\definecolor{rltblue}{rgb}{0,0,0.4}
\definecolor{drkgreen}{rgb}{0,0.4,0}
\definecolor{drkred}{rgb}{0.5,0,0}
\theoremstyle{plain}
\newtheorem{theorem}{Theorem}[section]
\newtheorem{lemma}[theorem]{Lemma}
\newtheorem{cor}[theorem]{Corollary}
\newtheorem{prop}[theorem]{Proposition}
\newtheorem{question}{Question}
\newtheorem{fact}{Fact}
\newtheorem{obs}[theorem]{Observation}
\newtheorem*{lemma*}{Lemma}
\newtheorem*{prop*}{Proposition}
\newtheorem*{theorem*}{Theorem}
\newtheorem*{obs*}{Observation}
\theoremstyle{definition}
\newtheorem{definition}[theorem]{Definition}
\newtheorem{example}[theorem]{Example}
\theoremstyle{definition}
\newtheorem{remark}[theorem]{Remark}
\newtheorem*{claim}{Claim}
\newtheorem*{ack}{Acknowledgement}
\newcommand{\upto}{\upharpoonright}
\newcommand{\fr}{\mbox{}^\smallfrown}
\newcommand{\N}{\mathbb{N}}
\newcommand{\om}{\omega}
\newcommand{\ep}{\varepsilon}
\newcommand{\nbase}[1]{{{\rm Nbase}( #1 )}}
\newcommand{\nbaseb}[2]{{{\rm Nbase}_{ #1 }( #2 )}}
\newcommand{\nbaseem}{{\rm Nbase}}
\newcommand{\name}[1]{{{\rm Name}( #1 )}}
\newcommand{\nameb}[2]{{{\rm Name}_{ #1 }( #2 )}}
\newcommand{\pt}[2]{{#1}\colon{#2}}
\newcommand{\cmp}[1]{{ #1 }^\mathsf{c}}
\newcommand{\reduce}{\leq_{\mathbf{T}}}
\newcommand{\eqreduce}{\equiv_{\mathbf{T}}}
\newcommand{\xx}{\mathcal{X}}
\newcommand{\yy}{\mathcal{Y}}
\newcommand{\zz}{\mathcal{Z}}
\newcommand{\mm}{\mathcal{M}}
\newcommand{\nn}{\mathcal{N}}
\newcommand{\nnp}{\mathcal{N}\sqcup\overline{\mathcal{N}}}
\newcommand{\dmm}{\delta_\mathcal{M}}
\newcommand{\dnn}{\delta_\mathcal{N}}
\newcommand{\dnnp}{\delta_{\nnp}}
\newcommand{\bvec}[1]{\bm{#1}}
\newcommand{\ex}[1]{\mathtt{Ex}(#1)}
\newcommand{\exc}[1]{\mathtt{Ex}^c(#1)}
\newcommand{\ab}{\{a,b,0\}}
\newcommand{\rs}[1]{\upharpoonright{#1}}
\newcommand{\dom}{\operatorname{dom}}
\newcommand{\mylem}[2]{
\begin{lemma*}[Lemma \ref{#1}]
#2
\end{lemma*}
\expandafter\newcommand\csname #1\endcsname{#2}}
\newcommand{\lemproof}[2]{
\begin{lemma}
\label{#1}
\expandafter\csname #1\endcsname
\end{lemma}
\begin{proof}
#2
\end{proof}}
\newcommand{\myprop}[2]{
\begin{prop*}[Proposition \ref{#1}]
#2
\end{prop*}
\expandafter\newcommand\csname #1\endcsname{#2}}
\newcommand{\propproof}[2]{
\begin{prop}
\label{#1}
\expandafter\csname #1\endcsname
\end{prop}
\begin{proof}
#2
\end{proof}}
\newcommand{\mythm}[2]{
\begin{theorem*}[Theorem \ref{#1}]
#2
\end{theorem*}
\expandafter\newcommand\csname #1\endcsname{#2}}
\newcommand{\thmproof}[2]{
\begin{theorem}
\label{#1}
\expandafter\csname #1\endcsname
\end{theorem}
\begin{proof}
#2
\end{proof}}
\newcommand{\myobs}[2]{
\begin{obs*}[Observation \ref{#1}]
#2
\end{obs*}
\expandafter\newcommand\csname #1\endcsname{#2}}
\newcommand{\obsproof}[2]{
\begin{obs}
\label{#1}
\expandafter\csname #1\endcsname
\end{obs}
\begin{proof}
#2
\end{proof}}
\title{Enumeration degrees and non-metrizable topology}
\author{Takayuki Kihara, Keng Meng Ng, and Arno Pauly}
\address[Takayuki Kihara]{Department of Mathematical Informatics, Graduate School of Informatics, Nagoya University, Japan}
\email{kihara@i.nagoya-u.ac.jp}
\urladdr{\href{http://www.math.mi.i.nagoya-u.ac.jp/~kihara/index.html}{www.math.mi.i.nagoya-u.ac.jp/~kihara}}
\address[Keng Meng Ng]{Division of Mathematical Sciences, School of Physical and Mathematical Sciences, Nanyang Technological University, Singapore}
\email{kmng@ntu.edu.sg}
\urladdr{\href{http://www.ntu.edu.sg/home/kmng/}{www.ntu.edu.sg/home/kmng/}}
\address[Arno Pauly]{Department of Computer Science, Swansea University, UK}
\email{Arno.M.Pauly@gmail.com}
\urladdr{\href{http://www.cs.swan.ac.uk/~cspauly/}{www.cs.swan.ac.uk/~cspauly/}}
\begin{document}

\begin{abstract}
The enumeration degrees of sets of natural numbers can be identified with the degrees of difficulty of enumerating neighborhood bases of points in a universal second-countable $T_0$-space (e.g.\ the $\om$-power of the Sierpi\'nski space).
Hence, every represented second-countable $T_0$-space determines a collection of enumeration degrees.
For instance, Cantor space captures the total degrees, and the Hilbert cube captures the continuous degrees by definition.
Based on these observations, we utilize general topology (particularly non-metrizable topology) to establish a classification theory of enumeration degrees of sets of natural numbers.

{\bf MSC:} 03D28; 54G20; 54A05; 54D10; 54H05
\end{abstract}

\maketitle

\setcounter{tocdepth}{2}
\tableofcontents

\section{Introduction}

\subsection{Context}

The notion of an enumeration degree was introduced by Friedberg and Rogers \cite{friedberg} in 1950s to estimate the degree of difficulty of enumerating a given set of natural numbers.
Roughly speaking, given sets $A,B\subseteq\om$, $A$ is {\em enumeration reducible to $B$} (written $A\leq_eB$) if there is a computable procedure that, given an enumeration of $B$, returns an enumeration of $A$.
Since then, the study of enumeration degrees have been one of the most important subjects in computability theory.

Nevertheless, only a few subcollections of enumeration degrees have been isolated.
Some prominent isolated properties are totality, semirecursiveness \cite{Joc68}, and quasi-minimality \cite{medvedev}.
Recently, the notion of cototality has also been found to be important and robust; see Andrews et al.\ \cite{cototal}, Jeandel \cite{Jea17}, and McCarthy \cite{McC17}.
Our aim is to understand the profound structure of enumeration degrees by isolating further subcollections of enumeration degrees and then establish a ``zoo'' of enumeration degrees.

To achieve our objective, we pay attention to a topological perspective of enumeration degrees.
The enumeration degrees can be identified with the degrees of difficulty of enumerating neighborhood bases of points in a universal second-countable $T_0$-space (e.g.\ the $\om$-power of the Sierpi\'nski space).
Hence, every represented second-countable $T_0$-space determines a collection of enumeration degrees.

This is exactly what Miller \cite{miller2} did for metric spaces.
Miller introduced the notion of {\em continuous degrees} as the degree structure of a universal separable metric space, and he described how this new notion can be understood as a substructure of enumeration degrees.
Subsequently, Kihara-Pauly \cite{KP} noticed that the {\em total degrees} are the enumeration degrees of neighborhood bases of points in (sufficiently effective) countable dimensional separable metric spaces.
This observation eventually led them to an application of continuous degrees in other areas outside of computability theory, such as descriptive set theory and infinite dimensional topology.
Other applications of continuous degrees can also be found in Day-Miller \cite{daymiller} and Gregoriades-Kihara-Ng \cite{GreKih}.

The connection to topology, and the results obtained through it, have already led to a flurry of recent activity in the study of enumeration degrees \cite{miller4,miller5,soskova,McC17}.

In this article, we further develop and deepen these connections.
We utilize general topology (particularly non-metrizable topology) to establish a classification theory of enumeration degrees.
For instance, we will examine which enumeration degrees can be realized as points in $T_0$ (Kolmogorov), $T_1$ (Fr\'echet), $T_2$ (Hausdorff), $T_{2.5}$ (Urysohn), and submetrizable spaces.
Furthermore, we will discuss the notion of $T_i$-quasi-minimality.
We will also provide a characterization of the notion of cototality in terms of computable topology.

Our work reveals that general topology (non-metrizable topology) is extremely useful to understand the highly intricate structure of subsets of the natural numbers.

\subsection{Summary}

In Section \ref{sec:zoo-list}, we reveal what substructures are captured by the degree structures of individual represented cb$_0$ spaces (some of which are quasi-Polish).
For instance, we define various subcollections of $e$-degrees, and then show the following.
\begin{itemize}
\item We construct a represented, decidable, $T_1$, non-$T_2$, quasi-Polish space $\mathcal{X}$ such that the $\mathcal{X}$-degrees are precisely the telograph-cototal degrees (Proposition \ref{prop:telophase-is-graph-cototal}).
\item We construct a represented, decidable, $T_2$, non-$T_{2.5}$, quasi-Polish space $\mathcal{X}$ such that the $\mathcal{X}$-degrees are precisely the doubled co-$d$-CEA degrees (Theorem \ref{thm:double-origin-degree}).
\item We construct a represented, decidable, $T_{2.5}$, non-submetrizable, quasi-Polish space $\mathcal{X}$ such that the $\mathcal{X}$-degrees are precisely the Arens co-$d$-CEA (the Roy halfgraph-above) degrees (Theorems \ref{thm:Arens-square} and \ref{thm:chained-co-d-CEA}).
\item We construct a represented, decidable, submetrizable, non-metrizable, quasi-Polish space $\mathcal{X}$ such that the $\mathcal{X}$-degrees are precisely the co-$d$-CEA degrees (Proposition \ref{prop:irregular-lattice-degree}).
\item Given a countable pointclass $\Gamma$, there is a computable extension $\gamma$ of the standard representation of Cantor space (hence, it induces a submetrizable topology) such that the $(2^\om,\gamma)$-degrees are exactly the $\Gamma$-above degrees (Proposition \ref{prop:Gamma-above}).
\item Every $e$-degree is an $\xx$-degree for some decidable, submetrizable, cb$_0$ space $\xx$ (Theorem \ref{thm:e-degree-realize}).
In particular, every $e$-degree is the degree of a point of a decidable $T_{2.5}$ space.
\end{itemize}

For the details of the above results, see Section \ref{sec:zoo-list}.
In Section \ref{sec:network}, we emphasize the importance of the notion of a network (which has been extensively studied in general topology).
A $G_\delta$-space is a topological space in which every closed set is $G_\delta$.
A second-countable $T_0$-space $\xx$ is a $G_\delta$-space if and only if $\xx$ has a countable closed network (Proposition \ref{prop:G_delta-equal-closed-network}).
The following is an unexpected characterization of cototality.

\begin{itemize}
\item An $e$-degree is cototal if and only if it is an $\xx$-degree of a computably $G_\delta$, cb$_0$ space $\xx$ (Theorem \ref{thm:cototal-Gdelta-space}).
\item There exists a decidable, computably $G_\delta$, cb$_0$ space $A_{\rm max}^{\rm co}$ such that the $A_{\rm max}^{\rm co}$-degrees are exactly the cototal $e$-degrees (Theorem \ref{thm:cototal-Gdelta-space2}).
\end{itemize}

We also show several separation results for specific degree-notions.
For instance,

\begin{itemize}
\item There are an $n$-semirecursive $e$-degree $\mathbf{c}\leq\mathbf{0}''$ and a total $e$-degree $\mathbf{d}\leq\mathbf{0}''$ such that the join $\mathbf{c}\oplus\mathbf{d}$ is not $(n+1)$-semirecursive (Theorem \ref{thm:rrn-semirec}).
\item For any $n\in\om$, an $n$-semirecursive $e$-degree is either total or a strong quasi-minimal cover of a total $e$-degree (Theorem \ref{thm:hprincipal}).
\item For any $n$, there is an $(n+1)$-cylinder-cototal $e$-degree which is not $n$-cylinder-cototal (Theorem \ref{thm:coBaire-hierarchy}).
\item There is a co-d-CEA set $A\subseteq\om$ such that $A$ is not cylinder-cototal (Proposition \ref{prop:non-cylinder-cototal}).
\item Every semirecursive, non-$\Delta^0_2$ $e$-degree is quasi-minimal w.r.t.\ telograph-cototal $e$-degrees (Theorem \ref{telograph-quasiminimal1}).
\item There is a semirecursive set $A\subseteq\om$ which is quasi-minimal, but not quasi-minimal w.r.t.\ telograph-cototal $e$-degrees (Theorem \ref{telograph-quasiminimal3}).
\item There is a cylinder-cototal $e$-degree which is quasi-minimal w.r.t.\ telograph-cototal $e$-degrees (Theorem \ref{telograph-quasiminimal2}).
\item By $(\om^\om)_{GH(n)}$ we denote the set $\om^\om$ endowed with the $\Sigma^1_n$-Gandy-Harrington topology.
For any distinct numbers $n,m\in\om$, there is no $e$-degree which is both an $(\om^\om)_{GH(n)}$-degree and an $(\om^\om)_{GH(m)}$-degree (Theorem \ref{thm:Gandy-Harrington-hierarchy}).
\item There is a continuous degree which is neither telograph-cototal nor cylinder-cototal (Proposition \ref{thm:continuous-cospec}).
\end{itemize}

Moreover, we introduce the notion of a regular-like network, and give a characterization in the context of a closure representation, which plays a key role in Section \ref{sec:quasiminimaltec}.
By using these notions, we will show the following separation results.

\begin{itemize}
\item Let $\mathcal{T}$ be a countable collection of second-countable $T_1$ spaces.
Then, there is a $\mathcal{T}$-quasi-minimal semirecursive $e$-degree (Theorem \ref{thm:countable-T_1-quasiminimal}).
\item Let $\mathcal{T}$ be a countable collection of second-countable $T_1$ spaces.
Then, there is an $(n+1)$-semirecursive $e$-degree which cannot be written as the join of an $n$-semirecursive $e$-degree and an $\xx$-degree for $\xx\in\mathcal{T}$ (Theorem \ref{thmseparation1}).
\item For any represented Hausdorff space $\xx$, there is a cylinder-cototal $e$-degree which is not an $\xx$-degree (Theorem \ref{thm:T-1-degree-not-T-2}).
\item There is a cylinder-cototal $e$-degree which is $\mathbb{N}^{\mathbb{N}^\mathbb{N}}$-quasi-minimal (Theorem \ref{thm:T-1-degree-NNN-quasiminimal}).
\item Given any countable collection $\{S_i\}_{i\in\omega}$ of effective $T_2$ spaces, there is a telograph-cototal $e$-degree which is $S_i$-quasi-minimal for any $i\in\om$ (Theorem \ref{thm:T_2-quasiminimal}).
\item For any represented $T_{2.5}$-space $\xx$, there is an $(\mathbb{N}_{\rm rp})^\om$-degree which is not an $\xx$-degree (Theorem \ref{thm:T-2-degree-not-T-25}).
\item There is an $(\mathbb{N}_{\rm rp})^\om$-degree which is $\mathbb{N}^{\mathbb{N}^\mathbb{N}}$-quasi-minimal (Theorem \ref{thm:T-2-degree-NNN-quasiminimal}).
\item Let $\xx=(X,\nn)$ be a regular Hausdorff space with a countable cs-network.
Then there is an $(\om^\om)_{GH}$-degree which is not an $\xx$-degree (Theorem \ref{thm:Gandy-Harrington-closed-neighborhood}).
\item The Gandy-Harrington space has no point of $\mathbb{N}^{\mathbb{N}^\mathbb{N}}$-degree (Theorem \ref{thm:GH-not-NNN}).
\end{itemize}

\subsection{Structure of the article}
To ease the reading, we are not giving proofs in Sections \ref{sec:zoo-list} and \ref{sec:further}. Instead, we focus on the statements of the theorems and accompanying narratives and explanations. The proofs omitted in these Sections are given in Sections \ref{sec:zoo-proofs} and \ref{sec:further-proofs}. The statements of the theorems are repeated in the latter sections. Theorem numbers always refer to the place where the theorems are given with proofs. In Section \ref{sec:cs-networks} narrative and proofs are not separated.


\section{Preliminaries}

\subsection{Notations}
We use $\cmp{A}$ to denote the complement of $A$, and $\overline{U}$ always means the topological closure of $U$.
We also use the standard notations in computability theory:
For $X,Y\subseteq\om$, let $X\oplus Y$ be the Turing join of $X$ and $Y$; that is, $(X\oplus Y)(2n)=X(n)$ and $(X\oplus Y)(2n+1)=Y(n)$.
For strings $\sigma,\tau\in\om^{<\om}$, let $\sigma\fr\tau$ be the concatenation of $\sigma$ and $\tau$, and we mean by $\sigma\preceq\tau$ that $\sigma$ is an initial segment of $\tau$.
For a string $\sigma\in\om^{<\om}$, let $|\sigma|$ be the length of $|\sigma|$, and $[\sigma]=\{X\in\om^\om:X\succ\sigma\}$.
By $x\upto s$ we mean the restriction of $x$ up to $s$.

\subsection{General topology}

We first review some basic concepts from general topology (see also Steen-Seebach \cite{CTopBook}).
In most parts of this paper, 
we only deal with second-countable $T_0$-spaces. However in Section \ref{sec:cs-networks} we also consider (non-second-countable) spaces which have a countable cs-network, e.g.\ the Kleene-Kreisel space $\N^{\N^\N}:=C(\N^\N,\N)$.
A normal space having a countable cs-network is known as an $\aleph_0$-space (see \cite{Mic66,Guth}).
\index{Separation axioms}
A space $X$ is $T_0$ ({\em Kolmogorov}) if any two distinct points are topologically distinguishable. We are only concerned with $T_0$ spaces in this paper.
A space $X$ is $T_1$ ({\em Fr\'echet}) if every singleton is closed.
A space $X$ is $T_2$ ({\em Hausdorff}) if the diagonal is closed.
A space $X$ is $T_{2.5}$ ({\em Urysohn}) if any two distinct points are separated by their closed neighborhoods.
A space $X$ is $T_D$ if every singleton is the intersection of an open set and a closed set.
A space $X$ is {\em completely Hausdorff} if any two distinct points are separated by a continuous $[0,1]$-valued function.
A space $X$ is {\em submetrizable} if it admits a continuous metric.
We have the following implications.
\[\mbox{metrizable}\Rightarrow\mbox{submetrizable}\Leftrightarrow\footnotemark\mbox{completely Hausdorff}\Rightarrow T_{2.5}\Rightarrow T_2\Rightarrow T_1\Rightarrow T_D\Rightarrow T_0.\]
\footnotetext{Being submetrizable and being completely Hausdorff coincides for spaces with hereditarily Lindel\"of squares, which includes all spaces with countable networks, hence all spaces relevant for our purposes. We are grateful to Taras Banakh for pointing this out to us on mathoverflow (\url{https://mathoverflow.net/questions/280359/does-second-countable-and-functionally-hausdorff-imply-submetrizable}).}
A space $X$ is {\em regular} if the closed neighborhoods of a point $x$ form a local network at the point $x$, that is, every neighborhood of a point contains a closed neighborhood of the same point.
In the category of second-countable $T_0$ spaces, by Urysohn's metrization theorem, the property being $T_3$ (regular Hausdorff) is equivalent to metrizability.
A space in which every closed set is $G_\delta$ is called a {\em $G_\delta$-space}.
Every metrizable space is $G_\delta$ (see \cite[Part III]{CTopBook}), and every $T_0$, $G_\delta$-space is $T_1$ (see Section \ref{sec:G-delta-space}).

\subsection{Computability theory}
\subsubsection{Enumeration and Medvedev reducibility}

We review the definition of enumeration reducibility (see also Odifreddi \cite[Chapter XIV]{OdiBook1}, Cooper \cite{CooperE} \& \cite[Chapter 11]{cooperbook}).
Let $(D_e)_{e\in\om}$ be a computable enumeration of all finite subsets of $\om$.
Given $A,B\subseteq\om$, we say that {\em $A$ is enumeration reducible to $B$} (written $A\leq_eB$) if there is a c.e.\ set $\Phi$ such that
\[n\in A\iff(\exists e)\;[\langle n,e\rangle\in\Phi\mbox{ and }D_e\subseteq B].\]

The $\Phi$ in the above definition is called an {\em enumeration operator}.
An enumeration operator induces a computable function on $\om^\om$, and indeed, $A\leq_eB$ iff there is a computable function $f:\om^\om\to\om^\om$ such that given an enumeration $p$ of $A$, $f(p)$ returns an enumeration of $B$, where we say that $p\in\om^\om$ is an enumeration of $A$ if $A=\{p(n)-1:p(n)>0\}$ ($p(n)=0$ indicates that we enumerate nothing at the $n$-th step).

Each equivalence class under the $e$-equivalence $\equiv_e:=\leq_e\cap\geq_e$ is called an {\em enumeration degree} or simply {\em $e$-degree}.
The $e$-degree of a set $A\subseteq\om$ is written as $\deg_e(A)$.
The $e$-degree structure forms a upper semilattice, where the join is given by the disjoint union $A\oplus B=\{2n:n\in A\}\cup\{2n+1:n\in B\}$.
We use the symbol $\mathcal{D}_e$ to denote the set of all $e$-degrees.


For $P,Q\subseteq\om^\om$, we say that {\em $P$ is Medvedev reducible to $Q$} (written $P\leq_MQ$, \cite{medvedev}) if there is a partial computable function $\Psi:\subseteq\om^\om\to\om^\om$ such that for any $q\in Q$, $\Psi(q)\in P$. There is a natural embedding of the enumeration degrees into the Medvedev degrees of the Baire space, by taking a set $A$ to the class of all enumerations of $A$.

\subsection{Represented spaces}\label{intro:rep-sp-00}
The central objects of study in computable analysis are the represented spaces, which allow us to make sense of computability for most space of interest in everyday mathematics.

\begin{definition}
A \emph{represented space} is a set $X$ together with a partial surjection $\delta:\subseteq\om^\om\to X$. We often write $\xx$ for a represented space.
\end{definition}

We say that $p\in\om^\om$ is a {\em $\delta$-name of $x$} if $x=\delta(p)$. We use $\nameb{\delta}{x}$ to denote the set of all $\delta$-names of $x$, or just write $\name{x}$, if the space is clear from the context. Hereafter, by a {\em point}, we mean a pair of a point $x\in X$ and the underlying represented space $\xx=(X,\delta)$, denoted by $\pt{x}{\xx}$ or simply $\pt{x}{\delta}$.

A partial function $F : \subseteq \om^\om \to \om^\om$ is called a \emph{realizer} of a partial function $f : \subseteq \xx \to \yy$, if $\delta_\yy(F(p)) = f(\delta_\xx(p))$ for any $p \in \dom(f\delta_\xx)$. We then say that $f$ is computable (respectively continuous), if $f$ has a computable (respectively continuous) realizer.

If $\gamma$ and $\delta$ are representations, we say that {\em $\gamma$ is (computably) reducible to $\delta$} or {\em $\gamma$ is (computably) finer than $\delta$} if there is a continuous (computable) function $\Phi$ such that $\gamma=\delta\circ\Phi$.
It is equivalent to saying that the identity map ${\rm id}\colon(X,\gamma)\to(X,\delta)$ is continuous (computable). If $\gamma$ is (computably) reducible to $\delta$ and $\delta$ is (computably) reducible to $\gamma$, we call $\gamma$ and $\delta$ (computably) equivalent. This corresponds to ${\rm id}\colon(X,\gamma)\to(X,\delta)$ being a (computable) isomorphism.

In the topological terminology, $\gamma$ is reducible to $\delta$ if and only if $\tau_\delta\subseteq\tau_\gamma$, where $\tau_\gamma$ and $\tau_\delta$ are the quotient topologies given by $\gamma$ and $\delta$, respectively.
If $X$ is equipped with a topology $\tau$, then $\delta$ is continuous if and only if $\tau\subseteq\tau_\delta$.

\subsubsection{Representation via a countable basis}
\index{represented cb space}
A {\em represented cb space} is a pair $(\xx,\beta)$ of a second-countable space $\xx$ and an enumeration $\beta=(\beta_e)_{e\in\om}$ of a countable open subbasis of $\xx$.
Here, ``cb'' stands for ``countably based''.
If a represented cb space is $T_0$, then it is also called a represented cb$_0$ space.
The enumeration $\beta$ is called a {\em cb representation of $\xx$}.

One of the key observations is that specifying a cb representation $\beta$ of a second-countable $T_0$ space $\xx$ is the same thing as specifying an embedding of $\xx$ into the power set $\mathcal{P}\om$ of $\om$ endowed with the Scott topology (that is, basic open sets are $\{X\subseteq\om:D\subseteq X\}$ where $D$ ranges over finite subsets of $\om$).
Hence, a cb$_0$ representation $\beta$ (and the induced embedding) determines how a point $x\in\xx$ is identified with a subset of the natural numbers.
\index{universal countably-based $T_0$-space}
This observation entails the known fact that the Scott domain $\mathcal{P}\om$ is a universal second-countable $T_0$ space, that is, every second-countable $T_0$ space embeds into $\mathcal{P}\om$.
We describe how an embedding $:\mathcal{X}\hookrightarrow\mathcal{P}\om$ is induced from a representation $\beta$.
One can identify a point $x$ in a represented cb$_0$ space $(\xx,\beta)$ with the coded neighborhood filter
\[\nbaseb{\beta}{x}=\{e\in\om:x\in \beta_e\}.\]
It is not hard to see that $\nbaseem_{\beta}\colon\mathcal{X}\hookrightarrow\mathcal{P}\om$ is a topological embedding. \index{$\nbaseb{\beta}{x}$}
An enumeration of $\nbaseb{\beta}{x}$ is called a {\em $\beta$-name of $x$}, that is, for a $p:\om\to\om$,
\[\mbox{$p$ is a $\beta$-name of $x$} \iff {\rm rng}(p)=\nbaseb{\beta}{x},\]
where one can assume $\beta_0=\xx$ without loss of generality.
If $\beta$ is clear from the context, we also use the symbol $\nbase{x}$ instead of $\nbaseb{\beta}{x}$.

Clearly, a cb-representation $\beta$ always induces a representation $\delta_\beta$ defined by $\delta_\beta(p)=x$ iff $p$ is a $\beta$-name of $x$ (i.e., $p$ enumerates $\nbaseb{\beta}{x}$). This entails that $\nbaseb{\xx}{x}$ is c.e.~ iff $\pt{x}{\xx}$ is computable. In situations where no confusion is expected, we may speak of a cb representation and its induced representation interchangeably. We can also express computability of partial functions between represented cb spaces equivalently as a special case of computability on represented spaces, or in the language of enumeration reducibility: Saying that $f : \subseteq \xx \to \yy$ is computable is equivalent to saying that there is a single enumeration operator $\Psi$ such that
\[(\forall x\in{\rm dom}(f))\;[\nbase{f(x)}\leq_e\nbase{x}\mbox{ via $\Psi$}].\]

\begin{remark}
It is known that the Scott domain $\mathcal{P}\om$ is homeomorphic to the $\om$-power $\mathbb{S}^\om$ of the Sierpi\'nski space, where $\mathbb{S}=\{0,1\}$ which has the three open sets $\emptyset$, $\{1\}$, and $\mathbb{S}$ (see \cite{debrecht6,schroder}).
\end{remark}

\begin{remark}
In Weihrauch-Grubba \cite{WeGr09}, a represented cb$_0$ space is called an {\em effective topological space}.
However we prefer to emphasize second-countability (cb) since the range of computability theory is far larger than second-countable spaces (see e.g.~\cite{schroder,Sch03,selivanov4,ScSe15,dBScSe15,pauly-synthetic-arxiv,paulydebrecht3-csl,pauly-ordinals-mfcs,schroder6}).
We also avoid the use of the terminology ``effective'' since the definition of a represented cb$_0$ space does not involve any effectivity.
\end{remark}

\subsubsection{Changing representations}\label{sec:change-repres}

We have introduced a cb representation as a countable subbasis $(\beta_e)_{e\in\om}$, but without loss of generality, we can always assume that it is actually a countable basis.
To see this, let $\beta=(\beta_e)_{e\in\om}$ be a countable subbasis of a cb$_0$ space $\mathcal{X}$.
Then we get a basis $\beta^+$ of $\mathcal{X}$ by defining $\beta^+_\sigma=\bigcap_{i<|\sigma|}\beta_{\sigma(i)}$ for $\sigma\in\om^{<\om}$.
Note that there is no difference between $\beta$ and $\beta^+$ from the computability-theoretic perspective:
\begin{align*}
e\in\nbaseb{\beta}{x}&\iff\langle e\rangle\in\nbaseb{\beta^+}{x},\\
\sigma\in\nbaseb{\beta^+}{x}&\iff\{\sigma(0),\dots,\sigma(|\sigma|-1)\}\subseteq\nbaseb{\beta}{x}.
\end{align*}

In other words, $\nbaseb{\beta}{x}$ is $e$-equivalent to $\nbaseb{\beta^+}{x}$ in a uniform manner. Indeed, we find
that every subbasis $\beta$ is computably equivalent to the induced basis $\beta^+$ (in the sense of represented spaces).

We observe that translations between cb$_0$ representations have a particular convenient form:
Let $\beta$ and $\gamma$ be cb$_0$ representations of $\xx$.
We see that {\em $\beta$ is computably reducible to $\gamma$} (written $\beta\leq\gamma$) if there is a single enumeration operator witnessing the reduction $\nbaseb{\beta}{x}\leq_e\nbaseb{\gamma}{x}$ for any $x\in\xx$.
It is equivalent to saying that any $\beta$-basic open set is $\gamma$-c.e.\ open in an effective manner, that is, there is a computable function $h$ such that
\[\beta_e=\bigcup\{\gamma^+_{\sigma}:{\sigma\in W_{h(e)}}\},\]
where $\gamma^+$ is a basis for $\xx$ defined as above.

\subsubsection*{Computable topological spaces}
We will consider the following additional effective properties for represented cb spaces $(X,\beta)$.
\begin{enumerate}
\item[(I)] There is a c.e.\ set $S$ such that $\beta_i\cap\beta_j=\bigcup\{\beta_e:(i,j,e)\in S\}$.
\item[(E)] $\{e:\beta_e\not=\emptyset\}$ is c.e.
\end{enumerate}

In Weihrauch-Grubba \cite{WeGr09}, a represented cb$_0$ space with (I) is called a {\em computable topological space}.
In Kurovina-Kudinov \cite{KoKu08}, a represented cb space with (I) and (E) is called a {\em effectively enumerable topological space}.

Moreover, if every positive finite Boolean operation on $\beta$ is computable, then we say that $(X,\beta)$ is {\em decidable}.


\begin{prop}\label{prop:equiv-repre-preserv}
Let $\beta,\gamma$ be representations of $X$ such that $\gamma\equiv\beta$.
If $(X,\beta)$ is computable, so is $(X,\gamma)$.
\end{prop}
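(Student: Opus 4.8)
The plan is to show that the property (I) transfers along a computable equivalence of representations. Assume $(X,\beta)$ is computable, so there is a c.e.\ set $S_\beta$ with $\beta_i\cap\beta_j=\bigcup\{\beta_e:(i,j,e)\in S_\beta\}$. Since $\gamma\equiv\beta$, fix computable functions $h$ and $k$ witnessing $\beta\leq\gamma$ and $\gamma\leq\beta$ respectively; by the discussion in Section \ref{sec:change-repres}, $h$ and $k$ can be taken so that each $\beta$-basic open set is a c.e.\ union of $\gamma^+$-basic open sets and vice versa, i.e.\ $\beta_e=\bigcup\{\gamma^+_\sigma:\sigma\in W_{h(e)}\}$ and $\gamma_j=\bigcup\{\beta^+_\tau:\tau\in W_{k(j)}\}$, where as usual $\delta^+_\sigma=\bigcap_{i<|\sigma|}\delta_{\sigma(i)}$.

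The first step is to observe that (I) for $\beta$ (giving computability of binary intersections) automatically yields a c.e.\ description of finite intersections, hence of $\beta^+_\sigma\cap\beta^+_\tau$, as a c.e.\ union of $\beta$-basic sets; this is a routine induction on $|\sigma|+|\tau|$ using $S_\beta$. Next I would compute $\gamma_i\cap\gamma_j$ in terms of $\gamma$: first rewrite $\gamma_i=\bigcup\{\beta^+_\tau:\tau\in W_{k(i)}\}$ and $\gamma_j=\bigcup\{\beta^+_\rho:\rho\in W_{k(j)}\}$, so that $\gamma_i\cap\gamma_j=\bigcup\{\beta^+_\tau\cap\beta^+_\rho:\tau\in W_{k(i)},\ \rho\in W_{k(j)}\}$; by the previous step each $\beta^+_\tau\cap\beta^+_\rho$ is a c.e.\ union $\bigcup\{\beta_e:e\in V_{\tau,\rho}\}$ for a uniformly c.e.\ sequence $(V_{\tau,\rho})$. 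Finally, push each such $\beta_e$ back into $\gamma$ using $\beta_e=\bigcup\{\gamma^+_\sigma:\sigma\in W_{h(e)}\}$, and note $\gamma^+_\sigma$ is in turn a finite intersection of $\gamma$-basic sets, which we can re-expand one more time if we want a union of single $\gamma$-basic sets — but since a computable topological space is only required to have property (I), it is cleaner to first note that (I) for $\gamma$ is equivalent to the a priori weaker statement that $\gamma_i\cap\gamma_j$ is uniformly $\gamma$-c.e.\ open, because $\gamma$-c.e.\ open sets are exactly c.e.\ unions of $\gamma^+$-basic sets and property (I) lets these be flattened. Assembling the nested c.e.\ unions (a c.e.\ union of c.e.\ unions of c.e.\ unions, all uniform) gives a single c.e.\ set $S_\gamma$ with $\gamma_i\cap\gamma_j=\bigcup\{\gamma_e:(i,j,e)\in S_\gamma\}$, which is exactly property (I) for $(X,\gamma)$.

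The only mildly delicate point — and the place I would be most careful — is the bookkeeping that makes all these unions simultaneously uniform in the indices $i,j$, i.e.\ checking that $S_\gamma$ is genuinely c.e.\ and not merely that each section $\{e:(i,j,e)\in S_\gamma\}$ is c.e.\ for fixed $i,j$. This is a standard dovetailing argument: the functions $h,k$ are computable, the sequences $W_{h(e)}$, $W_{k(i)}$, $V_{\tau,\rho}$ are uniformly c.e., and a uniform c.e.\ union of uniformly c.e.\ sets is uniformly c.e., so enumerating $S_\gamma$ amounts to running all of these enumerations in parallel and outputting the appropriate triples. No essentially new idea is needed beyond this, so the proposition follows. (One should also remark that the analogous transfer for properties (E) and decidability holds by the same method, using that $\beta_e\neq\emptyset$ is decidable-from-c.e.\ data translates across the reductions, but the statement as given only asks about (I).)
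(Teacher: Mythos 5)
Your proposal follows essentially the same route as the paper's proof: use $\gamma\leq\beta$ to write $\gamma_i$ and $\gamma_j$ as $\beta$-c.e.\ open sets, intersect them effectively using property (I) for $\beta$ (the flattening of finite intersections that you spell out explicitly), translate the result back along $\beta\leq\gamma$, and dovetail all the enumerations to get uniformity in $i,j$. The one delicate point you flag at the end --- passing from ``$\gamma_i\cap\gamma_j$ is a c.e.\ union of $\gamma^+$-sets'' to property (I) as literally stated for $\gamma$ --- is justified circularly as written (flattening $\gamma^+$-sets into unions of single $\gamma$-sets is itself property (I) for $\gamma$), but the paper's proof silently makes exactly the same identification when it writes the final union over single $\gamma$-basic sets rather than $\gamma^+$-sets, so your argument is correct to precisely the same extent, and by the same method, as the paper's own.
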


\begin{proof}
For computability, since $\gamma\leq\beta$, given $d$ and $e$, $\gamma_d$ and $\gamma_e$ can be written as $\beta$-c.e.\ open set $U_d$ and $U_e$.
Thus, $\gamma_d\cap\gamma_e=U_d\cap U_e$.
One can easily find a $\beta$-index of the $\beta$-c.e.\ open set $U_d\cap U_e$, that is, $U_d\cap U_e=\bigcup_{n}\beta_{f(n,d,e)}$.
Since $\beta\leq\gamma$, we also have a $\gamma$-index of $U_d\cap U_e$, that is, $U_d\cap U_e=\bigcup_{n}\gamma_{g(n,d,e)}$.
Hence, $\gamma_d\cap\gamma_e=\bigcup_{n}\gamma_{g(n,d,e)}$, that is, $\gamma$ is computable.
\end{proof}


\subsubsection{Multi-representations}

As a technical took, we will rarely make use of multi-representations. A {\em (multi-)representation} is a multi-valued partial surjection $\delta:\subseteq\om^\om\rightrightarrows\xx$.
We say that $p\in\om^\om$ is a {\em $\delta$-name of $x$} if $x\in\delta(p)$. Notions such as realizer, computable functions between multi-represented spaces, etc, are all defined analogously to the case of ordinary representations. In the context of computable analysis, multi-representations were introduced by Schr\"oder \cite{Sch02}. They are an instance of assemblies from realizability theory \cite{oosten}.

\subsubsection{Admissible representation}\label{sec:intro-admissible-representation}

For a topological space $\xx=(X,\tau)$, we say (following Schr\"oder \cite{schroder}) that $\delta\colon\om^\om\to\xx$ is {\em admissible} if it is $\leq$-maximal among continuous representations of $X$, that is, it is continuous, and every continuous representation of $\xx$ is reducible to $\delta$.
Equivalently, a representation $\delta:\subseteq\om^\om\to\xx$ is admissible if it is continuous, and for any continuous representation $\gamma:\om^\om\to\xx$, the identity map $(\xx,\gamma)\to(\xx,\delta)$ is continuous.
Note also that admissible representations are the ones which realize the coarsest quotient topology refining $\tau$.

\begin{obs}
The representation $\delta_\beta$ induced from a cb-representation is always admissible.
\end{obs}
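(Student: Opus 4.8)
The plan is to show that the induced representation $\delta_\beta$ of a cb$_0$ space $\xx=(X,\beta)$ satisfies the two defining properties of admissibility: it is continuous, and every continuous representation of $\xx$ reduces to it. Throughout I identify $\xx$ with its image under the embedding $\nbaseem_\beta\colon\xx\hookrightarrow\mathcal{P}\om$, so that a $\delta_\beta$-name of $x$ is precisely an enumeration of $\nbaseb{\beta}{x}=\{e:x\in\beta_e\}$, and the topology $\tau$ on $X$ is generated by the subbasis $\{\beta_e\}_{e\in\om}$.

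First I would check continuity of $\delta_\beta$, i.e.\ that $\tau\subseteq\tau_{\delta_\beta}$. Equivalently, each $\beta_e$ must be $\delta_\beta$-open. But $\delta_\beta^{-1}(\beta_e)=\{p\in\dom(\delta_\beta):e\in\operatorname{rng}(p)\}$, which is open in Baire space since membership of $e$ in the range of $p$ is witnessed by a finite prefix of $p$. Hence every subbasic open set, and therefore every open set, pulls back to an open set, so $\delta_\beta$ is continuous.

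Next I would verify maximality. Let $\gamma\colon\subseteq\om^\om\to X$ be any continuous representation of $\xx$; I must produce a continuous $\Phi$ with $\delta_\beta\circ\Phi=\gamma$, equivalently show that $\mathrm{id}\colon(X,\tau_\gamma)\to(X,\tau_{\delta_\beta})$ is continuous. Since $\gamma$ is continuous we have $\tau\subseteq\tau_\gamma$; so it suffices to show $\tau_{\delta_\beta}\subseteq\tau_\gamma$, i.e.\ every $\delta_\beta$-open set is $\gamma$-open. A $\delta_\beta$-open set is of the form $\delta_\beta(\mathcal{U})$ for $\mathcal{U}$ open in $\dom(\delta_\beta)$ saturated under $\delta_\beta$; but in fact it is enough to handle a basis for $\tau_{\delta_\beta}$, and the key point is that $\tau_{\delta_\beta}$ is exactly the topology generated by the $\beta_e$, i.e.\ $\tau_{\delta_\beta}=\tau$. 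Indeed, any $\delta_\beta$-open set is a union of sets of the form $\delta_\beta([\sigma])$ where $[\sigma]$ is a basic clopen subset of $\dom(\delta_\beta)$; and $\delta_\beta([\sigma])$ is the set of points $x$ whose neighbourhood filter $\nbaseb{\beta}{x}$ contains the finite set $\{\sigma(0),\dots,\sigma(|\sigma|-1)\}$, which is just the $\beta$-basic open set $\bigcap_{i<|\sigma|}\beta_{\sigma(i)}=\beta^+_\sigma$. Thus $\tau_{\delta_\beta}$ is generated by $\{\beta^+_\sigma\}$, which is the same as the topology generated by $\{\beta_e\}$, namely $\tau$. Combining, $\tau_\gamma\supseteq\tau=\tau_{\delta_\beta}$, so $\mathrm{id}\colon(X,\tau_\gamma)\to(X,\tau_{\delta_\beta})$ is continuous, which is exactly the statement that $\gamma$ reduces to $\delta_\beta$.

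The only subtle point — and the step I would be most careful about — is the identification $\tau_{\delta_\beta}=\tau$: one must confirm both that every $\delta_\beta$-open set is open in $\tau$ (this is where one uses that $\delta_\beta$-names enumerate the \emph{full} neighbourhood filter, so that a name for $x$ eventually reveals any given $\beta_e$ containing $x$, and hence any $\delta_\beta$-open set is a union of $\beta^+_\sigma$'s) and conversely that each $\beta_e$ is $\delta_\beta$-open (already done in the continuity step). Given that the excerpt has set up $\nbaseem_\beta$ as a topological embedding and recorded the equivalences between $\nbaseb{\beta}{x}$ and $\nbaseb{\beta^+}{x}$, these verifications are routine unwindings of the definitions, and the continuity of the translating map $\Phi$ follows automatically from the fact that reducibility of representations is witnessed by a continuous function whenever the corresponding identity map is continuous.
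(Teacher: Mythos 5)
Your continuity step is fine, and so is the topological identification: since a $\delta_\beta$-name of $x$ has range exactly $\nbaseb{\beta}{x}$, one gets $\delta_\beta([\sigma]\cap\dom(\delta_\beta))=\beta^+_\sigma$, so $\delta_\beta$ is an open continuous surjection and $\tau_{\delta_\beta}=\tau$. The gap is the word ``equivalently'' in the maximality step. Reducibility of $\gamma$ to $\delta_\beta$ means there is a \emph{continuous realizer} $\Phi$ with $\delta_\beta\circ\Phi=\gamma$ on $\dom(\gamma)$, and in general this is strictly stronger than continuity of ${\rm id}\colon(X,\tau_\gamma)\to(X,\tau_{\delta_\beta})$. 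Your argument establishes only the latter, purely topological, statement ($\tau_{\delta_\beta}=\tau\subseteq\tau_\gamma$, which is nothing more than the hypothesis that $\gamma$ is a continuous representation) and then passes to a realizer by appealing to the equivalence of the two notions. But the implication ``${\rm id}$ topologically continuous $\Rightarrow$ a continuous realizer exists'' for the target $\delta_\beta$ is precisely the maximality half of admissibility you are trying to prove, so the proof is circular; and as a general principle for arbitrary target representations the equivalence is false --- the binary-expansion and Cauchy representations of $[0,1]$ induce the same quotient topology, yet the Cauchy representation does not continuously reduce to the binary one. The paper's parenthetical remark that $\gamma\leq\delta$ iff $\tau_\delta\subseteq\tau_\gamma$ can only be used in the easy direction (or once $\delta$ is known to be admissible), so it cannot carry this step.

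The missing ingredient is an explicit realizer, and it is exactly here that the special shape of $\delta_\beta$ (names are enumerations of neighborhood filters) gets used. Given $p\in\dom(\gamma)$, let $\Phi(p)$ enumerate, with the index of $\xx$ (say $\beta_0$) as padding, every $e$ such that $[p\upto n]\cap\dom(\gamma)\subseteq\gamma^{-1}(\beta_e)$ for some $n$. Every such $e$ lies in $\nbaseb{\beta}{\gamma(p)}$; conversely, since $\gamma$ is continuous, each $\gamma^{-1}(\beta_e)$ is open in $\dom(\gamma)$, so every $e\in\nbaseb{\beta}{\gamma(p)}$ is witnessed by some finite prefix of $p$ and is eventually enumerated. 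Hence ${\rm rng}(\Phi(p))=\nbaseb{\beta}{\gamma(p)}$, i.e.\ $\delta_\beta(\Phi(p))=\gamma(p)$, and $\Phi$ is continuous because each output entry depends only on a finite prefix of $p$ and is never retracted. With this construction inserted in place of the ``equivalently'' step, the rest of your write-up (the continuity of $\delta_\beta$ and, if you wish, the fact $\tau_{\delta_\beta}=\tau$) completes the proof.
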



In fact, if $\nn=(N_e)_{e\in\om}$ is a countable cs-network (see Section \ref{sec:cs-networks}) for a $T_0$ space $\xx$, Schr\"oder \cite{schroder} showed that the following map $\delta_{\nn}$ always gives an admissible representation of $\xx$:
\[\dnn(p)=x\iff \{N_{p(n)}:n\in\om\}\mbox{ is a strict network at }x\mbox{ (see Definition \ref{def:network-strict-network})}.\]
We call $\delta_{\nn}$ the {\em induced $\om^\om$-representation of $\xx$} (obtained from $\nn$).
We also use the symbol $\nameb{\nn}{x}$ to denote the set of all enumerations of a strict subnetwork of $\nn$ at $x$, that is,
\[\nameb{\nn}{x}=\dnn^{-1}\{x\}=\{p\in\om^\om:\dnn(p)=x\}\]
If $\nn$ is clear from the context, we also use $\name{x}$ instead of $\nameb{\nn}{x}$.
See Section \ref{sec:cs-networks} for more details.

\subsection{Quasi-Polish spaces}\label{sec:quasi-Polish}
\index{Quasi-Polish space}
A {\em quasi-Polish space} is a second-countable space which is Smyth-completely quasi-metrizable \cite{debrecht6}.
Recall that a set in a space is $\mathbf{\Pi}^0_2$ if it is the intersection of countably many constructible sets, where a constructible set is a finite Boolean combination of open sets (see \cite{debrecht6,Seliv06}).
De Brecht \cite[Theorem 24]{debrecht6} showed that a space is quasi-Polish if and only if it is homeomorphic to a $\mathbf{\Pi}^0_2$ subset of $\mathbb{S}^\om$.
Be careful that it is not always the case that $\mathbf{\Pi}^0_2=G_\delta$.
Indeed, $\mathbf{\Pi}^0_2=G_\delta$ holds if and only if the underlying space is a $G_\delta$ space (see Section \ref{sec:G-delta-space}).

Assume that $\xx$ is a represented cb$_0$ space.
Consider the set of all names of points in $\xx$:
\[{\rm Name}(\xx)=\{p\in\om^\om:(\exists x\in\xx)\;{\rm rng}(p)={\rm Nbase}(x)\}.\]

Essentially, ${\rm Name}(\xx)$ is the domain of an admissible representation of the space $\xx$.
For a pointclass $\Gamma$, we say that $\xx$ is {\em $\Gamma$-named} if ${\rm Name}(\xx)$ is $\Gamma$.

\begin{prop}[De Brecht \cite{debrecht6}]\label{prop:deBrecht-named}
A represented cb$_0$ space $\xx$ is quasi-Polish if and only if $\xx$ is $\mathbf{\Pi}^0_2$-named.
\end{prop}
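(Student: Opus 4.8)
The plan is to prove Proposition \ref{prop:deBrecht-named} by reducing it to De Brecht's characterization of quasi-Polish spaces as the $\mathbf{\Pi}^0_2$-subspaces of $\mathbb{S}^\om$ (\cite[Theorem 24]{debrecht6}), using the embedding $\nbaseem_\beta\colon\xx\hookrightarrow\mathcal{P}\om\cong\mathbb{S}^\om$ induced by a cb$_0$ representation $\beta$ of $\xx$. The key point is that the set ${\rm Name}(\xx)$ of names and the image $\nbaseem_\beta[\xx]\subseteq\mathbb{S}^\om$ are closely related: a point $P\in\mathcal{P}\om$ lies in $\nbaseem_\beta[\xx]$ iff $P$ is a valid coded neighborhood filter, and the map $p\mapsto{\rm rng}(p)$ sending a name to the set it enumerates is an open continuous surjection from ${\rm Name}(\xx)\subseteq\om^\om$ onto $\nbaseem_\beta[\xx]$, with the preimage of a $\mathbf{\Pi}^0_2$ set being $\mathbf{\Pi}^0_2$ and conversely.

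First I would fix the identification $\mathcal{P}\om\cong\mathbb{S}^\om$ given in the Remark, so that $\nbaseem_\beta[\xx]$ is a subspace of $\mathbb{S}^\om$ with the subspace topology matching the topology of $\xx$ via the embedding. Next I would establish the two directions. For the ``only if'' direction: if $\xx$ is quasi-Polish, then by De Brecht's theorem $\nbaseem_\beta[\xx]$ is (homeomorphic to) a $\mathbf{\Pi}^0_2$-subset $A$ of $\mathbb{S}^\om$; I then need to show ${\rm Name}(\xx)$ is $\mathbf{\Pi}^0_2$ in $\om^\om$. The point is that $p\in{\rm Name}(\xx)$ iff ${\rm rng}(p)\in A$; since $A$ is $\mathbf{\Pi}^0_2$ in $\mathbb{S}^\om$, write $A=\bigcap_n C_n$ with each $C_n$ constructible (a finite Boolean combination of subbasic sets $\{X:m\in X\}$), and observe that the condition ``$m\in{\rm rng}(p)$'' is open in $p$ (it is a $\mathbf{\Sigma}^0_1$ condition, being a countable union over witnesses $p(k)=m$), while ``$m\notin{\rm rng}(p)$'' is $\mathbf{\Pi}^0_1$; pulling back the Boolean combination $C_n$ through $p\mapsto{\rm rng}(p)$ gives a $\mathbf{\Pi}^0_2$ (indeed $\mathbf{\Delta}^0_2$) set, and the countable intersection stays $\mathbf{\Pi}^0_2$. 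For the ``if'' direction: if ${\rm Name}(\xx)$ is $\mathbf{\Pi}^0_2$ in $\om^\om$, I need $\nbaseem_\beta[\xx]$ to be $\mathbf{\Pi}^0_2$ in $\mathbb{S}^\om$; here I use that the map $r\colon p\mapsto{\rm rng}(p)$ is an open continuous surjection ${\rm Name}(\xx)\twoheadrightarrow\nbaseem_\beta[\xx]$ and exploit the fact that its fibers are ``uniformly nice'' — each fiber is the set of enumerations of a fixed countable set, a $\mathbf{\Pi}^0_2$ (in fact closed-in-the-relevant-sense) condition — so that $\mathbf{\Pi}^0_2$-ness descends. Alternatively, and more cleanly, one can argue: $\nbaseem_\beta[\xx]=r[{\rm Name}(\xx)]$, and since $r$ has a continuous section-like behavior on the positive information, $X\in\nbaseem_\beta[\xx]$ iff the canonical enumeration of $X$ (obtained computably from $X$ regarded as an element of $\mathbb{S}^\om$) lies in ${\rm Name}(\xx)$; but that canonical enumeration map $\mathbb{S}^\om\to\om^\om$ is continuous, so $\nbaseem_\beta[\xx]$ is the preimage of the $\mathbf{\Pi}^0_2$ set ${\rm Name}(\xx)$ under a continuous map, hence $\mathbf{\Pi}^0_2$. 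Then De Brecht's theorem gives that $\xx\cong\nbaseem_\beta[\xx]$ is quasi-Polish.

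The main obstacle I anticipate is the delicate point in the ``if'' direction: there is no genuinely continuous section of $r$ (a name carries more information than the set it enumerates — it carries an ordering), so one must be careful that the argument does not secretly require choosing a name continuously. The resolution is that one does not need a section of $r$ on all of $\mathbb{S}^\om$; one only needs it on the image, and on the image one can use a \emph{canonical} enumeration (enumerate the elements of $X$ in increasing order of first appearance, or simply list $X\cap\{0,1,\dots,n\}$ at stage $n$), which is computed continuously from $X\in\mathbb{S}^\om$ and always produces an element of ${\rm Name}(\xx)$ when $X\in\nbaseem_\beta[\xx]$, while producing something outside ${\rm Name}(\xx)$ when $X\notin\nbaseem_\beta[\xx]$ — the latter because ${\rm rng}$ of this canonical enumeration is exactly $X$, so it lands in ${\rm Name}(\xx)$ iff $X$ is a legitimate neighborhood filter. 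This makes $\nbaseem_\beta[\xx]$ literally the continuous preimage of ${\rm Name}(\xx)$, closing the argument. A secondary routine check is that the Borel hierarchy computations (open-ness of ``$m\in{\rm rng}(p)$'', stability of $\mathbf{\Pi}^0_2$ under finite Boolean operations applied levelwise and countable intersection) go through; these are standard and I would state them without belaboring the verification.
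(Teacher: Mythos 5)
Your ``only if'' direction is fine and is essentially the paper's argument: $\nbaseem[\xx]$ is $\mathbf{\Pi}^0_2$ in $\mathbb{S}^\om$ by de Brecht's theorem, and ${\rm Name}(\xx)={\rm rng}^{-1}[\nbaseem[\xx]]$ is $\mathbf{\Pi}^0_2$ because ${\rm rng}\colon\om^\om\to\mathbb{S}^\om$ is continuous (your levelwise Boolean-combination computation just spells this out). The gap is in your ``if'' direction, and specifically in the alternative you call cleaner. There is no continuous map $s\colon\mathbb{S}^\om\to\om^\om$ with ${\rm rng}(s(X))=X$: producing an enumeration of $X$ requires the negative information ``$m\notin X$'', which is not open in the Scott topology. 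More bluntly, any continuous map from $\mathbb{S}^\om$ into a $T_1$ space such as Baire space must preserve the specialization order, which on $\mathbb{S}^\om$ is inclusion; since $\emptyset\subseteq X$ for every $X$, such a map is constant. So ``list $X\cap\{0,\dots,n\}$ at stage $n$'' is not continuous (nor computable) from $X$ regarded as a point of $\mathbb{S}^\om$, only from a \emph{name} of $X$ --- and a name of $X$ in $\mathbb{S}^\om$ is already an enumeration, so at the realizer level your argument only re-expresses the hypothesis that ${\rm Name}(\xx)$ is $\mathbf{\Pi}^0_2$ and does not show that the image $\nbaseem[\xx]$ is $\mathbf{\Pi}^0_2$ \emph{in} $\mathbb{S}^\om$. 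Your first sketch has the same problem in softer form: ``$\mathbf{\Pi}^0_2$-ness descends along open continuous surjections with nice fibers'' is not a citable fact, and proving it is essentially the hard content of de Brecht's results.

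The repair is short and is exactly the paper's route, and you already have all the ingredients: since ${\rm Name}(\xx)$ is $\mathbf{\Pi}^0_2$ in the metrizable space $\om^\om$, it is $G_\delta$, hence Polish, hence quasi-Polish; the map ${\rm rng}\colon{\rm Name}(\xx)\to\nbaseem[\xx]$ is an open continuous surjection (openness because the image of a basic cylinder $[\sigma]\cap{\rm Name}(\xx)$ is the trace on $\nbaseem[\xx]$ of the basic Scott-open set determined by ${\rm rng}(\sigma)$); now invoke de Brecht's theorem that a $T_0$ space admitting an open continuous surjection from a Polish space is quasi-Polish (Fact \ref{fact:deBrecht-ocs}, \cite[Theorem 40]{debrecht6}) to conclude that $\nbaseem[\xx]$, and hence $\xx$, is quasi-Polish. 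In other words, do not try to establish $\mathbf{\Pi}^0_2$-ness of the image directly; conclude quasi-Polishness of the image from the open continuous surjection and let de Brecht's characterization do the rest.
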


\begin{proof}
Note that $\nbaseem:x\mapsto\nbase{x}$ is an embedding of $\xx$ into $\mathbb{S}^\om$.
If $\xx$ is quasi-Polish, then so is the homeomorphic image $\nbaseem[\xx]$.
By de Brecht \cite[Theorem 21]{debrecht6}, $\nbaseem[\xx]$ is $\mathbf{\Pi}^0_2$ in $\mathbb{S}^\om$.
Note that ${\rm Name}(\xx)={\rm rng}^{-1}[\nbaseem[\xx]]$.
Since ${\rm rng}:\om^\om\to\mathbb{S}^\om$ is clearly continuous, ${\rm Name}(\xx)$ is $\mathbf{\Pi}^0_2$, that is, $\xx$ is $\mathbf{\Pi}^0_2$-named.

Conversely, if $\xx$ is $\mathbf{\Pi}^0_2$-named, then ${\rm Name}(\xx)$ is Polish, and in particular, quasi-Polish.
Note that ${\rm rng}:{\rm Name}(\xx)\to{\rm Nbase}[\xx]$ is an open continuous surjection.
Hence, by de Brecht \cite[Theorem 40]{debrecht6}, ${\rm Nbase}[\xx]$ is quasi-Polish.
Consequently, $\xx$ is quasi-Polish since $\xx$ is homeomorphic to ${\rm Nbase}[\xx]$.
\end{proof}

\begin{remark}
Let $\delta$ be an admissible representation of a space $\xx$.
Then, consider
\[{\rm Eq}(\xx,\delta)=\{(p,q)\in\om^\om:p,q\in{\rm dom}(\delta)\mbox{ and }\delta(p)=\delta(q)\}.\]
Generally, de Brecht et al.\ \cite{dBScSe15} has studied the classification of spaces based on the complexity of ${\rm Eq}(\xx,\delta)$.
\end{remark}

The following fact is useful to show that a space is quasi-Polish.

\begin{fact}[De Brecht {\cite[Theorems 40 and 41]{debrecht6}}]\label{fact:deBrecht-ocs}
A $T_0$ space $\xx$ is quasi-Polish if and only if there is an open continuous surjection from a Polish space onto $\xx$.
\qed
\end{fact}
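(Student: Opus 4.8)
The statement to prove is Fact~\ref{fact:deBrecht-ocs}: a $T_0$ space $\xx$ is quasi-Polish if and only if there is an open continuous surjection from a Polish space onto $\xx$. Since this is attributed to de Brecht, I would prove it by combining the two cited results from \cite{debrecht6} (Theorems 40 and 41 there) together with the machinery already developed in this excerpt, rather than re-deriving everything from scratch. The plan is as follows.

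\textbf{Forward direction.} Suppose $\xx$ is quasi-Polish. By the characterization recalled just before the statement (de Brecht, \cite[Theorem 24]{debrecht6}), $\xx$ is homeomorphic to a $\mathbf{\Pi}^0_2$ subset of $\mathbb{S}^\om$; equivalently, by Proposition~\ref{prop:deBrecht-named}, when we fix a cb$_0$ representation $\beta$, the set ${\rm Name}(\xx)\subseteq\om^\om$ is $\mathbf{\Pi}^0_2$, hence Polish as a subspace of $\om^\om$ (a $\mathbf{\Pi}^0_2=G_\delta$ subset of a Polish space is Polish — here $\om^\om$ is genuinely metrizable, so $\mathbf{\Pi}^0_2$ and $G_\delta$ agree). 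The induced representation $\delta_\beta\colon{\rm Name}(\xx)\to\xx$ is then a continuous surjection from a Polish space onto $\xx$. The only thing left is openness: I would argue that ${\rm rng}\colon{\rm Name}(\xx)\to{\rm Nbase}[\xx]$ is open and continuous (this is essentially the observation used in the proof of Proposition~\ref{prop:deBrecht-named}: a basic open set of ${\rm Name}(\xx)$ is determined by finitely many constraints on which values appear in $p$, and its image is exactly the trace of a Scott-basic open set on ${\rm Nbase}[\xx]$), and then transport this through the homeomorphism $\nbaseem\colon\xx\to{\rm Nbase}[\xx]$ to get an open continuous surjection from the Polish space ${\rm Name}(\xx)$ onto $\xx$.

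\textbf{Reverse direction.} Suppose $f\colon\mathcal{P}\to\xx$ is an open continuous surjection with $\mathcal{P}$ Polish. A Polish space is in particular quasi-Polish, so this is exactly the situation of de Brecht \cite[Theorem 40]{debrecht6}, which states that quasi-Polishness is preserved under open continuous surjections (this is precisely how Proposition~\ref{prop:deBrecht-named} invoked it above: there the map ${\rm rng}\colon{\rm Name}(\xx)\to{\rm Nbase}[\xx]$ was shown to be an open continuous surjection from a Polish space). Applying that preservation result to $f$ yields that $\xx$ is quasi-Polish. The $T_0$ hypothesis on $\xx$ is what makes ``quasi-Polish'' meaningful here (quasi-Polish spaces are by convention $T_0$, and second-countability of $\xx$ follows from that of $\mathcal{P}$ via the open surjection).

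\textbf{Main obstacle.} The real content is entirely in the cited de Brecht results, so the only genuine work in writing this out is the openness claim in the forward direction — verifying carefully that the domain-of-admissible-representation map ${\rm rng}$ (or $\delta_\beta$) is open, and checking that the Polish structure on ${\rm Name}(\xx)$ is correctly inherited. I expect the openness verification to be the step needing the most care: one must check that images of the canonical basic opens $\{p : n_0,\dots,n_k\in{\rm rng}(p)\}\cap{\rm Name}(\xx)$ are relatively open in ${\rm Nbase}[\xx]\subseteq\mathbb{S}^\om$, which amounts to unwinding the definition of ${\rm Nbase}$ and the Scott topology. Everything else is a direct citation or a routine transfer along a homeomorphism.
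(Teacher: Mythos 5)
Your proposal is correct and matches the paper's treatment: the paper states this as a bare Fact with \qed, citing de Brecht's Theorems 40 and 41, and your reverse direction is exactly that citation while your forward direction legitimately reconstructs the other half from the machinery already used in the proof of Proposition \ref{prop:deBrecht-named} (Polishness of ${\rm Name}(\xx)$ as a $\mathbf{\Pi}^0_2$ subset of $\om^\om$, openness of ${\rm rng}$ onto ${\rm Nbase}[\xx]$, and transfer along the embedding $\nbaseem$). The openness step you flag as the main obstacle is indeed the only point needing verification, and it goes through as you describe: the image of a basic open $[\sigma]\cap{\rm Name}(\xx)$ is the trace on ${\rm Nbase}[\xx]$ of the Scott-basic open set determined by ${\rm rng}(\sigma)$.
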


Conversely, de Brecht has generalized the Hurewicz dichotomy from Polish to quasi-Polish spaces in \cite{debrecht8} to yield the following:

\begin{theorem}[De Brecht {\cite[Theorem 7.2]{debrecht8}}]\label{thm:Hurewicz-dichotomy}
A $\Pi^1_1$-subspace of a quasi-Polish is not quasi-Polish if and only if it contains a homeomorphic copy of one of the following spaces as a $\Pi^0_2$-subspace:
\begin{enumerate}
\item $\mathbb{Q}$ with the subspace topology inherited from $\mathbb{R}$
\item $\om_{\rm cof}$, the integers with the cofinite topology
\item $\omega_<$, the lower integers
\item $S_0$, with underlying set $\om^{<\om}$ equipped with the lower topology, where the basic closed sets are of the form $\uparrow p := \{q \in \om^{<\om} \mid p \preceq q\}$ (here $\preceq$ denotes the prefix relation)
\end{enumerate}
\end{theorem}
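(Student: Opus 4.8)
The plan is to treat the two implications separately.

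For the direction ``$\Leftarrow$'': a $\mathbf{\Pi}^0_2$-subspace of a quasi-Polish space is quasi-Polish, because by \cite{debrecht6} a quasi-Polish space is homeomorphic to a $\mathbf{\Pi}^0_2$ subset of $\mathbb{S}^\om$, and a $\mathbf{\Pi}^0_2$ subset of a $\mathbf{\Pi}^0_2$ subset of $\mathbb{S}^\om$ is again $\mathbf{\Pi}^0_2$ in $\mathbb{S}^\om$. On the other hand none of $\mathbb{Q}$, $\om_{\rm cof}$, $\om_<$, $S_0$ is quasi-Polish (the first two are meager in themselves while quasi-Polish spaces are Baire; for the last two see \cite{debrecht8}). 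So a quasi-Polish $A$ cannot contain any of the four spaces as a $\mathbf{\Pi}^0_2$-subspace, which is the contrapositive of ``$\Leftarrow$''.

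For ``$\Rightarrow$'', let $\xx$ be quasi-Polish with a cb$_0$ representation $\beta$ and let $A\subseteq\xx$ be $\Pi^1_1$ and not quasi-Polish; the first step is to recast this as a complexity statement about names. Regarding $A$ as a represented cb$_0$ space with the inherited subbasis --- so that $\nbase{x}$ is the same whether computed in $A$ or in $\xx$ --- Proposition \ref{prop:deBrecht-named} gives that $\name{\xx}$ is $\mathbf{\Pi}^0_2$ in $\om^\om$ and that $A$ is quasi-Polish if and only if $\name{A}$ is $\mathbf{\Pi}^0_2$. Since $\name{A}=\mathrm{rng}^{-1}[\nbaseem[A]]$, since $\nbaseem[A]$ is $\Pi^1_1$ in $\mathbb{S}^\om$, and since $\mathrm{rng}\colon\om^\om\to\mathbb{S}^\om$ is continuous, $\name{A}$ is $\Pi^1_1$ in $\om^\om$. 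Thus the hypothesis says exactly that $\name{A}$ is a $\Pi^1_1$, non-$\mathbf{\Pi}^0_2$ subset of the Polish space $\name{\xx}$. Running the usual Hurewicz-type argument on $\name{A}$ --- either by invoking the classical metrizable dichotomy, which applies since $\name{\xx}$ is Polish, or directly via Gandy--Harrington forcing --- yields a Cantor set $K\subseteq\name{\xx}$ with $H:=K\cap\name{A}$ countable and dense in $K$; so $H\subseteq\name{A}$ is homeomorphic to $\mathbb{Q}$, and $K\setminus H$ to the irrationals.

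The decisive step is to transport $H$ into $A$ along the open continuous surjection $\mathrm{rng}\colon\name{\xx}\to\nbaseem[\xx]$ (Proposition \ref{prop:deBrecht-named}). I would analyse $\mathrm{rng}$ on $K$ together with the preorder $p\preceq_\ast q\iff\mathrm{rng}(p)\subseteq\mathrm{rng}(q)$ obtained by pulling back the specialization order $\subseteq$ of $\mathbb{S}^\om$ --- on $\name{\xx}$ this is the pullback of the specialization preorder of $\xx$. A Cantor--Bendixson / Galvin--Prikry analysis of the Cantor set $K$ relative to $\preceq_\ast$ and to the dense set $H$ should allow one to pass to a subcube on which $\mathrm{rng}$ behaves uniformly: either $\mathrm{rng}|_H$ is injective, so $\mathrm{rng}[H]\cong\mathbb{Q}$; or $\mathrm{rng}$ identifies points of $H$ in one of exactly three canonical patterns, whose image --- read off from $\beta$ --- is homeomorphic to $\om_{\rm cof}$, to $\om_<$, or to $S_0$. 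That no further alternative occurs reflects a classification of the minimal non-quasi-Polish countable $T_0$ spaces. Finally one checks that the copy obtained is $\mathbf{\Pi}^0_2$ in $A$; it suffices that it is $\mathbf{\Pi}^0_2$ in $\xx$, which holds since it is cut out by the Cantor scheme.

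The main obstacle is exactly this last step: forcing the open continuous image of the Hurewicz witness to realise precisely the four listed spaces --- neither fewer (each must be shown unavoidable in its regime, which needs a genuine combinatorial extraction along the Cantor scheme, not merely a complexity count) nor more (here the classification of minimal non-quasi-Polish countable $T_0$ spaces is used) --- while keeping each witness a $\mathbf{\Pi}^0_2$-subspace. A technically equivalent but more self-contained route forgoes both the classical dichotomy and the push-down and instead builds an unfolded Cantor scheme of basic open conditions directly inside $\mathbb{S}^\om\supseteq\xx$, using the failure of $\mathbf{\Pi}^0_2$-ness of $A$ to keep the scheme splitting; the four test spaces then arise according to how the chosen basic open sets nest in the Scott topology, and the four-way case analysis is again the crux. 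That route additionally needs that a quasi-Polish subspace of a quasi-Polish space is $\mathbf{\Pi}^0_2$ (the analogue for quasi-Polish spaces of the fact that a Polish subspace of a Polish space is $G_\delta$), so that the two phrasings of the hypothesis coincide.
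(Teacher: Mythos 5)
First, note that the paper does not prove this statement at all: it is imported verbatim from de Brecht \cite[Theorem 7.2]{debrecht8} and used as a black box (only the cases $\mathbb{Q}$ and $\om_{\rm cof}$ are ever invoked). So there is no in-paper argument to compare yours against; the question is only whether your blind sketch would stand on its own, and it does not.

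The gap is in the forward direction, and it is exactly where you flag ``the main obstacle'': everything after the classical Hurewicz step is a hope, not an argument. Two specific points fail. (i) From ``$\mathrm{rng}|_H$ injective'' you conclude $\mathrm{rng}[H]\cong\mathbb{Q}$, but the restriction of an open continuous map to a subset need not be open onto its image, so the subspace topology on $\mathrm{rng}[H]$ inside $\xx$ can be strictly coarser than the metrizable one upstairs --- indeed this is precisely the mechanism by which the non-metrizable witnesses $\om_{\rm cof}$, $\om_<$, $S_0$ arise, so the dichotomy ``injective $\Rightarrow\mathbb{Q}$, else one of three identification patterns'' is not available for free. (ii) The claim that only ``three canonical patterns'' occur is justified by appeal to ``a classification of the minimal non-quasi-Polish countable $T_0$ spaces'' that you neither state nor prove; but producing exactly these four spaces (and no fifth alternative), via a Cantor/Hurewicz-type scheme of basic open sets in the Scott domain with a four-way case split, \emph{is} the content of de Brecht's theorem, not a lemma one can cite into a blind proof of it. Likewise the $\mathbf{\Pi}^0_2$-ness of the produced copy does not follow from being ``cut out by the Cantor scheme'': the scheme lives in the name space, and images under $\mathrm{rng}$ of $\mathbf{\Pi}^0_2$ sets need not be $\mathbf{\Pi}^0_2$; in de Brecht's argument the embedding and its complexity are built directly, not transported. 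A smaller remark on the easy direction: the Baire-category reason you give covers $\mathbb{Q}$ and $\om_{\rm cof}$, but $\om_<$ is in fact a Baire space (every nonempty open set contains the minimal point), so non-quasi-Polishness of $\om_<$ and $S_0$ genuinely needs the separate lemmas you defer to \cite{debrecht8} --- acceptable as a citation, but it underlines that the backward direction too is not self-contained.
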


Of these, we make use of $\mathbb{Q}$ and $\om_{\rm cof}$ to show that particular spaces are not quasi-Polish. Recently, also a definition of what a \emph{computable quasi-Polish space} should be \cite{paulydebrecht4,stull}. We do not need it for our purposes, but we shall point out that whenever we are arguing that a particular space is quasi-Polish, it will already be computably quasi-Polish.

\section{Enumeration degree zoo}\label{sec:zoo-list}


\subsection{Definitions and overview}\label{sec:deg-examples}

In this section, we focus on the degree structures of second-countable spaces.
Our objective of this section is to see that general topology is surprisingly useful for investigating the enumeration degrees.

\subsubsection{The degree structure of a space}

We now introduce one of the key notions in this article.
To each point $\pt{x}{\xx}$, we assign the {\em degree of difficulty of calling a name of $x$}.

\begin{definition}[see Kihara-Pauly \cite{KP}]
Let $\pt{x}{\xx}$ and $\pt{y}{\yy}$ be points.
Then, we define
\[\pt{x}{\xx}\reduce\pt{y}{\yy}\overset{\rm def}{\iff}\nameb{\xx}{x}\leq_M\nameb{\yy}{y}.\]
\end{definition}

One can see that if $\xx$ and $\yy$ are represented cb$_0$ spaces, then
\[\name{x}\leq_M\name{y}\iff\nbase{x}\leq_e\nbase{y}\]

Therefore, reducibility between points can be defined in the following manner:
\[\pt{x}{\xx}\reduce\pt{y}{\yy} \iff \nbaseb{\xx}{x}\leq_e\nbaseb{\yy}{y}\]


We now describe how we classify the $e$-degrees by using topological notions.
Let $\xx$ be a represented cb$_0$ space.
We say that an enumeration degree $\mathbf{d}$ is an {\em $\xx$-degree} if $\nbase{x}\in\mathbf{d}$ for some $x\in\xx$.
By $\mathcal{D}_\xx$, we denote the set of all $\xx$-degrees.
In other words,
\[\mathcal{D}_\xx=\{\deg_e(\nbaseb{\xx}{x}):x\in\xx\}.\]

A key observation is that every represented cb$_0$ space determines a subset $\mathcal{D}_\xx$ of the $e$-degrees $\mathcal{D}_e$.

\begin{example}
Cantor space $2^\omega$, Baire space $\omega^\omega$, Euclidean $n$-space $\mathbb{R}^n$, and Hilbert cube $[0,1]^\om$ are represented in a standard manner.
\begin{enumerate}
\item If $X\in\{2^\om,\om^\om,\mathbb{R}^n\}$, then $\mathcal{D}_X$ is exactly the total degrees $\mathcal{D}_T$ (for the definition, see \ref{sec:en-deg-zoo}). \index{Degrees!total}
\item $\mathcal{D}_{[0,1]^\om}$ exactly the continuous degrees $\mathcal{D}_r$ (see Miller \cite{miller2}). \index{Degrees!continuous}
\item Let $\mathbb{R}_<$ be the real numbers endowed with the lower topology, and represented by $\beta_e=(q_e,\infty)$, where $q_e$ is the $e$-th rational.
Then, $\mathcal{D}_{\mathbb{R}_<}$ is exactly the semirecursive $e$-degrees (see Kihara-Pauly \cite{KP}). \index{Degrees!semi-recursive}
\end{enumerate}
\end{example}

For (1), Kihara-Pauly \cite{KP} showed that total $e$-degrees are characterized by countable dimensionality, that is, a separable metrizable space $X$ is countable dimensional iff, for any representation $\beta$ of $X$, there is an oracle $C$ such that every $(X,\beta)$-degree is total relative to $C$.
For (2), one can also easily see that metrizability captures continuous degrees by universality of the Hilbert cube.
These are what we indicated by our slogan ``{\em utilizing general topology to classify $e$-degrees}''.


In classical computability theory, Medvedev \cite{medvedev} introduced the notion of quasi-minimality.
An $e$-degree ${\bf a}$ is {\em quasi-minimal} if for every total degree ${\bf b}\leq_e{\bf a}$, we have ${\bf b}={\bf 0}$.
It is equivalent to saying that there is $A\in\mathbf{d}$ such that
\[(\forall x\in 2^\om)\;[\nbaseb{2^\om}{x}\leq_eA\;\Longrightarrow\;\nbaseb{2^\om}{x}\mbox{ is c.e.}]\]
We introduce a topological version of quasi-minimality.

\begin{definition}
Let $\mathcal{T}$ be a collection of represented cb$_0$ spaces. \index{quasi-minimal}
We say that an $e$-degree $\mathbf{d}$ is {\em $\mathcal{T}$-quasi-minimal} if there is $A\in\mathbf{d}$ such that
\[(\forall\xx\in\mathcal{T})(\forall x\in\xx)\;[\nbaseb{\xx}{x}\leq_eA\;\Longrightarrow\;\nbaseb{\xx}{x}\mbox{ is c.e.}]\]
\end{definition}

\subsubsection{Enumeration degree zoo}\label{sec:en-deg-zoo}

Our aim of this section is to investigate $\xx$-degrees for specific represented cb$_0$-spaces $\xx$.
Surprisingly, we will see that for most $\xx$, the $\xx$-degrees have very simple descriptions.

A set $A$ is {\em total} if $\cmp{A}\leq_eA$,\index{Degrees!total} and {\em cototal} \cite{cototal,Jea17} if $A\leq_e\cmp{A}$.\index{Degrees!cototal}
For a total function $g:\om\to\om$ and $b\in\om$, we define the graph ${\rm Graph}(g)$, the {\em cylinder-graph} ${\rm CGraph}(g)$, and the {\em $b$-telograph} ${\rm TGraph}(g)$ of $g$ as follows:
\begin{align*}
{\rm Graph}(g)&=\{\langle n,m\rangle:g(n)=m\},\\
{\rm CGraph}(g)&=\{\sigma\in\om^{<\om}:\sigma\prec g\},\\
{\rm TGraph}_b(g)&=\{\langle n,m\rangle:g(n)=m\mbox{ and }m\geq b\}.
\end{align*}

\begin{definition}\label{def:graph-cototal}
Let $\mathbf{a}$ be an enumeration degree.
\begin{enumerate}
\item We say that $\mathbf{a}$ is {\em graph-cototal} \cite{cototal,solon} if $\mathbf{a}$ contains the complement $\cmp{{\rm Graph}(g)}$ of the graph of a total function $g$. \index{Degrees!graph-cototal}
\item We say that $\mathbf{a}$ is {\em cylinder-cototal} if $\mathbf{a}$ contains the complement $\cmp{{\rm CGraph}(g)}$ of the cylinder graph of a total function $g$. \index{Degrees!cylinder-cototal}
We also say that $\mathbf{a}$ is {\em $n$-cylinder-cototal} if it is the join of $n$ many cylinder-cototal $e$-degrees. \index{Degrees!$n$-cylinder-cototal}
\item We say that $\mathbf{a}$ is {\em telograph-cototal} if $\mathbf{a}$ contains the join $\cmp{{\rm Graph}(g)}\oplus{\rm TGraph}_b(g)$ for some total function $g:\om\to\om$ and $b\in\om$. \index{Degrees!telograph-cototal}
\end{enumerate}
\end{definition}

Recall that a subset of $\om$ is {\em $d$-c.e.}\ if it is the difference of two c.e.~sets, and {\rm co-$d$-c.e.}\ if it is the complement of a d-c.e.\ set, that is, the union $A\cup P$ of a c.e.\ set $P$ and co-c.e.\ set $A$ such that $A$ and $P$ are disjoint.
Note that an enumeration degree contains a co-$d$-c.e.~set if and only if it contains a $3$-c.e.~set.

\begin{definition}\label{def:co-dCEA}
Let $\mathbf{a}$ be an enumeration degree.
\begin{enumerate}
\item We say that $\mathbf{a}$ is {\em co-d-CEA} if $\mathbf{a}$ contains a set of the form $(X\oplus\cmp{X})\oplus(A\cup P)$ for some $X,A,P\subseteq\om$ such that $P$ and $\cmp{A}$ are $X$-c.e., and $A$ and $P$ are disjoint.\index{Degrees!co-d-CEA}
\item Generally, we say that $\mathbf{a}$ is {\em $\Gamma$-above} if $\mathbf{a}$ contains a set of the form $(X\oplus\cmp{X})\oplus Z$ such that $Z$ is $\Gamma$ in $X$.\index{Degrees!$\Gamma$-above}
\item We say that $\mathbf{a}$ is {\em doubled co-d-CEA} if $\mathbf{a}$ contains a set of the form
\[(X\oplus\cmp{X})\oplus(A\cup P)\oplus(B\cup N)\]
for some $X,A,B,P,N\subseteq \om$ such that $P$, $N$, and $\cmp{A\cup B}$ are $X$-c.e., and that $A$, $B$, $P$ and $N$ are pairwise disjoint.\index{Degrees!doubled co-d-CEA}
\end{enumerate}
\end{definition}

\begin{figure}[t]
{\small
\[
\xymatrix{
& \mbox{total} \ar [dl]  \ar [dr]  &   \\
\mbox{cylinder-cototal} \ar [d] &  & \mbox{co-d-CEA} \ar  [d]  \\
\mbox{$n$-cylinder-cototal} \ar  [d] &   & \mbox{doubled co-d-CEA} \ar  [d] ^{\text{(Prop. \ref{prop:doubleorigin-is-cototal})}} \\
\mbox{$(n+1)$-cylinder-cototal} \ar  [dr]  &  & \mbox{telograph-cototal} \ar  [dl] ^{\text{(Prop. \ref{prop:telophase-is-graph-cototal})}}   \\
& \mbox{graph-cototal} &
}
\]
}
\caption{A zoo of enumeration degrees I}\index{Degrees!total}\index{Degrees!cylinder-cototal}\index{Degrees!co-d-CEA}\index{Degrees!$n$-cylinder-cototal}\index{Degrees!doubled co-d-CEA}\index{Degrees!telograph-cototal}\index{Degrees!graph-cototal}
\label{fig:1}
\end{figure}

In Sections \ref{sec:Arens-square} and \ref{section:Roy-space}, we will introduce further variants of co-$d$-CEA degrees.
We will see that a co-$d$-CEA $e$-degree can be described using a Medvedev degree of separability.
Given $S,A,B\subseteq\om$, consider the following notions:
\begin{align*}
{\rm Enum}(S)&=\{p\in\om^\om:{\rm rng}(p)=S\},\\
{\rm Sep}(A,B)&=\{C\subseteq\om:A\subseteq C\mbox{ and }B\cap C=\emptyset\}.
\end{align*}

Note that an enumeration degree $\mathbf{a}$ is total if and only if $\mathbf{a}$ contains a set $S$ such that ${\rm Enum}(S)$ is Medvedev equivalent to $X\oplus\cmp{X}\oplus{\rm Sep}(A,B)$ for some $X,A,B\subseteq\om$ such that $A$ and $B$ are disjoint and $X$-co-c.e.
\begin{definition}\label{def:separation-above}
An enumeration degree $\mathbf{a}$ is {\em $[\Gamma_0,\Gamma_1;\Gamma_2]$-separating-above} ($[\Gamma_0,\Gamma_1;\Gamma_2]$-SepA) if $\mathbf{a}$ contains a set $S$ such that ${\rm Enum}(S)$ is Medvedev equivalent to $X\oplus\cmp{X}\oplus{\rm Sep}(A,B)$ for some $X,A,B\subseteq\om$ such that $A$ and $B$ are disjoint, $A\in\Gamma_0^{X}$, $B\in\Gamma_1^X$, and $A\cup B\in\Gamma_2^X$.
\end{definition}
In this terminology, an enumeration degree $\mathbf{a}$ is total if and only if $\mathbf{a}$ is $[\Pi^0_1,\Pi^0_1;\Pi^0_1]$-SepA.
We use $\ast$ to denote the pointclass containing all sets.
Then, for instance, we will see that an enumeration degree $\mathbf{a}$ is co-$d$-CEA if and only if $\mathbf{a}$ is $[\ast,\Pi^0_1;\Pi^0_1]$-SepA.

\begin{figure}[t]
{\small
\[
\xymatrix{
\mbox{Arens co-$d$-CEA}
&
\mbox{total} \ar [d] \ar [r]
&
\mbox{continuous} \ar @/^8pc/ [rdddddll]
 \\
\mbox{$[\ast,\Pi^0_1;\Pi^0_1]$-SepA} \ar [d] \ar @{<->} [r] _{\text{\quad\quad (Prop. \ref{prop:irregular-lattice-degree2})}}
& \mbox{co-$d$-CEA} \ar  [d] \ar[ul] \ar[dr]
&  \\
\mbox{$[\ast,\ast;\Pi^0_1]$-SepA} \ar @{<->} [d] _{\text{(Thm. \ref{thm:telophase-degree})}}
& \mbox{Roy halfgraph-above} \ar [d]
& \mbox{$\Delta^0_2$-above} \ar  [d] \\
\mbox{telograph-cototal} \ar [dr] _{\text{(Prop. \ref{prop:telophase-is-graph-cototal})}}
& \mbox{doubled co-$d$-CEA} \ar [l]
& \mbox{$\Sigma^0_2$-above} \ar  [dl] \\
& \mbox{graph-cototal} \ar [d]
&  \\
& \mbox{cototal} &
}
\]
}\index{Degrees!Arens co-d-CEA}\index{Degrees!total}\index{Degrees!continuous}\index{Degrees!co-d-CEA}\index{Degrees!Roy halfgraph-above}\index{Degrees!$\Delta^0_2$-above}\index{Degrees!telograph-cototal}\index{Degrees!doubled co-d-CEA}\index{Degrees!$\Sigma^0_2$-above}\index{Degrees!graph-cototal}\index{Degrees!cototal}
\caption{A zoo of enumeration degrees II}
\label{fig:2}
\end{figure}
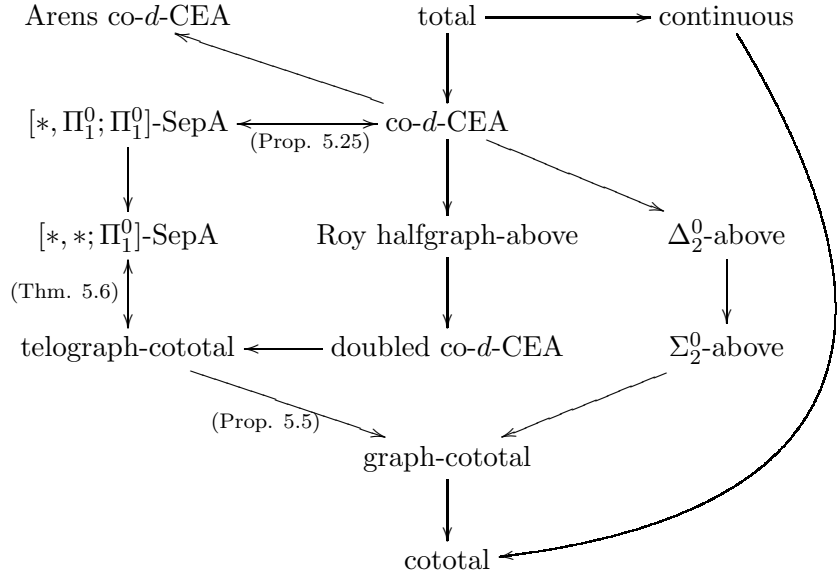

We will identify the above classes of $e$-degrees as the degree structures of certain second-countable $T_0$ spaces.

\begin{table}[h]\footnotesize
  \begin{tabular}{l|l} \hline \hline
    co-d-CEA & the $\om$-power of the irregular lattice space (submetrizable) \\\index{Degrees!co-d-CEA}\index{Space!irregular lattice space}
    Arens co-$d$-CEA & the $\om$-power of the quasi-Polish Arens square ($T_{2.5}$) \\\index{Degrees!Arens co-d-CEA}\index{Space!Arens square}
    Roy halfgraph-above & the $\om$-power of the quasi-Polish Roy space ($T_{2.5}$) \\\index{Degrees!Roy halfgraph-above}\index{Space!Roy}
    doubled co-d-CEA & the $\om$-power of the double origin space $(T_2)$ \\ \index{Degrees!doubled co-d-CEA}\index{Space!double origin}
    telograph-cototal & the $\om$-power of the telophase space $(T_1)$ \\ \index{Degrees!telograph-cototal}\index{Space!telophase}
    $n$-cylinder-cototal & the $n$-power of the cocylinder topology on $\om^\om$ $(T_1)$ \\\index{Degrees!$n$-cylinder-cototal}\index{Space!cocylinder topology}
    graph-cototal & the $\om$-power of the cofinite topology on $\om$ ($T_1$) \index{Degrees!graph-cototal}\index{Space!cofinite topology} \\
    \hline \hline
  \end{tabular}
\end{table}

For non-implications, there is a co-d-CEA $e$-degree which is not cylinder-cototal (Proposition \ref{prop:non-cylinder-cototal}), there is a cylinder-cototal $e$-degree which is quasi-minimal w.r.t.~telograph-cototal $e$-degrees (Theorem \ref{telograph-quasiminimal2}), there is a telograph-cototal $e$-degree which is quasi-minimal w.r.t.~doubled co-d-CEA $e$-degrees (Theorem \ref{thm:T_2-quasiminimal}), there is a quasi-minimal co-d-CEA $e$-degree (see Cooper \cite{CooperE}; Theorem \ref{irrelatti-quasi-minimal}), there is a doubled co-d-CEA $e$-degree which is not co-d-CEA (Proposition \ref{prop:proper-doubled}) and for any $n$, there is an $(n+1)$-cylinder-cototal $e$-degree which is not $n$-cylinder-cototal (Theorem \ref{thm:coBaire-hierarchy}).
We also show that there is a continuous degree which is neither telograph-cototal nor cylinder-cototal (Theorem \ref{thm:continuous-cospec}).

Andrews et al.\ \cite{cototal} showed that every $\Sigma^0_2$ $e$-degree is graph-cototal, while we will see that there is a $\Sigma^0_2$ $e$-degree which is quasi-minimal w.r.t.\ telograph-cototal $e$-degrees (Theorem \ref{telograph-quasiminimal1}).


\subsection{Degrees of points: $T_0$-topology}
\label{sec:deg-point-T0}

Let $\mathbb{R}_<$ be the space of all reals equipped with the lower topology generated by $\rho_<=((q,\infty):q\in\mathbb{Q})$.
Note that $\mathbb{R}_<$ is a $T_0$ space which is not $T_1$.\index{Space!lower reals}
Fixing a bijection $e\mapsto q_e:\om\to\mathbb{Q}$ gives us a representation of $\mathbb{R}_<$ by setting $\beta_e=(q_e,\infty)$.
Then, the coded neighborhood filter of $x$ is given by $\nbase{x}=\{e\in\om:q_e<x\}$.
For notational simplicity, hereafter we fix a standard effective indexing $e\mapsto q_e$, and always assume that every $q\in\mathbb{Q}$ is coded by a natural number without explicitly mentioning $e\mapsto q_e$.
Then, for a point $x\in\mathbb{R}_<$, the coded neighborhood filter of $x$ is just given as follows:
\[\nbaseb{\mathbb{R}_<}{x}=\{q\in\mathbb{Q}:q<x\}.\]

For notational simplicity, for a given $x\in\mathbb{R}$, we assume that $\nbase{x}$ always means $\nbaseb{\mathbb{R}}{x}$, and $\nbaseb{<}{x}$ always means $\nbaseb{\mathbb{R}_<}{x}$.
We say that a real $x\in\mathbb{R}$ is {\em left-c.e.}\ if $\nbaseb{<}{x}$ is c.e.
Similarly, a real $x$ is {\em right-c.e.}\ if $\nbaseb{<}{-x}$ is c.e. Recall that $A \subseteq \om$ is  {\em semirecursive} \cite{Joc68}, if there is a computable function $s : \om \times \om \to \{0,1\}$ such that if $\{n_0,n_1\} \cap A \neq \emptyset$, then $n_{s(n_0,n_1)} \in A$.
As pointed out by Kihara-Pauly \cite{KP},  the $\mathbb{R}_<$-degrees are exactly the semirecursive $e$-degrees .\index{Degrees!semi-recursive}
In other words:
\[\mathcal{D}_{\mathbb{R}_<}=\{\mathbf{d}\in\mathcal{D}_e:\mathbf{d}\mbox{ is semirecursive}\}.\]

In Section \ref{section:t0-nt1-quasiminimality-statements}, we investigate how $\mathbb{R}_<$-degrees behave.
Moreover, for instance, in Section \ref{sec:T-1-quasiminimal}, we will see that almost no $\mathbb{R}_<$-degrees are realized by a point in a $T_1$ space.
Thus, the results on $\mathbb{R}_<$-degrees describe the behavior of typical non-$T_1$-degrees.


\subsection{Degrees of points: $T_D$-topology}

A topological space is $T_D$ if every singleton can be written as the intersection of an open set and a closed set (see \cite{AT61}).\index{Separation axioms!$T_D$}
Note that a space is $T_D$ iff every singleton is $\mathbf{\Delta}^0_2$ in the sense of the non-metrizable Borel hierarchy \cite{debrecht6,Seliv06}:
A set in a space is $\mathbf{\Pi}^0_2$ if it is the union of countably many constructible sets, where a constructible set (in the sense of classical algebraic geometry) is a finite Boolean combination of open sets.
Note that $\mathbf{\Delta}^0_2$ (i.e., $\mathbf{\Sigma}^0_2\cap\mathbf{\Pi}^0_2$) is not necessarily equal to $F_\sigma\cap G_\delta$ (see Section \ref{sec:G-delta-space}).
Recall that a space is $T_1$ if every singleton is closed.

\begin{prop}\label{prop:T_D}
For every represented, $T_D$, cb$_0$ space $\xx$, there is a represented $T_1$, cb$_0$ space $\yy$ whose degree structure is the same as that of $\xx$, i.e., $\mathcal{D}_\xx=\mathcal{D}_\yy$.
\end{prop}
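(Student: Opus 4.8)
The plan is to realize $\yy$ as a ``doubling'' of $\xx$ that upgrades each singleton from $\mathbf{\Delta}^0_2$ to closed while leaving coded neighborhood filters $e$-equivalent. Concretely, since $\xx=(X,\beta)$ is $T_D$, for each $x\in X$ we may write $\{x\}=U_x\cap C_x$ with $U_x$ open and $C_x$ closed. The key structural fact I would use is that in a second-countable $T_D$ space one can encode, alongside the usual ``positive'' information $\nbase{x}=\{e:x\in\beta_e\}$, also the single bit of ``negative'' information needed to pin down $x$ among the points of the basic open set isolating it — and this extra bit is already $e$-computable from $\nbase{x}$ because $\{x\}$ being $\mathbf{\Delta}^0_2$ means $X\setminus\{x\}$ is $\mathbf{\Sigma}^0_2$, i.e. a countable union of constructible (finite Boolean combinations of basic open) sets, hence enumerable from $\nbase{x}$. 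So the idea is: take $Y=X$ as a set, and let $\yy$ carry the topology generated by $\beta$ together with the new open sets $X\setminus\{x\}$ for each $x$ — equivalently, take the coarsest $T_1$ topology refining $\tau_\xx$. This is second countable because each $X\setminus\{x\}$ is already $\mathbf{\Sigma}^0_2$ in $\tau_\xx$, so only countably many genuinely new basic sets (the constructible pieces witnessing the $\mathbf{\Sigma}^0_2$ complements) need to be added; I would enumerate these as a new subbasis $\gamma$.

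The steps, in order. First, fix for each point a $T_D$-witness and, using second-countability, observe that $X\setminus\{x\}=\bigcup_n K^x_n$ with each $K^x_n$ constructible in $\beta$; collect all the constructible sets $K^x_n$ arising this way — there are only countably many distinct finite Boolean combinations of basic open sets — and let $\gamma$ enumerate $\beta$ together with these. Second, check $(Y,\gamma)$ is $T_1$: given distinct $x,y$, since $\{y\}$ is $\mathbf{\Delta}^0_2$ some constructible $K$ in the decomposition of $X\setminus\{y\}$ contains $x$, and $K$ (being a Boolean combination of $\beta$-opens) is $\gamma$-open, so $x$ has a $\gamma$-neighborhood missing $y$; symmetrically for $y$. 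Third — the heart of the matter — verify $\nbaseb{\gamma}{x}\equiv_e\nbaseb{\beta}{x}$ uniformly: $\nbaseb{\beta}{x}\leq_e\nbaseb{\gamma}{x}$ is trivial since $\beta\subseteq\gamma$; for the converse, a constructible set $K$ is a finite Boolean combination of basic opens, so ``$x\in K$'' is decided by finitely many positive facts ``$x\in\beta_e$'' and finitely many negative facts ``$x\notin\beta_e$'', and the negative facts are themselves $e$-enumerable from $\nbase{x}$ precisely because $X\setminus\{x'\}$ is $\mathbf{\Sigma}^0_2$ for the relevant $x'$ — one unwinds that membership of $x$ in a basic open set's complement is an enumerable event via the $T_D$-decompositions of the finitely many points involved. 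Fourth, conclude $\mathcal{D}_\yy=\{\deg_e(\nbaseb{\gamma}{x}):x\in Y\}=\{\deg_e(\nbaseb{\beta}{x}):x\in X\}=\mathcal{D}_\xx$, and note the construction is effective if $\xx$ is, so if $\xx$ is a represented cb$_0$ space then so is $\yy$.

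The main obstacle is the third step: making precise and uniform the claim that the ``negative'' membership bits ``$x\notin\beta_e$'' used in evaluating a constructible set are $e$-computable from $\nbase{x}=\nbaseb{\beta}{x}$ alone. The subtlety is that $x\notin\beta_e$ is not in general an enumerable fact about $\nbase{x}$ — it is so exactly because of $T_D$-ness, and one must be careful that the constructible sets $K^x_n$ in the decomposition of $X\setminus\{x\}$ are chosen so that their Boolean structure interacts correctly: one wants, for each pair $(e,x)$, that either $x\in\beta_e$ (witnessed positively) or there is some constructible $K$ in the family with $x\in K\subseteq X\setminus\beta_e$ (witnessed positively in $\gamma$), giving an enumeration of the ``$x\notin\beta_e$'' events. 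Securing this may require closing the family $\gamma$ under a few more constructible sets — e.g. including, for each basic $\beta_e$ and each $x\notin\beta_e$, a constructible piece separating them, which is available since $\{x\}$ is $\mathbf{\Delta}^0_2$ hence $X\setminus\{x\}$ is $\mathbf{\Sigma}^0_2$ and $\beta_e\subseteq X\setminus\{x\}$ is then covered by constructible sets inside $X\setminus\{x\}$. Once the bookkeeping of which countably many constructible sets to throw into $\gamma$ is pinned down, everything else is routine.
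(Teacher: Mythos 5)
There is a genuine gap, and it sits exactly where you flagged the "main obstacle": your third step. The claim that the negative facts ``$x\notin\beta_e$'' needed to evaluate membership in a constructible set are enumerable from $\nbaseb{\beta}{x}$ \emph{because} $X\setminus\{x\}$ is $\mathbf{\Sigma}^0_2$ is a non sequitur: the $T_D$ hypothesis is a purely topological complexity bound and produces no enumeration operator. In general ``$x\notin\beta_e$'' is not an enumerable event relative to an enumeration of $\nbase{x}$ (this is precisely why $T_1$-degrees, e.g.\ telograph-cototal or cylinder-cototal ones, need not be total: adding complements of basic opens wholesale lands you at $\nbase{x}\oplus\cmp{\nbase{x}}$). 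Your proposed repair --- closing $\gamma$ under further constructible sets $K$ with $x\in K\subseteq X\setminus\beta_e$ --- is circular, since membership of $x$ in those new sets must itself be enumerable from $\nbaseb{\beta}{x}$, which is the very point at issue; nothing in $T_D$-ness supplies such reductions, so $\nbaseb{\gamma}{x}\leq_e\nbaseb{\beta}{x}$ is unjustified. There is also a second, independent gap in your countability bookkeeping: constructible sets are finite Boolean combinations of \emph{arbitrary} open sets, not of basic ones, so the pieces $K^x_n$ witnessing $X\setminus\{x\}\in\mathbf{\Sigma}^0_2$ vary with the (typically uncountably many) points $x$ --- already the closed parts $X\setminus U_x$ can be pairwise distinct --- and your count of ``finitely Boolean combinations of basic opens'' does not cover them; likewise the coarsest $T_1$ refinement of $\tau_\xx$ (the join with the cofinite topology) need not be second-countable. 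With only countably many added sets you must still separate every pair of points, and you give no mechanism reconciling this with degree preservation.

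For contrast, the paper's proof does not refine the topology of $\xx$ at all, and thereby never has to enumerate any negative information. Since $\xx$ is $T_D$ one may take the open half of the witness to be basic, $\{x\}=\beta_{e(x)}\cap F_x$, and then pass to the subspace $\yy=\{(e,x)\in\om\times\xx:e=e(x)\}$ with subbasis $\gamma_{d,e}=\yy\cap(\{d\}\times\beta_e)$. Each singleton $\{(e(x),x)\}=\yy\cap(\{e(x)\}\times F_x)$ is closed, so $\yy$ is $T_1$, and $\nbaseb{\yy}{e(x),x}=\{\langle e(x),k\rangle:k\in\nbaseb{\xx}{x}\}\equiv_e\nbaseb{\xx}{x}$, because the only extra information attached to a point is the single number $e(x)$, a constant that cannot change an enumeration degree. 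In other words, the one piece of ``negative'' data your construction tries to make enumerable is instead absorbed as a finite tag; that is the move your proposal is missing, and without it the degree-preservation step fails.
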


\begin{proof}
Since $\xx=(X,\beta)$ is $T_D$, for any $x\in\xx$, there is an open set $\beta_{e(x)}$ such that $\{x\}=\beta_{e(x)}\cap F_x$ for some closed set $F_x$.
Consider $\yy=\{(e,x)\in\om\times\xx:e(x)=e\}$, whose representation is induced from $\xx$, that is, $\gamma_{d,e}=\yy\cap(\{d\}\times\beta_e)$.
Then $\yy$ is $T_1$ since $\{(e(x),x)\}=\{e(x)\}\times F_x$.
Moreover, $\langle j,k\rangle\in\nbaseb{\yy}{e(x),x}$ iff $e(x)=j$ and $k\in\nbaseb{\xx}{x}$.
Hence, $\nbaseb{\yy}{e(x),x}$ is $e$-equivalent to $\nbaseb{\xx}{x}$.
\end{proof}

One can also consider a uniform version of being $T_D$, that is, having a $\mathbf{\Delta}^0_2$-diagonal.
Again, be careful that such a space does not necessarily have a $G_\delta$-diagonal.
Recall that a space is Hausdorff if it has a closed diagonal.
Following \cite{pauly-synthetic-arxiv}, we say that a represented space $\xx$ is computably Hausdorff if the diagonal $\Delta_\xx$ is $\Pi^0_1$.
For computability on topological separation axioms, see also Weihrauch \cite{Wei10,Wei13}.

\begin{prop}\label{prop:uniform-T_D}
If $\xx$ is a represented cb$_0$ space which has a $\Delta^0_2$-diagonal, then there is a computably Hausdorff cb$_0$ space $\yy$ such that $\mathcal{D}_\xx=\mathcal{D}_\yy$.
\end{prop}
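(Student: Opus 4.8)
The plan is to mimic the construction in the proof of Proposition \ref{prop:T_D}, but now carrying along the open set that trivializes each singleton as part of the name of the point, so that the diagonal of the new space becomes $\Pi^0_1$ rather than merely $\mathbf{\Delta}^0_2$. First I would recall what it means for $\xx=(X,\beta)$ to have a $\Delta^0_2$-diagonal: the diagonal $\Delta_\xx\subseteq\xx\times\xx$ is $\mathbf{\Delta}^0_2$ (indeed effectively so), i.e.\ $\Delta_\xx$ is both $\Sigma^0_2$ and $\Pi^0_2$ in $\xx\times\xx$. Writing the $\Pi^0_2$ side, there is a computable sequence of constructible (finite Boolean combinations of basic open) sets $C_n\subseteq\xx\times\xx$ with $\Delta_\xx=\bigcap_n C_n$. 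The key point is that for each $x$, the "diagonal slice" gives a witness: there is a basic open $\beta_{e(x)}\ni x$ together with the $\Pi^0_2$-data such that the pair $(x,x)$ lies in every $C_n$ while $(x,y)$ eventually leaves some $C_n$ for $y\neq x$. What we want to record with $x$ is essentially an index (or enumeration) that lets us certify, given names of two points $x,x'$, that they are equal — and this certification must be a $\Pi^0_1$ condition on the pair of names.

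Concretely, I would set $\yy$ to be the set of pairs $(d,x)\in\om\times\xx$ where $d$ codes the relevant $\mathbf{\Delta}^0_2$-witness for $x$ being "diagonal-isolated" in the appropriate uniform sense (analogously to how in Proposition \ref{prop:T_D} we took $\yy=\{(e,x):e(x)=e\}$), with representation $\gamma_{d,e}=\yy\cap(\{d\}\times\beta_e)$, so that $\nbaseb{\yy}{d,x}\equiv_e\{d\}\oplus\nbaseb{\xx}{x}$ uniformly, giving $\mathcal{D}_\yy=\mathcal{D}_\xx$ exactly as before. The real work is to check that with the extra coordinate $d$ available in the name, the equality relation on $\yy$ becomes $\Pi^0_1$: given names $p$ of $(d,x)$ and $q$ of $(d',x')$, one first reads off $d$ and $d'$ (these appear immediately, since the first coordinate is constant along $\gamma$-names), checks $d=d'$ (a decidable, hence $\Pi^0_1$, condition), and then uses the $\Pi^0_2$-presentation $\Delta_\xx=\bigcap_n C_n$ together with the witness coded by $d$ to express "$(x,x')\in\Delta_\xx$" as a $\Pi^0_1$ (rather than $\Pi^0_2$) condition — the witness $d$ is exactly what collapses the outer $\bigcap_n$ to a single co-c.e.\ check, because $d$ tells us in advance which "stage-$n$ constructible set" will already exclude every $x'\neq x$. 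Here one uses that for the diagonal, membership of $(x,x)$ in each $C_n$ is automatic, so the genuine content is: if $x\neq x'$ then some $C_n$ excludes $(x,x')$, and the witness pins down $n$ as a computable function of $d$.

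I would organize the verification into three steps: (i) define $\yy$ and its representation, and verify $\mathcal{D}_\yy=\mathcal{D}_\xx$ by the same $e$-equivalence computation as in Proposition \ref{prop:T_D}; (ii) verify $\yy$ is a cb$_0$ space (the $T_0$ property is inherited, and we keep countably-basedness since we only added a discrete $\om$-coordinate); (iii) show $\Delta_\yy$ is $\Pi^0_1$, by exhibiting an enumeration operator that, given a pair of $\gamma$-names, enumerates a "refutation" whenever the two coded points differ — equivalently, an r.e.\ set of finite conditions on the pair of names whose nonemptiness certifies inequality. Step (iii) is the crux and the main obstacle: one must be careful that the $\Delta^0_2$-diagonal hypothesis is genuinely used in its \emph{uniform} form (so that the witness $d$ can be extracted and reused), and that the $\Pi^0_2$-combination-of-constructible-sets description of $\Delta_\xx$ really does collapse to $\Pi^0_1$ once $d$ is known — this is where the analogy with $T_D\rightsquigarrow T_1$ is tightest, since there too the extra coordinate $e(x)$ converted the $\mathbf{\Delta}^0_2$ singleton into a closed singleton. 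I expect no essential difficulty beyond bookkeeping once the right coding of $d$ is chosen; the temptation to avoid is trying to make the full diagonal $\Pi^0_1$ in $X\times X$ itself (which is false in general) rather than in the enriched space $\yy\times\yy$.
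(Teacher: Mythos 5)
There is a genuine gap in step (iii), and it lies exactly where you located the crux. You propose to use the $\Pi^0_2$ side of the hypothesis, writing $\Delta_\xx=\bigcap_n C_n$ with $C_n$ constructible, and to attach to each $x$ a witness $d$ which ``tells us in advance which stage-$n$ constructible set will already exclude every $x'\neq x$.'' Such a witness need not exist: the $\Pi^0_2$ presentation only guarantees that for each \emph{pair} $x\neq x'$ some $C_n$ excludes $(x,x')$, with $n$ depending on the pair, not on $x$ alone. (Compare the standard presentation $\Delta=\bigcap_n\{(x,y):x\upharpoonright n=y\upharpoonright n\}$ in Baire space: no single $C_n$ has slice $\{x\}$.) Moreover, even if such an $n$ were available, the resulting equality test would not be $\Pi^0_1$ on names: $C_n$ and its complement are finite Boolean combinations of open sets, so deciding $(x,x')\in\cmp{C_n}$ mixes positive (c.e.-certifiable) and negative (co-c.e.) information about the names, and nothing in your construction makes the positive part automatic. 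So the claimed collapse of $\bigcap_n$ to ``a single co-c.e.\ check'' is unsubstantiated, and the analogy with Proposition \ref{prop:T_D} does not by itself supply it.

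The paper's proof uses the other half of the hypothesis. Write the diagonal as $\Sigma^0_2$: $\Delta_\xx=\bigcup_n(G_n\cap F_n)$ with $G_n$ open and $F_n$ closed. For each $x$ pick $n$ with $(x,x)\in G_n\cap F_n$ and a basic open $B_k\ni x$ with $B_k^2\subseteq G_n$, and tag $x$ with the pair $(n,k)$, setting $\yy=\{(n,k,x):x\in\xx_{n,k}\}$. The point of the tag is localization: for two points carrying the same tag $(n,k)$, the pair $(x,x')$ lies in $B_k^2\subseteq G_n$ automatically, so $x=x'$ if and only if $(x,x')\in F_n$, which is a $\Pi^0_1$ condition; hence the diagonal of $\yy$ is $\Pi^0_1$, while $\nbaseb{\yy}{n,k,x}\equiv_e\nbaseb{\xx}{x}$ gives $\mathcal{D}_\yy=\mathcal{D}_\xx$ as in your steps (i)--(ii). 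In short, the witness one should carry is not an index that isolates $x$ inside a $\Pi^0_2$ intersection, but an index $n$ of a $\Sigma^0_2$ piece together with a basic box inside its open component; it is the restriction to that box that discharges the positive information and leaves only the closed set $F_n$ to check.
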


\begin{proof}
Let $\xx$ be an effectively $T_D$ cb$_0$ space.
Then, the diagonal $\Delta_\xx$ is ${\Sigma}^0_2$; therefore, it is written as $\Delta_\xx=\bigcup_nD_n$ where $D_n$ is the intersection of a $\Sigma^0_1$ set $G_n$ and a $\Pi^0_1$ set $F_n$.
Consider $\xx_n=\{x\in\xx:(x,x)\in D_n\}$.
Let $(B_k)_{k\in\omega}$ be a countable open basis of $\xx$.
If $(x,x)$ is contained in an open set $G$ in $\xx^2$, then there is $k$ such that $x\in B_k$ and $B_k^2\subseteq G$.
In particular, for every $x\in\xx_n$ there is $k(x)\in\omega$ such that $(x,x)\in B_{k(x)}^2$ and $\Delta_n\cap B_{k(x)}^2=B_{k(x)}^2\cap F_n$.
Define $\xx_{n,k}=\{x\in\xx_n:k(x)=k\}$.
Then the diagonal on $\xx_{n,k}$ is the restriction of $F_n$ on $\xx_{n,k}$.
This is because, for any $x,y\in\xx_{n,k}$, we have $(x,y)\in B_k^2$.
Therefore $x=y$ if and only if $(x,y)\in F_n$.
Consequently, the diagonal on $\xx_{n,k}$ is $\Pi^0_1$.
Then, define $\yy=\{(n,k,x):x\in\xx_{n,k}\}$.
One can easily see that $\nbaseb{\yy}{n,k,x}$ is $e$-equivalent to $\nbaseb{\xx}{x}$.
\end{proof}

For Propositions \ref{prop:T_D} and \ref{prop:uniform-T_D}, the same observation is independently made by de Brecht.
Roughly speaking, these Propositions show that the $T_D$-degrees are exactly the $T_1$-degrees, and the $\Delta^0_2$-diagonal-degrees are exactly the $T_2$-degrees.
Thus, we do not need to consider the $T_D$-separation axiom (and any notion between $T_D$ and $T_1$ such as $T_{\frac{1}{2}}$), and its uniform version any more.


\subsection{Degrees of points: $T_1$-topology}

A topological space is $T_1$ if every singleton is closed.\index{Separation axioms!$T_1$}
We first consider the cofinite topology, which is one of the most basic constructions obtaining a non-Hausdorff (actually non-sober) $T_1$-topology.
Here, we consider the cofinite topology on $\om$, that is, a basis is given by $(\om\setminus D_e:e\in\om)$, where $D_e$ is the $e$-th finite subset of $\om$.
By $\om_{\rm cof}$, we denote the natural numbers $\om$ endowed with the cofinite topology and the above representation.
As before, we will not mention a fixed canonical indexing $e\mapsto D_e$, and we treat a finite set $D\subset\om$ as if it were a natural number.

The space $\om_{\rm cof}$ itself is countable, and thus, not degree-theoretically interesting. \index{Space!cofinite topology}
Instead, we consider the $\om$-power $(\om_{\rm cof})^\om$, and then for any $x\in(\om_{\rm cof})^\om$, we have
\[\nbase{x}=\{\langle n,D\rangle:x(n)\not\in D\},\]
where $D$ ranges over finite subsets of $\om$.
It is not hard to see that the $(\om_{\rm cof})^\om$-degrees are exactly the graph-cototal $e$-degrees (Definition \ref{def:graph-cototal}), that is,
\[\mathcal{D}_{(\om_{\rm cof})^\om}=\{\mathbf{d}\in\mathcal{D}_e:\mathbf{d}\mbox{ is graph-cototal}\}.\]

For basic properties on graph-cototal $e$-degrees, see Andrews et al.\ \cite{cototal}.\index{Degrees!graph-cototal}
In this section, we will isolate other proper subcollections of graph-cototal $e$-degree as degrees of points of specific non-Hausdorff $T_1$-spaces.

\subsubsection{Cocylinder topology}\label{sec:cocylinder-topo}

For a represented cb$_0$-space $\xx=(X,\beta)$, recall that $\beta$ is an enumeration of a countable open basis of a cb$_0$ space $X$.
We introduce the co-representation $\beta^{\rm co}$ of $X$ by $\beta^{\rm co}_e=X\setminus\beta_e$.
We write $\xx_{\rm co}=(X,\beta^{\rm co})$.

\begin{example}\label{exa:T_0-cofinite-intro}
We define $\lambda_{\langle n,m\rangle}=\{x\in\om^\om:x(n)=m\}$ for any $n,m\in\om$.
Then, $\lambda$ is a representation of the Baire space $\om^\om$.
It is not hard to see that $(\om^\om,\lambda^{\rm co})$ is computably homeomorphic to $(\om_{\rm cof})^\om$.
\end{example}

\begin{example}\label{exa:T_0-cocylinder-intro}
We define $\gamma_{\sigma}=\{x\in\om^\om:\sigma\prec x\}$ for any $\sigma\in\om^{<\om}$.
Then, $\gamma$ is a representation of the Baire space $\om^\om$.
It is easy to see that $\gamma$ is computably equivalent to the representation $\lambda$ in Example \ref{exa:T_0-cofinite-intro}, and therefore $(\om^\om,\lambda)$ and $(\om^\om,\gamma)$ are computably homeomorphic.
However, Proposition \ref{prop:non-cylinder-cototal} shows that $(\om^\om,\lambda^{\rm co})$ and $(\om^\om,\gamma^{\rm co})$ are not homeomorphic!
This indicates that the topology on $X$ induced from $\beta^{\rm co}$ heavily depends on the choice of the representation $\beta$ of $X$.
\end{example}

\begin{remark}
A better-behaved ``co-topology'' is known as the {\em de Groot dual} (after \cite{deGr,deGroot2}).
It only depends on the topology on $X$, but not on its representation.
Unfortunately, the de Groot dual of a cb$_0$ space is not necessarily second-countable, and therefore it exceeds the scope of this section.
However, it is worth mentioning that it does NOT exceed the scope of computability theory (see also Section \ref{sec:cs-networks}).
\end{remark}

Hereafter, by $\om^\om_{\rm co}$ we always mean the cocylinder space $(\om^\om,\gamma^{\rm co})$.
As usual, via a fixed canonical bijection between $\om$ and $\om^{<\om}$, we treat a string $\sigma\in\om^{<\om}$ as if it were a natural number.\index{Space!cocylinder topology}
Then, for any $x\in\om_{\rm co}^\om$,
\[\nbase{x}=\{\sigma\in\om^{<\om}:\sigma\not\prec x\}.\]

By definition, it is clear that the $\om_{\rm co}^\om$-degrees are exactly the cylinder-cototal $e$-degrees\index{Degrees!cylinder-cototal} (Definition \ref{def:graph-cototal}), that is,
\[\mathcal{D}_{\om_{\rm co}^\om}=\{\mathbf{d}\in\mathcal{D}_e:\mathbf{d}\mbox{ is cylinder-cototal}\}.\]

Recall that a join of $n$ cylinder-cototal $e$-degrees is called $n$-cylinder-cototal.
In other words, the $n$-cylinder-cototal $e$-degrees are exactly the $(\om_{\rm co}^\om)^n$-degrees.

\begin{obs}
Every $n$-cylinder-cototal $e$-degree is graph-cototal.
\end{obs}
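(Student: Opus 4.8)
The plan is to reduce the statement to two routine facts and then combine them. First, a single cylinder-cototal $e$-degree is always graph-cototal; second, the graph-cototal $e$-degrees are closed under finite join. Since an $n$-cylinder-cototal degree is by definition a join $\mathbf{b}_1\vee\cdots\vee\mathbf{b}_n$ of cylinder-cototal degrees, applying the first fact to each $\mathbf{b}_i$ and then the second fact $n-1$ times gives the result. (Alternatively one could argue via the characterization $\mathcal{D}_{(\om^\om_{\rm co})^n}=\{n\text{-cylinder-cototal}\}$, but working from the definitions is cleaner.)

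For the first fact, fix a total $g\colon\om\to\om$ with $\cmp{{\rm CGraph}(g)}$ in the given degree, fix a computable bijection $c\mapsto\sigma_c$ between $\om$ and $\om^{<\om}$ with $c\mapsto|\sigma_c|$ computable, and define a total function $\hat g\colon\om\to\om$ by letting $\hat g(n)$ be the code of the string $g\upto n$. The key observation is that for every $n$ and every $c$: if $|\sigma_c|\neq n$ then $\langle n,c\rangle\in\cmp{{\rm Graph}(\hat g)}$ outright (since $\hat g(n)$ codes a string of length $n$), while if $|\sigma_c|=n$ then $\langle n,c\rangle\in\cmp{{\rm Graph}(\hat g)}$ iff $\sigma_c\neq g\upto n$ iff $\sigma_c\not\prec g$ iff $c\in\cmp{{\rm CGraph}(g)}$. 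This yields an enumeration operator witnessing $\cmp{{\rm Graph}(\hat g)}\leq_e\cmp{{\rm CGraph}(g)}$ (enumerate all pairs of the first kind immediately; for a pair of the second kind wait for $c$ to appear in $\cmp{{\rm CGraph}(g)}$). For the converse, $c\in\cmp{{\rm CGraph}(g)}$ iff $\sigma_c\not\prec g$ iff $\hat g(|\sigma_c|)\neq c$ iff $\langle|\sigma_c|,c\rangle\in\cmp{{\rm Graph}(\hat g)}$, an enumeration operator the other way. Hence $\cmp{{\rm Graph}(\hat g)}\equiv_e\cmp{{\rm CGraph}(g)}$, so the degree is graph-cototal.

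For the second fact, given total functions $g_0,g_1\colon\om\to\om$, set $h(2n)=g_0(n)$ and $h(2n+1)=g_1(n)$. Then $\langle k,m\rangle\in\cmp{{\rm Graph}(h)}$ iff either $k=2n$ and $\langle n,m\rangle\in\cmp{{\rm Graph}(g_0)}$, or $k=2n+1$ and $\langle n,m\rangle\in\cmp{{\rm Graph}(g_1)}$; reading this in both directions gives $\cmp{{\rm Graph}(h)}\equiv_e\cmp{{\rm Graph}(g_0)}\oplus\cmp{{\rm Graph}(g_1)}$ via an obvious pair of enumeration operators, and iterating handles joins of $n$ graph-cototal degrees.

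I expect no genuine obstacle here; the whole argument is a bookkeeping reindexing of the two cototal sets involved. The only point demanding a moment's care is the reduction $\cmp{{\rm Graph}(\hat g)}\leq_e\cmp{{\rm CGraph}(g)}$: one must put into the output, with no hypothesis, every pair $\langle n,c\rangle$ for which $c$ does not code a string of length $n$, since those never arrive through the enumeration of $\cmp{{\rm CGraph}(g)}$; omitting them would only reduce the ``length-matching'' part of $\cmp{{\rm Graph}(\hat g)}$.
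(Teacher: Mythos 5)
Your argument is correct, and it rests on the same key idea as the paper's proof: the paper embeds $(\om^\om_{\rm co})^n$ into $(\om_{\rm cof})^\om$ by sending $x=(x_m)_{m<n}$ to $\check{x}$ with $\check{x}(k)=\bigoplus_{m<n}x_m\upto k$, i.e., it codes initial segments as values of a total function, which is precisely your passage from $g$ to $\hat g$. The only difference is packaging: the paper bundles the $n$ coordinates level-by-level inside a single computable embedding of represented spaces (using that $(\om_{\rm cof})^\om$-degrees are the graph-cototal degrees), whereas you treat each coordinate separately and then close under join by interleaving — a harmless reorganization.
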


\begin{proof}
It suffices to show that each product cocylinder space $(\om^\om_{\rm co})^n$ is effectively embedded into $(\om_{\rm cof})^\om$.
To see this, given $x=(x_m)_{m<n}$, consider $\check{x}(n)=\bigoplus_{m<n}x_m\upto n$.
It is not hard to check that $x\mapsto\check{x}$ gives a desired computable embedding.
It is also clear that $(\om^\om_{\rm co})^\om$ is effectively homeomorphic to $(\om_{\rm cof})^\om$.
\end{proof}

We first show that cylinder-cototal $e$-degrees form a proper subclass of graph-cototal $e$-degrees.
As shown by Andrews et al.\ \cite{cototal}, every $\Sigma^0_2$-above degree is graph-cototal.
It is not true for cylinder-cototal $e$-degrees.

\myprop{prop:non-cylinder-cototal}{
There is a co-d-CEA set $A\subseteq\om$ such that $A$ is not cylinder-cototal.}

The above proposition is useful for separating cylinder-cototal degrees and other degrees, since we will see that most collections of degrees obtained from represented cb$_0$ spaces in this article are larger than the co-$d$-CEA $e$-degrees.
We also show that $\om^\om_{\rm co}$ has a nontrivial $\Sigma^0_2$ $e$-degree.
An $e$-degree $\mathbf{d}$ is {\em proper-$\Sigma^0_2$} if $\mathbf{d}$ contains a $\Sigma^0_2$ set, but no $\Delta^0_2$ set.

\myprop{prop:propersigmacylindercototal}{There is a proper-$\Sigma^0_2$ cylinder-cototal degree.}

\subsubsection{Products of cocylinder topology}\label{sec:3-4-2}

In this section, we will see that there is a hierarchy of graph-cototal $e$-degrees.
We write $\mathcal{D}(\xx)$ for the substructure of $\mathcal{D}_e$ consisting of $e$-degrees of neighborhood bases of points in $\xx$.
We will then have the following proper hierarchy of degree structures:
\[\mathcal{D}_T=\mathcal{D}(\om^\om)\subsetneq\mathcal{D}(\om^\om_{\rm co})\subsetneq\dots\subsetneq\mathcal{D}((\om^\om_{\rm co})^n)\subsetneq\mathcal{D}((\om^\om_{\rm co})^{n+1})\subsetneq\dots\subsetneq\mathcal{D}((\om_{\rm cof})^\om)\]

\mythm{thm:coBaire-hierarchy}{
For any $n$, there is an $(n+1)$-cylinder-cototal $e$-degree which is not $n$-cylinder-cototal, that is,
\[\mathcal{D}_{(\om^\om_{\rm co})^{n+1}}\not\subseteq\mathcal{D}_{(\om^\om_{\rm co})^n}.\]
}

\subsubsection{Telophase topology}\label{sec:3-4-3}

Let $\mathcal{L}$ be a linearly ordered set.
The order topology on $\mathcal{L}$ is generated by the subbasis $(\{x:a<x\},\{x:x<a\}:a\in\mathcal{L})$.
Assume that $\mathcal{L}$ has a countable basis $\mathcal{B}$, that is, there is a countable set $\mathcal{B}\subseteq\mathcal{L}$ such that for any $a,b\in\mathcal{L}$, if $a<b$ then there are $c,d\in \mathcal{B}$ such that $a\leq c< d\leq b$.
Then the order topology on $\mathcal{L}$ is separable and metrizable.

We now assume that $\mathcal{L}$ has the greatest element $\mathbf{1}$.
Choose $\mathbf{1}_\star\not\in\mathcal{L}$.
Then $\mathcal{L}\cup\{\mathbf{1}_\star\}$ forms a partial order by adding the relation $a<\mathbf{1}_\star$ for each $a\in\mathcal{L}$ with $a\not=\mathbf{1}$.
Roughly speaking, $\mathcal{L}\cup\{\mathbf{1}_\star\}$ is almost linear ordered except that it has two maximal elements $\mathbf{1}$ and $\mathbf{1}_\star$.
The {\em telophase space $\mathcal{L}_{TP}$} \index{Space!telophase} is defined as the set $\mathcal{L}\cup\{\mathbf{1}_\star\}$ equipped with the Lawson topology, that is, generated by the following subbasis:
\[\{\{x:a\not\leq x\}:a\in\mathcal{L}\cup\{\mathbf{1}_\star\}\}\cup\{\{x:a<x\}:a\in\mathcal{L}\}.\]

If $\mathcal{L}$ has a countable basis $\mathcal{B}$, the following gives us a countable subbasis of $\mathcal{L}_{TP}$:
\[\{\{x:x<a\}:a\in\mathcal{B}\}\cup\{\{x:x\leq a\}:a\in\{\mathbf{1},\mathbf{1}_\star\}\}\cup\{\{x:a<x\}:a\in\mathcal{B}\}\]

\begin{example}[see Steen-Seebach {\cite[II.73]{CTopBook}}]\label{example:teloph-1}
Consider the Cantor space $\mathcal{C}=2^\om$, which is linearly ordered by defining $x\leq_{\rm left}y$ if and only if there is $n\in\om$ such that $x\upto n=y\upto n$ and $x(n)<y(n)$.
Then, the greatest element $\mathbf{1}$ is the sequence $1^\om$ consisting only of $1$'s.
One can easily see that the following gives us a countable subbasis of $\mathcal{C}_{TP}$:
\[\{[\sigma]:\sigma\in 2^{<\om}\}\cup\{[\sigma]_\star:\sigma\prec 1^\om\},\]
where $[\sigma]_\star=([\sigma]\setminus\{\mathbf{1}\})\cup\{\mathbf{1}_\star\}$.
Unfortunately, the degree structure of $\mathcal{C}_{TP}$ is not so interesting since every $\mathcal{C}_{TP}$-degree is total.
This is because, this construction adds only one new point, and thus, it is clear that $\nbaseb{\mathcal{C}_{TP}}{x}\equiv_e\nbaseb{\mathcal{C}}{x}$ for any $x\in\mathcal{C}$, and that $\mathbf{1}_\star$ is computable, that is, $\nbaseb{\mathcal{C}_{TP}}{\mathbf{1}_\star}$ is c.e.

Nevertheless, we will see that the degree structure of the $\om$-power $(\mathcal{C}_{TP})^\om$ is pretty interesting.
Note that for each $x\in(\mathcal{C}_{TP})^\om$, its coded neighborhood filter is given as
\[\nbase{x}=\{\langle n,0,\sigma\rangle:\sigma\prec x(n)\}\cup\{\langle n,1,k\rangle:1^k\prec x(n)\mbox{ and }x(n)\not=1^\om\}.\]
\end{example}

\begin{example}\label{example:teloph-2}
Consider the one-point compactification $\hat{\om}=\om\cup\{\infty\}$ of $\om$, which is naturally linear ordered with the greatest element $\infty$.
The telophase space $\hat{\om}_{TP}$ looks like a ``two-point compactification'' of $\om$.
The topology is generated by
\[\{\{m\},[m,\infty],[m,\infty_\star]:m\in\om\}.\]

Here, $[a,b]$ is the interval $\{c:a\leq c\leq b\}$.
Then, for each $x\in(\hat{\om}_{TP})^\om$, its coded neighborhood filter is given as
\begin{align*}
\nbase{x}=\{\langle n,0,m\rangle:x(n)=m\}&\cup\{\langle n,1,m\rangle:m\leq x(n)\leq\infty\}\\
&\cup\{\langle n,2,m\rangle:m\leq x(n)\leq\infty_\star\},
\end{align*}
where $m$ and $n$ range over $\om$.
\end{example}

It is easy to check that the spaces in Examples \ref{example:teloph-1} and \ref{example:teloph-2} are $T_1$ but not Hausdorff (since $\mathbf{1}$ and $\mathbf{1}_\star$ cannot be separated by open sets).
Later we will see the following:
\begin{align*}
\mathcal{D}_{(\mathcal{C}_{TP})^\om}=\mathcal{D}_{(\hat{\om}_{TP})^\om}&=\{\mathbf{d}\in\mathcal{D}_e:\mathbf{d}\mbox{ is telograph-cototal}\}\\
&=\{\mathbf{d}\in\mathcal{D}_e:[\ast,\ast,\Pi^0_1]\mbox{-separating-above}\}.
\end{align*}

Here, recall from Definitions \ref{def:graph-cototal} and  \ref{def:separation-above} for the above notions.

\myprop{prop:ctpcomputablyembeds}{
$\mathcal{C}_{TP}$ computably embeds into $(\hat{\om}_{TP})^\om$.
Hence, $\mathcal{D}_{(\mathcal{C}_{TP})^\om}=\mathcal{D}_{(\hat{\om}_{TP})^\om}$.}

We will also see that the hierarchy of telograph-cototal $e$-degrees collapses.
For $b\in\om$, we say that an enumeration degree $\mathbf{a}$ is {\em $b$-telograph-cototal} if it contains $\cmp{{\rm Graph}(g)}\oplus{\rm TGraph}_b(g)$ for some total function $g:\om\to\om$.

\myprop{obs:telograph-hierarchy-collapses}{\index{Degrees!telograph-cototal}
The $1$-telograph-cototal $e$-degrees are exactly the total degrees.
For any natural numbers $b,c>1$, the $b$-telograph-cototal $e$-degrees are exactly the $c$-telograph-cototal $e$-degrees.}

We will show that the $(\hat{\om}_{TP})^\om$-degrees are characterized by the telograph-cototal $e$-degrees (Definitions \ref{def:graph-cototal}).

\myprop{prop:telophase-is-graph-cototal}{
The $(\hat{\om}_{TP})^\om$-degrees are exactly the telograph-cototal $e$-degrees.}

We give another characterization of $(\hat{\om}_{TP})^\om$-degrees in terms of separating sets.
For any $A,B\subseteq\om$, recall that ${\rm Sep}(A,B)$ is the collection of sets separating $A$ and $B$:
\[{\rm Sep}(A,B)=\{C\subseteq\om:A\subseteq C\mbox{ and }B\cap C=\emptyset\}.\]

We also recall that ${\rm Enum}(E)$ is the set of all enumerations of $E\subseteq\om$, that is,
\[{\rm Enum}(E)=\{p\in\om^\om:{\rm rng}(p)=E\}.\]
To make our argument simple, we always assume that $E$ is nonempty.
Note that $D\equiv_eE$ if and only if ${\rm Enum}(D)\equiv_M{\rm Enum}(E)$.

\mythm{thm:telophase-degree}{
The $(\hat{\om}_{TP})^\om$-degrees (hence the telograph-cototal $e$-degrees) are exactly the $[\ast,\ast,\Pi^0_1]$-separating-above $e$-degrees.
In other words, a nonempty set $E\subseteq\om$ is $e$-equivalent to $\nbase{x}$ for some $x\in(\hat{\om}_{TP})^\om$ if and only if there are $X,A,B\subseteq\om$ such that $A\cup B$ is $X$-co-c.e., $A\cap B=\emptyset$, and
\[{\rm Enum}(E)\equiv_M\{X\}\times{\rm Sep}(A,B).\]
}

\subsection{Degrees of points: $T_2$-topology}

A topological space $\xx$ is $T_2$ or {\em Hausdorff} if any distinct points $x\not=y\in\xx$ are separated by open sets, that is, there are disjoint open sets $U,V\subseteq\xx$ such that $x\in U$ and $y\in V$.
It is equivalent to saying that the diagonal $\Delta_\xx=\{(x,x):x\in\xx\}$ is closed in $\xx^2$.

\subsubsection{Double Origin Topology}\label{sec:3-5-1}

Let $\mathcal{L}_0$ and $\mathcal{L}_1$ be linearly ordered sets.
Consider the product $\mathcal{L}=\mathcal{L}_0\times\mathcal{L}_1$.
Let $\tau_{\mathcal{L}}$ be the product of the order topologies on $\mathcal{L}_0$ and $\mathcal{L}_1$.
Fix an element $\mathbf{0}=(\mathbf{o}_0,\mathbf{o}_1)\in\mathcal{L}$, and $\mathbf{0}_\star\not\in\mathcal{L}$.
The {\em double origin space}\index{Space!double origin} $\mathcal{L}_{DO}$ is defined as the set $\mathcal{L}\cup\{\mathbf{0}_\star\}$ equipped with the topology generated by the following subbasis:
\begin{align*}
\{U\setminus\{\mathbf{0}\}:U\in\tau_\mathcal{L}\}&\cup\{\{\mathbf{0}\}\cup\{(x,y):a<x<b\mbox{ and } \mathbf{o}_1<y<c\}:a<\mathbf{o}_0<b, c>\mathbf{o}_1\}\\
&\cup\{\{\mathbf{0_\star}\}\cup\{(x,y):a<x<b\mbox{ and }c<y<\mathbf{o}_1\}:a<\mathbf{o}_0<b, c<\mathbf{o}_1\},
\end{align*}
where $a,b,x$ range over $\mathcal{L}_0$ and $y$ ranges over $\mathcal{L}_1$. We observe that $\mathcal{L}_{DO}$ is Hausdorff.

\begin{example}[see Steen-Seebach {\cite[II.74]{CTopBook}}]
For each $i<2$, let $\mathcal{Q}_i$ be the unit open rational interval, that is, $\mathcal{Q}_i=\mathbb{Q}\cap(-1,1)$, equipped with the canonical ordering, and put $\mathbf{o}_0=\mathbf{o}_1=0$.
Then, a countable subbasis of $\mathcal{Q}_{DO}$ is given as follows:
\begin{align*}
&\{((p,q)\times(r,s))\setminus\{\mathbf{0}\}:p,q,r,s\in\mathbb{Q}\cap(-1,1)\}\\
&\cup\{((-k^{-1},k^{-1})\times(0,\ell^{-1}))\cup\{\mathbf{0}\}:k,\ell\in\om\}\\
&\cup\{((-k^{-1},k^{-1})\times(-\ell^{-1},0))\cup\{\mathbf{0}_\star\}:k,\ell\in\om\}.
\end{align*}

Clearly $\mathcal{Q}_{DO}$ is countable, and so its degree structure is not interesting.
Instead, we consider the $\om$-power $(\mathcal{Q}_{DO})^\om$.
We treat each $z\in(\mathcal{Q}_{DO})^\om$ as if it were a pair $(x,y)$.
If $z(n)\not=\mathbf{0}_\star$ for all $n\in\om$, it is actually a pair $(x,y)$ given by $z(n)=\langle x(n),y(n)\rangle$ for any $n\in\om$.
If $z(n)=\mathbf{0}_\star$ for some $n\in\om$, one may put $\mathbf{0}_\star=(0_\star,0_\star)$ by choosing a new symbol $0_\star$, where we assume that $0_\star$ has no relationship with other rationals.
Then, for any point $(x,y)\in(\mathcal{Q}_{DO})^\om$, its coded neighborhood filter is given as follows:
\begin{align*}
\nbase{x,y}=&\{\langle n,0,p,q,r,s\rangle:p<x(n)<q,\;r<y(n)<s,\mbox{ and }(x(n),y(n))\not\in\{\mathbf{0},\mathbf{0}_\star\}\}\\
&\cup\{\langle n,1,k,\ell\rangle:(|x(n)|<k^{-1}\mbox{ and }0<y(n)<\ell^{-1})\mbox{ or }(x(n),y(n))=\mathbf{0}\},\\
&\cup\{\langle n,2,k,\ell\rangle:(|x(n)|<k^{-1}\mbox{ and }-\ell^{-1}<y(n)<0)\mbox{ or }(x(n),y(n))=\mathbf{0}_\star\}.
\end{align*}
\end{example}

\begin{example}\label{exa:double-quasi-Polish}
Define $\mathcal{P}_0=\hat{\om}\simeq \om+1$, $\mathcal{P}_1=\om+1+\om^\ast$, and $\mathcal{P}=\mathcal{P}_0\times\mathcal{P}_1$.
Here, recall that $\hat{\om}=\om\cup\{\infty\}$ is a one-point compactification of $\om$, and $\om^\ast$ is the reverse order of $\om$.
More precisely, $\mathcal{P}_1$ is the set $\{n:n\in\om\}\cup\{\ast\}\cup\{\overline{n}:n\in\om\}$ ordered as follows:
\[0<1<\dots<n<n+1<\dots<\ast<\dots<\overline{n+1}<\overline{n}<\dots<\overline{1}<\overline{0}.\]
Then, define $\mathbf{o}_0=\infty\in\mathcal{P}_0$ and $\mathbf{o}_1=\ast\in\mathcal{P}_1$.
A countable subbasis of $\mathcal{P}_{DO}$ is given as follows:
\begin{align*}
&\{[n,\infty]\times\{m\}:n\in\om,\;m\not=\ast\}\cup\{\{(n,m)\}:n\in\om,\;m\in\mathcal{P}_1\}\cup\{\{n\}\times[n,\overline{m}]:n,m\in\om\}\\
\cup&\,\{([n,\infty]\times(\ast,\overline{n}])\cup\{\mathbf{0}\}:n\in\om\}\cup\{([n,\infty]\times[n,\ast))\cup\{\mathbf{0}_\star\}:n\in\om\}.
\end{align*}
It is clear that $\mathcal{P}_{DO}$ embeds into $\mathcal{Q}_{DO}$.
To see this, consider embeddings $j_0:\mathcal{P}_0\to\mathcal{Q}_0$ and $j_1:\mathcal{P}_1\to\mathcal{Q}_1$ defined by $j_0(n)=2^{-n}$, $j_0(\infty)=0$, $j_1(n)=-2^{-n}$, $j_1(\ast)=0$, and $j_1(\overline{n})=2^{-n}$.
Then $j_0\times j_1$ clearly induces an embedding of $\mathcal{P}_{DO}$ into $\mathcal{Q}_{DO}$.
An advantage of using $\mathcal{P}$ is that $(\mathcal{P}_{DO})^\om$ is quasi-Polish (see Proposition \ref{prop:quasi-Polish-examples}) while $(\mathcal{Q}_{DO})^\om$ is not.
\end{example}

We will show the following characterization of double origin spaces.
\[\mathcal{D}_{(\mathcal{Q}_{DO})^\om}=\mathcal{D}_{(\mathcal{P}_{DO})^\om}=\{\mathbf{d}\in\mathcal{D}_e:\mathbf{d}\mbox{ is doubled co-d-CEA}\}.\]

Here, recall from Definition \ref{def:co-dCEA} for the above notion.

\mythm{thm:double-origin-degree}{
The $(\mathcal{P}_{DO})^\om$ and the $(\mathcal{Q}_{DO})^\om$-degrees each are exactly the doubled co-d-CEA degrees.\index{Space!double origin}\index{Degrees!doubled co-d-CEA}
In other words, an $e$-degree $\mathbf{d}$ is a $(\mathcal{Q}_{DO})^\om$-degree if and only if there are $X,A,B,P,N\subseteq\om$ such that $A$, $B$, $P$ and $N$ are pairwise disjoint, $P$, $N$, and $\cmp{(A\cup B)}$ are $X$-c.e., and
\[X\oplus\cmp{X}\oplus(A\cup P)\oplus(B\cup N)\in\mathbf{d}.\]
}

A set is {\em d-c.e.}~if it is a difference of two c.e.~sets.
A set is {\em co-d-c.e.}~if it is the complement of a d-c.e.~set, that is, the union of a c.e.~set and a co-c.e.~set.
Clearly, double origin degrees include all co-d-c.e.~degrees.
It is known that there is a quasi-minimal co-d-c.e.~degree (see Cooper \cite{CooperE}).
Thus, there is a quasi-minimal double origin degree.


Moreover, one can see that the degree structure of the double origin space is included in that of the telophase space.

\myprop{prop:doubleorigin-is-cototal}{
Every doubled co-$d$-CEA $e$-degree is telograph-cototal.\index{Degrees!doubled co-d-CEA}\index{Degrees!telograph-cototal}
Hence, we have $\mathcal{D}_{(\mathcal{Q}_{DO})^\om}\subseteq\mathcal{D}_{(\hat{\om}_{TP})^\om}$.
}


\subsection{Degrees of points: $T_{2.5}$-topology}

A topological space $\xx$ is $T_{2.5}$ if any distinct points $x\not=y\in\xx$ are separated by closed neighborhoods, that is, there are open sets $U,V\subseteq\xx$ such that $x\in\overline{U}$, $y\in\overline{V}$, and $\overline{U}\cap\overline{V}=\emptyset$.\index{Separation axioms!$T_{2.5}$}
A topological space $\xx$ is {\em completely Hausdorff} or {\em functionally Hausdorff} if any distinct points $x\not=y\in\xx$ are separated by a function, that is, there exists a continuous function $f:\xx\to[0,1]$ with $f(x)=0$ and $f(y)=1$.
Note that every metrizable space is completely Hausdorff, and every completely Hausdorff space is $T_{2.5}$, but none of the converse holds.
In this section, we examine the degree structure of a $T_{2.5}$ space which is not completely Hausdorff (hence, not submetrizable).

\subsubsection{Arens square}\label{sec:3-6-1}

We would like to know a typical degree-theoretic behavior of a space which is second-countable, $T_{2.5}$, but not completely Hausdorff.
As such an example, Steen-Seebach {\cite[II.80]{CTopBook}} mentioned the Arens square; however there it has been found\footnote{The problem was observed by Martin Sleziak on math.stackexchange.com. A direct fix was then proposed by Brian M. Scott (\url{https://math.stackexchange.com/questions/1715435/is-arens-square-a-urysohn-space}). Our modification can be seen as an abstraction of the one proposed by Scott.} that their definition contains an error, that is, the Arens square defined in \cite[II.80]{CTopBook} is not $T_{2.5}$.
We here construct a simple example of a space which fulfills the above required property by modifying the definition of Arens square. Rather than describing the space as a subset of the rational unit square, we observe that the crucial ideas of the construction are all order-theoretic in nature and thus use corresponding language.

\begin{example}\label{exa:Arens-square}\index{Space!Arens square}
Consider a linear ordering $\mathcal{L}$ of type $\om+1+\zeta+1+\om^\ast$, where $\zeta$ is the order type of the integers $\mathbb{Z}$.
More precisely, consider the sets $\om=\{n:n\in\om\}$, $\om^\ast=\{\overline{n}:n\in\om\}$, and $\zeta=\{n_\zeta:n\in\mathbb{Z}\}$, and then, let $\mathcal{L}$ be the linear order consisting of the set $\om\cup\{\infty\}\cup\zeta\cup\{\overline{\infty}\}\cup\om^\ast$
ordered as follows.
\[0<1<\dots<\infty<\dots<(-1)_\zeta<0_\zeta<1_\zeta<\dots<\overline{\infty}<\dots<\overline{1}<\overline{0}.\]

Consider the following subset $I_x$ of the ordinal $\om^3+1$ for each $x\in\mathcal{L}$.
\begin{align*}
I_0&=I_{\overline{0}}=\{\om^3\},\ I_{0_\zeta}=\{\om^2\cdot (j + 1):j\in\om\},\\
I_{\infty}&=\{\om^2\cdot j+\om\cdot (2k+1):j,k\in\om\},\\
I_{\overline{\infty}}&=\{\om^2\cdot j+\om\cdot (2k+2):j,k\in\om\},\\
I_n&=\{\om^2\cdot j+\om\cdot (2k)+2n-1:j,k\in\om\},\\
I_{\overline{n}}&=\{\om^2\cdot j+\om\cdot (2k+1)+2n-1:j,k\in\om\},\\
I_{n_\zeta}&=\{\om^2\cdot j+\om\cdot (2k+1)+2n:j,k\in\om\},\\
I_{(-n)_\zeta}&=\{\om^2\cdot j+\om\cdot (2k)+2n:j,k\in\om\}.
\end{align*}
where $n$ ranges over $\om^+:=\om\setminus\{0\}$.
Note that $(I_x:x\in\mathcal{L}\setminus\{0\})$ is a partition of $(\om^3+1)\setminus\{0\}$.
Moreover, $(I_x:x\in\mathcal{L}^+)$ is a partition of the nonzero successor ordinals $<\om^3$, where $\mathcal{L}^+=\mathcal{L}\setminus\{0,\overline{0},0_\zeta,\infty,\overline{\infty}\}$.
Then define a modified Arens square (which we will call the {\em quasi-Polish Arens space}) $\mathcal{QA}\subseteq\mathcal{L}\times(\om^3+1)$ as follows:
\[\mathcal{QA}=\{(x,y):x\in\mathcal{L}\mbox{ and }y\in I_x\}.\]

The set $\mathcal{QA}$ is topologized by declaring the following as an open basis.
\begin{align*}
&\{\om\times(\alpha,\om^3]:\alpha<\om^3\}\cup\{\om^\ast\times(\alpha,\om^3]:\alpha<\om^3\}\\
\cup\,&\{\zeta\times(\om^2\cdot j+\om\cdot n,\om^2\cdot(j+1)]:n,j\in\om\}\\
\cup\,&\{[n,(-n)_\zeta]\times[\om^2j+\om(2k)+2n-1,\om^2j+\om(2k+1)]:n\in\om^+,\;k,j\in\om\}\\
\cup\,&\{[n_\zeta,\overline{n}]\times[\om^2j+\om(2k+1)+2n-1,\om^2j+\om(2k+2)]:n\in\om^+,\;k,j\in\om\}\\
\cup\,&\{\{(x,y)\}:x\in\mathcal{L}^+\mbox{ and }y\in I_x\}.
\end{align*}
\end{example}

\begin{remark}\label{rem:om31-total}
Note that the second projection $\pi:\mathcal{QA}\to\om^3+1$ given by $\pi(x,y)=y$ is continuous w.r.t.\ the order topology on the ordinal $\om^3+1$.
Hence, given a name of $(x,y)\in\mathcal{QA}$, one can compute a name of $y$ w.r.t.\ a suitable representation of $\om^3+1$.
The computability is ensured by just considering a quotient admissible representation of $\om^3+1$ induced from $\pi$, or equivalently, by considering an embedding $h:\om^3+1\to[0,1]$ defined by
\begin{align*}
&h(\om^3)=0,\ h(\om^3[j])=2^{-j},\ h(\om^3[j+1][k])=2^{-j}(1+2^{-k}),\\
&h(\om^3[j+1][k+1][\ell])=2^{-j}(1+2^{-k}(1+2^{-\ell})),
\end{align*}
where fundamental sequences are given by $\om^{n+1}[j]=\om^n\cdot j$ and $(\alpha+\beta)[k]=\alpha+\beta[k]$.
Thus, one can consider the embedded image of $\om^3+1$ into the unit interval $[0,1]$.
Note also that the ordinal space $\om^3+1$ is a (computably) zero-dimensional compact metrizable space.
Hence, for any $x\in(\om^3+1)^\om$, $\nbase{x}$ has a total $e$-degree.
\end{remark}

\myprop{proparensecondcountable}{
The quasi-Polish Arens space $\mathcal{QA}$ is second-countable, and $T_{2.5}$, but not completely Hausdorff.}

From the descriptive set theoretic perspective, our modified Arens square $\mathcal{QA}$ is better behaved than the original one in a certain sense: The space $\mathcal{QA}$ is quasi-Polish (hence so is the $\om$-power $\mathcal{QA}^\om$) as we will see in Proposition \ref{prop:quasi-Polish-examples}.

We proceed to examine the degree structure of the product Arens space $\mathcal{QA}^\om$.
For $z=(x(n),y(n))_{n\in\om}\in\mathcal{QA}^\om$, the coded neighborhood basis of $z$ is given as follows:
\begin{align*}
&\{\langle 0,n,j\rangle:x(n)\in\om\mbox{ and }y(n)>\om^2j\}\cup\{\langle 1,n,j\rangle:x(n)\in\om^\ast\mbox{ and }y(n)>\om^2j\}\\
\cup\,&\{\langle 2,n,j,k\rangle:x(n)\in\zeta\mbox{ and }\om^2j+\om k<y(n)<\om^2(j+1)\}\\
\cup\,&\{\langle 3,n,j,k,\ell\rangle:j\leq x(n)\leq(-j)_\zeta\mbox{ and }\om^2k+\om(2\ell+1)+2j-1<y(n)\leq\om^2k+\om(2\ell +2)\}\\
\cup\,&\{\langle 4,n,j,k,\ell\rangle:j_\zeta\leq x(n)\leq\overline{j}\mbox{ and }\om^2k+\om(2\ell)+2j<y(n)\leq\om^2k+\om(2\ell +1)\}\\
\cup\,&\{\langle 5,n,x,y\rangle:x(n)=x\in\mathcal{L}^+\mbox{ and }y(n)=y\in I_x\}.
\end{align*}

We will see that the degree structure of the product quasi-Polish Arens space $\mathcal{QA}^\om$ can be considered as a variant of the co-$d$-CEA degrees. To the similarity, we provide a characterization of the co-$d$-CEA degrees similar to what we show below as Definition \ref{def:arenscodcea}.
Let $\mathcal{E}$ be the collection of $e$-degrees $\mathbf{d}$ which contain a set $S\subseteq\om$ of the following form
\[S=X\oplus \cmp{X}\oplus(A\cup P)\oplus (B\cup N)\]
for some $A,B,P,N,X\subseteq\om$ such that $P,N$ and $\cmp{(A\cup B)}$ are c.e.\ in $X$, $A,B,P,N$ are pairwise disjoint, and $P$ and $N$ are $X$-computably separated over $\cmp{(A\cup B)}$.
Here, we say that $P$ and $N$ are {\em $X$-computably separated over $C$} if there are disjoint $X$-c.e.\ sets $H_P,H_N\subseteq\om$ such that $C=H_P\cup H_N$, $P\subseteq H_P$, and $N\subseteq H_N$.

\myobs{obsdineiffdiscodcea}{
For an $e$-degree $\mathbf{d}$, $\mathbf{d}\in\mathcal{E}$ if and only if $\mathbf{d}$ is co-$d$-CEA.\index{Degrees!co-d-CEA}}

\begin{definition}
\label{def:arenscodcea}
We say that an $e$-degree $\mathbf{d}$ is {\em Arens co-$d$-CEA} if $\mathbf{d}$ contains a set $S\subseteq\om$ of the following form
\[S=Y\oplus \cmp{Y}\oplus(L\cup J_L)\oplus (R\cup J_R)\oplus (\cmp{(L\cup R\cup N)}\cup J_M)\]
for some $L,R,N,J_L,J_R,J_M,Y\subseteq\om$ such that $N,J_L,J_R,J_M$ and $\cmp{(L\cup R)}$ are c.e.\ in $Y$, $L,R,N$ are pairwise disjoint, and $J_L,J_R,J_M\subseteq N$ are pairwise disjoint, where $J_L$ and $J_R$ are $Y$-computably separated over $N$, that is, there is a $Y$-c.e.\ partition $\{H_L,H_R\}$ of $N$ such that $J_L\subseteq H_L$ and $J_R\subseteq H_R$.\index{Degrees!Arens co-d-CEA}
\end{definition}

\myobs{codceaisarenscodcea}{
Every co-$d$-CEA $e$-degree is Arens co-$d$-CEA.
}

\mythm{thm:Arens-square}{
The degree structure of the product quasi-Polish Arens space $\mathcal{QA}^\om$ consists exactly of Arens co-$d$-CEA $e$-degrees:
\[\mathcal{D}_{\mathcal{QA}^\om}=\{\mathbf{d}\in\mathcal{D}_e:\mathbf{d}\mbox{ is Arens co-$d$-CEA}\}.\]
}

\subsubsection{Roy's lattice space}\label{sec:3-6-2}

We next introduce another space which has a similar property as the Arens space $\mathcal{QA}$.
The space is a quasi-Polish version of Roy's lattice space (see Steen-Seebach {\cite[II.126]{CTopBook}}).\index{Space!Roy}

To introduce the space, first recall that the ordinal $\om^3$ is order isomorphic to the lexicographical ordering on the set $\om^3$ of strings of length $3$ by identifying an ordinal $\om^2\cdot j+\om\cdot k+n<\om^3$ with the string $\langle j,k,n\rangle\in\om^3$.
Now, consider the Kleene-Brouwer ordering $\leq_{\rm KB}$ on the well-founded tree $\mathcal{O}_{\om^\om}=\{\sigma\in\om^{<\om}:|\sigma|\leq\sigma(0)+1\}$.
Then, $(\mathcal{O}_{\om^\om},\leq_{\rm KB})$ is order isomorphic to the ordinal $(\om^\om+1,\leq)$.
Note that $|\sigma|_{\rm KB}$ is a successor ordinal iff $\sigma$ is a leaf (i.e., a terminal node), and that $|\langle\rangle|_{\rm KB}=\om^\om$.
Hereafter, we use $\mathcal{O}_{\om^\om}^{\rm leaf}$ to denote the set of leaves of $\mathcal{O}_{\om^\om}$.
Given $\sigma,\tau\in \mathcal{O}_{\om^\om}$, we define the interval $[\sigma,\tau]_{\rm KB}=\{\gamma\in \mathcal{O}_{\om^\om}:|\sigma|_{\rm KB}\leq|\gamma|_{\rm KB}\leq|\tau|_{\rm KB}\}$.
We define the half-open interval $(\sigma,\tau]_{\rm KB}$ etc.\ in a similar manner.
One can see that for $\sigma\in \mathcal{O}_{\om^\om}\setminus \mathcal{O}_{\om^\om}^{\rm leaf}$ and $j\in\om$,
\[(\sigma j,\sigma]_{\rm KB}=\{\tau\in \mathcal{O}_{\om^\om}:\tau=\sigma\mbox{ or }(\exists k>j)\;\tau\succeq\sigma k\}.\]

We topologize $\mathcal{O}_{\om^\om}$ by declaring the following as an open basis:
\[\{\{\sigma\}:\sigma\in \mathcal{O}_{\om^\om}^{\rm leaf}\}\cup\{(\sigma j,\sigma]_{\rm KB}:\sigma\not\in \mathcal{O}_{\om^\om}^{\rm leaf}\mbox{ and }j\in\om\}.\]

One can see that $\mathcal{O}_{\om^\om}$ is homeomorphic to the order topology on the ordinal $\om^\om+1$.
As in Remark \ref{rem:om31-total}, one can see that the ordinal space $\om^\om+1$ is zero-dimensional, compact, and metrizable.
For $k>0$, consider the following subsets of the space $\mathcal{O}_{\om^\om}$.
\begin{align*}
I_0&=I_\infty=\{\sigma\in \mathcal{O}_{\om^\om}:|\sigma|=0\}=\{\langle\rangle\},\\
I_{2k}&=\{\sigma\in \mathcal{O}_{\om^\om}:|\sigma|=k\}\\
I_{1}&=\{\sigma\in \mathcal{O}_{\om^\om}^{\rm leaf}:|\sigma|\geq 2\mbox{ and }(\forall j>1)\;\sigma(j)=0\},\\
I_{2k+1}&=\{\sigma\in \mathcal{O}_{\om^\om}^{\rm leaf}:|\sigma|\geq k+2,\;\sigma(k+1)>0\mbox{ and }(\forall j>k+1)\;\sigma(j)=0\}.
\end{align*}

Note that $(I_k:k\in\om)$ is a partition of the ordinal $\om^\om+1$.
Moreover, each set $I_k$ is cofinal in $\om^\om$ for $k\in\om\setminus\{0\}$.
We now introduce the {\em quasi-Polish Roy space}, whose underlying set is given as follows.
\[\mathcal{QR}=\{(x,y):x\in\hat{\om}\mbox{ and }y\in I_x\}.\]

Let $\tau$ be the topology on $\mathcal{O}_{\om^\om}$ defined as above.
The set $\mathcal{QR}$ is topologized by declaring the following as an open basis.
\[\{\{2n+1\}\times U,\ \{0,1\}\times U,\ [2n+1,2n+3]\times U,\ [2n+1,\infty]\times \mathcal{O}_{\om^\om}:n\in\om,\ U\in\tau\}.\]

As in Remark \ref{rem:om31-total}, one can also ensure that the projection $\pi:\mathcal{QR}\to\om^\om+1$ defined by $\pi(x,y)=y$ is (computably) continuous.

\myprop{propquasipolishroy}{
The quasi-Polish Roy space $\mathcal{QR}$ is a second-countable $T_{2.5}$ space which is not completely Hausdorff.
}

We now begin to examine the degree structure of the product chain space $\mathcal{QR}^\om$.
The coded neighborhood basis of $z=(x_n,y_n)\in\mathcal{QR}^\om$ is given as follows.
\begin{align*}
&\{\langle 0,n,k,\sigma\rangle:x_n=2k+1\mbox{ and }y_n=\sigma\}\\
\cup\,&\{\langle 1,n,k,\sigma j\rangle:|x_n-2k|\leq 1\mbox{ and }y_n\in(\sigma j,\sigma]_{\rm KB}\}\\
\cup\,&\{\langle 2,n,k\rangle:x_n>2k\}.
\end{align*}

Fix two symbols $\bot_0,\bot_1\not\in\om$, and consider $\tilde{\om}=\om\cup\{\bot_0,\bot_1\}$.
Then, consider the following {\em halfgraph} of a function $f:\om\to\tilde{\om}$:
\begin{align*}
{\rm HalfGraph}(f)=\,&\{2\langle n,m\rangle:f(n)=2m\}\\
\cup\,&\{2\langle n,m\rangle+1:f(n)\in\om\mbox{ and }f(n)\geq 2m\}.
\end{align*}

We say that a function $f:\om\to\tilde{\om}$ is {\em half-c.e.}\ if it has a c.e.\ halfgraph, that is, ${\rm HalfGraph}(f)$ is c.e.
We also say that a function $f:\om\to\tilde{\om}$ is {\em computably dominated} if there is a partial computable function $g:\subseteq\om\to\om$ such that
\[(\forall n\in\om)\;[f(n)\in\om\;\Longrightarrow\;n\in{\rm dom}(g)\mbox{ and }f(n)<g(n)].\]

Then, we consider the {\em extended halfgraph} of $f:\om\to\tilde{\om}$:
\begin{align*}
{\rm HalfGraph}^+(f)=\,&\{2\langle n,m\rangle:f(n)=\bot_0\mbox{ or }f(n)\leq 2m\}\\
\cup\,&\{2\langle n,m\rangle+1:f(n)=\bot_1\mbox{ or }f(n)\geq 2m\}.
\end{align*}

We say that $\mathbf{d}$ is {\em Roy halfgraph-above} if $\mathbf{d}$ contains a set $S$ of the form
\[S=Y\oplus\cmp{Y}\oplus{\rm HalfGraph}^+(f)\]
for some $Y\subseteq\om$ and $f:\om\to\tilde{\om}$ such that $f$ is half-c.e.\ and computably dominated relative to $Y$.

\myprop{proproyhalfgraph}{\index{Degrees!co-d-CEA}\index{Degrees!Roy halfgraph-above}\index{Degrees!doubled co-d-CEA}
Every co-$d$-CEA $e$-degree is Roy halfgraph-above.
Every Roy halfgraph-above $e$-degree is doubled co-$d$-CEA.
}

\mythm{thm:chained-co-d-CEA}{
The $\mathcal{QR}^\om$-degrees are exactly the Roy-halfgraph-above degrees, that is,
\[\mathcal{D}_{\mathcal{QR}^\om}=\{\mathbf{d}\in\mathcal{D}_e:\mathbf{d}\mbox{ is Roy-halfgraph-above}\}.\]
}

\subsection{Degrees of points: submetrizable topology}

We say that a space is {\em submetrizable} if it admits a continuous metric, that is, either it is metrizable or it has a coarser metrizable topology (see \cite{Gru13}).\index{Separation axioms!submetrizable}
Every submetrizable space is completely Hausdorff, and every completely Hausdorff space is $T_{2.5}$:
\[\mbox{submetrizable}\;\Longrightarrow\;\mbox{completely Hausdorff}\;\Longrightarrow\;T_{2.5}.\]

The Arens square and Roy's lattice space are not completely Hausdorff, and hence, not submetrizable.
Extending the topology of a metrizable space always gives us a submetrizable space which is not necessarily metrizable.

\subsubsection{Extension topology}\label{sec:horrible-extension-topology}

One of the most basic constructions to obtain a non-metrizable, completely Hausdorff topology is extending a metrizable topology by adding new open sets.
Concretely speaking, given a space $\xx$ with a metrizable topology $\tau_\xx$, choose a collection $\mathcal{U}$ of subsets of $\xx$, and consider the topology generated by $\tau_\xx\cup\mathcal{U}$.
We denote the obtained space by $\xx_\mathcal{U}$.\index{Extension topology}
By definition, $\xx_\mathcal{U}$ is submetrizable (hence, completely Hausdorff, and $T_{2.5}$); however $\xx_\mathcal{U}$ is not necessarily metrizable.
In this article, since we are only interested in a second-countable topology, we always assume that $\mathcal{U}$ is countable.

\begin{example}
Let us begin with Cantor space $2^\om$.
Let $\mathcal{U}=(U_e)_{e\in\om}$ be a countable collection of subsets of $2^\om$.
This induces a representation of the extension topology on $2^\om$ induced by $\mathcal{U}$, and for any point $x\in (2^\om)_\mathcal{U}$, its coded neighborhood filter is given as:
\[\nbase{x}=\{\langle 0,\sigma\rangle:\sigma\prec x\}\cup\{\langle 1,e\rangle:x\in U_e\}.\]
\end{example}

Let $\Gamma$ be a countable collection of subsets of $\om\times 2^\om$.
We say that a set $A\subseteq\om$ is {\em $\Gamma$ relative to $X\in 2^\om$} (written $A\in\Gamma^X$) if there is $G\in\Gamma$ such that $A=\{n:\langle n,X\rangle\in G\}$.
We also say that an $e$-degree $\mathbf{d}$ is {\em $\Gamma$-above} if $A\oplus X\oplus\cmp{X}\in\mathbf{d}$ for some $A,X\subseteq\om$ such that $A$ is $\Gamma$ relative to $X$.\index{Degrees!$\Gamma$-above}

Let $\beta$ be a representation of a space $\xx$, and let $\gamma$ be a countable collection of (not necessarily open) subsets of $\xx$.
We say that {\em $\gamma$ computably extends $\beta$} if there is a c.e.\ set $W$ such that $\beta_e=\bigcup\{\gamma_d:\langle d,e\rangle\in W\}$.
Then, $\gamma$ is a representation of $\xx_\gamma$ (the result of adding sets from $\gamma$ to the original space $\xx$ as new open sets).

\begin{obs}\label{obs:comp-extension-rep}
If $\gamma$ computably extends $\beta$, then $\nbaseb{\beta}{x}\leq_e\nbaseb{\gamma}{x}$ for any $x\in\xx$.
\end{obs}

\begin{proof}
This is because $e\in\nbaseb{\beta}{x}$ if and only if there is $d$ such that  $\langle d,e\rangle\in W$ and $d\in\nbaseb{\gamma}{x}$.
\end{proof}

Let $\lambda$ be the canonical representation of Cantor space, that is, $\lambda_e$ is the set of all extensions of the $e$-th binary string.
We characterize $\Gamma$-above $e$-degrees in terms of extension topology.

\myprop{prop:Gamma-above}{
The following are equivalent for a collection $\mathcal{C}$ of $e$-degrees:
\begin{enumerate}
\item There is $\beta$ computably extending $\lambda$ such that $\mathcal{C}=\mathcal{D}_{(2^\om)_\beta}$.
\item There is a countable collection $\Gamma$ of subsets of $2^\om$ such that
\[\mathcal{C}=\{\mathbf{d}\in\mathcal{D}_e:\mathbf{d}\mbox{ is $\Gamma$-above}\}.\]
\end{enumerate}
}

\subsubsection{A horrible behavior of extension topology}\label{sec:newhorrible}

In this section, we examine the notion of a $\mathcal{T}$-quasi-minimal $e$-degree for a collection $\mathcal{T}$ of represented cb$_0$ spaces, and try to construct such a degree.

For instance, on the one hand, in Section \ref{sec:T-1-quasiminimal}, we will see that if $\mathcal{T}$ is a countable collection of $T_1$ spaces, then there is a $\mathcal{T}$-quasi-minimal $e$-degree.
On the other hand, in this section, we will show that if $\mathcal{T}$ is a collection of all decidable $T_1$ spaces, there is no $\mathcal{T}$-quasi-minimal $e$-degree.
At first glance, this looks paradoxical.
--- When we talk about an effective version of some notion $\mathcal{P}$, we often implicitly assume that there are only countably many objects which are effectively-$\mathcal{P}$.
However, it is, strictly speaking, sometimes incorrect.
There is a possibility of the existence of uncountably many effective objects.

Consider partial computable functions on Baire space.
Any restriction $f\upto A$ of a partial computable function $f:\subseteq\om^\om\to\om^\om$ is also partial computable, and therefore, there are uncountably many partial computable functions.
Fortunately, every partial computable function is merely a restriction of a partial computable function with a $\Pi^0_2$ domain, and there are only countably many such functions.
Hence, essentially, we only need to deal with countably many partial computable functions.

Can we say the same thing about computable or decidable $T_i$-spaces?
Is there a countable collection $(\xx_n)_{n\in\om}$ of represented $T_i$-spaces such that every computable cb$_0$ space embeds into $\xx_n$ for some $n\in\om$?
Of course, it is true if $i=0$ since there is a universal decidable cb$_0$ space, $\mathbb{S}^\om$ say, or if $i=3$ since there is a universal decidable metric space, $[0,1]^\om$ say.\index{Universal space}
Contrary to these cases, we will show that the answer is ``no,'' for any $i\in\{1,2,2.5\}$.

In this section, we show that if $\mathcal{T}$ is a countable collection of $T_1$ spaces, then there is a $\mathcal{T}$-quasi-minimal $e$-degree, whereas if $\mathcal{T}$ is a collection of all decidable $T_1$ spaces, there is no $\mathcal{T}$-quasi-minimal $e$-degree.

\subsubsection*{Non-existence of universal spaces}

Recall that every metrizable space is submetrizable, and every submetrizable space is $T_{2.5}$.
By using an argument of extending topology, we show that every $e$-degree is realized as the degree of a point in a decidable submetrizable space.
In particular, every $e$-degree is the degree of a point of a decidable $T_{2.5}$ space.

\mythm{thm:e-degree-realize}{\index{Universal space}
Every $e$-degree is an $\xx$-degree for some decidable, submetrizable, cb$_0$ space $\xx$, that is,
\[\mathcal{D}_e=\bigcup\{\mathcal{D}_\xx:\xx\mbox{ is a decidable, submetrizable, cb$_0$ space}\}.\]
}

We will give a more detailed analysis of Theorem \ref{thm:e-degree-realize}.
Recall from Proposition \ref{prop:deBrecht-named} that a represented space is $\mathbf{\Pi}^0_2$-named if and only if it is quasi-Polish.
We will also use a stronger naming condition.
Consider the following sets:
\begin{align*}
{\rm Sup}(\xx)&=\{p\in\om^\om:(\exists x\in\xx)\;{\rm Nbase}(x)\subseteq{\rm rng}(p)\},\\
{\rm Sub}(\xx)&=\{p\in\om^\om:(\exists x\in\xx)\;{\rm rng}(p)\subseteq{\rm Nbase}(x)\}.
\end{align*}

We always have  ${\rm Name}(\xx)\subseteq{\rm Sup}(\xx)\cap{\rm Sub}(\xx)$.
Moreover,
\[\xx\mbox{ is }T_1\;\Longrightarrow\; {\rm Name}(\xx)={\rm Sup}(\xx)\cap{\rm Sub}(\xx).\]

We say that $\xx$ is {\em strongly $\Gamma$-named}\index{Strongly $\Gamma$-named} if there are $\Gamma$ sets $P,N$ such that
\[{\rm Sub}(\xx)\subseteq N,\;{\rm Sup}(\xx)\subseteq P\mbox{, and }{\rm Name}(\xx)=P\cap N.\]

For instance, one can easily see that Baire space $\om^\om$ is strongly $\Pi^0_2$-named, and the telophase space $(\hat{\om}_{TP})^\om$ is strongly $\Pi^0_3$-named.

\myprop{prop:e-degree-realize-arith}{
Let $n\geq 4$.
If $\mathbf{d}$ is an $e$-degree of a $\Delta^0_n$ set, then there is a decidable, strongly $\Pi^0_n$-named, submetrizable, cb$_0$ space $\xx_\mathbf{d}$ such that $\mathbf{d}$ is an $\xx_\mathbf{d}$-degree.
}

In particular, there is a quasi-minimal $e$-degree realized in a strongly $\Pi^0_4$-named submetrizable space since there is a $3$-c.e.~(hence $\Delta^0_2$) quasi-minimal $e$-degree (see Proposition \ref{irrelatti-quasi-minimal}).
Indeed, Proposition \ref{irrelatti-quasi-minimal} shows that a quasi-minimal $e$-degree can be realized in a strongly $\Pi^0_3$-named submetrizable space.\index{quasi-minimal}

\subsubsection{Gandy-Harrington topology}\label{sec:3-7-3}

We now consider a special kind of extension topology.\index{Space!Gandy-Harrington}
Let $GH_e$ be the $e$-th $\Sigma^1_1$ subset of Baire space $\om^\om$.
The extension topology $\tau_{GH}$ on $\om^\om$ generated by $GH=(GH_e)_{e\in\om}$ is known as the {\em Gandy-Harrington topology}.
This topology is known to have a number of applications in descriptive set theory and related areas (e.g.\ \cite{Lou80,HKL90}).

We always assume that the Gandy-Harrington space $(\om^\om)_{GH}=(\om^\om,\tau_{GH})$ is represented by $GH$, and then, for any point $x\in(\om^\om)_{GH}$, its coded neighborhood filter is given as:
\[\nbase{x}=\{e\in\om:x\in GH_e\}.\]

It is clear that every $(\om^\om)_{GH}$-degree is $\Sigma^1_1$-above, but be careful that the converse is not true.
We now assume that $\lambda$ is the canonical representation of Baire space $\om^\om$.

\myprop{prop:gandy-harrington}{
For every $x\in\om^\om$ and $\alpha<\omega_1^{{\rm CK},x}$ (that is, $\alpha$ is an $x$-computable ordinal), we have the following inequalities:
\[\nbaseb{\lambda}{x^{(\alpha)}}\leq_e\nbaseb{GH}{x}\leq_e\nbaseb{\lambda}{x^{{\rm HJ}}},\]
where $x^{(\alpha)}$ denotes the $\alpha$-th Turing jump of $x$, and $x^{\rm HJ}$ denotes the hyperjump of $x$.
}

Recall that an $e$-degree is said to be {\em continuous} if it is an $\mathcal{H}$-degree, where $\mathcal{H}$ is Hilbert cube $[0,1]^\mathbb{N}$ with the canonical representation.
Concretely speaking, the coded neighborhood filter of $x=(x(n))_{n\in\om}\in\mathcal{H}$ is:
\[\nbaseb{\mathcal{H}}{x}=\{\langle n,s,p\rangle:|x(n)-p|<2^{-s}\},\]
where $n$ and $s$ range over $\om$, and $p$ ranges over $\mathbb{Q}\cap[0,1]$.

\mythm{thm:GH-non-continuous}{
No $(\om^\om)_{GH}$-degree is continuous.\index{Degrees!continuous}\index{Space!Gandy-Harrington}
}

\subsubsection{Irregular Lattice Topology}\label{sec:3-7-4}

As another example of an extension topology we consider the irregular lattice topology. It indeed fails to be metrizable (and hence is irregular) -- this is a consequence of Proposition \ref{irrelatti-quasi-minimal} below.
Let $\hat{\om}=\om\cup\{\infty\}$ be the one-point compactification of $\om$.
For a point $x\in\hat{\om}^\om$, its coded neighborhood filter is given as:
\[\nbaseb{\hat{\om}^\om}{x}=\{\langle 0,n,k\rangle:x(n)=k\}\cup\{\langle 1,n,k\rangle:x(n)\geq k\},\]
where $n$ and $k$ range over $\om$.
Note that $\hat{\om}^\om$ is a zero-dimensional compact metrizable space.
In the computability-theoretic context, this is rephrased as follows:

\begin{obs}\label{obs:one-point-compactification}
The $\hat{\om}^\om$-degrees are exactly the total degrees.
\end{obs}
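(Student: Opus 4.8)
The plan is to prove the two inclusions $\mathcal{D}_{\hat{\om}^\om}\subseteq\mathcal{D}_T$ and $\mathcal{D}_T\subseteq\mathcal{D}_{\hat{\om}^\om}$ separately, in both cases by exhibiting explicit enumeration operators between coded neighborhood filters.

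For $\mathcal{D}_{\hat{\om}^\om}\subseteq\mathcal{D}_T$ I would fix a point $x\in\hat{\om}^\om$ and show $\cmp{\nbase{x}}\leq_e\nbase{x}$, since by definition this makes $\deg_e(\nbase{x})$ total. The key observation is that membership in $\cmp{\nbase{x}}$ is a $\Sigma^0_1$ condition over $\nbase{x}$. Concretely, a code $\langle 1,n,k\rangle$ lies in $\cmp{\nbase{x}}$ --- i.e.\ $x(n)<k$ --- if and only if $\langle 0,n,j\rangle\in\nbase{x}$ for some $j<k$; and a code $\langle 0,n,k\rangle$ lies in $\cmp{\nbase{x}}$ --- i.e.\ $x(n)\neq k$ --- if and only if either $\langle 0,n,j\rangle\in\nbase{x}$ for some $j<k$ (the case $x(n)<k$) or $\langle 1,n,k+1\rangle\in\nbase{x}$ (the case $x(n)>k$, which includes $x(n)=\infty$). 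Codes of neither shape lie in $\cmp{\nbase{x}}$ and form a computable set. Hence a single enumeration operator witnesses $\cmp{\nbase{x}}\leq_e\nbase{x}$, uniformly in $x$.

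For $\mathcal{D}_T\subseteq\mathcal{D}_{\hat{\om}^\om}$ I would use the inclusion $2^\om\hookrightarrow\hat{\om}^\om$ that sends a binary sequence to itself, each bit regarded as an element of $\{0,1\}\subseteq\hat{\om}$. For $y\in 2^\om$ one has $\langle 0,n,1\rangle\in\nbaseb{\hat{\om}^\om}{y}\iff y(n)=1$ and $\langle 0,n,0\rangle\in\nbaseb{\hat{\om}^\om}{y}\iff y(n)=0$, so $\nbaseb{\hat{\om}^\om}{y}$ is $e$-equivalent to $y\oplus\cmp{y}$, i.e.\ to $\nbaseb{2^\om}{y}$; thus the inclusion is a computable embedding. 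Since the $2^\om$-degrees are exactly the total degrees, every total degree is already a $\hat{\om}^\om$-degree, which gives the inclusion.

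The argument is essentially bookkeeping; the one place requiring a moment's care is the case $x(n)=\infty$ in the first inclusion. The event ``$x(n)=\infty$'' is \emph{not} positively recognizable from $\nbase{x}$, but it need not be recognized: the clause $\langle 1,n,k+1\rangle\in\nbase{x}$ simultaneously covers $x(n)=\infty$ and all finite values $x(n)>k$, and this is exactly why $\cmp{\nbase{x}}\leq_e\nbase{x}$ goes through without ever having to distinguish $\infty$ from a large finite number.
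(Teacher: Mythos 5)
Your proof is correct and follows essentially the same route as the paper: the paper identifies $x\in\hat{\om}^\om$ with a partial function $\hat{x}$ (reading $x(n)=\infty$ as ``undefined'') and checks $\nbase{x}\equiv_e{\rm Graph}(\hat{x})\oplus\cmp{{\rm Graph}(\hat{x})}$, which is exactly the same $\infty$-blind bookkeeping you use to show $\cmp{\nbase{x}}\leq_e\nbase{x}$ directly, with the converse inclusion likewise obtained from the obvious copy of $2^\om$ (or $\om^\om$) inside $\hat{\om}^\om$.
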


\begin{proof}
To see this (that is, to show totality of a point in $\hat{\om}^\om$), we identify $x\in\hat{\om}^\om$ with a partial function by interpreting $x(n)=\infty$ as that $x(n)$ is undefined.
Then, it is not hard to check that $\nbase{x}$ is $e$-equivalent to ${\rm Graph}(x)\oplus\cmp{{\rm Graph}(x)}$.
The latter set is clearly total.
\end{proof}

We consider an extension topology on (a subset of) $\hat{\om}^\om\times\hat{\om}^\om$.
Let $\mathcal{L}$ be the space whose underlying set is $L=((\om\times\hat{\om})\cup\{(\infty,\infty)\})^\om$ whose topology is generated by the (subspace) topology on $\hat{\om}^\om\times\hat{\om}^\om$.
Then define $\mathcal{L}_{IL}$ as the space obtained by extending $\mathcal{L}$ by adding new open sets $IL=(V_{a,b})_{a,b\in\om}$, where
\[V_{a,b}=\{(c,d)\in\om^2:c\geq a\mbox{ and }d\geq b\}\cup\{(\infty,\infty)\}.\]
We note that $(n,\infty)\not\in V_{a,b}$ for any $n\in\om$.
This is known as the {\em irregular lattice topology} (see Steen-Seebach \cite[II.79]{CTopBook}).\index{Space!irregular lattice space}

We introduce a countable open subbasis of $(\mathcal{L}_{IL})^\om$ as follows.
First put $P_{a,b}=\{(a,b)\}$, $U_{a,b}=\{(a,d)\in\om\times\hat{\om}:d\geq b\}$, and $V_{a,b}$ for $a,b\in\om$.
Given $Q\subseteq\mathcal{L}_{IL}$ and $n\in\om$, we write $Q^n=\{x\in(\mathcal{L}_{IL})^\om:x(n)\in Q\}$.
Then, $(P^n_{a,b},U^n_{a,b},V^n_{a,b}:a,b\in\om)$ forms a subbasis of $(\mathcal{L}_{IL})^\om$.
For instance, an open neighborhood $\{(c,d)\in L:c\geq a\mbox{ and }d\geq b\}$ of $(\infty,\infty)$ can be written as $V_{a,b}\cup\bigcup_{c\geq a}U_{c,b}$.

Each point $z\in(\mathcal{L}_{IL})^\om$ can be thought of as the unique pair $(x,y)$ satisfying $z(n)=(x(n),y(n))$ for all $n\in\om$.
Then, the coded neighborhood filter of $(x,y)\in(\mathcal{L}_{IL})^\om$ is given as:
\begin{align*}
\nbaseb{(\mathcal{L}_{IL})^\om}{x,y}=&\{\langle 0,n,a,b\rangle:x(n)=a\mbox{ and }y(n)=b\}\\
&\cup\{\langle 1,n,a,b\rangle:x(n)=a\mbox{ and }b\leq y(n)\}\\
&\cup\{\langle 2,n,a,b\rangle:x(n)=y(n)=\infty\mbox{ or }[a\leq x(n)\mbox{ and }b\leq y(n)<\infty]\}.
\end{align*}

We will show that
\begin{align*}
\mathcal{D}_{(\mathcal{L}_{IL})^\om}&=\{\mathbf{d}\in\mathcal{D}_e:\mathbf{d}\mbox{ is co-$d$-CEA}\}\\
&=\{\mathbf{d}\in\mathcal{D}_e:\mathbf{d}\mbox{ is [$\ast$,$\Pi^0_1$,$\Pi^0_1$]-separating-above}\}.
\end{align*}

\myprop{prop:irregular-lattice-degree}{
The $(\mathcal{L}_{IL})^\om$-degrees are exactly the co-$d$-CEA degrees.\index{Space!irregular lattice space}\index{Degrees!co-d-CEA}
}

\myprop{prop:irregular-lattice-degree2}{
The $(\mathcal{L}_{IL})^\om$-degrees (hence the co-$d$-CEA degrees) are exactly the $[\ast,\Pi^0_1,\Pi^0_1]$-separating-above $e$-degrees.
In other words, a nonempty set $E\subseteq\om$ is co-$d$-CEA if and only if there are $X,A,B\subseteq\om$ such that $B$ and $A\cup B$ are $X$-co-c.e., $A\cap B=\emptyset$, and
\[{\rm Enum}(E)\equiv_M\{X\}\times{\rm Sep}(A,B).\]
}

Kalimullin has shown that there is a quasi-minimal co-d-c.e.~$e$-degree.
As a consequence:

\begin{prop}\label{irrelatti-quasi-minimal}
The product irregular lattice space $\mathcal{L}^\om$ contains a $\Delta^0_2$ point of quasi-minimal degree.\index{quasi-minimal}
\end{prop}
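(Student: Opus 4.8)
The plan is to reduce the statement to a known existence result in classical computability theory via Proposition \ref{prop:irregular-lattice-degree}. That proposition tells us that $\mathcal{D}_{(\mathcal{L}_{IL})^\om}$ is precisely the class of co-$d$-CEA $e$-degrees, so it suffices to exhibit a co-$d$-CEA $e$-degree that is quasi-minimal and is, moreover, realized by a point of $(\mathcal{L}_{IL})^\om$ whose coded neighborhood filter is $\Delta^0_2$.

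The first ingredient is the elementary observation that every co-$d$-c.e.\ $e$-degree is co-$d$-CEA: if $E=A\cup P$ with $A$ co-c.e., $P$ c.e.\ and $A\cap P=\emptyset$, then taking $X=\emptyset$ in Definition \ref{def:co-dCEA}(1), the set $(X\oplus\cmp{X})\oplus(A\cup P)$ has the required form (here $\cmp{A}$ and $P$ are trivially $X$-c.e.\ and $A,P$ are disjoint) and it is $e$-equivalent to $E$. The second ingredient is Kalimullin's theorem (see Cooper \cite{CooperE}) that there is a co-$d$-c.e.\ set $E$ of quasi-minimal $e$-degree; since every $d$-c.e.\ set is $\Delta^0_2$ and the $\Delta^0_2$ sets are closed under complementation, such an $E$ is itself $\Delta^0_2$, hence $E\leq_T\emptyset'$ and $\cmp{E}\leq_T\emptyset'$.

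It remains to check that this quasi-minimal degree is realized by a $\Delta^0_2$ \emph{point} of $(\mathcal{L}_{IL})^\om$, and not merely that it is an $(\mathcal{L}_{IL})^\om$-degree. For this I would inspect the construction in the proof of Proposition \ref{prop:irregular-lattice-degree} that turns a co-$d$-CEA presentation $(X\oplus\cmp{X})\oplus(A\cup P)$ into a point $z=(x,y)\in(\mathcal{L}_{IL})^\om$ with $\nbase{z}\equiv_e E$. With $X=\emptyset$ and $E$ as above, that construction fixes the values $x(n)$ and $y(n)$ by a $\emptyset'$-effective procedure reading a $\Delta^0_2$ approximation of $E$ (together with enumerations of the relevant c.e.\ sets), so $x$, the set $\{n:y(n)=\infty\}$, and $y$ restricted to its finite part are all $\Delta^0_2$. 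Each of the three clauses defining $\nbaseb{(\mathcal{L}_{IL})^\om}{x,y}$ is then a $\Delta^0_2$ predicate of $(x,y)$, so $\nbase{z}$ is $\Delta^0_2$; thus $z$ is the desired $\Delta^0_2$ point, and $\deg_e(\nbase{z})=\deg_e(E)$ is quasi-minimal.

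The main obstacle is precisely this last step. Since $e$-equivalence does not preserve the $\Delta^0_2$ level, it is not formally automatic that a $\Delta^0_2$ $e$-degree is realized by a point with a $\Delta^0_2$ name; one has to re-enter the realization construction of Proposition \ref{prop:irregular-lattice-degree} and verify that feeding it a $\Delta^0_2$ oracle keeps the output in $\Delta^0_2$. Everything else — the inclusion of the co-$d$-c.e.\ degrees into the co-$d$-CEA degrees, and the appeal to Kalimullin's quasi-minimal co-$d$-c.e.\ degree — is immediate.
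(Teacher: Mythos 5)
Your proposal is correct and follows essentially the same route as the paper: the paper also derives the result by combining Kalimullin's quasi-minimal co-$d$-c.e.\ $e$-degree (cited via Cooper) with Proposition \ref{prop:irregular-lattice-degree}, noting that co-$d$-c.e.\ degrees are co-$d$-CEA (take $X=\emptyset$). Your extra check that the realization construction, fed a $\Delta^0_2$ presentation, outputs a point with $\Delta^0_2$ coded neighborhood filter is a detail the paper leaves implicit, and it goes through exactly as you describe.
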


We also show that the product irregular lattice space is strictly smaller than the product double-origin space in the sense of degrees.

\myprop{prop:proper-doubled}{
There is a doubled co-$d$-CEA $e$-degree which is not co-$d$-CEA.\index{Degrees!doubled co-d-CEA}\index{Degrees!co-d-CEA}
}


\subsection{Degrees of points: $G_\delta$-topology}\label{sec:network}
The next stop in our investigation are the degrees of points in $G_\delta$-spaces and see that these are just the cototal degrees. We briefly recall the definition of $G_\delta$-spaces:

\begin{definition}
A {\em $G_\delta$-space} or a {\em perfect space} is a topological space in which every closed set is $G_\delta$ (i.e.\ an intersection of countably many open sets).\index{Separation axioms!$G_\delta$}
\end{definition}

In other words, a $G_\delta$-space is a space in which the classical Borel hierarchy is well-behaved.
Recall from Section \ref{sec:quasi-Polish} that a set is $\mathbf{\Pi}^0_2$ if it can be obtained as a countable intersection of constructible sets (in the sense of classical algebraic geometry).
Note that we always have $G_\delta\subseteq\mathbf{\Pi}^0_2$.

\myobs{obs:Gdelta-Borel-hierarchy}{
A space is $G_\delta$ if and only if $G_\delta=\mathbf{\Pi}^0_2$.
}

\subsubsection{Closed networks and $G_\delta$-spaces}\label{sec:G-delta-space}

One of the key notions in this article is a network of a topological space introduced by Arhangel'skii in 1959, which has become one of the most fundamental notions in modern general topology.

\begin{definition}[Arhangel'skii]
A {\em network} $\nn$ for a topological space $\xx$ is a collection of subsets (not necessarily open) of $\xx$ such that, for any open neighborhood $U\subseteq\xx$ of a point $x\in\xx$, there is $N\in\nn$ such that $x\in N\subseteq U$.
\end{definition}

If $\Gamma$ is a collection of subsets of $\xx$, by a {\em $\Gamma$ network}, we mean a network consisting of $\Gamma$ sets.
For instance, an open network for $\xx$ is precisely an open basis of $\xx$.
A closed network is a network consisting of closed sets.
Later, we will see that the notion of a {\em closed network} plays an important role in our work.
We first see the following characterization of $T_1$-spaces.

\myobs{observation:closed-network}{\index{Separation axioms!$T_1$}\index{closed network}
A $T_0$ space $\xx$ is $T_1$ if and only if $\xx$ has a closed network.
}

\myprop{prop:G_delta-equal-closed-network}{
A second-countable space $\xx$ is a $G_\delta$-space if and only if $\xx$ has a countable closed network.}

Note that even if $\xx$ is not second-countable, the proof of Proposition \ref{prop:G_delta-equal-closed-network} shows the following implications:
\[\mbox{$\xx$ has a countable closed network }\Longrightarrow\mbox{ $\xx$ is $G_\delta$ }\Longrightarrow\mbox{ $\xx$ has a closed network}.\]
Thus, the property being a $G_\delta$-space can be thought of as a strengthening of being $T_1$ in the category of $T_0$ spaces.
One can see further implications as follows:

\myobs{obs:twin-second-countable}{
For a second-countable $T_0$ space, we have the following implications:\index{Separation axioms!$T_1$}
\begin{align*}
\mbox{compact and $T_1$ } \rotatebox{-30}{ $\Longrightarrow$}&\\
& \mbox{ $G_\delta$ }\Longrightarrow\mbox{ $T_1$}.\\
\mbox{metrizable }\rotatebox{30}{ $\Longrightarrow$}&
\end{align*}
}

We will also see that none of the above implications can be reversed.

\begin{obs}
There is a non-compact non-metrizable second-countable $G_\delta$-space.
For instance, the double origin space $(\mathcal{Q}_{DO})^\om$ is non-compact, non-metrizable, but $G_\delta$.\index{Space!double origin}
\qed
\end{obs}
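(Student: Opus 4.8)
The plan is to verify the three asserted properties of $(\mathcal{Q}_{DO})^\om$ separately, in each case reducing to a statement about the single factor $\mathcal{Q}_{DO}$ or to a general product lemma.

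\emph{Non-compactness.} A coordinate projection $\pi_0\colon(\mathcal{Q}_{DO})^\om\to\mathcal{Q}_{DO}$ is a continuous surjection, and a continuous image of a compact space is compact, so it suffices to see that $\mathcal{Q}_{DO}$ itself is not compact. On $\mathcal{Q}_{DO}\setminus\{\mathbf{0},\mathbf{0}_\star\}$ the topology coincides with the Euclidean topology of $(\mathbb{Q}\cap(-1,1))^2$, and every basic neighborhood of $\mathbf{0}$ or of $\mathbf{0}_\star$ is confined to a horizontal strip $\{(x,y):|y|<c\}$. Hence the line $C=\{(q,1/2):q\in\mathbb{Q}\cap(-1,1)\}$ is closed in $\mathcal{Q}_{DO}$ (it is Euclidean-closed and neither origin lies in its closure). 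But $C$, with its subspace topology, is homeomorphic to $\mathbb{Q}\cap(-1,1)$, which is not compact. Since a closed subspace of a compact space is compact, $\mathcal{Q}_{DO}$, and therefore $(\mathcal{Q}_{DO})^\om$, is not compact.

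\emph{Non-metrizability.} Fixing any point $p\in\mathcal{Q}_{DO}$, the map $x\mapsto(x,p,p,\dots)$ is a topological embedding of $\mathcal{Q}_{DO}$ into $(\mathcal{Q}_{DO})^\om$; since metrizability passes to subspaces, it is enough to show $\mathcal{Q}_{DO}$ is not metrizable, and for that it is enough to show $\mathcal{Q}_{DO}$ is not regular. Take the point $\mathbf{0}$ and the set $C'=\{(1/n,0):n\ge 2\}$. As above, $C'$ is closed in $\mathcal{Q}_{DO}$ and $\mathbf{0}\notin C'$ (a basic neighborhood of $\mathbf{0}$ is contained in $\{y>0\}\cup\{\mathbf{0}\}$ and so misses $C'$, and similarly for $\mathbf{0}_\star$). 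However $\mathbf{0}$ and $C'$ cannot be separated by disjoint open sets: an open $U\ni\mathbf{0}$ contains a basic neighborhood $\{\mathbf{0}\}\cup((-1/k,1/k)\times(0,1/\ell))$, while an open $V\supseteq C'$ contains, for each $n>k$, an ordinary rectangular Euclidean neighborhood of the point $(1/n,0)$, which in turn contains a point $(1/n,y_0)$ with rational $0<y_0<1/\ell$; this point lies in both $U$ and $V$. Thus $\mathcal{Q}_{DO}$ is not regular, hence not metrizable, and therefore $(\mathcal{Q}_{DO})^\om$ is not metrizable.

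\emph{The $G_\delta$ property.} By Proposition \ref{prop:G_delta-equal-closed-network} it suffices to produce a countable closed network for the second-countable space $(\mathcal{Q}_{DO})^\om$. The space $\mathcal{Q}_{DO}$ is countable and $T_1$, so the family of all its singletons is a countable closed network. More generally, if $\xx=\prod_{n}\xx_n$ and each $\xx_n$ carries a countable closed network $\mathcal{N}_n$, then the family of all finite intersections $\bigcap_{i<k}\pi_i^{-1}(N_i)$ with $k\in\om$ and $N_i\in\mathcal{N}_i$ is a countable closed network for $\xx$: each such set is closed (a finite intersection of preimages of closed sets under continuous projections), the family is countable, and it is a network since any basic open box containing a point $x$ can be refined coordinate by coordinate using the networks $\mathcal{N}_i$. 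Taking $\xx_n=\mathcal{Q}_{DO}$ for all $n$ yields a countable closed network for $(\mathcal{Q}_{DO})^\om$, which is therefore a $G_\delta$-space.

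The one step that requires genuine care is the failure of regularity: the closed set $C'$ has to approach $\mathbf{0}$ only in directions lying outside the ``pinched'' neighborhoods of $\mathbf{0}$, while staying inside the full Euclidean neighborhoods of its own points, which is exactly what forces any pair of separating open sets to meet. Everything else is routine, once one notices that the $G_\delta$ property is best obtained from the product behavior of countable closed networks rather than attempted directly.
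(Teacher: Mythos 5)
Your proof is correct. The paper states this observation with a bare \qed and supplies no argument of its own, so there is nothing to compare routes against; your three-part verification — non-compactness via the closed copy of $\mathbb{Q}\cap(-1,1)$ at height $1/2$ (noting that sufficiently small pinched neighborhoods of $\mathbf{0}$ and $\mathbf{0}_\star$ miss it), non-regularity at $\mathbf{0}$ against the closed set $\{(1/n,0):n\ge 2\}$ to rule out metrizability, and the $G_\delta$ property via Proposition \ref{prop:G_delta-equal-closed-network} applied to the countable closed network of finitely-restricted cylinders over singletons — is sound and fills the omitted proof correctly.
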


\myprop{exa:indiscrete-irrational-extension}{
There exist a second-countable submetrizable space which is not $G_\delta$.
For instance, the indiscrete irrational extension of $\mathbb{R}$ is second-countable, submetrizable, but not $G_\delta$.
}

As seen in Section \ref{sec:horrible-extension-topology}, extending topology has an undesirable degree-theoretic behavior, that is, even if we assume decidability of bases, any $e$-degree can be realized by extending a metrizable topology.
Therefore, extension topologies must avoid any nontrivial degree-theoretic characterizations.
An important observation obtained from Proposition \ref{exa:indiscrete-irrational-extension} is that extending topology destroys the property being $G_\delta$.
We will explain the reason of this phenomenon in terms of degree theory:
In contrast to extension topologies, $G_\delta$-spaces are, degree-theoretically, extremely well-behaved.

\subsubsection{Cototal enumeration degrees}\label{sec:cototalenumeration}

In this section, we describe how the notion of a $G_\delta$-space is useful in computability theory.

Recall that a set $A\subseteq\om$ is {\em cototal} if $A\leq_e\cmp{A}$ holds.\index{Degrees!cototal}
As one of the most important results in this article, we will see that the notion of {\em cototality} is captured by $G_\delta$-spaces.
A represented space is computably $G_\delta$ if given a code of a closed set, one can effectively find its $G_\delta$ code (see Definition \ref{def:comp-G-delta} for the precise definition).


\begin{theorem}\label{thm:cototal-Gdelta-space}{\index{Degrees!cototal}\index{Separation axioms!$G_\delta$}
An $e$-degree is cototal if and only if it is an $\xx$-degree of a computably $G_\delta$, cb$_0$ space $\xx$, that is,
\[\{\mathbf{d}\in\mathcal{D}_e:\mathbf{d}\mbox{ is cototal}\}=\bigcup\{\mathcal{D}_\xx:\xx\mbox{ is a computably $G_\delta$, cb$_0$ space}\}\]
}
\end{theorem}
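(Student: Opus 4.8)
The plan is to establish the two inclusions of the displayed set equation separately.

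\emph{Every cototal degree is realized.} Given a cototal $A\in\mathbf d$, fix an enumeration operator $\Psi$, given by a c.e.\ set $W$ via $\Psi(S)=\{n:(\exists e)[\langle n,e\rangle\in W\wedge D_e\subseteq S]\}$, that witnesses $A\leq_e\cmp A$, so that $A=\Psi(\cmp A)$. I would take
\[\xx_A=\{B\subseteq\om:B=\Psi(\cmp B)\},\]
a subspace of $\mathcal P\om$ with the Scott subspace topology, using the countable subbasis $\beta_n=\{B\in\xx_A:n\in B\}$. Then $\nbase{B}=B$ for each $B\in\xx_A$; in particular $A\in\xx_A$ and $\nbase{A}=A\in\mathbf d$, so $\mathbf d\in\mathcal D_{\xx_A}$ provided $\xx_A$ is a computably $G_\delta$ cb$_0$ space. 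It is $T_0$ since it is $T_1$, which follows from monotonicity of $\Psi$: if $B_1\subseteq B_2$ with $B_1,B_2\in\xx_A$ then $\cmp{B_1}\supseteq\cmp{B_2}$, hence $B_1=\Psi(\cmp{B_1})\supseteq\Psi(\cmp{B_2})=B_2$, so $B_1=B_2$; thus distinct points are incomparable and each has a basic open the other avoids. For the $G_\delta$ structure, unwinding the fixed-point condition gives, uniformly computably in $n$,
\[\xx_A\setminus\beta_n=\{B\in\xx_A:n\notin\Psi(\cmp B)\}=\bigcap_{\langle n,e\rangle\in W}\ \bigcup_{m\in D_e}\beta_m,\]
a c.e.-indexed intersection of \emph{finite} unions of basic opens; intersecting such presentations over a code of an arbitrary closed set shows every closed set is effectively $G_\delta$ in the sense of Definition~\ref{def:comp-G-delta}.

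\emph{Every such degree is cototal.} Conversely, let $\xx=(X,\beta)$ be a computably $G_\delta$ cb$_0$ space and $x\in X$; I would exhibit a single enumeration operator witnessing $\nbase{x}\leq_e\cmp{\nbase{x}}$. Applying the computable $G_\delta$ hypothesis to the closed set $\cmp{\beta_e}$, which carries the trivial closed-set code "complement $=\beta_e$", yields, uniformly in $e$, an effective $G_\delta$-presentation $\cmp{\beta_e}=\bigcap_n U_{e,n}$ in which each $U_{e,n}=\bigcup_{i<k_{e,n}}\beta_{d(e,n,i)}$ is a finite union of basic opens, with $k_{e,n}$ and $d$ computable from $e,n$ (this finitary form is exactly what the definition, via the proof of Proposition~\ref{prop:G_delta-equal-closed-network}, provides). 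Then for every $e$,
\[e\in\nbase{x}\iff x\notin\cmp{\beta_e}\iff(\exists n)(\forall i<k_{e,n})\ d(e,n,i)\in\cmp{\nbase{x}},\]
and since for each $n$ the set $\{d(e,n,i):i<k_{e,n}\}$ is finite and computably obtained, the right-hand side is an enumeration condition on $\cmp{\nbase{x}}$. Hence $\nbase{x}$ is cototal, so every degree in $\mathcal D_\xx$ is cototal.

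\emph{Where the work is.} The first inclusion amounts to writing down $\xx_A$ and checking the monotonicity and effectivity bookkeeping, and I expect no real obstacle there. The delicate point is the second inclusion, specifically the requirement that the $G_\delta$-presentation supplied by "computably $G_\delta$" uses approximating opens that are \emph{finite} unions of basic opens: an enumeration operator can only certify that finitely many indices have entered $\cmp{\nbase{x}}$, so if one only had countably-presented ($\Pi^0_1$-coded) opens $U_{e,n}$ the argument would instead ask whether an infinite c.e.\ set lies inside $\cmp{\nbase{x}}$, which is a $\Pi$-condition yielding no enumeration operator — indeed, going through the closed-network name $m(x)=\{k:x\in N_k\}$ of a closed network $(N_k)$ gives only $\nbase{x}\leq_e m(x)\leq_e\cmp{\nbase{x}}$ with $m(x)$ total, hence generally \emph{not} $e$-equivalent to $\nbase{x}$. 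So the heart of the proof is to confirm that the finitary formulation of Definition~\ref{def:comp-G-delta} (equivalently: that a computably $G_\delta$ cb$_0$ space admits a computable closed network whose members have complements expressible as finite unions of basic opens) is the data actually available, after which the reduction falls out as displayed.
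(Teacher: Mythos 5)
Your first inclusion is fine: the fixed-point space $\xx_A=\{B:\Psi(\cmp{B})=B\}$ with subbasis $\beta_n=\{B:n\in B\}$ is $T_1$ by monotonicity, has $\nbase{B}=B$, and your computation of $\cmp{\beta_n}$ is exactly the ``uniformly cototal implies computably $G_\delta$'' computation that the paper carries out inside Theorem \ref{cototal-equal-twin}; it is a reasonable self-contained substitute for the paper's route, which instead realizes every cototal degree in the single space $A_{\rm max}^{\rm co}$ via McCarthy's result and then applies Theorem \ref{cototal-equal-twin}.

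The second inclusion, however, has a genuine gap, and it is precisely at the point you flag and then wave through. Definition \ref{def:comp-G-delta} does \emph{not} supply presentations $\cmp{\beta_e}=\bigcap_n U_{e,n}$ with each $U_{e,n}$ a \emph{finite} union of basic opens: the closed sets $P_d$ there are complements of c.e.\ (typically infinite) unions of basic opens, and Proposition \ref{prop:G_delta-equal-closed-network} only yields a countable closed network, not the finitary form. Worse, the finitary form can fail outright for the given subbasis, and with it your displayed reduction. Take Baire space with the cylinder subbasis $\beta_\sigma=[\sigma]$: it is computably $G_\delta$ (each $[\sigma]$ equals $P_d$ where $W_d$ enumerates the strings incomparable with $\sigma$), yet for $|\sigma|\geq 1$ the set $\cmp{[\sigma]}$ contains the full column $[n]$ for every $n\neq\sigma(0)$, so the only finite union of basic opens containing it is the whole space; hence $\cmp{[\sigma]}$ is not an intersection of finite unions of basic opens at all. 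Correspondingly, the set-level conclusion $\nbase{x}\leq_e\cmp{\nbase{x}}$ you aim for is false in general: here $\cmp{\nbaseb{\om^\om}{x}}$ is exactly $\nbaseb{(\om^\om)_{\rm co}}{x}$, which is quasi-minimal whenever $x$ is noncomputable and not $\emptyset'$-computably dominated (Lemma \ref{lem:cocylinder-quasiminimal}), while $\nbaseb{\om^\om}{x}$ has the nonzero total degree of $x$; only the \emph{degree} is cototal, not the set $\nbase{x}$ itself. The missing idea, which is how the paper proves this direction (Observation \ref{obs:comp-cl-network2} inside Theorem \ref{cototal-equal-twin}), is to change the representation: adjoin the c.e.\ open sets $\cmp{P_{f(e,n)}}$ as new subbasic opens, getting $\gamma\equiv\beta$ with respect to which each $P_{f(e,n)}$ is finitary closed; then your reduction goes through for $\nbaseb{\gamma}{x}$, and since $\nbaseb{\gamma}{x}\equiv_e\nbaseb{\beta}{x}$ the degree of $\nbase{x}$ is cototal.
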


Moreover, we will construct a decidable $G_\delta$-space $A_{\rm max}^{\rm co}$ which is universal in the sense that $A_{\rm max}^{\rm co}$ captures exactly the cototal $e$-degrees:

\mythm{thm:cototal-Gdelta-space2}{\index{universal $G_\delta$ space}\index{Space!maximal antichain}
There exists a decidable, computably $G_\delta$, cb$_0$ space $\xx = A_{\rm max}^{\rm co}$ such that
\[\mathcal{D}_\xx=\{\mathbf{d}\in\mathcal{D}_e:\mathbf{d}\mbox{ is cototal}\}.\]
}

\subsubsection*{Uniform cototality}

We first introduce notions of cototality for a space.

\begin{definition}
We say that a space $\xx$ is {\em relatively cototal} if there are an oracle $C$ and an enumeration operator $\Psi$ such that
\[(\forall x\in\xx)\;{\rm Nbase}(x)=\Psi({\cmp{{\rm Nbase}(x)}\oplus C\oplus\cmp{C}}).\]
If $C$ can be empty, we say that $\xx$ is {\em uniformly cototal}.
\end{definition}

It is clear that if $x$ is a point in a uniformly cototal space $\xx$, then ${\rm Nbase}_\xx(x)$ is cototal.
Baire space, the Hilbert cube, the double origin space, the telophase space, the cocylinder space, etc.~are all uniformly cototal.

\begin{example}[Jeandel \cite{Jea17}]
In universal algebra, a {\em quasi-variety} is a class of structures satisfying a Horn clause.
Jeandel \cite{Jea17} coded this notion as follows.
Let $S\subseteq\om^{<\om}$ be a set.
We say that $X\subseteq\om$ satisfies $S$ if for any $\sigma\in S$, if $\sigma(n)\in X$ for all $0<n<|\sigma|$, then $\sigma(0)\in X$.
A quasi-variety $V$ defined by a set $S$ is the class of all $X\subseteq\om$ satisfying $S$.
If $S$ is c.e.\ we call $V$ a c.e.\ quasi-variety.
For instance, the set of all (forbidden languages of) subshifts ${\sf Sf}_{\Sigma}$ over a finite alphabet $\Sigma$, and the set of all (words of) groups ${\sf Gr}_n$ with $n$ generators are c.e.\ quasi-varieties.

For each $Y\subseteq\om$, we define $[Y]=\{X\subseteq\om:Y\subseteq X\}$.
A set $Y\subseteq\om$ is a presentation of $X\in V$ if $V\cap[Y]=V\cap[X]$.
A point $X\in V$ is {\em finitely presented} if $X$ has a finite presentation.
A point $X\in V$ is {\em maximal} if $X\not=\om$ and $V\cap[X]=\{X,\om\}$.
Maximal points in ${\sf Sf}_{\Sigma}$ and ${\sf Gr}_n$ are minimal subshifts and simple groups, respectively.

Given a quasi-variety $V$, consider the following space (as a subspace of $\mathcal{P}(\om)\simeq\mathbb{S}^\om$):
\[V_{\rm max}^{\rm co}=\{X\subseteq\om:\cmp{X}\in V\mbox{, and }\cmp{X}\mbox{ is maximal in $V$}\}.\]

Jeandel \cite{Jea17} showed that if $V$ is a c.e.\ quasi-variety such that $\om$ is finitely presented, then $V_{\rm max}^{\rm co}$ is uniformly cototal.
In particular, the spaces $({\sf Sf}_{\Sigma})_{\rm max}^{\rm co}$ and $({\sf Gr}_n)_{\rm max}^{\rm co}$ are uniformly cototal.
\end{example}

\begin{remark}
For a quasi-variety $V$, the space $V_{\rm max}^{\rm co}$ is computably homeomorphic to the space of all maximal elements in $V$ equipped with the basis $([D]_{\rm co})$, where $[D]_{\rm co}=\{X:X\cap D=\emptyset\}$, and $D$ ranges over all finite subsets of $\om$.
\end{remark}

\begin{example}[McCarthy \cite{McC17}]\label{exa:McCarthy}
We topologize $\mathcal{P}(\om^{<\om})$ by putting $[D]=\{X\subseteq\om^{<\om}:D\subseteq X\}$ as a basic open set for any finite set $D\subseteq\om^{<\om}$.
Consider the following subspace of $\mathcal{P}(\om^{<\om})$:\index{Space!maximal antichain}
\[A_{\rm max}^{\rm co}=\{X\subseteq\om^{<\om}:\cmp{X}\mbox{ is a maximal antichain}\}.\]

Then, $A_{\rm max}^{\rm co}$ is uniformly cototal.
Moreover, McCarthy \cite{McC17} showed that the $A_{\rm max}^{\rm co}$-degrees and the $({\sf Sf}_{\Sigma})_{\rm max}^{\rm co}$-degrees are exactly the cototal $e$-degrees:
\[\mathcal{D}_{A_{\rm max}^{\rm co}}=\mathcal{D}_{({\sf Sf}_{\Sigma})_{\rm max}^{\rm co}}=\{\mathbf{d}\in\mathcal{D}_e:\mathbf{d}\mbox{ is cototal}\}.\]
\end{example}

One can check that a represented cb$_0$ space $(X,\beta)$ is {\em decidable} (see Section \ref{sec:change-repres}) if $\{\langle d,e\rangle:\beta^+_d\subseteq\beta^+_e\}$ is computable, where we assume that $\emptyset$ and $X$ appear in $\beta$.

\myobs{obs:Amax-decidable}{
$A_{\rm max}^{\rm co}$ is a decidable cb$_0$ space.
}

Then, what kind of topological property is shared by these spaces?
To answer this question, we effectivize the notion of a $G_\delta$-space.
It is obvious that a space is $G_\delta$ if and only if every open set is $F_\sigma$.
We introduce an effective version of this property.

\begin{definition}\label{def:comp-G-delta}
We say that $\xx$ is {\em computably $G_\delta$} if there is a computable procedure that, given a code of open set, returns its $F_\sigma$-code, that is, there is a computable function $f$ such that
\[(\forall e\in\om)\;\beta_e=\bigcup_{n\in\om}P_{f(e,n)}\mbox{, where }P_{d}=\xx\setminus\bigcup\{\beta_c:c\in W_d\}.\]
\end{definition}

Note that a second-countable $T_0$ space is $G_\delta$ if and only if it is computably $G_\delta$ relative to some oracle, or equivalently, it is computably $G_\delta$ w.r.t.\ some representation $\beta$.

We now show an effective topological characterization of uniform cototality.

\mythm{cototal-equal-twin}{
Let $\xx=(X,\beta)$ be a represented cb$_0$ space.
Then, $\xx$ is computably $G_\delta$ if and only if there is a representation $\gamma\equiv\delta$ of $X$ such that $(X,\gamma)$ is uniformly cototal.
}

Theorem \ref{thm:cototal-Gdelta-space} clearly follows from Theorem \ref{cototal-equal-twin}.
This gives us an (effective) topological explanation of why continuous degrees, double origin degrees, graph cototal degrees, etc.~are cototal.

Theorems \ref{thm:cototal-Gdelta-space} and Theorems \ref{thm:cototal-Gdelta-space2} conclude that the cototal $e$-degrees are characterized by the degrees of difficulty of enumerating (neighborhood bases of) points in (decidable) computably $G_\delta$ spaces.


\subsection{Quasi-Polish topology}\label{sec:3-9}

Contrary to the fact that we always have the completion of a metric, there is no hope of getting the notion of ``quasi-completion'' which preserves $T_i$-separation axioms for $i\not=0,3$ (see Section \ref{sec:newhorrible}).\index{Quasi-Polish space}
Instead of considering the notion of quasi-completion\footnote{Some positive results about the possibility of having a quasi-completion were recently obtained by de Brecht \cite{debrecht9}.}, we will directly show the existence of a quasi-Polish space at every separation level.
Remember the spaces $(\hat{\om}_{TP})^\om$, $(\mathcal{P}_{DO})^\om$, and $\mathcal{QA}^\om$ introduced in Examples \ref{example:teloph-2} and \ref{exa:double-quasi-Polish} and in Section \ref{sec:Arens-square}, respectively.

\myprop{prop:quasi-Polish-examples}{
For any $i\in\{0,1,2,2.5\}$, there is a quasi-Polish $T_i$ space which is not $T_j$ for any $j>i$.
Indeed,\index{Separation axioms}
\begin{enumerate}
\item The telophase space $(\hat{\om}_{TP})^\om$ is a quasi-Polish $T_1$-space which is not $T_2$.\index{Space!telophase}
\item The double origin space $(\mathcal{P}_{DO})^\om$ is a quasi-Polish $T_2$-space which is not $T_{2.5}$.\index{Space!double origin}
\item The Arens space $\mathcal{QA}^\om$ is a quasi-Polish $T_{2.5}$-space which is not submetrizable.\index{Space!Arens square}
\item The irregular lattice space $(\mathcal{L}_{IL})^\om$ is a quasi-Polish submetrizable space which is not metrizable.\index{Space!irregular lattice space}
\item The quasi-completion $\overline{\mathbb{R}_<}=\mathbb{R}_<\cup\{\infty\}$ of the lower real line is a quasi-Polish $T_0$-space which is not $T_1$.\index{Space!lower reals}
\end{enumerate}
}

We also show that several natural spaces are not quasi-Polish.

\myprop{prop:non-quasi-Polish-examples}{
{}~
\begin{enumerate}
\item (De Brecht) The Gandy-Harrington space $(\om^\om)_{GH}$ is not quasi-Polish.\index{Space!Gandy-Harrington}
\item The Golomb space $\mathbb{N}_{\rm rp}$ (see Section \ref{sec:T2-deg-nT25list}) is not quasi-Polish.\index{Space!Golomb}
\item The maximal antichain space $\mathcal{A}_{\rm max}^{\rm co}$ is not quasi-Polish.\index{Space!maximal antichain}
\end{enumerate}
}

For (1), \cite[Theorem 6.1]{mummert2} by Mummert and Stephan implies that the Gandy-Harrington space can be represented as the maximal elements of an $\omega$-algebraic domain. Therefore, the Gandy-Harrington space embeds as a (necessarily strict) co-analytic subset of a quasi-Polish space.



\section{Further degree theoretic results}
\label{sec:further}

In the light of Theorem \ref{thm:e-degree-realize} showing that any enumeration degree can be realized in a decidable, submetrizable, cb$_0$ space, it may seem that separating separation axioms via degree-theoretic properties is not possible.
One may overcome this difficulty by restricting our attention to countably many collections of spaces.
We adopt this approach, and we often show strong separating results by considering the notion of quasi-minimality.

\index{quasi-minimal}\index{quasi-minimal!cover}\index{quasi-minimal!$\mathcal{T}$}
\begin{definition}
Let $\mathcal{T}$ be a collection of represented spaces. We say that $\pt{x}{\xx}$ is {\em $\mathcal{T}$-quasi-minimal in $\pt{y}{\yy}$} if

\[(\forall\zz\in\mathcal{T})(\forall z\in\zz)\; \pt{z}{\zz} \leq_{\mathrm{T}} \pt{x}{\xx} \;\Longrightarrow\; \pt{z}{\zz} \leq_{\mathrm{T}} \pt{y}{\yy} .\]
i.e.~if $x$ cannot compute any more points in spaces from $\mathcal{T}$ than $y$ can. If in addition it holds that $ \pt{y}{\yy} <_\mathrm{T} \pt{x}{\xx}$, we say that $\pt{x}{\xx}$ a {\em strong $\mathcal{T}$-quasi-minimal cover of $\pt{y}{\yy}$}.

We just say that $\pt{x}{\xx}$ is $\mathcal{T}$-quasiminimal, if $\pt{x}{\xx}$ is a strong $\mathcal{T}$-quasi-minimal cover of $\pt{0^\om}{2^\om}$, where $0^\om\in 2^\om$ is the computable infinite sequence consisting only of $0$'s (of course, one may replace $0^\om$ with any computable sequence).
\end{definition}

Recall that an $e$-degree ${\bf a}$ is {\em quasi-minimal} if for every total degree ${\bf b}\leq_e{\bf a}$, we have ${\bf b}={\bf 0}$.
Medvedev \cite{medvedev} first constructed a quasi-minimal $e$-degree by showing that every enumeration $1$-generic is quasi-minimal. Our definition is a generalization, in as far as an enumeration degree is quasi-minimal iff it is $\{2^\om\}$-quasi-minimal.

We use $\mathcal{D}_\mathcal{T}$ to denote the union $\bigcup\{\mathcal{D}_\zz:\zz\in\mathcal{T}\}$.


\subsection{$T_0$-degrees which are not $T_1$}
\label{subsec:t0nott1}

\subsubsection{Quasi-minimality}\label{section:t0-nt1-quasiminimality-statements}

We return to the degree spectrum of $\mathbb{R}_<$ considered in Subsection \ref{sec:deg-point-T0}, and recall that these degrees are just the semirecursive enumeration degrees. Arslanov-Cooper-Kalimullin \cite{ACK} showed that if a point $x\in\mathbb{R}_<$ is neither left- nor right-c.e., then it has a quasi-minimal enumeration degree.
We can relativize this property in the following sense.\index{Space!lower reals}

\mylem{propdichotomy}{(see also Kihara-Pauly \cite{KP}) Let $\xx$ be any represented cb$_0$ space, $x\in 2^\omega$, $y\in\mathbb{R}_<$, and $z\in\xx$.
If $\nbaseb{2^\om}{x}\leq_e\nbaseb{<}{y}\oplus\nbaseb{\xx}{z}$, then either $\nbaseb{2^\om}{x}\leq_e\nbaseb{\xx}{z}$ or $\nbaseb{<}{-y}\leq_e\nbaseb{\xx}{z}$ holds.}


The total degrees in $\mathbb{R}_<$ can be characterized as follows.

\myprop{prop:right-ce}{
Let $\bf{a}$ be an $\mathbb{R}_<$-degree. Then, the following are equivalent.\index{Space!lower reals}\index{Degrees!total}
\begin{enumerate}
\item
$\bf{a}$ is a total $e$-degree.
\item $\bf{a}$ is a  $\Pi^0_1$ $e$-degree.
\item $\bf{a}$ is the $e$-degree of $\nbase{x}$ of a left- or right-c.e.~real $x\in\mathbb{R}_<$.
\item $\bf{a}$ is not quasi-minimal.
\end{enumerate}}

In particular, if $\nbaseb{<}{x}\leq_e\nbaseb{<}{y}$ and $x$ is right-c.e., but not left-c.e., then $y$ is right-c.e.
Note that the structure $\mathcal{D}_{\mathbb{R}_<}$ has a maximal element, i.e., the $e$-degree of $\cmp{K}$.
One can also see that the structure of the c.e.~degrees $\left(\mathcal{R},\leq_T\right)$ is isomorphic to the substructure of $\mathcal{D}_{\mathbb{R}_<}$ consisting of the right-c.e.~reals (which are exactly the $\Pi^0_1$ $e$-degrees by Proposition \ref{prop:right-ce}).

As mentioned above, it is known that each enumeration $1$-generic $e$-degree is quasi-minimal, and the same holds for function $1$-generic \cite[Proposition 2.6]{copestake}.
Therefore, we know at least two ways of constructing a quasi-minimal enumeration degree; choosing a point $x\in\mathbb{R}_<$ which is neither left- nor right-c.e., or choosing an enumeration/function generic point.
Here we show that these two constructions are incomparable in the following sense.

\myprop{prop:lower-unbounding}{
No $\mathbb{R}_<^n$-degree computes a function $2$-generic. I.e.~if $x$ is a real and $G\subseteq\om$ be a function $2$-generic, then $G\not\leq_e\nbaseb{<}{x}$.\index{Degrees!2-generic}}

Note that we can see that every nonzero $\mathbb{R}_<$-degree bounds a quasi-minimal $e$-degree.

\myprop{qminimalbound}{
For every $x\in\mathbb{R}_<$, either $\nbaseb{<}{x}$ is c.e.\ or there is quasi-minimal $S\subseteq\om$ such that $S\leq_e\nbaseb{<}{x}$.\index{Space!lower reals}\index{quasi-minimal}
}



\subsubsection{Degree Structure}\label{sec:4-1-2}

It is immediate that every recursively presented perfect\footnote{Unlike the classical case, here \emph{perfect} cannot be replaced by \emph{uncountable} as shown by Gregoriades \cite{gregoriades3}.} Polish space $\xx$ has a computable homeomorphic copy of Cantor space.
Hence, the jump inversion theorem holds in $\xx$.
However, the lower reals $\mathbb{R}_<$ obviously contain no copy of Cantor space.
Nevertheless, we can have the jump inversion theorem for $\mathbb{R}_<$, though the jump and join operations are not in the same space.
Indeed, we here give a short proof of McEvoy's quasi-minimal jump inversion theorem \cite{mce85} inside $\mathbb{R}_<$.

The enumeration jump operator is introduced by Cooper \cite{Coo84}.\index{Enumeration jump operator}
Given $A\subseteq\om$, define $K^A=\{e\in\om:e\in\Psi_e(A)\}$, where $\Psi_e$ is the $e$-th enumeration operator.
Then, the {\em enumeration jump of $A$} is the set ${\rm EJ}(A)=K^A\oplus\cmp{(K^A)}$.
Gregoriades-Kihara-Ng \cite{GreKih} introduced the jump of a point in a represented cb$_0$ space $\xx=(X,\beta)$.
The {\em jump of $x\in\xx$} is the point $J_\xx(x)\in 2^\om$ defined by $J_\xx(x)=\{e\in\om:x\in U_e^\xx\}$, where $U_e^\xx$ is the $e$-th c.e.\ open set in $\xx$.
One can see that for any $x\in\xx$,
\[\nbaseb{2^\om}{J_\xx(x)}\equiv_e{\rm EJ}(\nbaseb{\xx}{x}).\]

The notation such as $J_\xx(x)\equiv_T C$ also makes sense.
This is equivalent to saying that $\nbaseb{2^\om}{J_\xx(x)}\equiv_eC\oplus\cmp{C}$.

We now show the {\em Semirecursive Jump Inversion Theorem}, which generalizes McEvoy's quasi-minimal jump inversion theorem; nevertheless our proof is far simpler than McEvoy's one:

\myprop{semirecjumpinversion}{
For any $C\geq_T\emptyset'$ there is a semirecursive set $A\subseteq\om$ such that $A$ is quasi-minimal and ${\rm EJ}(A)\equiv_eC\oplus\cmp{C}$.\index{Degrees!semi-recursive}
}

We now focus on the first order degree structure of the lower reals.
One of the most fundamental questions in degree theory is whether a given degree structure forms an upper (lower) semilattice or not.
It is not hard to see that $\mathcal{D}_{\mathbb{R}_<}$ has no supremum operation. A somewhat more involved argument also establishes that $\mathcal{D}_{\mathbb{R}_<}$ has no infimum operation.

\myprop{prop:rlowernotupsemi}{
The structure $\mathcal{D}_{\mathbb{R}_<}$ is not an upper semilattice.

Indeed, if $x$ is not $\Delta^0_2$ (as a point in $\mathbb{R}$), then the pair $\nbaseb{<}{x}$ and $\nbaseb{<}{-x}$ has no common upper bound in $\mathcal{D}_{\mathbb{R}_<}$.}

\myprop{thm:non-lower-semilattice}
{The structure $\mathcal{D}_{\mathbb{R}_<}$ is not a lower semilattice.

Indeed, there are right-c.e.~reals $x,y\in\mathbb{R}$ such that the pair $\nbaseb{<}{x}$ and $\nbaseb{<}{Y}$ has no greatest lower bound in  $\mathcal{D}_{\mathbb{R}_<}$.}


We have already noted that the degree structure of $\mathbb{R}_<$ has a maximal element.
Now, it is natural to ask whether this degree structure has a minimal element.
Recall that the degree structures of $2^\omega$ and $[0,1]^\omega$ each have continuum many minimal elements, whereas Guttridge \cite{Gutt71} showed that there is no minimal degree in $\mathbb{S}^\omega$. We can see that $\mathbb{R}_<$ has no minimal degree. We will prove that every noncomputable point $X\in\mathbb{S}^\omega$ computes a noncomputable point $y\in\mathbb{R}_<$ (i.e.~that every nonzero $e$-degree bounds a nonzero semirecursive $e$-degree; i.e.~that there are no $\mathbb{R}_<$-quasi-minimal $e$-degrees). This allows us to transfer Guttridge's result that there are no minimal $e$-degrees to the $\mathbb{R}_<$-degrees.

\mythm{thm:lower-minimal}{
There is no $\mathbb{R}_<$-quasi-minimal $e$-degree.\index{quasi-minimal!$\mathbb{R}_<$}}

The result of Theorem \ref{thm:lower-minimal} was independently obtained by Downey, Greenberg, Harrison-Trainor, Patey and Turetsky \cite{downey}.

\begin{cor}
There is no minimal semirecursive degree.
\end{cor}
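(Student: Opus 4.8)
The plan is to reduce the statement to a computability-theoretic claim and then establish that claim by a priority construction. Recall (Kihara--Pauly~\cite{KP}, Subsection~\ref{sec:deg-point-T0}) that $\mathcal{D}_{\mathbb{R}_<}$ is exactly the collection of semirecursive $e$-degrees and that $\nbaseb{<}{y}$ is c.e.\ iff $y$ is left-c.e.; since the $e$-degree of any c.e.\ set is $\mathbf{0}$, unwinding the definition of an $\mathbb{R}_<$-quasi-minimal $e$-degree shows that Theorem~\ref{thm:lower-minimal} amounts to the assertion that \emph{every non-c.e.\ set $A\subseteq\om$ enumeration-computes a non-c.e.\ semirecursive set}. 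Equivalently, it suffices to build, from any non-c.e.\ $A$, a real $y\in(0,1)$ that is \emph{not} left-c.e.\ and with $\nbaseb{<}{y}\leq_e A$; then $\nbaseb{<}{y}$ is a non-c.e.\ semirecursive set and $\deg_e(\nbaseb{<}{y})$ is a nonzero semirecursive $e$-degree below $\deg_e(A)$. (Combined with Guttridge's theorem that $\mathbb{S}^\om$ has no minimal degree, this also yields that $\mathbb{R}_<$ has no minimal degree, and hence the stated Corollary: given a nonzero $\mathbb{R}_<$-degree $\mathbf{a}$, pick $\mathbf{0}<\mathbf{b}<\mathbf{a}$ by Guttridge and then a nonzero semirecursive degree below $\mathbf{b}$ by the above.)

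Fix a non-c.e.\ $A$. I would build $y$ as the limit of a nested sequence of rational intervals $[l_s,r_s]$ with $l_s$ nondecreasing and $r_s$ nonincreasing. The invariant that secures $\nbaseb{<}{y}\leq_e A$ is that the \emph{left} endpoint is moved only by a \emph{licensed} step: certain rationals $q_i$ carry a ``coding number'' $n_i$, and $l_s$ is allowed to jump above $q_i$ only once $n_i$ has appeared in $A$ (or for reasons that are outright computable, e.g.\ $q_i$ lying below a value we have already forced $y$ to exceed). Since appearances of elements of $A$ and our own computable commitments can all be read off positively from an enumeration of $A$, this makes $\{q:q<y\}$ enumeration-reducible to $A$ via a single operator. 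By contrast, \emph{lowering} $r_s$ — imposing an upper bound on $y$ — is always permitted, as it has no effect on $\nbaseb{<}{y}$; this asymmetry is precisely what the diagonalization exploits.

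The diagonalization meets, for each $e$, the requirement $R_e\colon \nbaseb{<}{y}\neq W_e$ (equivalently: $y$ is not the $e$-th left-c.e.\ real). Each $R_e$ is assigned an infinite computable set of coding numbers $n^e_0<n^e_1<\dots$ drawn from a fixed partition of $\om$ (so that $(e,i)$ is recoverable from $n^e_i$), together with coding rationals inside the interval currently allotted to $R_e$. Working at location $i$, the strategy waits for one of the two \emph{confirmable} events ``$q^e_i\in W_e$'' and ``$n^e_i\in A$'': if the former occurs first it forces $y\leq q^e_i$ (a free move) and is permanently satisfied, since then $q^e_i\in W_e\setminus\nbaseb{<}{y}$; if the latter occurs first (with $q^e_i$ not yet in $W_e$) it forces $y>q^e_i$ (a licensed move) and is satisfied unless $q^e_i$ later enters $W_e$, in which case location $i$ is \emph{spoiled} and the strategy advances to location $i+1$ inside the smaller interval. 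The crux is a counting argument: if $R_e$ never succeeded, one shows (after dealing with the locations at which nothing happens) that the equivalence ``$n^e_i\in A$'' $\Leftrightarrow$ (a condition c.e.\ uniformly in $i$) would hold, forcing $A$ to be c.e.\ and contradicting the hypothesis. Hence each $R_e$ is met, is injured only finitely often by higher-priority requirements, and the construction settles into a genuine point $y\in(0,1)$.

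I expect the real difficulty to lie precisely in the interaction of the two halves: the construction sees only \emph{positive} information about $A$, so ``$n^e_i\notin A$'' is never confirmed, and a careless strategy would stall forever at one coding location and freeze every lower-priority requirement. Handling this — leaving a location only upon a confirmable ``spoiled'' signal, guaranteeing via the counting argument that not every location is spoiled, and arranging the nested-interval geometry so that the ``rightward'' moves the diagonalization needs are exactly the ones the reduction $\nbaseb{<}{y}\leq_e A$ can afford while every ``leftward'' move is free — is the technical heart of the proof; one must then verify that the intervals do not degenerate, so that $y$ is a well-defined point of $(0,1)$ whose left cut is genuinely not c.e.
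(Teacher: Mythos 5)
Your derivation of the Corollary itself is exactly the paper's: Guttridge's theorem that there is no minimal $e$-degree, combined with Theorem \ref{thm:lower-minimal} (every non-c.e.\ set bounds a nonzero semirecursive $e$-degree), yields for any nonzero semirecursive degree $\mathbf{a}$ a nonzero semirecursive degree strictly below it; the paper phrases this contrapositively (a minimal element of $\mathcal{D}_{\mathbb{R}_<}$ would have minimal $e$-degree), but the content is identical, and since Theorem \ref{thm:lower-minimal} is stated before the Corollary and proved in Section \ref{sec:further-proofs}, citing it would already finish the proof.

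The gap lies in the construction you offer for Theorem \ref{thm:lower-minimal}. Your strategies are driven by races of the form ``does $q^e_i$ enter $W_e$ before $n^e_i$ enters $A$?'', but for a general non-c.e.\ $A$ there is no way for the construction to witness the event $n^e_i\in A$: a computable construction cannot wait on it, and if you instead run the construction relative to an enumeration of $A$, the outcome of each race depends on the order of enumeration, so you get neither a single well-defined real $y$ nor a single enumeration operator witnessing $\nbaseb{<}{y}\leq_e A$. The remaining option --- putting the licensing axioms $(q^e_i,\{n^e_i\})$ into the operator in advance --- destroys the other half of the strategy: once such an axiom exists, ``forcing $y\leq q^e_i$'' when $q^e_i$ enters $W_e$ is no longer within the construction's power, since the axiom fires whenever $n^e_i$ happens to lie in $A$, and this is not an injury the priority ordering controls. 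This is exactly the difficulty you flag as ``the technical heart,'' but it is not resolved by bookkeeping inside your framework; it is why the paper's proof of Theorem \ref{thm:lower-minimal} starts with a case split: if $0.A=\sum_{n\in A}2^{-n}$ is not left-c.e., one simply takes $y=0.A$ (no construction needed), and otherwise $A$ is $\Delta^0_2$ and the construction runs against a specially chosen $\Delta^0_2$ approximation $A_s$ (one where any extraction of $n$ is accompanied by an insertion of some $m<n$, so the associated left approximation is monotone), with explicit machinery for cancelling and later recovering computations as the approximation changes. That case split and the approximation machinery are the missing ideas. A smaller, fixable point: with coding numbers drawn from a fixed partition of $\om$, failure of $R_e$ only shows that one column of $A$ is c.e., not that $A$ is; each requirement must code all of $A$ (e.g.\ replace $A$ by $\om\times A\equiv_e A$) for the intended contradiction to go through.
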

\begin{proof}
If there is some nonzero minimal element $\nbaseb{<}{x}$ in $\mathcal{D}_{\mathbb{R}_{<}}$, then we claim that $\nbaseb{<}{x}$ has minimal $e$-degree (which is a contradiction to Guttridge \cite{Gutt71}). Clearly $\nbaseb{<}{x}$ is not c.e. If there is some non-c.e.~set $Y<_e\nbaseb{<}{x}$ then by Theorem \ref{thm:lower-minimal} we have some $y$ where $\emptyset<_e\nbaseb{<}{y}\leq_e Y<_e \nbaseb{<}{x}$, contradicting minimality of $\nbaseb{<}{x}$ in $\mathcal{D}_{\mathbb{R}_{<}}$.
\end{proof}


\subsubsection{$T_1$-quasi-minimal degrees}\label{sec:T-1-quasiminimal}

We extend our idea of the proof of Lemma \ref{propdichotomy} to show the existence of a $T_1$-quasi-minimal degree.

\mythm{thm:countable-T_1-quasiminimal}{
Let $\mathcal{T}$ be a countable collection of second-countable $T_1$ spaces.
Then, there is a $\mathcal{T}$-quasi-minimal semirecursive $e$-degree.\index{quasi-minimal!$\mathcal{T}$}\index{Degrees!semi-recursive}
}

Indeed, we will show that all but countably many semirecursive $e$-degree satisfy the above property.
In particular, almost no point $x\in\mathbb{R}_<$ computes a nontrivial point in a $T_1$ space.

\subsubsection{Products of lower topology}\label{sec:4-1-4}

We say that an $e$-degree $\mathbf{d}$ is {\em $n$-semirecursive} if there are semirecursive $e$-degrees $\mathbf{c}_1,\dots,\mathbf{c}_n$ such that
\[\mathbf{d}=\mathbf{c}_1\oplus\dots\oplus\mathbf{c}_n.\]

Clearly, the $\mathbb{R}_<^n$-degrees are exactly the $n$-semirecursive $e$-degrees.
These degrees have also been studied by Kihara-Pauly \cite{KP}.
In this section, using the idea developed in Sections \ref{sec:deg-point-T0} and \ref{sec:T-1-quasiminimal}, we provide a more detailed analysis of $n$-semirecursive $e$-degrees.

We first see degree-theoretic differences among $\mathbb{R}$, $\mathbb{R}_<$, $\mathbb{R}\times\mathbb{R}_<$, and $\mathbb{R}_<^2$.
We say that $y\in\mathbb{R}$ is {\em left-c.e.\ in $x\in\xx$} if $\pt{y}{\mathbb{R}}$ is $\Sigma^0_1(x)$, that is, $\nbaseb{<}{y}\leq_e\nbaseb{\xx}{x}$.
Similarly, $y\in\mathbb{R}$ is {\em right-c.e.\ in $x\in\xx$} if $\nbaseb{<}{-y}\leq_e\nbaseb{\xx}{x}$.

The latter result shows that $\mathbb{R}_<$-points are useless to compute a $2^\om$-point.

\myprop{prop:RRT1quasimin1}{
Let $\xx$ be a represented cb$_0$ space.\index{Space!lower reals}
For any $x\in\xx$ and $y\in\mathbb{R}$, if $y$ is neither left- nor right-c.e.~in $x$, then $\pt{(x,y)}{\xx\times\mathbb{R}_<}$ is a strong quasi-minimal cover of $\pt{x}{\xx}$.
In particular, we have the following.
\begin{enumerate}
\item Every $\xx\times\mathbb{R}_<$-degree is either an $(\xx\times\mathbb{R})$-degree or a strong quasi-minimal cover of an $\xx$-degree.
\item For any $\xx$-degree $\mathbf{d}$, there is an $(\xx\times\mathbb{R}_<)$-degree which is a strong quasi-minimal cover of $\mathbf{d}$.\index{quasi-minimal!cover}
\end{enumerate}
}

Indeed, we can show the following:

\myprop{prop:RRT1quasimin2}{
Let $\xx$ be a represented cb$_0$ space, and let $\mathcal{T}$ be a countable collection of second-countable $T_1$-spaces.
For any $x\in\xx$ for all but countably many $y\in\mathbb{R}$ it holds that $\pt{(x,y)}{\xx\times\mathbb{R}_<}$ is a strong $\mathcal{T}$-quasi-minimal cover of $\pt{x}{\xx}$.
}

We now turn our attention to the lower real plane $\mathbb{R}_<^2$.
The space $\mathbb{R}_<^2$ also has a point which is a strong quasi-minimal cover of $\emptyset'$ (since any right-c.e.~real has a total degree).
However, we will show that $\mathbb{R}_<^2$ has no point which is a strong quasi-minimal cover of $\emptyset''$, which implies $\mathcal{D}_{\mathbb{R}\times\mathbb{R}_<}\not\subseteq\mathcal{D}_{\mathbb{R}_<^2}$.

\myprop{proptotalordelta2}{
Every $2$-semirecursive $e$-degree is either total or quasi-minimal in $\emptyset'$.\index{Degrees!2-semirecursive}
}

Next we will discuss degree-theoretic properties of $n$-semirecursive $e$-degrees.
We first construct an $(n+1)$-semirecursive $e$-degree which cannot be obtained from an $n$-semirecursive $e$-degree by joining a $T_1$-degree.

\mythm{thmseparation1}{
Let $\mathcal{T}$ be a countable collection of second-countable $T_1$ spaces.\index{Separation axioms!$T_1$}
Then, there is an $(n+1)$-semirecursive $e$-degree which cannot be written as the join of an $n$-semirecursive $e$-degree and a $\mathcal{T}$-degree.
That is,
\[\mathcal{D}_{\mathbb{R}_<^{n+1}}\not\subseteq\left\{\mathbf{c}\oplus\mathbf{d}:\mathbf{c}\in\mathcal{D}_{\mathbb{R}_<^n}\mbox{ and }(\exists\xx\in\mathcal{T})\;\mathbf{d}\in\mathcal{D}_{\xx}\right\}.\]
}

In particular, we have $\mathcal{D}_{\mathbb{R}_<^{n+1}}\not\subseteq\mathcal{D}_{\mathbb{R}\times\mathbb{R}_<^{n}}$.
We next see a degree-theoretic behavior of the join of a $n$-semirecursive $e$-degree and a total $e$-degree.

\mythm{thm:rrn-semirec}{
There are an $n$-semirecursive $e$-degree $\mathbf{c}\leq\mathbf{0}''$ and a total $e$-degree $\mathbf{d}\leq\mathbf{0}''$ such that the join $\mathbf{c}\oplus\mathbf{d}$ is not $(n+1)$-semirecursive.
In particular,
\[\mathcal{D}_{\mathbb{R}\times\mathbb{R}_<^{n}}\not\subseteq\mathcal{D}_{\mathbb{R}_<^{n+1}}.\]
}

The next theorem reveal a strong quasi-minimal behavior of a finite join of semirecursive $e$-degrees.

\mythm{thm:hprincipal}{
For any $n\in\om$, an $n$-semirecursive $e$-degree is either total or a strong quasi-minimal cover of a total $e$-degree.
}


\subsubsection{Quasi-minimality w.r.t.\ telograph-cototal degrees}\label{sec:4-1-5}
\index{Degrees!telograph-cototal}
In the proof of Theorem \ref{thm:countable-T_1-quasiminimal}, we will see that every semirecursive, non-$\Delta^0_2$ $e$-degree is quasi-minimal w.r.t.\ strongly $\Pi^0_2$-named $T_1$-spaces (see \ref{lem:countable-T_1-quasiminimal}).\index{Strongly $\Gamma$-named}
However, we essentially know just one proven example of a $\Pi^0_2$-named $T_1$ space, namely $\om^\om$ (and its variants). We do conjecture that the spaces $(\om_{\rm cof})^\om$, $(\om^\om)_{\rm co}$ and $(\hat{\om}_{\rm TP})^\om$ are not strongly $\Pi^0_2$-named.
Then we introduce a different technique to prove that, if $x\in\mathbb{R}$ is not $\Delta^0_2$, then $\nbaseb{<}{x}$ is $(\hat{\om}_{\rm TP})^\om$-quasi-minimal.

\mythm{telograph-quasiminimal1}{
Every semirecursive, non-$\Delta^0_2$ $e$-degree is quasi-minimal w.r.t.\ telograph-cototal $e$-degrees.\index{Degrees!$\Delta^0_2$}\index{Degrees!telograph-cototal}
}

We will also show that quasi-minimality w.r.t.\ telograph-cototal $e$-degrees is strictly stronger than quasi-minimality (w.r.t.\ total $e$-degrees).

\mythm{telograph-quasiminimal3}{
There is a semirecursive set $A\subseteq\om$ which is quasi-minimal, but not quasi-minimal w.r.t.\ telograph-cototal $e$-degrees.\index{quasi-minimal}\index{quasi-minimal!telograph-cototal}\index{Degrees!semi-recursive}
}

Theorem \ref{telograph-quasiminimal1} implies that there is a $\Sigma^0_2$ $e$-degree which is quasi-minimal w.r.t.\ telograph-cototal $e$-degrees.
However, the proof of Theorem \ref{telograph-quasiminimal3} indeed implies that a semirecursive, $\Delta^0_2$, $e$-degree is not necessarily quasi-minimal w.r.t.\ telograph-cototal $e$-degrees (see Theorem \ref{thm:lower-minimal}).

\subsubsection{An alternate approach via Kalimullin pairs}
\label{subsubsec:mariya}
Mariya Soskova (personal communication) has pointed out to us that some of the results in Subsection \ref{subsec:t0nott1} can alternatively be obtained via Kalimullin pairs:
\index{$\mathcal{K}$-pair}
\begin{definition}[Kalimullin \cite{kalimullin2}]
A $\mathcal{K}$-pair is a pair $(A,B)$ where $A,B \subseteq \om$ and there is a c.e.~set $W \subseteq \om \times \om$ such that $A \times B \subseteq W$ and $A^c \times B^c \subseteq W^c$. The pair is trivial, if $A$ is c.e., and non-trivial otherwise.
\end{definition}

As observed by Kalimullin, $\mathcal{K}$-pairs generalize the notion of semirecursive set in way that preserves many of their good properties. Namely, whenever $A$ is semirecursive, then $(A,A^c)$ is a $\mathcal{K}$-pair. Conversely, as shown in \cite{cai} if $(A,B)$ is a non-trivial $\mathcal{K}$-pair, then there is a semirecursive $C$ with $A \leq_e C$ and $B \leq_e C^c$. Further relevant properties of $\mathcal{K}$-pairs were provided by Ganchev, Soskova and others in \cite{kalimullin2,cai,cai2,sorbi3,GanSos15}.

Soskova in particular gave alternative proofs of Lemma \ref{propdichotomy}, Proposition \ref{prop:right-ce}, Proposition \ref{prop:lower-unbounding}, Proposition \ref{qminimalbound}, Proposition \ref{semirecjumpinversion} and Proposition \ref{proptotalordelta2} by reasoning about $\mathcal{K}$-pairs. By invoking some highly non-trivial results from \cite{soskova2}, she also provided an alternative proof of Theorem \ref{thm:countable-T_1-quasiminimal}. Given the usefulness of $\mathcal{K}$-pairs and the context of this work, the question is raised whether we can characterise the halves of non-trivial $\mathcal{K}$-pairs as the degrees of points in some natural topological space (see Question \ref{question:kpairs}).

\subsection{$T_1$-degrees which are not $T_2$}\label{sec:4-2}

We consider two cb-representations of $\om^\om$.
When we simply write $\omega^\omega$, we assume that $\omega^\omega$ is endowed with the usual Baire representation.
We again use $(\om^\om)_{\rm co}$ to denote the represented space whose underlying set is $\om^\om$ whose cb-representation is given by $B_\sigma:=\{f\in\omega^\omega:\sigma\not\prec f\}$.\index{Space!cocylinder topology}
In this section, we will show the following:

\mythm{thm:T-1-degree-not-T-2}{
For any represented Hausdorff space $\xx$, there is a cylinder-cototal $e$-degree which is not an $\xx$-degree, that is,\index{Degrees!cylinder-cototal}\index{Separation axioms!$T_2$}
\[\mathcal{D}_{(\om^\om)_{\rm co}}\not\subseteq\mathcal{D}_\xx.\]
}

Our proof of Theorem \ref{thm:T-1-degree-not-T-2} is applicable to show the existence of a quasi-minimal degree w.r.t.\ some non-second-countable space.
We will later introduce the notion of degrees of points of certain non-second-countable (but still separable) spaces.
In particular, we will deal with the degree structure of the Kleene-Kreisel space $\N^{\N^\N}$, which has been studied by Hinman \cite{Hinman73}, Normann \cite{NormannBook}, and others.
Then we will show the following:
\index{quasi-minimal!$\N^{\N^\N}$}
\mythm{thm:T-1-degree-NNN-quasiminimal}{
There is a cylinder-cototal $e$-degree which is $\mathbb{N}^{\mathbb{N}^\mathbb{N}}$-quasi-minimal.\index{Degrees!cylinder-cototal}
}

One can also show the following by using the techniques from previous sections:

\mythm{telograph-quasiminimal2}{
There is a cylinder-cototal $e$-degree which is quasi-minimal w.r.t.\ telograph-cototal $e$-degrees.\index{Degrees!cylinder-cototal}\index{quasi-minimal!telograph-cototal}
}

\subsubsection{$T_1$-degrees which are $T_2$-quasi-minimal}\label{sec:4-2-1}

In this section, we show that there is a $T_1$-degree which is $T_2$-quasi-minimal.
This is one of the most nontrivial theorems in this article.
Indeed, the product telophase space $(\hat{\om}_{TP})^\om$ contains a point of $T_2$-quasi-minimal degree in the following sense.

\mythm{thm:T_2-quasiminimal}{\index{Degrees!telograph-cototal}\index{quasi-minimal!$T_2$}
Given any countable collection $\{S_i\}_{i\in\omega}$ of effective $T_2$ spaces, there is a telograph-cototal $e$-degree which is $S_i$-quasi-minimal for any $i\in\om$.}

\subsubsection{Continuous degrees}\label{sec:4-2-2}

Recall that a continuous degree is a degree of a point in a computable metric space.
It is known that there is no quasi-minimal continuous degree (see Miller \cite{miller2}).
Therefore, there is a co-$d$-c.e.\ $e$-degree which is not continuous (see Proposition \ref{irrelatti-quasi-minimal}).
Conversely, by using the notion of cospectrum, we can show the following:

\myprop{thm:continuous-cospec}{
There is a continuous degree which is neither telograph-cototal nor cylinder-cototal.\index{Degrees!continuous}\index{Degrees!telograph-cototal}\index{Degrees!cylinder-cototal}
}

\subsection{$T_2$-degrees which are not $T_{2.5}$}
\label{sec:T2-deg-nT25list}

Let $\mathbb{Z}_+$ be the set of all positive integers.
The {\em relatively prime integer topology} $\tau_{\rm rp}$ on $\mathbb{Z}_+$ is generated by $\{a+b\mathbb{Z}:\gcd(a,b)=1\}$, where $a+b\mathbb{Z}=\{a+bn\in\mathbb{Z}_+:n\in\mathbb{Z}\}$.
This space is also known as the {\em Golomb space}.\index{Space!Golomb}
We write $\mathbb{N}_{\rm rp}:=(\mathbb{Z}_+,\tau_{\rm rp})$.
It is known that $\mathbb{N}_{\rm rp}$ is second-countable, Hausdorff, but not $T_{2.5}$ (see Steen-Seebach {\cite[II.60]{CTopBook}}).
Its countable product $\mathbb{N}_{\rm rp}^\om$ also has the same properties.
Note that the coded neighborhood basis of $x\in\mathbb{N}_{\rm rp}$ is described as follows.
\[\nbase{x}=\{\langle n,a,b\rangle:x(n)\equiv a\mbox{ mod }b\mbox{, and $a$ and $b$ are relatively prime}\}.\]

\mythm{thm:T-2-degree-not-T-25}{\index{Space!Golomb}\index{Separation axioms!$T_{2.5}$}
For any represented $T_{2.5}$-space $\xx$, there is an $(\mathbb{N}_{\rm rp})^\om$-degree which is not an $\xx$-degree, that is,
\[\mathcal{D}_{(\mathbb{N}_{\rm rp})^\om}\not\subseteq\mathcal{D}_\xx.\]
}

As before, our proof of Theorem \ref{thm:T-2-degree-not-T-25} is applicable to show the existence of a quasi-minimal degree w.r.t.\ some non-second-countable space.
\index{quasi-minimal!$\N^{\N^\N}$}
\mythm{thm:T-2-degree-NNN-quasiminimal}{\index{Space!Golomb}\index{Space!Kleene-Kreisel}
There is an $(\mathbb{N}_{\rm rp})^\om$-degree which is $\mathbb{N}^{\mathbb{N}^\mathbb{N}}$-quasi-minimal.
}

\subsection{$T_{2.5}$-degrees which are not $T_3$}\label{sec:4-4}

Recall that $(\om^\om)_{GH}$ is the set $\om^\om$ endowed with the Gandy-Harrington topology.
Recall from Theorem \ref{thm:GH-non-continuous} that no $(\om^\om)_{GH}$-degree is continuous.
In this section, we will prove much stronger results.

\mythm{thm:Gandy-Harrington-closed-neighborhood}{\index{Space!Gandy-Harrington}\index{Separation axioms!regular}
Let $\xx=(X,\nn)$ be a regular Hausdorff space with a countable cs-network.
Then there is an $(\om^\om)_{GH}$-degree which is not an $\xx$-degree, that is,
\[\mathcal{D}_{(\om^\om)_{GH}}\not\subseteq\mathcal{D}_\xx.\]
}


\mythm{thm:GH-not-NNN}{
The Gandy-Harrington space has no point of $\mathbb{N}^{\mathbb{N}^\mathbb{N}}$-degree, that is,\index{Space!Gandy-Harrington}\index{Space!Kleene-Kreisel}
\[\mathcal{D}_{(\om^\om)_{GH}}\cap\mathcal{D}_{\mathbb{N}^{\mathbb{N}^\mathbb{N}}}=\emptyset.\]
}

For an $\om$-parametrized pointclass $\Gamma$, the {\em $\Gamma$-Gandy-Harrington topology} is the topology $\tau_\Gamma$ on $\om^\om$ generated by the subbasis consisting of all $\Gamma$ subsets of $\om^\om$.
By $(\om^\om)_{GH(n)}$, we denote $\om^\om$ endowed with the $\Sigma^1_n$-Gandy-Harrington topology.
We show that there is a hierarchy of degree structures of Gandy-Harrington topologies.

\mythm{thm:Gandy-Harrington-hierarchy}{
For any distinct numbers $n,m\in\om$, there is no $e$-degree which is both an $(\om^\om)_{GH(n)}$-degree and an $(\om^\om)_{GH(m)}$-degree, that is,
\[n\not=m\;\Longrightarrow\;\mathcal{D}_{(\om^\om)_{GH(n)}}\cap\mathcal{D}_{(\om^\om)_{GH(m)}}=\emptyset.\]
}

\section{Proofs for Section \ref{sec:zoo-list}}
\label{sec:zoo-proofs}

\subsection{Degrees of points: $T_1$-topology}

\subsubsection{Cocylinder topology}\label{section:cocylinder}

We first show several basic results on cylinder-cototal degrees (Section \ref{sec:cocylinder-topo}).
The following shows that the standard construction of a proper-$\Sigma^0_2$ $e$-degree is doable in the cocylinder space.

\propproof{prop:propersigmacylindercototal}{[Proof (Sketch)]
In this proof, we assume that the reader is familiar with a priority argument.
We will construct a point $x$ in the cocylinder space fulfilling the following requirements:
\[
\mathcal{P}_{D,\Phi,\Psi}:\Psi({\rm Nbase}(x))\not=D\mbox{ or }\Phi(D)\not={\rm Nbase}(x)\mbox{ or }D\mbox{ is not }\Delta^0_2.
\]
where $D$ ranges over all $\Sigma^0_2$ subsets of $\om$, and $\Phi$ and $\Psi$ range over all enumeration operators.
The following describe the action of a $\mathcal{P}_{D,\Phi,\Psi}$-strategy $\xi$:
\begin{enumerate}
\item Choose $\sigma_\xi$, and enumerate $\sigma_\xi$ into $A={\rm Nbase}(x)$.
\item Wait for $\Phi(D)(\sigma_\xi)=A(\sigma_\xi)=1$ by $\Phi(D)$ enumerating $\sigma_\xi$ with some use $F\subseteq D$.
\item Wait for $F\subseteq \Psi(A)$ with some use $G\subseteq A$.
\item Then the strategy $\xi$ declares that we decided to enumerate $G\setminus\{\sigma_\xi\}$ into $A$ forever.
\item Remove $\sigma_\xi$ from $A$.
\item Wait for $\Phi(D)(\sigma_\xi)=A(\sigma_\xi)=0$ being recovered by $\Phi(D)$ removing $\sigma_\xi$.
This forces $F\not\subseteq D$.
\item Then enumerate $\sigma_\xi$ into $A$.
This recovers $G\subseteq A$, and therefore forces $F\subseteq\Psi(A)$ and thus $\Psi(A)\upto F\not=D\upto F$.
\item Wait for $F\subseteq D$ being recovered.
This may recover $\Psi(A)\upto F=D\upto F$, but this forces $\Phi(D)(\sigma_\xi)=1$.
Then go back to Step 5.
\end{enumerate}

For each stage reaching at Step 5, the strategy $\xi$ returns the infinitary outcome $\infty$.
Otherwise, the strategy $\xi$ returns the finitary outcome ${\tt f}$.
It should be careful about the choice of $\sigma_\xi$.
Let $\zeta$ be the maximal string such that $\zeta\fr\infty\preceq\xi$.
\begin{enumerate}
\item Let $\sigma_\xi$ be the lexicographically least immediate successor of $\sigma_\zeta$ which is neither chosen by any strategy nor declared to be determined.
\item Enumerate all strings incomparable with $\sigma_\zeta$ into $A$.
Remove all initial segments of $\sigma_\xi$ from $A$.\qedhere
\end{enumerate}
}

\subsubsection{Products of cocylinder topology}

We next prove the following result mentioned in Section \ref{sec:3-4-2}.


\thmproof{thm:coBaire-hierarchy}{
First note that a basic open set in $(\om^\om_{\rm co})^n$ is of the form $\prod_{k<n}\cmp{[D_k]}$ for some collection $(D_k)_{k<n}$ of finite sets of strings.
We code the set $\prod_{k<n}\cmp{[D_k]}$ by $(D_k)_{k<n}$.
Therefore, an enumeration operator from $(\om^\om_{\rm co})^{m}$ to $(\om^\om_{\rm co})^{n}$ is a c.e.~set $\Psi$ of collection of $(n+m)$-tuples of the form $((\sigma_k)_{k<n},(D_\ell)_{\ell<m})$.
For each such enumeration operator $\Psi$, we write $\Psi^{[k]}$ for its $k$-th section, that is, a collection of tuples of the form $(\sigma_k,(D_\ell)_{\ell<m})$.

We will construct a tuple $\bvec{x}=(x_k)_{k\leq n}\in(\om^\om_{\rm co})^{n+1}$ fulfilling the following requirements:
\[R_{\langle d,e\rangle}: [(\exists \bvec{y}\in(\om^\om_{\rm co})^n\;\Phi_d^{{\rm Nbase}(\bvec{x})}={\rm Nbase}(\bvec{y})]\;\Longrightarrow\;\Psi_e^{{\rm Nbase}(\bvec{y})}\not={\rm Nbase}(\bvec{x}).\]

Let $s=\langle d,e\rangle$.
Inductively we assume that $x_k\upto s$ is constructed for every $k\leq n$.
We also assume that we have constructed a collection $(E_{k,s})_{k\leq n}$ of finite sets of strings such that $\sigma\not\in E_{k,s}$ for any $\sigma\preceq x_k\upto s$, where $E_{k,0}=\emptyset$.
At stage $s$, proceed the following strategy:

\begin{enumerate}
\item Choose an $(n+1)$-tuple $(a_k)_{k\leq n}$ of large numbers which are not mentioned by $E_{k,s}$.
\item Ask whether there exists a finite collection $\bvec{D}=(D_\ell)_{\ell<n}$ of finite sets of strings such that $\langle ((x_k\upto s)\fr a_k)_{k\leq n},(D_\ell)_{\ell<n}\rangle\in\Psi_e$.  \label{coBaire-question}
\item If there is no such $\bvec{D}$, for each $k\leq n$, choose a large number $a_k'\not=a_k$ not mentioned in $E_{k,s}$, and define $x_k(s)=a_k'$, and $E_{k,s+1}=E_{k,s}$.\label{coBaire-item1}
Go to stage $s+1$.
\item If such a $\bvec{D}$ exists, we say that a finite set $G$ of strings is {\em $k$-avoidable} if $G$ has no initial segment of $(x_k\upto s)\fr a_k$.\label{coBaire-yes}
Then we say that a $(n+1)$-tuple $(G_k)_{k\leq n}$ of finite sets of strings is {\em avoidable except for $j$} if $G_k$ is $k$-avoidable for any $k\not=j$, and that $(G_k)_{k\leq n}$ is {\em all-but-one avoidable} if it is avoidable except for at most one $j$.
\item Ask whether for any $\ell<n$ there is an all-but-one avoidable tuple $(G^\ell_k)_{k\leq n}$ such that for any $\sigma\in D_\ell$, $\langle\sigma,(G^\ell_k)_{k\leq n}\rangle\in\Phi_d^{[\ell]}$.\label{coBaire-item5}
\item If yes, define $E_{k,s+1}=E_{k,s}\cup\bigcup_{\ell<n}G^\ell_{k}$.\label{coBaire-item2}
Note that there is $m\leq n$ such that $G^\ell_{m}$ is $m$-avoidable for any $\ell<n$ since $((G^\ell_{k})_{k\leq n})_{\ell<n}$ is an $n$-tuple of all-but-one avoidable $(n+1)$-tuples.
For such $m$, we put $a_m'=a_m$, and for each $k\not=m$, choose a large $a_k'\not=a_k$ not mentioned in $E_{k,s+1}$.
Then define $x_k(s)=a_k'$.
Go to stage $s+1$.
\item If no with $\ell<n$, note that if $G_0$ and $G_1$ are $k$-avoidable, so is $G_0\cup G_1$.
Therefore, for any $j\leq n$, there is $\sigma_j\in D_\ell$ such that if $(G_k)_{k\leq n}$ is avoidable except for $j$, we have $\langle\sigma_j,(G_k)_{k\leq n}\rangle\not\in\Phi_d^{[\ell]}$.
\item If $\sigma_i$ and $\sigma_j$ are incomparable for some $i\not=j$, define $x_k(s)=a_k$ and $E_{s+1}=E_s$.
Go to stage $s+1$.\label{coBaire-item6}
\item If $(\sigma_j)_{j\leq n}$ is pairwise comparable, then let $\sigma_{\bvec{D}}$ be the shortest one. \label{coBaire-item3}
\end{enumerate}

\noindent
{\bf Case 1.}
We reach Step (\ref{coBaire-item1}).
In this case, note that for any $\bvec{y}\in(\om^\om_{\rm co})^n$, $\Psi_e^{{\rm Nbase}(\bvec{y})}$ does not enumerate $((x_k\upto s)\fr a_k)_{k\leq n}$, that is, if $\Psi_e^{{\rm Nbase}(\bvec{y})}$ is a neighborhood basis of a point $(z_k)_{k\leq n}\in(\om^\om_{\rm co})^{n+1}$, then $z_k$ must extend $(x_k\upto s)\fr a_k$ for some $k\leq n$.
Since our action at Step (\ref{coBaire-item1}) ensures that $x_k\upto s+1$ is incomparable with $(x_k\upto s)\fr a_k$ for every $k\leq n$, the requirement $R_{\langle d,e\rangle}$ is fulfilled.

\medskip
\noindent
{\bf Case 2.}
Otherwise, for $\bvec{a}=(a_k)_{k\leq n}$, let $\mathcal{D}_{\bvec{a}}$ be the set of all $\bvec{D}$'s witnessing that the question in Step (\ref{coBaire-question}) is true.
Consider the case that we reach Step (\ref{coBaire-item2}) or (\ref{coBaire-item6}) with some $\bvec{D}\in\mathcal{D}_{\bvec{a}}$.

Assume that we reach Step (\ref{coBaire-item2}) with a collection $((G^\ell_k)_{k\leq n})_{\ell<n}$ of avoidable tuples.
Fix $m\leq n$ such that $G_m^\ell$ is $m$-avoidable.
Then $\bigcup_\ell G^\ell_{m}$ is also $m$-avoidable, and moreover, $E_{m,s}$ is $m$-avoidable by our choice of $a_m$.
Therefore, $E_{m,s+1}$ is also $m$-avoidable, and therefore, $E_{m,s+1}$ has no initial segment of $x_m\upto s+1=(x_m\upto s)\fr a_m$.
Moreover, for each $k\not=m$, by our choice of $a_k'$, $E_{k,s+1}$ has no initial segment of $x_k\upto s+1=(x_k\upto s)\fr a_k'$.
Since $G^\ell_k\subseteq E_{k,s+1}$ for any $\ell<n$ and $k\leq n$, given $\bvec{x}=(x_k)_{k\leq n}$, if $x_k$ is an extensions of $x_k\upto s+1$, then $(G^\ell_k)_{k\leq n}\subseteq{\rm Nbase}(\bvec{x})$, and therefore, the $\ell$-th section of $\Phi_d^{{\rm Nbase}(\bvec{x})}$ enumerates all strings in $D_\ell$ for any $\ell<n$.
However, since we have $\langle ((x_k\upto s)\fr a_k)_{k\leq n},(D_\ell)_{\ell<n}\rangle\in\Psi_e$, if $\Psi_e\Phi_d^{{\rm Nbase}(\bvec{x})}$ enumerates a neighborhood basis of a point $(z_k)_{k\leq n}\in(\om_{\rm co}^\om)^{n+1}$, $z_k$ cannot extend $(x_k\upto s)\fr a_k$ for any $k\leq n$.
Since $x_m\upto s+1=(x_m\upto s)\fr a_m$, we must have $\Psi_e\Phi_d^{{\rm Nbase}(\bvec{x})}\not={\rm Nbase}(\bvec{x})$.
Thus, the requirement $R_{\langle d,e\rangle}$ is fulfilled.

Assume that we reach Step (\ref{coBaire-item6}) with $\ell$ and incomparable $\sigma_i$ and $\sigma_j$.
Later we will also use the symbol $\ell_{\bvec{D}}$ to specify this $\ell$.
In this case, our action ensures that $x_k\upto s+1=(x_k\upto s)\fr a_k$, and therefore, if $(G_k)_{k\leq n}\subseteq{\rm Nbase}(\bvec{x})$, then $G_k$ must be $k$-avoidable for any $k\leq n$.
In particular, for any $k\leq n$, $(G_k)_{k\leq n}$ is avoidable except for $k$.
Thus, we have $\langle\sigma_k,(G_k)_{k\leq n}\rangle\not\in\Phi_d^{[\ell]}$.
Since $(\sigma_k)_{k\leq n}$ contains an incomparable pair, the $\ell$-th section $\Phi_d^{{\rm Nbase}(\bvec{x})}$ does not define a  point in $\om^\om_{\rm co}$.
Thus, the requirement $R_{\langle d,e\rangle}$ is fulfilled.

\medskip

\noindent
{\bf Case 3.}
Assume that we reach Step (\ref{coBaire-item3}) for any $\bvec{D}\in\mathcal{D}_{\bvec{a}}$.
We say that $\bvec{z}=(z_k)_{k\leq n}\in(\om^\om_{\rm co})^{n+1}$ is {\em all-but-one good} (for $\bvec{a}=(a_k)_{k\leq n}$) if $z_k$ extends $(x_k\upto s)\fr a_k$ for all but one $k\leq n$.
If $\bvec{z}$ is all-but-one good, any $\bvec{G}\subseteq{\rm Nbase}(\bvec{z})$ is all-but-one avoidable.
Therefore, if $\Phi_d^{{\rm Nbase}(\bvec{z})}$ enumerates a neighborhood basis of a point $(y_\ell)_{\ell<n}$, $y_\ell$ must extend $\sigma_{\bvec{D}}$, where $\ell=\ell_{\bvec{D}}$.

If there are $\bvec{D},\bvec{D}'\in\mathcal{D}_{\bvec{a}}$ such that $\ell_{\bvec{D}}=\ell_{\bvec{D}'}$ and that $\sigma_{\bvec{D}}$ is incomparable with $\sigma_{\bvec{D}'}$, then this means that $\Phi_d^{{\rm Nbase}(\bvec{z})}$ is not a neighborhood basis of a point for any all-but-one good tuple $\bvec{z}$.
In this case, by putting $x_k(s)=a_k$, and $E_{k,s+1}=E_{k,s}$ for each $k\leq n$, the requirement $R_{\langle d,e\rangle}$ is fulfilled.
Then go to stage $s+1$.


\medskip

\noindent
{\bf Case 4.}
Otherwise, for any $\bvec{D},\bvec{D}'\in\mathcal{D}_{\bvec{a}}$, if $\ell_{\bvec{D}}=\ell_{\bvec{D}'}$, then $\sigma_{\bvec{D}}$ and $\sigma_{\bvec{D}'}$ are comparable.
Then we define $\sigma^{\bvec{a}}_\ell$ for any $\ell<n$ and large tuple $\bvec{a}$ not mentioned in $E_s$ as follows:
\[\sigma^{\bvec{a}}_\ell=\bigcup\{\sigma_{\bvec{D}}:\bvec{D}\in\mathcal{D}_{\bvec{a}}\mbox{ and }\ell_{\bvec{D}}=\ell\}.\]

If there is a tuple $(a_k)_{k\leq n}$ such that, whenever $x_k$ extends $(x_k\upto s)\fr a_k$ for any $k\leq n$, $\Phi_d^{{\rm Nbase}(\bvec{x})}$ does not enumerate a neighborhood basis of a point in $(\om_{\rm co}^\om)^n$, then we just put $x_k(s)=a_k$ and $E_{k,s+1}=E_{k,s}$ for each $k\leq n$, and then go to stage $s+1$.

Therefore, we can assume that for any $(a_k)_{k\leq n}$, there are $\bvec{x}=(x_k)_{k\leq n}$ such that $x_k$ extends $(x_k\upto s)\fr a_k$ and that $\Phi_d^{{\rm Nbase}(\bvec{a})}$ enumerates a neighborhood basis of a point in $(\om_{\rm co}^\om)^n$.
Under this assumption, we show the following claim.

\begin{claim}
For any $(n+1)$-tuples $\bvec{a},\bvec{b}\in\om^{n+1}$ of large numbers and $\ell<n$, $\sigma^{\bvec{a}}_\ell$ and $\sigma^{\bvec{b}}_\ell$ are comparable.
\end{claim}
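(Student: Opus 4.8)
The plan is to isolate the content of the Case~3 analysis as a reusable inequality and then feed it to carefully chosen ``hybrid'' witness points.

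First I would record the following, which is exactly what the reasoning leading to Step~(\ref{coBaire-item3}) establishes. Suppose $\bvec{z}=(z_k)_{k\leq n}$ is all-but-one good for a large tuple $\bvec{c}$, with bad coordinate $j^\ast$ (so $z_k$ extends $(x_k\upto s)\fr c_k$ for all $k\neq j^\ast$), and $\Phi_d^{{\rm Nbase}(\bvec{z})}$ enumerates a neighborhood basis of a point $\bvec{y}\in(\om^\om_{\rm co})^n$. Then $\sigma^{\bvec{c}}_\ell\preceq y_\ell$ for every $\ell<n$. Indeed, for $k\neq j^\ast$ no initial segment of $(x_k\upto s)\fr c_k$ lies in ${\rm Nbase}(z_k)$, so ${\rm Nbase}(\bvec{z})$ is avoidable except for $j^\ast$ relative to $\bvec{c}$; hence, for each $\bvec{D}\in\mathcal{D}_{\bvec{c}}$ with $\ell_{\bvec{D}}=\ell$, the string $\sigma_{j^\ast}\in D_\ell$ chosen when the question of item~(\ref{coBaire-item5}) receives a negative answer is not enumerated into the $\ell$-th section of $\Phi_d^{{\rm Nbase}(\bvec{z})}$, i.e.\ $\sigma_{j^\ast}\prec y_\ell$; since $\sigma_{\bvec{D}}\preceq\sigma_{j^\ast}$ we get $\sigma_{\bvec{D}}\preceq y_\ell$, and taking the union over such $\bvec{D}$ yields $\sigma^{\bvec{c}}_\ell\preceq y_\ell$.

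Next I would treat the case in which $\bvec{a}$ and $\bvec{b}$ agree outside a set $\{k_1,k_2\}$ of at most two coordinates. Form the hybrid tuple $\bvec{c}$ equal to $\bvec{a}$ except that $c_{k_1}=b_{k_1}$; then $\bvec{c}$ differs from $\bvec{a}$ only at $k_1$ and from $\bvec{b}$ only at $k_2$. By the standing assumption of Case~4 there is $\bvec{z}$ with $z_k\succ(x_k\upto s)\fr c_k$ for all $k\leq n$ and $\Phi_d^{{\rm Nbase}(\bvec{z})}={\rm Nbase}(\bvec{y})$ for some $\bvec{y}\in(\om^\om_{\rm co})^n$. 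This $\bvec{z}$ is all-but-one good for $\bvec{a}$ (bad coordinate $k_1$) and all-but-one good for $\bvec{b}$ (bad coordinate $k_2$), so the previous paragraph gives $\sigma^{\bvec{a}}_\ell\preceq y_\ell$ and $\sigma^{\bvec{b}}_\ell\preceq y_\ell$; in particular $\sigma^{\bvec{a}}_\ell$ and $\sigma^{\bvec{b}}_\ell$ are comparable. Reformulated: for every large $\bvec{r}$, fixing such a witness $\bvec{z}^{\bvec{r}}$ with output point $\bvec{y}^{\bvec{r}}$, all the strings $\sigma^{\bvec{c}}_\ell$ with $\bvec{c}$ differing from $\bvec{r}$ in at most one coordinate are initial segments of $y^{\bvec{r}}_\ell$, hence form a chain.

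For arbitrary large $\bvec{a},\bvec{b}$ I would connect them by a finite sequence $\bvec{a}=\bvec{c}^{(0)},\dots,\bvec{c}^{(t)}=\bvec{b}$ of large tuples with consecutive tuples differing in a single coordinate, so that the previous paragraph (applied at each $\bvec{c}^{(i)}$) makes each triple $\sigma^{\bvec{c}^{(i-1)}}_\ell,\sigma^{\bvec{c}^{(i)}}_\ell,\sigma^{\bvec{c}^{(i+1)}}_\ell$ a chain below $y^{(i)}_\ell:=y^{\bvec{c}^{(i)}}_\ell$, and moreover $\sigma^{\bvec{c}^{(i)}}_\ell\preceq y^{(i-1)}_\ell\wedge y^{(i)}_\ell$. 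Since comparability of strings is not transitive, one must glue these local bounds into a single one; the extra input I would exploit is that each $\bvec{y}^{(i)}$ is an actual \emph{point}, together with the freedom (from the Case~4 assumption) to pick the witnesses $\bvec{z}^{(i)}$ coherently — for consecutive tuples, agreeing off the single changing coordinate and pushing the divergence of their common coordinates arbitrarily far out — so as to force the sequence $(y^{(i)}_\ell)_i$, and with it the whole sequence $(\sigma^{\bvec{c}^{(i)}}_\ell)_i$, onto one branch; then $\sigma^{\bvec{a}}_\ell$ and $\sigma^{\bvec{b}}_\ell$ are both initial segments of that branch, so comparable.

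The step I expect to be the main obstacle is precisely this gluing, i.e.\ upgrading the radius-one comparability to global comparability. The naive chain argument yields only that consecutive triples are comparable, which is genuinely too weak (it does not preclude a ``dip-and-diverge'' pattern along the chain), so one must genuinely use that $\Phi_d^{{\rm Nbase}(\cdot)}$ outputs points at the inputs furnished by the Case~4 assumption, not merely arbitrary subsets of $\om$; pinning down the coherent choice of witnesses and the resulting agreement of the output points on long initial segments is where the real work lies.
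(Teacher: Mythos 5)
Your preparatory lemma and the two-coordinate case are correct and coincide with the paper's reasoning: any point all-but-one good for $\bvec{c}$ whose $\Phi_d$-output is a point $\bvec{y}$ forces $\sigma^{\bvec{c}}_\ell\preceq y_\ell$, and a fully good witness for a hybrid differing from each of $\bvec{a},\bvec{b}$ in one coordinate settles comparability when $\bvec{a}$ and $\bvec{b}$ differ in at most two coordinates. But the Claim is needed precisely for tuples differing in every coordinate (that is what the Dickson's-lemma application produces), and there your argument stops: you yourself concede that radius-one comparability along the hybrid chain does not rule out the dip-and-diverge pattern, and the proposed repair --- choosing the witnesses $\bvec{z}^{(i)}$ ``coherently'' so that the outputs $y^{(i)}_\ell$ are forced onto one branch --- is never carried out. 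Nothing in the Case-4 hypothesis licenses it: that hypothesis only asserts that for each large tuple \emph{some} fully good point has a point output; it gives no control over which witnesses have point outputs, and even for witnesses agreeing on all common coordinates up to an enormous length the operator $\Phi_d$ is under no obligation to produce outputs sharing long initial segments. So the decisive step of the Claim is missing, and what you have actually proved is only the case in which $\bvec{a}$ and $\bvec{b}$ differ in at most two coordinates.

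For comparison, the paper runs the same chain of hybrids $\bvec{a},\ \bvec{a}[b_0/a_0],\ \bvec{a}[b_0/a_0,b_1/a_1],\dots,\bvec{b}$ but never distributes the bounds over different witnesses: it argues that the single output $y_\ell$ attached to one point fully good for $\bvec{a}$ already extends $\sigma^{\bvec{c}}_\ell$ for \emph{every} hybrid $\bvec{c}$ in the chain, so that all these strings are prefixes of the one sequence $y_\ell$ and hence pairwise comparable --- in particular $\sigma^{\bvec{a}}_\ell$ and $\sigma^{\bvec{b}}_\ell$ are. That single-witness bound is exactly the ingredient your plan replaces by the unproven gluing step; note that it does not follow merely from applying your preliminary lemma to the one witness (a point fully good for $\bvec{a}$ is all-but-one good only for tuples at Hamming distance at most one from $\bvec{a}$), so if you want to complete your route you must either supply the gluing argument with genuinely new control over $\Phi_d$, or prove the paper's stronger statement that one witness bounds all the hybrid strings simultaneously.
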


\begin{proof}
Given large $\bvec{a}$, assume that $\bvec{z}$ is an all-but-one good tuple (for $\bvec{a}$), and that $\Phi_d^{{\rm Nbase}(\bvec{z})}$ defines a neighborhood basis of a point $\bvec{y}\in(\om_{\rm co}^\om)^n$.
For any $t$, there is $\bvec{D}\in\mathcal{D}_{\bvec{a}}$ such that $\ell_{\bvec{D}}=\ell$ and $\sigma^{\bvec{a}}_\ell\upto t\preceq\sigma_{\bvec{D}}$.
As mentioned in Case 3, for any such $\bvec{D}$, $y_\ell$ extends $\sigma_{\bvec{D}}$.
In particular, we have $\sigma^{\bvec{a}}_{\ell}\upto t\preceq y_\ell$.
Since $t$ is arbitrary, we get  $\sigma^{\bvec{a}}_{\ell}\preceq y_\ell$.
Note that $\bvec{x}\upto s$ followed by $\bvec{a}=(a_0,a_1,a_2,\dots,a_n)$ is all-but-one good for $\bvec{a}[b_0/a_0]:=(b_0,a_1,a_2\dots, a_n)$, $\bvec{x}\upto s$ followed by $\bvec{a}[b_0/a_0]$ is all-but-one good for $\bvec{a}[b_0/a_0,b_1/a_1]:=(b_0,b_1,a_2\dots, a_n)$, and so on.
Therefore, for any $\bvec{x}=(x_k)_{k\leq n}$, if $x_k$ extends $(x_k\upto s)\fr a_k$ for every $k$, and if $\Phi_d^{{\rm Nbase}(\bvec{x})}$ defines a neighborhood basis of a point $(y_\ell)_{\ell<n}\in (\om_{\rm co}^\om)^n$, then $y_\ell$ extends $\sigma_\ell^{\bvec{a}}$, $\sigma^{(b_0,a_1,a_2,\dots,a_n)}_{\ell}$, $\sigma^{(b_0,b_1,a_2,\dots,a_n)}_{\ell}$, and so on.
This implies that all of these strings are comparable.
By our assumption, for any large $\bvec{a}$, there is such $\bvec{x}$, and therefore, for any large $\bvec{a}$ and $\bvec{b}$, and any $\ell<n$, $\sigma^{\bvec{a}}_\ell$ and $\sigma^{\bvec{b}}_\ell$ are comparable.
Thus, our claim is verified.
\end{proof}

Now, for any $\bvec{a}\in(\om^\om_{\rm co})^{n+1}$, we define $r(\bvec{a})\in(\om+1)^n$ as follows:
\[r(\bvec{a})(\ell)=|\sigma^{\bvec{a}}_\ell|.\]

Consider any infinite sequence $(\bvec{a}^i)_{i\in\om}$ of large $(n+1)$-tuples $\bvec{a}^i=(a^i_k)_{k\leq n}$ such that if $i\not=j$ then ${a}^i_k\not={a}^j_k$ for any $k\leq n$.
Let $\leq^n$ be the product order on $(\om+1)^n$.
Since $\om+1$ is a well quasi order, so is $((\om+1)^n,\leq^n)$ by Dickson's lemma.
Thus, there are $j<i$ such that $r(\bvec{a}^j)\leq^nr(\bvec{a}^i)$.
Put $\bvec{a}=\bvec{a}^i$ and $\bvec{b}=\bvec{a}^j$, and define $x_k(s)=a_k$ for each $k\leq s$.

Assume that $x_k$ extends $x_k\upto s+1$ for any $k\leq n$.
Then, $(x_k\upto s)\fr b_k)_{k\leq n}\in{\rm Nbase}(\bvec{x})$ since $x_k(s)=a_k\not=b_k$ for any $k\leq n$.
If $\Phi_d^{{\rm Nbase}(\bvec{x})}$ defines a neighborhood basis of a point $\bvec{y}=(y_\ell)_{\ell<n}\in(\om_{\rm co}^\om)^n$ then we must have $y_\ell\succeq\sigma^{\bvec{a}}_\ell$ for every $\ell<n$.
Since $r(\bvec{b})\leq^nr(\bvec{a})$, we get $y_\ell\succeq\sigma^{\bvec{b}}_\ell$ for every $\ell<n$.
However, by our choice of $\sigma^{\bvec{b}}_{\ell}$, if $\langle ((x_k\upto s)\fr b_k)_{k\leq n},\bvec{D}\rangle\in\Psi_e$, then for $\ell=\ell_{\bvec{D}}$, there is $\sigma\in D_\ell$ such that $\sigma\preceq\sigma^{\bvec{b}}_\ell$.
Therefore, $D_\ell$ contains an initial segment of $y_\ell$, and thus $D\not\subseteq{\rm Nbase}(\bvec{y})$.
Hence, we have $\Psi_e^{{\rm Nbase}(\bvec{y})}\not={\rm Nbase}(\bvec{x})$, that is, $\Psi_e\Phi_d^{\nbase{\bvec{x}}}\not={\rm Nbase}(\bvec{x})$.
Put $E_{k,s+1}=E_{k,s}$ for each $k\leq n$, and then go to stage $s+1$.
Then, the requirement $R_{\langle d,e\rangle}$ is fulfilled.
}


\subsubsection{Telophase topology}\label{sec:telophasetopo}

Next, we show several basic results on telophase spaces and telograph-cototal degrees.
For definitions, see Section \ref{sec:3-4-3}.

\propproof{prop:ctpcomputablyembeds}{
We define a function $c:\mathcal{C}_{TP}\to 2^\om$ by $c(x)=1^\om$ if $x=\mathbf{1}_\star$; otherwise $c(x)=x$.
Given $x\in\mathcal{C}_{TP}$, we define $h(x)(n+1)=c(x)(n)$.
We define $h(x)(0)=n$ if we find that $1^n0\prec x$.
Otherwise, $x\in\{\mathbf{1},\mathbf{1}_\star\}$.
If $x=\mathbf{1}$, then define $h(x)(0)=\infty$, and if $x=\mathbf{1}_\star$, then define $h(x)(0)=\infty_\star$.
It is not hard to check that $h\colon\mathcal{C}_{TP}\to(\hat{\om}_{TP})^\om$ is an embedding, that is, there are enumeration operators $\Phi,\Psi$ witnessing that $\nbase{x}\equiv_e\nbase{h(x)}$ for any $x\in\mathcal{C}_{TP}$.

For the latter assertion, it is clear that $\xx^{\om\times\om}$ is computably homeomorphic to $\xx^{\om}$.
Thus, $(\mathcal{C}_{TP})^\om$ computably embeds into $(\hat{\om}_{TP})^\om$, and therefore $\mathcal{D}_{(\mathcal{C}_{TP})^\om}=\mathcal{D}_{(\hat{\om}_{TP})^\om}$.
}

\propproof{obs:telograph-hierarchy-collapses}{
For the first assertion, given $g:\om\to\om$, consider
\[G=\{\langle n,m,0\rangle:g(n)\not=m+1\}\cup \{\langle n,m,1\rangle:g(n)=m+1\}.\]
Clearly, $G$ is total, and one can check that $G\equiv_e\cmp{{\rm Graph}(g)}\oplus{\rm TGraph}_1(g)$.

One can easily check that every $b$-telograph-cototal $e$-degree is $(b+1)$-telograph-cototal by considering $\tilde{g}(n)=g(n)+1$.
To see that every $b$-telograph-cototal $e$-degree is $2$-telograph-cototal, given $g:\om\to\om$ and $i<b$, consider the following:
\[
g_i(n)=
\begin{cases}
0&\mbox{ if }g(n)=i,\\
1&\mbox{ if }g(n)<b\mbox{ and }g(n)\not=i,\\
g(n)-b+2&\mbox{ if }g(n)\geq b.
\end{cases}
\]

Define $\tilde{g}$ by $\tilde{g}(bn+i)=g_i(n)$.
Then, we claim that $\cmp{{\rm Graph}(\tilde{g})}\oplus{\rm TGraph}_2(\tilde{g})\equiv_e\cmp{{\rm Graph}(g)}\oplus{\rm TGraph}_b(g)$.
It is straightforward to see the following for $n,m\in\om$ and $i<b$,
\begin{align*}
\langle bn+i,0\rangle\in\cmp{{\rm Graph}(\tilde g)}&\iff\langle n,i\rangle\in\cmp{{\rm Graph}(g)},\\
\langle bn+i,1\rangle\in\cmp{{\rm Graph}(\tilde g)}&\iff(\forall j<b)\;j=i\mbox{ or }\langle n,j\rangle\in\cmp{{\rm Graph}(g)},\\
\langle bn+i,m+2\rangle\in\cmp{{\rm Graph}(\tilde g)}&\iff\langle n,m+b\rangle\in\cmp{{\rm Graph}(g)},\\
\langle bn+i,m+2\rangle\in{\rm TGraph}_2(\tilde g)&\iff\langle n,m+b\rangle\in{\rm TGraph}_b(g).
\end{align*}
The reduction $\leq_e$ clearly follows from the above equivalences.
For the reduction $\geq_e$, see the first, third, and forth equivalences.
}

\propproof{prop:telophase-is-graph-cototal}{
By Proposition \ref{obs:telograph-hierarchy-collapses}, it suffices to show that the $(\hat{\om}_{TP})^\om$-degrees are exactly the $2$-telograph-cototal $e$-degrees.
Given a point $x\in(\hat{\om}_{TP})^\om$, consider the following $g_x$:
\[
g_x(n)=
\begin{cases}
0&\mbox{ if }x(n)=\infty,\\
1&\mbox{ if }x(n)=\infty_\star,\\
x(n)+2&\mbox{ if }x(n)\in\om.
\end{cases}
\]

We claim that $\nbase{x}\equiv_e\cmp{{\rm Graph}(g_x)}\oplus{\rm TGraph}_2(g_x)$.
Recall from Example \ref{example:teloph-2} the definition of $\nbase{x}$ in the telophase space $(\hat{\om}_{TP})^\om$.
To verify the reduction $\leq_e$, we claim that
\begin{align*}
\langle n,0,m\rangle\in\nbase{x} & \iff\langle n,m+2\rangle\in{\rm TGraph}_2(g_x),\\
\langle n,1,m\rangle\in\nbase{x} & \iff\langle n,1\rangle,\langle n,2\rangle,\dots,\langle n,m+1\rangle\in\cmp{{\rm Graph}(g_x)},\\
\langle n,2,m\rangle\in\nbase{x} & \iff\langle n,0\rangle,\langle n,2\rangle,\dots,\langle n,m+1\rangle\in\cmp{{\rm Graph}(g_x)}.
\end{align*}
This is because, for the first equivalence, if $x(n)=m\in\om$, then $g_x(n)=m+2\geq 2$, and thus $\langle n,m+2\rangle$ is enumerated into ${\rm TGraph}_2(g_x)$.
For the second equivalence, $m\leq x(n)\leq\infty$ if and only if $x(n)\not=\infty_\star$ and $x(n)\not\in\{0,\dots,m-1\}$.
This means that $\langle n,1\rangle\not\in{\rm Graph}(g_x)$ and $\langle n,2\rangle,\dots,\langle n,m-1+2\rangle\not\in{\rm Graph}(g_x)$.
The last equivalence holds by a similar reason.
The above three equivalences give us a reduction witnessing $\nbase{x}\leq_e\cmp{{\rm Graph}(g_x)}\oplus{\rm TGraph}_2(g_x)$.

For the reduction $\geq_e$, first note that $g_x(n)\not=\infty$ if and only if $m\leq x(n)\leq\infty_\star$ for some $m\in\om$.
Similarly, $g_x(n)\not=\infty_\star$ if and only if $m\leq x\leq\infty$ for some $m\in\om$.
Therefore, for $i<2$,
\[\langle n,i\rangle\in\cmp{{\rm Graph}(g_x)}\iff\langle n,2-i,m\rangle\in\nbase{x}.\]

For $m\in\om$, $g_x(n)\not=m+2$ if and only if either $x(n)=k$ for some $k<m$ or $k\leq x$ for some $k>m$.
Therefore, for $m\in\om$,
\begin{align*}\langle n,m+2\rangle\in\cmp{{\rm Graph}(g_x)}\iff&(\exists k<m)\;\langle n,0,k\rangle\in\nbase{x}\\
&\mbox{ or }(\exists k>m)(\exists i<2)\;\langle n,i+1,k\rangle\in\nbase{x}.
\end{align*}

Finally, it is clear that for any $n,m\in\om$,
\[\langle n,m\rangle\in{\rm TGraph}_2(g_x)\iff m\geq 2\mbox{ and }\langle n,0,m-2\rangle\in\nbase{x}.\]

The above equivalences give us a reduction witnessing $\cmp{{\rm Graph}(g_x)}\oplus{\rm TGraph}_2(g_x)\leq_e\nbase{x}$.
This concludes the proof.
}


\thmproof{thm:telophase-degree}{
Given a point $x\in(\hat{\om}_{TP})^\om$, we define
\[X=\{2\langle n,m\rangle:x(n)=m\}\cup\{2\langle n,m\rangle+1:x(n)\not=m\},\]
where $m$ ranges over $\om$.
Clearly, $X$ is total.
We define $A=\{n\in\om:x(n)=\infty\}$ and $B=\{n\in\om:x(n)=\infty_\star\}$.
It is clear that $A\cap B=\emptyset$.
Note that $A\cup B$ is co-c.e.~relative to $X$, since $n\in A\cup B$ if and only if $2\langle n,m\rangle+1\in X$ for any $m>0$.

It is clear that $X\oplus\cmp{X}\leq_e{\rm Nbase}(x)$.
To see ${\rm Sep}(A,B)\leq_M{\rm Nbase}(x)$, given $n\in\om$, wait for the first triple $\langle n,i,m\rangle$ to be enumerated into $\nbase{x}$.
If $i=1$ (then, $x(n)\not=\infty_\ast$), enumerate $n$ into $C$.
If $i=2$ (then, $x(n)\not=\infty$), enumerate $n$ into $\cmp{C}$.
If $i=0$ (then, $x(n)\in\om$), enumerate $n$ into $C$.
The constructed set $C$ satisfies that $C\in{\rm Sep}(A,B)$.

Conversely, assume that $C\in{\rm Sep}(A,B)$ is given.
For each $n$, if $n\in C$, then $x(n)\not=\infty_\star$.
Thus, enumerate $\langle n,1,m\rangle$ into $\nbase{x}$ (which indicates that $m\leq x(n)\leq\infty$) if $2\langle n,k\rangle+1$ is enumerated into $X$.
If $n\not\in C$, then $x(n)\not=\infty$.
Thus, enumerate $\langle n,2,m\rangle$ into $\nbase{x}$ (which indicates that $m\leq x(n)\leq\infty_\star$) if $2\langle n,k\rangle+1$ is enumerated into $X$.
Moreover, if $2\langle n,m\rangle$ is enumerated into $X$, then enumerate $\langle n,0,m\rangle$ and $\langle n,i+1,k\rangle$ into $\nbase{x}$ for each $i<2$ and $k\leq m$.
It is not hard to check that this procedure eventually enumerates ${\rm Nbase}(x)$.

Now let $A,B$ be a pair of disjoint sets such that $A\cup B$ is $X$-co-c.e.
We construct a point $x\in(\hat{\om}_{TP})^\om$ such that $\{X\}\times{\rm Sep}(A,B)$ is equivalent to ${\rm Nbase}(x)$.
We define $x(2n)$ to be $X(n)$.
Fix an $X$-computable enumeration of the complement of $A\cup B$.
We define $x(2n+1)$ as follows:
\[
x(2n+1)=
\begin{cases}
\infty\mbox{ if }n\in A,\\
\infty_\star\mbox{ if }n\in B,\\
s\mbox{ if we see $n\not\in A\cup B$ at stage $s$}.
\end{cases}
\]
As in the above argument, it is not hard to see that $\{X\}\times{\rm Sep}(A,B)$ is Medvedev-equivalent to ${\rm Nbase}(x)$.
}


\subsection{Degrees of points: $T_2$-topology}

\subsubsection{Double Origin Topology}\label{sec:doubleorigintopology}

We show several basic results on double origin spaces and doubled co-$d$-CEA degrees.
For definitions, see Section \ref{sec:3-5-1}.

\thmproof{thm:double-origin-degree}{
We first show the ``only if'' part, where we only need to consider $(\mathcal{Q}_{DO})^\om$.
To simplify the argument, we treat $\mathbf{0}_\star$ as a pair $(0_\star,0_\star)$, where $0_\star\not\in\mathcal{Q}_i=\mathbb{Q}\cap(-1,1)$.
Define $c\colon\mathcal{Q}_i\cup\{0_\star\}\to\mathcal{Q}_i$ by $c(x)=x$ if $x\not=0_\star$; otherwise $c(x)=0$.
Given $(x,y)=(x_n,y_n)_{n\in\om}\in(\mathcal{Q}_{DO})^\om$, define $c_{(x,y)}(2n)=c(x_n)$ and $c_{(x,y)}(2n+1)=c(y_n)$. We observe that $c_{\cdot} : \mathcal{Q}_{DO})^\om \to \mathbb{Q}^\omega$ is the computable quotient map that merges the two origins. 

Note that $\mathbb{Q}$ embeds into $\mathbf{2}^\omega$ (as the subspace of eventually constant sequences), which is inherited by $\mathbb{Q}^\omega$. Thus, $c_{(x,y)}$ has total degree, and we pick a suitable total representative $X$.
Moreover, we define $A,B,P,N$ as follows:
\begin{align*}
A&=\{n\in\om:(x_n,y_n)=\mathbf{0}\},\\
B&=\{n\in\om:(x_n,y_n)=\mathbf{0}_\star\},\\
P&=\{n\in\om:y_n>0\},\\
N&=\{n\in\om:y_n<0\}.
\end{align*}

Clearly, $A,B,P,N$ are pairwise disjoint, $A\cup B$ is $X$-co-c.e., and $P,N$ are $X$-c.e.
It is easy to see that $X\oplus\cmp{X}\leq_e{\rm Nbase}(x,y)$.
Note that $\langle n,1,1,1\rangle$ is enumerated into ${\rm Nbase}(x,y)$ if and only if $n$ is enumerated into $A\cup P$.
Similarly, $\langle n,2,1,1\rangle$ is enumerated into ${\rm Nbase}(x,y)$ if and only if $n$ is enumerated into $B\cup N$.
Therefore, we get $X\oplus\cmp{X}\oplus(A\cup P)\oplus(B\cup N)\leq_e{\rm Nbase}(x,y)$.

Conversely, assume that an enumeration of $X\oplus\cmp{X}\oplus(A\cup P)\oplus(B\cup N)$ is given.
Then we proceed the following algorithm:
\begin{enumerate}
\item[(I)] If we see $n\not\in A\cup B$ by using $X\oplus\cmp{X}$, we start to enumerate all tuples of the form $\langle n,0,p,q,r,s\rangle$ such that $p<c(x_n)<q$ and $r<c(y_n)<s$.
\item[(II)] if we see $n\in A\cup P$, by using $X\oplus\cmp{X}$, we start to enumerate all tuples of the form $\langle n,1,k,\ell\rangle$ such that $|c(x_n)|<k^{-1}$ and $c(y_n)<\ell^{-1}$.
\item[(III)] If we see $n\in B\cup N$, by using $X\oplus\cmp{X}$, we start to enumerate all tuples of the form $\langle n,2,k,\ell\rangle$ such that $|c(x_n)|<k^{-1}$ and $-\ell^{-1}<c(y_n)$.
\end{enumerate}

Here, for (I), recall that $A\cup B$ is co-c.e.\ relative to $X$.
We claim that the above procedure gives us an enumeration of $\nbase{x,y}$.
To show this claim, let $\nbaseb{n}{x,y}$ be the $n$-th section of $\nbase{x,y}$, that is, $\nbaseb{n}{x,y}=\{\alpha:\langle n,\alpha\rangle\in\nbase{x,y}\}$.

If $n\in A$, clearly, $\nbaseb{n}{x,y}=\{\langle 1,k,\ell\rangle:k,\ell\in\om\}$.
Since $n\in A$, our algorithm only proceeds (II), and since $c(x_n)=c(y_n)=0$, we have $|c(x_n)|<k^{-1}$ and $c(y_n)<\ell^{-1}$ for all $k,\ell\in\om$.
Thus, the $n$-th section enumerated by the above algorithm is exactly $\nbaseb{n}{x,y}$.
If $n\in B$, a similar argument holds.
If $n\in P$, then $\nbaseb{n}{x,y}$ is the union of the set of all $\langle 0,p,q,r,s\rangle$ such that $p<x_n<q$ and $r<y_n<s$, and that of all $\langle n,1,k,\ell\rangle$ such that $|x_n|<k^{-1}$ and $y_n<\ell^{-1}$.
Since $n\in P$, our algorithm proceeds (I) and (II), and clearly, $c(x_n)=x_n$ and $c(y_n)=y_n$.
Thus, the $n$-th section enumerated by the above algorithm is exactly $\nbaseb{n}{x,y}$.
If $n\in N$, a similar argument holds.
Finally, assume that $n\not\in A\cup B\cup P\cup N$.
In this case, $\nbaseb{n}{x,y}$ is the set of all $\langle 0,p,q,r,s\rangle$ such that $p<x_n<q$ and $r<y_n<s$.
Since $n\not\in A\cup B\cup P\cup N$, our algorithm only proceeds (I), and clearly, $c(x_n)=x_n$ and $c(y_n)=y_n$.
Thus, the $n$-th section enumerated by the above algorithm is exactly $\nbaseb{n}{x,y}$.
This verifies the claim.

To show the ``if'' part, let $X,A,B,P,N$ be such that $A$, $B$, $P$, and $N$ are disjoint, and $P$, $N$, and $\cmp{(A\cup B)}$ are $X$-c.e.
We will construct a point $(x,y)=(x_n,y_n)_{n\in\om}\in(\mathcal{Q}_{DO})^\om$ such that $\nbase{x,y}\equiv_eX\oplus\cmp{X}\oplus(A\cup P)\oplus (B\cup N)$; where $(x,y)$ will be in the image of the computale embedding of $(\mathcal{P}_{DO})^\om$ into $(\mathcal{Q}_{DO})^\om$ constructed in Example \ref{exa:double-quasi-Polish}. Hence, $(x,y)$ has even a $(\mathcal{P}_{DO})^\om$-degree.

Fix $X$-computable enumerations of $\cmp{A\cup B}$, $P$, and $N$.
First, $(x_{2n},y_{2n})$ is used to code $X\oplus\cmp{X}$.
For instance, put $(x_{2n},y_{2n})=(X(n)/2,1/2)$.
We first define $c(x_{2n+1})$ and $c(y_{2n+1})$ as follows:
\begin{align*}
c(x_{2n+1})&=
\begin{cases}
2^{-s}&\mbox{ if we see $n\not\in A\cup B$ at stage $s$},\\
0&\mbox{ if $n\in A\cup B$},
\end{cases}\\
c(y_{2n+1})&=
\begin{cases}
2^{-s}&\mbox{ if we see $n\in P$ at stage $s$},\\
-2^{-s}&\mbox{ if we see $n\in N$ at stage $s$},\\
0&\mbox{ if $n\not\in P\cup N$},
\end{cases}
\end{align*}

Note that $n\in A\cup B$ implies $(c(x_{2n+1}),c(y_{2n+1}))=(0,0)$ since $A,B,P,N$ are disjoint.
If $(c(x_{2n+1}),c(y_{2n+1}))=(0,0)$, then define $(x_{2n+1},y_{2n+1})=\mathbf{0}$ if $x\in A$, and define $(x_{2n+1},y_{2n+1})=\mathbf{0}_\star$ if $x\in B$.
We can then decode $A,B,P,N$ as before, and we get $X\oplus\cmp{X}\oplus(A\cup P)\oplus(B\cup N)\equiv_e{\rm Nbase}(x,y)$ as in the above argument.
}

\propproof{prop:doubleorigin-is-cototal}{
Every doubled co-$d$-CEA $e$-degree is of the form $Y=(C\oplus\cmp{C})\oplus(A\cup P)\oplus (B\cup N)$ such that $P,N,\cmp{(A\cup B)}$ are $C$-c.e.
Put $Z=(A\cup P)\oplus (B\cup N)$.
It suffices to construct a total function $g$ such that $C\oplus\cmp{C}\oplus Z\equiv_eC\oplus\cmp{C}\oplus\cmp{{\rm Graph}(g)}\oplus{\rm TGraph}_2(g)$ (since we can remove $C\oplus \cmp{C}$ from the right-hand set by replacing $g$ with $\tilde{g}$ such that $\tilde{g}(2n)=C(n)+2$ and $\tilde{g}(2n+1)=g(n)$).
Fix $C$-computable enumerations of $P$, $N$, and $\cmp{(A\cup B)}$.
For each $n$, let $t^0_n$ ($t^1_n$, resp.)~be the first stage such that $n$ is enumerated into $P$ ($N$, resp.)~if $n\in P\cup N$.
Then we define $g$ as follows.
\begin{align*}
g(2n)&=
\begin{cases}
0,&\mbox{ if }n\not\in A\cup P,\\
1,&\mbox{ if }n\in A,\\
t^0_n+2,&\mbox{ if }n\in P,
\end{cases}
\\
g(2n+1)&=
\begin{cases}
0,&\mbox{ if }n\not\in B\cup N,\\
1,&\mbox{ if }n\in B,\\
t^1_n+2,&\mbox{ if }n\in N,
\end{cases}
\end{align*}

Clearly, $g$ is total.
As $A,B,P,N$ are disjoint, trivially we have
\[A=(A\cup B)\setminus (B\cup N),\mbox{ and }B=(A\cup B)\setminus (A\cup P).\]

Thus, $n\in A$ if and only if $n\in A\cup B$ and $2n+1\in\cmp{Z}$ (i.e., $n\in B\cup N$).
Similarly, $n\in B$ if and only if $n\in A\cup B$ and $2n\in\cmp{Z}$ (i.e., $n\in A\cup P$).
Thus, we note that
\begin{align*}
g(2n)\not=0&\iff 2n\in Z,\\
g(2n)\not=1&\iff n\in\cmp{(A\cup B)}\mbox{ or }2n+1\in Z,\\
g(2n)=m+2&\iff m=t^0_n.
\end{align*}

Here, note that the equality $m=t^0_n$ is $C$-computable.
Similarly,
\begin{align*}
g(2n+1)\not=0&\iff 2n+1\in Z,\\
g(2n+1)\not=1&\iff n\in\cmp{(A\cup B)}\mbox{ or }2n\in Z,\\
g(2n+1)=m+2&\iff m=t^1_n.
\end{align*}

The above equivalences clearly witness that $C\oplus\cmp{C}\oplus Z$ is $e$-equivalent to $C\oplus\cmp{C}\oplus\cmp{{\rm Graph}(g)}\oplus{\rm TGraph}_2(g)$.
Consequently, $Y$ has a telograph-cototal $e$-degree.
}


\subsection{Degrees of points: $T_{2.5}$-topology}

\subsubsection{Arens square}\label{sec:Arens-square}

We show several basic results on the Arens space and Arens co-$d$-CEA degrees.
For definitions, see Section \ref{sec:3-6-1}.

\propproof{proparensecondcountable}{
It is clear that $\mathcal{QA}$ is second-countable.
To see that $\mathcal{QA}$ is $T_{2.5}$, choose two distinct points $(x_0,y_0)\not= (x_1,y_1)$ in $\mathcal{QA}$.
If $y_0\not=y_1$, then there are disjoint open sets $U,V\subseteq\om^3+1$ such that $y_0\in\overline{U}$ and $y_1\in\overline{V}$  (w.r.t.\ the order topology on $\om^3+1$).
Then, it is easy to see that $\mathcal{L}\times{U}$ and $\mathcal{L}\times{V}$ separate $(x_0,y_0)$ and $(x_1,y_1)$.
If $y_0=y_1$, then we must have $\{x_0,x_1\}=\{0,\overline{0}\}$ and $y_0=y_1=\om^3$.
Assume that $x_0=0$ and $x_1=\overline{0}$.
Then, it is easy to see that $\om\times(\om^3+1)$ and $\om^\ast\times (\om^3+1)$ separate $(x_0,y_0)$ and $(x_1,y_1)$.

Assume for the sake of a contradiction that there is a  continuous function $f:\mathcal{QA}\to\mathbb{R}$ such that $f(0,\om^3)=0$ and $f(\overline{0},\om^3)=1$.
As $\{\om\times(\alpha,\om^3]:\alpha<\om^3\}$ (respectively $\{\om^\ast\times(\alpha,\om^3]:\alpha<\om^3\}$) is a neighbourhood basis of $(0,\om^3)$ (respectively of $(\overline{0},\om^3)$), there is $\alpha<\om^3$ such that $f(x,y)<1/4$ if $x\in\om$ and $y>\alpha$, and that $f(x,y)>3/4$ if $x\in\om^\ast$ and $y>\alpha$.
Note that if $\beta\in I_\infty$ and $\beta>\alpha$, then $(\infty,\beta)\in\overline{\om\times(\alpha,\om^3]}$.
This is because $\beta$ is a limit ordinal, and hence $\beta$ is an accumulation point of $I_n$ for any $n\in\om$.
Hence, $f(\infty,\beta)\leq 1/4$.
Similarly, $(\overline{\infty},\beta)\in\overline{\om^\ast\times(\alpha,\om^3]}$, and hence $f(\overline{\infty},\beta)\leq 3/4$.
Choose $\gamma\in I_{0_\zeta}$ such that $\gamma>\alpha$, and consider the value $f(0_\zeta,\gamma)$.
There is $\delta$ such that $\alpha<\delta<\gamma$ and $|f(x,y)-f(0_\zeta,\gamma)|<1/8$ for any $x\in\zeta$ and $\delta<y\leq \gamma$.
Since $\gamma$ is of the form $\om^2\cdot j$, $\gamma$ is an accumulation point of both $I_{\infty}$ and $I_{\overline{\infty}}$.
Hence, there are $\beta\in I_{\infty}$ and $\overline{\beta}\in I_{\overline{\infty}}$ such that $\alpha<\delta<\beta,\overline{\beta}<\gamma$.
Then, we also have $(\infty,\beta),(\overline{\infty},\overline{\beta})\in\overline{\zeta\times(\delta,\gamma]}$.
Hence,
\[\frac{3}{8}\geq f(\infty,\beta)+\frac{1}{8}\geq f(0_\zeta,\gamma)\geq f(\overline{\infty},\overline{\beta})-\frac{1}{8}\geq \frac{5}{8},\]
which is clearly a contradiction.
}

\obsproof{obsdineiffdiscodcea}{
It is clear that if $\mathbf{d}$ is co-$d$-CEA, then $\mathbf{d}\in\mathcal{E}$.
Conversely, assume that $S=X\oplus \cmp{X}\oplus(A\cup P)\oplus (B\cup N)$ for some $A,B,P,N,X\subseteq\om$ satisfying the above mentioned conditions.
Define $Z=(A\cup P)\cup H_P$, and then $Z\leq_e S$ since $H_P$ is $X$-c.e.
One can see that $\cmp{Z}=(B\cup N)\cup H_N$, and thus $\cmp{Z}\leq_eS$.
Then, we have $A\cup P=Z\cap((A\cup B)\cap P)$ and $B\cup N=\cmp{Z}\cap((A\cup B)\cap N)$.
Since $P,N,\cmp{A\cup B}$ are $X$-c.e., the sets $A\cup P$ and $B\cup N$ are co-$d$-c.e.\ relative to $X\oplus Z$.
Hence, $S\oplus Z\oplus\cmp{Z}\equiv_eS$ is co-$d$-CEA.
}

\obsproof{codceaisarenscodcea}{
If an $e$-degree $\mathbf{d}$ is co-$d$-CEA, then there are $X,A,P\subseteq\om$ such that $P$ and $\cmp{A}$ are $X$-c.e., $A$ and $P$ are disjoint, and $(X\oplus\cmp{X})\oplus(A\cup P)$.
Put $Y=X$, $L=A$, $J_L=P$, $N=\cmp{L}$, and $R=J_R=J_M=\emptyset$.
This witnesses that $\mathbf{d}$ is Arens co-$d$-CEA.
}

\thmproof{thm:Arens-square}{
Assume that $z=(x_n,y_n)_{n\in\om}\in\mathcal{QA}^\om$ is given.
Consider the following sets:
\begin{align*}
L&=\{n\in\om:(x_n,y_n)=(0,\om^3)\},\\
R&=\{n\in\om:(x_n,y_n)=(\overline{0},\om^3)\},\\
M&=\{n\in\om:x_n=0_\zeta\}=\{n\in\om:(\exists j\in\om)\;y_n=\om^2\cdot j\},\\
J_L&=\{n\in\om:x_n\in\om\setminus\{0\}\}=\{n\in\om:(\exists k\in\om\setminus\{0\})\;y_n\in I_k\},\\
J_R&=\{n\in\om:x_n\in\om^\ast\setminus\{\overline{0}\}\}=\{n\in\om:(\exists k\in\om\setminus\{0\})\;y_n\in I_{\overline{k}}\},\\
J_M&=\{n\in\om:x_n\in\zeta\setminus\{0_\zeta\}\}=\{n\in\om:(\exists k\in\zeta\setminus\{0\})\;y_n\in I_{k_\zeta}\}.
\end{align*}

Let $Y$ code the information on the second coordinate $(y_n)_{n\in\om}$.
Then $Y$ is total as seen in Remark \ref{rem:om31-total}.
The sets $L\cup R$, $M$, $J_L$, $J_R$, and $J_M$ are characterized only by using $(y_n)_{n\in\om}$.
For instance, $n\in L\cup R$ iff $y_n=\om^3$, which is co-c.e.\ condition relative to $Y$.
One can also see that $n\in J_L$ iff $y_n\in I_k$ for some $k\in\om$, which is a c.e.\ condition relative to $Y$, since $I_k$ consists of successor ordinals, which are isolated in the space $\om^3+1$.
Similarly, $J_R$, and $J_M$ are c.e.\ in $Y$.
Define $N=\cmp{(L\cup R\cup M)}$.
Then, $N=\{n:y(n)\not\in\{\om^3,\om^2\cdot j:j\in\om\}\}$, which is a c.e.\ condition relative to $Y$.
Define $H_L=\{n:0<x_n<0_\zeta\}$, and $H_R=\{n:0_\zeta<x_n<\overline{0}\}$.
Clearly, $\{H_L,H_R\}$ is a partition of $N$.
Note that $n\in H_L$ iff $y_n\in I_\infty\cup\bigcup_{k\in\om^+}(I_k\cup I_{(-k)_\zeta})$, which is c.e.\ condition relative to $Y$.
Similarly, $H_R$ is c.e.\ in $Y$.
Hence, $A=Y\oplus \cmp{Y}\oplus(L\cup J_L)\oplus (R\cup J_R)\oplus (\cmp{(L\cup R\cup N)}\cup J_M)$ is of an Arens co-$d$-c.e.\ $e$-degree.

We first check that $A\leq_e\nbase{z}$.
It is easy to see that
\begin{align*}
n\in L\cup J_L&\iff(\exists j)\;\langle 0,n,j\rangle\in\nbase{z},\\
n\in R\cup J_R&\iff(\exists j)\;\langle 1,n,j\rangle\in\nbase{z}.
\end{align*}
Moreover, one can see that $\cmp{(L\cup R\cup N)}=M$, and therefore,
\[n\in\cmp{(L\cup R\cup N)}\cup J_M=M\cup J_M\iff(\exists j,k)\;\langle 2,n,j,k\rangle\in\nbase{z}.\]

This verifies that $A\leq_e\nbase{z}$.
Conversely, we first recover $y(n)$ from an enumeration of $Y\oplus\cmp{Y}$.
Then consider the following.
\begin{enumerate}
\item If $y(n)$ is a successor ordinal, then one finds it at finite stage.
One can then compute $x(n)=m_{y(n)}$, which determines $z(n)=(x(n),y(n))$.
Enumerate all neighborhoods of $z(n)$.
\item Even if $y(n)$ is a limit ordinal, if $n\not\in I_0\cup I_1\cup I_{0_\zeta}$, we see $\om^2k+\om u+2j-1<y(n)\leq\om^2k+\om (u+1)$ at some finite stage.
If $u$ is even, $u=2\ell$ say, it is ensured that $j\leq x(n)\leq(-j)_\zeta$, and hence we can enumerate $\langle 3,n,j,k,\ell\rangle$ into $\nbase{z}$.
If $u$ is odd, $u=2\ell+1$ say, it is ensured that $j_\zeta\leq x(n)\leq\overline{j}$, and hence we can enumerate $\langle 4,n,j,k,\ell\rangle$ into $\nbase{z}$.
\item Simultaneously, wait until $n$ is enumerated into $(L\cup J_L)\oplus(R\cup J_R)\oplus(M\cup J_M)$.
\begin{itemize}
\item If we see $n\in L\cup J_L$, enumerate $\langle 0,n,j\rangle$ for any $j$ such that $y(n)>\om^2j$.
\item If we see $n\in R\cup J_R$, enumerate $\langle 1,n,j\rangle$ for any $j$ such that $y(n)>\om^2j$.
\item If we see $n\in M\cup J_M$, enumerate $\langle 2,n,j,k\rangle$ for any $j,k$ such that $\om^2j+\om k<y(n)<\om^2(j+1)$.
\end{itemize}
\end{enumerate}

One can check that the above procedure witnesses that $\nbase{z}\leq_eA$.
Next, given such $L,R,N,J_L,J_R,J_M,Y\subseteq\om$, we define a point $z\in\mathcal{QA}^\om$.
We first define $z(2n+1)=(1,1)$ if $n\in Y$; otherwise $z(2n+1)=(2,3)$.
This clearly ensures that $Y\oplus\overline{Y}\leq_e\nbase{z}$.

Fix $Y$-computable enumerations of $N,J_L,J_R,J_M$, and $\cmp{(L\cup R)}$.
If $n\in L$, define $z(2n)=(0,\om^3)$.
If $n\in R$, define $z(2n)=(\overline{0},\om^3)$.
Put $M=\cmp{(L\cup R\cup N)}$.
If $n\not\in(L\cup R)$ happens, then let $s$ be the first stage when we confirm that (w.r.t.\ a $Y$-computable enumeration of $\cmp{L\cup R}$).
If $n\in M$, we define $z(2n)=(0_\zeta,\om^2\cdot(s+1))$.
If $n\not\in(L\cup R\cup M)$ happens, that is, if $n\in N$, then we see either $n\in H_L$ or $n\in H_R$ at some stage $t$.
Put $j=1$ if $n\in H_L$, and $j=2$ if $n\in H_R$.
If $n\in N\setminus(J_L\cup J_R\cup J_M)$, define $z(2n)=(\infty,\om^2\cdot s+\om(2t+j))$.
If $n\in J_L\cup J_R\cup J_M$ happens, let $u$ be the first stage when we confirm that.
If $n\in J_L\cup J_R$, define the second coordinate of $z(2n)$ to be $\om^2\cdot s+\om(2t+j-1)+2u+1$; otherwise define it to be $\om^2\cdot s+\om(2t+j-1)+2u$.
The second coordinate uniquely determines the first coordinate.

Now, it is not hard to verify that the coded neighborhood basis of the second coordinate of $z$ is $e$-reducible to $Y\oplus\cmp{Y}$.
Moreover, the sets $L,R,N,J_L,J_R,J_M$ satisfy the equations mentioned in the first paragraph in this proof (where $z(2n)=(x_n,y_n)$).
Hence, the above argument shows that $A\equiv_e\nbase{z}$ as desired.
}

\subsubsection{Roy's lattice space}\label{section:Roy-space}

We show several basic results on Roy's lattice space and corresponding degrees.
For definitions, see Section \ref{sec:3-6-2}.

\propproof{propquasipolishroy}{
It is clear that the space $\mathcal{QR}$ is second-countable.
To see that $\mathcal{QR}$ is $T_{2.5}$, let $(x_0,y_0),(x_1,y_1)\in\mathcal{QR}$ be given two distinct points.
If $y_0\not=y_1$, since the ordinal space $\om^\om+1$ is metrizable; hence $T_{2.5}$, choose open sets $U,V\subseteq \mathcal{O}_{\om^\om}$ such that $y_0\in U$, $y_1\in V$, and $\overline{U}\cap\overline{V}=\emptyset$.
Then $\hat{\om}\times U$ and $\hat{\om}\times V$ separate $(x_0,y_0)$ and $(x_1,y_1)$.
If $y_0=y_1$, then we must have $\{x_0,x_1\}=\{0,\infty\}$ and $y_0=y_1$ are the empty string $\langle\rangle$, since $(I_x:x\in\om\setminus\{0\})$ is a partition of the ordinal $\om^\om$.
Then define $U=\{0,1\}\times\mathcal{O}_{\om^\om}$ and $V=[5,\infty]\times\mathcal{O}_{\om^\om}$.
Clearly, we have $(0,\langle\rangle)\in U$, $(\infty,\langle\rangle)\in V$, $\overline{U}\subseteq[0,2]\times \mathcal{O}_{\om^\om}$ and $\overline{V}\subseteq[4,\infty]\times \mathcal{O}_{\om^\om}$.
Hence, $U$ and $V$ separates $(x_0,y_0)$ and $(x_1,y_1)$.

To see that $\mathcal{QR}$ is not completely Hausdorff, suppose for the sake of contradiction that there is a continuous function $f:\mathcal{QR}\to[0,1]$ such that $f(0,\langle\rangle)=0$ and $f(\infty,\langle\rangle)=1$.
Since every open neighborhood of $\langle\infty,\langle\rangle\rangle$ is of the form $[2n+1,\infty]\times\mathcal{O}_{\om^\om}$, there is $k\in\om$ such that $f(n,\sigma)>3/4$ for any $n>2k$ and $\sigma\in\mathcal{O}_{\om^\om}$.
Similarly, since the open sets of the form $\{0,1\}\times(\langle m\rangle,\langle\rangle]_{\rm KB}$ form a local basis at $\langle 0,\langle\rangle\rangle$, there is $m$ such that $\{1\}\times(\langle m\rangle,\langle\rangle]_{\rm KB}\subseteq f^{-1}[0,\ep)$.
Put $\ep=4^{-1}$, and $\ell=\max\{m,k\}+1$.
Then, in particular, we have $\{1\}\times(\langle\ell-1\rangle,\langle\ell\rangle]_{\rm KB}\subseteq f^{-1}[0,4^{-1})$.

Note that $(2,\langle\ell\rangle)\in\mathcal{QR}$ since the length of $\langle\ell\rangle$ is $1$, and thus $\langle\ell\rangle\in I_2$.
The closure of $\{1\}\times (\langle\ell-1\rangle,\langle\ell\rangle]_{\rm KB}$ contains the point $(2,\ell)$ for some $m$ since every open neighborhood of $(2,\langle\ell\rangle)$ contains $\{1\}\times(\langle\ell,t\rangle,\langle\ell\rangle]_{\rm KB}$ for almost all $t\in\om$.
In particular, the closure of $f^{-1}[0,4^{-1})$ contains $(2,\langle\ell\rangle)$.
This shows that $f(2,\langle\ell\rangle)\leq 4^{-1}$.
Since $f^{-1}[0,4^{-1}+4^{-2})$ is an open set containing the point $(2,\langle\ell\rangle)$, it also includes $\{3\}\times(\langle\ell,t\rangle,\langle\ell\rangle]_{\rm KB}$ for almost all $t\in\om$.
Then there is $t_1$ such that $\{3\}\times(\langle\ell,t_1-1\rangle,\langle\ell,t_0\rangle]_{\rm KB}\subseteq f^{-1}[0,4^{-1}+4^{-2})$.
By the same argument as above, one can see that $(4,\langle\ell,t_1\rangle)$ is contained in the closure of the above set, and thus $f(4,\langle\ell,t_1\rangle)\leq 4^{-1}+4^{-2}$.
Continue this procedure.
We eventually get a string $\sigma=\langle\ell,t_1,\dots,t_{\ell-1}\rangle$ such that $f(2\ell,\sigma)<2^{-1}$.
Since $2\ell>2k$, by our choice of $k$, we get $3/4<f(2\ell,\sigma)<1/2$, a contradiction.
}

\propproof{proproyhalfgraph}{
Let $\mathbf{d}$ is a co-$d$-CEA degree.
Then, $X\oplus\cmp{X}\oplus (A\cup P)\in\mathbf{d}$ for some $X,A,P\subseteq\om$ such that $\cmp{A}$ and $P$ are c.e.\ in $X$, and $A$ and $P$ are disjoint.
Define $f(n)=\bot_0$ if $n\in A$; $f(n)=0$ if $n\in P$; otherwise put $f(n)=1$.
One can check that $f$ is half-c.e.\ since $\cmp{A}$ and $P$ are c.e.\ in $X$.
Clearly, $f$ is $X$-computably dominated.
We claim that $X\oplus\cmp{X}\oplus(A\cup P)\equiv_eX\oplus\cmp{X}\oplus{\rm HalfGraph}^+(f)$.
For the direction $\leq_e$, one can see that $n\in A\cup P$ iff $2\langle n,0\rangle\in{\rm HalfGraph}^+(f)$.
For the converse direction, $2\langle n,0\rangle+1\in{\rm HalfGraph}^+(f)$ iff $n\not\in A$, which is a $X$-c.e.\ condition.
Moreover, whenever $m>0$, we always have $2\langle n,m\rangle\in{\rm HalfGraph}^+(f)$ and $2\langle n,m\rangle+1\not\in{\rm HalfGraph}^+(f)$.
Hence, every co-$d$-CEA degree is Roy halfgraph-above.

For the second assertion, let $\mathbf{d}$ be a Roy halfgraph-above degree.
Then, $\mathbf{d}$ contains a set of the form $Y\oplus\cmp{Y}\oplus{\rm HalfGraph}^+(f)$, where $f$ is $Y$-half-c.e.\ and $Y$-computably dominated.
We define $A=\{n:f(n)=\bot_0\}$, $B=\{n:f(n)=\bot_1\}$, and given $I\subseteq\om$, we also define $C_I=\{n:f(n)\in I\}$.
We claim that $S=Y\oplus\cmp{Y}\oplus{\rm HalfGraph}^+(f)$ is $e$-equivalent to the following set $Q$.
\[Q=Y\oplus\cmp{Y}\oplus(A\cup C_{\{0\}})\oplus\bigoplus_{k\in\om}(B\cup C_{[2k,\infty)})\oplus\bigoplus_{k\in\om}C_{[2k,2k+2]}.\]

It is obvious that $n\in A\cup C_{\{0\}}$ iff $2\langle n,0\rangle\in{\rm HalfGraph}^+(f)$, and that $n\in B\cup C_{[2k,\infty)}$ iff $2\langle n,m\rangle\in{\rm HalfGraph}^+(f)$.
To see $Q\leq_eS$, note that $n\in C_{[2k,2k+2]}$ iff $2\langle n,k\rangle+1,2\langle n,k+1\rangle\in{\rm HalfGraph}^+(f)$.
Similarly, to see $S\leq_eQ$, one can see that $2\langle n,m\rangle\in{\rm HalfGraph}^+(f)$ iff $n\in A\cup C_{\{0\}}$ or $n\in C_{[2k,2k+2]}$ for some $k<m$.
This verifies the claim.

We define $A_{2k}=A$, $B_{2k}=B$, $P_{2k}=C_{\{0\}}=\{n:f(n)=0\}$, and $N_{2k}=C_{[2k,\infty)}=\{n:f(n)\geq 2k\}$.
Note that $n\not\in A_{2k}\cup B_{2k}$ iff there is $m$ such that $2\langle n,m\rangle+1\in{\rm HalfGraph}(f)$.
It is also easy to see that $n\in P_{2k}$ iff $2\langle n,m\rangle\in{\rm HalfGraph}(f)$, and that $n\in N_{2k}$ iff $2\langle n,m\rangle\in{\rm HalfGraph}(f)$.
Since $f$ is $Y$-half-c.e., the above shows that $P_{2k}$, $N_{2k}$, and $\cmp{(A_{2k}\cup B_{2k})}$ are c.e.\ in $Y$.
It is clear that $A_{2k}$, $B_{2k}$, $P_{2k}$, and $N_{2k}$ are disjoint.
Hence, $Z_{2k}=Y\oplus\cmp{Y}\oplus (A_{2k}\cup P_{2k})\oplus (B_{2k}\cup N_{2k})$ is doubled-co-$d$-c.e.

We now start to code $C_{[2k,2k+2]}$ in a set of a doubled co-$d$-CEA degree.
Note that $C_{[2k,2k+2]}$ is $3$-c.e.\ in $Y$, and hence it is $e$-equivalent to a set which is co-$d$-c.e.\ in $Y$ (see Cooper \cite{CooperE}).
For the sake of completeness (and to check uniformity of the proof) we explicitly write the coding procedure.
Since $f$ is $Y$-half-c.e., there is a uniform $Y$-computable enumeration of $(C_{[2k,\infty)})_{k\in\om}$.
We use the symbol $C_{[2k,\infty)}[s]$ to denote the set of elements enumerated into $C_{[2k,\infty)}$ by stage $s$.
Note that $C_{[2k,\infty)}[s]$ is computable uniformly in $k$ and $s$.
Then, we define $A_{2k+1}$ and $P_{2k+1}$ as follows.
\begin{align*}
A_{2k+1}&=\{\langle n,s\rangle:\mbox{either }n\in C_{[2k,\infty)}[s]\mbox{ or }n\in A\cup B\cup C_{[0,2k+2)}\},\\
P_{2k+1}&=\{\langle n,s\rangle:n\in C_{[2k,\infty)}[s]\mbox{ and }n\in C_{\{2k+2\}}\}.
\end{align*}

We define $B_{2k+1}=N_{2k+1}=\emptyset$.
Note that $n\in A_{2k+1}$ iff $n\in C_{[2k,\infty)}[s]$ or $2\langle n,k\rangle+1\not\in{\rm HalfGraph}(f)$.
Hence, $A_{2k+1}$ is co-c.e.\ in $Y$.
It is also easy to see that $n\in P_{2k+1}$ iff $n\in C_{[2k,\infty)}[s]$ and $2\langle n,k+1\rangle\in{\rm HalfGraph}(f)$, which is a $Y$-c.e.\ condition.
Therefore, $Z_{2k+1}=Y\oplus\cmp{Y}\oplus (A_{2k+1}\cup P_{2k+1})$ is co-$d$-CEA.

We claim that $Z_{2k+1}$ is $e$-equivalent to $Y\oplus\cmp{Y}\oplus C_{[2k,2k+2]}$.
Note that $n\in C_{[2k,\infty)}[s]$ ensures that $n\not\in A\cup B$, and hence if $n\in C_{[2k,\infty)}[s]$, then the condition $n\in A_{2k+1}\cup P_{2k+1}$ is equivalent to that $n\in C_{[2k,2k+2]}$.
Hence, $n\in C_{[2k,2k+2]}$ if and only if there is $s$ such that $n\in C_{[2k,\infty)}[s]$ and $\langle n,s\rangle\in A_{2k+1}\cup P_{2k+1}$.
For the converse direction, one can see that $\langle n,s\rangle\in A_{2k+1}\cup P_{2k+1}$ iff either $n\not\in C_{[2k,\infty)}[s]$ or $n\in C_{[2k,2k+2]}$.
Thus, we conclude that $\bigoplus_iZ_i$ is $e$-equivalent to $Q$.


We claim that $\bigoplus_iZ_i$ is also doubled co-$d$-CEA.
To see this, consider $Z=Y\oplus\cmp{Y}\oplus(\bigoplus_iA_i\cup\bigoplus_iP_i)\oplus(\bigoplus_iB_i\cup\bigoplus_iN_i)$.
Note that $(\bigoplus_iA_i)\cup(\bigoplus_iB_i)=\om\times(A_i\cup B_i)$ is co-c.e.\ in $Y$, and $\bigoplus_iA_i$, $\bigoplus_iB_i$, $\bigoplus_iP_i$, and $\bigoplus_iN_i$ are disjoint.
Hence, $Z$ is doubled co-$d$-CEA.
It is easy to check that $Z$ is $e$-equivalent to $\bigoplus_iZ_i$, which is also $e$-equivalent to $S$.
Consequently, every Roy halfgraph-above degree is doubled-co-$d$-CEA.
}

\thmproof{thm:chained-co-d-CEA}{
Assume that $z=(x_n,y_n)_{n\in\om}\in\mathcal{QR}^\om$ is given.
To see that $\nbase{z}$ has a Roy halfgraph-above degree, we define $f:\om\to\tilde{\om}$ as follows.
\begin{align*}
f(n)=\bot_0&\iff x_n=0,\\
f(n)=\bot_1&\iff x_n=\infty,\\
f(n)=k&\iff x_n=k+1\iff y_n\in I_{k+1}.
\end{align*}

Let $Y$ be a set coding the second coordinate $(y_n)_{n\in\om}$, which has a total degree.
By definition, we have $f(n)=2m$ iff $y_n\in I_{2m+1}$.
This condition is c.e.\ in $Y$ since $I_{2m+1}$ is a computable set of isolated points (successor ordinals).
We also note that $f(n)\in\om$ and $f(n)\geq 2m$ iff $2m+1\leq x_n<\om$ iff, $|y_n|\geq m$, and, whenever $y_n$ is a leaf, there is $\ell\geq m$ such that $y_n(\ell)>0$.
It is equivalent to saying that we see that, for some $\sigma\in\mathcal{O}_{\om^\om}$ with $|\sigma|\geq m$, the open set $(\sigma 0,\sigma]_{\rm KB}$ is a neighborhood of $y_n$.
This shows that $f$ is half-c.e.
To see that $f$ is $Y$-computably dominated, we define $g:\subseteq\om\to\om$ as follows.
\[g(n)=2y_n(0)+1.\]

Note that $y_n(0)=k$ iff $y_n\in(\langle k-1\rangle,\langle k\rangle]_{\rm KB}$.
Hence, one can recover $y_n(0)$ from $Y$ by a partial computable way.
Obviously, $f(n)\in\om$ if and only if $y_n(0)$ is defined.
This verifies that $g$ is $Y$-computable, and ${\rm dom}(g)=\{n:f(n)\in\om\}$.
One can see that $g(n)\leq 2k$ iff $y_n$ extends $\langle j\rangle$ for some $j\leq k-1$.
This implies that $|y_n|\leq k$.
Note that $\sigma\in I_{\ell}$ for some $\ell>2k$ implies that $|\sigma|>k$.
Hence, $|y_n|\leq k$ implies $x_n\leq 2k$, that is, $f(n)\leq 2k-1$.
Therefore, we have $f(n)<g(n)$ whenever $f(n)\in\om$.
Consequently, $f$ is $Y$-computably dominated.

For ${\rm HalfGraph}^+(f)\leq_e\nbase{z}$, it is easy to see the following.
\begin{align*}
2\langle n,k\rangle\in{\rm HalfGraph}^+(f)&\iff x_n\leq 2k+1\\
&\iff(\exists i\leq 1)(\exists\ell\leq k)(\exists\sigma)\;\langle i,n,\ell,\sigma\rangle\in{\rm Nbase}(z),\\
2\langle n,k\rangle+1\in{\rm HalfGraph}^+(f)&\iff x_n\geq 2k+1\iff\langle 2,n,k\rangle\in{\rm Nbase}(z).
\end{align*}

To see that $Y\oplus\overline{Y}\leq_e\nbase{z}$, if $x_n<\infty$ (that is, $\langle i,n,\ell,\sigma\rangle$ is enumerated into $\nbase{z}$ for some $i,n,\ell,\sigma$), then just follow the approximation of the second coordinate $y_n$.
If we see $\langle 2,n,k\rangle\nbase{z}$, then we know that $x_n>2k$.
As seen above, $x_n>2k$ implies $|y_n|>k$, and thus $y_n\in (\langle k\rangle,\langle\rangle]_{\rm KB}$ is ensured.
Hence, in any cases, we can recover a code $Y\oplus\cmp{Y}$ of $(y_n)_{n\in\om}$ from $\nbase{z}$ in a uniform manner.

For $\nbase{z}\leq_eS:=Y\oplus\cmp{Y}\oplus{\rm HalfGraph}^+(f)$, by monitoring an enumeration of $Y$, we can recover the second coordinate $(y_n)_{n\in\om}$.
Then, one can see the following.
\begin{align*}
\langle 0,n,k,\sigma\rangle\in\nbase{z}\iff\,& y_n=\sigma\mbox{ and }(\forall i<2)\;2\langle n,k\rangle+i\in{\rm HalfGraph}^+(f),\\
\langle 1,n,k,\sigma j\rangle\in\nbase{z}\iff\,& y_n\in(\sigma j,\sigma]_{\rm KB}\\
&\mbox{and }(\forall i<2)\;2\langle n,k\rangle+i\in{\rm HalfGraph}^+(f),\\
\langle 2,n,k\rangle\in\nbase{z}\iff\,& 2\langle n,k\rangle+1\in{\rm HalfGraph}^+(f).
\end{align*}

Hence, we obtain $S\equiv_e\nbase{z}$, and conclude that every $\mathcal{QR}^\om$-degree is chained co-$d$-CEA.

Now, given such $S=Y\oplus\cmp{Y}\oplus{\rm HalfGraph}^+(f)$, we define a point $z\in\mathcal{QR}^\om$ such that $\nbase{z}\equiv_eS$.
We first define $z_{2n+1}=(1,1)$ if $n\in Y$; otherwise $z_{2n+1}=(1,0)$.
It is clear that $Y\oplus\cmp{Y}$ is $e$-equivalent to the coded neighborhood basis of $(z_{2n+1})_{n\in\om}$ in the Roy space $\mathcal{QR}^\om$.

We now describe how to define $z_{2n}=(x_n,y_n)$.
Given $n\in\om$, we begin with $y_n[0]=\langle\rangle$.
We wait until we see $g(n)$ is defined, say $g(n)=2k-1$.
In this case, we have $f(n)<2k-1$.
Then, we declare that $y_n\in(\langle k-1\rangle,\langle k\rangle]_{\rm KB}$, that is, $y_n(0)=k$.
If we see $2\langle n,1\rangle+1\in{\rm HalfGraph}(f)$, that is, $f(n)\geq 2$, at stage $s_1$, then we declare that $y_n\in(\langle k,s_1-1\rangle,\langle k,s_1\rangle]_{\rm KB}$, that is, $y_n(1)=s_1$.
Continue this procedure.
Assume that we have already seen $f(n)\geq 2m$, and thus $y_n[s_m]=\sigma=\langle k,s_1,\dots,s_m\rangle$.
If we see $f(n)\geq 2m+2$ at stage $s_{m+1}$, then we declare $y_n\in(\sigma\fr(s_{m+1}-1),\sigma\fr s_{m+1}]_{\rm KB}$, that is, $y_n(m+1)=s_{m+1}$.
If we see $f(n)=2m$ at some stage $t_m$, then we declare $y_n=\sigma\fr t_m 0\dots 0\in \mathcal{O}_{\om^\om}^{\rm leaf}$, where we can assume that $t_m>0$.
If $f(n)\not=2m+1$ for any $m$, then, since $f(n)<2k-1$, our construction ensures that $|y_n|\leq k$, and hence $y_n\in \mathcal{O}_{\om^\om}$.
Note that this procedure gives us an $\mathcal{O}_{\om^\om}$-name of $y_n$ in a $Y$-computable manner.
If $y_n$ is nonempty, let $x_n$ be the unique $x$ such that $y_n\in I_x$.
If $y_n$ is empty and $f(n)=\bot_0$, define $x_n=0$; otherwise put $x_n=\infty$.

We claim that if $f(n)\in\om$, then $x_n=f(n)+1$.
If $f(n)=2m$, then we see $f(n)=2m$ at some stage, and our construction ensures that $y_n$ is of the form $\sigma\fr t_m\fr 0\dots 0\in\mathcal{O}_{\om^\om}^{\rm leaf}$, where $|\sigma|=m+1$ and $t_m>0$.
Hence, $y_n(m+1)>0$ and $y_n(k)=0$ for any $k>m+1$.
This means that $y_n\in I_{2m+1}$, and hence $x_n=2m+1=f(n)+1$.
If $f(n)=2m+1$, then we see $f(n)\geq 2m$ at some stage, and neither $f(n)\geq 2m+2$ nor $f(n)=2m$ happens.
Hence, our construction ensures that $y_n$ is a string of length $m+1$.
This means that $y_n\in I_{2m+2}$, and hence $x_n=2m+2=f(n)+1$.

We define $z_{2n}=(x_n,y_n)$ for each $n\in\om$.
As mentioned above, the coded neighborhood basis of the second coordinate of $(y_n)_{n\in\om}$ in the product ordinal space $(\mathcal{O}_{\om^\om})^\om$ (that is, the $Y$ constructed from $(x_n,y_n)_{n\in\om}$ as in the second paragraph of this proof) is $e$-reducible to $Y\oplus\cmp{Y}$.
Moreover, the function $f$ satisfy the equations mentioned in the first paragraph in this proof (where $z_{2n}=(x_{n},y_{n})$).
Hence, the above argument shows that $S\equiv_e\nbase{z}$ as desired.
}

\subsection{Degrees of points: submetrizable topology}

\subsubsection{Extension topology}

For definitions, see Section \ref{sec:horrible-extension-topology}.

\propproof{prop:Gamma-above}{
Assume that $\beta$ computably extends $\gamma$.
Then, define $G=\{\langle e,x\rangle:x\in \beta_e\}$ and $\Gamma=\{G\}$.
Define $A=\{n:\langle n,x\rangle\in G\}$, which is clearly $\Gamma$ relative to $x$.
Note that
\[e\in A\iff \langle e,x\rangle\in G\iff e\in\nbaseb{\beta}{x}.\]
Thus, by the above equivalence and Observation \ref{obs:comp-extension-rep}, clearly $x\oplus\cmp{x}\oplus A\equiv_e\nbaseb{\beta}{x}$, and thus, $\nbaseb{\beta}{x}$ is $\Gamma$-above.
Conversely, if $\mathbf{d}$ is $\Gamma$-above, then there are $x\in 2^\om$ and $A\in\Gamma^x$ such that $x\oplus\cmp{x}\oplus A\in\mathbf{d}$.
Since $\Gamma=\{G\}$, we must have $A=\{n:\langle n,x\rangle\in G\}$.
As in the previous argument, one can see that $x\oplus\cmp{x}\oplus A\equiv_e\nbaseb{\beta}{x}$.

To show the converse direction, fix a countable collection $\Gamma=(G_e)_{e\in\om}$.
Define $\beta$ by $\beta_{\langle 0,e\rangle}=\lambda_e$ and $\beta_{\langle 1,e,n\rangle}=\{\langle e,y\rangle:\langle n,y\rangle\in G_e\}$.
Clearly, $\beta$ computably extends $\lambda$.
For each $x\in(2^\om)_{\beta}$, let $e$ be the first entry of $x$, that is, $x=\langle e,y\rangle$.
Define $A_e=\{n:\langle n,y\rangle\in G_e\}$.
Then, $A_e$ is $\Gamma$ relative to $y$, and therefore $y\oplus\cmp{y}\oplus A_e$ is $\Gamma$-above.
Note that
\[n\in A_e\iff \langle n,y\rangle\in G_e\iff \langle 1,e,n\rangle\in\nbaseb{\beta}{\langle e,y\rangle}=\nbaseb{\beta}{x}.\]
By the above equivalence and Observation \ref{obs:comp-extension-rep}, we have that $x\oplus\cmp{x}\oplus A_e\leq_e\nbaseb{\beta}{x}$.
Moreover, for any $d\not=e$ and $n\in\om$, $\langle 1,d,n\rangle\not\in\nbaseb{\beta}{x}$, and the $0$-th section of $\nbaseb{\beta}{x}$ is obviously $e$-equivalent to $x\oplus\cmp{x}$.
Hence, $x\oplus\cmp{x}\oplus A_e\equiv_e\nbaseb{\beta}{x}$.
Since $x\oplus\cmp{x}\oplus A_e$ is clearly $e$-equivalent to $y\oplus\cmp{y}\oplus A_e$, this shows that $\nbaseb{\beta}{x}$ is $\Gamma$-above.

Conversely, if $\mathbf{d}$ is $\Gamma$-above, then there are $y\in 2^\om$ and $A\in\Gamma^y$ such that $y\oplus\cmp{y}\oplus A\in\mathbf{d}$.
Then there is $e$ such that $A=\{n:\langle n,y\rangle\in G_e\}$.
Consider $x=\langle e,y\rangle$.
As in the previous argument, one can see that $y\oplus\cmp{y}\oplus A\equiv_e\nbaseb{\beta}{x}$.
}

\thmproof{thm:e-degree-realize}{
Note that by Theorem \ref{thm:countable-T_1-quasiminimal}, the enumeration degrees are not covered by countably many $T_1$-spaces.
Therefore, the above equivalence must be realized by an {\em uncountable} union.
As a corollary, there are uncountably many decidable submetrizable spaces in an essential sense, that is, for any countably many submetrizable (indeed $T_1$) spaces $(\xx_i)_{i\in\om}$, there is a decidable submetrizable space $\yy$ that cannot be embedded into $\xx_i$ for any $i\in\om$.

To prove Theorem \ref{thm:e-degree-realize}, given an enumeration degree $\mathbf{d}$, we will construct an decidable submetrizable space $\xx_\mathbf{d}$ and a point $x\in\xx$ such that the enumeration degree of ${\rm Nbase}(x)$ is exactly $\mathbf{d}$.
Moreover, if $\mathbf{d}$ is $\Delta^0_n$ with $n\geq 4$, $\xx_\mathbf{d}$ can be strongly $\Pi^0_n$-named.

\subsubsection*{Construction}

Given a topological space $(\xx,\tau_X)$ and a set $D\subseteq\xx$ let $\xx_D$ be the {\em extension topology} of $\xx$ plus $D$, that is, the topological space with the underlying set $\xx$ and the topology generated by $\tau_X\cup\{D\}$.
If $\xx$ is submetrizable, then so is the $D$-extension $\xx_D$.
However, the $D$-extension $\xx_D$ is not necessarily metrizable even if $\xx$ is.

Consider the $\om$-power $(\mathcal{B}_D)^\om$ of the $D$-extension of Baire space $\mathcal{B}:=\om^\om$.
Note that an open subbasis of $(\mathcal{B}_D)^\om$ is given by $B_{0,n,\sigma}=\{x\in(\mathcal{B}_D)^\om:x(n)\succ\sigma\}$ and $B_{1,n}=\{x:x(n)\in D\}$.
Therefore, the coded neighborhood filter of $x\in(\mathcal{B}_D)^\om$ is given as follows:
\[\nbase{x}=\{\langle 0,n,\sigma\rangle:\sigma\prec x(n)\}\cup\{\langle 1,n\rangle:x(n)\in D\}.\]

\begin{obs}\label{obs:dense-codense}
If $D$ is dense and co-dense, then $(\mathcal{B}_D)^\om$ is a decidable, submetrizable, cb$_0$ space.
\end{obs}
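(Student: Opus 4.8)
The plan is to check the three requirements separately; only the last one uses anything about $D$. Second-countability is immediate, since the subbasis $\{B_{0,n,\sigma}\}_{n,\sigma}\cup\{B_{1,n}\}_n$ is countable. For submetrizability and the $T_0$ property, note that the topology of $(\mathcal{B}_D)^\om$ refines the product of the Baire topologies on the factors $\om^\om$; hence the identity map from $(\mathcal{B}_D)^\om$ onto $(\om^\om)^\om$ with its product Baire topology is continuous, which is exactly to say that $(\mathcal{B}_D)^\om$ admits a continuous metric. So it is submetrizable, hence Hausdorff, hence in particular $T_0$; this part uses nothing about $D$.

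It remains to prove decidability, and this is where density and co-density of $D$ enter. Recall that a represented cb$_0$ space $(X,\beta)$ in which $\emptyset$ and $X$ occur as basic open sets is decidable once the relation $\{\langle d,e\rangle:\beta^+_d\subseteq\beta^+_e\}$ is computable (here $X=B_{0,0,\langle\rangle}$, and adjoining $\emptyset$ to the subbasis changes nothing up to computable equivalence). A basic open set $\beta^+_d$ is a finite intersection of subbasic sets; grouping the constraints coordinatewise gives $\beta^+_d=\prod_n V^d_n$, where $V^d_n$ is an intersection of finitely many cylinders $[\sigma]\subseteq\om^\om$ together with possibly $D$, and $V^d_n=\om^\om$ for all but finitely many $n$. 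An intersection of finitely many cylinders is either a cylinder or $\emptyset$, decidably, by testing pairwise comparability of the strings; so from $d$ one computes the finite set of relevant coordinates and, for each, whether $V^d_n$ has the form $[\sigma]$, $[\sigma]\cap D$, or $\emptyset$. Since $\prod_n V^d_n\subseteq\prod_n V^e_n$ holds iff some $V^d_n=\emptyset$ or $V^d_n\subseteq V^e_n$ for every $n$, it suffices to decide, uniformly in the strings involved, inclusions among sets of the form $[\sigma]$, $[\sigma]\cap D$, $\emptyset$.

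Here the hypothesis is invoked exactly twice: density of $D$ gives $[\sigma]\cap D\neq\emptyset$ for every $\sigma$, and co-density gives $[\sigma]\not\subseteq D$ for every $\sigma$. With these, a short case analysis settles every inclusion: $\emptyset$ is contained in everything; each of $[\sigma]\subseteq[\tau]$, $[\sigma]\cap D\subseteq[\tau]$, and $[\sigma]\cap D\subseteq[\tau]\cap D$ holds iff $\tau\preceq\sigma$; $[\sigma]\subseteq[\tau]\cap D$ is always false; and inclusions of $[\sigma]$ or $[\sigma]\cap D$ into $\emptyset$ are always false. All of these are decidable in $\sigma,\tau$, so the inclusion relation on $\beta^+$ is computable and $(\mathcal{B}_D)^\om$ is decidable. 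The only points needing care are bookkeeping — handling coordinates mentioned by only one of $d,e$ (for $n$ unmentioned by $d$ one has $V^d_n=\om^\om=[\langle\rangle]$, and $[\langle\rangle]\subseteq V^e_n$ forces $V^e_n=[\langle\rangle]$), and the empty-product case — together with the observation that both uses of the hypothesis are genuinely needed: without co-density one would have to decide $[\sigma]\subseteq D$, and without density $[\sigma]\cap D$ could be empty, either of which breaks decidability of the inclusion relation. I expect no real obstacle beyond organizing this case analysis cleanly.
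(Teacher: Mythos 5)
Your proof is correct and takes essentially the same route as the paper: submetrizability is automatic since the extension topology refines a metrizable one, and decidability comes down to exactly the two facts you isolate, namely that co-density rules out any inclusion $[\sigma]\subseteq[\tau]\cap D$ while density guarantees $[\sigma]\cap[\tau]\cap D=\emptyset$ if and only if $\sigma\bot\tau$. The paper's own proof is just a terser statement of your coordinatewise case analysis.
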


\begin{proof}
If $D$ is dense and co-dense, given $\sigma,\tau\in \om^{<\om}$, we have that $[\sigma]\not\subseteq D\cap [\tau]$, and that $[\sigma]\cap(D\cap [\tau])=\emptyset$ if and only if $\sigma\bot\tau$.
This gives a decidable basis of $(\mathcal{B}_D)^\om$.
\end{proof}

We now describe how we extend a metric topology to code a given $e$-degree.
Define $\mathbb{Q}^-=\mathbb{Q}\setminus\{0^\om\}$.
Given $A\subseteq\om$, define
\[D_A=\{n\fr x\in\om^\om:[n\in A\mbox{ and }x\not\in\mathbb{Q}^-]\mbox{ or }[n\not\in A\mbox{ and }x\not\in\mathbb{Q}]\}.\]
It is clear that $D_A$ is dense and co-dense.
Note that $\mathbb{Q}\cap D_A=\{n\fr 0^\om:n\in A\}$.
We show that every $e$-degree is realized in the space of the form $(\mathcal{B}_{D_A})^\om$.

Given $D\subseteq\om^\om$ and $x\in(\mathcal{B}_D)^\om$, define $X=\{\langle k,m\rangle:x(k)=m\}$.
Then, it is not hard to see the following.
\[{\rm Nbase}(x)\equiv_eX\oplus\cmp{X}\oplus\{n\in\om:x(n)\in D\}.\]

Given an $e$-degree $\mathbf{d}$, choose $A\in\mathbf{d}$.
By Observation \ref{obs:dense-codense}, $(\mathcal{B}_{D_A})^\om$ is a decidable, submetrizable, cb$_0$ space since $D_A$ is dense and co-dense.
Define $x(n)=n\fr 0^\om$.
Then clearly, $x(n)\in D_A$ if and only if $n\in A$.
Moreover, since $X$ and $\cmp{X}$ are c.e., we have the following.
\[{\rm Nbase}(x)\equiv_eX\oplus\cmp{X}\oplus\{n\in\om:x(n)\in D_A\}\equiv_eA.\]

Thus, by putting $\xx_\mathbf{d}=(\mathcal{B}_{D_A})^\om$, this verifies our claim.
}

We now claim that, if $\mathbf{d}$ is $\Delta^0_n$ with $n\geq 4$, $\xx_\mathbf{d}$ can be strongly $\Pi^0_n$-named.

\propproof{prop:e-degree-realize-arith}{
We introduce ad-hoc technical notions.
A countable set $E\subseteq \om^\om$ is {\em $(\Lambda,\Gamma)$-enumerable} if there is a sequence $(r_e)_{e\in\om}$ of reals such that
\[E\subseteq\{r_e:e\in\om\},\;\{(e,\sigma):r_e\succ\sigma\}\in\Lambda\mbox{, and }\{e:r_e\in E\}\in\Gamma.\]
We also says that a countable set $E\subseteq \om^\om$ is {\em strongly $(\Delta^0_m,\Delta^0_{n})$-enumerable} if it is $(\Delta^0_m,\Delta^0_{n})$-enumerable, and moreover it satisfies the following condition:
\[(\forall S\subseteq\om)\;[S\in\Pi^0_2\;\Longrightarrow\;\{n\in\om:(\exists e)\;[r_e\in E\mbox{ and }\langle e,n\rangle\in S]\}\in\Delta^0_{n}].\]

Let $\mathbb{Q}\subseteq\om^\om$ be the set of all infinite binary strings $x$ such that $x(n)=0$ for almost all $n$.
For instance, $\mathbb{Q}$ is $(\Delta^0_1,\Sigma^0_2)$-enumerable, and strongly $(\Delta^0_1,\Delta^0_4)$-enumerable.

\begin{lemma}\label{lem:strong-co-enumerable}
For any $k\geq 4$, if $A\in\Delta^0_k$ then $\om^\om\setminus D_A$ is strongly $(\Delta^0_1,\Delta^0_k)$-enumerable.
\end{lemma}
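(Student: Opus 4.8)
The plan is to exhibit a computable enumeration $(r_e)_{e\in\om}$ of a set of reals containing $E:=\om^\om\setminus D_A$, with computable prefix relation, such that $\{e:r_e\in E\}\in\Delta^0_k$ and, for every $\Pi^0_2$ set $S\subseteq\om$, the projection $T_S:=\{c:(\exists e)[\,r_e\in E\text{ and }\langle e,c\rangle\in S\,]\}$ lies in $\Delta^0_k$. First I would unwind $E$. From the definition of $D_A$, a sequence $n^\frown x$ fails to lie in $D_A$ precisely when $x\in\mathbb{Q}\setminus\{0^\om\}$ in case $n\in A$ and $x\in\mathbb{Q}$ in case $n\notin A$; hence
\[
E=\{\,n^\frown x:x\in\mathbb{Q}\,\}\setminus\{\,n^\frown 0^\om:n\in A\,\},
\]
a countable, dense and co-dense set of computable points which differs from the ``obvious'' dense set $\{n^\frown x:x\in\mathbb{Q}\}$ only by deleting the computably-isolable exceptional points $n^\frown 0^\om$ for $n\in A$.

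For the enumeration I would list $\{n^\frown x:x\in\mathbb{Q}\}$ computably, say $r_{\langle n,\sigma\rangle}=n^\frown\sigma^\frown 0^\om$ with $\sigma$ ranging over binary strings (after the refinement described below); then $\{(e,\sigma):\sigma\prec r_e\}$ is computable. For $\{e:r_e\in E\}$: a code with $r_e=n^\frown\sigma^\frown 0^\om$ and $\sigma$ not all-zero always satisfies $r_e\in E$, which is decidable from $e$; the only codes in doubt are those with $r_e=n^\frown 0^\om$, and for these $r_e\in E\iff n\notin A$. Since $A\in\Delta^0_k$, this already gives $\{e:r_e\in E\}\in\Delta^0_k$, so $E$ is $(\Delta^0_1,\Delta^0_k)$-enumerable. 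The heart is the strong condition. Writing $\mathrm{Good}=\{e:r_e\in E\}$, I would split $T_S$ by the shape of a witnessing code. The codes $e$ with $r_e=n^\frown\sigma^\frown 0^\om$, $\sigma$ not all-zero, all lie in $\mathrm{Good}$, and whether such a code is $S$-related to $c$ is an existential quantifier over a $\Pi^0_2$ relation, hence $\Sigma^0_3$; because $k\ge 4$ we have $\Sigma^0_3\subseteq\Delta^0_4\subseteq\Delta^0_k$. This is exactly where the hypothesis $k\ge 4$ is used: it supplies the room $\Pi^0_2\subseteq\Delta^0_3\subseteq\Delta^0_{k-1}$ needed to absorb an arbitrary $\Pi^0_2$ set $S$ without raising the level.

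The remaining piece of $T_S$ comes from witnessing codes with $r_e=n^\frown 0^\om$, and it has the shape $\{c:(\exists n)[\,n\notin A\ \text{and}\ (\text{some code for }n^\frown 0^\om\text{ is }S\text{-related to }c)\,]\}$; a naive (canonical-code) enumeration makes this an existential projection of a $\Delta^0_k$ relation, which is only manifestly $\Sigma^0_k$, and this is the main obstacle. The plan is to avoid assigning a single canonical code to each exceptional real $n^\frown 0^\om$ and instead hand out codes indexed by finite ``guesses'' drawn from a fixed $\Delta^0_k$ (iterated-limit) presentation of $A$: a code keeps pointing at $n^\frown 0^\om$ only while a computable monitor of the nested guess parameters has not overturned it, and is otherwise redirected so that $r_e$ genuinely moves off $n^\frown 0^\om$ (onto a harmless element of $E$, or onto a point of ``bad shape'' outside $\{m^\frown x:x\in\mathbb Q\}$). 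Tracking the nested true stages matched to the presentation of $A$ keeps the enumeration computable and its range $\supseteq E$, while lowering the effective complexity of $\mathrm{Good}$ on the exceptional codes enough that the corresponding piece of $T_S$ also falls into $\Delta^0_k$. The delicate part is the bookkeeping for this guessing construction together with the verification that the resulting $\mathrm{Good}$ and each $T_S$ have the claimed complexity; the rest ($E$'s description, computability of the prefix relation, the $\Sigma^0_3$-absorption using $k\ge 4$) is routine.
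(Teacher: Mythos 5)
Your setup coincides with the paper's: the same description of $E=\om^\om\setminus D_A$, essentially the same enumeration $r_{\langle n,\sigma\rangle}=n\fr\sigma\fr 0^\om$ with $\sigma$ a finite binary string, the same computation of the prefix relation and of $\{e:r_e\in E\}$, and the same $\Sigma^0_3$-into-$\Delta^0_4$ absorption for the non-exceptional codes. The gap is the last third. For the exceptional codes you correctly isolate a problematic piece, but the proposed repair (re-issuing codes for the points $m\fr 0^\om$ according to guesses at a $\Delta^0_k$ approximation of $A$) is never carried out: the bookkeeping and the verification that the resulting index set and every $T_S$ land in $\Delta^0_k$ is exactly the content of the lemma, and you defer it. Worse, under the reading you adopt -- the witness $e$ in the strong condition ranging over \emph{all} codes -- no enumeration whatsoever can succeed. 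Take $Q\in\Pi^0_{k-1}$ such that $Y=\{c:(\exists n)\,Q(n,c)\}$ is $\Sigma^0_k$-complete, and let $A=\{\langle n,c\rangle:\neg Q(n,c)\}\in\Delta^0_k$. For any family $(r_e)$ with computable prefix relation whose range contains $E$, the set $S=\{\langle e,c\rangle: r_e=\langle n,c\rangle\fr 0^\om\mbox{ for some }n\}$ is $\Pi^0_2$ (indeed $\Delta^0_2$: a $\Sigma^0_1$ condition on the first entry plus the $\Pi^0_1$ condition that the tail is $0^\om$), and since $\langle n,c\rangle\fr 0^\om\in E\iff Q(n,c)$, the set $\{c:(\exists e)[r_e\in E\mbox{ and }\langle e,c\rangle\in S]\}$ is exactly $Y\notin\Delta^0_k$. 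So no amount of code-redirecting or true-stage bookkeeping can rescue your plan: the obstruction is independent of how codes are assigned.

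The paper's verification avoids, rather than solves, this difficulty by checking the strong condition in sectioned form: the family is indexed as $r_{n,e}=n\fr\sigma_e\fr 0^\om$, and when deciding whether $n$ belongs to the set in question, the existential quantifier runs only over the codes whose first index is that same $n$. Then $r_{n,e}\in E$ iff ($n\notin A$ or $\sigma_e$ has a nonzero entry), so the condition becomes $[\,n\in A\mbox{ and }(\exists e)(\sigma_e\neq 0\cdots0\mbox{ and }\langle e,n\rangle\in S)\,]$ or $[\,n\notin A\mbox{ and }(\exists e)\langle e,n\rangle\in S\,]$: a Boolean combination of the single $\Delta^0_k$ fact ``$n\in A$'' with $\Sigma^0_3$ conditions, hence $\Delta^0_k$ once $k\geq 4$. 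The point you missed is that the only appeal to $A$ must concern the very number $n$ being decided; your formulation keeps an existential quantifier over the complement of $A$, and that quantifier, not the bookkeeping, is what pushes you to $\Sigma^0_k$ and cannot be eliminated. So the missing ``delicate part'' is not routine verification: the witness-over-all-codes formulation has to be abandoned before the $\Delta^0_k$ bound can be obtained.
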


\begin{proof}
Define $r_{n,e}=n\fr \sigma_e\fr 0^\om$, where $\sigma_e$ is the $e$-th finite string.
We note that
\[\om^\om\setminus D_A=\mathbb{Q}\setminus\{n\fr 0^\om:n\in A\},\]
and therefore, we have $\om^\om\setminus D_A\subseteq\mathbb{Q}=\{r_{n,e}:n,e\in\om\}$.
Moreover, we have that
\[r_{n,e}\not\in D_A\iff n\not\in A\mbox{ or }(\exists s<|\sigma_e|)\;\sigma_e(s)\not=0.\]

This condition is $\Delta^0_k$, and thus, $\om^\om\setminus D_A$ is co-$(\Delta^0_1,\Delta^0_k)$-enumerable.
Moreover, given a set $S\in\Pi^0_2$,
\begin{align*}
(\exists e)\;[r_{n,e}\not\in D_A\mbox{ and }\langle e,n\rangle\in S]\iff
& [n\in A\mbox{ and }(\exists e)[r_{n,e}\in\mathbb{Q}^-\mbox{ and }\langle e,n\rangle\in S]\\
&\mbox{and }[n\not\in A\mbox{ and }(\exists e)[r_{n,e}\in\mathbb{Q}\mbox{ and }\langle e,n\rangle\in S]].
\end{align*}

Clearly, this condition is $\Delta^0_k$ since $A\in\Delta^0_k$ and $k\geq 4$.
\end{proof}

\begin{lemma}\label{lem:strong-co-enumerable2}
For any $n\geq 4$ and $m\leq n-2$, if $\cmp{D}$ is strongly $(\Delta^0_m,\Delta^0_n)$-enumerable, then $(\mathcal{B}_D)^\om$ is strongly $\Pi^0_{n}$-named.
\end{lemma}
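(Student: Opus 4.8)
The plan is to produce, for $\xx:=(\mathcal{B}_D)^\om$, two $\Pi^0_n$ sets $P,N\subseteq\om^\om$ with ${\rm Sub}(\xx)\subseteq N$, ${\rm Sup}(\xx)\subseteq P$ and ${\rm Name}(\xx)=P\cap N$. First I would unwind what it means for ${\rm rng}(p)$ to be a name: since $\nbase{x}=\{\langle 0,n,\sigma\rangle:\sigma\prec x(n)\}\cup\{\langle 1,n\rangle:x(n)\in D\}$, the condition ``${\rm rng}(p)=\nbase{x}$ for some $x$'' splits, section by section, into (i) for every $n$ the set $S_n^p:=\{\sigma:\langle 0,n,\sigma\rangle\in{\rm rng}(p)\}$ is a maximal $\prec$-chain — hence determines a point $x_n^p\in\om^\om$ — and (ii) $\langle 1,n\rangle\in{\rm rng}(p)$ iff $x_n^p\in D$. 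Exactly as in the verification that Baire space is strongly $\Pi^0_2$-named, the statements ``$S_n^p$ is a chain'', ``$S_n^p$ includes a maximal chain'' (the $\supseteq$-version, which by König's lemma says $S_n^p$ includes the initial segments of some point) and ``$S_n^p$ is a maximal chain'' are all $\Pi^0_2$ uniformly in $p,n$, which for $n\geq 4$ is harmlessly inside $\Delta^0_n$.

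The core of the argument is condition (ii). I would fix an enumeration $(r_e)_{e\in\om}$ of $\cmp{D}$ witnessing strong $(\Delta^0_m,\Delta^0_n)$-enumerability, so that $\cmp{D}\subseteq\{r_e:e\in\om\}$, the relation $\{(e,\sigma):r_e\succ\sigma\}$ is $\Delta^0_m$, $\widehat{D}:=\{e:r_e\in\cmp{D}\}$ is $\Delta^0_n$, and the strong clause is available. Set $\mathrm{Eq}(p,n,e):\equiv(\forall\sigma)[\,r_e\succ\sigma\to\langle 0,n,\sigma\rangle\in{\rm rng}(p)\,]$, i.e.\ ``every initial segment of $r_e$ lies in $S_n^p$''. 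A routine count gives $\mathrm{Eq}\in\Pi^0_{\max(m,2)}$ in $p$, which the hypothesis $m\leq n-2$ places inside $\Delta^0_n$ in $p$ (and it is literally $\Pi^0_2$ once the prefix relation is $\Delta^0_1$, the only case needed in the applications). When $S_n^p$ is a chain, $\mathrm{Eq}(p,n,e)$ holds exactly when $r_e=x_n^p$, and since $\cmp{D}\subseteq\{r_e\}$ we get $x_n^p\notin D\iff(\exists e)[\,r_e\in\cmp{D}\wedge\mathrm{Eq}(p,n,e)\,]$. Naively this is only $\Sigma^0_n$, so applying $(\forall n)$ would overshoot to $\Pi^0_{n+1}$; here I would invoke the strong-enumerability clause (relativized to $p$), with $S:=\{\langle e,n\rangle:\mathrm{Eq}(p,n,e)\}\in\Pi^0_2(p)$, to conclude that $T_p:=\{n:(\exists e)[\,r_e\in\cmp{D}\wedge\mathrm{Eq}(p,n,e)\,]\}$ is $\Delta^0_n$ in $p$.

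With this in hand I would set
\[ N:=\{\,p:(\forall n)\,[\,S_n^p\text{ is a chain}\;\wedge\;(\langle 1,n\rangle\in{\rm rng}(p)\to(\forall e)(\mathrm{Eq}(p,n,e)\to r_e\in D))\,]\,\}, \]
\[ P:=\{\,p:(\forall n)\,[\,S_n^p\text{ includes a maximal chain}\;\wedge\;(\langle 1,n\rangle\in{\rm rng}(p)\;\vee\;n\in T_p)\,]\,\}. \]
In $N$ the inner universal quantifier over $e$ keeps that clause at level $\Pi^0_n$ (as $r_e\in D$ and $\mathrm{Eq}$ are $\Delta^0_n$ in $p$), and $\Pi^0_n$ absorbs the outer $(\forall n)$, so $N\in\Pi^0_n$; in $P$ each clause is $\Delta^0_n$ in $p$ (using $T_p\in\Delta^0_n$), so $P\in\Pi^0_n$. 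I would then check the three containments: ${\rm Sub}(\xx)\subseteq N$ (if ${\rm rng}(p)\subseteq\nbase{x}$ then each $S_n^p$ is a chain, $\langle 1,n\rangle\in{\rm rng}(p)$ forces $x(n)\in D$, and any $e$ with $\mathrm{Eq}(p,n,e)$ must have $r_e=x(n)$); ${\rm Sup}(\xx)\subseteq P$ (if $\nbase{x}\subseteq{\rm rng}(p)$ then each $S_n^p$ includes a maximal chain, and $\langle 1,n\rangle\notin{\rm rng}(p)$ forces $x(n)\notin D$, so $x(n)=r_e\in\cmp D$ with $\mathrm{Eq}(p,n,e)$, i.e.\ $n\in T_p$); and $P\cap N\subseteq{\rm Name}(\xx)$ (if $p\in P\cap N$, each $S_n^p$ is simultaneously a chain and includes a maximal chain, hence equals the maximal chain determining $x_n^p$, and the two flag-clauses combine to $\langle 1,n\rangle\in{\rm rng}(p)\iff x_n^p\in D$ — the subcase $x_n^p\notin\{r_e:e\in\om\}$, where $x_n^p\in D$ automatically and the ``$\forall e$'' clause is vacuous, being dealt with separately). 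Together with ${\rm Name}(\xx)\subseteq{\rm Sub}(\xx)\cap{\rm Sup}(\xx)\subseteq N\cap P$ this yields ${\rm Name}(\xx)=P\cap N$.

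The main obstacle is the arithmetical bookkeeping, not any conceptual difficulty: condition (ii) must be written so that the outermost $(\forall n)$ does not raise the complexity from $\Pi^0_n$ to $\Pi^0_{n+1}$. For the direction ``$\langle 1,n\rangle\in{\rm rng}(p)\Rightarrow x_n^p\in D$'' a bare $\forall e$ suffices, but for the converse one genuinely needs $\{n:x_n^p\notin D\}$ to be $\Delta^0_n$ rather than merely $\Sigma^0_n$, which is precisely what the strong-enumerability clause delivers; and the budget $m\leq n-2$ is exactly what keeps $\mathrm{Eq}$ and $\widehat D$ low enough for this count to close. (In the eventual application $\cmp{D_A}$ is strongly $(\Delta^0_1,\Delta^0_k)$-enumerable by Lemma \ref{lem:strong-co-enumerable}, so $m=1$, $\mathrm{Eq}$ is literally $\Pi^0_2$, and no relativization is needed.)
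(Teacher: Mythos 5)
Your construction is essentially the paper's: the same two sets $P$ (capturing ${\rm Sup}$, with the clause ``if the flag $\langle 1,n\rangle$ is absent then some $r_e\notin D$ has all its prefixes recorded'') and $N$ (capturing ${\rm Sub}$, with the dual flag clause), the same use of plain $(\Delta^0_m,\Delta^0_n)$-enumerability for $N$ and of the strong clause (relativized to $p$) to keep the existential part of $P$ at level $\Delta^0_n$ in $p$, and the same three containments. Your phrasing of $N$ via $\mathrm{Eq}$ is a harmless cosmetic variant of the paper's ``incomparable witness'' clause, and the slight miscount $\Pi^0_{\max(m,2)}$ versus $\Pi^0_{m+1}$ for $\mathrm{Eq}$ is immaterial since $m\leq n-2$ keeps either bound inside $\Delta^0_n$.

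There is, however, one step that fails as written: the claim that ``$S_n^p$ includes a maximal chain'' (i.e.\ contains every initial segment of some point of $\om^\om$) is $\Pi^0_2$ ``by K\"onig's lemma.'' K\"onig's lemma does not apply here, because the relevant tree is $\om$-branching: for example, if $S_n^p$ consists, for each $k$, of all prefixes of the constant-$k$ string of length $k$, then for every $\ell$ there is a length-$\ell$ string all of whose prefixes lie in $S_n^p$, yet no infinite point has all its prefixes in $S_n^p$. In general ``the chain-tree inside $S_n^p$ has an infinite branch'' is $\Sigma^1_1$ and not arithmetical, so your $P$ is not shown to be $\Pi^0_n$. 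The repair is exactly the paper's move: replace that clause of $P$ by the genuinely $\Pi^0_2$ statement that for every $n$ and $\ell$ some $\sigma\in\om^\ell$ has $\langle 0,n,\sigma\rangle\in{\rm rng}(p)$. This still contains ${\rm Sup}((\mathcal{B}_D)^\om)$, and in $P\cap N$ the chain clause coming from $N$ upgrades it: a pairwise comparable $S_n^p$ containing strings of every length is exactly the set of prefixes of a single point $x_n^p$, after which your verification of $P\cap N\subseteq{\rm Name}((\mathcal{B}_D)^\om)$ goes through unchanged. With that substitution your argument matches the paper's proof.
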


\begin{proof}
Let $(r_e)_{e\in\om}$ witness that $\cmp{D}$ is strongly $(\Delta^0_m,\Delta^0_n)$-enumerable.
We define the predicate $p\in P$ as follows:
\begin{align*}
&(\forall n)(\forall\ell)(\exists\sigma\in\om^\ell)\;\langle 0,n,\sigma\rangle\in{\rm rng}(p),\\
\mbox{and }&(\forall n)\;[\langle 1,n\rangle\not\in{\rm rng}(p)\;\Longrightarrow\;(\exists e)\;r_e\not\in D\mbox{ and }(\forall\sigma\prec r_e)\;\langle 0,n,\sigma\rangle\in{\rm rng}(p)].
\end{align*}
The first line says that $p$ extends the Baire name of a point $x\in\mathcal{B}^\om$, and the second line says that if $p$ does not enumerate $\langle 1,n\rangle$, then some of such $x$ satisfies that $x(n)\not\in D$.
To see that ${\rm Sup}((\mathcal{B}_D)^\om)\subseteq P$, fix $p\in{\rm Sup}((\mathcal{B}_D)^\om)$, that is, $p$ extends a $(\mathcal{B}_D)^\om$-name $q$.
We show that $p$ satisfies the contrapositive of the second line in the definition of $p\in P$.
Assume that any point $x$ whose Baire name is extended by $p$ satisfies $x(n)\in D$.
Then the unique point $x$ coded by $q$ must satisfy $x(n)\in D$ since $p$ extends $q$ and therefore extends the Baire name of $x$.
Then $q$ must enumerate $\langle 1,n\rangle$, and so does $p$.

We next define the predicate $p\in N$ as follows:
\begin{align*}
(\forall n)&(\forall\sigma,\tau)\;[\sigma\bot\tau\;\Longrightarrow\;\langle 0,n,\sigma\rangle\not\in{\rm rng}(p)\mbox{ or }\langle 0,n,\tau\rangle\not\in{\rm rng}(p)],\\
\mbox{ and }(\forall n)&\;[(\langle 1,n\rangle\in{\rm rng}(p)\mbox{ and }(\exists^\infty\sigma)\;\langle 0,n,\sigma\rangle\in{\rm rng}(p))\\
&\;\Longrightarrow\;(\forall e)[r_e\not\in D\;\rightarrow\;(\exists\sigma\prec r_e)(\exists\tau)\;\tau\bot\sigma\mbox{ and }\langle 0,n,\tau\rangle\in{\rm rng}(p)].
\end{align*}
The first line says that $p$ does not enumerate two incomparable strings for each coordinate.
Note that, in this case, $p$ generates a sequence $x^p=(x^p(n))_{n\in\om}\in \om^{\leq\om}$.
The second and third lines say that if $p$ enumerates $\langle 1,n\rangle$ and such $x^p(n)$ is an infinite string, then $x^p(n)\in D$.
Note that every $p\in{\rm Sub}((\mathcal{B}_D)^\om)$ satisfies this condition.
Otherwise, $p$ enumerates $\langle 1,n\rangle$, $x^p(n)\in \om^\om$ is determined but $x^p(n)\not\in D$.
Thus, if $q$ is a $(\mathcal{B}_D)^\om$-name extending $p$, we must have $x^q(n)\not\in D$, and then $q$ never enumerates $\langle 1,n\rangle$, which is impossible.
Hence, we get that ${\rm Sub}((\mathcal{B}_D)^\om)\subseteq N$.

It is not hard to check that $P\in\Pi^0_{n}$ since $\cmp{D}$ is strongly $(\Delta^0_m,\Delta^0_n)$-enumerable.
It is also straightforward to see that $N\in\Pi^0_n$ since $\cmp{D}$ is $(\Delta^0_m,\Delta^0_n)$-enumerable.
Finally, we claim that $P\cap N\subseteq{\rm Name}((\mathcal{B}_D)^\om)$.
The first lines in the definitions of $P$ and $N$ say that $p$ determines $x^p=(x^p(n))_{n\in\om}\in(\om^\om)^\om$.
Then the second line of $P$ ensures that if $p$ does not enumerate $\langle 1,n\rangle$ then $x^p(n)\not\in D$.
Conversely, the second and third lines of $N$ ensures that if $p$ enumerates $\langle 1,n\rangle$ then $x^p(n)\in D$.
This verifies our claim.
\end{proof}



Given $D\subseteq\om^\om$ and $x\in(\mathcal{B}_D)^\om$, define $X=\{\langle k,m\rangle:x(k)=m\}$.
Then, it is not hard to see the following.
\[{\rm Nbase}(x)\equiv_eX\oplus\cmp{X}\oplus\{n\in\om:x(n)\in D\}.\]
Let $\mathbf{d}$ be a $\Delta^0_n$-enumeration degree for $n\geq 4$, and choose $A\in\mathbf{d}$.
By Lemma \ref{lem:strong-co-enumerable}, $D_A$ is strongly co-$(\Delta^0_1,\Delta^0_n)$-enumerable.
Therefore, by Lemma \ref{lem:strong-co-enumerable2}, $(\mathcal{B}_{D_A})^\om$ is strongly $\Pi^0_{n}$-named.
Define $x_n=n\fr 0^\om$.
Then clearly, $x_n\in D_A$ if and only if $n\in A$.
Moreover, since $X$ and $\cmp{X}$ are c.e., we have the following.
\[{\rm Nbase}(x)\equiv_eX\oplus\cmp{X}\oplus\{n\in\om:x(n)\in D_A\}\equiv_eA.\]

Thus, by putting $\xx_\mathbf{d}=(\mathcal{B}_{D_A})^\om$, this verifies our claim.
}


\subsubsection{Gandy-Harrington topology}\label{sec:GHtopology}

For definitions, see Section \ref{sec:3-7-3}

\propproof{prop:gandy-harrington}{
First it is easy to see $\nbaseb{GH}{x}\leq_e\nbaseb{\lambda}{x^{{\rm HJ}}}$ since the hyperjump of $x$ determines whether the $e$-th $\Sigma^1_1$ set contains $x$ or not.
To see $\nbaseb{\lambda}{x^{(\alpha)}}\leq_e\nbaseb{GH}{x}$, given $e$, one can effectively find $\Sigma^1_1$ indices $p_e$ and $n_e$ of a $\Sigma^0_{1+\alpha}$ set $\{y\in\omega^\omega:y^{(\alpha)}(e)=1\}$ and a $\Pi^0_{1+\alpha}$ set $\{y\in\omega^\omega:y^{(\alpha)}(e)=0\}$.
Then for each $e$ either $p_e$ or $n_e$ is enumerated into $\{e:x\in GH_e\}$.
By waiting for either one to occur, one can make an enumeration procedure witnessing $\nbaseb{\lambda}{x^{(\alpha)}}\leq_e\nbaseb{GH}{x}$.
}

\thmproof{thm:GH-non-continuous}{
Suppose that $\nbaseb{\mathcal{H}}{z}\leq_e\nbaseb{GH}{x}$ for $z\in\mathcal{H}$.
Then there is a c.e.~set $\Psi$ such that $\langle n,s,p\rangle\in\nbaseb{\mathcal{H}}{z}$ if and only if $(n,s,p,D)\in\Psi$ for some finite set $D\subseteq\nbaseb{GH}{x}$.
Let $L_s$ be the set of all $\langle n,t,p\rangle$ such that $t>s$ and
\[(\forall (m,u,q,D)\in\Psi)\;[(m=n\mbox{ and }D\subseteq\nbaseb{GH}{x})\;\rightarrow\;|q-p|<2^{-t}+2^{-u}].\]

In other words, the diameter of the ball $B_{t,p}=\{y\in[0,1]:|y-p|<2^{-t}\}$ determined by $\langle n,t,p\rangle$ is less than $2^{-s}$, and the ball $B_{t,p}$ must intersect with any ball enumerated by $\Psi^{\nbaseb{GH}{x}}$ at the $n$-th coordinate.

Note that $\nbaseb{GH}{x}$ is a $\Sigma^1_1(x)$ subset of $\omega$.
Therefore, $L_s$ is a $\Pi^1_1(x)$ subset of $\omega$ uniformly in $s$, and clearly nonempty.
We claim that $z(n)\in\overline{B_{t,p}}$ for any $\langle n,t,p\rangle\in L_s$.
To see this, let $V$ be an arbitrary open neighborhood of $z(n)$.
Then, there is $v>u$ and $q$ such that $z(n)\in B_{v,q}\subseteq V$.
Since $\langle n,v,q\rangle\in\nbaseb{\mathcal{H}}{z}$, $\Psi^{\nbaseb{GH}{x}}$ enumerates $\langle n,v,q\rangle$, and then, as mentioned above, $B_{t,p}$ intersects with such $B_{v,q}$.
Therefore, $B_{t,p}$ intersects with any open neighborhood of $z(n)$, that is, $z(n)\in\overline{B_{t,p}}$; hence $z(n)\in B_{t-1,p}$.

Since $L_s$ is $\Pi^1_1(x)$ uniformly in $s$, by uniformization (see \cite[Theorem II.2.3]{SacksBook}), there is a $\Pi^1_1(x)$ total function $\langle n,s\rangle\mapsto h(n,s)$ such that $\langle n,s,h(n,s)\rangle\in L_s$.
By totality, $h$ is $\Delta^1_1(x)$.
Thus, we obtain a $\Delta^1_1(x)$-sequence $(B_{s-1,h(n,s)})_{s\in\omega}$ of open balls such that $z(n)\in \bigcap_sB_{s-1,h(n,s)}$ for all $s$.
Indeed, we have $\{z(n)\}=\bigcap_sB_{s-1,h(n,s)}$ (that is, $(h(n,s))_{s\in\om}$ is a Cauchy sequence rapidly converging to $z(n)$) since the diameter of $B_{s-1,h(s)}$ is at most $2^{-s+2}$.
Hence, one can enumerate $\nbaseb{\mathcal{H}}{z}$ using $h$.
Since $h$ is $\Delta^1_1(x)$, this shows that $\nbaseb{\mathcal{H}}{z}\leq_e\nbaseb{\lambda}{x^{(\alpha)}}$ for some $\alpha<\omega_1^{{\rm CK},x}$.
However, this implies that $\nbaseb{GH}{x}\not\leq_e\nbaseb{\mathcal{H}}{z}$ by Proposition \ref{prop:gandy-harrington}.
}

\subsubsection{Irregular Lattice Topology}

For definitions, see Section \ref{sec:3-7-4}.

\propproof{prop:irregular-lattice-degree}{
Given $(x,y)\in(\mathcal{L}_{IL})^\om$, define $X$ as the coded neighborhood filter of $(x,y)$ in $\mathcal{L}^\om$ (which is equivalent to $\nbaseb{\hat{\om}^\om}{x}\oplus\nbaseb{\hat{\om}^\om}{y}$).
Note that $X$ is total as mentioned above.
Then, define $A$ and $P$ as follows:
\begin{align*}
A&=A(x):=\{n\in\om:x(n)=(\infty,\infty)\},\\
P&=P(y):=\{n\in\om:y(n)\in\om\}.
\end{align*}

Clearly, $\cmp{A}$ and $P$ are c.e.\ relative to $X$.
Since $(\mathcal{L}_{IL})^\om$ is finer than $\mathcal{L}^\om$, we can recover the coded $\mathcal{L}^\om$-neighborhood filter $X$ of $x$ from an enumeration of $\nbaseb{(\mathcal{L}_{IL})^\om}{x}$.
One can see that
\[n\in A\cup P\iff (\exists a,b\in\om)(\exists i\in\{0,2\})\;\langle n,i,a,b\rangle\in\nbase{x,y}.\]
Thus, $X\oplus\cmp{X}\oplus(A\cup P)$ is $e$-reducible to $\nbaseb{(\mathcal{L}_{IL})^\om}{x,y}$.

Conversely, from $X$, we first decode $\nbaseb{\hat{\om}^\om}{x}$ and $\nbaseb{\hat{\om}^\om}{y}$.
Then, it is easy to see that for any $i<2$ and $n,a,b\in\om$,
\[\langle n,i,a,b\rangle\in\nbaseb{(\mathcal{L}_{IL})^\om}{x,y}\iff\langle n,0,a\rangle\in\nbaseb{\hat{\om}^\om}{x}\mbox{ and }\langle n,i,b\rangle\in\nbaseb{\hat{\om}^\om}{y}.
\]
Moreover, one can see that
\begin{align*}
\langle n,2,a,b\rangle\in\nbaseb{(\mathcal{L}_{IL})^\om}{x,y}\iff\langle n,1,a\rangle\in\nbaseb{\hat{\om}^\om}{x},\;\langle n,1,b\rangle\in\nbaseb{\hat{\om}^\om}{y}\\
\mbox{ and }n\in A\cup P.
\end{align*}

The above two equality given us an $e$-reduction from $\nbaseb{(\mathcal{L}_{IL})^\om}{x,y}$ to $X\oplus\cmp{X}\oplus(A\cup P)$.
Consequently, every $(\mathcal{L}_{IL})^\om$-degree is co-$d$-CEA.

Next, assume that a co-$d$-CEA set is given, i.e.\ sets $X,A,P$ such that $\cmp{A}$ and $P$ are $X$-c.e.\ are given, and consider $A\cup P$.
Without loss of generality, we can assume that $A\cap P=\emptyset$ since replacing $P$ with the new $X$-c.e.\ set $P\setminus A$ does not affect on the set $A\cup P$.
Then, we construct $(x,y)\in(\mathcal{L}_{IL})^\om$ as follows.
Fix $X$-computable enumerations of $\cmp{A}$ and $P$.
First we use $(x(2n),y(2n))_{n\in\om}$ to code $X$.
Then define $x(2n+1)$ and $y(2n+1)$ as follows:
\begin{align*}
x(2n+1)&=
\begin{cases}
\infty&\mbox{ if }n\in A,\\
s&\mbox{ if we see $n\in \cmp{A}$ at stage $s$,}
\end{cases}
\\
y(2n+1)&=
\begin{cases}
\infty&\mbox{ if }n\not\in P,\\
t&\mbox{ if we see $n\in P$ at stage $t$,}
\end{cases}
\end{align*}
Then, $A$ and $P$ are recovered from $(x,y)$ as above, i.e., $A=A(x)$ and $P=P(y)$.
The above argument shows that for any $(x,y)\in(\mathcal{L}_{IL})^\om$, $\nbaseb{(\mathcal{L}_{IL})^\om}{x,y}$ is $e$-equivalent to $X\oplus\cmp{X}\oplus(A(x)\cup P(y))$.
This concludes the proof.
}

\propproof{prop:irregular-lattice-degree2}{
Fix $X,A,B\subseteq\om$ such that $B$ and $A\cup B$ are $X$-co-c.e., and $A$ and $B$ are disjoint.
Note that $\cmp{A}=B\cup\cmp{(A\cup B)}$, that is, it is the union of an $X$-co-c.e.\ set and an $X$-c.e.\ set, and thus, $\cmp{A}$ is co-$d$-c.e.\ relative to $X$.
We claim that ${\rm Enum}(X\oplus\cmp{X}\oplus\cmp{A})$ is Medvedev equivalent to $\{X\}\times{\rm Sep}(A,B)$.

We first show that there is a $X$-computable function that, given enumeration of $\cmp{A}$, returns a set $C$ separating $A$ from $B$.
Fix an enumeration of $B$ relative to $X$.
Then, given an enumeration of $\cmp{A}$, wait until we see either $n\in\cmp{A}$ or $n\in\cmp{B}$ (by using an enumeration relative to $X$).
Since $A$ and $B$ are disjoint, this happens at some stage.
If we see $n\in\cmp{A}$ (before seeing $n\in\cmp{B}$), we enumerate $n$ into $\cmp{C}$.
If we see $n\in\cmp{B}$ (before seeing $n\in\cmp{A}$), we enumerate $n$ into $C$.
Clearly, $C$ separates $A$ from $B$.

Conversely, assume that a set $C$ separating $A$ from $B$ is given.
Then, note that $\cmp{A}=\cmp{C}\cup\cmp{(A\cup B)}$.
Thus, wait until we see either $n\in\cmp{C}$ or $n\in\cmp{(A\cup B)}$ (by using an enumeration relative to $X$).
If we see this, enumerate $n$ into $\cmp{A}$.
This procedure gives us a correct enumeration of $\cmp{A}$.

Next, assume that a co-$d$-CEA set is given, that is, disjoint sets $B,P\subseteq\om$ such that $B$ is $X$-co-c.e.~and $P$ is $X$-c.e.\ are given.
Define $A=\cmp{(B\cup P)}$.
Note that $A$ and $B$ are disjoint, and that $B$ and $A\cup B=\cmp{P}$ are $X$-co-c.e.
Since $B\cup P=\cmp{A}$, by the same argument as above, we can show that ${\rm Enum}(X\oplus\cmp{X}\oplus(B\cup P))$ is Medvedev equivalent to $\{X\}\times{\rm Sep}(A,B)$.
}

\propproof{prop:proper-doubled}{
We construct $Z=(A\cup P)\oplus(B\cup N)$.
Let $E_e$ be the $e$-th co-$d$-c.e.\ set.
Begin with $n\in B$.
Wait until $2n+1$ is enumerated into $\Psi\circ\Phi(Z_s)$ with $\Phi$-use $\varphi_s(n)$.
If we see this, remove $n$ from $B$, and enumerate $n$ into $A$.
That is, define $Z_{s+1}=(Z_s\setminus\{2n+1\})\cup\{2n\}$.
Restrain $Z_{s+1}\upto \varphi_s(n)$.
Wait until $2n$ is enumerated into $\Psi\circ\Phi(Z_t)$ with $\Phi$-use $\varphi_t(n)$.
Restrain $Z_{t}\upto\varphi_t(n)$.
Given $S$, we write $S^0=(S\setminus\{2n\})\cup\{2n+1\}$ and $S^1=(S\setminus\{2n+1\})\cup\{2n\}$.
For any stage $u$ after $t$, either both $2n$ and $2n+1$ are enumerated into $\Psi\circ\Phi(Z_u^i)$ for some $i<2$ or there is $m<\max\{\varphi_s(n),\varphi_t(n)\}$ such that $m\in\Phi(Z_u^i)$, but $m\not\in\Phi(Z_u^{1-i})$ by monotonicity of an enumeration operator.
In the former case, put $Z_{u+1}=Z_u^i$, and restrain the $\Phi$-use.
In the latter case, search for such $m$, and choose $i$ such that the current guess of $\Phi(Z^i_u;m)$ is unequal to the current approximation of $E_e(m)$.
Then put $Z_{u+1}=Z^i_u$.
Note that, at some later stage $v>u$, we may see that $\Phi(Z^i_v;m)=E(m)$.
In this case, we search for new $m$ and $i$, and continue the similar procedure.
This procedure converges at some stage, and therefore, this is finite injury.
Combine the quasi-minimal strategy with this.
}

\subsection{Degrees of points: $G_\delta$-topology}

\subsubsection{Closed networks and $G_\delta$-spaces}

We show basic properties of $G_\delta$-spaces.
For definitions, see Section \ref{sec:G-delta-space}.

\obsproof{observation:closed-network}{
If $\xx$ is $T_1$, every point is closed.
Thus, $\nn=\{\{x\}:x\in\xx\}$ forms a closed network.
We show the converse direction.
Fix $x\not=y$.
Since $\xx$ is $T_0$, there is an open set $U$ such that either $x\in U\not\ni y$ or $x\not\in U\ni y$.
Without loss of generality, we may assume that $x\in U$ and $y\not\in U$.
Since $\xx$ has a closed network, there is a closed set $F$ such that $x\in F\subseteq U$.
Then, $V=\xx\setminus F$ is open, and we have $x\not\in V$ and $y\in V$.
This shows that $\xx$ is $T_1$.
}

\obsproof{obs:Gdelta-Borel-hierarchy}{
Clearly, if $G_\delta=\mathbf{\Pi}^0_2$ in a space $\xx$, then $\xx$ is a $G_\delta$ space since every closed set is constructible, and hence $\mathbf{\Pi}^0_2$.
To see the converse, note that the class of $G_\delta$ sets is closed under finite union and countable intersection.
If $\xx$ is a $G_\delta$ space, then any open or closed set is $G_\delta$.
Hence, every constructible set is $G_\delta$, and therefore, any $\mathbf{\Pi}^0_2$ set is $G_\delta$.
}

\propproof{prop:G_delta-equal-closed-network}{
Let $\xx$ be a $T_0$ space with a countable basis $(\beta_e)_{e\in\om}$.
If $\xx$ is a $G_\delta$-space, then every open set is $F_\sigma$, and therefore, for any $e\in\om$, there is a countable collection $(F^e_n)_{n\in\om}$ of closed sets such that $\beta_e=\bigcup_{n\in\om}F^e_n$.
Since $(\beta_e)_{e\in\om}$ is a basis, $(F^e_n)_{e,n\in\om}$ forms a countable closed network for $\xx$.

Conversely, if $\nn$ is a countable closed network for $\xx$, for any open set $U$, consider the $F_\sigma$ set $N(U)=\bigcup\{N\in\nn:N\subseteq U\}$.
We claim that $U=N(U)$.
The inclusion $N(U)\subseteq U$ is clear.
For the inclusion $U\subseteq N(U)$, given $x\in U$, since $\nn$ is a network, there is $N\in\nn$ such that $x\in N\subseteq U$.
This means that $x\in N(U)$, and therefore, $U=N(U)$, that is, $U$ is $F_\sigma$.
This concludes that $\xx$ is a $G_\delta$-space.
}

\obsproof{obs:twin-second-countable}{Let $(B_i)_{i\in\om}$ be a countable basis for $\xx$.
Assume that $\xx$ is compact and $T_1$.
For each finite set $D$, let $N_D$ be the complement of $\bigcup_{i\in D}B_i$.
We claim that $(N_D)_{D\subseteq_{\rm fin}\om}$ forms a countable closed network for $\xx$.
To see this, fix a point $x\in \xx$ and open neighborhood $U$ of $x$.
Since $\xx$ is $T_1$, the complement $\{x\}$ is open, and thus it is written as $\bigcup_{i\in I}B_i$.
Since $\xx\setminus\{x\}$ covers the closed subset $\xx\setminus U$ of the compact space $\xx$, there is a finite set $D\subseteq I$ such that $\xx\setminus U$ is covered by $\bigcup_{i\in D}B_i$, which means that $N_D\subseteq U$.
Moreover, we have that $\bigcup_{i\in D}B_i\subseteq \xx\setminus\{x\}$, and therefore $x\in N_D$.
This shows that $(N_D)_{D\subseteq_{\rm fin}\om}$ forms a countable closed network for $\xx$.
Thus, $\xx$ is $G_\delta$ by Proposition \ref{prop:G_delta-equal-closed-network}.

Next, if $\xx$ is metrizable, it is easy to see that every open ball is a countable union of closed balls.
Hence, every metrizable space is $G_\delta$.
The implication from being $G_\delta$ to being $T_1$ follows from Observation \ref{observation:closed-network} and Proposition \ref{prop:G_delta-equal-closed-network}.
}

\propproof{exa:indiscrete-irrational-extension}{
To simplify our argument, we consider the indiscrete irrational extension of $\mathcal{C}=2^\om$ rather than $\mathbb{R}$.
More formally, let $J$ be the set of all infinite binary sequences containing infinitely many $0$'s, and then consider the $J$-extension $\mathcal{C}_J$ of the Cantor topology $\tau_\mathcal{C}$, i.e., the topology generated by $\tau_\mathcal{C}\cup\{J\}$.

Suppose for the sake of contradiction that $\mathcal{C}_J$ is $G_\delta$.
Let $(B_i)_{i\in\om}$ and $(N_i)_{i\in\om}$ be a countable basis and a countable closed network for $\mathcal{C}_J$ (by Proposition \ref{prop:G_delta-equal-closed-network}).
Note that every $N_i$ is closed in $\mathcal{C}_J$, and therefore $N_i$ can be written as the complement of $V_i\cap J$ or $V_i$ for some $\tau_\mathcal{C}$-open set $V_i$.
Then, there is an oracle $Z$ such that $U_i$ and $V_i$ are $Z$-c.e.~open for any $i\in\om$.
Let $x$ be a $1$-generic real relative to $Z$.
Clearly, $x\in J$.
Therefore, there is $i$ such that $x\in B_i\subseteq J$.
Then, since $(N_i)_{i\in\om}$ is a closed network for $\mathcal{C}_J$, there is $j$ such that $x\in N_j\subseteq B_i$.
Since $x\in J$, we have $x\not\in V_j$.
Note that the complement of $B_i$ is dense, and thus $V_j$ is dense.
However, since $x$ is $1$-generic relative to $Z$, and $V_j$ is a dense $Z$-c.e.~open set, we must have $x\in V_j$, a contradiction.
}

\subsubsection{Cototal enumeration degrees}

In this section, we show one of the important results in this article connecting the notion of a cototal $e$-degree and the notion of a $G_\delta$-space.
For definitions, see Section \ref{sec:cototalenumeration}



\obsproof{obs:Amax-decidable}{
For finite sets $D,E\subseteq\om^{<\om}$, we claim that $D\subseteq E$ if and only if $A_{\rm max}^{\rm co}\cap[E]\subseteq A_{\rm max}^{\rm co}\cap[D]$.
It suffices to show that $D\not\subseteq E$ implies $A_{\rm max}^{\rm co}\cap[E]\not\subseteq A_{\rm max}^{\rm co}\cap[D]$.
Choose $\sigma\in D\setminus E$.
Then, it is easy to construct a maximal antichain $X\subseteq\om^{<\om}$ such that $X\cap E=\emptyset$ and $\sigma\in X$.
For instance, consider $X=\{\sigma\}\cup\{\tau\in\om^\ell:\sigma\not\preceq\tau\}$ for a sufficiently large $\ell$.
Then, $E\subseteq \cmp{X}$, but $D\not\subseteq\cmp{X}$.
This shows that $\cmp{X}\in[E]\setminus[D]$; therefore $A_{\rm max}^{\rm co}\cap[E]\not\subseteq A_{\rm max}^{\rm co}\cap[D]$.}

%

%
%

\thmproof{cototal-equal-twin}{
To prove Theorem \ref{cototal-equal-twin}, we will see that one can assume that, in a computably $G_\delta$ space, every open set can be written as a computable union of finitary closed sets, where a set is finitary closed (w.r.t.\ $\beta$) if it is the complement of finitely many open sets in the basis generated by $\beta$.
Be careful that the notion of being finitary closed depends on the choice of the representation $\beta$; hence it is not a topological property.

Recall from Section \ref{sec:change-repres} the notion of reducibility of representations; for instance, by $\gamma\equiv\delta$ we mean that $\gamma$ is bi-reducible to $\delta$.

\begin{obs}\label{obs:comp-cl-network2}
Let $\xx=(X,\beta)$ be a represented cb$_0$ space which is computably $G_\delta$.
Then, there is a representation $\gamma\equiv\beta$ of $X$ such that, given $e\in\om$, one can effectively find a computable sequence $(Q^e_n)_{n\in\om}$ of $\gamma$-finitary closed sets with $\gamma_e=\bigcup_nQ^e_n$.
\end{obs}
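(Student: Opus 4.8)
The plan is to massage the given computably $G_\delta$ representation $\beta$ into a new, equivalent representation $\gamma$ whose basic open sets are exactly the original ones but whose \emph{names} carry, for each basic open set $\beta_e$, an explicit enumeration of an $F_\sigma$-presentation of $\beta_e$ in which each closed piece is $\gamma$-finitary closed. The point is a bookkeeping one: the definition of computably $G_\delta$ (Definition \ref{def:comp-G-delta}) already hands us, for each $e$, an $F_\sigma$-presentation $\beta_e=\bigcup_n P_{f(e,n)}$, where each $P_d=X\setminus\bigcup\{\beta_c:c\in W_d\}$ is the complement of a \emph{c.e.} union of basic open sets — not of a \emph{finite} union. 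So the $P_d$ are closed but typically not finitary closed. First I would replace each such $P_d$ by an increasing exhaustion by finitary closed sets from the inside.

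First, enumerate $W_d=\{c_0,c_1,c_2,\dots\}$ and set $Q^{d}_k := X\setminus\bigcup_{i\le k}\beta_{c_i}$, a finitary closed set; then $P_d=\bigcap_k Q^d_k$, which is the wrong direction (intersection, not union). The correct move is to observe that since each $P_d\subseteq\beta_e$ and we want $\beta_e$ as a \emph{union} of finitary closed sets, we should not try to present $P_d$ itself but rather absorb the inner complement into the indexing of names. Concretely, I would define a new representation $\gamma$ of $X$ as follows: a $\gamma$-name of $x$ is any enumeration of the set
\[\nbaseb{\gamma}{x}=\{\langle e,d\rangle: x\in\beta_e \text{ and } x\in P_d \text{ and } d\in\{f(e,n):n\in\om\}\}\cup\{\langle e,\star\rangle: x\in\beta_e\}.\]
Since $x\in\beta_e$ iff $x\in P_{f(e,n)}$ for some $n$, and "$x\in P_{f(e,n)}$" is a $\Pi^0_1$ condition relative to $\nbaseb{\beta}{x}$ (it says $x\notin\beta_c$ for all $c\in W_{f(e,n)}$, i.e. no such $c$ ever appears in $\nbaseb{\beta}{x}$) — wait, that is co-c.e., not c.e. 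This is the crux of the difficulty, and it is where I expect to spend the real effort: \emph{membership in a single $P_d$ is not $\Sigma^0_1$ in $\nbaseb{\beta}{x}$}, so one cannot naively enumerate it.

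The resolution I would pursue: do not insist on naming the individual $P_d$; instead, refine the $F_\sigma$-presentation to one built from the \emph{finitary} closed sets $Q^d_k$ directly. Since $P_d=\bigcap_k Q^d_k$ and $\beta_e=\bigcup_n P_{f(e,n)}$, we have $\beta_e=\bigcup_n\bigcap_k Q^{f(e,n)}_k$; and because each $Q^{f(e,n)}_k\subseteq X$ is finitary closed with $Q^{f(e,n)}_0\supseteq Q^{f(e,n)}_1\supseteq\cdots$, and because $\beta$ is a basis, one can show (using that $P_{f(e,n)}\subseteq\beta_e$ and compactness-free point-by-point reasoning) that in fact $\beta_e=\bigcup\{Q^{f(e,n)}_k : Q^{f(e,n)}_k\subseteq\beta_e,\ n,k\in\om\}$ — the right-to-left inclusion is immediate, and for left-to-right, given $x\in\beta_e$ pick $n$ with $x\in P_{f(e,n)}$, then $x\in Q^{f(e,n)}_k$ for every $k$, and for $k$ large enough $Q^{f(e,n)}_k\subseteq\beta_e$ need \emph{not} hold in general, so this inclusion must be argued more carefully — possibly it fails, in which case one instead intersects with a fixed finitary closed refinement of $\beta_e$, using that $\beta_e$ itself, being open, is a union of basic opens and one can throw the complement of a suitable such basic open into the finitary closed set. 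I would carry out this refinement so that the resulting family $(Q^e_n)_{n\in\om}$ (re-indexing the $Q^{f(e,m)}_k\cap(\text{finitary closed}\subseteq\beta_e)$) satisfies $\beta_e=\bigcup_n Q^e_n$ with all $Q^e_n$ being $\beta$-finitary closed, and hence $\gamma$-finitary closed for $\gamma:=\beta$ itself. Then no change of representation is even needed: one takes $\gamma\equiv\beta$ to be $\beta$, and the effective procedure $e\mapsto(\text{index of the sequence }(Q^e_n)_n)$ is read off from $f$ and a canonical enumeration of each $W_{f(e,n)}$. The main obstacle, as flagged, is verifying that the inner $\Pi^0_1$ obstruction dissolves after this refinement — i.e. that insisting each $Q^e_n\subseteq\beta_e$ is compatible with still covering all of $\beta_e$ — and this is exactly the kind of elementary but delicate point-set bookkeeping where I would slow down and check that the "enlarge the finitary closed set using a basic open contained in $\beta_e$ through the point" trick actually works uniformly.
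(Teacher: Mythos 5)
There is a genuine gap, and it sits exactly where you flagged uncertainty: your final resolution takes $\gamma=\beta$ and hopes to repair the covering by ``throwing the complement of a suitable basic open into the finitary closed set,'' but the statement with $\gamma=\beta$ is simply false in general, so no amount of bookkeeping can rescue that plan. Concretely, take $\xx=\om^\om$ with the standard cylinder basis $\beta_\sigma=[\sigma]$. This space is computably $G_\delta$ with respect to $\beta$ (each $[\sigma]$ is itself of the form $P_d=X\setminus\bigcup\{[\tau]:\tau\perp\sigma\}$ for a computable index $d$), yet a $\beta$-finitary closed set $X\setminus([\tau_1]\cup\dots\cup[\tau_k])$ is contained in a proper cylinder $[\sigma]$ only if the finitely many cylinders $[\tau_i]$ cover $X\setminus[\sigma]$, which forces some $\tau_i=\langle\rangle$ and hence the finitary closed set to be empty. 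So no proper cylinder is a union of $\beta$-finitary closed sets, and in particular no finitary closed $F$ with $x\in F\subseteq\beta_e$ exists for you to build, no matter which complement of a basic open you intersect with. Your first attempt (encoding membership in the individual $P_d$'s into names) was correctly abandoned for the $\Pi^0_1$ reason you identified, but the fallback does not close the gap.

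The missing idea is that the representation must genuinely be changed, and the change is cheap: adjoin the \emph{open} sets $X\setminus P_{f(e,n)}=\bigcup\{\beta_d:d\in W_{f(e,n)}\}$ as new basic open sets. The paper defines $\gamma_{2e}=\beta_e$ and $\gamma_{2\langle e,n\rangle+1}=X\setminus P_{f(e,n)}$; then $\gamma\equiv\beta$ because each new basic open is a c.e.\ union of $\beta$-basic opens (so $\gamma\leq\beta$ computably, and $\beta\leq\gamma$ trivially). With respect to $\gamma$, each $P_{f(e,n)}$ is the complement of a \emph{single} basic open, hence $\gamma$-finitary closed by fiat, and $\gamma_{2e}=\beta_e=\bigcup_n\bigl(X\setminus\gamma_{2\langle e,n\rangle+1}\bigr)$ is the required effective union; the new basic opens $\gamma_{2\langle e,n\rangle+1}$ are computable unions of sets $\beta_d=\gamma_{2d}$, each of which is handled by the same formula, so the procedure $e\mapsto(Q^e_n)_n$ is uniform. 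In short: rather than trying to approximate the closed sets $P_d$ from inside by finitary closed sets (which is impossible in general), one enlarges the basis so that the $P_d$ themselves become finitary closed.
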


\begin{proof}
Let $f$ be a computable function witnessing that $\xx$ is computably $G_\delta$.
Define $\gamma_{2e}=\beta_e$ and $\gamma_{2\langle e,n\rangle+1}=X\setminus P_{f(e,n)}=\bigcup\{\beta_d:d\in W_{f(e,n)}\}$.
Then, $\gamma_{2e}=\beta_e=\bigcup_nP_{f(e,n)}=\bigcup_n(X\setminus\gamma_{2\langle e,n\rangle+1})$, which is a computable union of $\gamma$-finitary closed sets.
Moreover, $\gamma_{2\langle e,n\rangle+1}$ is a computable union of sets of the form $\beta_d$, where $\beta_d$ can be written as a computable union of $\gamma$-finitary closed sets.
This concludes the proof.
\end{proof}

\begin{obs}\label{obs:robust-network-G-delta}
Let $(X,\beta)$ be a represented cb$_0$ space, and let $\gamma$ be a representation of $X$ such that $\beta\equiv\gamma$.
Then, if $(X,\beta)$ is a computably $G_\delta$ space, so is $(X,\gamma)$.
\end{obs}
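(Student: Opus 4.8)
Observation \ref{obs:robust-network-G-delta}: if $(X,\beta)$ is a represented cb$_0$ space, $\gamma$ is a representation of $X$ with $\beta\equiv\gamma$, and $(X,\beta)$ is computably $G_\delta$, then $(X,\gamma)$ is computably $G_\delta$ as well. This is the ``invariance of computable $G_\delta$-ness under computably equivalent representations'' companion to Proposition \ref{prop:equiv-repre-preserv}, and it should follow the same template.

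\textbf{Plan.} The approach is to unwind the definition of computably $G_\delta$ (Definition \ref{def:comp-G-delta}) and to use the two directions of the equivalence $\beta\equiv\gamma$, namely $\gamma\leq\beta$ and $\beta\leq\gamma$, in the convenient form recorded in Section \ref{sec:change-repres}: $\gamma\leq\beta$ means there is a computable $h$ with $\gamma_e=\bigcup\{\beta^+_\sigma:\sigma\in W_{h(e)}\}$, and symmetrically for $\beta\leq\gamma$. First I would fix a computable $f$ witnessing that $(X,\beta)$ is computably $G_\delta$, so that for each $e$, $\beta_e=\bigcup_{n}P^\beta_{f(e,n)}$ where $P^\beta_d=X\setminus\bigcup\{\beta_c:c\in W_d\}$ is a $\beta$-finitary-closed-built $F_\sigma$ complement. (One should also note, as in the discussion before Observation \ref{obs:comp-cl-network2}, that a basic $\beta^+$-set, being a finite intersection of $\beta$-basic sets, is again handled by $f$ up to a computable reindexing, since finite intersections of $F_\sigma$ sets are $F_\sigma$ effectively.)

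\textbf{Key steps.} Step 1: express each $\gamma$-basic open set $\gamma_e$ as a $\beta$-c.e.\ open set: $\gamma_e=\bigcup\{\beta^+_\sigma:\sigma\in W_{h(e)}\}$ via the computable $h$ coming from $\gamma\leq\beta$. Step 2: for each such $\sigma$, use $f$ (and the finite-intersection bookkeeping) to write $\beta^+_\sigma=\bigcup_n P^\beta_{g(\sigma,n)}$ for a computable $g$; hence $\gamma_e=\bigcup_{\sigma\in W_{h(e)}}\bigcup_n P^\beta_{g(\sigma,n)}$, a computable union (uniformly in $e$) of sets of the form $P^\beta_d$. Step 3: convert each $P^\beta_d$ into a $\gamma$-finitary-closed complement. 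Here $P^\beta_d=X\setminus\bigcup\{\beta_c:c\in W_d\}$, and by $\beta\leq\gamma$ there is a computable $k$ with $\beta_c=\bigcup\{\gamma^+_\tau:\tau\in W_{k(c)}\}$; so $\bigcup\{\beta_c:c\in W_d\}$ is a $\gamma$-c.e.\ open set, i.e.\ equals $\bigcup\{\gamma_{c'}:c'\in W_{m(d)}\}$ for a computable $m$ (absorbing the passage from $\gamma^+$ back to $\gamma$ via the uniform equivalence $\beta\equiv\beta^+$, equivalently $\gamma\equiv\gamma^+$, noted in Section \ref{sec:change-repres}). Thus $P^\beta_d=P^\gamma_{m(d)}$. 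Step 4: assemble: $\gamma_e=\bigcup\{P^\gamma_{m(g(\sigma,n))}:\sigma\in W_{h(e)},\ n\in\om\}$, and since $\{(\sigma,n):\sigma\in W_{h(e)}\}$ is c.e.\ uniformly in $e$, we can recursively enumerate the indices $m(g(\sigma,n))$ and re-present this as $\gamma_e=\bigcup_n P^\gamma_{f'(e,n)}$ for a computable $f'$. That $f'$ witnesses that $(X,\gamma)$ is computably $G_\delta$.

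\textbf{Main obstacle.} The only real subtlety is the bookkeeping between a representation and its canonically induced basis: Definition \ref{def:comp-G-delta} is stated with open sets $\beta_e$ from the subbasis, whereas the reductions $\gamma\leq\beta$ and $\beta\leq\gamma$ naturally produce $\beta^+$- and $\gamma^+$-basic sets. One must check that ``computably $G_\delta$'' is insensitive to replacing $\beta$ by $\beta^+$ — which is exactly the content of the uniform $e$-equivalence $\nbaseb{\beta}{x}\equiv_e\nbaseb{\beta^+}{x}$ from Section \ref{sec:change-repres} together with the fact that a finite intersection of $F_\sigma$ sets is again $F_\sigma$ in an effective way (distribute the intersection over the countable unions). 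Once that routine reduction is in place, the rest is a direct substitution, entirely parallel to the proof of Proposition \ref{prop:equiv-repre-preserv}; no new idea is required. (Indeed one can shortcut slightly by first invoking Observation \ref{obs:comp-cl-network2} for $\beta$ to get a representation $\beta'\equiv\beta$ in which every basic open set is already a computable union of $\beta'$-finitary closed sets, and then transporting along $\beta'\equiv\gamma$ — but the brace-level substitutions are the same either way.)
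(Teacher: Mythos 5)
Your proposal is correct and follows essentially the same route as the paper's proof: transfer a $\gamma$-basic open set to a $\beta$-coded open set via $\gamma\leq\beta$, apply the computable $G_\delta$ witness for $\beta$ to get an effective $F_\sigma$ decomposition, and push the resulting closed sets back to $\gamma$-codes via $\beta\leq\gamma$. The paper states this in four lines and suppresses the $\beta^+$/$\gamma^+$ and c.e.-union bookkeeping that you spell out, but the argument is the same.
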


\begin{proof}
Note that $\beta\leq\gamma$ iff, given a $\beta$-code of an open set in $\xx$, one can effectively find its $\gamma$-code.
Given a $\gamma$-basic open set $U$, one can find its $\beta$-code since $\gamma\leq\beta$.
Since $(X,\beta)$ is computably $G_\delta$, one can find a $\beta$-computable sequence of closed sets whose union is $U$.
Since $\beta\leq\gamma$, it is also computable w.r.t.\ $\gamma$.
Hence, $(X,\gamma)$ is computably $G_\delta$.
\end{proof}

We now assume that $\xx=(X,\beta)$ is computably $G_\delta$.
By Observation \ref{obs:comp-cl-network2}, there is $\gamma\equiv\delta$ such that every basic open set can be written as a computable union of finitary closed sets w.r.t.\ $\gamma$ in an effective manner.
We will construct an enumeration operator $\Psi$.
Let $W$ be a c.e.\ set such that $\gamma_e=\bigcup_{\langle e,D\rangle\in W}N_D$, where $N_D=\xx\setminus\bigcup_{j\in D}\gamma_j$.
Then we claim that $W$ witnesses uniform cototality of $(X,\gamma)$, that is,
\[e\in\nbaseb{\gamma}{x}\iff(\exists D)\;[D\subseteq\cmp{\nbaseb{\gamma}{x}}\mbox{ and }\langle e,D\rangle\in W].\]

To see the implication ``$\Leftarrow$'', we first note that $D\subseteq\cmp{\nbaseb{\gamma}{x}}$ if and only if $x\in N_D$ by definition of $N_D$.
Moreover, if $\langle e,D\rangle\in W$, then $N_D\subseteq \gamma_e$, and therefore, the right formula implies $e\in\nbase{x}$.
For the implication ``$\Rightarrow$'', if $e\in\nbase{x}$, since $\gamma_e=\bigcup_{\langle e,D\rangle \in W}N_D$, there is a finite set $D$ such that $\langle e,D\rangle\in W$ and $x\in N_D\subseteq \gamma_e$.
Then we have $D\subseteq\cmp{\nbase{x}}$, and $\langle e,D\rangle\in W$ as desired.
Consequently, $(X,\gamma)$ is uniformly cototal.

Conversely, we assume that $(X,\gamma)$ is relatively cototal via an enumeration operator $\Psi$.
Then for any finite set $D$, define $N_D=\xx\setminus\bigcup_{n\in D}\gamma_n$.
We claim that $\gamma_n=\bigcup\{N_D:\langle n,D\rangle\in\Psi\}$.
For the inclusion ``$\subseteq$,'' if $x\in\gamma_n$, then since $\Psi(\cmp{\nbase{x}})=\nbase{x}$, there is a finite set $D\subseteq\cmp{\nbase{x}}$ (i.e., $x\not\in\bigcup_{j\in D}\gamma_j$, and therefore $x\in N_D$) such that $\langle n,D\rangle\in\Psi$.
For the inclusion ``$\supseteq$,'' we show that if $\langle n,D\rangle\in\Psi$, then $\gamma_n\cup\bigcup_{j\in D}\gamma_j=\xx$ (i.e., $N_D\subseteq \gamma_n$).
Otherwise, there is $y\in\xx$ such that $y\not \in \gamma_n\cup\bigcup_{j\in D}\gamma_j$.
However, we then have $D\subseteq\cmp{\nbase{y}}$, which implies $n\in\Psi(\cmp{\nbase{y}})$, while $n\not\in\nbase{y}$.
Then, we get $\Psi(\cmp{\nbase{y}})\not=\nbase{y}$, which contradicts our choice of $\Psi$.
This shows that given $n$, one can effectively find a computable sequence $(N_D:\langle n,D\rangle\in\Psi)$ of finitary closed sets whose union is $\gamma_n$, that is, $(X,\gamma)$ is computably $G_\delta$.
Hence, $(X,\beta)$ is also computably $G_\delta$ by Observation \ref{obs:robust-network-G-delta}.
}


\thmproof{thm:cototal-Gdelta-space2}{
As mentioned in Example \ref{exa:McCarthy}, McCarthy \cite{McC17} showed that the space $A_{\rm max}^{\rm co}$ is uniformly cototal, and moreover, the $A_{\rm max}^{\rm co}$-degrees are exactly the cototal $e$-degrees.
Hence, by Theorem \ref{cototal-equal-twin}, $A_{\rm max}^{\rm co}$ is computably $G_\delta$.
Moreover, as seen in Observation \ref{obs:Amax-decidable}, $A_{\rm max}^{\rm co}$ is a decidable cb$_0$ space.
Consequently, $A_{\rm max}^{\rm co}$ is a decidable $G_\delta$-space which captures the cototal $e$-degrees.
}

\subsection{Quasi-Polish topology}

We give a proof of some results mentioned in Section \ref{sec:3-9}

\propproof{prop:quasi-Polish-examples}{
It is easy to see that if $\xx$ is $\mathbf{\Pi}^0_2$-named, then so is $\xx^\om$.
For (1), by Fact \ref{fact:deBrecht-ocs}, it suffices to show that $\hat{\om}_{TP}$ is an open continuous image of a Polish space.
Define a function $\delta$ by $\delta(j0^n10^\om)=n$ for each $j<2$, $\delta(0^\om)=\infty$, and $\delta(10^\om)=\infty_\star$.
It is clear that the domain of $\delta$ is a closed subset of $2^\om$; hence Polish.
For continuity, the preimages of basic open sets $\{n\}$, $[n,\infty]$, and $[n,\infty_\star]$ are the clopen sets $[00^n1]\cup[10^n1]$, $[00^n]$, and $[10^n]$.
For openness, the images of basic open sets $[j0^n1\tau]$, $[00^n]$, and $[10^n]$ are the open sets $\{n\}$, $[n,\infty]$, and $[n,\infty_\star]$.
Hence, $\delta$ is open and continuous.

For (2), define a partial surjection $\delta:\subseteq\om^\om\to\mathcal{P}_{DO}$ as follows.
\begin{align*}
&\delta(0n0^m10^\om)=(n,m),\ \delta(0n0^m20^\om)=(n,\overline{m}),\ \delta(0n0^\om)=(n,\ast),\\
&\delta(10^\om)=\mathbf{0}_\star,\;\delta(20^\om)=\mathbf{0},\\
&\delta(10^s1m0^n10^\om)=(n+s,m+s),\  \delta(10^s1m0^\om)=(\infty,m+s),  \\
&\delta(20^s1m0^n10^\om)=(n+s,\overline{m+s}),\ \delta(20^s1m0^\om)=(\infty,\overline{m+s}).
\end{align*}

It is clear that ${\rm dom}(\delta)$ is a closed subset of $\om^\om$; hence Polish.
For continuity, the preimages of some basic open sets are:
\begin{align*}
\delta^{-1}[(n,\overline{m})]&=[0n0^m2]\cup\bigcup_{s\leq m}[20^s1(m-s)0^{n-s}],\\
\delta^{-1}[[n,\infty]\times\{m\}]&=\bigcup_{s\leq m}[10^s1(m-s)0^{n-s}]\cup\bigcup_{k\geq n}\delta^{-1}[(k,m)],\\
\delta^{-1}[\{n\}\times[m,\overline{m}]]&=[0n0^m]\cup\bigcup_{k\in\om\cup\om^\ast}\delta^{-1}[(n,k)],\\
\delta^{-1}[([n,\infty]\times(\ast,\overline{n}])\cup\{\mathbf{0}\}]&=[20^n]\cup\bigcup_{m\geq n}\delta^{-1}[[n,\infty]\times\{\overline{m}\}].
\end{align*}

These sets are open, and therefore, $\delta$ is continuous.
For openness, the images of some basic open sets $[0n0^m]$, $[0n0^m2\tau]$, $[10^s]$, $[20^s1m0^n]$, $[10^s1m0^n1\tau]$ are the open sets $\{n\}\times[m,\overline{m}]$, $\{(n,\overline{m})\}$, $([s,\infty]\times[s,\ast))\cup\{\mathbf{0}_\star\}$, $[n+s,\infty]\times\{\overline{m+s}\}$, and $\{(n+s,m+s)\}$.
Consequently, $\delta$ is open and continuous.

For (3), we define a partial surjection $\delta:\subseteq\om^\om\to\mathcal{QA}$ as follows.
\begin{align*}
\delta(0^\om)&=(0,\om^3),\;\delta(10^\om)=(\overline{0},\om^3),\\
\delta(00^j1k\ell\tau)&=(\ell+1,\om^2\cdot j+\om\cdot(2k)+2\ell+1),\\
\delta(10^j1k\ell\tau)&=(\overline{\ell+1},\om^2\cdot j+\om\cdot(2k+1)+2\ell+1),\\
\delta(2k0^\om)&=(0_\zeta,\om^2\cdot k+1),\\
\delta(2k0^{2j}1\ell\tau)&=((-\ell-1)_\zeta,\om^2\cdot k+\om\cdot(2j)+2\ell+2),\\
\delta(2k0^{2j+1}1\ell\tau)&=((\ell+1)_\zeta,\om^2\cdot k+\om\cdot(2j+1)+2\ell+2),\\
\delta(3k\ell 0^\om)&=(\infty,\om^2\cdot k+\om\cdot (2\ell+1)),\\
\delta(3k\ell 0^{2j}1\tau)&=(j+1,\om^2\cdot k+\om\cdot (2\ell)+2j+1),\\
\delta(3k\ell 0^{2j+1}1\tau)&=((-j-1)_\zeta,\om^2\cdot k+\om\cdot (2\ell)+2j+2),\\
\delta(4k\ell 0^\om)&=(\overline{\infty},\om^2\cdot k+\om\cdot (2\ell+2)),\\
\delta(4k\ell 0^{2j}1\tau)&=(\overline{j+1},\om^2\cdot k+\om\cdot (2\ell+1)+2j+1),\\
\delta(4k\ell 0^{2j+1}1\tau)&=((j+1)_\zeta,\om^2\cdot k+\om\cdot (2\ell+1)+2j+2).
\end{align*}
where $j,k,\ell\in\om$, and $\tau$ is an arbitrary finite string.
It is not hard to check that $\delta$ is open continuous.

For (4), we define a partial surjection $\delta:\subseteq\om^\om\to\mathcal{C}$ as follows.
\[
\begin{array}{ll}
\delta(0^\om)=(\infty,\infty),& \delta(00^jab\tau)=(j+a,j+b),\\
\delta(1n0^\om)=(n,\infty),&\delta(1n0^j1\tau)=(n,j).
\end{array}
\]
It is clear that the domain of $\delta$ is a closed subset of $\om^\om$.
It is also easy to check that $\delta$ is open continuous.
}

\propproof{prop:non-quasi-Polish-examples}{
(1)
Let $A$ be a (light-face) strictly co-analytic subset of Baire space, and let $A'$ be the same subset inside $(\om^\om)_{GH}$. Then $A'$ is closed in $(\om^\om)_{GH}$, so if $(\om^\om)_{GH}$ were quasi-Polish, then $A'$ as a subspace of $(\om^\om)_{GH}$ would be quasi-Polish, too. Thus, there would exist a continuous function $g : (\om^\om) \to (\om^\om)_{GH}$ such that $g(\om^\om) = A'$. Note that ${\rm id} : (\om^\om)_{GH} \to (\om^\om)$ is trivially continuous, and that $A = ({\rm id} \circ g)(\om^\om)$, i.e.~ $A$ is a continuous image of Baire space, hence analytic, which contradicts the choice of $A$ as a strictly co-analytic set.

(2)
We use a theorem by de Brecht \cite{debrecht8} showing that a $\Pi^1_1$-subspace of a quasi-Polish space is either quasi-Polish or contains one of four canonical counterexamples as $\Pi^0_2$-subspace (see Theorem \ref{thm:Hurewicz-dichotomy}). From its definition, it is easy to see that the canonic embedding of $\mathbb{N}_{\rm rp}$ into $\om^\om$ is as $\Sigma^0_3$-subspace. Moreover, $\mathbb{N}_{\rm rp}$ is Hausdorff, and the only Hausdorff space amongst de Brecht's four counterexamples is $\mathbb{Q}$. We thus arrive at: Either $\mathbb{N}_{\rm rp}$ is quasi-Polish or $\mathbb{Q}$ embeds as $\Pi^0_2$-subspace into $\mathbb{N}_{\rm rp}$.

Thus, it suffices to show that $\mathbb{Q}$ embeds into $\mathbb{N}_{\rm rp}$ as a $\Pi^0_2$-subspace.
Inductively choose $n(s)$ as a number satisfying $1 + \sum_{i<s} \prod_{k\leq n(i)} p_k < p_{n(s)}$.
Then, given $b = b_0b_1b_2\dots$ define
\[h(b) = 1 + \sum_{i} b_i \prod_{k\leq n(i)} p_k.\]

\begin{claim}
$h$ is a computable embedding of the dyadic rationals into $\mathbb{N}_{\rm rp}$.
\end{claim}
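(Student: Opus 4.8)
The plan is to extract from the gap condition defining $n$ an explicit \emph{recovery formula} that expresses each bit $b_i$ of $b$ in terms of the single integer $h(b)$, and then to read off from that formula both the injectivity and computability of $h$ and the two-sided matching of neighbourhoods with the Golomb topology. Write $(p_k)_{k\in\om}$ for the increasing enumeration of the primes, put $P_i=\prod_{k\le n(i)}p_k$, and let $\mathbb D$ denote the space of dyadic rationals, presented as the finitely supported elements of $2^\om$ with the representation inherited from $2^\om$; this is a computably presented, countable, metrizable space without isolated points, hence a copy of $\mathbb Q$, and $h$ plainly takes values in $\mathbb{Z}_+$ since $h(b)\ge 1$. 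Note first that the defining inequality $1+\sum_{i<s}P_i<p_{n(s)}$ forces, for $s\ge 1$, $p_{n(s)}>P_{s-1}\ge p_{n(s-1)}$, so $n$ is automatically strictly increasing; consequently $p_{n(N)}\mid P_i$ for all $i\ge N$, and moreover $(P_i)_i$ is super-increasing, $P_i\ge p_{n(i)}>\sum_{j<i}P_j$.

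The key lemma I would prove is: for every finitely supported $b\in 2^\om$ and every $N\in\om$,
\[ h(b)\bmod p_{n(N)}=1+\sum_{i<N}b_iP_i. \]
Indeed, in $h(b)=1+\sum_i b_iP_i$ every summand with $i\ge N$ is divisible by $p_{n(N)}$, while the residual quantity $1+\sum_{i<N}b_iP_i$ lies in $[1,p_{n(N)})$ by the gap condition, so it is exactly the least nonnegative residue. Two consequences follow at once. First, $b_N=\bigl((h(b)\bmod p_{n(N+1)})-(h(b)\bmod p_{n(N)})\bigr)/P_N$, so $h$ is injective and $h^{-1}$ is computable on the image (iterating over $N$ reproduces all bits of $b$, which are eventually $0$). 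Second, $h(b)\bmod p_{n(N)}\in\{1,\dots,p_{n(N)}-1\}$ is coprime to the prime $p_{n(N)}$, so that $h(b)+p_{n(N)}\mathbb Z$ (intersected with $\mathbb{Z}_+$) is a \emph{basic} open neighbourhood of $h(b)$ in $\mathbb{N}_{\rm rp}$.

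It remains to match topologies effectively. For computability/continuity of $h$: given a basic neighbourhood $h(b)+c\mathbb Z$ of $h(b)$ with $\gcd(h(b),c)=1$, choose $N$ — computably from $c$ — so large that $p_{n(N)}$ exceeds every prime factor of $c$; then $c\mid P_i$ for all $i\ge N$, so any finitely supported $b'$ with $b'\upto N=b\upto N$ satisfies $h(b')-h(b)=\sum_{i\ge N}(b_i'-b_i)P_i\equiv 0\pmod c$, i.e.\ $h(b')\in h(b)+c\mathbb Z$. For openness of $h$ onto its image: by the recovery formula (and super-increasingness, which makes $\sum_{i<N}b_iP_i$ determine $b\upto N$), a finitely supported $b'$ satisfies $b'\upto N=b\upto N$ iff $h(b')\equiv h(b)\pmod{p_{n(N)}}$; hence $h$ carries the basic neighbourhood $\{b':b'\upto N=b\upto N\}$ of $b$ exactly onto $h[\mathbb D]\cap\bigl(h(b)+p_{n(N)}\mathbb Z\bigr)$, which by the previous paragraph is an open subset of the subspace $h[\mathbb D]$, and this is uniform in $N$. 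Combining the two paragraphs gives that $h$ is a computable embedding. The main obstacle is bookkeeping rather than conceptual: one must see that the gap condition makes $h(b)\bmod p_{n(N)}$ depend on exactly the first $N$ bits of $b$ and nothing more, and that these residues are automatically coprime to $p_{n(N)}$, so that the congruence classes one exhibits are genuinely open in the Golomb topology and not merely in the discrete topology on $\mathbb{Z}_+$; once the recovery formula is in place, injectivity, computability of $h$ and $h^{-1}$, and both halves of the topological matching all fall out of it, and the same formula is what one would then use to check that $h[\mathbb D]$ is a $\Pi^0_2$-subspace of $\mathbb{N}_{\rm rp}$.
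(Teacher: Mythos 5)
Your write-up follows essentially the same route as the paper's proof: your recovery formula $h(b)\bmod p_{n(N)}=1+\sum_{i<N}b_iP_i$ is exactly the mechanism behind the paper's inductive computation of $b_s$ (querying whether $h(b)\equiv h(b)[s]$ or $h(b)[s]+r(s)$ modulo the prime $p_{n(s+1)}$, both residues being coprime to that prime because they lie in $[1,p_{n(s+1)})$), and your forward direction is the paper's reduction of the value of $h(b)$ modulo a given modulus to finitely many bits of $b$. The extra bookkeeping you supply (injectivity, openness onto the image, uniformity in $N$) is fine and only uses the prime moduli $p_{n(N)}$.

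There is, however, one step that fails as literally stated, and it is worth flagging even though the paper's own proof contains the same simplification: you claim that once $p_{n(N)}$ exceeds every prime factor of $c$ one has $c\mid P_i$ for all $i\ge N$. Since each $P_i$ is a primorial, hence squarefree, this holds only for squarefree $c$. The relatively prime integer topology has basic open sets $a+c\mathbb{Z}$ for \emph{every} $c$ with $\gcd(a,c)=1$, e.g.\ $1+4\mathbb{Z}$; and since $P_i\equiv 2\pmod 4$ for all $i$, one gets $h(b)\bmod 4=1+2\varepsilon$ where $\varepsilon$ is the parity of the number of ones of $b$, which is not determined by any finite prefix of $b$. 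So the reduction to finitely many bits breaks down precisely for non-squarefree moduli, and this affects the continuity/computability of $h$ itself (the inverse/recovery direction, which only uses the primes $p_{n(N)}$, is unaffected). To make the forward direction airtight you would have to modify the weights so that every fixed modulus divides all but finitely many of them (e.g.\ build growing prime powers into $P_i$ while keeping the gap condition $1+\sum_{i<s}P_i<p_{n(s)}$, which is all the recovery argument needs), or else justify why the squarefree-modulus sets suffice --- and they do not generate the full topology, as the $1+4\mathbb{Z}$ example shows. In short: same approach as the paper, correct for prime and squarefree moduli, but the step handling arbitrary basic neighbourhoods needs repair in both your version and the paper's.
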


For computability of $h$, to check whether $h(b) \equiv u$ mod $v$, let $p_k$ be the largest prime factor of $v$. Then there is $s$ such that $k\leq n(s)$.
   Then, to compute the value of $h(b)$ mod $v$, we only need to check the first $s$ terms of $h(b)$.

For computability of $h^{-1}$, given $h(b)$, inductively assume that we have already computed $b_0,b_1,\dots, b_{s-1}$.
 To compute $b_s$, let $r_i = \prod_{k\leq n(i)} p_k$.
 We have already computed $h(b)[s] := 1 + \sum_{i<s} b_i r_i$.
By our choice of $n(s+1)$ we have $h(b)[s] + r(s) < p_{n(s+1)}$. Hence, since $p_{n(s+1)}$ is prime, we have
        $\gcd ( h(b)[s] , p_{n(s+1)} ) = 1$,  and    $\gcd ( h(b)[s] + r(s) , p_{n(s+1)} ) = 1$.
Thus, to the relatively prime integer topology, we can ask whether
        $h(b) \equiv h(b)[s]$ mod $p_{n(s+1)}$,  or   $h(b) \equiv h(b)[s] + r(s)$ mod $p_{n(s+1)}$.
If the former holds, then $b_s=0$, and if the latter holds, then $b_s=1$.

(3)
By Theorem \ref{thm:Hurewicz-dichotomy}, it suffices to show that $\om_{\rm cof}$ embeds into the maximal antichain space $\mathcal{A}_{\rm max}^{\rm co}$ as a $\Pi^0_2$-subspace.
For that we map $n\in\mathbb{N}$ to the complement of the set of all strings of length $n$.
It is not hard to check that this gives indeed an embedding. To see that the range is $\Pi^0_2$, note that we can define it as \emph{for any pair of words of distinct length, at least one of them belongs to the complement of the antichain}.
}

\section{Cs-networks and non-second-countability}\label{sec:cs-networks}

In this section, we develop techniques which will be used in the next section.
As we mentioned repeatedly, one can develop computability theory on some non-second-countable spaces (without using notions from higher computability theory such as $\alpha$-recursion, $E$-recursion, infinite time Turing machines, etc.)
To explain this idea, we consider the following basic notion in general topology (originally introduced by Arhangel'skii in 1959).

\begin{definition}\label{def:network-strict-network}
Let $\xx$ be a topological space, and $\nn$ be a collection of subsets of $\xx$.
We say that $\nn$ is a {\em network at a point $x\in\xx$} if for any open neighborhood $U$ of $x$, there is $N\in\nn$ such that $x\in N\subseteq U$.
Moreover, if $\nn$ is a network at $x$, and if $x\in N$ holds for all $N\in\nn$, then we also say that $\nn$ is a {\em strict network at $x$}.
\end{definition}

We now consider a space $\xx$ which has no countable basis, but has a countable network $\nn=(N_e)_{e\in\om}$.
Recall that by the ``{\em degree of a point $x$}'' in a (represented) cb$_0$ space, we meant the {\em degree of difficulty of enumerating a neighborhood basis of $x$}.
However, if a space is non-second-countable, there may be no $\om$-step enumeration of a neighborhood basis of a point.
Instead, we consider the {\em degree of difficulty of enumerating a strict subnetwork of $\nn$ at }$x\in\xx$.
That is, we consider the following representation:
\[p\mbox{ is a name of }x\iff\{N_{p(n)}:n\in\om\}\mbox{ is a strict network at }x.\]

Now, the induced computability theory on $\xx$ heavily depends on the choice of a network $\nn$.
Of course, the same was true for a basis representation.
But the situation regarding a network is worse than the case of a basis.
On the one hand, one can always recover the topology on $\xx$ from a basis, and thus, any representations yield the same computability natures relative to some oracle.
On the other hand, a network does not memorize information on topology, and thus, the computability structure induced from a network can be almost arbitrary.
In summary, the notion of a network is too weak, and therefore, we need a more restrictive notion.

A number of variants of a network have been extensively studied in general topology (see \cite{Gr84,MNbook,LY16}).
Schr\"oder \cite{Sch03,schroder} clarified that the following variant captures the territory of computability theory.

\begin{definition}[Guthrie \cite{Guth}]
A {\em cs-network} $\nn$ for a topological space $\xx$ is a collection of subsets of $\xx$ such that, for any open set $U\subseteq\xx$, if a sequence $(x_n)_{n\in\omega}$ converges to $x\in U$, then there are $N\in\nn$ and $n_0\in\omega$ such that
\[\{x\}\cup\{x_n:n\geq n_0\}\subseteq N\subseteq U.\]
\end{definition}

Here, ``cs'' stands for ``convergent sequence''.
The following implications are clear:
\[\mbox{basis }\Longrightarrow\mbox{ cs-network }\Longrightarrow\mbox{ network.}\]

For notational simplicity, in this article, we assume that a {\em cs-network $\nn$ always contains the whole space}, that is, $\xx\in\nn$.
Schr\"oder \cite{Sch03,schroder} showed that a topological space $\xx$ has an admissible representation if and only if $\xx$ is a $T_0$ space with a countable cs-network.
Since then, the notion of a countable cs-network have become a key notion in the context of {\em a convenient category of domains} \cite{ELS04,BSS06,BSS07}\footnote{In \cite{Sch03,schroder,ELS04,BSS07}, a cs-network is called a {\em pseudobase} or a {\em sequential pseudobase}.}.

More explicitly, if $\nn=(N_e)_{e\in\om}$ is a countable cs-network for a $T_0$ space $\xx$, recall from Section \ref{sec:intro-admissible-representation} that the induced $\om^\om$-representation of $\xx$ from $\nn$ is given as follows:
\[\dnn(p)=x\iff \{N_{p(n)}:n\in\om\}\mbox{ is a strict network at }x.\]

This map $\delta_{\nn}$ always gives an admissible representation of $\xx$.
Note that the convention $\xx\in\nn$ makes it possible for $p$ to output no information at each stage.
Recall from Section \ref{sec:intro-admissible-representation} the definition of reducibility $\reduce$.
By $\pt{y}{\yy}\reduce\pt{x}{\xx}$ we mean that there is a partial computable function which, given an $\xx$-name of $x$, returns a $\yy$-name of $y$.

\begin{obs}\label{obs:network-e-basis}
Let $\xx=(X,\beta)$ be a represented cb$_0$ space, and $\yy=(Y,\nn)$ be a topological space with a countable cs-network.
Then $\pt{y}{\yy}\reduce\pt{x}{\xx}$ if and only if there is $J\leq_e\nbaseb{\xx}{x}$ such that $\{N_e:e\in J\}$ is a strict network at $y$.
\end{obs}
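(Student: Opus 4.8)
The plan is to prove Observation \ref{obs:network-e-basis} by unwinding both sides of the equivalence to the level of enumeration operators and enumerations of the relevant sets. Recall that, by the convention $\yy\in\nn$, a $\dnn$-name of $y$ is exactly an enumeration $p$ of a set $J_p=\{N_e: e\in\rng(p)\}$ which is a strict network at $y$; since we may always pad with the index of $\yy$ itself, the condition "$\rng(p)$ enumerates a set $J$ such that $\{N_e:e\in J\}$ is a strict network at $y$" is precisely the condition that $p$ is a $\yy$-name of $y$ (we never need $\rng(p)$ to be infinite). On the other side, recall from Section \ref{sec:change-repres}/Section \ref{intro:rep-sp-00} that a $\beta$-name of $x$ is an enumeration of $\nbaseb{\xx}{x}$, and that for represented cb$_0$ spaces, $\name{x}\leq_M\name{y}\iff\nbase{x}\leq_e\nbase{y}$. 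So the two directions of the claimed equivalence are, respectively, a statement about the existence of a Medvedev reduction $\nameb{\yy}{y}\leq_M\nameb{\xx}{x}$ and a statement about the existence of an enumeration operator.

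First I would prove the direction "$\Leftarrow$". Suppose $J\leq_e\nbaseb{\xx}{x}$ via an enumeration operator $\Psi$, and $\{N_e:e\in J\}$ is a strict network at $y$. Given any $\xx$-name $q$ of $x$ (i.e.\ an enumeration of $\nbaseb{\xx}{x}$), the operator $\Psi$ transforms $q$ into an enumeration $p$ of $J$. Then $p$ lists (indices of) a collection of sets that is a strict network at $y$, so by the convention above $p$ is a valid $\dnn$-name of $y$. This transformation is uniformly computable, hence $\pt{y}{\yy}\reduce\pt{x}{\xx}$. The only subtlety to spell out is that we must ensure $p$ is a total element of $\om^\om$ even though $\Psi$ enumerates $J$ at an unpredictable rate — this is handled in the usual way, interleaving the output of the padding index of $\yy$ (or outputting $0$-steps, matching the "output no information at each stage" convention) so that $p$ is always defined and its range is exactly $J$.

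Next, the direction "$\Rightarrow$". Suppose $F$ is a partial computable function witnessing $\pt{y}{\yy}\reduce\pt{x}{\xx}$: for every enumeration $q$ of $\nbaseb{\xx}{x}$, $F(q)$ is a $\dnn$-name of $y$, i.e.\ an enumeration of some set $J_q$ with $\{N_e:e\in J_q\}$ a strict network at $y$. The point is that the set $J$ we want should not depend on $q$. The natural candidate is $J=\{e\in\om: (\exists q\text{ enumerating }\nbaseb{\xx}{x})\ e\in\rng(F(q))\}$; this is visibly $e$-below $\nbaseb{\xx}{x}$ — enumerate all finite subsets $D\subseteq\nbaseb{\xx}{x}$, run $F$ for finitely many steps on all finite extensions, and collect whatever $F$ outputs — giving an explicit enumeration operator. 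It remains to check $\{N_e:e\in J\}$ is a strict network at $y$: it is a strict network because every $e\in J$ comes from some genuine name $F(q)$ of $y$, so $y\in N_e$; and it is a network at $y$ because for a fixed enumeration $q_0$ of $\nbaseb{\xx}{x}$, already $\rng(F(q_0))\subseteq J$ and $\{N_e:e\in\rng(F(q_0))\}$ is a network at $y$, and enlarging a network keeps it a network. This last observation is the crux, and I expect it to be the main (though still mild) obstacle: one must be careful that "network at $y$" is preserved under supersets — which it is, since the defining property only requires the existence of a suitable $N\in\nn$ inside each neighborhood — and that "$y\in N$ for all $N$" (strictness) is not destroyed, which is why we must take $J$ to consist only of indices that actually occur as outputs of $F$ on honest names of $x$, rather than, say, all indices output by $F$ on arbitrary inputs. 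Assembling these pieces gives the equivalence.
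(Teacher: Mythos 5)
Your proposal is correct and follows essentially the same route as the paper: the backward direction is the trivial composition of the enumeration operator with a name of $x$, and for the forward direction you take $J$ to be the collection of all indices output by the reduction on (fragments of) names of $x$, check the network property by noting it is preserved under enlargement, and secure strictness by insisting the outputs come from honest names. The only cosmetic difference is that the paper defines $J$ syntactically via finite strings $\tau$ with $\rng(\tau)\subseteq\nbaseb{\xx}{x}$ and then proves strictness by extending such $\tau$ to a genuine name, whereas you define $J$ semantically via genuine names and need the same extension-plus-continuity observation when arguing that your enumeration operator outputs exactly $J$; these are the same argument placed at different points.
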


\begin{proof}
The ``if'' direction is obvious.
Assume that $\pt{y}{\yy}\reduce\pt{x}{\xx}$.
Then, there is a computable function $\Phi$ such that if $p$ enumerates $\nbaseb{\xx}{x}$, then $\Phi(p)$ enumerates a strict subnetwork of $\nn$ at $y$.
We define $\Psi$ as follows:
\[\langle e,{\rm rng}(\tau)\rangle\in\Psi\iff(\exists n)\;\Phi(\tau)(n)\downarrow=e.\]
Clearly, $\Psi$ is c.e.
Then, we define $J=\Psi(\nbaseb{\xx}{x})$.
We claim that $\{N_e:e\in J\}$ is a strict network at $y$.
For any open neighborhood $U$ of $y$, if $p$ enumerates $\nbaseb{\xx}{x}$, then $\Phi(p\upto s)(n)$ must output $e$ such that $y\in N_e\subseteq U$ for some $e,n,s\in\om$.
Since ${\rm rng}(p\upto s)\subseteq\nbaseb{\xx}{x}$, we have $e\in J$.
Therefore, $\{N_e:e\in J\}$ is a network at $y$.
For strictness, suppose that $y\not\in N_e$ for some $e\in J$.
Then, there are $\tau,n$ such that ${\rm rng}(\tau)\subseteq\nbaseb{\xx}{x}$ and $\Phi(\tau)(n)\downarrow=e$.
Clearly, $\tau$ can be extended to an $\xx$-name $p$ of $x$; however, we have $y\not\in N_{\Phi(p)(n)}$, and thus $\Phi(p)$ is not an enumeration of a strict network at $y$, which is a contradiction.
\end{proof}




%
%
%
%

\subsection{Regular-like networks and closure representation}

For a topological space $\xx$ with a countable network $\nn$, we introduce a new represented space with the underlying space $\xx$ names of whose points are given by a sequence of closures of network elements whose intersection captures the point.
Formally, we define that $p$ is a $\overline{\dnn}$-name of $x$ if and only if
\[\{N_{p(n)}:n\in\om\}\mbox{ is a network at $x$, and }x\in\overline{N_{p(n)}}\mbox{ for all $n\in\om$.}\]

Here, we do not require $\{N_{p(n)}:n\in\om\}$ to be strict, that is, $x\not\in N_{p(n)}$ can happen, while we always have $x\in\overline{N_{p(n)}}$.
At first glance this definition may look very strange; however, we will later see that this is a very useful technical notion.
In this section, we investigate how $(\xx,\dnn)$ and $(\xx,\overline{\dnn})$ are related, and we will show the following.

\begin{theorem}
Assume that $\xx$ is a topological space with a countable cs-network.
\begin{enumerate}
\item If $\xx$ is regular and Hausdorff, then $\xx$ has a countable cs-network $\nn$ such that $(\xx,\dnn)$ is isomorphic to $(\xx,\overline{\dnn})$.
\item There is a non-regular Hausdorff space $\xx$ which has a countable cs-network $\nn$ such that $(\xx,\dnn)$ is isomorphic to $(\xx,\overline{\dnn})$.
\end{enumerate}
\end{theorem}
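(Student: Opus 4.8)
The plan is to analyze separately the two named assertions. For part (1), the key point is that the definition of $\overline{\dnn}$ replaces membership in network elements by membership in their closures, so the two representations agree precisely when one can control closures. First I would observe that in a regular Hausdorff space $\xx$ with a countable cs-network $\nn_0$, every neighborhood of a point contains the closure of a smaller neighborhood (by regularity), and more importantly, if $(x_n)_n$ converges to $x$ then by passing to a tail one may assume all $x_n$ and $x$ lie inside a closed neighborhood contained in a given open set. Using this, I would build a countable cs-network $\nn$ consisting (essentially) of sets of the form $N$ together with enough ``closed-padded'' versions so that: (a) $\nn$ is still a cs-network, and (b) for each $N\in\nn$ we have $\overline{N}\in\nn$ as well, and moreover the closures of network elements still form a cs-network. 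Concretely, since $\xx$ is second-countable-like only up to the cs-network, the right move is to take $\nn$ to be the closures $\{\overline{N_0}:N_0\in\nn_0\}$ together with $\nn_0$ itself, and check that this remains a countable cs-network — this uses that $\xx$ is regular (so closures of small sets are still small) and Hausdorff (so strict networks pin down points; in a Hausdorff space with a cs-network, a network all of whose members contain $x$ and which refines every neighborhood of $x$ forces a unique point, and the same is true if all members merely have $x$ in their closure, because $\bigcap_n \overline{N_{p(n)}}=\{x\}$ when the $N_{p(n)}$ form a network at $x$ in a Hausdorff space). Then I would exhibit the isomorphism $(\xx,\dnn)\to(\xx,\overline{\dnn})$ explicitly: given a $\dnn$-name $p$, the same sequence (reindexed so each $N_{p(n)}$ is replaced by itself, which has $x$ in its closure since $x\in N_{p(n)}$) is a $\overline{\dnn}$-name, computably; conversely, given a $\overline{\dnn}$-name $q$, where each $N_{q(n)}$ has $x\in\overline{N_{q(n)}}$ and $\{N_{q(n)}\}$ is a network at $x$, one uses regularity to replace, uniformly and computably, each $N_{q(n)}$ by a network element $N'$ with $x\in N'\subseteq N_{q(n)}$ — here the cs-network property of $\nn$ is what licenses the computable search, since one enumerates candidates and waits for the defining inclusion to become apparent along the name. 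The verification that both directions are realizers reduces to the standard admissibility/topological bookkeeping of Schröder's representations.

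The hard part of (1) will be making the ``replace $N_{q(n)}$ by a smaller network element containing $x$'' step genuinely computable rather than merely relatively computable: one has only a name of $x$, not $x$ itself, so the search for a witnessing inclusion $x\in N'\subseteq N_{q(n)}$ must be driven by finitely much of the name at a time, which is exactly what the cs-network condition (applied to constant sequences, or to convergent sequences extracted from the name) provides. I expect this to require a careful statement of a lemma of the form ``in a regular Hausdorff space with a countable cs-network, the map sending a $\overline{\dnn}$-name to a $\dnn$-name is computable,'' proved by the same kind of argument as in Observation~\ref{obs:network-e-basis}, tracking which network indices are forced by initial segments of the name.

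For part (2), the task is to produce a single explicit non-regular Hausdorff example where the two representations still coincide. The natural candidates are the spaces already constructed in the paper that are Hausdorff but not $T_{2.5}$ (hence certainly not regular), namely $\mathcal{QA}^\om$, $\mathcal{QR}^\om$, $(\mathbb{N}_{\rm rp})^\om$, or a double-origin-type space; but for these the point is that one must choose the cs-network $\nn$ cleverly. The cleanest route, I think, is to take a space built from an extension topology or a telophase/double-origin construction where the ``bad'' points (the ones witnessing non-regularity) are exactly points that already appear in the closures of the relevant network elements in a benign way, so that no information is lost in passing to closures. I would take $\xx = (\hat{\om}_{TP})^\om$ — wait, that is only $T_1$; instead take the double origin space $(\mathcal{P}_{DO})^\om$ or $(\mathcal{Q}_{DO})^\om$, which is $T_2$ but not $T_{2.5}$, hence not regular, and equip it with the natural cs-network coming from its open subbasis together with the closures of subbasic sets; then one checks that for each point $x$, a sequence of subbasic neighborhoods forming a network at $x$ already has $x$ in the closure of each, and conversely that any ``closure network'' at $x$ can be refined computably to a genuine network at $x$ because the subbasic open sets of $(\mathcal{Q}_{DO})^\om$ are concrete enough (products of order-intervals with a rule at the two origins) that the inclusion $x\in N'\subseteq \overline{N}$ is decidable from finite data. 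The main obstacle for (2) is verifying that the chosen $\nn$ is genuinely a cs-network (not just a network) and that the refinement step is computable without regularity — this must be done by hand for the specific space, exploiting that the only failure of regularity is localized at the doubled origin and does not interfere with refining neighborhoods of any individual point.

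Overall, the proof is a ``soft'' topological-computability argument for (1) plus a concrete construction for (2); I expect Step (1)'s computable refinement lemma to be where most of the genuine work lies, with (2) being a matter of choosing the example and patiently checking the cs-network and computability conditions.
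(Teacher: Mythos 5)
There is a genuine gap, and it sits exactly where you predicted the main work would be: the converse realizer in part (1). Given a $\overline{\dnn}$-name $q$ of $x$, you propose to ``replace, uniformly and computably, each $N_{q(n)}$ by a network element $N'$ with $x\in N'\subseteq N_{q(n)}$,'' waiting for the inclusion to ``become apparent along the name.'' This step fails for two reasons. First, such an $N'$ need not exist: in a $\overline{\dnn}$-name the point $x$ is only required to lie in $\overline{N_{q(n)}}$, not in $N_{q(n)}$, so $N_{q(n)}$ may contain no set containing $x$ at all. Second, even when $x\in N_{q(n)}$, a closure-name carries no finite certificate for a statement of the form ``$x\in N'$'' with $N'$ small; the only membership facts certifiable from finitely much of the name are of the form $x\in\overline{N_{q(s)}}$, hence $x\in M$ whenever $M\supseteq\overline{N_{q(s)}}$. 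So the correct realizer must output \emph{supersets} of closures of named elements, not subsets of named elements. This is precisely how the paper proceeds (Theorem \ref{thm:regular-like-network}): one enlarges the network to $\mm=\nn\cup\overline{\nn}$ (your first move is the same) and then lets the realizer enumerate every index $e$ with $\overline{M_{q\upto s}}\subseteq M_e$ for some $s$; membership $x\in M_e$ is then automatic, and the fact that these sets form a network at $x$ uses exactly that $\nn$ is \emph{regular-like} (which every network of a regular space is, Observation \ref{obs:regular-like-network}), since the closures $\overline{M_{q(n)}}$ themselves form a network at $x$ and lie in $\mm$. Note also that this only yields a continuous realizer in general (the containment relation between network elements need not be c.e.), which is all the statement needs; your claim of computability is an overreach.

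Part (2) as proposed does not work either. Your concrete witness --- the double origin space with the cs-network consisting of the (sub)basic open sets together with their closures --- provably fails to satisfy $(\xx,\dnn)\cong(\xx,\overline{\dnn})$: the basic neighborhoods $U_k$ of the origin $\mathbf{0}$ form a strict network at $\mathbf{0}$, but each closure $\overline{U_k}$ picks up points $(x,0)$ of the axis and therefore is contained in no $U_j$, so this network is not regular-like at $\mathbf{0}$; by the converse direction of Theorem \ref{thm:regular-like-network}, the identity $(\xx,\overline{\dnn})\to(\xx,\dnn)$ is then not even continuous. (Your intended ``hand verification'' repeats the same backwards inclusion $x\in N'\subseteq\overline{N}$ as in part (1).) The paper's example is different and clean: the Kleene--Kreisel space $\N^{\N^\N}$ (Example \ref{exa:Kleene-Kreisel}), which is Hausdorff but not regular, admits the countable cs-network $N_{\sigma,n}=\{f:(\forall y\succ\sigma)\,f(y)=n\}$ whose members are \emph{closed}; for a closed network a $\overline{\dnn}$-name is literally a $\dnn$-name, so the two representations coincide with no further argument. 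If you want to salvage a double-origin-type example you would first have to produce a countable \emph{closed} (or at least regular-like) cs-network for it, which the basis-plus-closures family is not.
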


Before proving this theorem, we have to warn the reader that $\overline{\dnn}$ may be a multi-representation in general (as studied e.g.~by Weihrauch in \cite{weihrauchg}), that is, a single $p$ can be a name of many points.
For instance, if $\nn$ is an open network (i.e.\ basis) of $(\om^\om)_{\rm co}$, then ${\rm id}:\om\to\om$ is an $\overline{\dnn}$-name of any point $x\in(\om^\om)_{\rm co}$.
Then, when does $p$ determine a single point $x$?
It is only if $\{\bigcap_n\overline{N_{p(n)}}\}$ is a strict network at $x$.
We first check that it is always true if $\xx$ is Hausdorff.

\begin{obs}\label{obs:closure-representation-1}
If a Hausdorff space $\xx$ has a countable network $\nn$, then $\overline{\dnn}$ is a (single-valued) representation of $\xx$.
\end{obs}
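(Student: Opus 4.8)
\textbf{Proof proposal for Observation~\ref{obs:closure-representation-1}.}

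The plan is to show that for a Hausdorff space $\xx$ with countable network $\nn$, whenever $p\in\om^\om$ is a $\overline{\dnn}$-name of $x$, then $x$ is the unique point so named; that is, $\{\bigcap_n\overline{N_{p(n)}}\}$ is in fact a strict network at $x$, and no point $y\neq x$ can also satisfy the defining condition for $p$. First I would fix $p$ with $x$ among the points it names, so $\{N_{p(n)}:n\in\om\}$ is a network at $x$ and $x\in\overline{N_{p(n)}}$ for every $n$. The core claim is $\{x\}=\bigcap_n\overline{N_{p(n)}}$. The inclusion $\subseteq$ is immediate from $x\in\overline{N_{p(n)}}$ for all $n$. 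For $\supseteq$, suppose toward a contradiction that some $y\neq x$ lies in $\bigcap_n\overline{N_{p(n)}}$. Since $\xx$ is Hausdorff, pick disjoint open sets $U\ni x$ and $V\ni y$. Because $\{N_{p(n)}:n\in\om\}$ is a network at $x$, there is $n_0$ with $x\in N_{p(n_0)}\subseteq U$. Then $\overline{N_{p(n_0)}}\subseteq\overline{U}$, but $\overline{U}\cap V$ need not be empty, so the naive argument fails at this point — this is the step I expect to be the main (though still minor) obstacle, and the fix is to use disjointness of $U$ and $V$ directly rather than passing to closures: since $N_{p(n_0)}\subseteq U$ and $U\cap V=\emptyset$, we have $N_{p(n_0)}\cap V=\emptyset$, whence $V$ is an open neighborhood of $y$ disjoint from $N_{p(n_0)}$, contradicting $y\in\overline{N_{p(n_0)}}$. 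This establishes $\{x\}=\bigcap_n\overline{N_{p(n_0)}}$.

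From this the observation follows: if $p$ is a $\overline{\dnn}$-name of both $x$ and $y$, then applying the claim to $x$ gives $\{x\}=\bigcap_n\overline{N_{p(n)}}$, and $y\in\overline{N_{p(n)}}$ for all $n$ forces $y\in\bigcap_n\overline{N_{p(n)}}=\{x\}$, so $y=x$. Hence $\overline{\dnn}$ assigns at most one point to each name, i.e.\ it is single-valued. Surjectivity onto $\xx$ is clear since $\nn$ is a network: for any $x$, enumerate (a subnetwork of) those $N\in\nn$ with $x\in N$, which automatically satisfies $x\in N\subseteq\overline{N}$, giving a $\overline{\dnn}$-name of $x$. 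Therefore $\overline{\dnn}$ is a (single-valued) representation of $\xx$, as claimed.

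I should note one subtlety to check along the way: the defining condition for a $\overline{\dnn}$-name requires $\{N_{p(n)}:n\in\om\}$ to be a \emph{network at $x$} in addition to $x\in\overline{N_{p(n)}}$ for all $n$; the Hausdorff argument above uses exactly the network property at $x$ to locate a small $N_{p(n_0)}\subseteq U$, so both halves of the definition are genuinely used. No countability or cs-network hypothesis beyond "countable network" is needed for this particular observation, which matches the statement; the cs-network hypothesis and regularity will only become relevant for the subsequent results comparing $\dnn$ and $\overline{\dnn}$.
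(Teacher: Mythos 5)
Your proof is correct and follows essentially the same route as the paper: the paper invokes the standard characterization that in a Hausdorff space each point equals the intersection of the closures of its open neighborhoods and combines it with the network property, while you simply unfold that characterization via the disjoint-open-sets argument at $N_{p(n_0)}\subseteq U$. Your additional remark on surjectivity is fine but not needed, since the observation only concerns single-valuedness.
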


\begin{proof}
Note that $\xx$ is Hausdorff if and only if every point in $\xx$ can be written as the intersection of all its closed neighborhoods.
For a collection $\mm$ of subsets of $\xx$, we write $\overline{\mm}=\{\overline{M}:M\in\mm\}$.
Thus, $\{x\}=\bigcap\overline{\mathcal{O}_x}$, where $\mathcal{O}_x$ is the set of all open neighborhoods of $x$.
Now, assume that $p$ is a $\overline{\dnn}$-name of $x_0,x_1\in\xx$.
Then, for each $i<2$, $\nn_p=\{N_{p(n)}:n\in\om\}$ forms a network at $x_i$, and therefore $x_i\in\bigcap\overline{\nn_p}\subseteq\bigcap\overline{\mathcal{O}_{x_i}}=\{x_i\}$.
Hence, $\bigcap\overline{\nn_p}=\{x_0\}=\{x_1\}$, which implies $x_0=x_1$.
\end{proof}

We should be careful that, even if $\xx$ is Hausdorff, $\{\overline{N_{p(n)}}:n\in\om\}$ is not necessarily a network at $x$, that is, there may exist an open neighborhood $U$ of $x$ such that $\overline{N_{p(n)}}\not\subseteq U$ for all $n\in\om$, while we eventually have $\bigcap_n\overline{N_{p(n)}}\subseteq U$.

Now, it is clear that the identity map ${\rm id}:(\xx,\dnn)\to(\xx,\overline{\dnn})$ is computable (in the sense of Section \ref{intro:rep-sp-00}), that is, there is a partial computable function $F$ such that $\overline{\dnn}(F(p))=\dnn(p)$ for any $\dnn$-name $p$ of a point in $\xx$.
Under a certain assumption on a network, we also have computability of its inverse.

\begin{obs}
If a topological space $\xx$ has a countable closed network $\nn$, then the identity map ${\rm id}:(\xx,\overline{\dnn})\to(\xx,\dnn)$ is computable.
\end{obs}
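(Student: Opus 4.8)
The statement to prove is: if a topological space $\xx$ has a countable closed network $\nn$, then the identity map ${\rm id}:(\xx,\overline{\dnn})\to(\xx,\dnn)$ is computable.

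The plan is to unfold the two representations and observe that, when every network element $N_e$ is already closed, the distinction between the two naming conditions collapses. Recall that a $\overline{\dnn}$-name of $x$ is a $p\in\om^\om$ such that $\{N_{p(n)}:n\in\om\}$ is a network at $x$ and $x\in\overline{N_{p(n)}}$ for every $n$, while a $\dnn$-name of $x$ is a $p$ such that $\{N_{p(n)}:n\in\om\}$ is a strict network at $x$, i.e.\ a network at $x$ with $x\in N_{p(n)}$ for every $n$. If $\nn$ consists of closed sets, then $\overline{N_{p(n)}}=N_{p(n)}$, so the side condition $x\in\overline{N_{p(n)}}$ is literally the side condition $x\in N_{p(n)}$. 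Hence every $\overline{\dnn}$-name of $x$ is already a $\dnn$-name of $x$.

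Concretely, I would take the realizer $F$ to be the identity partial function on $\om^\om$ (or, to be safe about domains, the identity restricted to $\dom(\overline{\dnn})$, which is trivially computable). First I would note that $\dom(\overline{\dnn})\subseteq\dom(\dnn)$: given $p$ with $\overline{\dnn}(p)=x$, the collection $\{N_{p(n)}:n\in\om\}$ is a network at $x$ and each $x\in\overline{N_{p(n)}}=N_{p(n)}$, so it is a strict network at $x$, whence $\dnn(p)=x$ as well. Then $\dnn(F(p))=\dnn(p)=x=\overline{\dnn}(p)$, which is exactly the condition for $F$ to realize the identity map $(\xx,\overline{\dnn})\to(\xx,\dnn)$. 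Since $F$ is the identity, it is computable, so the identity map is computable.

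There is essentially no obstacle here — the only thing to be careful about is the bookkeeping on domains of partial representations (ensuring we only claim $\dnn(F(p))=\overline{\dnn}(p)$ on $\dom(\overline{\dnn})$, where it genuinely holds) and the trivial remark that $\overline{N}=N$ for closed $N$ makes the two side-conditions coincide. One may also remark, for symmetry with the preceding observation that ${\rm id}:(\xx,\dnn)\to(\xx,\overline{\dnn})$ is always computable, that when $\nn$ is a closed network the two representations are in fact computably equivalent, i.e.\ $(\xx,\dnn)$ and $(\xx,\overline{\dnn})$ are computably isomorphic via the identity; but the asserted direction alone follows immediately from the identification $\overline{N_{p(n)}}=N_{p(n)}$.
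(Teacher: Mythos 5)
Your proof is correct and follows essentially the same route as the paper: since each $N_{p(n)}$ is closed, the condition $x\in\overline{N_{p(n)}}$ becomes $x\in N_{p(n)}$, so any $\overline{\dnn}$-name of $x$ is already a strict network at $x$, i.e.\ a $\dnn$-name, and the identity on $\om^\om$ is a computable realizer. The only differences are cosmetic (your explicit remarks on domains of the partial representations and on the resulting computable equivalence), which the paper leaves implicit.
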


\begin{proof}
Assume that $\xx$ has a countable closed cs-network $\nn$.
Let $p$ be a $\overline{\dnn}$-name of a point $x\in\xx$.
Since $\nn$ is closed, $\overline{\nn_{p(n)}}=\nn_{p(n)}$, and therefore, $\{{\nn_{p(n)}}:n\in\om\}$ forms a strict network at $x$.
Thus, $p$ is also a $\dnn$-name of $x$.
Hence, the identity map ${\rm id}:(\xx,\overline{\dnn})\to(\xx,\dnn)$ is computable.
%
%
\end{proof}

Recall that a topological space $\xx$ is {\em regular} if for any open neighborhood $U$ of a point $x\in\xx$, there is an open neighborhood $V$ of $x$ such that $x\in V\subseteq\overline{V}\subseteq U$.
It is equivalent to saying that if $\mm$ is a neighborhood basis at $x$, then so is $\overline{\mm}=\{\overline{N}:N\in\mm\}$.
We say that a network $\nn$ of $\xx$ is {\em regular-like} if for any $\mm\subseteq\nn$ and $x\in\xx$, if $\mm$ is a network at $x$, then so is $\overline{\mm}=\{\overline{N}:N\in\mm\}$.

It is clear that every closed network is regular-like.
The converse is not always true, but it is easy to see that a space has a closed network of cardinality $\kappa$ iff it has a regular-like network of cardinality $\kappa$.
In particular, if a space has a countable regular-like network, it is a $G_\delta$-space by Proposition \ref{prop:G_delta-equal-closed-network}.
However, it is unclear if every space with a regular-like cs-network always has a closed cs-network.

\begin{obs}\label{obs:regular-like-network}
Every network of a regular space is regular-like.
\end{obs}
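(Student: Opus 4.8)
The final statement to prove is Observation~\ref{obs:regular-like-network}: every network of a regular space is regular-like.

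The plan is to unwind the definitions and verify the required property directly. Recall that a network $\nn$ of $\xx$ is regular-like if whenever $\mm \subseteq \nn$ is a network at a point $x$, then $\overline{\mm} = \{\overline{N} : N \in \mm\}$ is also a network at $x$. So fix a regular space $\xx$, an arbitrary network $\nn$ of $\xx$, a point $x \in \xx$, and a subfamily $\mm \subseteq \nn$ which happens to be a network at $x$. We must show $\overline{\mm}$ is a network at $x$; that is, for every open neighborhood $U$ of $x$ there is $N \in \mm$ with $x \in \overline{N} \subseteq U$.

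First I would invoke regularity: given the open neighborhood $U$ of $x$, there is an open neighborhood $V$ of $x$ with $x \in V \subseteq \overline{V} \subseteq U$. Now apply the hypothesis that $\mm$ is a network at $x$ to the neighborhood $V$: there is $N \in \mm$ with $x \in N \subseteq V$. Taking closures, $\overline{N} \subseteq \overline{V} \subseteq U$, and clearly $x \in N \subseteq \overline{N}$. Hence $x \in \overline{N} \subseteq U$ with $\overline{N} \in \overline{\mm}$, which is exactly what is needed. Since $U$ was an arbitrary open neighborhood of $x$, $\overline{\mm}$ is a network at $x$; since $x$ and $\mm$ were arbitrary, $\nn$ is regular-like.

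This argument is entirely routine — there is no real obstacle, as it is just a two-line diagram chase through the definitions of regularity and of a network. The only point that requires a moment's care is that regularity is being used in its ``closed neighborhood base'' formulation (every open neighborhood of a point contains the closure of a smaller open neighborhood of that point), which is precisely the characterization recalled in the line immediately preceding the statement in the excerpt. Below is the proof.

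\begin{proof}
Let $\xx$ be regular, let $\nn$ be any network of $\xx$, let $x \in \xx$, and suppose $\mm \subseteq \nn$ is a network at $x$. We show $\overline{\mm} = \{\overline{N} : N \in \mm\}$ is a network at $x$. Let $U$ be an open neighborhood of $x$. By regularity of $\xx$, there is an open neighborhood $V$ of $x$ with $x \in V \subseteq \overline{V} \subseteq U$. Since $\mm$ is a network at $x$, there is $N \in \mm$ with $x \in N \subseteq V$. Then $x \in N \subseteq \overline{N} \subseteq \overline{V} \subseteq U$, so $\overline{N} \in \overline{\mm}$ witnesses $x \in \overline{N} \subseteq U$. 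As $U$ was arbitrary, $\overline{\mm}$ is a network at $x$. As $x$ and $\mm$ were arbitrary, $\nn$ is regular-like.
\end{proof}
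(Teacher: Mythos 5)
Your proof is correct and follows the same route as the paper's: use regularity to shrink the given neighborhood $U$ to an open $V$ with $\overline{V}\subseteq U$, then apply the network property of $\mm$ inside $V$ and take closures. No differences worth noting.
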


\begin{proof}
Let $\nn$ be a network for a regular space $\xx$, and let $\mm\subseteq\nn$ be a network at $x\in\xx$.
Given an open neighborhood $U$ of $x$, by regularity of $\xx$, there is an open neighborhood $V$ of $x$ such that $x\in V$ and $\overline{V}\subseteq U$.
Since $\mm$ is a network for $\xx$, there is $N\in\mm$ such that $x\in N\subseteq V$.
Therefore, $x\in\overline{N}\subseteq\overline{V}\subseteq U$.
This shows that $\overline{M}=\{\overline{N}:N\in\mm\}$ is a network at $x$ as well.
Consequently, $\nn$ is regular-like.
\end{proof}

A regular space which has a countable network is known as a {\em cosmic space}, and a regular space which has a countable cs-network is known as an {\em $\aleph_0$-space} (see \cite{Mic66,Guth}).

\begin{theorem}\label{thm:regular-like-network}
A topological space $\xx$ has a countable regular-like cs-network if and only if $\xx$ has a countable cs-network $\nn$ such that the identity map ${\rm id}:(\xx,\overline{\dnn})\to(\xx,\dnn)$ is continuous.
\end{theorem}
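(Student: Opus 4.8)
The plan is to prove the equivalence in Theorem~\ref{thm:regular-like-network} in both directions, treating the forward direction as essentially a packaging of the observations already established and the backward direction as the substantive content.

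\textbf{Forward direction.} Suppose $\xx$ has a countable regular-like cs-network $\nn$. As noted in the discussion preceding the theorem, a space has a closed network of a given cardinality if and only if it has a regular-like network of that cardinality; so replacing each $N\in\nn$ by $\overline{N}$ produces a countable \emph{closed} cs-network $\nn'=\overline{\nn}$ (one must check this family is still a cs-network: if $(x_n)$ converges to $x\in U$, regular-likeness of $\nn$ applied to the subfamily of $\nn$ that witnesses the cs-network property at the convergent sequence yields some $\overline{N}$ with $\{x\}\cup\{x_n:n\geq n_0\}\subseteq\overline{N}\subseteq U$; the only genuine point to verify is that one can first shrink $U$ to a smaller open set whose closure still lies in $U$, which needs regularity — but in fact one should argue directly from the regular-like hypothesis applied to the network-at-$x$ obtained by intersecting with a neighborhood filter, so some care is needed here). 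Once we have the closed cs-network $\nn'$, the observation stating ``if a topological space $\xx$ has a countable closed network $\nn$, then ${\rm id}:(\xx,\overline{\delta_{\nn}})\to(\xx,\delta_{\nn})$ is computable'' gives continuity (indeed computability) of ${\rm id}:(\xx,\overline{\delta_{\nn'}})\to(\xx,\delta_{\nn'})$, which is exactly what is required. So for the forward direction I would take $\nn$ itself (or $\overline{\nn}$) as the witnessing cs-network.

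\textbf{Backward direction.} Suppose $\nn$ is a countable cs-network for $\xx$ such that ${\rm id}:(\xx,\overline{\delta_{\nn}})\to(\xx,\delta_{\nn})$ is continuous. We must produce a countable regular-like cs-network. The natural candidate is again built from $\nn$, but refined using the given continuous realizer. Let $F:\subseteq\om^\om\to\om^\om$ be a continuous realizer of ${\rm id}:(\xx,\overline{\delta_{\nn}})\to(\xx,\delta_{\nn})$, so that whenever $p$ is a $\overline{\delta_{\nn}}$-name of $x$, $F(p)$ is a $\delta_{\nn}$-name of $x$, i.e.\ $\{N_{F(p)(n)}:n\in\om\}$ is a \emph{strict} network at $x$. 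The idea is: given a subfamily $\mm\subseteq\nn$ which is a network at $x$ with $x\in\overline{N}$ for all $N\in\mm$, an enumeration $p$ of (indices of) $\mm$ is a $\overline{\delta_{\nn}}$-name of $x$, so $F(p)$ enumerates a strict network at $x$ all of whose members lie ``inside'' what $p$ described. To turn this into a network $\nn^*$ that is provably regular-like, I would let $\nn^*$ consist of all finite intersections $\overline{N_{i_1}}\cap\dots\cap\overline{N_{i_k}}$ together with the $N_i$ themselves — or, more carefully, define $\nn^*$ so that membership of a point in an element of $\nn^*$ can be read off by the realizer $F$ from finite initial data. The key lemma to establish is: if $\mm\subseteq\nn^*$ is a network at $x$, then so is $\{\overline{M}:M\in\mm\}$. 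The proof of this lemma is where the continuity of $F$ does the work: given an open $U\ni x$, pick (via the cs-network/Hausdorff-type manipulation) a countable $\mm'\subseteq\mm$ that is a network at $x$, form a name $p$ from $\mm'$, and feed it to $F$; continuity means $F$ commits to some output index $e$ with $x\in N_e\subseteq U$ after reading only finitely much of $p$, hence after seeing only finitely many members of $\mm'$; the closures of those finitely many members, intersected, sit between $x$ and $U$ because each contains $x$ in its closure and... this is precisely the delicate point.

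\textbf{Main obstacle.} The hard part will be the backward direction's key lemma — extracting from the mere \emph{continuity} (not computability, and with no uniformity or effectivity assumed on $\nn$ beyond countability) of the realizer $F$ a genuine \emph{topological} statement about closures of network elements forming a network. The difficulty is that $F$ reading a finite prefix of a $\overline{\delta_{\nn}}$-name tells us what $F$ outputs, but an arbitrary $x$ lying in $\bigcap$ of the closures of the prefixed network elements need not lie in the output set $N_e$ unless we have arranged $\nn^*$ to close off exactly the right intersections; so the construction of $\nn^*$ must be reverse-engineered from $F$ so that the implication goes through, and one must simultaneously check $\nn^*$ remains a cs-network (not merely a network) and remains countable. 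I expect this will require the same style of argument as Observation~\ref{obs:network-e-basis} (pushing a finite-use condition through a realizer) combined with the characterization that $\xx$ has a closed network iff it has a regular-like network, and I would organize the write-up so that the forward direction and the easy half of the backward direction are dispatched via the already-proved observations, concentrating all new work in that one lemma.
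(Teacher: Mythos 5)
Your forward direction has a genuine gap at the very step you flag: you replace $\nn$ by $\overline{\nn}$ and hope this is still a countable cs-network, so that the observation about closed networks applies. But regular-likeness only says that closures of subfamilies which are networks \emph{at a point} are again networks at that point; it gives no control over capturing tails of convergent sequences, which is what the cs-property demands, and your attempted repair (shrinking $U$ to an open set whose closure stays inside $U$) is exactly a regularity assumption you do not have. Indeed the paper explicitly states, just before the theorem, that it is unclear whether a space with a regular-like cs-network has a closed cs-network, so this route cannot simply be cited or waved through. The paper's proof avoids the issue by \emph{enlarging} rather than replacing: it takes $\mm=\nn\cup\overline{\nn}$, which is automatically a countable cs-network because it contains $\nn$, and then constructs the realizer directly -- $h(p)$ enumerates every $e$ such that some already-enumerated element has its closure contained in $M_e$ -- verifying it is a realizer from regular-likeness of $\mm$ together with the fact that each $\overline{M_{p(n)}}$ is itself a member of $\mm$. (Your fallback ``take $\nn$ itself'' also does not follow from the closed-network observation, since $\nn$ is not closed.)

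For the backward direction you have the right tool (a continuous realizer commits to its outputs on finite prefixes, as in Observation \ref{obs:network-e-basis}), but you aim at a new network $\nn^*$ of finite intersections of closures and, as you admit, cannot prove the key lemma; you would also still owe a proof that $\nn^*$ is a countable cs-network. The paper's argument is shorter and sharper: it shows that $\nn$ \emph{itself} is regular-like. Given a strict subnetwork $\mm\subseteq\nn$ at $x$, any enumeration $p$ of $\mm$ is a $\overline{\dnn}$-name of $x$; if $\overline{\mm}$ failed to be a network at $x$, witnessed by an open $U\ni x$, then since $h(p)$ must be a $\dnn$-name of $x$, the realizer outputs, after reading only a finite prefix $p\upto s$, an index of some $N\subseteq U$. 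One then chooses a point $y\notin U$ lying in the closures of the prefix elements together with a $\overline{\dnn}$-name $q$ of $y$; the string $(p\upto s)\fr q$ is again a $\overline{\dnn}$-name, now of $y$, on which the realizer still produces that same $N\subseteq U\not\ni y$, contradicting the fact that $\dnn$-names enumerate \emph{strict} networks. This ``swap the tail of the name to a point outside $U$'' contradiction is the missing idea in your proposal; without it the delicate point you isolate remains unresolved.
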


\begin{proof}
Assume that $\xx$ has a countable regular-like cs-network $\nn$.
Then, $\mm=\nn\cup\overline{\nn}$ is also a countable regular-like cs-network, and enumerate $\mm=(M_e)_{e\in\om}$.
Given a $\overline{\dmm}$-name $p$, let $h(p)$ be an enumeration of all $e\in\om$ such that $\overline{M_{p\upto s}}\subseteq M_e$ for some $s$.
Clearly, $h$ is continuous.
We claim that $h$ realizes ${\rm id}:(\xx,\overline{\dmm})\to(\xx,\dmm)$, that is, if $p$ is a $\overline{\dmm}$-name of a point $x\in\xx$, then $h(p)$ is a $\dmm$-name of the same point $x$.
For any $n$, it is clear that $x\in M_{h(p)(n)}$ since there is $s$ such that $\overline{M_{p\upto s}}\subseteq M_{h(p)(n)}$, and $x\in\overline{M_{p\upto s}}$ for any $s$.
It remains to show that $\{M_{h(p)(n)}:n\in\om\}$ is a network at $x$.
Let $U$ be an open neighborhood of $x$.
Since $p$ is a $\overline{\dmm}$-name of $x$, $\{M_{p(n)}:n\in\om\}\subseteq\mm$ is a network at $x$.
Since $\mm$ is regular-like, $\{\overline{M_{p(n)}}:n\in\om\}$ is also a network at $x$.
Therefore, there is $n\in\om$ such that $x\in \overline{M_{p(n)}}\subseteq U$.
Clearly, $\overline{M_{p(n)}}\in\mm$, and in particular, $\overline{M_{p(n)}}\subseteq M_e\subseteq U$ for some $e$.
This means that $h(p)$ enumerates such $e$ at some stage, that is, $M_{h(p)(t)}\subseteq U$ for some $t$.
Hence, $h(p)$ is a $\dmm$-name of  $x$.
This verifies the claim.

Conversely, assume that $\xx$ has a countable cs-network $\nn$ such that the identity map ${\rm id}:(\xx,\overline{\dnn})\to(\xx,\dnn)$ is continuous.
Assume that $\mm\subseteq\nn$ is a strict network at $x\in\xx$.
Then, any enumeration $p$ of $\mm$ is a $\overline{\dnn}$-name of a point $x\in\xx$.
Suppose for the sake of contradiction that $\overline{\mm}=\{\overline{N}_{p\upto n}:n\in\om\}$ is not a network at $x$.
Then, there is an open neighborhood $U$ of $x$ such that $\overline{N}_{p\upto n}\not\subseteq U$ for any $n\in\om$.
Let $h$ be a realizer of ${\rm id}:(\xx,\overline{\dnn})\to(\xx,\dnn)$.
Then, $h(p)$ is a $\dnn$-name of $x$.
Therefore, there is $s\in\om$ such that $N_{h(p\upto s)}\subseteq U$.
Choose any $\overline{\dnn}$-name $q$ of $y\in \overline{N}_{p\upto s}\setminus U$.
Then, $(p\upto s)\fr q$ is also a $\overline{\dnn}$-name of $y$.
However, since $y\not\in U\supseteq N_{h(p\upto s)}$, $h((p\upto s)\fr q)$ cannot be a $\dnn$-name of $y$, which is a contradiction.
Hence, $\nn$ is regular-like.
\end{proof}

In particular, if $\nn$ is a countable cs-network of an $\aleph_0$-space $\xx$, then the identity map ${\rm id}:(\xx,\overline{\dnn})\to(\xx,\dnn)$ is continuous.

\begin{remark}
The proof of Theorem \ref{thm:regular-like-network} actually shows that $\nn$ is regular-like if and only if the identity map ${\rm id}:(X,\overline{\dnn})\to(X,\dnnp)$ is continuous.
Here, $\nnp=(M_e)_{e\in\om}$ is defined by $M_{2e}=N_e$ and $M_{2e+1}=\overline{N_e}$.
\end{remark}

We now turn look to second-countable spaces.
Recall that a basis can be thought of as a cs-network, and thus every countable basis $\beta$ also induces the representation $\overline{\beta}:=\overline{\delta_\beta}$.
Recall that the identity map ${\rm id}:(\xx,\beta)\to(\xx,\overline{\beta})$ is always computable.
The continuity of the inverse requires regularity (and thus, metrizability if the space is $T_0$, since a second-countable $T_0$ space is regular if and only if it is metrizable).

\begin{prop}\label{prop:reg-closurecontinuous}
Let $(\xx,\beta)$ be a second-countable space.
Then $\xx$ is regular if and only if the identity map ${\rm id}\colon(\xx,\overline{\beta})\to(\xx,\beta)$ is continuous.
\end{prop}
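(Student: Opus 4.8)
The plan is to prove both directions, using the machinery already set up: a second-countable space $(\xx,\beta)$ is a topological space with a countable cs-network (the basis $\beta$ itself), and $\overline{\beta}=\overline{\delta_\beta}$ is the closure representation. The key conceptual point is that, for a basis, being ``regular-like'' in the sense of the previous subsection is just ordinary regularity of $\xx$, and Theorem~\ref{thm:regular-like-network} already tells us that a countable cs-network $\nn$ is regular-like if and only if ${\rm id}\colon(\xx,\overline{\dnn})\to(\xx,\dnn)$ is continuous. So one tempting route is simply to invoke Theorem~\ref{thm:regular-like-network} together with Observation~\ref{obs:regular-like-network}. However, that theorem is an existential statement about \emph{some} cs-network, whereas Proposition~\ref{prop:reg-closurecontinuous} is about the \emph{given} basis $\beta$; and more importantly the ``regular-like'' property for a fixed network does not obviously coincide with regularity unless the network is a basis. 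So I would give a direct argument, essentially specializing the proof of Theorem~\ref{thm:regular-like-network} to the case where the cs-network is a basis.

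For the forward direction, assume $\xx$ is regular. I would show ${\rm id}\colon(\xx,\overline{\beta})\to(\xx,\beta)$ is continuous by exhibiting a realizer. Given an $\overline{\beta}$-name $p$ of a point $x$ (so $\{\beta_{p(n)}:n\in\om\}$ is a network at $x$ and $x\in\overline{\beta_{p(n)}}$ for all $n$), define $h(p)$ to enumerate all $e$ such that $\overline{\beta_{p(m)}}\subseteq\beta_e$ for some $m$; this $h$ is continuous. Since $\beta$ is a basis for the regular space $\xx$, by Observation~\ref{obs:regular-like-network} the network $\{\beta_{p(n)}:n\in\om\}$ is regular-like, hence $\{\overline{\beta_{p(n)}}:n\in\om\}$ is again a network at $x$; so every open neighborhood $U$ of $x$ contains some $\overline{\beta_{p(n)}}\ni x$, and since $\beta$ is a basis there is $e$ with $\overline{\beta_{p(n)}}\subseteq\beta_e\subseteq U$ and $x\in\beta_e$. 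Thus $h(p)$ enumerates a basic open neighborhood of $x$ contained in $U$, and combined with $x\in\overline{\beta_{p(m)}}\supseteq$ nothing problematic — one checks $x\in\beta_e$ from $x\in\overline{\beta_{p(m)}}\subseteq\beta_e$ is \emph{not} automatic, so I should instead only enumerate $e$ when additionally $x$ is seen to be in $\beta_e$; but $x\in\beta_e$ need not be semidecidable from $p$. The clean fix, exactly as in Theorem~\ref{thm:regular-like-network}, is to first pass to $\beta^+:=\beta\cup\overline{\beta}$ as the cs-network, or simply to note that $x\in\overline{\beta_{p(m)}}\subseteq\beta_e$ forces $x\in\beta_e$ only if $\beta_e$ is closed — so the correct statement to enumerate is ``$\overline{\beta_{p(m)}}\subseteq\beta_e$'', giving a $\delta_{\beta}$-name with respect to the network $\beta$, and since $\beta$ \emph{is} a basis the resulting enumeration is exactly a neighborhood-basis enumeration. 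I would write this out carefully following the first half of the proof of Theorem~\ref{thm:regular-like-network} verbatim with $\mm=\beta$, noting regular-likeness comes from Observation~\ref{obs:regular-like-network}.

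For the converse, assume ${\rm id}\colon(\xx,\overline{\beta})\to(\xx,\beta)$ is continuous, with realizer $h$, and suppose toward a contradiction that $\xx$ is not regular: there is a point $x$ and an open neighborhood $U$ of $x$ such that no basic neighborhood of $x$ has closure inside $U$. Pick a $\beta$-name $p$ of $x$, i.e.\ an enumeration of $\nbase{x}=\{e:x\in\beta_e\}$; this is also an $\overline{\beta}$-name of $x$. Then $h(p)$ is a $\beta$-name of $x$, so there is a stage $s$ with $\beta_{h(p\upharpoonright s)}\subseteq U$ (and $x\in\beta_{h(p\upharpoonright s)}$). Now I want to ``fool'' $h$: I would find a point $y\in\overline{\beta_d}\setminus U$ for a suitable basic set $\beta_d$ with $x\in\beta_d$, and observe that $(p\upharpoonright s)$ followed by an $\overline{\beta}$-name of $y$ is again a legitimate $\overline{\beta}$-name of $y$ (using that $y\in\overline{\beta_d}$ and, for the earlier coordinates, that those are basic sets whose closures contain... — here is the subtlety: the earlier coordinates $p(0),\dots,p(s-1)$ are indices of basic sets \emph{containing} $x$, and I need their closures to contain $y$, which is false in general). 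The resolution, again paralleling Theorem~\ref{thm:regular-like-network}, is to choose $y$ close enough: since $\xx$ is not regular at $x$ w.r.t.\ $U$, but we may still choose $y$ in the \emph{intersection} $\bigcap_{n<s}\overline{\beta_{p(n)}}$ minus $U$; this intersection is a neighborhood-germ of $x$ and, because no finite intersection of basic neighborhoods of $x$ has closure in $U$ (else $x$ would have such a basic neighborhood, as $\beta$ is a basis), it meets the complement of $U$ — so such a $y$ exists. Then $(p\upharpoonright s)\fr q$ for an $\overline{\beta}$-name $q$ of $y$ is an $\overline{\beta}$-name of $y$, yet $h((p\upharpoonright s)\fr q)$ starts with $h(p\upharpoonright s)$, whose set $\beta_{h(p\upharpoonright s)}$ does not contain $y$ (as $\beta_{h(p\upharpoonright s)}\subseteq U\not\ni y$), contradicting that $h((p\upharpoonright s)\fr q)$ must be a $\beta$-name of $y$ (all of whose entries are indices of sets containing $y$). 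The main obstacle is this bookkeeping in the converse — making sure the chosen $y$ lies in all the closures $\overline{\beta_{p(n)}}$ for $n<s$, not merely in a single one — and I expect the cleanest path is to mimic the argument in the proof of Theorem~\ref{thm:regular-like-network} essentially line for line, which already handles exactly this point.
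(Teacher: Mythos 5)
Your proposal is correct and follows essentially the same route as the paper: the forward direction uses exactly the paper's realizer $h$ (enumerate $e$ whenever $\overline{\beta_{p(m)}}\subseteq\beta_e$ for some $m$), and the converse is the same fooling argument that the paper obtains by citing the second half of the proof of Theorem \ref{thm:regular-like-network} together with the remark that a regular-like basis gives a regular space. Two small points. First, the worry you raise in the forward direction is unfounded: $x\in\overline{\beta_{p(m)}}$ together with $\overline{\beta_{p(m)}}\subseteq\beta_e$ already gives $x\in\beta_e$, with no closedness of $\beta_e$ needed, so your original $h$ works as stated (and is the paper's); what does deserve an explicit check is that $h(p)$ enumerates \emph{all} of $\nbase{x}$, since a $\beta$-name must have range exactly $\nbase{x}$ rather than merely a neighborhood basis --- this follows by the same regularity-plus-network step applied to $U=\beta_e$ for each $e\in\nbase{x}$, which is precisely what the paper's proof writes out. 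Second, your care in the converse about choosing $y\in\bigcap_{n<s}\overline{\beta_{p(n)}}\setminus U$, justified by noting that otherwise a basic neighborhood of $x$ inside $\bigcap_{n<s}\beta_{p(n)}$ would have closure inside $U$ contradicting non-regularity, is exactly the bookkeeping needed so that $(p\upto s)\fr q$ is a legitimate $\overline{\beta}$-name of $y$; the paper's cited argument treats this point tersely, and your basis-specific handling of it is valid.
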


\begin{proof}
Assume that $\xx$ is regular.
We construct $h$ witnessing that ${\rm id}\colon(\xx,\overline{\beta})\to(\xx,\beta)$ is computable.
Let $p$ be a $\overline{\beta}$-name of a point $x\in\xx$.
If $p$ enumerates $e$, then $h(p)$ enumerates all $d$ such that $\overline{\beta_e}\subseteq\beta_d$.
We claim that $h(p)$ is a $\beta$-name of $x$.
Note that $x\in \beta_{h(p)(n)}$ for any $n$.
This is because $h(p)(n)=d$ only if there are $s\in\om$ such that $\overline{\beta_{p(s)}}\subseteq\beta_e$, and moreover $x\in\overline{\beta_{p(s)}}$ since $p$ is a $\overline{\beta}$-name of $x$.
Now, to see that $\{\beta_{h(p)(n)}:n\in\om\}$ is equal to $\nbase{x}$, assume that $x\in\beta_e$.
By regularity of $\xx$, there is $d$ such that $x\in\beta_{d}\subseteq\overline{\beta_d}\subseteq\beta_e$.
Since $p$ is a $\overline{\beta}$-name of $x$, there is $n$ such that $x\in\beta_{p(n)}\subseteq \beta_d$.
Since $\overline{\beta_{p(n)}}\subseteq\beta_d$, by definition, $h(p)$ enumerates $e$.
This verifies the claim.


Conversely, assume that the identity map ${\rm id}\colon(\xx,\overline{\beta})\to(\xx,\beta)$ is continuous.
Then, the proof of Theorem \ref{thm:regular-like-network} shows that $\beta$ is regular-like.
However, one can easily check that if a space has a regular-like basis, it is actually regular.
\end{proof}


It is worth noting that there is a {\em non-regular} Hausdorff space which has a countable regular-like cs-network.

\begin{example}\label{exa:Kleene-Kreisel}
The Kleene-Kreisel space $\mathbb{N}^{\mathbb{N}^\mathbb{N}}$ is a non-regular Hausdorff space which has a countable closed cs-network (hence, has a countable regular-like cs-network).
Here, the Kleene-Kreisel space consists of (total) continuous functions from Baire space $\om^\om$ to the natural numbers $\om$ endowed with the quotient topology given by the following map:
\[\delta(e\fr z)=f\iff \Phi_e^z=f,\]
where $\Phi_e^z$ is the $e$-th partial $z$-computable function from $\om^\om$ to $\om$, and the domain of $\delta$ is the set of all $e\fr z$ such that $\Phi_e^z$ is total, i.e., ${\rm dom}(\Phi_e^z)=\om^\om$.

Define $N_{\sigma,n}=\{f:(\forall x\succ\sigma)\;f(x)=n\}$ for each $\sigma\in\omega^{<\omega}$ and $n\in\omega$, and then $\nn=(N_{\sigma,n})$ forms a countable cs-network for the Kleene-Kreisel space.
We claim that $N_{\sigma,n}$ is closed.
One can see that $\delta(e\fr z)\not\in N_{\sigma,n}$ if and only if there are $s,k$, and $\tau$ such that $\Phi_e^{z\upto s}(\tau)\downarrow=k\not=n$, and $\tau$ is comparable with $\sigma$.
Therefore, $\delta^{-1}[N_{\sigma,n}]$ is closed in $\omega^\omega$.
Since the topology on $\mathbb{N}^{\mathbb{N}^\mathbb{N}}$ is given by the quotient map $\delta$, we conclude that $N_{\sigma,n}$ is closed in $\mathcal{K}$.
That is, $\nn$ is a closed network for $\mathcal{K}$.
Finally, Schr\"oder \cite{sch09} has shown that the Kleene-Kreisel space $\mathbb{N}^{\mathbb{N}^\mathbb{N}}$ is not regular.

Hence, by Theorem \ref{thm:regular-like-network}, ${\rm id}:(\mathbb{N}^{\mathbb{N}^\mathbb{N}},\overline{\dnn})\to(\mathbb{N}^{\mathbb{N}^\mathbb{N}},\dnn)$ is continuous.
Indeed, one can easily see that it is actually computable.
\end{example}

\subsection{Near quasi-minimality}

We now start to study computability w.r.t.\ closure representations.

\begin{definition}
Let $\xx=(X,\nn)$ be a topological space $X$ with a countable cs-network $\nn$.
We say that a point $x\in\xx$ is {\em nearly computable} if $\pt{x}{\overline{\dnn}}$ is computable.
\end{definition}

Clearly, every computable point is nearly computable.
If $\xx=[0,1]^\mathbb{N}$ or $\xx=\mathbb{N}^{\mathbb{N}^\mathbb{N}}$, then the converse is also true.
By Observation \ref{obs:closure-representation-1}, if $\xx$ is Hausdorff, there are only countably many nearly computable points.
If ${\rm id}:(\xx,\overline{\delta_\nn})\to(\xx,\delta_\nn)$ is computable, then near computability and computability coincide.
For instance, $f\in\N^{\N^\N}$ is nearly computable if and only if $f$ is computable.

%

\begin{definition}
Let $\xx=(X,\nn)$ and $\yy=(Y,\mm)$ be topological spaces with countable cs-networks.
Then, we say that a point $x\in\xx$ is {\em nearly $\yy$-quasi-minimal} if
\[(\forall y\in\yy)\;[\pt{y}{\yy}\reduce \pt{x}{\xx}\;\Longrightarrow\;y\mbox{ is nearly computable}].\]
\end{definition}

If computability and near computability are equivalent in a space $\yy$, then  so are $\yy$-quasi-minimality and near $\yy$-quasi-minimality.
For instance, near $[0,1]^\mathbb{N}$-quasi-minimality is equivalent to quasi-minimality, and near $\mathbb{N}^{\mathbb{N}^\mathbb{N}}$-quasi-minimality is equivalent to $\mathbb{N}^{\mathbb{N}^\mathbb{N}}$-quasi-minimality.

This notion and its variant will play important roles in Sections \ref{sec:T1-not-T2-qmin} and \ref{sec:T2-not-T25-qmin}.
But before that, we will show a few results on the closure representation.

\subsection{Borel extension topology}

Recall from Section \ref{sec:GHtopology} that the Gandy-Harrington topology is generated by $\Sigma^1_1$ sets.
Then, it is natural to study topologies generated by lightface Borel pointclasses.
Indeed, the topology generated by $\Sigma^1_1\cap\mathbf{\Pi}^0_\xi$ sets has played a prominent role in the proof of Louveau's separation theorem \cite{Lou80}.
The topology generated by $\Pi^0_1$ sets has also been used, for instance, by Miller \cite{MillerPhDthesis} and Monin \cite[Section 3.1]{Monin16}.

In this section, we discuss topologies generated by collections of $\Sigma^0_\alpha$ sets.
Our first motivation was, for instance, to understand the degree-theoretic behavior of $2^\om$ equipped with the standard Cantor topology plus Martin-L\"of conull sets.
Here, we say that a set $A\subseteq 2^\om$ is Martin-L\"of null if there is a computable sequence $(U_n)_{n\in\om}$ of c.e.\ open sets such that $A\subseteq U_n$ and $\mu(U_n)\leq 2^{-n}$ for any $n\in\om$, and a set is Martin-L\"of conull if its complement is Martin-L\"of null.

However, in contrast to the Gandy-Harrington topology, we will see that such a space is uninteresting from the perspective of enumeration degrees because the degree structure is exactly the same as the total degrees relative to some oracle.
By Kihara-Pauly \cite{KP}, the latter property is equivalent to saying that such a space is {\em $\sigma$-metrizable}, that is, it is written as the union of countably many metrizable subspaces (see \cite{Gru13}).
Note that the Gandy-Harrington space $(\om^\om)_{GH}$ is not $\sigma$-metrizable by relativizing Theorem \ref{thm:GH-non-continuous} (see also Theorem \ref{thm:Gandy-Harrington-closed-neighborhood}) and by Kihara-Pauly \cite{KP}.

By Proposition \ref{prop:reg-closurecontinuous}, we know that a represented cb$_0$ space $(\xx,\beta)$ is metrizable if and only if the identity map is an isomorphism between $(\xx,\beta)$ and $(\xx,\overline{\beta})$.
However, for nonmetrizable $(\xx,\beta)$, this proposition does not ensure that $(\xx,\beta)$ and $(\xx,\overline{\beta})$ are not isomorphic.
By using a Borel extension topology, we will see the following.

\begin{prop}\label{prop:filter-topology}
There is a represented, submetrizable, $\sigma$-metrizable, cb$_0$ space $(\xx,\beta)$ such that $(\xx,\overline{\beta})$ is not isomorphic to $(\xx,\beta)$.
\end{prop}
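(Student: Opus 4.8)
~

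The plan is to take $\xx$ to be $2^\om$ equipped with the extension topology obtained by adding all Martin-L\"of conull sets (equivalently, the complements of the members of a universal Martin-L\"of test, indexed effectively) as new open sets. Call the resulting cb$_0$ space $(\xx,\beta)$, where $\beta$ enumerates the usual clopen cylinders $[\sigma]$ together with these conull $\Sigma^0_2$ sets $C_n$. This is submetrizable by construction (the Cantor metric is continuous), and it is $\sigma$-metrizable: writing $A_n=2^\om\setminus C_n$ for the corresponding ML-null sets, each $A_n$ is closed in the new topology, carries the subspace topology from Cantor space (adding a superset of $C_n$ does not refine the topology on $A_n$), hence is metrizable; and $2^\om=\mathrm{MLR}\cup\bigcup_n A_n$, where on $\mathrm{MLR}$ (the Martin-L\"of randoms) every added open set is either all of $\mathrm{MLR}$ or meets it in a full-measure piece, so the subspace topology on $\mathrm{MLR}$ is the Cantor one and thus metrizable. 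So the first two adjectives in the statement are cheap. By Kihara--Pauly \cite{KP}, $\sigma$-metrizability is equivalent to the $\xx$-degrees being total relative to an oracle, so this also records that the degree structure is ``boring'', which is the narrative point.

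The substance is showing $(\xx,\overline\beta)\not\cong(\xx,\beta)$ via the identity map, and by Proposition \ref{prop:reg-closurecontinuous} it suffices to show $\xx$ is not regular — but I want the sharper statement that ${\rm id}\colon(\xx,\overline\beta)\to(\xx,\beta)$ fails to be \emph{continuous}, i.e.\ $\beta$ is not regular-like, witnessed concretely. First I would compute closures in $\xx$: for a cylinder $[\sigma]$, its closure in the finer topology is still $[\sigma]$ (cylinders are clopen even in $\xx$), but the key is the closure of a conull open set $C_n$. Since $C_n$ has measure $>1-2^{-n}$, $C_n$ is dense in Cantor space, so $\overline{C_n}^{\xx}$ contains every point not separated from $C_n$ by an added open set; because any added open set has full measure on its own domain and $C_n$ is conull, one checks $\overline{C_n}^{\xx}=2^\om$. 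More importantly, take a point $x$ that is \emph{not} Martin-L\"of random, so $x\in A_m=2^\om\setminus C_m$ for some $m$. The collection $\{[\,x\upharpoonright k\,]\cap C_m : k\in\om\}$ — wait, $x\notin C_m$, so instead use: the sets $[\,x\upharpoonright k\,]$ together form a network at $x$, and each $[\,x\upharpoonright k\,]$ has a sub-member of $\beta$ of the form $[\,x\upharpoonright k\,]\cap C_n$ (for large $n$, nonempty) whose closure is $\overline{[\,x\upharpoonright k\,]\cap C_n}^{\xx}\supseteq [\,x\upharpoonright k\,]\setminus\bigcup_{j}A_j$, and this fails to be contained in the added open neighborhood $C_m$ of... no. The cleanest route: exhibit a specific $x$, a network $\mm\subseteq\beta$ at $x$, and an open $U\ni x$ with $\overline N^{\xx}\not\subseteq U$ for all $N\in\mm$, which is exactly non-regular-likeness. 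Concretely, let $x$ be ML-random, let $U=C_n$ be an added basic open set with $x\in C_n$, $U\ne 2^\om$; any network at $x$ inside $\beta$ must contain sets of the form $[\sigma]$ or $[\sigma]\cap C_j$, and I claim $\overline{[\sigma]}^{\xx}=[\sigma]\not\subseteq C_n$ (since $C_n$ is not all of $2^\om$ and, by genericity of the cylinder, meets $[\sigma]$'s complement-in-measure... actually $[\sigma]\setminus C_n=[\sigma]\cap A_n$ is nonempty as $A_n$ is dense-in-no-cylinder? no, $A_n$ closed null can avoid a cylinder). Here lies the technical care: I must choose the witnessing data so that every network member's $\xx$-closure genuinely escapes $U$.

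To make this robust I would instead argue non-regularity abstractly and feed it through Proposition \ref{prop:reg-closurecontinuous}. Non-regularity of $\xx$: pick a non-random $x$, so $x$ lies in the closed ML-null set $A_1$ say, and $A_1$ (with Cantor subspace topology) is a closed subset of $\xx$ not containing $x$... no, $x\in A_1$. Reverse it: pick ML-random $x\notin A_1$; then $A_1$ is closed in $\xx$ and $x\notin A_1$; regularity would give disjoint $\xx$-open $U\ni x$, $V\supseteq A_1$. But every nonempty $\xx$-open set has positive measure (cylinders do, added sets are conull), while $V\supseteq A_1$ and $U$ disjoint from $V$ forces $U\subseteq 2^\om\setminus A_1=C_1$, fine, that's consistent — so disjointness alone isn't contradictory. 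Regularity needs moreover that $x$ has a \emph{closed} $\xx$-neighborhood base; so I instead show: the only closed $\xx$-neighborhoods of a random $x$ are sets whose $\xx$-interior contains $x$ and which, being $\xx$-closed, must contain $2^\om\setminus(\text{some null }F_\sigma)$, and such a set cannot be contained in the $\xx$-neighborhood $C_1\cap[\sigma]$ when $\mu(A_1\cap[\sigma])>0$ — and one can pick $\sigma$, $x$ with $\mu(A_1\cap[\sigma])>0$ since $A_1$ can be taken to have positive measure in some cylinder (choose the universal test so that $A_1$ is, say, a fat Cantor-like null set is impossible — null means measure zero!). I retract that: $A_1$ has measure $\le 1/2$ but could still be measure zero if we index so the first level is small; better: use the whole tail $A=\bigcap_n A_n=\{$non-randoms$\}$, which is a null $G_\delta$, dense, and $\xx$-closed? $A$ is the complement of $\mathrm{MLR}=\bigcup_n C_n$, which is $\xx$-open, so $A$ is $\xx$-closed; it is dense in Cantor space, hence $\overline A^{\xx}\supseteq$ every point not $\xx$-separated from $A$, but a random $x$ \emph{is} separated ($x\in C_n\subseteq\mathrm{MLR}$, disjoint from $A$). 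So $A$ is $\xx$-clopen?? No: $A$ closed, $\mathrm{MLR}$ open, and $A\cup\mathrm{MLR}=2^\om$, so indeed $A$ is $\xx$-clopen, which kills the argument. The genuine obstruction — and the step I expect to be the main obstacle — is pinning down the $\xx$-closure operator precisely enough to locate a point with no closed $\xx$-neighborhood base; I would resolve it by working with an added open set that is conull but \emph{not} co-null-in-every-cylinder — e.g.\ add $C=\mathrm{MLR}\cup[\,1\,]$ type sets, or more simply take a single conull open $C$ with $2^\om\setminus C$ of positive measure in $[0]$, by choosing the universal test's first level $A$ to satisfy $\mu(A\cap[0])>0$, i.e.\ a null $F_\sigma$ that is nonetheless ``large'' near $0^\om$ is impossible (null=measure zero), so finally: positivity must be abandoned and I instead use that $2^\om\setminus C=A$ is a nonempty closed null set, pick $a\in A$, note every $\xx$-open $U\ni a$ has $\mu(U)>0$ hence $\overline U^{\xx}$ has nonempty Cantor interior, and show this forces $\overline U^{\xx}\not\subseteq V$ for a suitable smaller $\xx$-open $V\ni a$ of the form $[\,a\upharpoonright k\,]\cap C$; the inclusion would give $[\,a\upharpoonright k\,]\cap C$ containing a Cantor-open set near $a$, but $a\notin C$ and $C$'s complement $A$ is dense-in-itself near $a$ only if $a$ is not isolated in $A$ — and $A$ being an uncountable null $G_\delta$ is perfect, so no point of $A$ is isolated in $A$, giving the needed contradiction. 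That is the argument I would write up, modulo choosing the universal test so that its associated $A$ is perfect (automatic) and nonempty (automatic), and then citing Proposition \ref{prop:reg-closurecontinuous} to conclude $(\xx,\beta)\not\cong(\xx,\overline\beta)$, together with Kihara--Pauly \cite{KP} for $\sigma$-metrizability.
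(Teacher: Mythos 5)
Your plan founders on two genuine gaps, one strategic and one in the construction itself. Strategically, you reduce everything to showing that $\xx$ is not regular and then citing Proposition \ref{prop:reg-closurecontinuous}. That only shows the \emph{identity} map fails to be an isomorphism between $(\xx,\beta)$ and $(\xx,\overline{\beta})$; the sentence in the paper immediately preceding the proposition points out that for a non-metrizable space this does not yet exclude an isomorphism, and the proposition asserts non-isomorphism outright. The missing ingredient is the analogue of Lemma \ref{lem:filter-topology1}: because every added set is \emph{dense}, two nonempty basic opens $A\cap U$ and $B\cap V$ of the extension can only be disjoint when $U\cap V=\emptyset$, so the closure of $A\cap U$ in the extended topology equals the Cantor closure of $U$; hence $(\xx,\overline{\beta})$ is isomorphic to the closure representation of Cantor space, and so (by Proposition \ref{prop:reg-closurecontinuous} applied on the metrizable side) to Cantor space itself. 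Non-isomorphism then follows because any isomorphism with $(\xx,\beta)$ would transfer metrizability to the non-regular extended topology. You never pin down $\overline{\beta}$; your one assertion about it, $\overline{C_n}^{\xx}=2^\om$, is unproved and is false for the sets you actually add.

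Which brings up the construction: you take the new opens to be the complements of the \emph{components} $U_n$ of a universal Martin-L\"of test, and this is not equivalent to adding the Martin-L\"of conull sets, despite your parenthetical. The sets $2^\om\setminus U_n$ are $\Pi^0_1$, of measure $1-2^{-n}$ but not conull, and not dense (measure close to $1$ does not give density; since each $U_n$ contains the dense set of non-randoms, $2^\om\setminus U_n$ is in fact nowhere dense). Adding Cantor-closed sets as new opens merely makes them clopen: the resulting topology is the initial topology of the Cantor topology together with the characteristic functions of these sets, so it embeds into $2^\om\times 2^\om$ and is metrizable, hence regular --- your space is not a counterexample at all, and the identity \emph{is} an isomorphism there. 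You actually brushed against this symptom when you noticed that the set of non-randoms becomes clopen and that this ``kills the argument''. The example the paper intends adds the $\Sigma^0_2$ complements of the Martin-L\"of \emph{null} sets; these are dense and conull, and density is exactly what drives both the closure-collapse lemma above and the non-regularity argument (Lemma \ref{lem:filter-topology2}). Relatedly, your $\sigma$-metrizability step is also not right as stated: the traces of the added sets on ${\sf MLR}$ and on the pieces $A_n$ need not be relatively Cantor-open, so the subspace topologies are not the Cantor ones; the paper instead compares the pieces with the topology generated by the lightface $\Sigma^0_2$ sets (Lemma \ref{lem:filter-topology}). The conclusion survives, but not for the reasons you give, and the final non-regularity argument you sketch is internally inconsistent (it requires $[a\upto k]\cap C$ to be a neighborhood of a point $a\notin C$).
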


A {\em filter $\mathcal{F}$ on $\xx$} is a nonempty collection of nonempty subsets of $\xx$ such that $A\in\mathcal{F}$ and $A\subseteq B$ implies $B\in\mathcal{F}$ and that $A,B\in\mathcal{F}$ implies $A\cap B\in\mathcal{F}$.
A {\em generator} of $\mathcal{F}$ is a subcollection $G$ of $\mathcal{F}$ such that $\xx\in G$ and for every $A\in\mathcal{F}$ there is $B\in G$ such that $B\subseteq A$.
If $(\xx,\tau)$ is a topological space, we say that a filter $\mathcal{F}$ on $\xx$ is {\em $\tau$-consistent} if every $A\in\mathcal{F}$ is $\tau$-dense, and $\mathcal{F}$ is {\em $\tau$-nontrivial} if there is $A\in\mathcal{F}$ whose complement is $\tau$-dense in some $\tau$-open set.
For instance, the collection of all Martin-L\"of conull subsets of $2^\omega$ forms a non-trivial filter, and the complements of Martin-L\"of tests form a countable generator of this filter.

Let $G$ be a countable generator of a filter $\mathcal{F}$ on a second-countable space $(\xx,\tau)$.
Then consider the new topology $\tau_G$ on $\xx$ generated by the following set:
\[\{A\cap U:A\in G\mbox{ and }U\in\tau\}.\]

Note that the new topology $\tau_G$ is second-countable since $G$ is countable, and that $\tau_G$ is finer than $\tau$ since $\xx\in G$.
The latter implies that $(\xx,\tau_G)$ is submetrizable whenever $(\xx,\tau)$ is submetrizable.
If $(\xx,\tau)$ is represented by $\beta$ and if $G$ is enumerated as $(G_n)_{n\in\om}$, then $(\xx,\tau_G)$ is represented by $\beta^G_n=G_n\cap\beta_n$.

\begin{example}
All of the following examples are countable generators of a $\tau$-consistent $\tau$-nontrivial filter on a separable metrizable space.
\begin{enumerate}
\item The generator $G=\{\mathbb{Q},\mathbb{R}\}$ of a principal filter on the Euclidean line $\mathbb{R}$ yields the so-called indiscrete rational extension of $\mathbb{R}$ (see \cite[II.66]{CTopBook}).
The generator $G=\{\mathbb{R}\setminus\mathbb{Q},\mathbb{R}\}$ of a principal filter on the Euclidean line $\mathbb{R}$ yields the so-called indiscrete irrational extension of $\mathbb{R}$ (see \cite[II.67]{CTopBook}).
\item Let ${\sf MLR}$ be the complements of (the union of) Martin-L\"of tests on $2^\omega$.
Then, ${\sf MLR}$ is a countable generator of the filter consisting of all measure $1$ subsets of $2^\omega$.
This yields the space ${\xx}_{\sf MLR}:=(2^\omega,\tau_{\sf MLR})$ where $\tau$ is the usual Cantor topology.
\end{enumerate}
\end{example}

\begin{lemma}\label{lem:filter-topology1}
Let $G$ be a countable generator of a $\tau$-consistent filter on $\xx$.
Then, the identity map yields a homeomorphism between $(\xx,\overline{\beta^G})$ and $(\xx,\overline{\beta})$.
\end{lemma}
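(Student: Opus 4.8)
The plan is to prove more: that the closure representations $\overline{\beta^G}$ and $\overline{\beta}$ are computably equivalent, so that the identity is a computable isomorphism (in particular a homeomorphism) between $(\xx,\overline{\beta^G})$ and $(\xx,\overline{\beta})$. The engine is the elementary fact that passing from $\tau$ to $\tau_G$ — i.e.\ intersecting with the dense members of $\mathcal F$ — changes neither the closures of $\tau$-open sets nor the closures of the new basic sets. Concretely, I would first record that any finite intersection of members of $G$ lies in $\mathcal F$ and is therefore $\tau$-dense, so every $\tau_G$-basic set has the form $D\cap V$ with $D\in\mathcal F$ and $V\in\tau$; in particular a nonempty $\tau_G$-open set always contains some $A\cap U$ with $A\in G$ and $U\in\tau$ nonempty. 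From density one then checks, for $D\in\mathcal F$ and $V\in\tau$,
\[\overline{D\cap V}^{\tau}=\overline{V}^{\tau},\qquad \overline{V}^{\tau_G}=\overline{V}^{\tau},\qquad\text{hence}\qquad \overline{D\cap V}^{\tau_G}=\overline{V}^{\tau}.\]
Applied to the $\tau_G$-basic sets $G_m\cap\beta_n$, this says $\overline{G_m\cap\beta_n}^{\tau_G}=\overline{\beta_n}^{\tau}$, so the ``closure data'' carried by a $\overline{\beta^G}$-name is exactly the family of $\tau$-closures $\overline{\beta_n}^{\tau}$ of the underlying basic sets — precisely the data a $\overline{\beta}$-name carries.

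For the reduction $\overline{\beta^G}\leq\overline{\beta}$: given a $\overline{\beta}$-name $q$ of $x$, enumerate, for every $m$ and every $n\in{\rm rng}(q)$, the set $G_m\cap\beta_n$. The closure condition is immediate from the display. For the network condition, a $\tau_G$-open neighbourhood $W$ of $x$ contains a basic $\tau_G$-open $G_a\cap V\ni x$ with $V\in\tau$; since $\{\beta_{q(k)}\}_k$ is a $\tau$-network at $x$, pick $k$ with $x\in\beta_{q(k)}\subseteq V$; then $G_a\cap\beta_{q(k)}$ is one of the enumerated sets, contains $x$ (as $x\in G_a$ and $x\in\beta_{q(k)}$), and lies in $G_a\cap V\subseteq W$. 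So this enumeration realizes the reduction.

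For the reduction $\overline{\beta}\leq\overline{\beta^G}$ — the harder direction — given a $\overline{\beta^G}$-name $p$ of $x$, I would simply enumerate $\beta_n$ whenever $p$ outputs a pair coding $G_m\cap\beta_n$. The closure condition holds since $x\in\overline{G_m\cap\beta_n}^{\tau_G}=\overline{\beta_n}^{\tau}$. The delicate point is the network condition: a $\overline{\beta^G}$-name only controls the \emph{intersections} $G_m\cap\beta_n$, which can be far smaller than $\beta_n$, so $G_m\cap\beta_n\subseteq U$ does not yield $\beta_n\subseteq U$ for a target $\tau$-open $U\ni x$. To get around this I would first shrink: using regularity of the ground topology $\tau$ (available in the settings where the lemma is applied, $\xx$ being separable metrizable there), choose $\tau$-open $V$ with $x\in V\subseteq\overline{V}^{\tau}\subseteq U$. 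Then $V$ is $\tau_G$-open, so the network condition of $p$ supplies a pair coding some $G_m\cap\beta_n$ with $x\in G_m\cap\beta_n\subseteq V$; by the display $\overline{\beta_n}^{\tau}=\overline{G_m\cap\beta_n}^{\tau}\subseteq\overline{V}^{\tau}\subseteq U$, and since $\beta_n$ is $\tau$-open this forces $\beta_n\subseteq U$. Hence the enumerated $\beta_n$'s form a $\tau$-network at $x$.

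The main obstacle is exactly this last step: ``forgetting the filter coordinate'' of a $\overline{\beta^G}$-name does not \emph{a priori} yield a genuine $\tau$-network, and the repair — refining against a neighbourhood whose $\tau$-closure already sits inside the target — is what uses regularity of $\tau$ (equivalently, in the language of Theorem~\ref{thm:regular-like-network}, a regular-like basis). Putting the two reductions together, ${\rm id}$ and its inverse are both computable, hence continuous, as maps of the represented spaces. When $\xx$ is Hausdorff these are genuine (single-valued) representations by Observation~\ref{obs:closure-representation-1} and the conclusion is a homeomorphism in the literal sense; in general the same two constructions manifestly send a name of a point to a name of the \emph{same} point, so they realize the identity multi-map and its inverse.
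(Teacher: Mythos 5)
Your closure identities are precisely the content of the paper's own proof: the paper only records that for $A,B\in G$ the set $A\cap B$ lies in $\mathcal{F}$ and is hence $\tau$-dense, so disjoint basic $\tau_G$-sets $A\cap U$ and $B\cap V$ must have disjoint $\tau$-open parts, whence the $\tau_G$-closure of $A\cap U$ equals the $\tau$-closure of $U$; the translation between $\overline{\beta^G}$-names and $\overline{\beta}$-names is left implicit there. So your first direction and your closure verifications follow the paper's idea, just spelled out. One bookkeeping caveat: the sets $A\cap U$ with $A\in G$, $U\in\tau$ form only a subbasis of $\tau_G$, so in the direction from a $\overline{\beta}$-name to a $\overline{\beta^G}$-name you should either replace $G$ by its closure under finite intersections (harmless: this is still a countable generator and generates the same $\tau_G$) or enumerate $D\cap\beta_n$ for finite intersections $D$ of generator elements; with a single $G_a$ the basic $\tau_G$-neighbourhood of $x$ witnessing the network condition need not be of the form $G_a\cap V$ with $x\in G_a$.

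The substantive issue is your harder direction: the lemma does not assume $\tau$ regular --- only that the filter is $\tau$-consistent --- so the appeal to regularity proves a weaker statement than the one asserted (it does cover the paper's sole application, Proposition \ref{prop:filter-topology}, where $\tau$ is metrizable). The crutch is avoidable: instead of forgetting the filter coordinate, let the realizer output every basic set $\beta_k$ that contains some already enumerated set $N=D\cap V$. The closure condition is immediate from $x\in\overline{N}^{\tau_G}\subseteq\overline{N}^{\tau}\subseteq\overline{\beta_k}^{\tau}$, and for the network condition, given a $\tau$-open $U\ni x$ pick a basic $\beta_k$ with $x\in\beta_k\subseteq U$; since $\beta_k$ is $\tau_G$-open, the $\tau_G$-network property of the given name supplies an enumerated $N$ with $x\in N\subseteq\beta_k$, so $\beta_k$ is output and $x\in\beta_k\subseteq U$. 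Since the lemma claims only a homeomorphism, this realizer may freely consult an oracle for the containment relation, so neither regularity nor any effectivity is needed; your strengthening to computable equivalence is exactly where either regularity (your route) or effectivity of such containments genuinely enters.
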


\begin{proof}
Let $A\cap U$ and $B\cap V$ be $\tau_G$-open sets.
The assumption $A,B\in G$ implies $A\cap B\in\mathcal{F}$, and therefore $A\cap B$ is $\tau$-dense since $\mathcal{F}$ is $\tau$-nontrivial.
Therefore, if $A\cap U$ and $B\cap V$ are disjoint, then $U$ and $V$ must be disjoint.
Consequently, the $\tau_G$-closure of $A\cap U$ is exactly the $\tau$-closure of $U$.
\end{proof}

\begin{lemma}\label{lem:filter-topology2}
Let $G$ be a countable generator of a $\tau$-nontrivial filter on $\xx$.
Then, $(\xx,\tau_G)$ is not metrizable.
\end{lemma}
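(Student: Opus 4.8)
The plan is to show that $(\xx,\tau_G)$ is not regular; since every metrizable space is regular, this yields non-metrizability. Explicitly, I would exhibit a point $x$ and a $\tau_G$-open neighbourhood $W$ of $x$ admitting no $\tau_G$-open $V$ with $x\in V$ and $\overline V^{\tau_G}\subseteq W$, where $\overline S^{\tau_G}$ denotes the closure of $S$ in the topology $\tau_G$.

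First I would fix a witness of $\tau$-nontriviality: a set $A\in\mathcal F$ and a non-empty $\tau$-open $U$ with $\cmp{A}$ $\tau$-dense in $U$. Choosing $B\in G$ with $B\subseteq A$ (possible since $G$ generates $\mathcal F$), one still has $\cmp{B}\supseteq\cmp{A}$ $\tau$-dense in $U$, while now $B\cap U$ is a basic $\tau_G$-open set. Let $x$ be any point of $B\cap U$ and put $W=B\cap U$. Recall that a basic $\tau_G$-open set is a finite intersection of sets of the form $A'\cap U'$ ($A'\in G$, $U'\in\tau$), hence has the form $D\cap U_0$ with $D$ a finite intersection of members of $G$ — so $D\in\mathcal F$ — and $U_0\in\tau$.

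The heart of the argument is the claim: whenever $D\cap U_0$ is a basic $\tau_G$-open set with $x\in D\cap U_0\subseteq W$ (so $D\cap U_0\subseteq B$, and we may shrink $U_0$ so that $U_0\subseteq U$), the closure $\overline{D\cap U_0}^{\tau_G}$ meets $\cmp{B}$, and therefore is not contained in $B$, a fortiori not in $W$. To see this, pick $y\in\cmp{B}\cap U_0$, which is non-empty because $\cmp{B}$ is $\tau$-dense in $U\supseteq U_0$; I claim $y\in\overline{D\cap U_0}^{\tau_G}$. Any basic $\tau_G$-neighbourhood of $y$ can be written $C\cap U_1$ with $C\in\mathcal F$, $y\in C\cap U_1$, and, after replacing $U_1$ by $U_1\cap U_0$, with $U_1\subseteq U_0$; then $(C\cap U_1)\cap(D\cap U_0)=C\cap D\cap U_1$, and $C\cap D\in\mathcal F$ is $\tau$-dense, so it meets the non-empty $\tau$-open set $U_1$. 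Hence every $\tau_G$-neighbourhood of $y$ meets $D\cap U_0$, i.e.\ $y\in\overline{D\cap U_0}^{\tau_G}\setminus B$. Now if $V$ is any $\tau_G$-open set with $x\in V\subseteq W$, it contains a basic $\tau_G$-open set $D\cap U_0\ni x$, and $\overline V^{\tau_G}\supseteq\overline{D\cap U_0}^{\tau_G}\not\subseteq W$; thus $(\xx,\tau_G)$ is not regular at $x$, completing the proof.

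I expect the main difficulty to be the density bookkeeping in the filter: the argument relies throughout on members of $\mathcal F$, and finite intersections of them, being $\tau$-dense, which is precisely the $\tau$-consistency of the filters arising in our examples (such as the measure-one filter generated by ${\sf MLR}$). This hypothesis is genuinely needed — a principal filter, which is $\tau$-nontrivial but not $\tau$-consistent, only makes a point isolated and leaves the space metrizable — so it should be carried alongside $\tau$-nontriviality. The remaining points (that $B\cap U\neq\emptyset$, that basic $\tau_G$-open sets take the claimed normal form $D\cap U_0$, and that the $\tau$-open factors may be shrunk without loss) are routine.
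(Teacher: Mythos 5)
Your proposal is correct and takes essentially the same route as the paper: both establish non-regularity of $(\xx,\tau_G)$ from the fact that finite intersections of filter elements are $\tau$-dense, the paper phrasing it as the impossibility of separating a point $x\in D\cap T$ from $\cmp{D}$ by disjoint $\tau_G$-open sets, while you use the closed-neighbourhood form of regularity with the same density computation on basic sets $C\cap U_1$. Your remark that $\tau$-consistency must be carried alongside $\tau$-nontriviality is also on target: the lemma's statement omits it, but the paper's proof uses it (``since $D$ is $\tau$-dense''), and it holds in the intended applications such as Proposition \ref{prop:filter-topology}.
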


\begin{proof}
It suffices to show that $(\xx,\tau_G)$ is not regular.
Since $\mathcal{F}$ is $\tau$-nontrivial, there are $D\in\mathcal{F}$ and $T\in\tau$ such that $T\setminus D$ is $\tau$-dense in $T$.
Then, there exists $x\in D\cap T$ since $D$ is $\tau$-dense.
We claim that there is no disjoint pair of open sets separating $x$ and $\cmp{D}$.
Suppose that $x\in A\cap U$ and $\cmp{D}\subseteq B\cap V$.
As in the proof of Lemma \ref{lem:filter-topology1}, if $A\cap U$ and $B\cap V$ are disjoint, so are $U$ and $V$.
Since $T\setminus D$ is $\tau$-dense in $T$, $V$ must include $T$.
However, $x\in T$ and therefore $U\cap V\not=\emptyset$.
\end{proof}

For a computable ordinal $\alpha$, we say that a generator $G$ is {\em $\Sigma^0_\alpha$-generated} if every element of $G$ is $\Sigma^0_\alpha$ in $\tau$, and the union of a $\tau_G$-open set and a $\Sigma^0_\alpha$-in-$\tau$ set is $\tau_G$-open.
For instance, the complements of Martin-L\"of tests is $\Sigma^0_2$-generated.

If $(\xx,\tau)$ is Polish, the topology $\tau_\alpha$ generated by $\Sigma^0_{\alpha}$ sets in $\tau$ yields a zero-dimensional metrizable topology whenever $\alpha>0$.
This is because the collection of all $\Pi^0_\beta$-sets in $\tau$ for $\beta<\alpha$ forms a basis of the topology $\tau_\alpha$, and each $\Pi^0_\beta$ set is $\tau_\alpha$-clopen.
Therefore, $\tau_\alpha$ has a basis consisting of clopen sets $(C_n)_{n\in\omega}$ and then the metric $d_\alpha$ on $(\xx,\tau_\alpha)$ is given by $d_\alpha(x,y)=2^{-n}$ where $n$ is the least index $n$ such that $C_n$ separates $x$ and $y$.

\begin{lemma}\label{lem:filter-topology}
Let $(\xx,\tau)$ be a Polish space.
If $G$ is $\Sigma^0_\alpha$-generated, then $(\xx,\tau_G)$ is $\sigma$-metrizable.
\end{lemma}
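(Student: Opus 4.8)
The plan is to argue by induction on the computable ordinal $\alpha$, using as a scaffold the topology $\tau_\alpha$ on $\xx$ generated by all $\Sigma^0_\alpha$-in-$\tau$ sets, which by the discussion preceding the statement is zero-dimensional and metrizable, with a countable clopen basis $(C_m)$ consisting of $\Pi^0_{<\alpha}$-in-$\tau$ sets. First I would record the sandwich $\tau\subseteq\tau_G\subseteq\tau_\alpha$: the left inclusion because $\xx\in G$, so every $\tau$-open $U$ is the basic $\tau_G$-open set $\xx\cap U$; the right inclusion because each basic $\tau_G$-open set $G_n\cap U$ is the intersection of a $\Sigma^0_\alpha$-in-$\tau$ set with a $\tau$-open (hence $\Sigma^0_\alpha$) set, so it is $\Sigma^0_\alpha$-in-$\tau$, hence $\tau_\alpha$-open, and in the second-countable setting every $\tau_G$-open set is a countable union of such. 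I would also note the two permanence facts used throughout: $\sigma$-metrizability is inherited by subspaces and by countable unions of subspaces, and if $Y\subseteq\xx$ is a subset for which $\tau_G|_Y$ is generated by $\tau|_Y$ together with a countable family of sets of Borel class $<\alpha$ in $\tau|_Y$, then $\tau_G|_Y$ has exactly the form of the topology in the lemma but one level lower, so the inductive hypothesis applies to $(Y,\tau_G|_Y)$.

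For the induction, the base case $\alpha=1$ is trivial: every $G_n$ is then $\tau$-open, so $\tau_G=\tau$ is metrizable. For the inductive step I would write each generator as an increasing union $G_n=\bigcup_k F_{n,k}$ of sets $F_{n,k}$ of Borel class $<\alpha$ in $\tau$ (possible since $G_n\in\Sigma^0_\alpha(\tau)$), each therefore $\tau_\alpha$-clopen, and then decompose $\xx$ into \emph{countably many} pieces on each of which every generator $G_n$ is realized either by the empty set (outside $G_n$) or by one of its approximants $F_{n,k}$. On such a piece $Y$, the subspace topology $\tau_G|_Y$ is generated by $\tau|_Y$ together with the sets $F_{n,k}\cap Y$, which are of class $<\alpha$; by the permanence fact above, $\tau_G|_Y$ is then $\sigma$-metrizable by the inductive hypothesis, and $(\xx,\tau_G)$ is $\sigma$-metrizable as the countable union of the $Y$'s. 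To produce the pieces one processes the generators in order — splitting off $\xx\setminus G_1$ and the exact-level sets $F_{1,k}\setminus F_{1,k-1}$ handles $G_1$, and one continues so as to pin down $G_2,G_3,\dots$ while keeping the collection of pieces countable. This is exactly where the hypothesis that $G$ is $\Sigma^0_\alpha$-\emph{generated} (and not merely that its elements are $\Sigma^0_\alpha$) is needed: it is what lets one show that the $\tau_G$-neighbourhood structure near any point involves only countably many of these exact-level sets in an essential way, so that a single countable refinement suffices.

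The main obstacle is precisely this last point. The naive construction — intersecting, over all $n$, the countable partitions of $\xx$ by the exact level of a point in $G_n$ — produces continuum-many pieces; on each of them $\tau_G$ does collapse to $\tau$ (hence is metrizable), but there are too many of them for $\sigma$-metrizability. Extracting from the hypothesis the point-countability (or "countably many neighbourhood-filter types") that makes the descent from level $\alpha$ to level $<\alpha$ performable on a countable cover is the heart of the argument; everything else is the routine induction and the elementary permanence properties of $\sigma$-metrizability. Once the level-$<\alpha$ reduction is available on a countable cover, iterating it and invoking the base case $\alpha=1$ completes the proof.
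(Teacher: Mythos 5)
There is a genuine gap, and you have put your finger on it yourself: the entire proposal rests on producing a \emph{countable} cover of $\xx$ on each piece of which every generator $G_n$ restricts to a set of Borel class $<\alpha$, and you never supply that step. You correctly observe that the naive refinement (intersecting the level-decompositions of all the $G_n$) gives continuum-many pieces, and then assert that the hypothesis that $G$ is $\Sigma^0_\alpha$-\emph{generated} yields some point-countability that rescues the construction --- but no argument is given, and it is not at all clear how the hypothesis would give it. A second problem is that your stated inductive hypothesis (``$\tau_G|_Y$ is generated by $\tau|_Y$ together with countably many sets of class $<\alpha$'') silently drops the closure clause that is part of the definition of $\Sigma^0_\alpha$-generated (namely that the union of a $\tau_G$-open set with any $\Sigma^0_\alpha$ set is again $\tau_G$-open). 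The lemma is only claimed under that clause, so even if you produced the countable pieces, the statement you propose to invoke on them is a different, unproved (and unjustified) statement, not an instance of the lemma one level down.

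The actual proof needs no induction on $\alpha$ and uses the closure clause quite differently. Decompose $\xx$ into the set $\mathcal{R}_G$ of points lying in every $A\in G$, together with the countably many sets $\mathcal{S}_e=\xx\setminus A_e$, where $A_e$ is the $e$-th generator. On $\mathcal{R}_G$ one has $\tau_G|_{\mathcal{R}_G}=\tau|_{\mathcal{R}_G}$, which is metrizable. On $\mathcal{S}_e$, for any $\Sigma^0_\alpha$ set $S$ the closure clause gives $A_e\cup S\in\tau_G$, and $(A_e\cup S)\cap\mathcal{S}_e=S\cap\mathcal{S}_e$; hence every $\Sigma^0_\alpha$ set is relatively $\tau_G$-open on $\mathcal{S}_e$, so $\tau_G|_{\mathcal{S}_e}$ coincides with $\tau_\alpha|_{\mathcal{S}_e}$, where $\tau_\alpha$ is the (zero-dimensional, metrizable) topology generated by all $\Sigma^0_\alpha$ sets --- exactly the scaffold you set up but then did not use in this way. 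This exhibits $(\xx,\tau_G)$ as a countable union of metrizable subspaces directly, which is the content your proposal is missing.
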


\begin{proof}
We say that $x$ is {\em $G$-quasi-generic} if $x\in A$ for any $A\in G$.
If $x$ is $G$-quasi-generic, $x\in A\cap U$ if and only if $x\in U$.
Therefore, $(\xx\upto\mathcal{R}_G,\tau_G)$ is homeomorphic to $(\xx\upto\mathcal{R}_G,\tau)$ where $\mathcal{R}_G$ is the set of all $G$-quasi-generic points.

If $x$ is not $G$-quasi-generic, there is $A\in G$ such that $x\not\in A$.
Let $\mathcal{S}_e$ be the set of all $x\in\xx$ such that $x\not\in A_e$ where $A_e$ is the $e$th element of $G$.
We claim that $(\xx\upto\mathcal{S}_e,\tau_G)$ is homeomorphic to $(\xx\upto\mathcal{S}_e,\tau_\alpha)$ for any $e$.
It is clear that the identity map ${\rm id}:(\xx,\tau_\alpha)\to(\xx,\tau_G)$ is continuous, since $G$ is $\Sigma^0_\alpha$-generated, which implies that $\tau_\alpha$ is finer than $\tau_G$.
For any $x\in\mathcal{S}_e$, given $\Sigma^0_\alpha$ set $S$, $x\in A_e$ if and only if $x\in A_e\cup S$.
Since $G$ is $\Sigma^0_\alpha$-generated, we always have $A_e\cup S\in G$ and hence $A_e\cup S$ is $\tau_G$-open.
Therefore, the identity map $(\xx\upto\mathcal{S}_e,\tau_G)\to(\xx\upto\mathcal{S}_e,\tau_\alpha)$ is also continuous.
\end{proof}

\begin{proof}[Proof of Proposition \ref{prop:filter-topology}]
Let $(\xx,\tau)$ be a separable metrizable space and $G$ be a countable $\Sigma^0_\alpha$-generated generator of a $\tau$-consistent $\tau$-nontrivial filter on $\xx$.
For instance, let $(\xx,\tau)$ be Cantor space, and $G$ be the collection of complements of Martin-L\"of tests.
Then, $(\xx,\tau_G)$ is submetrizable, and $\sigma$-metrizable by Lemma \ref{lem:filter-topology}.
Given a representation of $\xx$, consider $\xx_G=(\xx,\beta^G)$.
Then $(\xx,\overline{\beta^G})$ is isomorphic to $(\xx,\overline{\beta})$ by Lemma \ref{lem:filter-topology1}, and this is isomorphic to $(\xx,\beta)$ since $\beta$ is metrizable.
Then, since $\xx_G$ is non-metrizable by Lemma \ref{lem:filter-topology2}, we know that $(\xx,\beta^G)$ is not isomorphic to $(\xx,\overline{\beta^G})$.
\end{proof}

\section{Proofs for Section \ref{sec:further}}\label{sec:quasiminimaltec}
\label{sec:further-proofs}

\subsection{$T_0$-degrees which are not $T_1$} In this section we examine properties of the lower topology and $T_1$-quasi-minimality in this space.

\subsubsection{Quasi-minimality}\label{sec:T_0-deg:quasi-minimal}

We first show basic degree-theoretic properties of the lower reals $\mathbb{R}_<$.
For definitions, see Section \ref{section:t0-nt1-quasiminimality-statements}.

\lemproof{propdichotomy}{
Assume that $\nbaseb{2^\om}{x}\leq_e\nbaseb{<}{y}\oplus\nbaseb{\xx}{z}$.
Then, there is a c.e.\ set $\Phi\subseteq\omega\times 2\times\mathbb{Q}\times\omega$ such that for any $n\in\om$ and $i<2$,
\[x(n)=i\iff(\exists p\in\mathbb{Q})(\exists e\in\om)\;[p<y,\;e\in\nbase{z}\mbox{, and }\langle n,i,p,e\rangle\in\Phi].\]

Suppose that there is $\varepsilon>0$ such that for any $p<y+\varepsilon$ and $e\in\nbase{z}$, $\langle p,e,n,i\rangle\in\Phi$ implies $x(n)=i$.
In this case, fix a rational $q\in\mathbb{Q}$ such that $y<q<y+\varepsilon$.
Then,
\[x(n)=i\iff(\exists p<q)(\exists e\in\om)\;[e\in\nbase{z}\mbox{, and }\langle n,i,q,e\rangle\in\Phi].\]
This shows $\nbaseb{2^\om}{x}\leq_e\nbaseb{\xx}{z}$.
Hereafter we assume that $y$ is irrational; otherwise $y$ is computable and thus $\nbaseb{<}{-y}\leq_e\nbaseb{\xx}{z}$ trivially holds.

Otherwise, for all $\varepsilon>0$, there are $p<y+\varepsilon$ and $e\in\nbase{z}$ such that $\langle n,i,p,e\rangle\in\Phi$ but $x(n)\not=i$.
We claim that $y<p$ if and only if there are $n,i,p,d,e$ such that
\[q<p\ \ \&\ \ d,e\in\nbase{z}\ \ \&\ \ \langle n,1-i,q,d\rangle\in\Phi\ \ \&\ \ \langle n,i,p,e\rangle\in\Phi.\]

If $y<p$ is not true, we have $p<y$ since $y$ is irrational, and then $q<p$ implies $q<p<y$.
Since $\nbaseb{2^\om}{x}\leq_e\nbaseb{<}{y}\oplus\nbaseb{\xx}{z}$, for any $d,e\in\nbase{z}$, whenever $\langle n,i,q,d\rangle$ and $\langle n,j,p,e\rangle$ are enumerated into $\Phi$, we must have $i=j$.
If $y<p$, then by our assumption, there are $\hat{p}<p$ and $e\in\nbase{z}$ such that $\langle n,i,\hat{p},e\rangle\in\Phi$ but $x(n)\not=i$.
By monotonicity, one can assume that $\hat{p}=p$.
Since $\nbaseb{2^\om}{x}\leq_e\nbaseb{<}{y}\oplus\nbaseb{\xx}{z}$, there also exist $q<y$ and $d\in\nbase{z}$ such that $\langle n,j,q,d\rangle\in\Phi$ and $x(n)=j$, i.e., $j=1-i$.
This verifies the claim.

Now note that $y<p$ if and only if $-p\in\nbaseb{<}{-y}$.
Thus, by the above claim, we conclude $\nbaseb{<}{-y}\leq_e\nbase{z}$.
}

\propproof{prop:right-ce}{Fix $x\in\mathbb{R}_<$.
If $x$ is rational, then $x$ satisfies all conditions (1)--(4).
We now assume that $x$ is irrational.
Clearly, the conditions (2) and (3) are equivalent.
We show the equivalence of (1) and (3).
If $x$ is right-c.e., $\nbaseb{\mathbb{R}_<}{-x}$ is c.e., and thus $\cmp{\nbaseb{<}{x}}$ is c.e.
Hence, $\nbaseb{<}{x}\oplus\cmp{\nbaseb{<}{x}}\equiv_e \nbaseb{<}{x}$.
Thus, $\nbaseb{<}{x}$ is total.
Conversely, if $A\oplus\cmp{A}\equiv_e\nbaseb{<}{x}$ for a set $A\subseteq\mathbb{N}$, by Lemma \ref{propdichotomy}, we have $A\oplus\cmp{A}$ is c.e.\ (thus $\nbaseb{<}{x}$ is c.e.)\ or $\nbaseb{<}{-x}$ is c.e.
In other words, $x$ is either left- or right-c.e.
The equivalence of (3) and (4) follows from Lemma \ref{propdichotomy}.}

Here, we review the definition of function $n$-genericity as introduced by Copestake \cite{copestake}.
Fix a new symbol $\bot\not\in\omega$, and assume that $\omega\cup\{\bot\}$ is endowed with the discrete topology, that is, fix an effective bijection between $\om$ and $\om\cup\{\bot\}$.
Then, $(\omega\cup\{\bot\})^\omega$ is effectively homeomorphic to Baire space $\omega^\omega$.
Thus, the $(\omega\cup\{\bot\})^\omega$-degrees are the total degrees.
Indeed, for any $g\in(\omega\cup\{\bot\})^\omega$,
\[{\rm Graph}(g)\equiv_e\{\langle n,m+1\rangle:g(n)=m\}\cup\{\langle n,0\rangle:g(n)=\bot\}.\]

From $g\in(\omega\cup\{\bot\})^\omega$ we get a partial function $\hat{g}:\subseteq\om\to\om$ by interpreting $\bot$ as ``undefined''.
Then,
\[{\rm Graph}(\hat{g})=\{\langle n,m\rangle\in\om^2:g(n)\not=\bot\mbox{, and }g(n)=m\}.\]


\begin{definition}\label{def:enumeration-n-generic}
We say that $G\subseteq\om$ is {\em function $n$-generic} if it is of the form ${\rm Graph}(\hat{g})$ for some $n$-generic point $f$ in the Baire space $(\omega\cup\{\bot\})^\omega$.
\end{definition}

As shown by Copestake, every function $n$-generic computes an enumeration $n$-generic, but the notions are distinct.
Note that ${\rm Graph}(\hat{g})\leq_e{\rm Graph}(g)$ is always true, but ${\rm Graph}(g)\leq_e{\rm Graph}(\hat{g})$ is not necessarily true.

\propproof{prop:lower-unbounding}{Suppose that ${\rm Graph}(g)\leq_e\nbaseb{<}{x}$ for a partial function $\hat{g}:\subseteq\om\to\om$.
Then, there is a c.e.~set $\Phi$ such that $\hat{g}(n)\downarrow$ if and only if $p<x$ for some $\langle n,p\rangle\in\Phi$.
For each $n$ we let $\theta(n)=\inf \{q\in\mathbb{Q}:(n,q)\in \Phi\}$.
There is an increasing sequence $\{n_k\}_{k\in\omega}$ such that $\{\theta(n_k)\}_{k\in\omega}$ is a monotonic sequence of reals.
Note that the relation $\theta(n)\leq\theta(m)$ is $\Pi^0_2$.
Therefore, such a sequence $\{n_k\}_{k\in\omega}$ can be found computably in $\emptyset''$.
Now it is easy to check that $\hat{g}(n_k)$ is either defined for finitely many or for co-finitely many $k\in\omega$.
Consider $S^0_\ell=\{f\in(\omega\cup\{\bot\})^\omega:(\exists k>\ell)\;f(n_k)=\bot\}$ and $S^1_\ell=\{f\in(\omega\cup\{\bot\})^\omega:(\exists k>\ell)\;f(n_k)\in\omega\}$.
Note that $g$ is not contained in $S^0_\ell$ or $S^1_\ell$ for any sufficiently large $\ell$.
However, $S^0_\ell$ and $S^1_\ell$ are dense $\emptyset''$-open sets with respect to the Baire topology on $(\omega\cup\{\bot\})^\omega$.
Consequently, $\hat{g}$ is not function $2$-generic.}

\propproof{qminimalbound}{
To prove Proposition \ref{qminimalbound}, we need the following lemma:

\begin{lemma}\label{lem:rightrenotleftre}
Given non-computable c.e.\ sets $A,B\subseteq\om$, there are left-c.e.~reals $z\leq_TA$ and $y\leq_TB$ such that $y-z$ is neither left- nor right-c.e.
\end{lemma}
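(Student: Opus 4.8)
The plan is a finite-injury priority construction producing nondecreasing computable sequences $(z_s)_{s\in\om}$ and $(y_s)_{s\in\om}$ of dyadic rationals with $z_0=y_0=0$; we then set $z=\lim_sz_s$ and $y=\lim_sy_s$, which are left-c.e.\ by monotonicity, and we write $d_s=y_s-z_s$, $d=y-z$, so that $d$ is a difference of two left-c.e.\ reals. The requirements are, for every $e\in\om$,
\[L_e:\ W_e\neq\{q\in\mathbb{Q}:q<y-z\},\qquad R_e:\ W_e\neq\{q\in\mathbb{Q}:q<z-y\},\]
whose satisfaction makes $d$ neither left- nor right-c.e. Alongside the diagonalization we impose a permitting discipline: $z$ may be increased at a stage only in response to $A$ enumerating a fresh element below a currently relevant marker, and $y$ may be increased only in response to such a $B$-permission. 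From this discipline the reductions $z\leq_TA$ and $y\leq_TB$ will follow by the standard permitting computation.

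First I would describe the module for $L_e$, a permitting variant of the usual strategy forcing a difference of left-c.e.\ reals to avoid being left-c.e. When started (or restarted after a higher-priority injury), $L_e$ fixes a fresh, large gap parameter $k$ and monitors whether $W_e$ ever enumerates a rational $q$ lying in a narrow window just below the current value of $d$. While this does not happen the value of $d$ stays bounded away from any such enumerated rational, and eventually a permanent rational $q_0$ with $q_0<d$ and $q_0\notin W_e$ can be designated, witnessing $q_0\in\{q:q<d\}\setminus W_e$ and meeting $L_e$. If instead such a $q$ is found, $L_e$ appoints a use marker $p$ and waits for $A$ to enumerate an element $\leq p$; upon this permission it increases $z$ by less than $2^{-k}$, pushing $d$ strictly below $q$, and imposes the restraint $d\leq q$ from then on, so that $q\in W_e\setminus\{r:r<d\}$ and $L_e$ is met. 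The only way $L_e$ could be driven to request permission infinitely often is for $W_e$ to keep enumerating such rationals while $A$ grants nothing; but then the computable record of requests and their markers would compute $A$, contradicting its non-computability. Hence $L_e$ acts only finitely often and is eventually satisfied. The module for $R_e$ is the mirror image under the symmetry $z\leftrightarrow y$, $A\leftrightarrow B$, $d\leftrightarrow-d$: a witness $q$ with $-q$ just above $d$, a reaction that \emph{raises} $d$ above $-q$ by increasing $y$ on a $B$-permission, and the dual appeal to the non-computability of $B$.

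For the verification I would put the requirements in a priority list and argue, by simultaneous induction, that each requirement is injured only finitely often by higher-priority actions, re-chooses at each injury a fresh witness and a fresh gap parameter inside the window left free by the restraints currently in force, acts only finitely often, and is eventually met; since every action moves $d$ by less than its current gap $2^{-k}$, with the gaps drawn from a fixed summable computable schedule, the sequences $(z_s)$ and $(y_s)$ have summable total variation, so $z$ and $y$ converge, $d_s\to d$, and the nesting of gap sizes keeps every window of positive length. It then follows that the two left cuts $\{q:q<y-z\}$ and $\{q:q<z-y\}$ each differ from every $W_e$, so $y-z$ is neither left- nor right-c.e. The reduction $z\leq_TA$ is obtained as usual: the construction is arranged so that every increase of $z$ affecting its first $n$ bits is $A$-permitted through a marker bounded by a computable function of $n$, so those bits of $z$ are found by running the construction until $A$ has settled below that bound; $y\leq_TB$ is symmetric. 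I expect the genuine difficulty to lie precisely in this last point: organizing the markers and the priority/window management so that, simultaneously, the restraints on the single quantity $d=y-z$ stay mutually consistent and leave room for lower-priority witnesses, and the number of permission requests per requirement is computably bounded, which is what makes the permitting computations of $z$ from $A$ and of $y$ from $B$ actually go through. One should also keep the fixed gaps in place to exclude the degenerate possibility that $d$ converges exactly to a witness value.
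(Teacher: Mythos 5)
Your overall plan coincides with the paper's: two left-c.e.\ reals built by a finite-injury permitting construction, requirements diagonalizing both cuts of the difference against every $W_e$, restraints kept consistent by a summable schedule of quanta, and the outcome ``infinitely many unpermitted requests would compute the oracle'' guaranteeing eventual permission. The paper's trigger is phrased slightly differently (its strategy first raises the difference by $2^{-n}$ by adding to the \emph{other} real and waits for $W_e$ to confirm the cut up to precision $n+1$, rather than waiting for $W_e$ to enter a window just below the current difference), but that is inessential.

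The genuine gap sits exactly where you flag it, and the remedy you propose does not work. In your design each incarnation of $L_e$ keeps a fixed quantum $2^{-k}$ but appoints a fresh, ever larger marker $p$ for each new request. With that bookkeeping the permitting computation of $z$ from $A$ fails: an increase of size about $2^{-k}$ can be licensed arbitrarily late by an $A$-change below an arbitrarily large marker, so settling any computably bounded initial segment of $A$ does not freeze the first $k$ bits of $z$; your stated invariant that every increase affecting the first $n$ bits is permitted through a marker bounded by a computable function of $n$ is simply not arranged by the strategy as described. If, to save the reduction, you instead bound the markers computably in $k$, then ``infinitely many unpermitted requests'' only certifies that a fixed initial segment of $A$ has settled, the contradiction with the noncomputability of $A$ evaporates, and a strategy can wait forever unpermitted while $W_e$ really is the left cut. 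Nor can you escape by ``computably bounding the number of permission requests per requirement'': there is no computable bound on how long a noncomputable c.e.\ set takes to permit. The paper resolves the tension by tying the quantum to the marker: whenever a confirmed attack with parameter $n$ is not yet permitted, it is kept pending but a new attack with a fresh larger parameter $n'$ is launched, so the prospective increase shrinks to $2^{-n'}$ exactly as the required change moves up to $A\upharpoonright n'+1$. This yields $z^i=\Gamma_i^{A_i}$ with use $\gamma_i(n)=n$ (so $z\leq_T A$ and $y\leq_T B$ are immediate), while in the bad outcome the escalating parameters $n^1<n^2<\cdots$, with $A\upharpoonright n^k+1$ never changing after the corresponding confirmation, do compute $A$, which is the contradiction you need.
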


\begin{proof}
Given non-computable c.e.\ sets $A_0,A_1\subseteq\om$, we construct a c.e.~reals $z^i=\Gamma_i^{A_i}$ with $A_i$-use $\gamma_i(n)=n$.
The $(e,i)$-th strategy $\mathcal{R}_{e,i}$ tries to diagonalize $W_e={\rm Left}(z^{1-i}-z^i)$, where ${\rm Left}(z)=\{q\in\mathbb{Q}:q\leq z\}$.
We describe the action of $\mathcal{R}_{e,i}$ at stage $s$.
If this is the first action of $\mathcal{R}_{e,i}$ after its initialization, choose a large number $n_{e,i}$ which is bigger than all numbers $+2$ mentioned in previous stages $<s$.
Then, put $z^{1-i}_{s+1}=z^{1-i}_s+2^{-n_{e,i}}$.
Wait for ${\rm Left}(z^{1-i}-z^i)\upharpoonright n_{e,i}+1\subseteq W_e$.
Here, ${\rm Left}(z^{1-i}-z^i)\upharpoonright n$ is defined as ${\rm Left}(z)$ for a dyadic rational $z=0.(\sigma\upharpoonright n)000\dots$, where $\sigma$ is a unique binary string satisfying $z^{1-i}-z^i=0.\sigma 000\dots$ (note that such $\sigma$ exists since our strategy ensures that $z^i$ and $z^{1-i}$ are dyadic rationals).
If it happens, wait for the change of $A_i\upharpoonright n_{e,i}+1$, choose a fresh large number $n_{e,i}'$, injure all lower priority strategies, and go back to the first step with $n_{e,i}'$.
If we see the change of $A_i\upharpoonright n_{e,i}+1$ at stage $t\geq s$, the strategy $\mathcal{R}_{e,i}$ acts by putting $z^{i}_{t+1}=z^{i}_t+2^{-n_{e,i}}$, and stop the action of $\mathcal{R}_{e,i}$.

If ${\rm Left}(z^{1-i}-z^i)\upharpoonright n_{e,i}+1\subseteq W_e$ does not happen, then the requirement is clearly fulfilled.
If it happens with an infinite increasing sequence $(n_{e,i}^k)_{k\in\omega}$ since $A_i\upharpoonright n_{e,i}^k+1$ never changes for all $k\in\omega$, then $A_i$ is computable.
It contradicts the choice of $A_i$.
Hence, $\mathcal{R}_{e,i}$ acts with $n_{e,i}^k$ for some $k$.
If it is never injured after the action, then the requirement is fulfilled sine the sum of weights added to $z^{1-i}$ by all lower priority strategies is less than $2^{-n_{e,i}}$.
Consequently, $z^{1-i}-z_i$ is neither left- nor right-c.e.
\end{proof}

It suffices to show that if $r\in\mathbb{R}_<$ is right-c.e., but not left-c.e., then $r$ bounds a quasi-minimal $e$-degree.
As mentioned in the paragraph below Proposition \ref{prop:right-ce}, the $\mathbb{R}_<$-degrees of right-c.e.\ reals are exactly the c.e.\ Turing degrees.
Therefore, by Lemma \ref{lem:rightrenotleftre}, for any right-c.e.~real $r\in\mathbb{R}$ if $r$ is not left-c.e., we have right-c.e.~reals $y,z\in\mathbb{R}$ such that $y-z$ is neither left- nor right-c.e., and $\nbase{y},\nbase{z}\leq_e\nbaseb{<}{r}$.
Put $x=y-z$.
Then, $\nbase{x}\leq_e\nbaseb{<}{r}$.
By Lemma \ref{propdichotomy}, $\nbaseb{<}{x}$ is quasi-minimal.}

\subsubsection{Degree Structure}

We examine the degree structure of the lower reals $\mathbb{R}_<$.
For definitions, see Section \ref{sec:4-1-2}.

\propproof{semirecjumpinversion}{
It suffices to find a real $x\in\mathbb{R}_<$ such that $\nbaseb{<}{x}$ is quasi-minimal and $J_\xx(x)\equiv_T C$.
First note that every c.e.~open set in $\mathbb{R}_<$ is of the form $\bigcup_{q\in W_e}\{y:y>q\}$ for a c.e.~set $W_e\subseteq\mathbb{Q}$.
Clearly, $r_e=\inf W_e$ is right-c.e.
Then, $J_{\mathbb{R}_<}(x)=\{e\in\omega:r_e<x\}$.
Assume that $x$ is not right-c.e.
Then, either $x<r_e$ or $x>r_e$ holds.
Since $r_e\leq_T\emptyset'$ in $2^\omega$, we have $J_{\mathbb{R}_<}(x)\equiv_Tx\oplus\emptyset'$ (i.e., $\nbaseb{2^\om}{J_{\mathbb{R}_<}(x)}\equiv_e\nbaseb{\mathbb{R}}{x}\oplus K\oplus\cmp{K}$).
Now, by the Friedberg jump inversion theorem in $2^\omega$, there is a $1$-generic real $z\in\mathbb{R}$ such that $z'\equiv_Tz\oplus\emptyset'\equiv_TC$.
By $1$-genericity, $z$ is neither left-c.e.~nor right-c.e.
By Lemma \ref{propdichotomy}, $\nbaseb{<}{z}$ is semirecursive and quasi-minimal.
Moreover, we have $J_{\mathbb{R}_<}(z)\equiv_Tz\oplus\emptyset'\equiv_TC$ since $z$ is not right-c.e.
Note that $J_{\mathbb{R}_<}(z)$ is $e$-equivalent to ${\rm EJ}(\nbaseb{<}{z})$ as mentioned above.
Consequently, $A=\nbaseb{<}{z}$ satisfies the desired property.
}

\propproof{prop:rlowernotupsemi}{
Assume that $x$ is not $\Delta^0_2$, and that $\nbaseb{<}{x}\oplus\nbaseb{<}{-x}\leq_e\nbaseb{<}{y}$. Then on the one hand, $\nbaseb{<}{y}$ is not quasi-minimal. On the other hand, $y$ is not $\Delta^0_2$ and hence neither left- nor right-c.e. But this contradicts Lemma \ref{propdichotomy}.}

\begin{lemma}\label{prop:minimalpairrelativized}
Let $A,B\subseteq\om$ be c.e.~sets, and $y\in\mathbb{R}_<$.
Suppose that $\nbaseb{<}{y}\leq_e A\oplus\cmp{A}$ and $\nbaseb{<}{y}\leq_e B\oplus\cmp{B}$.
Then there exists a total function $h\in\omega^\omega$ such that $h\leq_T A,B$ and $y$ is left-c.e.~relative to $h$.
\end{lemma}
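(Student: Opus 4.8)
The plan is to turn the two enumeration reductions into mutually compatible left-approximations of $y$, exploiting that $A$ and $B$ are c.e. Fix enumeration operators $\Phi,\Psi$ with $\Phi^{A\oplus\cmp A}=\nbaseb{<}{y}=\Psi^{B\oplus\cmp B}$, together with computable enumerations $A=\bigcup_sA_s$ and $B=\bigcup_sB_s$. For each $s$ let $p^A_s$ be the supremum of the rationals that $\Phi$ enumerates from the finite oracle $A_s\oplus(\om\setminus A_s)$ within $s$ steps, and define $p^B_s$ dually; these are computable rational-valued functions of $s$. A valid $\Phi$-axiom for a rational $q<y$ has its (finite) positive part absorbed into $A_s$ from some stage on, and its (finite) negative part disjoint from $A\supseteq A_s$ forever, so $q\le p^A_s$ for all large $s$; hence $\liminf_sp^A_s\ge y$, and likewise $\liminf_sp^B_s\ge y$. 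Using the oracle $A$ one can recognise which currently active $\Phi$-axioms are genuinely valid, so the trimmed value $\underline p^A_s=\sup\{q:q \text{ has a currently active } A\text{-valid axiom}\}$ is $A$-computable in $s$, nondecreasing, and $\underline p^A_s\nearrow y$; symmetrically $\underline p^B_s\nearrow y$ is $B$-computable in $s$. (This is just a restatement of the hypothesis; the point will be to extract from it a single oracle, below both $A$ and $B$, that still computes such a sequence.)

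Thus the goal reduces to building $h$ with $h\le_TA$, $h\le_TB$, and a nondecreasing $h$-computable rational sequence converging to $y$ — equivalently, $\nbaseb{<}{y}$ c.e.\ in $h$. I would do this by a (finite-injury) permitting construction. The set coded by $h$ will record, for a computable list of target rationals, the stages at which successive targets become \emph{doubly confirmed}: we advance the coded left-approximation past the next target only once both $\underline p^A$ and $\underline p^B$ have passed it, and we commit into $h$ only bits each of which is simultaneously permitted by an element entering $A$ and an element entering $B$. Since $\underline p^A$ is $A$-computable, the evidence that it has passed a given rational comes with a finite use in $A$ whose elements have by then been enumerated — an $A$-permission — and dually for $B$; so whenever a target below $y$ must be certified, the required permissions do eventually arrive from both sides. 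Placing the markers for the $n$th coding datum above all use-bounds seen so far guarantees that $h$ below $n$ can change only when $A$ (resp.\ $B$) changes below the corresponding bound, whence $A$ (resp.\ $B$) can compute $h$ by waiting for that bound to settle; so $h\le_TA$ and $h\le_TB$. Reading the coded rationals off $h$ yields the desired left-approximation of $y$.

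The delicate point, and the step I expect to be the real obstacle, is precisely this coordination: arranging the marker placement and the confirmation discipline so that "$h\le_TA$", "$h\le_TB$", and "every $q<y$ is eventually certified into the $h$-computable left-approximation" hold simultaneously. One must check that the double-permission requirement never deadlocks — that for each target $q<y$ the $A$-side and the $B$-side each issue, infinitely often, a permission usable for the currently active marker without injuring the already-fixed portion of $h$. The $\liminf$ and monotone-convergence facts from the first paragraph are exactly what make such permissions available infinitely often; converting that availability into a correctly bookkept finite-injury (or, if necessary, $\Pi^0_2$-priority) argument is where the work lies.
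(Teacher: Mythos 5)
Your first paragraph is fine, but it is essentially a restatement of the hypotheses: each oracle \emph{separately} yields a nondecreasing rational sequence converging to $y$. The whole content of the lemma is the step you explicitly defer, namely producing a single $h$ that is computable from $A$ and from $B$ separately while still coding a left-approximation of $y$, and the mechanism you propose (double permitting with markers placed above use-bounds) does not deliver it as described. Note that no \emph{generic} double-permitting argument can possibly work: $A$ and $B$ may form a minimal pair of c.e.\ degrees, in which case anything you construct below both oracles is outright computable, so "permissions eventually arrive from both sides because of the liminf facts" cannot by itself be the engine of the proof — those liminf facts hold in every instance, including ones where the only admissible $h$'s are computable and the conclusion then asserts that $y$ is plainly left-c.e. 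You flag the possible deadlock of the double-permission discipline yourself and leave it unresolved; that is precisely where the theorem lives, so the proposal has a genuine gap at its crucial point.

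For contrast, the paper's argument uses no injury and no permitting. Writing $\alpha_s$ and $\beta_s$ for the stage-$s$ suprema of the two approximations (computed from the c.e.\ approximations of $A$ and $B$), it defines stages $s_0<s_1<\cdots$ and rationals $q_n=\min\{\alpha_{s_n},\beta_{s_n}\}$ chosen so that for every $t\geq s_n$ at least one of $\alpha_t\geq q_n$ or $\beta_t\geq q_n$ holds, and sets $h(n)=s_n$. The key observation, which exploits the linear order on $\mathbb{R}$ rather than any permitting combinatorics, is that each oracle can recognize these stages by a search that is unbounded only on its \emph{own} side: with oracle $A$ one finds a true $A$-computation producing a rational $\geq q[u]$ together with a stage $v$ after which the $A$-approximation is correct on its use (so $\alpha_t\geq q[u]$ for all $t\geq v$), and then one only needs to inspect $\beta_t$ for the finitely many $t$ with $u\leq t\leq v$; the symmetric argument works with oracle $B$. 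Thus the same confirmation sequence is $A$-computable and $B$-computable, and reading off the $q_n$ from $h$ gives the increasing sequence with supremum $y$. This "one-sided settling plus finite check of the other side" device is exactly the idea your sketch is missing and what replaces your unproved no-deadlock claim; if you want to salvage your write-up, you should abandon the permitting framing and prove directly that such mutually recognizable confirmation stages exist and are computable from each oracle.
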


\begin{proof}
Suppose that $\nbaseb{<}{y}\leq_e A\oplus\cmp{A}$ and $\nbaseb{<}{y}\leq_e B\oplus\cmp{B}$.
Then, there are computable functions $W,V:2^\omega\to\mathbb{Q}^\omega$ such that $y=\sup W^A=\sup V^B$.
Let $\alpha_s=\sup W^A[s]$ and $\beta_s=\sup V^B[s]$, where $W^A[s]=(W(A[s])(n))_{n<s}$ and $V^B[s]=(V(B[s])(n))_{n<s}$.
Define $s_{0}$ to be the least stage such that for every $t\geq s_0$ we have $\alpha_t\geq q_0$ or $\beta_t\geq q_0$, where $q_0=\min\{\alpha_{s_0},\beta_{s_0}\}$.
Similarly define $s_{n+1}>s_n$ and $q_{n+1}>q_n$. Let $h(n)=s_n$.
Clearly $h$ is total.
We claim that $h$ is computable from both $A$ and $B$.
For each stage $u>s_n$, compute $q[u]=\min\{\alpha_{u},\beta_{u}\}$, and use $A$ to compute stage $v\geq u$ such that $\alpha_t\geq q[u]$ for all $t\geq v$.
Then, check whether $\beta_t\geq q[u]$ for each $t$ such that $u\leq t\leq v$.
Clearly, for the least such $u>s_n$ with $q[u]>q_n$, we have $u=s_{n+1}$.
By the same argument, $B$ can also compute $h$.
Now given $h$ we can recover the increasing sequence $q_n$ with limit $y=\sup W^A=\sup V^B$.
\end{proof}

\propproof{thm:non-lower-semilattice}{
Lachlan and Yates (see \cite[Corollary IX.3.3]{SoareBook}) proved the existence of c.e.~sets $A,B$ such that for any (not necessarily c.e.) set $H\leq_T A,B$ there exists a c.e.~set $C$ such that $H<_T C\leq_T A,B$.
We claim that for $x=\sum_{n\in \cmp{A}} 2^{-n}$ and $y=\sum_{n\in \cmp{B}} 2^{-n}$, $\nbaseb{<}{x}$ and $\nbaseb{<}{y}$ have no greatest lower bound in $\mathcal{D}_{\mathbb{R}_<}$.
Let $z\in\mathbb{R}_<$ be any point such that $\nbaseb{<}{z}\leq_e\nbaseb{<}{x},\nbaseb{<}{y}$.
Since $A$ and $B$ are c.e., $\nbaseb{<}{x}\equiv_e A\oplus\cmp{A}$ and $\nbaseb{<}{y}\equiv_e B\oplus\cmp{B}$.
Thus, we can apply Lemma \ref{prop:minimalpairrelativized} to obtain some total function $h\leq_T A,B$ such that $\nbaseb{<}{z}\leq_e\nbaseb{\om^\om}{h}$.
Since $A$ and $B$ have no greatest lower-bound in the c.e.\ Turing degrees $\mathcal{R}_T$ and $h$ is total, there is a c.e.\ set $C$ such that $h<_TC<_TA,B$.
Therefore, if we set $w=\sum_{n\in\cmp{C}}2^{-n}$, then $\nbaseb{<}{w}\equiv_eC\oplus\cmp{C}$; hence,
\[\nbaseb{<}{z}<_e\nbaseb{<}{w}<_e\nbaseb{<}{x},\nbaseb{<}{y}.\]

Consequently, there is no $z$ such that $\nbaseb{<}{z}$ can be a greatest lower bound of $\nbaseb{<}{x}$ and $\nbaseb{<}{y}$.}

\thmproof{thm:lower-minimal}{
We show that given any $X\subseteq \omega$ not c.e., there is some $y\in\mathbb{R}_<$ such that $y$ is not left-c.e., and $\nbaseb{<}{y}\leq_e X$.

If $0.X=\sum _{n\in X} 2^{-n}$ is not left-c.e., consider the real $y=0.X$.
Otherwise assume that $0.X$ is left-c.e. In this case, $X$ is $\Delta^0_2$. Furthermore we may fix an approximation $X_s$ of $X$ such that if $n$ leaves $X_s$ then there is some $m<n$ which is enumerated in $X_s$ at the same time.

Let $(\alpha_e)_{e\in\omega}$ be an effective enumeration of all left-c.e.~reals.
We wish to construct a some left c.e.~real $y$ relative to $X\in\mathbb{S}^\omega$ such that $y\neq \alpha_e$ for all $e\in\omega$.
In other words, we will construct a c.e.~set $\Phi$ of pairs of rationals and finite subsets of $\omega$ such that $y=\sup\{q:(\exists D\subseteq X)\;(q,D)\in\Phi\}$.
At stage $s$, the $e$-th strategy for $e\leq s$ is eligible to act with parameters $y_{e,s}$, $m_{e,s}$, $s^-_{e,s}$, $s_{e,s}$ and $p_{e,s}$.
Here, $y_{-1,s}=m_{-1,s}=t_{-1,s}=0$ for all $s$.

At substage $e\leq s$, the $e$-th strategy acts as follows:
\begin{enumerate}
\item If this is the first action of the $e$-th strategy after its initialization, set
\begin{align*}
m_{e,s+1}&=m_{e-1,s+1}+2^{p_{e-1,s+1}}+1,\\
y_{e,s+1}&=y_{e-1,s+1}+2^{-m_{e,s}-1}.
\end{align*}
Put $p_{e,s+1}=p_{e-1,s+1}$, and $s_{e,s+1}=s+1$.
Initialize all lower priority strategies, and go to stage $s+1$.
Otherwise, go to step (2).
\item If the $e$-th strategy is active, go to step (3).
If the $e$-th strategy is inactive because of the previous action in (3b), go to step (5).
\item Check whether $X_{s_{e,s}}\cap[p_{e-1,s},p_{e,s})\subseteq X_s$.
\begin{enumerate}
\item If yes, and go to step (4).
\item If no with $X_{s_{e,s}^-}\cap[p_{e-1,s},p_{e,s}-1)\not\subseteq X_s$, define $y_{e,s+1}$ as the current value of $\Phi(X_s)$ at stage $s$, and put $s^-_{e,s+1}=s^-_{e,s}$, $s_{e,s+1}=s_{e,s}$ and $p_{e,s+1}=p_{e,s}$.
This strategy is shifted into the inactive state.
Initialize all lower priority strategies, and go to stage $s+1$.
\item Otherwise, define
\[y_{e,s+1}=y_{e,s}+2^{-m_{e,s}-1},\]
and maintain the computation by enumerating $(y_{e,s+1},X_{s_{e,s+1}}\upharpoonright p_{e,s})$ into $\Phi$.
Put $s^-_{e,s+1}=s^-_{e,s}$, $s_{e,s+1}=s$ and $p_{e,s+1}=p_{e,s}$.
Initialize all lower priority strategies, and go to stage $s+1$.
\end{enumerate}
\item Check whether $\alpha_{e,s}\geq y_{e,s}-2^{-m_{e,s}-2}$.
\begin{enumerate}
\item If yes, put $p_{e,s+1}=p_{e,s}+1$, and
\begin{align*}
m_{e,s+1}&=m_{e-1,s+1}+2^{p_{e,s+1}}+1,\\
y_{e,s+1}&=y_{e,s}+2^{-m_{e,s+1}-1}.
\end{align*}
Moreover, put $s^-_{e,s+1}=s_{e,s}$, $s_{e,s+1}=s$ and $p_{e,s+1}=p_{e,s}+1$.
Then, enumerate $(y_{e,s+1},X_{s_{e,s+1}}\upharpoonright p_{e,s+1})$ into $\Phi$.
Initialize all lower priority strategies, and go to stage $s+1$.
\item If no, go to substrategy $e+1$ unless $e=s$.
If $e=s$, then go to stage $s+1$.
\end{enumerate}
\item Check whether $X_{s-1}\cap[p_{e-1,s},p_{e,s})\subseteq X_s$.
\begin{enumerate}
\item If yes, do the action described in (4b).
\item If no, check whether $\Phi(X_s)<y_s$.
If yes, the $e$-th strategy keeps on being in the inactive state, and do the action described in (3b).
If no, we must have stage $t<s$ such that $X_t\subseteq X_s$ and the $e$-th strategy is active at stage $t$.
We recover the parameters for the last such stage $t<s$, and the $e$-th strategy is shifted into the active state.
\end{enumerate}
\end{enumerate}

Suppose that the $e$-th strategy is never injured after stage $t$.
In this case, any parameter for $e'<e$ never changes after stage $t$.
If (4a) happens infinitely often, then (3b) never happens after $t$.
Hence, we have a computable $\subseteq$-increasing sequence $(X_{s^-_{e,s}}\cap[p_{e-1,s},p_{e,s}-1))_{s\geq t}$ converging to $X\cap\{n:n\geq p_{e-1,s}\}$ since $p_{e-1,s}=p_{e-1,t}$ for all $s\geq t$, and moreover, $s_{e,s}^-$ and $p_{e,s}$ are nondecreasing, and tend to infinity.
Therefore, $X$ is c.e.; however, it is impossible by our assumption.
Thus, (4a) never happens after some stage $t'\geq t$.
Then, $p_{e,s}=p_{e,t'}$, $m_{e,s}=m_{e,t'}$, $s^-_{e,s}=s^-_{e,t'}$ and $\alpha_{e,s}\leq {y}_{e,s}-2^{-m_{e,s}-2}$ for all $s\geq t'$.
Suppose that (3b) never happens.
First assume that $X_{s_{e,s}}\cap[p_{e-1,s},p_{e,s})\subseteq X_s$.
If $X_{s_{e,s}}\upharpoonright p_{e-1,s}\not\subseteq X_s$, then a higher priority strategy acts with (5b), which is impossible by our assumption.
Thus, $X_{s_{e,s}}\upharpoonright p_{e,s}\subseteq X_s$.
Hence, our computation is maintained, that is, $\Phi(X_{s_{e,s}}\upharpoonright p_{e,s})\geq {y}_{e,s}$.
If $X_{s_{e,s}}\cap[p_{e-1,s},p_{e,s})\not\subseteq X_s$, the $e$-th strategy maintains the computation at (3c) since (3b) never happens.
By monotonicity of our approximation of $X$, (3c) can happen only finitely often after stage $t'$.
Fix stage $t''\geq t'$ such that (3c) never happens after stage $t''$.
Then, $y_{e,s}=y_{e,t''}$ for all $s\geq t''$.
Consequently, $\Phi(X)\geq \Phi(X_{s_{e,t''}}\upharpoonright p_{e,t''})\geq y_{e,t''}>{y}_{e,t''}-2^{-m_{e,t''}-2}\geq\alpha_e$.

If (3b) happens, $X_{s^-_{e,s}}\cap [p_{e-1,s},p_{e,s}-1)\not\subseteq X_s$ at some stage $s\geq t$.
Let $u\geq t$ be the least such stage, and let $u'<u$ be the last stage when (4a) happened.
Then, $\alpha_e\geq y_{e,u'}-2^{-m_{e,u'}-2}$.
Note that $s^-_{e,u}<u'$ is the last stage when either (3c) or (4a) happens before $u'$.
Thus, $X_{s^-_{e,u}}\cap[p_{e-1,s},p_{e,s})\subseteq X_s$ for $s^-_{e,u}<s\leq u'$ since neither (3c) nor (3c) happens between $s^-_{e,u}$ and $u'$, and ${s_{e,s}}={s^-_{e,u}}$ for such $s$.
Moreover, since $s^-_{e,s}=s^-_{e,u}$ for all $s$ with $u'<s<u$, we have $X_{s^-_{e,u}}\cap [p_{e-1,s},p_{e,s}-1)\subseteq X_s$ for such $s$ by our choice of $u$.
Consequently, $X_s\cap [p_{e-1,s},p_{e,u'})\not\subseteq X_u$ for all $s$ with $s^{-}_{e,u}<s<u$.
In particular, if $s^{-}_{e,u}<s<u$ and $e'\geq e$, then $X_s\upharpoonright p_{e',s+1}\not\subseteq X_u$ since $p_{e,s}\leq p_{e',s}\leq p_{e',s+1}$.
Now, note that every computation enumerated into $\Phi$ at stage $s\geq t$ is of the form $(y_{e',s+1},X_{s}\upharpoonright p_{e',s+1})$ for some $e'\geq e$.
Therefore, every computation enumerated into $\Phi$ at stage $s$ with $s^{-}_{e,u}<s<u$ is destroyed at stage $u$.
We also note that $(y_{e,s})$ is monotone in the sense that $e\leq e'$ implies $y_{e,s}\leq y_{e',s}$ and that $t\leq s\leq s'\leq u$ implies $y_{e,s}\leq y_{e,s'}$.
Moreover, $y_{e,u''}+2^{-m_{e,u''+1}-1}=y_{e,u''+1}\leq y_{e,u'}$, where $u''=s^-_{e,u}$.
Therefore, $y_{e,u''}+2^{-m_{e,u''+1}-2}\leq \alpha_u$.
By the previous argument, the $e$-th strategy only enumerate the value $y_{e,u''}$ under the oracle $X_u$, and therefore, the value enumerated by lower priority is at most   $y_{e,u''}+2^{-m_{e,u''+1}-2}$.
Consequently, we have $\Phi(X)<\alpha_e$.
}

\subsubsection{$T_1$-quasi-minimal degrees}\label{sec:T-1-quasiminimalp}

We show the existence of $T_1$-quasi-minimal $\mathbb{R}_{<}$-degrees mentioned in Section \ref{sec:T-1-quasiminimal}.

\thmproof{thm:countable-T_1-quasiminimal}{
A {\em lightface pointclass} $\Gamma$ is a countable collection of subsets of $\om^\om$ which is closed under computable substitution, that is, if $S\in\Gamma$, then $\Phi^{-1}[S]\in\Gamma$ for any computable function $\Phi:\om^\om\to\om^\om$.
By $\check{\Gamma}$ we denote the dual of $\Gamma$, that is, $\check{\Gamma}=\{A:\cmp{A}\in\Gamma\}$.

We say that a set $A\subseteq\om$ is $\Gamma$ if there is a $\Gamma$ set $S$ such that for any $n\in\om$, $n\in A$ if and only if $n0^\om\in S$.
We say that a real $x\in\mathbb{R}$ is {\em left-$\Gamma$} if $\{n\in\om:p_n<x\}$ is in $\Gamma$, where $p_n$ is the $n$-th rational.
Similarly, we say that a real $x\in\mathbb{R}$ is {\em right-$\Gamma$} if $\{n\in\om:x<p_n\}$ is in $\Gamma$.
Then we say that $\pt{x}{\mathbb{R}}$ is $\Delta$ if it is both left-$\Gamma$ and right-$\Gamma$.

\begin{lemma}\label{lem:countable-T_1-quasiminimal}
Let $\Gamma$ be a lightface pointclass.
For any $x\in\mathbb{R}$, if $\pt{x}{\mathbb{R}}$ is not $\Delta$, then $\pt{x}{\mathbb{R}_<}$ is quasi-minimal w.r.t.~strongly $\check{\Gamma}$-named $T_1$ spaces.
\end{lemma}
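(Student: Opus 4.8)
The plan is to adapt the dichotomy appearing in the proof of Lemma~\ref{propdichotomy}, feeding it the strong naming hypothesis. We may assume $x$ is irrational, since if $x\in\mathbb{Q}$ then $\nbaseb{<}{x}=\{q\in\mathbb{Q}:q<x\}$ is a computable set and every set $e$-below it is c.e. So let $\yy$ be a strongly $\check{\Gamma}$-named $T_1$ cb$_0$ space, witnessed by $\check{\Gamma}$ sets $P,N$ with ${\rm Sub}(\yy)\subseteq N$, ${\rm Sup}(\yy)\subseteq P$, and ${\rm Name}(\yy)=P\cap N$; suppose $z\in\yy$ satisfies $\nbaseb{\yy}{z}\leq_e\nbaseb{<}{x}$ via an enumeration operator $\Psi$. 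We must show that $\nbaseb{\yy}{z}$ is c.e.

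For a rational $q$ write $\Psi_q=\Psi(\{p\in\mathbb{Q}:p<q\})$ and let $e_q\in\om^\om$ be the total enumeration of $\Psi_q$ obtained by running $\Psi$ on the canonical enumeration of $\{p\in\mathbb{Q}:p<q\}$; note that $q\mapsto e_q$ is computable. Since $\{p\in\mathbb{Q}:p<x\}=\bigcup_{q<x}\{p\in\mathbb{Q}:p<q\}$ and enumeration operators are monotone and continuous, we get $\Psi_q\subseteq\nbaseb{\yy}{z}$ for rational $q<x$, $\Psi_q\supseteq\nbaseb{\yy}{z}$ for rational $q>x$, and $\nbaseb{\yy}{z}=\bigcup_{q\in\mathbb{Q},\,q<x}\Psi_q$. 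In particular, if $\Psi_{q_0}=\nbaseb{\yy}{z}$ for some rational $q_0$, then $\nbaseb{\yy}{z}$ is c.e.\ and we are done. So it remains to rule out the case where $\Psi_q\neq\nbaseb{\yy}{z}$ for \emph{every} rational $q$ on one fixed side of $x$.

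Suppose $\Psi_q\neq\nbaseb{\yy}{z}$ for all rational $q<x$; we claim $x$ is left-$\Gamma$. For such $q$ we have $\Psi_q\subsetneq\nbaseb{\yy}{z}={\rm Nbase}(z)$, so ${\rm rng}(e_q)\subseteq{\rm Nbase}(z)$ yields $e_q\in{\rm Sub}(\yy)\subseteq N$; and $e_q\notin P$, for otherwise $e_q\in P\cap N={\rm Name}(\yy)$ would give a point $z'$ with ${\rm Nbase}(z')=\Psi_q\subsetneq{\rm Nbase}(z)$, which is impossible in a $T_1$ space since there the specialization preorder is trivial. For rational $q>x$ we have ${\rm Nbase}(z)\subseteq\Psi_q={\rm rng}(e_q)$, hence $e_q\in{\rm Sup}(\yy)\subseteq P$. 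As $x$ is irrational, this means that for rational $q$, $q<x$ if and only if $e_q\in\cmp{P}$; since $q\mapsto e_q$ is computable, $\cmp{P}\in\Gamma$, and $\Gamma$ is closed under computable substitution, this exhibits $\nbaseb{<}{x}$ as a $\Gamma$ set, i.e.\ $x$ is left-$\Gamma$. The symmetric argument — using the $N$-side of the naming and the fact that $\Psi_q\supsetneq{\rm Nbase}(z)$ cannot equal ${\rm Nbase}(z')$ for any point $z'$ of a $T_1$ space — shows that if $\Psi_q\neq\nbaseb{\yy}{z}$ for all rational $q>x$, then $x$ is right-$\Gamma$. Since $x$ is not $\Delta$, either $x$ is not left-$\Gamma$ or $x$ is not right-$\Gamma$; by the contrapositive of the relevant implication, in either case there is a rational $q_0$ with $\Psi_{q_0}=\nbaseb{\yy}{z}$, and hence $\nbaseb{\yy}{z}$ is c.e. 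As $\yy$ and $z$ were arbitrary, $\pt{x}{\mathbb{R}_<}$ is quasi-minimal with respect to strongly $\check{\Gamma}$-named $T_1$ spaces.

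I expect the main difficulty to be the bookkeeping rather than any single clever step: one has to verify that over-approximating $\nbaseb{<}{x}$ by $\{p<q\}$ for $q>x$ and under-approximating it by $\{p<q\}$ for $q<x$ interact monotonically with $\Psi$, that each $e_q$ is a genuine total name landing in the appropriate one of ${\rm Sup}(\yy)$, ${\rm Sub}(\yy)$, and that the $T_1$ hypothesis is invoked correctly to forbid $\Psi_q$ from being a name of a point strictly above or below $z$ in the Scott order. One also has to match the informal ``$\{q:q<x\}$ is $\Gamma$'' (and its right-hand analogue) with the precise definition of ``$A\subseteq\om$ is $\Gamma$'', so that the computable reindexing $q\mapsto e_q$ really witnesses membership in $\Gamma$.
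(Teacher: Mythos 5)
Your proof is correct and follows essentially the same route as the paper's: monotonicity of the operator along the chain $\mathbb{R}_<$, the $T_1$ incomparability of coded neighborhood filters to show that a rational $q$ with $\Psi_q\neq\nbase{z}$ produces a name-attempt lying in ${\rm Sub}(\yy)\setminus P$ (resp.\ ${\rm Sup}(\yy)\setminus N$), and closure of $\Gamma$ under computable substitution to turn this into left-/right-$\Gamma$ definitions of $x$, contradicting non-$\Delta$-ness unless some $\Psi_{q_0}$ already equals $\nbase{z}$ and is hence c.e. The only difference is that the paper proves a relativized dichotomy (with an extra parameter $o\in\xx$, needed for later applications) via a realizer $F$ and the sets $F^{-1}[\cmp{P}]$, $F^{-1}[\cmp{N}]$, but the underlying mechanism is the same, and the padding/bookkeeping issues you flag are handled there in the same routine way.
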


\begin{proof}
We show a relativized version, which improves Lemma \ref{propdichotomy}.
For any $r\in\om^\om$, we say that $A\subseteq\om^\om$ is {\em $\Gamma$ relative to $r$}, or simply $\Gamma(r)$ if there is a $\Gamma$ set $G\subseteq\om^\om$ such that $A=\{y:\langle r,y\rangle\in G\}$.
Let $\xx$ be any represented cb$_0$ space.
Given $o\in \xx$, we say that $A\subseteq\om^\om$ is {\em $\Gamma$ relative to $o$}, or simply $\Gamma(o)$, if $A$ is $\Gamma$ relative to any $\xx$-name of $o$ in a uniform manner, that is, there is a $\Gamma$ set $G\subseteq\om^\om$ such that $A=\{y:\langle r,y\rangle\in G\}$ for any $\xx$-name $r$ of $o$.
Then, we define the notion of a $\Delta(o)$ real in a straightforward manner.

Let $\xx$ be a represented cb$_0$ space, and $\zz$ be a strongly $\check{\Gamma}$-named $T_1$ space.
We will show that for any $x\in\mathbb{R}$, $o\in\xx$, and $z\in\zz$,
\[\nbaseb{\zz}{z}\leq_e\nbaseb{<}{x}\oplus\nbaseb{\xx}{o}\;\Longrightarrow\;\nbaseb{\zz}{z}\leq_e\nbaseb{\xx}{o}\mbox{ or }\pt{x}{\mathbb{R}}\mbox{ is $\Delta(o)$}.\]

Since the specialization order on $\mathbb{R}_<$ forms a chain while the specialization order on a $T_1$ space forms an antichain, for any continuous function $\Phi$ from $\xx\times\mathbb{R}_<$ to a $T_1$ space $\zz$ and any $o\in\xx$, for $\Phi_o:y\mapsto\Phi(o,y)$, the cardinality of ${\rm rng}(\Phi_o)$ is at most one.
In the context of an enumeration operator, this is because, for any enumeration operator $\Phi$, any point $o\in\xx$, and any reals $x,y\in\mathbb{R}$,
\[x<y\;\Longrightarrow\Phi_o(\nbaseb{<}{x})\subseteq\Phi_o(\nbaseb{<}{y}),\]
where we define $\Phi_o(A)=\Phi(\nbaseb{\xx}{o}\oplus A)$, and recall that if $\zz$ is $T_1$, for any $z_0,z_1\in\zz$, $z_0\not=z_1$ implies $\nbaseb{\zz}{z_i}\not\subseteq\nbaseb{\zz}{z_{1-i}}$ for any $i<2$.
Here, by symbols ${\rm dom}(\Phi_o)$ and ${\rm rng}(\Phi_o)$ we mean the domain and the range of the function $\tilde{\Phi}_o$ from $\mathbb{R}_<$ to a $T_1$ space $\zz$ induced from the operator $\Phi_o$, that is,
\begin{align*}
{\rm dom}(\Phi_o)&=\{x\in\mathbb{R}_<:(\exists z\in\zz)\;\Phi_o(\nbaseb{<}{x})=\nbaseb{\zz}{z}\},\\
{\rm rng}(\Phi_o)&=\{z\in\zz:(\exists x\in\mathbb{R}_<)\;\Phi_o(\nbaseb{<}{x})=\nbaseb{\zz}{z}\}.
\end{align*}

Assume that ${\rm rng}(\Phi_o)$ is nonempty in a $T_1$ space $\zz$, that is, there are $x\in\mathbb{R}$ and $z\in\zz$ such that $\Phi_o(\nbaseb{<}{x})=\nbaseb{\zz}{z}$.
If ${\rm dom}(\Phi_o)$ is not a singleton, $x,y\in{\rm dom}(\Phi_o)$ with $x<y$ say, it contains a non-degenerate interval $[x,y]$, and therefore contains a rational $q\in[x,y]\subseteq{\rm dom}(\Phi_o)$.
Thus, $\Phi_o(\nbaseb{<}{q})$ gives us a unique element $z$ of ${\rm rng}(\Phi_o)$, and then we must have $\nbaseb{\zz}{z}\leq_e\nbaseb{\xx}{o}$ since $q$ is computable.

Therefore, if ${\rm rng}(\Phi_o)$ contains a point $z$ such that $\nbaseb{\zz}{z}\not\leq_e\nbaseb{\xx}{o}$, then ${\rm dom}(\Phi_o)$ has to be a singleton $\{x\}$.
Let $P$ and $N$ witness that $\zz$ is strongly $\check{\Gamma}$-named.
Let $F$ be a computable realizer of $\tilde{\Phi}$, that is, given $\alpha,\beta\in\om^\om$, if $s$ is the first stage such that we see $D\subseteq{\rm rng}(\alpha)$ and $E\subseteq{\rm rng}(\alpha\upto s)$ and $\langle k,D,E\rangle\in\Phi$ by stage $s$, then put $F(\alpha,\beta)(s)=k$.
One can assume that $F$ is a total computable function on $\om^\om\times\om^\om$ since it is generated from an enumeration operator.
If we fix an $\xx$-name $\alpha$ of $o\in\xx$, then $F(\alpha,\beta)\in{\rm Sub}(\zz)\cup{\rm Sup}(\zz)$ for any $\beta\in{\rm Name}(\mathcal{R}_<)$.
Thus, we have the following:
\begin{align*}
F(\alpha,\beta)\not\in P&\iff F(\alpha,\beta)\in{\rm Sub}(\zz)\setminus{\rm Name}(\zz),\\
F(\alpha,\beta)\not\in N&\iff F(\alpha,\beta)\in{\rm Sup}(\zz)\setminus{\rm Name}(\zz).
\end{align*}

By our assumption, both $L=F^{-1}[\cmp{P}]$ and $R=F^{-1}[\cmp{N}]$ are $\Gamma$ subsets of $\om^\om$.
If $\alpha$ be an $\xx$-name of $o$, and $\beta$ be an $\mathbb{R}_<$-name of $z$, one can see that $(\alpha,\beta)\in L$ implies $z<x$, and similarly, $(\alpha,\beta)\in R$ implies $z>x$.
For each rational $p_n$, choose an $\mathbb{R}_<$-name $\beta_n$ of $p_n$ in an effective manner.
One can ensure that $g:(\alpha,n0^\om)\mapsto(\alpha,\beta_n)$ is computable.
Therefore, $g^{-1}[L]$ and $g^{-1}[R]$ are in $\Gamma$ since $\Gamma$ is closed under computable substitution.
If $\alpha$ is an $\xx$-name of $o$, we get that
\begin{align*}
p_n<x&\iff (\alpha,\beta_n)\in L\iff (\alpha,n0^\om)\in g^{-1}[L],\\
x<p_n&\iff (\alpha,\beta_n)\in R\iff (\alpha,n0^\om)\in g^{-1}[R].
\end{align*}
Thus, these give left- and right-$\Gamma$ approximations of $x$ uniformly relative to any name of $o$.
This concludes that $x$ is $\Delta(o)$, which verifies our claim.
If $o=\emptyset$ in $2^\om$, this means that $x$ is quasi-minimal w.r.t.~strongly $\check{\Gamma}$-named $T_1$ spaces.
\end{proof}

For any countable collection $\mathcal{T}$ of $T_1$-spaces, clearly there is a lightface pointclass $\Gamma$ such that $\xx$ is strongly $\check{\Gamma}$-named for any $\xx\in\mathcal{T}$.
Thus, by Lemma \ref{lem:countable-T_1-quasiminimal}, if $\pt{x}{\mathbb{R}}$ is not $\Delta$, then $\pt{x}{\mathbb{R}_<}$ is $\mathcal{T}$-quasi-minimal.
Recall that every ${\mathbb{R}_<}$-degree is semirecursive.
This concludes the proof.
}


\subsubsection{Products of lower topology}

For definitions, see Section \ref{sec:4-1-4}.

\propproof{prop:RRT1quasimin1}{Clearly $\nbaseb{\xx}{x}<_e\nbaseb{\xx\times\mathbb{R}_<}{x,y}\equiv_e\nbaseb{\xx}{x}\oplus\nbaseb{<}{y}$ since $y$ is not left-c.e.~in $x$.
Moreover, if $\nbaseb{2^\om}{z}\leq_e\nbaseb{\xx}{x}\oplus\nbaseb{<}{y}$ for some $z\in 2^\omega$, by Lemma \ref{propdichotomy}, we have $\nbaseb{2^\om}{z}\leq_e\nbaseb{\xx}{x}$ since $y$ is not right-c.e.\ in $x$, that is, $\nbaseb{<}{-y}\not\leq_e\nbaseb{\xx}{x}$.
Therefore, $\pt{(x,y)}{\xx\times\mathbb{R}_<}$ is a strong quasi-minimal cover of $\pt{x}{\xx}$.
For (1), if $y$ is left-c.e.\ in $x$, then $\nbase{y}\leq_e\nbase{x}$, and therefore, $\nbaseb{\xx}{x}\equiv_e\nbaseb{\xx}{x}\oplus\nbaseb{<}{y}$.
This means that $\nbaseb{\xx\times\mathbb{R}_<}{x,y}$ has an $\xx$-degree, and in particular, has an $\xx\times\mathbb{R}$-degree.
If $y$ is right-c.e.\ in $x$, then $\nbaseb{<}{-y}\leq_e\nbaseb{\xx}{x}$, and therefore, $\nbaseb{\xx}{x}\oplus\nbase{y}\equiv_e\nbaseb{\xx}{x}\oplus\nbaseb{<}{y}$.
This means that $\nbaseb{\xx\times\mathbb{R}_<}{x,y}$ has an $\xx\times\mathbb{R}$-degree.
If $y$ is neither left- nor right-c.e.\ in $x$, then $\nbaseb{\xx\times\mathbb{R}_<}{x,y}$ is a strong quasi-minimal cover of $x$ as shown above.
For (2), let $\mathbf{d}$ be an $\xx$-degree.
Then given a point $x\in\xx$ of degree $\mathbf{d}$, choose a real $y$ which is neither left- nor right-c.e.\ in $x$.
Such a $y$ must exist.
Then $\nbaseb{\xx\times\mathbb{R}_<}{x,y}$ is a strong quasi-minimal cover of $x$ as shown above.}

\propproof{prop:RRT1quasimin2}{
Let $\Gamma$ be some countably pointclass such that every space from $\mathcal{T}$ is strongly $\Gamma$-named. We adapt the proof of Proposition \ref{prop:RRT1quasimin1} by using Lemma \ref{lem:countable-T_1-quasiminimal} instead of Lemma \ref{propdichotomy}. We conclude that
for any $x\in\xx$ and $y\in\mathbb{R}$, if $\pt{y}{\mathbb{R}}$ is not $\Delta(x)$, then $\pt{(x,y)}{\xx\times\mathbb{R}_<}$ is a strong $T_1[\Gamma]$-quasi-minimal cover of $\pt{x}{\xx}$, where $T_1[\Gamma]$ is the collection of strongly $\Gamma$-named, second-countable, $T_1$ spaces.
}


\propproof{proptotalordelta2}{
Let $x\in\mathbb{R}$, and $y,z\in\mathbb{R}_<$.
By iterating Lemma \ref{propdichotomy}, if $\nbase{x}\leq_e\nbaseb{<}{y}\oplus\nbaseb{<}{z}$, then either (1) $\nbase{x}\leq_e\emptyset$ (i.e., $x\leq_T\emptyset$), (2) $\nbase{x}\leq_e\nbaseb{<}{z}$ and $\nbaseb{<}{-z}\leq_e\emptyset$, or (3) $\nbaseb{<}{-y}\leq_e\nbaseb{<}{z}$.
If (1) or (2) holds, then $x\leq_T\emptyset'$.
Thus, assume that (3) holds.
We claim that either $\nbaseb{<}{-y}\leq_e\emptyset$ or $\nbaseb{<}{-z}\leq_e\nbaseb{<}{y}$ holds.
Let $\Phi$ be a witness of our assumption (3), that is, for any rational $p\in\mathbb{Q}$, $y<p$ if and only if there is $q<z$ such that $\langle p,q\rangle\in\Phi$.
If there is $\varepsilon>0$ such that $q<z+\varepsilon$ and $\langle p,q\rangle\in\Phi$ implies $y\leq p$, then $y$ is right-c.e.
Otherwise, for all $\varepsilon>0$, there are $p,q\in\mathbb{Q}$ such that $q<z+\varepsilon$, $\langle p,q\rangle\in\Phi$, and $p<y$.
Now, by using a left-approximation of $y$, search $q$ such that $\langle p,q\rangle\in\Phi$ and $p<y$.
Such a $q$ must satisfy $q\leq z$ by our choice of $\Phi$, and for any $\varepsilon>0$ there is such $q<z+\varepsilon$ by our assumption.
Therefore, by enumerating all such $q$'s, we obtain a right-approximation of $z$.
This shows that $\nbaseb{<}{-z}\leq_e\nbaseb{<}{y}$.
Consequently, $\nbaseb{<}{-y}\oplus\nbaseb{<}{-z}\leq_e\nbaseb{<}{y}\oplus\nbaseb{<}{z}$, which implies that $\pt{(y,z)}{\mathbb{R}_<^2}$ is total.
}

\thmproof{thmseparation1}{
One can assume that $\mathcal{T}=T_1[\Gamma]$, i.e.\ the strongly $\Gamma$-named, second countable, $T_1$-spaces, for some lightface pointclass $\Gamma$.
To prove Theorem \ref{thmseparation1}, we need the notion of an independent point.
We say that a point $(x_i)_{i\leq n}\in\mathbb{R}_<^{n+1}$ is {\em independent} if neither $\nbaseb{<}{x_i}$ nor $\nbaseb{<}{-x_i}$ is $e$-reducible to $\nbase{(x_j)_{j\not=i}}$ for any $i\leq n$.
It is not hard to see that if $(x_i)_{i\leq n}$ is $1$-generic as a point in $\mathbb{R}^{n+1}$, then $(x_i)_{i\leq n}$ is independent.
One can also see that if $(x_i)_{i\leq n}$ is a Martin-L\"of random point in $\mathbb{R}^{n+1}$ (w.r.t.~the product measure), then $(x_i)_{i\leq n}$ is independent.
Given a lightface pointclass $\Gamma$, we say that a point $(x_i)_{i\leq n}\in\mathbb{R}_<^{n+1}$ is {\em $\Delta$-independent} if it is independent, and if for any $i\leq n$, neither $\pt{x_i}{\mathbb{R}_<}$ nor $\pt{-x_i}{\mathbb{R}_<}$ is $\Delta$ in  $\pt{(x_j)_{j\not=i}}{\mathbb{R}_<^n}$.
If $(x_i)_{i\leq n}$ is sufficiently generic or random, then $(x_i)_{i\leq n}$ is $\Delta$-independent.
Therefore, the set of $\Delta$-independent points is comeager and conull in $\mathbb{R}^{n+1}$.

\begin{lemma}\label{lemseparation1}
If $\bvec{x}\in\mathbb{R}_<^{n+1}$ is independent, then $\nbase{\bvec{x}}$ does not have an $\mathbb{R}\times\mathbb{R}^n_<$-degree.
If $\bvec{x}\in\mathbb{R}_<^{n+1}$ is $\Delta$-independent, then $\nbase{\bvec{x}}$ does not have an $\xx\times\mathbb{R}^n_<$-degree for any strongly $\Gamma$-named $T_1$-space $\xx$.
\end{lemma}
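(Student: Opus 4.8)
The plan is to argue by contradiction. Suppose $\nbase{\bvec{x}}\equiv_e\nbaseb{\yy}{u}\oplus\bigoplus_{i=1}^{n}\nbaseb{<}{y_i}$, where $(\yy,u)=(\mathbb{R},w)$ in the first statement and $\yy=\xx$ is a strongly $\Gamma$-named $T_1$ space in the second ($\Gamma$ being the ambient lightface pointclass, w.r.t.\ which $\bvec x$ is $\Delta$-independent). Writing $\nbase{\bvec{x}}=\bigoplus_{j=0}^{n}\nbaseb{<}{x_j}$, the first move is to peel the $\yy$-factor off: from $\nbaseb{\yy}{u}\leq_e\bigoplus_{j=0}^{n}\nbaseb{<}{x_j}$ I would remove the coordinates $x_0,\dots,x_n$ one at a time, using Lemma \ref{propdichotomy} in the first case (legitimate since $\nbaseb{\mathbb{R}}{w}$ is a total, hence $2^\om$-, degree, so only plain independence is needed) and the relativised dichotomy for $T_1$ spaces established inside the proof of Theorem \ref{thm:countable-T_1-quasiminimal} in the second. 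At the step removing $x_j$ the undesired alternative is $\nbaseb{<}{-x_j}\leq_e\nbase{(x_k)_{k\neq j}}$, resp.\ that $x_j$ is $\Delta$ over $(x_k)_{k\neq j}$; here one records that, since $\nbaseb{<}{-x_j}$ codes the complement of $\nbaseb{<}{x_j}$, being left-$\Gamma$ and right-$\Gamma$ over $(x_k)_{k\neq j}$ is exactly $\nbaseb{<}{x_j}\in\Delta$ over that tuple, so both alternatives contradict ($\Delta$-)independence (using that $x_j$ being $\Delta$ over a larger tuple is weaker). Iterating down to the empty tuple forces $\nbaseb{\yy}{u}$ to be c.e., so $\nbase{\bvec{x}}\equiv_e\bigoplus_{i=1}^{n}\nbaseb{<}{y_i}$ is $n$-semirecursive; the same peeling applied to any total $T\leq_e\nbase{\bvec{x}}$ also shows $\nbase{\bvec{x}}$ is quasi-minimal.

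It remains to prove what is really the content of the lemma: if $\bvec{x}\in\mathbb{R}_<^{n+1}$ is independent then $\deg_e(\nbase{\bvec{x}})$ is not $n$-semirecursive. I would do this by induction on $n$, the case $n=0$ being immediate since $\nbaseb{<}{x_0}$ would have to be c.e. For the step, assume $\bigoplus_{j=0}^{n}\nbaseb{<}{x_j}\equiv_e\bigoplus_{i=1}^{n}\nbaseb{<}{z_i}$. For each $j$ I would analyse the reduction $\nbaseb{<}{x_j}\leq_e\nbaseb{<}{z_1}\oplus\bigoplus_{i\geq 2}\nbaseb{<}{z_i}$ through its threshold function: after monotonising the enumeration operator there is a non-decreasing $\mu_j$, upper-approximable relative to $\bigoplus_{i\geq 2}\nbaseb{<}{z_i}$, with $q<x_j\iff\mu_j(q)<z_1$. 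If $\sup_{q<x_j}\mu_j(q)<z_1$ for every $j$, a fixed rational threshold gives $\nbaseb{<}{x_j}\leq_e\bigoplus_{i\geq 2}\nbaseb{<}{z_i}$ for all $j$, hence $\bigoplus_{j=0}^{n}\nbaseb{<}{x_j}\leq_e\bigoplus_{i\geq 2}\nbaseb{<}{z_i}$, an $(n-1)$-fold join; restricting to the independent sub-tuple $(x_0,\dots,x_{n-1})$ and combining with $\bigoplus_{i\geq 2}\nbaseb{<}{z_i}\leq_e\bigoplus_{j=0}^{n}\nbaseb{<}{x_j}$ contradicts the induction hypothesis.

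The hard part will be the remaining tight case, where $\sup_{q<x_{j_0}}\mu_{j_0}(q)=z_1$ for some $j_0$: there $\nbaseb{<}{x_{j_0}}$ is not driven below the smaller join, and the naive fix of peeling $z_1$ from the other side genuinely fails, because the natural predicate witnessing $\nbaseb{<}{-z_1}$ from $\nbaseb{<}{x_{j_0}}$ and the rest is only $\Sigma^0_2$ over that oracle, not enumerable. My plan is to exploit that in this case $z_1$ is, over $\nbase{\bvec{x}}$, left-c.e.\ but provably not right-c.e., and to re-run the threshold analysis peeling $x_{j_0}$ off the remaining $\nbaseb{<}{x_j}$'s: if these land in the easy case one again descends to an $(n-1)$-fold join over an independent $n$-tuple and invokes the induction hypothesis, and one argues (using quasi-minimality of $\nbase{\bvec{x}}$ to forbid new total sets appearing) that this alternation between coordinates must terminate. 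Pinning down this termination and the bookkeeping of which coordinates have been exchanged is where essentially all of the technical difficulty sits; I would cross-check the whole argument against Soskova's $\mathcal{K}$-pair route sketched in Subsection \ref{subsubsec:mariya}.
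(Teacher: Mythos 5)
Your first phase (peeling off the $\mathbb{R}$-factor, resp.\ the strongly $\Gamma$-named $T_1$-factor, by iterating Lemma \ref{propdichotomy}, resp.\ the relativized dichotomy of Lemma \ref{lem:countable-T_1-quasiminimal}, and noting that reducibility/$\Delta$-ness over a sub-tuple is weaker than over the full complementary tuple) is exactly how the paper begins, and it correctly reduces the lemma to: an independent $(n+1)$-tuple $\bvec{x}$ cannot satisfy $\nbase{\bvec{x}}\equiv_e\nbase{\bvec{y}}$ for $\bvec{y}\in\mathbb{R}_<^n$. But in the part that carries the real content you have a genuine gap: your ``tight case'' is precisely the hard case, and what you offer there --- re-running the threshold analysis with $x_{j_0}$ and $z_1$ exchanged and asserting that ``this alternation between coordinates must terminate'' by quasi-minimality --- is not an argument; you concede as much, and the auxiliary claim that $z_1$ is ``provably not right-c.e.\ over $\nbase{\bvec{x}}$'' is also unsubstantiated. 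The paper does not iterate at all here. Writing $\Phi,\Psi$ for the two operators witnessing $\nbase{\bvec{x}}\equiv_e\nbase{\bvec{y}}$, it first shows (your easy case, phrased via perturbing $y_k$ rather than thresholds) that one may assume false positives are dense: for every $k<n$ and $\varepsilon>0$ there is a rational $q_k<y_k+\varepsilon$ for which $\Psi$ applied to $\bvec{y}$ with $q_k$ in place of $y_k$ enumerates some $\langle i_k,q\rangle$ with $x_{i_k}<q$, and by pigeonhole $i_k$ depends only on $k$, with some index (say $n$) never of the form $i_k$. The key device is then to feed rational test values into the \emph{other} operator: set $\bvec{z}^p=g_\Phi(x_0,\dots,x_{n-1},p)$ for rationals $p$. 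By monotonicity $p>x_n$ forces $z^p_j\geq y_j$ for all $j$; if some $p>x_n$ gives $\bvec{z}^p=\bvec{y}$ then $\nbase{\bvec{y}}\leq_e\nbase{(x_i)_{i<n}}<_e\nbase{\bvec{x}}$, a contradiction, and otherwise choosing a rational $q_k$ with $y_k<q_k<z^p_k$ and invoking the false-positive property yields
\[x_n<p\iff(\exists q\in\mathbb{Q})\;[\langle i_k,q\rangle\in\Psi(\nbase{\bvec{z}^p})\mbox{ and }x_{i_k}<q],\]
hence $\nbaseb{<}{-x_n}\leq_e\nbase{(x_j)_{j<n}}\oplus\nbaseb{<}{-x_{i_k}}$, contradicting independence. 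Nothing in your proposal plays the two reductions against each other in this way, and without some such one-shot mechanism the tight case remains open.

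A secondary, fixable defect: in your Case A you apply the induction hypothesis to the sub-tuple $(x_0,\dots,x_{n-1})$, but what you have derived is $\nbase{\bvec{x}}\equiv_e\bigoplus_{i\geq 2}\nbaseb{<}{z_i}$ for the \emph{full} $(n+1)$-tuple; you do not get $\bigoplus_{i\geq 2}\nbaseb{<}{z_i}\leq_e\nbase{(x_0,\dots,x_{n-1})}$, so the IH as you state it (length of the tuple) does not apply. The induction should instead be on the number of semirecursive components for the fixed tuple $\bvec{x}$ (equivalently, phrased as the paper's ``by induction'' step after concluding $\nbase{\bvec{x}}\leq_e\nbase{(y_j)_{j\neq k}}$).
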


\begin{proof}
Suppose not.
Then, there are an independent point $\bvec{x}=(x_i)_{i\leq n}\in\mathbb{R}_<^{n+1}$ and points $z\in\mathcal{X}$ and $\bvec{y}=(y_j)_{j<n}\in\mathbb{R}_<^n$ such that $\nbase{z,\bvec{y}}\equiv_e\nbase{\bvec{x}}$.
In particular, $\nbase{z}\leq_e\nbase{\bvec{x}}$.
If $\bvec{x}$ is independent and $\xx=2^\om$, then $\bvec{x}$ is quasi-minimal by iterating Lemma \ref{propdichotomy}.
If $\bvec{x}$ is $\Delta$-independent, then $\bvec{x}$ is $T_1[\Gamma]$-quasi-minimal by iterating (the general claim in the proof of) Lemma \ref{lem:countable-T_1-quasiminimal}.
Therefore, $\nbase{z}\leq_e\nbase{\bvec{x}}$ implies $\nbase{z}\leq_e\emptyset$.
Thus, we now have $\nbase{\bvec{y}}\equiv_e\nbase{\bvec{x}}$.
Let $\Phi$ and $\Psi$ be enumeration operators witnessing this $e$-equivalence, that is,
\[\Phi(\nbase{\bvec{x}})=\nbase{\bvec{y}}\mbox{, and }\Psi(\nbase{\bvec{y}})=\nbase{\bvec{x}}.\]
Here, the coded neighborhood basis is given as $\nbase{(x_i)_{i\leq n}}=\{\langle i,p\rangle:p<x_i\}$.
First consider the case that there are $k<n$ and $\varepsilon>0$ such that for any $p_k<y_k+\varepsilon$,
\[(\forall i\leq n)(\forall q\in\mathbb{Q})\;[\langle i,q\rangle\in\Psi(\nbase{y_0,\dots,y_{k-1},p_k,y_{k+1},\dots,y_{n-1}})\;\Longrightarrow\;q<x_i].\]
Then, we get that $\nbase{(x_i)_{i\leq n}}\leq_e\nbase{(y_j)_{j\not=k}}$ as in the proof of Lemma \ref{propdichotomy}.
By induction, we can ensure that it is impossible.

Thus, for any $k<n$ and $\varepsilon>0$, there has to be $q_k<y_k+\varepsilon$ such that
\[(\exists i_k\leq n)(\exists q\in\mathbb{Q})\;[\langle i_k,q\rangle\in\Psi(\nbase{y_0,\dots,y_{k-1},q_k,y_{k+1},\dots,y_{n-1}})\mbox{ and }x_{i_k}<q].\]
By pigeon hole principle, one can assume that $i_k$ only depends on $k$, and does not depend on $\varepsilon$.
Without loss of generality, we may assume that $n\not\in\{i_k:k<n\}$.
Now, let $g_\Phi:\mathbb{R}_<^{n+1}\to\mathbb{R}_<^n$ be the computable function induced from the enumeration operator $\Phi$, that is, $g_\Phi(\bvec{r})=\bvec{s}$ if and only if $\Phi(\nbase{\bvec{r}})=\nbase{\bvec{s}}$.
Then, given $p\in\mathbb{Q}$, we define $\bvec{z}^p=(z^p_j)_{j<n}$ as follows:
\[\bvec{z}^p=g_\Phi(x_0,\dots,x_{n-1},p),\]

It is easy to see that for any $j<n$, if $p\leq x_n$ then $z^p_j\leq y_j$, and if $p\geq x_n$ then $y_j\leq z^p_j$.
If there is $p>x_n$ such that $z^p_j=y_j$ for any $j<n$, then clearly,
\[\nbase{\bvec{y}}=\nbase{\bvec{z}^p}\leq_e\nbase{(x_i)_{i<n}}<_e\nbase{(x_i)_{i\leq n}}=\nbase{\bvec{x}},\]
which contradicts our assumption.
Therefore, for any $p>x_n$, there is $k<n$ such that $y_k<z^p_k$.
By pigeon hole principle, one can fix such a $k$.
Let $q_k$ be a rational such that $y_k<q_k<z^p_k$.
Then,
\[\nbase{y_0,\dots,y_{k-1},q_k,y_{k+1},\dots,y_{n-1}}\subseteq\nbase{\bvec{z}^p}.\]
Hence, there is $q\in\mathbb{Q}$ such that $\langle i_k,q\rangle\in\Psi(\nbase{\bvec{z}^p})$ and $x_{i_k}<q$.
By combining these observations, we have
\[x_n<p\iff(\exists q\in\mathbb{Q})\;[\langle i_k,q\rangle\in\Psi(\nbase{\bvec{z}^p})\mbox{ and }x_{i_k}<q].\]

This gives us a right-approximation of $x_n$ from a left-approximation of $\bvec{z}^p$ and a right-approximation of $x_{i_k}$.
This concludes that
\[\nbase{-x_n}\leq_e\nbase{\bvec{z}^p}\oplus\nbase{-x_{i_k}}\leq_e\nbase{(x_j)_{j<n}}\oplus\nbase{-x_{i_k}}.\]

This contradicts our assumption on independence of $\bvec{x}$.
Consequently, we obtain that $\nbase{z,\bvec{y}}\not\equiv_e\nbase{\bvec{x}}$.
\end{proof}

By Lemma \ref{lemseparation1}, if $\mathbf{a}$ is an $e$-degree of an independent point in $\mathbb{R}_{<}^{n+1}$, then $\mathbf{a}$ is not the join of an $\mathbb{R}_{<}^{n}$-degree $\mathbf{c}$ and a $T_1[\Gamma]$-degree $\mathbf{d}$.
}

\thmproof{thm:rrn-semirec}{
For $z\in\mathbb{R}$, we say that a point $(x_i)_{i<n}\in\mathbb{R}_<^{n}$ is {\em independent relative to $z$} if neither $\nbaseb{<}{x_i}$ nor $\nbaseb{<}{-x_i}$ is $e$-reducible to $\nbase{z,(x_j)_{j\not=i}}$ for any $i\leq n$.
One can easily see that $(z,\bvec{x})$ is $1$-generic or Martin-L\"of random as a point in $\mathbb{R}^{n+1}$, then $\bvec{x}$ is independent relative to $z$.

\begin{lemma}\label{lem:rrn-semirec}
If $\bvec{x}\in\mathbb{R}_<^{n}$ is independent relative to $z\in\mathbb{R}$, and $z\not\leq_T\emptyset'$, then $\nbase{z,\bvec{x}}$ is not $(n+1)$-semirecursive.
\end{lemma}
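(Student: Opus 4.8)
The plan is to argue by contradiction and by induction on $n$, following closely the pattern of the proof of Lemma~\ref{lemseparation1}. So assume $\nbase{z,\bvec{x}}\equiv_e\nbase{\bvec{y}}$ for some $\bvec{y}=(y_0,\dots,y_n)\in\mathbb{R}_<^{n+1}$, witnessed by enumeration operators $\Phi$ (taking a name of $(z,\bvec{x})$ to a name of $\bvec{y}$) and $\Psi$ (the reverse). The base case $n=0$ is exactly where the hypothesis $z\not\leq_T\emptyset'$ does its essential work: then $\nbase{z,\bvec{x}}\equiv_e\nbase{z}$ is a total $e$-degree, and were it $1$-semirecursive it would be a total $\mathbb{R}_<$-degree, hence by Proposition~\ref{prop:right-ce} the $e$-degree of $\nbaseb{<}{x}$ for a left- or right-c.e.\ real $x$; since such an $x$ satisfies $x\leq_T\emptyset'$ and total $e$-equivalence implies Turing equivalence, this would force $z\leq_T\emptyset'$, a contradiction.

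For the inductive step ($n\geq 1$) I would first rule out that $\nbase{z,\bvec{x}}$ is total. If it were, then, regarding $\nbase{z,\bvec{x}}$ as the disjoint union $\nbase{z}\sqcup\bigsqcup_{i<n}\nbaseb{<}{x_i}$, its complement is $\equiv_e\nbase{z}\sqcup\bigsqcup_{i<n}\nbaseb{<}{-x_i}$ and hence $\leq_e\nbase{z,\bvec{x}}$; so for each $i$ we get $\nbaseb{<}{-x_i}\leq_e\nbaseb{<}{x_i}\oplus\nbase{z,(x_j)_{j\ne i}}$, and since $\nbaseb{<}{x_i}\oplus\nbaseb{<}{-x_i}$ is total, Lemma~\ref{propdichotomy} (applied with $y=x_i$ and the rest as the parameter) yields $\nbaseb{<}{-x_i}\leq_e\nbase{z,(x_j)_{j\ne i}}$, contradicting independence of $\bvec{x}$ relative to $z$. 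Hence by Theorem~\ref{thm:hprincipal} the $(n+1)$-semirecursive degree $\nbase{z,\bvec{x}}$ is a strong quasi-minimal cover of a total $e$-degree $\mathbf{b}$. Since $\nbase{z}$ is total and $\leq_e\nbase{z,\bvec{x}}$, we have $\mathbf{b}\geq\deg_e(\nbase{z})$; conversely, $\mathbf{b}$ is total and $\leq_e\nbase{z}\oplus\bigoplus_{i<n}\nbaseb{<}{x_i}$, and when we peel off the coordinates $\nbaseb{<}{x_i}$ with Lemma~\ref{propdichotomy} only the ``first branch'' is possible, as the second would again give $\nbaseb{<}{-x_i}\leq_e\nbase{z,(x_j)_{j\ne i}}$; thus $\mathbf{b}\leq_e\nbase{z}$, and $\mathbf{b}=\deg_e(\nbase{z})$.

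Next I would exploit that $\nbase{\bvec{y}}$ is $(n+1)$-semirecursive with largest total sub-degree $\deg_e(\nbase{z})$: completing, to closure, every coordinate $y_k$ with $\nbaseb{<}{-y_k}$ enumeration-reducible to the remaining coordinates (each such $y_k$ being right-c.e.\ over them, so that $\nbaseb{<}{y_k}$ becomes, uniformly, $e$-equivalent to a total set) yields a presentation $\nbase{z,\bvec{x}}\equiv_e\nbase{z',\bvec{w}}$ with $z'\equiv_T z$ (the completed block being precisely $\mathbf{b}$) and $\bvec{w}\in\mathbb{R}_<^m$; since $\mathbf{b}=\deg_e(\nbase{z})\ne\mathbf{0}$ at least one coordinate is completed, so $m\leq n$. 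I then run the overshoot-and-read-back argument of Lemma~\ref{lemseparation1}, with $z'$ playing the role of the auxiliary parameter: varying the last coordinate $x_{n-1}$ through $\Phi$ to obtain $\bvec{w}^p$, one shows that for rational $p>x_{n-1}$ one cannot have $\bvec{w}^p=\bvec{w}$ (else $\nbaseb{<}{x_{n-1}}\leq_e\nbase{z,(x_i)_{i<n-1}}$, contradicting independence), so $\bvec{w}^p$ overshoots some $w_k$; feeding the overshooting tuple back into $\Psi$ and comparing against a right-approximation of a suitable $x_{i_k}$ (with $i_k\ne n-1$ arranged by a pigeonhole argument) reads off ``$x_{n-1}<p$'', yielding a reduction $\nbaseb{<}{-x_{n-1}}\leq_e\nbase{z,(x_j)_{j\ne n-1}}\oplus\nbaseb{<}{-x_{i_k}}$ that is incompatible with independence. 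If the completion happens to leave $m=n$, one re-applies the previous paragraph to $\nbase{z',\bvec{w}}$ — its completed block is again $\deg_e(\nbase{z})\ne\mathbf{0}$, forcing either a drop in the number of lower coordinates, a smaller presentation that contradicts the analogous no-$\mathbb{R}\times\mathbb{R}_<^{n-1}$-degree statement, or a configuration to which the induction hypothesis applies.

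The step I expect to be the main obstacle is this last one: making the ``completion'' bookkeeping fully rigorous (that the completed block is exactly the maximal total sub-degree $\deg_e(\nbase{z})$, so that no ``dimension'' is silently lost or gained), ruling out the borderline case $m=n$ by a well-founded descent, and transcribing the computation of Lemma~\ref{lemseparation1} to the present setting with a total parameter $z'$ while keeping the extracted index $i_k$ distinct from the varied coordinate, so that the final reduction genuinely violates the independence of $\bvec{x}$ relative to $z$. Everything else — the base case, ruling out totality, the identification of $\mathbf{b}$, and the iterated use of Lemma~\ref{propdichotomy} — is routine once this scaffolding is in place.
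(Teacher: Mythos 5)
There is a genuine gap, and it sits exactly where you predicted: the passage from ``at least one completed coordinate'' to a contradiction. Completing every $y_k$ with $\nbaseb{<}{-y_k}\leq_e\nbase{(y_j)_{j\neq k}}$ only guarantees $|C|\geq 1$, hence a presentation $\nbase{z,\bvec{x}}\equiv_e\nbase{z'}\oplus\nbase{\bvec{w}}$ with $\bvec{w}\in\mathbb{R}_<^{m}$, $m\leq n$. But $m\leq n$ is vacuous: the pair $(z,\bvec{x})$ itself is already a presentation of exactly this shape, so nothing is contradicted. To invoke the relativized Lemma \ref{lemseparation1} you need $m\leq n-1$, i.e.\ \emph{two} completed coordinates, and your argument never produces the second one. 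Your fallback for $m=n$ (re-apply the completion to $(z',\bvec{w})$) cannot create a descent: the maximal total sub-degree $\mathbf{b}=\deg_e(\nbase{z})$ is now realized by the explicit total coordinate $z'$, and no coordinate of $\bvec{w}$ need satisfy $\nbaseb{<}{-w_j}\leq_e\nbase{z',(w_i)_{i\neq j}}$ --- indeed for the genuine presentation $(z,\bvec{x})$ none does, by independence --- so re-applying the previous paragraph simply returns the same configuration, and neither the ``no $\mathbb{R}\times\mathbb{R}_<^{n-1}$-degree'' statement nor your induction hypothesis (which concerns shorter independent tuples, not shorter presentations) becomes applicable. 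Relatedly, your claim that the completed block has degree exactly $\mathbf{b}$ with $z'\equiv_Tz$ is fine, but it does not help when $|C|=1$: then the single completed $y_0$ satisfies $y_0\equiv_Tz$ while $\nbaseb{<}{y_0}$ alone need not be total, so Proposition \ref{prop:right-ce} does not apply to it and no contradiction with $z\not\leq_T\emptyset'$ falls out.

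The paper closes this hole differently, and this is where $z\not\leq_T\emptyset'$ must be used a second time (in your write-up it only appears in the base case). One takes a sub-block of $\bvec{y}$ that is \emph{minimal} with $\nbase{z}\leq_e\nbase{(y_{\sigma(j)})_{j\leq k}}$. If the block were a single coordinate, Lemma \ref{propdichotomy} would give either $\nbase{z}\leq_e\emptyset$ or $\nbaseb{<}{-y_{\sigma(0)}}\leq_e\emptyset$; the latter makes $y_{\sigma(0)}$ right-c.e., hence $\nbase{z}\leq_eK\oplus\cmp{K}$, contradicting $z\not\leq_T\emptyset'$. So $k\geq1$, and by minimality every coordinate of the block has its negative reducible to the \emph{other block coordinates}, hence at least two coordinates are completed; this drops the free lower-real count to $n-1$ and the $z$-relativized Lemma \ref{lemseparation1} (with independence of $\bvec{x}$ relative to $z$) finishes the proof. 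Your base case, the exclusion of totality, the identification of $\mathbf{b}$ via Theorem \ref{thm:hprincipal} (which could also be done by iterating Lemma \ref{propdichotomy} directly), and the overshoot argument are all fine; what is missing is precisely this minimal-block analysis producing the second completed coordinate.
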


\begin{proof}
Let $z\in\mathbb{R}$ and $\bvec{x}\in\mathbb{R}_<^{n}$ be such that $(z,\bvec{x})$ is independent, and that $\nbase{z}\not\leq_e K\oplus\cmp{K}$.
Suppose that there is $\bvec{y}=(y_j)_{j\leq n}\in\mathbb{R}_<^{n+1}$ such that $\nbase{\bvec{y}}\equiv_e\nbase{z,\bvec{x}}$.
Consider any permutation $\sigma$ on $n+1$.
First assume that $\nbase{z}\leq_e\nbase{y_{\sigma(0)}}$.
Then, by Lemma \ref{propdichotomy}, we have $\nbase{z}\leq_e\emptyset$ or $\nbase{-y_{\sigma(0)}}\leq_e\emptyset$.
The latter inequality implies that $\nbase{z}\leq_e\nbase{y_{\sigma(0)}}\leq_e K\oplus\cmp{K}$, which is impossible by our assumption.

Assume that for some $k\leq n$, there is a permutation $\sigma$ on $n+1$ such that $\nbase{z}\leq_e\nbase{(y_{\sigma(j)})_{j\leq k}}$, but there is no permutation $\tau$ on $k+1$ such that $\nbase{z}\leq_e\nbase{(y_{\sigma\circ\tau(j)})_{j<k}}$.
By Lemma \ref{propdichotomy}, the inequality $\nbase{z}\leq_e\nbase{(y_{\sigma(j)})_{j\leq k}}$ implies that for any $\ell\leq k$, either $\nbase{z}\leq_e\nbase{(y_{\sigma(j)})_{j\not=\ell,j\leq k}}$ or $\nbase{-y_{\sigma(\ell)}}\leq_e\nbase{(y_{\sigma(j)})_{j\not=\ell,j\leq k}}$.
The former is impossible by our choice of $k$.
Hence, we get that $\nbase{(-y_{\sigma(j)})_{j\leq k}}\leq_e\nbase{(y_{\sigma(j)})_{j\leq k}}$.
Note that by induction, there must exist such a $k$, and we have $k\geq 1$.

Eventually, we must have $\nbase{\bvec{y},-y_s,-y_t}\equiv_e\nbase{\bvec{y}}$ for some $s\not=t\leq n$.
Hence, one can identify $\bvec{y}$ with an element of $\mathbb{R}\times\mathbb{R}_<^{n-1}$.
However, by Lemma \ref{lemseparation1} relative to $z$ and by independence of $\bvec{x}$ relative to $z$, it is impossible to have $\nbase{\bvec{y}}\equiv_e\nbase{z,\bvec{x}}$.
\end{proof}

By choosing a $2$-generic or a $2$-random, it is easy to construct an independent sequence $(z,\bvec{x})\in\mathbb{R}\times\mathbb{R}_<^{n}$ of $\Delta^0_3$ reals such that $z\not\leq_T\emptyset'$.
Let $\mathbf{c}$ be the $e$-degree of $\nbase{\bvec{x}}$, and $\mathbf{d}$ be that of $\nbase{z}$.
Since these are $\Delta^0_3$, we have $\mathbf{c},\mathbf{d}\leq\mathbf{0}''$.
By Lemma \ref{lem:rrn-semirec}, the join $\mathbf{c}\oplus\mathbf{d}$ is not $(n+1)$-semirecursive.
}

\thmproof{thm:hprincipal}{
Fix $\bvec{x}=(x_i)_{i\leq n}\in\mathbb{R}_<^n$.
Suppose that $\bvec{x}$ bounds a total degree $z\in 2^\omega$.
As in the proof of Lemma \ref{lem:rrn-semirec}, by iterating Lemma \ref{propdichotomy}, for any permutation $\sigma$ on $n+1$, either $\nbase{z}\leq_e\emptyset$ or $\nbase{-x_{\sigma(k)}}\leq_e\nbase{(x_{\sigma(i)})_{i<k}}$ holds for some $k\leq n$.
We assume $\nbase{z}\not\leq_e\emptyset$.
Therefore, by induction, we can find a permutation $\sigma$ on $n+1$ and a number $k\leq n$ such that $\nbase{-x_{\sigma(\ell)}}\not\leq_e\nbase{(x_{\sigma(i)})_{i<\ell}}$ for any $k<\ell\leq n$ and $\nbase{-x_{\sigma\circ\tau(k)}}\leq_e\nbase{(x_{\sigma\circ\tau(i)})_{i<k}}$ for any permutation $\tau$ on $k+1$.
The latter condition implies that $\nbase{(x_{\sigma(i)},-x_{\sigma(i)})_{i\leq k}}\equiv_e\nbase{(x_{\sigma(i)})_{i\leq k}}$.
This shows that ${(x_{\sigma(i)})_{i\leq k}}$ has a total degree.
We claim that ${(x_{\sigma(i)})_{i\leq k}}$ has the greatest total degree below $\nbase{\bvec{x}}$.
Suppose that $Y$ is a subset of $\om$ such that $Y\oplus\cmp{Y}\leq_e\nbase{\bvec{x}}$.
Note that $\nbase{\bvec{x}}\equiv_e\nbase{(x_{\sigma(i)})_{i\leq n}}$.
Therefore, by iterating Lemma \ref{propdichotomy}, either $Y\oplus\cmp{Y}\leq_e\nbase{(x_{\sigma(i)})_{i\leq k}}$ or $\nbase{-x_{\sigma(\ell)}}\leq_e\nbase{(x_{\sigma(i)})_{i<\ell}}$ holds for some $k<\ell\leq n$.
However, the latter condition cannot hold by our choice of $\sigma$ and $k$.
Consequently, $\bvec{x}$ is either total or a strong quasi-minimal cover of $(x_{\sigma(i)})_{i\leq k}$.
}

\subsubsection{Left- and right-totality}\label{sec:quasiminimalspaces}


In Section \ref{sec:T-1-quasiminimalp}, we developed techniques constructing $T_1$-quasi-minimal degrees.
In contrast to non-$T_1$ cb$_0$ spaces, there are a number of interesting $T_1$-space, and therefore $T_1$-degrees, e.g.\ graph-cototal degrees, cylinder-cototal degrees, telograph-cototal degrees, etc.
We examine the behavior of $T_1$-degrees using quasi-minimality arguments.
In particular, we give proofs of some results mentioned in Section \ref{sec:4-1-5}.

\subsubsection*{Total-like properties}

To prove Theorems \ref{telograph-quasiminimal1} and \ref{telograph-quasiminimal2}, we introduce some variants of totality, and show that every telograph-cototal $e$-degree is close to being total in this sense.

The power set of $\om$ can be topologized as the countable product $\mathbb{S}^\om$ of the Sierpinsk\'i space $\mathbb{S}=\{0,1\}$, where open sets in $\mathbb{S}$ are $\emptyset$, $\{1\}$, and $\mathbb{S}$.
Every element $S\in\mathbb{S}^\om$ is identified with $S^{-1}\{1\}\subseteq\om$.
We consider the effective Borel hierarchy on $\mathbb{S}^\om$.
Here, we note that Selivanov and his collaborators (see e.g.\ \cite{debrecht6,Seliv06}) have studied a modified version of the Borel hierarchy.
As seen in Observation \ref{obs:Gdelta-Borel-hierarchy}, the underlying space is a $G_\delta$-space if and only if the classical Borel hierarchy and the modified Borel hierarchy coincide.
In particular, a $\mathbf{\Pi}^0_2$ set is not necessarily $G_\delta$ in the space $\mathbb{S}^\om$.

We say that $\mathcal{P}\subseteq\mathbb{S}^\om$ is {\em $C$-computably $e$-closed} if there is a $C$-computable sequence $(D_{n})_{n\in\om}$ of finite subsets of $\om$ such that
\[A\in\mathcal{P}\iff(\forall n)\;D_{n}\not\subseteq A.\]
We also say that $\mathcal{P}\subseteq\mathbb{S}^\om$ is {\em $C$-computably $e$-$G_\delta$} if there is a $C$-computable sequence $(D_{n,i})_{n,i\in\om}$ of finite subsets of $\om$ such that
\[A\in\mathcal{P}\iff(\forall n)(\exists i)\;D_{n,i}\subseteq A.\]

As mentioned above, a computably $e$-closed set is not necessarily computably $e$-$G_\delta$.
Therefore we avoid use of the terminology such as ``$e$-$\Pi^0_2$''.
Note that every computably $e$-closed set is downward closed, and every computably $e$-$G_\delta$ set is upward closed.

\begin{definition}
A set $A\subseteq\om$ is said to be {\em $G_\delta$-left-total} if there are a set $C\subseteq\om$, and an computably $e$-$G_\delta$ set $\mathcal{P}$ such that
\begin{align*}
C\oplus\overline{C}\leq_e A\mbox{, and }&A\in\mathcal{P},\\
(\forall X)\;[X\in\mathcal{P}\mbox{ and }X\subseteq A&\;\Longrightarrow\;A\leq_eX\oplus C\oplus\overline{C}].
\end{align*}

A set $A\subseteq\om$ is said to be {\em jump-right-total} if there are a set $C\subseteq\om$, and a $C'$-computably $e$-closed set $\nn$  such that
\begin{align*}
C\oplus\overline{C}\leq_e A\mbox{, and }&A\in\nn,\\
(\forall X)\;[X\in\nn\mbox{ and }A\subseteq X&\;\Longrightarrow\;A\leq_eX\oplus C\oplus\overline{C}].
\end{align*}
\end{definition}

\begin{example}\label{exa:telcototal-totallike}
Every telograph-cototal $e$-degree is $G_\delta$-left-total and jump-right-total.
\end{example}

\begin{proof}
We first show that every telograph-cototal $e$-degree is $G_\delta$-left-total.
Let $A$ be ${\rm Nbase}(x)$ for a point $x$ in the product telophase space $(\hat{\om}_{TP})^\om$.
Let $C$ be the total part of $x$, that is, $C=\{2\langle n,m\rangle:x(n)=m\}\cup\{2\langle n,m\rangle+1:x(n)=m\}$, and define $R=\{n\in\om:x(n)=\infty\}$ and $S=\{n\in\om:x(n)=\infty_\star\}$.
The proof of Theorem \ref{thm:telophase-degree} shows that $A={\rm Nbase}(x)$ is Medvedev equivalent to $\{C\}\times{\rm Sep}(R,S)$.
In particular $C\oplus\cmp{C}\leq_eA$.
Define $D_{n,i}=\{\langle n,j,m\rangle\}$, where $i=\langle j,m\rangle$.
This sequence generates a computably $e$-$G_\delta$-set:
\[\mathcal{P}=\{X:(\forall n)(\exists i)\;D_{n,i}\subseteq X\}.\]

Recall the definition of $\nbase{x}$ in Example \ref{example:teloph-2}.
Clearly, for any $n$, there is $i=\langle j,\sigma\rangle$ such that $D_{n,i}=\{\langle n,j,\sigma\rangle\}\subseteq A={\rm Nbase}(x)$.
Fix $X\in\mathcal{P}$ such that $X\subseteq A$.
Then, for any $n$, $D_{n,i_n}\subseteq X$ for some $i_n$.
Given an enumeration of $X$, one can find a sequence $(i_n)_{n\in\om}$ of such witnesses.
The sequence $(i_n)_{n\in\om}$ may depend on how to enumerate $X$.
As in the proof of Theorem \ref{thm:telophase-degree}, we can construct $B\subseteq\om$ according to $(i_n)_{n\in\om}$ as follows.
Define $n\not\in B$ iff $i_n$ is of the form $\langle 2,m\rangle$, that is, $i_n$ is an index of the interval $[m,\infty_\star]$.
We claim that $B\in{\rm Sep}(R,S)$.
If $n\in R$, then $x(n)=\infty$, and therefore, $x(n)\not\in[m,\infty]$ for any $m$, and thus $\langle n,2,m\rangle\not\in A$.
Since $X\subseteq A$, we also have $\langle n,2,m\rangle\not\in A$.
Therefore, $n\in B$, that is, $R\subseteq B$.
If $n\in S$, then $x(n)=\infty_\star$, and thus, $\langle n,j,m\rangle\in A$ implies $j=2$.
Since $A\subseteq X$, $i_n$ must be of the form $\langle 2,m\rangle$.
This implies that $n\not\in B$, that is, $B\cap S=\emptyset$.
Consequently, we have $B\in{\rm Sep}(R,S)$.
This procedure gives us an enumeration operator $\Psi$ (independent of $X$) such that $\Psi({X\oplus C\oplus\cmp{C}})=A$.
Hence, $A$ is $G_\delta$-left-total.

We next show that every telograph-cototal $e$-degree is jump-right-total.
Let $A$ be ${\rm Nbase}(x)$ for a point $x$ in the product telophase space $(\hat{\om}_{TP})^\om$.
We define $C$, $R$, and $S$ as above.
Consider the following $C'$-computably $e$-closed set:
\[\nn=\{X:(\forall n\in R\cup S)(\forall i,j,s,t)\;[\langle n,i,s\rangle,\langle n,j,t\rangle\in X\;\Longrightarrow\;i=j\not=0]\}.\]

It is clear that $A\in\nn$.
Note that $R\cup S$ is $C$-co-c.e., and in particular, $C'$-computable.
To see that $\nn$ is $C'$-computably $e$-closed, let $I$ be the set of all $\langle n,i,j,s,t\rangle$ such that $n\in R\cup S$ and either $i\not=j$ or $i=0$.
Define $D_k=\{\langle n_k,i_k,s_k\rangle,\langle n_k,j_k,t_k\}$, where $\langle n_k,i_k,j_k,s_k,t_k\rangle$ is the $k$-th element of $I$.
Clearly, $(D_k)_{k\in\om}$ is a $C'$-computable sequence of finite sets.
It is easy to see that $X\in\nn$ iff $D_k\not\subseteq X$ for all $k\in\om$.

Fix $X\in\nn$ such that $A\subseteq X$.
Consider the procedure $\Psi$ which enumerates all $\langle n,i,m\rangle\in X$ which agree with $C$, that is, $\Psi$ enumerates $\langle n,0,m\rangle$ if $\langle n,0,m\rangle\in X$ and $2\langle n,m\rangle\in C$, and for $i\not=0$, $\Psi$ enumerates $\langle n,i,m\rangle$ if $\langle n,i,m\rangle$ and $2\langle n,s\rangle+1\in C$ for any $s<m$.

We claim that this procedure $\Psi(X\oplus C\oplus \cmp{C})$ precisely enumerates $A$.
To see this, for $n,m\in\om$, first note that $x(n)=m$ (that is, $\langle n,0,m\rangle\in A$) if and only if $2\langle n,m\rangle\in C$.
Thus, $x(n)=m$ if and only if $\Psi$ enumerates since $\langle n,0,m\rangle\in A\subseteq X$.
Next, $x(n)\geq m$ if and only if $2\langle n,s\rangle+1\in C$ for any $s<m$.
If $x(n)\in\om$, then for each $i\not=0$ and $m\leq x(n)$, we have $\langle n,i,m\rangle\in A\subseteq X$, and therefore $\Psi$ enumerates $\langle n,i,m\rangle$.
If $x(n)\not\in\om$, then this means that $n\in R\cup S$.
Assume that $x(n)=\infty$.
Then $\langle n,1,m\rangle\in A$ for any $m$.
Since $A\subseteq X\in\mathcal{N}$, $\langle n,i,m\rangle\in X$ if and only if $i=1$.
Thus, $\Psi$ enumerates $\langle n,i,m\rangle\in X$ if and only if $i=1$.
Similarly, when $x(n)=\infty_\star$, $\Psi$ enumerates $\langle n,i,m\rangle\in X$ if and only if $i=2$.
Consequently, we have $\Psi(X\oplus C\oplus \cmp{C})=A$.
Hence, $A$ is jump-right-total.
\end{proof}

\subsubsection{Quasi-minimality w.r.t.\ left-/right-totality}

We now describe how to use these total-like properties introduced in Section \ref{sec:quasiminimalspaces} to show some results on quasi-minimality mentioned in Section \ref{sec:4-1-5}.

\begin{lemma}\label{lem:semirec-avoids-totallike}
Let $x$ be a real.
If $x$ is not left-c.e.\ in $\emptyset'$, then $\nbaseb{<}{x}$ is quasi-minimal w.r.t.~$G_\delta$-left-total degrees.
If $x$ is not right-c.e.\ in $\emptyset'$, then $\nbaseb{<}{x}$ is quasi-minimal w.r.t.~jump-right-total degrees.
\end{lemma}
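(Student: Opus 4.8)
\textbf{Proof plan for Lemma \ref{lem:semirec-avoids-totallike}.}
The plan is to adapt the dichotomy argument of Lemma \ref{propdichotomy}, but one level up: instead of using an enumeration operator $\Phi$ reducing a point to $\nbaseb{<}{y}\oplus\nbaseb{\xx}{z}$ and splitting on whether a rational slightly above $y$ still forces correct values, we will analyze an enumeration operator $\Psi$ with $A\leq_e\nbaseb{<}{x}$ (where $A$ is the witness of $G_\delta$-left-totality or jump-right-totality) together with the structural constraint provided by the computably-$e$-$G_\delta$ set $\mathcal{P}$ (resp.\ the $C'$-computably-$e$-closed set $\nn$). The key point is that the ``upward/downward closed'' nature of $\mathcal{P}$ and $\nn$ plays the role that the chain-versus-antichain dichotomy played for $T_1$ spaces: it lets us run a guessing strategy that, given $\nbaseb{<}{x}$, produces \emph{some} $X\in\mathcal{P}$ (resp.\ $\nn$) with the required inclusion, and then conclude $A\leq_e X\oplus C\oplus\cmp{C}$, forcing us to control the computational content of $x$.

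For the $G_\delta$-left-total case, first I would fix $C,\mathcal{P},\Psi$ witnessing $A\leq_e\nbaseb{<}{x}$ with $A$ of $G_\delta$-left-total degree; note $C\oplus\cmp{C}\leq_e A\leq_e\nbaseb{<}{x}$, so $x$ being not left-c.e.\ in $\emptyset'$ a fortiori gives constraints on $C$. The strategy is: using a left-approximation of $x$ (which is all $\nbaseb{<}{x}$ provides) together with the oracle $C\oplus\cmp{C}$, run $\Psi$ on longer and longer initial segments of rationals below $x$ and, for each coordinate $n$, wait for the first finite set $D_{n,i}$ (from the $\mathcal{P}$-defining sequence) that appears inside the current enumeration; this yields a candidate $X\subseteq\om$, and since $\mathcal{P}$ is upward closed and $A\in\mathcal{P}$ with $A\leq_e\nbaseb{<}{x}$, monotonicity of enumeration operators guarantees $X\in\mathcal{P}$ and $X\subseteq A$. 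Then the $G_\delta$-left-total property gives $A\leq_e X\oplus C\oplus\cmp{C}$. Now the dichotomy: either the witnesses $D_{n,i}$ can all be found using only rationals below some fixed $q>x$ — in which case $q$ being computable lets us drop the dependence on $x$ and get $A\leq_e C\oplus\cmp{C}$, hence $\nbaseb{\zz}{z}\leq_e A\leq_e C\oplus\cmp{C}$ is already as desired (after checking $C\oplus\cmp{C}$ is the zero degree, or absorbing it) — or for every $\varepsilon>0$ some coordinate requires a rational in $(x,x+\varepsilon)$, and by pigeonhole some fixed coordinate $n$ does so cofinally; enumerating all such rationals as they appear produces a \emph{right}-approximation of $x$ using $A\oplus C\oplus\cmp{C}\equiv_e A\leq_e\nbaseb{<}{x}$, and combined with the left-approximation already available from $\nbaseb{<}{x}$ this makes $x$ $\Delta^0_1$ in a $\Sigma^0_1$-in-$\emptyset'$ way, i.e.\ left-c.e.\ in $\emptyset'$ — contradiction. (The extra jump appears because detecting that a coordinate ``requires'' small rationals is a $\Pi^0_2$-type condition, matching the computably-$e$-$G_\delta$ definition.)

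The jump-right-total case is dual and uses the $C'$-oracle already built into the definition of $\nn$: I would fix $C,\nn,\Psi$ with $A\leq_e\nbaseb{<}{x}$, $A\in\nn$, and run the analogous guessing procedure relative to $C'$, exploiting that $\nn$ is \emph{downward} closed (a $C'$-computably $e$-closed set) so that the candidate superset $X\supseteq A$ extracted from the enumeration stays in $\nn$; then $A\leq_e X\oplus C\oplus\cmp{C}$, and the same split on ``rationals just above $x$ suffice or not'' yields either $\nbaseb{<}{x}$-independence of the reduction or a right-approximation of $x$ computable from $\emptyset'$, contradicting $x$ not being right-c.e.\ in $\emptyset'$. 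In both halves one must be slightly careful that the constructed $X$ is genuinely the image of a valid name and that the reduction $A\leq_e X\oplus C\oplus\cmp{C}$ is uniform, so that substituting the $\nbaseb{<}{x}$-enumerable data for $X$ gives a bona fide enumeration operator on $\nbaseb{<}{x}$; this bookkeeping, together with pinning down exactly why the ``small rationals cofinally needed'' clause lands at the $\emptyset'$ level (rather than $\emptyset$ or $\emptyset''$), is the main obstacle. Everything else is a routine lift of the Lemma \ref{propdichotomy} argument.
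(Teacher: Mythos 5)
There is a genuine gap: your dichotomy is set up on the wrong side of $x$, and the contradiction mechanism does not work. In the $G_\delta$-left-total case the reduction $\Phi(\nbaseb{<}{x})=A$ only ever consults rationals $<x$, so no coordinate can ``require a rational in $(x,x+\varepsilon)$''; and in your first case a rational $q>x$ gives $\Phi(\nbaseb{<}{q})\supseteq A$, a \emph{superset}, which is useless for the left-total clause (that clause needs some $X\in\mathcal{P}$ with $X\subseteq A$) and certainly does not yield $A\leq_e C\oplus\cmp{C}$. Likewise, the candidate $X$ you build by scanning the enumeration of $\Phi(\nbaseb{<}{x})$ satisfies only $X\leq_e\nbaseb{<}{x}$, so the resulting inequality $A\leq_e X\oplus C\oplus\cmp{C}$ is circular and proves nothing about $A$ being c.e. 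Finally, the claimed contradiction in your failure case is invalid: you never actually produce a right-approximation of $x$ from data $\leq_e\nbaseb{<}{x}$, and even if you had one, ``left approximation plus right approximation'' would not yield ``$x$ is left-c.e.\ in $\emptyset'$''.

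The missing idea is a threshold real rather than a coordinatewise analysis. First, since not being left-c.e.\ in $\emptyset'$ implies $x$ is neither left- nor right-c.e., Lemma \ref{propdichotomy} makes $\nbaseb{<}{x}$ quasi-minimal, so the total part $C$ (with $C\oplus\cmp{C}\leq_e A\leq_e\nbaseb{<}{x}$) is computable and the sequence $(D_{n,i})$ is computable. Now set $\Phi^\ast[\mathcal{P}]=\{z\in\mathbb{R}:\Phi(\nbaseb{<}{z})\in\mathcal{P}\}$; by upward closure of $\mathcal{P}$ and monotonicity of $\Phi$ this is an upward-closed set of reals containing $x$, and for $q=\inf\Phi^\ast[\mathcal{P}]$ the relation $p<q$ is $\Sigma^0_2$ (its negation says: for every rational $r>p$, for every $n$ there is $i$ with $D_{n,i}\subseteq\Phi(\nbaseb{<}{r})$, a $\Pi^0_2$ statement). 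Hence $q$ is left-c.e.\ in $\emptyset'$, so $q\neq x$, so $q<x$; picking a rational $p$ with $q<p<x$, the set $\Phi(\nbaseb{<}{p})$ is c.e., lies in $\mathcal{P}$, and is contained in $A$, so the left-total clause gives $A\leq_e\Phi(\nbaseb{<}{p})\oplus C\oplus\cmp{C}$, i.e.\ $A$ is c.e. The jump-right-total half is the exact dual with $q=\sup\Phi^\ast[\nn]$: its right cut is $\Sigma^0_2$, so $q$ is right-c.e.\ in $\emptyset'$, hence $x<q$, and a rational $p\in(x,q)$ gives the c.e.\ superset $\Phi(\nbaseb{<}{p})\in\nn$ of $A$. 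Your intuition that supersets and the ``above $x$'' direction belong to the right-total case was correct, but you attached that analysis to the left-total half, and in neither half did you isolate the threshold real whose $\Sigma^0_2$ cut is precisely where the $\emptyset'$ hypothesis enters.
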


\begin{proof}
Let $x$ be a real which is not left-c.e.\ in $\emptyset'$.
For the first assertion, let $A$ be a $G_\delta$-left-total set, witnessed by $(D_{n,i})_{n,i\in\om}$, and $C$.
That is, let $\mathcal{P}$ be the computable $e$-$G_\delta$ set defined by $X\in\mathcal{P}$ iff for each $n$, there is $i$ such that $D_{n,i}\subseteq X$.
Suppose that $\Phi(\nbaseb{<}{x})=A\in\mathcal{P}$.
By Lemma \ref{propdichotomy}, if $x$ is neither left- nor right-c.e., then $\nbaseb{<}{x}$ is quasi-minimal, and therefore $C$ is computable since $C\oplus\cmp{C}\leq_eA\leq_e\nbaseb{<}{x}$.
Hence, $(D_{n,i})_{n,i\in\om}$ is a computable sequence.

As mentioned above, $\mathcal{P}$ is upward closed, and so is $\Phi^{-1}[\mathcal{P}]$ by monotonicity of the enumeration operator $\Phi$.
We define $\Phi^\ast[\mathcal{P}]=\{z\in\mathbb{R}:\Phi(\nbaseb{<}{z})\in\mathcal{P}\}$, which is upward closed w.r.t.\ the standard ordering on $\mathbb{R}$.
Consider $q=\inf\Phi^\ast[\mathcal{P}]$.
Then, $q\leq x$ since $x\in\Phi^\ast[\mathcal{P}]$.
Note that $q\leq p$ iff $p+\varepsilon\in\Phi^\ast[\mathcal{P}]$ for any $\varepsilon>0$, and therefore, for any rational $p$,
\[p< q\iff\neg(\forall\varepsilon>0)(\forall n)(\exists i)\;D_{n,i}\subseteq\Phi(\nbaseb{<}{p+\varepsilon}).\]

This shows that $\nbaseb{<}{q}$ is $\Sigma^0_2$, and thus, $q$ is left-c.e.\ in $\emptyset'$.
Since $x$ is not $\emptyset'$-left-c.e., we have $q<x$.
Therefore, there is a rational $p$ such that $q<p<x$.
Since $\nbaseb{<}{p}\subseteq\nbaseb{<}{x}$, by monotonicity of an enumeration operator, we have $\Phi(\nbaseb{<}{p})\subseteq\Phi(\nbaseb{<}{x})=A$.
Moreover, we have $p\in\Phi^\ast[\mathcal{P}]$, and therefore, $\Phi(\nbase{p})\in\mathcal{P}$.
Since $p$ is rational, $C$ is computable, and $A$ is $G_\delta$-left-total (via $\mathcal{P}$), we have $A\leq_e\Phi(\nbaseb{<}{p})\leq_e\emptyset$.
Consequently, $\nbaseb{<}{x}$ is quasi-minimal w.r.t.~$G_\delta$-left-total degrees.

For the second assertion, let $A$ be a jump-right-total set, witnessed by $(D_{n})_{n\in\om}$, and $C$.
That is, $(D_n)_{n\in\om}$ is a $C'$-computable sequence, and let $\mathcal{N}$ be the $C'$-computable closed set defined by $X\in\nn$ iff for all $n\in\om$, $D_n\not\subseteq X$.
Suppose that $\Phi(\nbaseb{<}{x})=A\in\mathcal{N}$.
By the same argument as before, $C$ has to be computable, and thus $(D_{n})_{n\in\om}$ is a $\emptyset'$-computable sequence.

As mentioned above, $\mathcal{N}$ is downward closed, and so is $\Phi^{-1}[\mathcal{N}]$ by monotonicity of the enumeration operator $\Phi$.
We define $\Phi^\ast[\mathcal{N}]=\{z\in\mathbb{R}:\Phi(\nbaseb{<}{z})\in\mathcal{N}\}$, which is downward closed w.r.t.\ the standard ordering on $\mathbb{R}$.
Consider $q=\sup\Phi^\ast[\mathcal{N}]$.
Then, $x\leq q$ since $x\in\Phi^\ast[\mathcal{N}]$.
Moreover, by a similar argument as above, for any rational $p$,
\[q<p\iff\neg(\forall\varepsilon>0)(\forall n)\;D_n\not\subseteq\Phi(\nbaseb{<}{p+\varepsilon}).\]

This shows that $\nbaseb{<}{-q}$ is $\Sigma^0_2$, and thus, $q$ is right-c.e.\ in $\emptyset'$.
Therefore, by our assumption on $x$, we have $x<q$.
Thus, there is a rational $p$ such that $x<p<q$.
Since $\nbaseb{<}{x}\subseteq\nbaseb{<}{p}$, by monotonicity of an enumeration operator, we have $A=\Phi(\nbaseb{<}{x})\subseteq\Phi(\nbaseb{<}{p})$.
Moreover, we have $p\in\Phi^\ast[\mathcal{N}]$, and therefore, $\Phi(\nbase{p})\in\mathcal{N}$.
Since $p$ is rational, $C$ is computable, and $A$ is jump-right-total (via $\mathcal{N}$), we have $A\leq_e\Phi(\nbaseb{<}{p})\leq_e\emptyset$.
Consequently, $\nbaseb{<}{x}$ is quasi-minimal w.r.t.~jump-right-total degrees.
\end{proof}

\thmproof{telograph-quasiminimal1}{
Let $\mathbf{d}$ be a semirecursive, non-$\Delta^0_2$ $e$-degree.
As mentioned in Section \ref{sec:deg-point-T0}, the semirecursive degrees are characterized as the $\mathbb{R}_<$-degrees.
Hence, there is a real $x$ such that $\nbaseb{<}{x}\in\mathbf{d}$ and $\nbaseb{<}{x}$ is not $\Delta^0_2$.
In particular, $x$ is not $\emptyset'$-left-c.e.\ or $x$ is not $\emptyset'$-right-c.e.
As seen in Example \ref{exa:telcototal-totallike}, every telograph-cototal $e$-degree is $G_\delta$-left-total and jump-right-total.
Therefore, by Lemma \ref{lem:semirec-avoids-totallike}, $\nbaseb{<}{x}$ is quasi-minimal w.r.t.\ telograph-cototal $e$-degree.
}

\begin{lemma}\label{lem:telog-qmin-3}
There is a $\Delta^0_2$ real $x$ such that $x$ is neither left- nor right-c.e., but $\nbaseb{<}{x}$ is not quasi-minimal w.r.t.~telograph-cototal $e$-degrees.
\end{lemma}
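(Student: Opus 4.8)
The plan is to deduce the statement from a finite-injury priority construction that simultaneously produces the real $x$ and a point $z$ of the telophase product $(\hat{\om}_{TP})^\om$ of Example \ref{example:teloph-2}.

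It suffices to build a real $x=\lim_s x_s$ with each $x_s$ rational — so that $x$ is $\Delta^0_2$ — together with a point $z\in(\hat{\om}_{TP})^\om$, in such a way that (a) $\nbase{z}\leq_e\nbaseb{<}{x}$; (b) $\nbase{z}$ is not c.e.\ (equivalently, $z$ is not a computable point); and (c) $x$ is not right-c.e. Granting this, Proposition \ref{prop:telophase-is-graph-cototal} makes $\nbase{z}$ telograph-cototal, (b) makes it noncomputable, and (a) places it $\leq_e$-below $\nbaseb{<}{x}$; while Lemma \ref{propdichotomy} together with (c) makes $\nbaseb{<}{x}$ quasi-minimal, hence nonzero, so $x$ is not left-c.e.\ either. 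Thus $\nbaseb{<}{x}$ is the left cut of a $\Delta^0_2$ real which is neither left- nor right-c.e., but it bounds the noncomputable telograph-cototal degree $\deg_e\nbase{z}$, so it is not quasi-minimal with respect to telograph-cototal $e$-degrees, which is the conclusion of the lemma. (In accordance with Theorem \ref{telograph-quasiminimal1}, such an $x$ is automatically $\Delta^0_2$; and one cannot push $\cmp{K}$ below $\nbaseb{<}{x}$, since $\deg_e\cmp{K}$ is a $\Pi^0_1$ $e$-degree and the greatest element of $\mathcal{D}_{\mathbb{R}_<}$, so the telograph-cototal set witnessing non-quasi-minimality must lie strictly below it.)

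For the construction, $x\in(0,1)$ is built by maintaining a rational interval $[\ell_s,u_s]\ni x_s$ with $u_s-\ell_s\to 0$ and $\bigcap_s[\ell_s,u_s]=\{x\}$, keeping $x$ irrational. The requirements are $M_e$: $\nbaseb{<}{-x}\neq W_e$ (so $x$ is not right-c.e.); $R_e$: $\nbase{z}\neq W_e$ (so $\nbase{z}$ is not c.e.); and, globally, a coding module defining a single fixed enumeration operator $\Psi$ with $\Psi(\nbaseb{<}{x})=\nbase{z}$. An $R_e$-strategy chooses a fresh follower coordinate $n_e$ from a dedicated computable reservoir and, watching $W_e$, sets the final value of $z(n_e)$ among the options $\infty$, $\infty_\star$, finite so as to disagree with the value to which $W_e$ commits on coordinate $n_e$; this choice prescribes how $x$ must be moved inside the current interval (up past a reserved threshold to signal $z(n_e)=\infty$, re-routed to signal $z(n_e)=\infty_\star$, or pinned into a sub-cell to signal a finite value). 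Followers abandoned after injury are sent to a finite value, so that the set $\{n:z(n)\in\{\infty,\infty_\star\}\}$ stays co-c.e.\ as required for the telophase coding. The $M_e$-diagonalisations act at scales strictly finer than, and disjoint from, the coding thresholds currently in force, exactly as in the priority argument in the proof of Theorem \ref{thm:lower-minimal}, of which the present construction is a simplification.

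With the construction in place the verification is routine: finite injury yields convergence of all parameters and of $x=\lim_s x_s$ (irrational, $\Delta^0_2$); the $M_e$ make $x$ not right-c.e., so $\nbaseb{<}{x}$ is quasi-minimal by Lemma \ref{propdichotomy}; the $R_e$ make $\nbase{z}$ noncomputable; and the coding module gives $\nbase{z}\leq_e\nbaseb{<}{x}$. The heart of the proof, and the step I expect to be the main obstacle, is the coding module. Since $\Psi$ must be a single fixed enumeration operator, it cannot have the followers $n_e$ or their thresholds — which are only determined in the limit — built in; it must recover, uniformly over all coordinates $n$, the data it needs from the left cut $\nbaseb{<}{x}$ alone, and the left cut supplies only the one-sided positive information ``$x>q$''. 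The device that makes this work is precisely that $\hat{\om}_{TP}$ has two \emph{incomparable} compactification points $\infty$ and $\infty_\star$: this is what lets an $R_e$-strategy revise a tentative classification of coordinate $n_e$ — in particular switch it between $\infty$ and $\infty_\star$ — while $\Psi$ stays monotone and correct on the true left cut, and it is why a genuinely \emph{non-total} telograph-cototal set can be coded below a quasi-minimal semirecursive $e$-degree (if only the outcome $\infty$ were available, the coded set would be total, and no nonzero total set lies below a quasi-minimal $e$-degree). Verifying that the $M_e$-moves never disturb a settled coding decision, and that every coordinate stabilises, is the remaining bookkeeping.
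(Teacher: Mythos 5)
You have reproduced the paper's architecture: a finite-injury construction of a $\Delta^0_2$ real $x$ together with a telophase-type object coded below $\nbaseb{<}{x}$, requirements making $x$ not right-c.e.\ and the coded object noncomputable, the two incomparable points $\infty,\infty_\star$ permitting a switch of a coordinate's classification, and demoted coordinates sent to finite values so that the relevant set stays co-c.e. But the proposal is not yet a proof, because the coding module — which you yourself flag as the main obstacle and defer as ``bookkeeping'' — is exactly where the entire content of the paper's argument lies, and the verifications it must deliver are absent. Concretely: the moves for the non-right-c.e.\ requirements necessarily \emph{increase} $x$, hence enlarge the left cut, so axioms of $\Psi$ already enumerated with thresholds between the old and new value fire and may contradict the intended classification of a coordinate. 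One must show that the only coordinates so affected can safely be demoted to finite values, and that no witness of an already-satisfied diagonalization is ever demoted; in the paper this is the content of the claims about the actions (P4) and (N3)/(N4) and is enforced by a quantitative discipline (thresholds of size $2^{-2n}$, with the sum of all lower-priority increments bounded by $2^{-2n_e-1}$). Nothing in your sketch guarantees this compatibility of scales, and without it a settled witness can be expelled or the downward move of an $R_e$-strategy can fail to invalidate the old axiom it needs to kill.

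A second, related gap is that you demand a single operator with $\Psi(\nbaseb{<}{x})=\nbase{z}$ \emph{exactly}, which is stronger than necessary and creates extra obligations you do not discharge: a coordinate demoted to a finite value must also have its exact value and all lower intervals emitted, and a coordinate ending at $\infty$ must have no stale type-$2$ axiom whose threshold lands below the final $x$. The paper sidesteps all of this by not fixing the output set: it constructs a computably inseparable pair $(A,B)$ with $A\cup B$ co-c.e.\ and an operator $\Gamma$ such that \emph{every} enumeration $p$ of $\nbaseb{<}{x}$ yields some separator $\Gamma^p\in{\rm Sep}(A,B)$ (different enumerations may yield different separators), and then applies Theorem \ref{thm:telophase-degree} to turn this Medvedev reduction into a noncomputable telograph-cototal $e$-degree below $\nbaseb{<}{x}$; adopting that formulation would remove your exact-output burden. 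Finally, a small logical slip: you infer ``$x$ not left-c.e.'' from Lemma \ref{propdichotomy} and (c), but quasi-minimality \emph{requires} not-left-c.e.\ as a hypothesis; the correct one-line argument is that if $x$ were left-c.e.\ then $\nbaseb{<}{x}$ would be c.e., so by (a) the set $\nbase{z}$ would be c.e., contradicting (b).
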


\begin{proof}
We construct a $\Delta^0_2$ real $x$ and a computably inseparable pair $(A,B)$ such that $x$ is not right-c.e., $A\cup B$ is co-c.e., and that any enumeration of $\nbaseb{<}{x}$ computes a separator of $(A,B)$.
Let $r_e$ be the $e$-th right-c.e.~real.
Consider the following requirements:
\begin{align*}
P_e\colon & \Phi_e\mbox{ is total }\Longrightarrow\;\Phi_e\not\in{\rm Sep}(A,B),\\
N_e\colon & \nbaseb{<}{x}\not=r_e,\\
G\colon & (\exists\Gamma)(\forall p)\;{\rm rng}(p)=\nbaseb{<}{x}\;\Longrightarrow\;\Gamma^{p}\in{\rm Sep}(A,B).
\end{align*}

Begin with $x_0=0$, $A_0=\emptyset$, $B_0=\om$, and $\Gamma_0=\emptyset$.
The global strategy $G$ constructs $\Gamma_s$ as follows:
The operator $\Gamma_s$ is a collection of tuples of the form $\langle n,i,p\rangle$, which indicates that $\Gamma(\nbaseb{<}{x})(n)\downarrow=i$ for any $x>p$.
For any $s$, we ensure that only finitely many tuples of the form $\langle n,i,p\rangle$ is enumerated into $\Gamma_s$.
At the beginning of each stage $s$, the global strategy tries to recover the destroyed computations as follows.
Given $n<s$, the $G$-strategy check if there is a rational $p<x_s$ such that $\langle n,i,p\rangle$ is enumerated into $\Gamma_s$ for some $i<2$.
If there is no such $p$, enumerate $\langle n,i,x_s-2^{-2n}\rangle$ into $\Gamma_{s+1}$, where $i=1$ iff $n\in A_s$.

A priority ordering is given by $P_e<N_e<P_{e+1}$, where $S<T$ means that $S$ is a higher priority strategy than $T$.
The $P_e$-action may decrease the value of $x_s$, and the $N_e$-action may increase the value of $x_s$.
At stage $s$, a $P_e$-strategy acts as follows:
\begin{enumerate}
\item[(P1)] Choose a large $n_e\in B_s$.
\item[(P2)] Wait for $\Phi_{e,s}(n_e)\downarrow=0$.
\item[(P3)] Move $n_e$ from $B$ to $A$, that is, define $B_{s+1}=B_s\setminus\{n_e\}$ and $A_{s+1}=A_s\cup\{n_e\}$.
\item[(P4)] Try to destroy the computation of $\Gamma^x(n_e)$ by putting $x_{s+1}=x_{s}-2^{-2n_e+1}$.
\item[(P5)] Injure all lower priority strategies by resetting all parameters.
\end{enumerate}

An $N_e$-strategy acts as follows:
\begin{enumerate}
\item[(N1)] Choose a large $m_e$.
\item[(N2)] Wait for $x_s<r_{e,s}<x_s+2^{-2m_e}$.
\item[(N3)] Put $x_{s+1}=x_s+2^{-2m_e}$, and for a sufficiently large $u$ such that $r_{e,s}<x_{s+1}-2^{-2u}$, we hereafter require that a large number should be bigger than $u$.
\item[(N4)] The above action may cause inconsistent computations on $\Gamma^x(n)$, that is, it is possible to have $p<q<x_{s+1}$ such that both $\langle n,i,p\rangle$ and $\langle n,1-i,q\rangle$ is contained in $\Gamma_{s+1}$.
For all such $n$, remove $n$ from $A\cup B$.
\item[(N5)] Injure all lower priority strategies by resetting all parameters.
\end{enumerate}

If a strategy reaches (P5) or (N5), then the strategy never acts unless it is later initialized.
Thus, every strategy acts once with the same $n_e$ or $m_e$.
The $P$-requirements can only be injured by (N3), and the $N$-requirements can only be injured by (P4).

\begin{claim}
The action (P4) always forces $\Gamma_{s+1}(x_{s+1};n_e)$ to be undefined, that is, there is no $p<x_{s+1}$ and $i$ such that $\langle n_e,i,p\rangle\in\Gamma_{s+1}$.
\end{claim}

\begin{proof}
An action of a higher priority strategy injures $P_e$ at some stage $t<s$, which forces $P_e$ to redefine $n_e$ as a large number $>t$.
At some stage $v$, $\langle n_e,i,x_v-2^{2n_e}\rangle$ is enumerated into $\Gamma_v$.
The $G$-strategy ensures $v>n_e>t$.
After $n_e$ is settled after stage $t$, only a lower priority strategy can act.
Only $N$-strategies can increase the value of $x$.
Therefore, for $m=\sum_{d\geq e}2^{-2m_d}$, and we have $x_s<x_v+m$.
One can see that $m< 2^{-2n_e-1}$, and hence
\[x_{s+1}=x_s-2^{-2n_e+1}<x_v+m-2^{-2n_e+1}<x_v-3\cdot 2^{-2n_e-1}<x_v-2^{2n_e}.\]
This forces $\Gamma_{s+1}(\nbaseb{<}{x_{s+1}};n_e)$ to be undefined.
\end{proof}

Thus, the $G$-strategy can recover the correct value for a separating set.
We would like to make sure that $\Gamma^x$ is total.
Of course, enumerating inconsistent computations makes $\Gamma^x$ be partial.
Hence, instead of dealing with $\Gamma^x$, we consider $\Gamma^p$ for any enumeration $p$ of $\nbaseb{<}{x}$.
The computation $\Gamma^p$ is given as follows.
Let $p$ be an enumeration of $\nbaseb{<}{x}$.
For each $n$, wait for $s$ such that $q<p(s)$ and $\langle\langle n,i\rangle,q\rangle\in\Gamma_s$ for some $i<2$, and then for the first such, we define $\Gamma^p(n)=i$.
By the action of the $G$-strategy, $\Gamma^p$ becomes total.

\begin{claim}
The $G$-requirement is fulfilled.
\end{claim}

\begin{proof}
Straightforward.
\end{proof}

\begin{claim}
The action (N3) only cause inconsistent $\Gamma^x(n)$ for $n\geq m_e$.
Hence, if the $P_e$-strategy acts, and is never injured, then $P_e$ is fulfilled.
\end{claim}

\begin{proof}
Otherwise, $\langle n,1,p\rangle$ is already enumerated.
In this case, we must have $n_d=n$ for some $d$.
Assume that $P$ acts with $n_d<m_e$ at some stage $t<s$.
As in the above argument, at some stage $v$, $\langle n_d,i,x_v-2^{2n_d}\rangle$ is enumerated into $\Gamma_v$.
As seen above, $x_{t+1}<x_v-3\cdot 2^{-2n_d-1}$.
Thus, to make $\Gamma^x(n_d)$ inconsistent, we need to increase $x$ by $2^{-2n_d-1}$.
As before, we have $m<2^{-2n_d-1}$.
\end{proof}

\begin{claim}
If the $N_e$-strategy acts, and is never injured, then the property $r_{e}<x$ is preserved forever.
\end{claim}

\begin{proof}
To see this, note that the $P_e$-action (P3) at stage $s$ only injure lower priority strategies.
This is because, for $d<e$, if $N_d$ has already acted at some stage $t<s$, and not injured until $s$, then $n_e$ must be bigger than $u$ chosen by $N_d$.
Therefore, $r_e\leq r_{e,t}<x_{t+1}-2^{-u}\leq x_s$.
\end{proof}

These claims conclude the proof.
\end{proof}

\thmproof{telograph-quasiminimal3}{
By Lemma \ref{lem:telog-qmin-3}.
}

\subsection{$T_1$-degrees which are not $T_2$}\label{sec:T1-not-T2-qmin}

We give proofs of some important separation results mentioned in Section \ref{sec:4-2}.
We say that a function $f:\omega\to\omega$ is {\em computably dominated} if there is a computable function $h:\omega\to\omega$ such that $f(n)\leq h(n)$ for almost all $n\in\omega$.
We also say that a function $f:\omega\to\omega$ is {\em computably dominating} if for any computable function $h:\omega\to\omega$, $h(n)\leq f(n)$ for almost all $n\in\omega$.

\begin{obs}\label{obs:basic-cylinder-cototal}~
\begin{enumerate}
\item For any $f:\om\to\om$, $\nbaseb{(\om^\om)_{\rm co}}{f}\leq_e\nbaseb{\om^\om}{f}$.
\item If $f:\omega\to\omega$ is computably dominated, then $\nbaseb{(\om^\om)_{\rm co}}{f}\equiv_e\nbaseb{\om^\om}{f}$.
Hence, $\nbaseb{(\om^\om)_{\rm co}}{f}$ has a total degree.
\item There is a computably dominating function $f$ such that $\nbaseb{(\om^\om)_{\rm co}}{f}$ is c.e.
\end{enumerate}
\end{obs}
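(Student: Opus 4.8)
The three claims in Observation \ref{obs:basic-cylinder-cototal} are of increasing depth, and my plan is to treat them in order. For (1), recall that a $\gamma^{\rm co}$-basic open set is $\cmp{[\sigma]}=\{g\in\om^\om:\sigma\not\prec g\}$, and membership of $f$ in this set is a $\Sigma^0_1$ fact in $f$: given the Baire name of $f$ (i.e.\ an enumeration of $\{\langle n,m\rangle:f(n)=m\}$, equivalently the graph of $f$), one waits until one sees $f(|\sigma|{-}1)$ and checks whether the initial segment of $f$ disagrees with $\sigma$ at some coordinate. More carefully, $\sigma\not\prec f$ iff there is $i<|\sigma|$ and $m\ne\sigma(i)$ with $\langle i,m\rangle$ in the graph of $f$. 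This is an enumeration-operator condition, so $\nbaseb{(\om^\om)_{\rm co}}{f}\leq_e\nbaseb{\om^\om}{f}$. I would phrase this as a single uniform enumeration operator.

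For (2), the converse reduction is the interesting direction and uses the domination hypothesis. From an enumeration of $\nbaseb{(\om^\om)_{\rm co}}{f}=\{\sigma:\sigma\not\prec f\}$ I must enumerate the graph of $f$, equivalently compute $f$. The point is that $f(n)=m$ iff $\sigma\not\prec f$ for every $\sigma$ of length $n+1$ with $\sigma\upto n\prec f$ and $\sigma(n)\ne m$; but to \emph{enumerate} $f(n)=m$ positively I want a finite amount of co-cylinder information to certify it. With a computable bound $h$ such that $f(n)\le h(n)$ for all $n\ge N$ (the finitely many exceptions are hard-coded), I can proceed by induction on $n$: once $f\upto n$ is known, I wait until, for the current candidate value, all the finitely many strings $\sigma=(f\upto n)\fr m$ with $m\le h(n)$ and $m\ne f(n)$ have appeared in the co-cylinder name as $\sigma\not\prec f$; since there are only $h(n)+1$ candidates and exactly one is correct, this halts and pins down $f(n)$. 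This gives $\nbaseb{\om^\om}{f}\leq_e\nbaseb{(\om^\om)_{\rm co}}{f}$, hence equivalence; and since $\nbaseb{\om^\om}{f}\equiv_e{\rm Graph}(f)\oplus\cmp{{\rm Graph}(f)}$ is total, so is $\nbaseb{(\om^\om)_{\rm co}}{f}$. I would be slightly careful about where the finitely many exceptional values of $n$ (where $f(n)>h(n)$) are handled — they can be absorbed into a relative-computability constant, or one may simply remark that totality is unaffected by finite modification.

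For (3), I need a single function $f$ that is computably dominating yet has a c.e.\ co-cylinder name, i.e.\ $\{\sigma:\sigma\not\prec f\}$ is c.e. The natural move is to take $f$ to be the modulus of a c.e.\ set (or of $\emptyset'$): for instance let $f(n)$ be the least stage $s$ by which $\emptyset'\upto n$ has stabilized — this is $\emptyset'$-computable and dominates every computable (indeed every $\emptyset'$-computable total) function. Then $\{\sigma:\sigma\not\prec f\}$ is c.e.: $\sigma\not\prec f$ iff there is a stage $t$ and $i<|\sigma|$ with $\sigma(i)\ne f_t(i)$ \emph{and} this disagreement is permanent, which for a modulus is detectable because once $\emptyset'\upto i$ settles the value $f(i)$ is fixed — so one enumerates $\sigma$ exactly when one sees a coordinate $i$ at which $\sigma(i)$ exceeds the current approximation to the modulus or falls strictly below the settled value. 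Getting this ``permanence is enumerable'' point exactly right is the main obstacle: I expect to want a slightly more hands-on construction of $f$ by a direct enumeration (build the c.e.\ set of ``killed'' cylinders so as to force $f$ above the $e$-th partial computable function whenever it is total, diagonalizing in the style of the cocylinder constructions already in the paper) rather than relying on a black-box modulus, so that the c.e.-ness of the co-cylinder name is immediate from the construction. I would present the direct construction: reserve coordinates $n$ in blocks, use block $e$ to make $f$ exceed $\Phi_e$ wherever $\Phi_e$ converges, and enumerate $\sigma\not\prec f$ as soon as the construction commits to a value of $f$ on $|\sigma|$-many coordinates incompatible with $\sigma$ — commitments are never revoked, so the enumeration is sound and complete.
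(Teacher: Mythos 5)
Your treatments of (1) and (2) are correct and essentially the paper's own proof: (1) is the remark that positive information about $f$ refutes all incompatible strings, and (2) is the same bounded-search argument ("all but one candidate below the computable bound eventually shows up in the co-name"), run coordinate-by-coordinate instead of level-by-level; hard-coding the finitely many exceptional values is legitimate since the reduction may depend non-uniformly on $f$, and the certification is indeed monotone, so it is an honest enumeration operator.

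The gap is in (3), and it is in the fallback construction you say you would actually present. If a computable construction makes irrevocable commitments to the \emph{values} of $f$, then $f$ is computable: to enumerate every string $(f\upto n)\fr m$ with $m\neq f(n)$ (all of which must enter the co-name) while never enumerating $(f\upto n)\fr f(n)$, the construction must at some stage commit to the value $f(n)$ at coordinate $n$, for every $n$; running the construction until that commitment computes $f$. But no computable function is computably dominating (it cannot dominate itself plus one), so "commitments are never revoked" is precisely what you cannot have: the intended values of $f$ must be pushed upward in a way no computable process settles, and the only thing that may be irrevocable is the enumeration of \emph{excluded} strings. (If instead you allow a coordinate's committed value to change, then an earlier enumeration based on the old commitment may kill an initial segment of the final $f$, and soundness fails — which is the real difficulty your sketch bypasses.) A second, smaller problem: partitioning coordinates into blocks with block $e$ handling $\Phi_e$ only yields $f\geq\Phi_e$ on block $e$, whereas computably dominating requires $f(n)\geq\Phi_e(n)$ for almost all $n$, so each requirement must act on a tail of coordinates. (Also, the parenthetical claim that the modulus of $\emptyset'$ dominates every $\emptyset'$-computable total function is false — it cannot dominate itself plus one — though nothing hinges on it.)

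The missing idea is how to reconcile a c.e.\ set of killed strings with a non-computable, ever-growing $f$: for instance, a movable-marker construction in which the intended value $f(n)$ is a marker that moves only upward, always to a fresh value larger than the current stage, while one only enumerates strings all of whose entries are bounded by the current stage and which are incompatible with the current marker configuration. Then no enumerated string can later become an initial segment of $f$ (soundness), every wrong string is eventually enumerated once the relevant markers settle (completeness), and letting requirement $e$ lift the marker at each $n\geq e$ above $\Phi_e(n)$ when it converges moves each marker finitely often and makes $f$ dominating. For comparison, the paper takes your first instinct instead: it sets $f(n)$ to be the settling time of the diagonal halting problem below $n$ and argues that $\{\sigma:\sigma\prec f\}$ is $\Pi^0_1$; note that the point you flagged — certifying $\sigma(n)>f(n)$, i.e.\ permanence, in a c.e.\ fashion — is exactly where your "exceeds the current approximation" rule is unsound (the approximation may later reach $\sigma(n)$ exactly), so whichever route you take, that direction needs a genuine argument rather than either of the two mechanisms you propose.
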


\begin{proof}
For (1), if we see $\sigma\in\nbaseb{\om^\om}{f}$ (that is, $\sigma\preceq f$), we know that $\tau\not\preceq f$ whenever $\tau$ is incomparable with $\sigma$, and therefore enumerate all such $\tau$ into $\nbaseb{(\om^\om)_{\rm co}}{f}$.
For (2), it suffices to show that $\nbaseb{\om^\om}{f}\leq_e\nbaseb{(\om^\om)_{\rm co}}{f}$.
Assume that $f$ is bounded by a computable function $g$.
Then, for any $\ell$, there are only finitely many strings $\sigma$ of length $\ell$ such that $\sigma(n)<g(n)$ for any $n<\ell$.
Let $I(g,\ell)$ be the set of all such strings.
Then, all but one string in $I(g,\ell)$ is enumerated into $\nbaseb{(\om^\om)_{\rm co}}{f}$.
By finiteness of $I(g,\ell)$, all such strings are enumerated by some finite stage, and then one can know what the unique string $\sigma\in I(g,\ell)\setminus \nbaseb{(\om^\om)_{\rm co}}{f}$ is.
Then, enumerate $\sigma$ into $\nbaseb{\om^\om}{f}$.
This procedure clearly witnesses that $\nbaseb{\om^\om}{f}\leq_e\nbaseb{(\om^\om)_{\rm co}}{f}$.
For (3), let $f(n)$ be the least stage $s$ such that for each $e<n$, if $\Phi_e(e)$ halts then $\Phi_e(e)$ halts by stage $s$.
Clearly $f$ is computably dominating.
Moreover, $\sigma\prec f$ iff for every $n<|\sigma|$ and $e<n$, $\Phi_e(e)$ does not halt or $\Phi_e(e)$ halts by stage $\sigma(n)$.
Clearly, this condition is $\Pi^0_1$.
Therefore, $\nbaseb{(\om^\om)_{\rm co}}{f}$ is c.e.
\end{proof}

\begin{lemma}\label{lem:cocylinder-cetree}
Suppose that $f:\omega\to\omega$ is a function which is not $\emptyset'$-computably dominated.
For every computable sequence $(D_s)_{s\in\om}$ of finite sets of finite strings, if $f\in\bigcap_s[D_s]$ then there is $\ell\in\om$ such that $[f\upto\ell]\subseteq\bigcap_s[D_s]$, i.e., for any $s$ there is $\sigma\preceq f\upto\ell$ such that $\sigma\in D_s$.
\end{lemma}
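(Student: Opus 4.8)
\textbf{Proof plan for Lemma \ref{lem:cocylinder-cetree}.}

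The plan is to argue by contraposition. Suppose no $\ell$ works, i.e.\ for every $\ell\in\om$ the cylinder $[f\upto\ell]$ fails to be contained in $\bigcap_s[D_s]$; equivalently, for every $\ell$ there is a stage $s(\ell)$ such that $[f\upto\ell]\not\subseteq[D_{s(\ell)}]$, which means no $\sigma\in D_{s(\ell)}$ is an initial segment of $f\upto\ell$. From this I would extract an $\emptyset'$-computable function that dominates $f$, contradicting the hypothesis that $f$ is not $\emptyset'$-computably dominated. The point is that the witnesses $s(\ell)$ can be searched for using $\emptyset'$: the statement ``$[f\upto\ell]\not\subseteq[D_s]$'' is decidable from $f\upto\ell$ and $s$, but of course we do not have $f$ as an oracle. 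So the real work is to phrase the search without reference to $f$ itself.

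Here is how I would carry that out. Since $f\in\bigcap_s[D_s]$, for each $s$ there is some $\sigma\in D_s$ with $\sigma\prec f$; in particular, for each $\ell$ and each $s$, if $D_s$ contains no string comparable-below with the \emph{single} string $f\upto\ell$, then $f$ must in fact extend some $\sigma\in D_s$ with $|\sigma|>\ell$. The idea is to run, for each $\ell$, a $\emptyset'$-search that looks for a stage $s$ and a finite ``front'' of strings all of length $>\ell$ that together cover every extension — this is where the hypothesis that $f$ is not $\emptyset'$-dominated is used. Concretely: define $g(\ell)$ to be the least number $N$ such that there is $s\le N$ with the property that every $\sigma\in D_s$ of length $\le\ell$ has some proper extension of length $\le N$ lying in $D_t$ for some $t\le N$ — and then iterate/clean this up so that $g$ is total and $\emptyset'$-computable (totality follows because $f\in\bigcap_s[D_s]$ forces such fronts to exist). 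Then one checks that $f(\ell)<g(h(\ell))$ for a suitable computable reindexing $h$, because: if the desired $\ell$ did not exist, then along $f$ the strings in the $D_s$'s witnessing $f\in[D_s]$ must keep getting longer without bound, and the length at which the ``$\ell$-th'' such string appears is bounded by $g$. This yields a $\emptyset'$-computable dominating function, a contradiction, so some $\ell$ must work.

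The main obstacle is the bookkeeping in the second paragraph: turning the informal ``the witnessing strings along $f$ must grow'' into a genuinely $\emptyset'$-computable function whose values provably bound $f(\ell)$, \emph{without} ever querying $f$. The subtlety is that the branching structure of the $D_s$'s can be wide, so one cannot simply enumerate all branches; the correct move is to exploit that $f$ lies in \emph{every} $[D_s]$, so one can use König-type finiteness at each level $\ell$ to certify, purely from the $D_s$'s, a bound $N$ past which $f$ must have revealed a longer initial segment. I expect the argument to be short once this certification is set up correctly, and I would double-check the edge case where $D_s$ itself already contains a string longer than $\ell$ comparable with $f$ (in which case that level imposes no constraint and the search at level $\ell$ should simply look further along $f$ via the next level).
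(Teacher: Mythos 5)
Your overall strategy -- contrapose and extract a $\emptyset'$-computable function dominating $f$ -- is the same as the paper's, but the mechanism you propose for building that function has a genuine gap. Your candidate definition of $g(\ell)$ quantifies over \emph{all} strings $\sigma\in D_s$ of length $\le\ell$ and demands that each acquire a proper extension in some $D_t$ with $t\le N$. The hypothesis $f\in\bigcap_s[D_s]$ only constrains strings comparable with $f$; it says nothing about members of $D_s$ incompatible with $f$, and those need never acquire extensions anywhere. Already $D_s=\{\langle\rangle\}$ for all $s$ (or $D_s=\{\sigma\}$ for a fixed $\sigma\prec f$) shows that your stated reason for totality, ``$f\in\bigcap_s[D_s]$ forces such fronts to exist,'' is false: no $D_t$ ever contains a proper extension of $\langle\rangle$, so your search diverges at every $\ell$. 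Moreover, even where the search halts, $g(\ell)$ bounds stage indices and \emph{lengths}, not the value $f(\ell)$: $f$ may extend a string $\tau\in D_t$ with $t$ and $|\tau|$ small but $\tau(\ell)$ enormous, so $f(\ell)<g(h(\ell))$ does not follow. That second defect is repairable (take the maximum of the entries of the relevant strings), but only once you know that $f$ is guaranteed to pass through one of a \emph{finite, $\emptyset'$-computably identified} set of strings of length $>\ell$ -- and producing that finite candidate set is exactly the bookkeeping your sketch does not supply.

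The missing idea is to track candidates along $f$ rather than impose conditions on all short strings in the $D_s$'s. The paper does this with a finite-branching c.e.\ tree $T$: at each leaf $\sigma$, wait for a stage $s$ at which no initial segment of $\sigma$ lies in $D_s$, and then adjoin all strings $\alpha$ with $\sigma\prec\alpha\preceq\tau$ for $\tau\in D_s$. Under your failure assumption, whenever $\sigma\prec f$ such a stage exists, and since $f\in[D_s]$ while no $\tau\preceq\sigma$ is in $D_s$, $f$ must extend some member of $D_s$ properly extending $\sigma$; hence $f$ is an infinite path through $T$. Because each node receives all of its (finitely many) children at a single stage, $\emptyset'$ can compute the levels of $T$, and the level-wise maxima give a $\emptyset'$-computable function dominating every path, contradicting the choice of $f$. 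Your ``K\"onig-type finiteness'' instinct is right, but it has to be implemented as this kind of tree (equivalently, an $\emptyset'$-computable sequence of finite fronts of potential initial segments of $f$), not as a condition on every short string occurring in the $D_s$'s.
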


\begin{proof}
For a given computable sequence $(D_s)_{s\in\om}$ we will define a finite-branching c.e.\ subtree $T$ of $\om^{<\om}$.
Let $T_0$ be the tree only having the root, that is, $T_0=\{\langle\rangle\}$.
Given $T_s$, if $\sigma$ is a leaf of $T_s$ and there is no $\tau\preceq\sigma$ such that $\tau\in D_s$, then enumerate all strings $\alpha$ such that $\sigma\prec\alpha\preceq\tau$ for some $\tau\in D_s$ into $T_{s+1}$.
Define $T=\bigcup_sT_s$.
Note that $T$ is finite-branching.
This is because we only enumerate finitely many elements into $T_{s+1}$ extending a leaf $\sigma\in T_s$ since $D_s$ is finite, and then $\sigma$ never become a leaf of $T_t$ for any $t>s$.
Since $T$ is a finite-branching c.e.\ tree, there is a $\emptyset'$-computable function dominating all infinite paths through $T$.

If $f\in\bigcap_s[D_s]$ but $[f\upto\ell]\not\subseteq\bigcap_s[D_s]$ for all $\ell\in\om$, then we claim that $f$ is an infinite path through $T$.
Assume that $f\upto n$ is a leaf of $T_s$.
Then there is $t\geq s$ such that $f\upto m\not\in D_t$ for any $m\leq n$ since $[f\upto n]\not\subseteq\bigcap_s[D_s]$.
Let $t$ be the first such stage.
Since $f\in\bigcap_s[D_s]$, there is $k>n$ such that $f\upto k\in D_t$.
By our construction, all initial segments of $f\upto k$ are enumerated into $T_{t+1}$.
Consequently, $f$ is an infinite path through $T$; however this implies that $f$ is $\emptyset'$-computably dominated.
\end{proof}

By using the above lemma, we will show that if $f$ is not $\emptyset'$-computably dominated then $\nbaseb{(\om^\om)_{\rm co}}{f}$ is quasi-minimal.
Indeed, the following abstract lemma implies more concrete results.
Let $\nn=(N_e)_{e\in\om}$ be a network.
Then, define we define the {\em disjointness diagram} of $\nn$ as ${\rm Disj}_\nn=\{\langle d,e\rangle:N_d\cap N_e=\emptyset\}$.

\begin{lemma}\label{lem:cocylinder-quasiminimal}
Let $f\colon\omega\to\omega$ be a function which is not $C'$-computably dominated.
For any cs-second-countable space $\mathcal{Z}=(Z,\nn)$ with a $C$-c.e.\ disjointness diagram, $\pt{f}{(\om^\om)_{\rm co}}$ is nearly $\mathcal{Z}$-quasi-minimal.
\end{lemma}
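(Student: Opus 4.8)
The plan is to show that any point $z\in\mathcal{Z}$ reducible to $\pt{f}{(\om^\om)_{\rm co}}$ must be nearly computable, i.e.\ $\pt{z}{\overline{\dnn}}$ is computable. By Observation \ref{obs:network-e-basis}, if $\pt{z}{\mathcal Z}\reduce\pt{f}{(\om^\om)_{\rm co}}$ then there is $J\leq_e\nbaseb{(\om^\om)_{\rm co}}{f}$ such that $\{N_e:e\in J\}$ is a strict network at $z$. Fix an enumeration operator $\Phi$ witnessing this reduction; so for each basic open set $\cmp{[\sigma]}$ enumerated into $\nbaseb{(\om^\om)_{\rm co}}{f}$ (equivalently, each $\sigma\not\prec f$) the operator reads a finite subset of $\{\tau:\tau\not\prec f\}$ and outputs network indices. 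The first step is to argue that, because $f$ is not $C'$-computably dominated, the ``use'' of $\Phi$ along $f$ is bounded: if $e$ is ever output with a finite use $D\subseteq\nbaseb{(\om^\om)_{\rm co}}{f}$, I want to conclude $z\in N_e$ unconditionally, and conversely that every neighborhood of $z$ already contains some $N_e$ that is forced by an initial segment $f\upto\ell$.

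The key technical input is Lemma \ref{lem:cocylinder-cetree}. First I would note that the set of uses appearing in $\Phi$ along names of $f$ can be organized into a computable sequence $(D_s)_{s\in\om}$ of finite sets of strings, each of which must be satisfied by any $(\om^\om)_{\rm co}$-name of $f$; hence $f\in\bigcap_s[D_s]$, and by Lemma \ref{lem:cocylinder-cetree} (applied with oracle $C$, using that $f$ is not $C'$-computably dominated) there is a single $\ell$ with $[f\upto\ell]\subseteq\bigcap_s[D_s]$. This is the mechanism that lets me replace the full oracle $\nbaseb{(\om^\om)_{\rm co}}{f}$ by the finite object $f\upto\ell$ relative to $C$: for any open neighborhood $U$ of $z$, since $\{N_e:e\in J\}$ is a (strict) network at $z$, there is $e\in J$ with $z\in N_e\subseteq U$, and $e$ enters $J$ via some finite use $D\subseteq\nbaseb{(\om^\om)_{\rm co}}{f}$; the argument shows such an $e$ can already be produced from the finitely many strings $\{\tau\not\preceq f\upto\ell\}$ that are forced. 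The subtlety is that I only want to \emph{enumerate} indices of network elements whose closures contain $z$, not to decide membership — this is exactly why the closure representation $\overline{\dnn}$ is the right target, and why the hypothesis on the disjointness diagram ${\rm Disj}_\nn$ being $C$-c.e.\ is used: to see $z\in\overline{N_e}$ it suffices to know that $N_e$ is not disjoint from every $N_d$ that is forced to contain $z$, and non-disjointness of the \emph{already-forced} pieces is $C$-c.e.

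Putting this together, I would describe a $C$-computable procedure that, running over all finite strings $\eta$ extending candidate initial segments of $f$ of length $\ell$ and all finitely many ``co-cylinder'' strings incomparable with them, simulates $\Phi$, collects the network indices $e$ it outputs, and enumerates into the $\overline{\dnn}$-name of $z$ exactly those $e$ for which the $C$-c.e.\ disjointness information confirms consistency with the forced part. The verification splits into: (i) every $e$ so enumerated satisfies $z\in\overline{N_e}$ (uses strictness of the $J$-network plus the disjointness diagram); (ii) $\{N_e:e\text{ enumerated}\}$ is a network at $z$ (uses Lemma \ref{lem:cocylinder-cetree} to see that every genuine neighborhood is captured by a length-$\ell$ initial segment, and a pigeonhole/compactness step over the finitely many relevant strings); hence $\pt{z}{\overline{\dnn}}\leq_{\mathrm T}\emptyset$, i.e.\ $z$ is nearly computable. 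The main obstacle I anticipate is item (ii): making precise that the finite truncation $f\upto\ell$ really suffices to generate a full network at $z$ and not merely a network at some nearby point, and handling the fact that different enumerations of $\nbaseb{(\om^\om)_{\rm co}}{f}$ may lead $\Phi$ to output different index sets — this is where I expect to need the hypothesis that $f$ is not $C'$-computably dominated in its full strength, via a careful reorganization of the operator's behavior into the computable sequence $(D_s)_{s\in\om}$ demanded by Lemma \ref{lem:cocylinder-cetree}.
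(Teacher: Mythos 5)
There is a genuine gap at the point where you set up the application of Lemma \ref{lem:cocylinder-cetree}. You propose to feed into that lemma ``the set of uses appearing in $\Phi$ along names of $f$''. But a use of the operator is a finite subset $D\subseteq\nbaseb{(\om^\om)_{\rm co}}{f}$, i.e.\ a finite set of strings \emph{none} of which is a prefix of $f$; for such $D$ we have $f\notin[D]$, so the hypothesis $f\in\bigcap_s[D_s]$ of Lemma \ref{lem:cocylinder-cetree} fails outright (and, besides, ``uses along $f$'' is not a computable enumeration without knowing $f$). The step the paper's proof hinges on, and which is missing from your plan, is a pairing trick: whenever $(d,D)$ and $(e,E)$ both lie in the operator and $N_d\cap N_e=\emptyset$, strictness of the network at $z$ forces $D\cup E\not\subseteq\nbaseb{(\om^\om)_{\rm co}}{f}$, i.e.\ \emph{some} string in $D\cup E$ is a prefix of $f$. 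Since the disjointness diagram is $C$-c.e., these unions $D_s\cup E_s$ form a $C$-computable sequence with $f\in\bigcap_s[D_s\cup E_s]$, and only then does (relativized) Lemma \ref{lem:cocylinder-cetree} give the finite bound $p$ with $[f\upto p]\subseteq\bigcap_s[D_s\cup E_s]$.

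Your proposed enumeration procedure also uses the disjointness hypothesis in the wrong direction: you want to ``enumerate exactly those $e$ for which the $C$-c.e.\ disjointness information confirms consistency with the forced part'', i.e.\ to certify non-disjointness. Under the hypothesis, disjointness is $C$-c.e., so non-disjointness is only $C$-co-c.e.; it cannot be confirmed by an enumeration, and this filtering step would stall. The correct construction does no filtering at all: one takes $L=\{e:(\exists E)\,[(e,E)\in\Psi$ and no $\sigma\in E$ is $\preceq f\upto p]\}$, which is c.e.\ from $\Psi$ and the finite string $f\upto p$ alone; $J\subseteq L$ gives the network property for free (so Lemma \ref{lem:cocylinder-cetree} is not what secures your item (ii)), and the choice of $p$ is what secures your item (i), namely that $z\in\overline{N_e}$ for every $e\in L$ (if $z\notin\overline{N_e}$, pick $d\in J$ with $N_d$ inside a neighborhood missing $N_e$; then $(d,e)$ is a disjoint pair both of whose uses avoid prefixes of $f\upto p$, contradicting the choice of $p$). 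Without the pairing trick your argument has no way to bring the non-domination hypothesis to bear, so the plan as written does not go through.
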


\begin{proof}
Now suppose that $\pt{z}{\mathcal{Z}}\reduce \pt{y}{(\om^\om)_{\rm co}}$ holds where $\mathcal{Z}$ is a given space with a countable cs-network $\nn=(N_e)_{e\in\omega}$.
By Observation \ref{obs:network-e-basis}, there is $J\leq_e\nbaseb{(\om^\om)_{\rm co}}{f}$ such that $\{N_e:e\in J\}$ forms a strict network at $z$.
Let $\Psi$ witness that $J\leq_e\nbaseb{(\om^\om)_{\rm co}}{f}$.
We first note that if there are $d,e$ such that $(d,D)$ and $(e,E)$ are enumerated in $\Psi$ while $N_d$ and $N_e$ are disjoint, then we must have $D\cup E\not\subseteq\nbaseb{(\om^\om)_{\rm co}}{f}$, that is, there is $\sigma\in D\cup E$ such that $\sigma\prec f$.
Enumerate all such tuples $(d_s,e_s,D_s,E_s)_{s\in\omega}$.
Such a $C$-computable enumeration exists since the disjointness diagram of $\nn$ is $C$-c.e.

Then, either this gives a finite sequence, or else $(D_s\cup E_s)_{s\in\om}$ is a $C$-computable sequence such that $f\in\bigcap_s[D_s\cup E_s]$.
In any case, by relativizing Lemma \ref{lem:cocylinder-cetree}, there is $p$ such that $[f\upto p]\subseteq\bigcap_s[D_s\cup E_s]$, that is, for every $s$ there is $\sigma\in D_s\cup E_s$ such that $\sigma\preceq f\upto p$.
We then consider
\[L=\{e\in\omega:(\exists E)\;[(e,E)\in\Psi\mbox{ and }(\forall \sigma\in E)\;\sigma\not\preceq f\upto p]\}.\]

Clearly, $J\subseteq L$, and therefore, $\{N_e:e\in L\}$ forms a network at $z$.
We claim that $z\in\overline{N_e}$ for any $e\in L$.
By our choice of $p$, if $e\in L$, then there is no $(d,D)\in\Psi$ such that $d\in L$ and $N_d\cap N_e=\emptyset$.
If $z\not\in\overline{N_e}$, there is an open neighborhood $U$ of $z$ such that $U\cap N_e=\emptyset$.
Then, there is $d\in L$ such that $z\in N_d\subseteq U$.
In particular, $N_d\cap N_e=\emptyset$, which implies a contradiction.
Consequently, any enumeration of $L$ gives a $\overline{\dnn}$-name of $z$.
Since $L$ is c.e., we conclude that $z$ is $\overline{\dnn}$-computable.
\end{proof}

\thmproof{thm:T-1-degree-not-T-2}{
Let $\xx$ be a topological space with a countable cs-network $\nn$.
Let $C$ be an oracle such that the disjointness diagram of $\nn$ is $C$-c.e.
By relativizing Lemma \ref{lem:cocylinder-quasiminimal}, if $f$ is not $C'$-computably dominated, then $\pt{f}{(\om^\om)_{\rm co}}$ is nearly $\xx$-quasi-minimal, that is, if $\pt{x}{\xx}\reduce \pt{f}{(\om^\om)_{\rm co}}$ then $\pt{x}{\xx}$ is nearly $C$-computable.
If $\xx$ is a Hausdorff space, then only countably many points in $\xx$ can be nearly $C$-computable by Observation \ref{obs:closure-representation-1}.
However, there are uncountably many functions which is not $C'$-computably dominated.
Thus, one can choose such a function which is not $\mathbf{T}$-equivalent to any nearly computable points in $\xx$.
}

\thmproof{thm:T-1-degree-NNN-quasiminimal}{
The canonical network $\nn$ of $\mathbb{N}^{\mathbb{N}^\mathbb{N}}$ has a computable disjointness diagram.
Moreover, ${\rm id}\colon(\mathbb{N}^{\mathbb{N}^\mathbb{N}},\overline{\dnn})\to(\mathbb{N}^{\mathbb{N}^\mathbb{N}},{\dnn})$ is computable as seen in Example \ref{exa:Kleene-Kreisel}.
Therefore, near $\mathbb{N}^{\mathbb{N}^\mathbb{N}}$-quasi-minimality is equivalent to $\mathbb{N}^{\mathbb{N}^\mathbb{N}}$-quasi-minimality by definition.
Now, by Lemma \ref{lem:cocylinder-quasiminimal}, if $f$ is not $\emptyset'$-computably dominated, then $\pt{f}{(\om^\om)_{\rm co}}$ is $\mathbb{N}^{\mathbb{N}^\mathbb{N}}$-quasi-minimal.
}

Applying the above lemmas, we eventually show the following:

\propproof{prop:non-cylinder-cototal}{
Let $A$ be a co-$d$-c.e.\ set relative to $K=\emptyset'$ such that $K\oplus K^{\sf c}\leq_TA$ and the $e$-degree of $A$ is non-total.
Suppose that $A\equiv_e\nbaseb{(\om^\om)_{\rm co}}{f}$ for any $f$.
If $f$ is $\emptyset'$-computably dominated, then by Observation \ref{obs:basic-cylinder-cototal} relative to $\emptyset'$, we have that $\nbaseb{(\om^\om)_{\rm co}}{f}\oplus K\oplus K^{\sf c}$ is total.
Since $K\oplus K^{\sf c}\leq_e\nbaseb{(\om^\om)_{\rm co}}{f}$ by our assumption, $\nbaseb{(\om^\om)_{\rm co}}{f}\equiv_e\nbaseb{(\om^\om)_{\rm co}}{f}\oplus K\oplus K^{\sf c}$ is total, which is impossible since $A$ is nontotal.
If $f$ is not $\emptyset'$-computably dominated, then by Lemma \ref{lem:cocylinder-quasiminimal}, $\nbaseb{(\om^\om)_{\rm co}}{f}$ is quasi-minimal, which is impossible since $A$ is not quasi-minimal.
}

\subsubsection{Cocylinder topology and left-/right-totality}

We next see that the property of jump-right-totality is not shared by (strongly arithmetically named) decidable $T_1$ cb$_0$ spaces.
Indeed, the cocylinder space $(\om^\om)_{\rm co}$ (see Section \ref{section:cocylinder}), one of the simplest decidable $T_1$ cb$_0$ space, is not jump-right-total.
By using a similar idea as in Section \ref{sec:quasiminimalspaces}, we will show the following:

\thmproof{telograph-quasiminimal2}{
For an oracle $C$, we say that $x\in\om^\om$ is {\em $C$-computably dominated} if there is a $C$-computable $y\in\om^\om$ such that $x(n)<y(n)$ for all $n\in\om$.

\begin{lemma}\label{lem:dominated-jumprighttotal}
If $x\in\om^\om$ is not $\emptyset''$-computably dominated, then $\nbaseb{\rm co}{x}$ is quasi-minimal w.r.t.~jump-right-total $e$-degrees.
\end{lemma}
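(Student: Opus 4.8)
\textbf{Proof plan for Lemma \ref{lem:dominated-jumprighttotal}.}
The plan is to mimic the structure of Lemma \ref{lem:semirec-avoids-totallike}, replacing ``not $\emptyset'$-left/right-c.e.'' with ``not $\emptyset''$-computably dominated'' and replacing the interval argument in $\mathbb{R}_<$ with the tree argument from Lemma \ref{lem:cocylinder-cetree}. So suppose $x\in\om^\om$ is not $\emptyset''$-computably dominated, and suppose $\Phi$ is an enumeration operator with $\Phi(\nbaseb{\rm co}{x})=A\in\nn$, where $A$ is jump-right-total witnessed by a set $C$, a $C'$-computable sequence $(D_n)_{n\in\om}$ of finite subsets of $\om$, and the $C'$-computably $e$-closed set $\nn=\{X:(\forall n)\;D_n\not\subseteq X\}$. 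First I would dispose of the oracle $C$: since $C\oplus\cmp{C}\leq_eA\leq_e\nbaseb{\rm co}{x}$, and by Lemma \ref{lem:cocylinder-quasiminimal} (relativized to nothing, i.e.\ the $\emptyset'$-computably-dominated analysis) — more precisely, since $x$ is not even $\emptyset'$-computably dominated, Lemma \ref{lem:cocylinder-quasiminimal} with $\mathcal{Z}=2^\om$ gives that $\pt{x}{(\om^\om)_{\rm co}}$ is quasi-minimal, hence $C$ is computable. Consequently $(D_n)_{n\in\om}$ is actually a $\emptyset'$-computable sequence, and $\nn$ is a $\emptyset'$-computably $e$-closed set; i.e.\ ``$A\in\nn$'' reads ``for all $n$, $D_n\not\subseteq A$'', a $\Pi^0_2$ condition on $A$.

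Next I would run the tree argument. The set $\Phi^{-1}[\nn]$ is downward closed under $\subseteq$ by monotonicity of $\Phi$, hence $\{y\in\om^\om:\Phi(\nbaseb{\rm co}{y})\in\nn\}$ is downward closed in the cocylinder specialization order, which on $(\om^\om)_{\rm co}$ is the prefix order reversed — wait, more carefully: $y_0\preceq y_1$ as strings-to-infinite-sequences is not the right relation; what matters is that $\sigma\prec y$ can only grow $\nbaseb{\rm co}{y}$ monotonically along a fixed sequence, but the relevant closure is along \emph{initial-segment extension}: if $y\upto\ell = y'\upto\ell$ then the first $\ell$-much information agrees. The clean way is: the condition ``$D_n\not\subseteq\Phi(\nbaseb{\rm co}{y})$'' for a fixed $n$ is, over stages, a condition of the form ``$y$ avoids a c.e.\ set of strings''; so consider for each $n$ the c.e.\ set $E_n$ of strings $\tau$ such that $\Phi$, run on $\nbaseb{\rm co}{\tau}$ (the strings incomparable with $\tau$, plus everything below), enumerates all of $D_n$. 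Since $A = \Phi(\nbaseb{\rm co}{x})\in\nn$, for each $n$ the string $x$ avoids $E_n$. Form the $\emptyset'$-computable sequence $(D'_s)_{s\in\om}$ obtained by bookkeeping all pairs $(n,s)$ into a single sequence so that $x\in\bigcap_s[D'_s]$ where $[D'_s]$ now denotes ``has a prefix in the complement-of-$E_n$-approximation''. By Lemma \ref{lem:cocylinder-cetree} relativized to $\emptyset'$ (this is where ``not $\emptyset''$-computably dominated'' enters, since $\emptyset''=(\emptyset')'$), there is $\ell\in\om$ with $[x\upto\ell]\subseteq\bigcap_s[D'_s]$; that is, \emph{every} $y\succeq x\upto\ell$ satisfies $\Phi(\nbaseb{\rm co}{y})\in\nn$.

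Now pick any \emph{computable} $y$ extending $x\upto\ell$, say $y=(x\upto\ell)\fr 0^\om$. Then $\Phi(\nbaseb{\rm co}{y})\in\nn$, and I claim $\Phi(\nbaseb{\rm co}{y})\subseteq A$. This requires comparing $\nbaseb{\rm co}{y}$ and $\nbaseb{\rm co}{x}=A$'s preimage: a string $\sigma$ lies in $\nbaseb{\rm co}{y}$ iff $\sigma\not\prec y$. If $\sigma\not\prec y$ and $|\sigma|\leq\ell$ then $\sigma\not\prec x$ (since $x\upto\ell=y\upto\ell$), and if $|\sigma|>\ell$ this need not follow — so $\nbaseb{\rm co}{y}\subseteq\nbaseb{\rm co}{x}$ can fail. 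The honest fix, which I expect to be the main obstacle, is to instead choose $y$ to be a computable extension of $x\upto\ell$ that \emph{also} lies in the cone $[x\upto\ell]\subseteq\bigcap_s[D'_s]$ while having $\nbaseb{\rm co}{y}\subseteq\nbaseb{\rm co}{x}$: this is impossible in general for one $y$, so instead I would argue directly that $A\leq_e\nbaseb{\rm co}{y}\oplus C\oplus\cmp{C}$ for a \emph{suitable} computable $y$ by exploiting jump-right-totality's second clause with $X=\Phi(\nbaseb{\rm co}{y})$: we have $X\in\nn$ and we need $A\subseteq X$. To get $A\subseteq X$, use that $A=\Phi(\nbaseb{\rm co}{x})$ and $x\succeq x\upto\ell$; monotonicity gives $\Phi(\nbaseb{\rm co}{z})\subseteq\Phi(\nbaseb{\rm co}{x})$ whenever $\nbaseb{\rm co}{z}\subseteq\nbaseb{\rm co}{x}$, i.e.\ whenever $z$ \emph{extends} $x$ as an infinite sequence — which forces $z=x$. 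So the correct move is the reverse: take $z$ with $\nbaseb{\rm co}{x}\subseteq\nbaseb{\rm co}{z}$, i.e.\ $z$ with fewer non-prefixes, which is impossible for $z\neq x$ unless... Here I would instead follow the exact template of Lemma \ref{lem:semirec-avoids-totallike}'s jump-right case, which works because there $\Phi^\ast[\nn]$ is an \emph{interval} $(-\infty,q]$ and one picks a rational strictly between $x$ and $\sup\Phi^\ast[\nn]$; the analogue here is that the cone $[x\upto\ell]$ plays the role of ``points above $x$'', all mapping into $\nn$, and among them a computable one $y_0$ exists with $A=\Phi(\nbaseb{\rm co}{x})\subseteq\Phi(\nbaseb{\rm co}{y_0})=:X$ precisely because enumeration operators are monotone and $\nbaseb{\rm co}{x}$ and $\nbaseb{\rm co}{y_0}$ agree on all strings of length $\leq\ell$ while $x$ and $y_0$ branch only past $\ell$ — hence any string in $\nbaseb{\rm co}{x}$ of length $\leq\ell$ is in $\nbaseb{\rm co}{y_0}$, and one shows by a further pigeonhole/length bound (again using non-domination to push $\ell$ large enough that all relevant use is $\leq\ell$) that in fact $\Phi$ on input $A$ only queries strings of length $\leq\ell$, giving $A=\Phi(\nbaseb{\rm co}{x})=\Phi(\nbaseb{\rm co}{y_0})\cap(\text{bounded part})\subseteq X$. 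Then jump-right-totality yields $A\leq_eX\oplus C\oplus\cmp{C}$, and since $y_0$ is computable and $C$ is computable, $A\leq_e\emptyset$, so $A$ is c.e.\ — contradicting that $A$ has nontrivial degree (or, in the quasi-minimality statement, exactly giving that $\pt{x}{(\om^\om)_{\rm co}}$ is quasi-minimal w.r.t.\ jump-right-total $e$-degrees). I would conclude by combining with Example \ref{exa:telcototal-totallike} (every telograph-cototal $e$-degree is jump-right-total, and also $G_\delta$-left-total), and with the $G_\delta$-left-total half handled by the symmetric argument using the dual tree lemma, to obtain Theorem \ref{telograph-quasiminimal2}: choosing $x$ not $\emptyset''$-computably dominated with $\nbaseb{\rm co}{x}$ of nonzero degree — which exists since the class of non-$\emptyset''$-dominated functions is comeager — gives a cylinder-cototal $e$-degree quasi-minimal w.r.t.\ telograph-cototal $e$-degrees.

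\begin{proof}[Proof of Theorem \ref{telograph-quasiminimal2}]
By Example \ref{exa:telcototal-totallike}, every telograph-cototal $e$-degree is both $G_\delta$-left-total and jump-right-total. By Lemma \ref{lem:dominated-jumprighttotal} (and its $G_\delta$-left-total analogue, proved symmetrically using the dual of Lemma \ref{lem:cocylinder-cetree}), if $x\in\om^\om$ is not $\emptyset''$-computably dominated then $\nbaseb{\rm co}{x}$ is quasi-minimal with respect to telograph-cototal $e$-degrees. The functions that are not $\emptyset''$-computably dominated form a comeager subset of $\om^\om$, so we may choose such an $x$ with $\nbaseb{\rm co}{x}$ noncomputable (equivalently, $x$ not $\emptyset$-computable). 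Then $\deg_e(\nbaseb{\rm co}{x})$ is a cylinder-cototal $e$-degree which is quasi-minimal with respect to telograph-cototal $e$-degrees.
\end{proof}
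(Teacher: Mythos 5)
Your opening moves match the paper's proof: dispose of the oracle $C$ by quasi-minimality of $x$ (Lemma \ref{lem:cocylinder-quasiminimal}), note that $(D_n)_{n\in\om}$ is then $\emptyset'$-computable, enumerate ($\emptyset'$-computably) the finite sets whose $\Psi$-image covers some $D_n$, observe that $A\in\nn$ forces each such set to contain a prefix of $x$, and apply Lemma \ref{lem:cocylinder-cetree} relative to $\emptyset'$ to obtain a single length $\ell$ such that every such set contains a prefix of $x\upto\ell$. Up to that point your plan is sound (the bookkeeping with the sets $E_n$ of strings $\tau$ is muddled, but repairable: one should work with finite sets $E$ of strings satisfying $D_n\subseteq\Psi(E)$, exactly as the tree lemma expects).

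The genuine gap is the final step, and you correctly sense it but the patch you propose does not work. You try to produce the witness $X$ for the jump-right-totality clause as $X=\Phi(\nbaseb{\rm co}{y_0})$ for a single computable point $y_0\succeq x\upto\ell$; as you note, $\nbaseb{\rm co}{x}$ and $\nbaseb{\rm co}{y_0}$ are $\subseteq$-incomparable, so monotonicity gives neither inclusion, and your rescue --- that ``$\Phi$ on input $A$ only queries strings of length $\leq\ell$'' by some pigeonhole/non-domination argument --- is unjustified and in general false: the non-domination hypothesis (via the tree lemma) bounds only where the \emph{bad} sets $E_s$ must hit prefixes of $x$; it says nothing about the use of $\Psi$ on the positive axioms that enumerate $A$, which may be unbounded. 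The missing idea is that the set $X$ with $A\subseteq X\in\nn$ need not be the image of any point at all. The paper simply takes the c.e.\ set
\[L=\{n:(\exists H)\,[\langle n,H\rangle\in\Psi\mbox{ and no prefix of }x\upto\ell\mbox{ lies in }H]\},\]
i.e.\ everything $\Psi$ could output via an axiom consistent with the cone $[x\upto\ell]$. Then $A\subseteq L$ is automatic (every axiom used in $\Psi(\nbaseb{\rm co}{x})=A$ has $H\subseteq\nbaseb{\rm co}{x}$, hence avoids all prefixes of $x$), and $L\in\nn$ follows from the choice of $\ell$: if $D_n\subseteq L$, the union of the witnessing $H$'s would be one of the enumerated sets $E_s$ avoiding all prefixes of $x\upto\ell$, contradicting the tree lemma's conclusion. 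Jump-right-totality then gives $A\leq_e L\oplus C\oplus\cmp{C}\leq_e\emptyset$, no use bound needed. A secondary remark: for Theorem \ref{telograph-quasiminimal2} the paper needs only the jump-right-total half (telograph-cototal degrees are jump-right-total by Example \ref{exa:telcototal-totallike}); your appeal to an unproved ``$G_\delta$-left-total analogue'' for the cocylinder space is unnecessary, and indeed the cocylinder space is precisely where jump-right-totality fails, which is the point of the theorem.
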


\begin{proof}
Assume that $x\in\om^\om$ is not $\emptyset''$-computably dominated.
By Lemma \ref{lem:cocylinder-quasiminimal}, such $x$ is quasi-minimal.
Let $A$ be a jump-right-total set, witnessed by $C$ and $(D_n)_{n\in\om}$ (generating $\nn$), and assume that $A\leq_e\nbaseb{\rm co}{x}$ via an enumeration operator $\Psi$.
Note that, by quasi-minimality of $x$, the total part $C$ has to be computable.
Thus, $(D_n)_{n\in\om}$ is a $\emptyset'$-computable sequence.

Let $(E_s)$ be a $\emptyset'$-computable enumeration of finite sets $E$ such that $D_n\subseteq\Psi(E)$ for some $n$.
Then, $A\in\nn$ implies that $\Psi(E_s)\not\subseteq A$ for any $s$.
Since $\Psi(\nbaseb{\rm co}{x})=A$, we have $E_s\not\subseteq\nbaseb{\rm co}{x}$, that is, there is $\sigma\prec x$ such that $\sigma\in E_s$.
Since $x$ is not $\emptyset''$-computably dominated, by Lemma \ref{lem:cocylinder-cetree} (relative to $\emptyset'$), there is $\ell$ such that for any $s$, there is $\tau\preceq x\upto\ell$ such that $\tau\in E_s$.
Consider the following c.e.~set:
\[L=\{n:(\exists H)\;[\langle n,H\rangle\in\Psi\mbox{ and }(\forall \sigma\prec x\upto\ell)\;\sigma\not\in H]\}.\]

It is easy to see that $A\subseteq L$ since $\Psi(\nbaseb{\rm co}{x})\subseteq L$.
We claim that $L\in\nn$.
Otherwise, there is $n$ such that $D_n\subseteq L$.
For each $m\in D_n$, there is $H_m$ such that $\langle m,H_m\rangle\in\Psi$ and $\sigma\not\in H_m$ for all $\sigma\prec x\upto\ell$.
Thus, for $E=\bigcup_{m\in D_n}H_m$ we have $D_n\subseteq\Psi(E)$, and hence, $E=E_s$ for some $s\in\om$.
By our choice of $\ell$, there is $\sigma\prec x\upto\ell$ such that $\sigma\in E=E_s$, that is, $\sigma\in H_m$ for some $m\in D_n$.
This contradicts our choice of $H_m$.

We thus obtain $A\subseteq L\in\nn$.
Since $A$ is jump-right-total, $C=\emptyset$, and $L$ is c.e., we conclude that $A\leq_e L\leq_e\emptyset$.
\end{proof}

Choose $x\in\om^\om$ which is not $\emptyset''$-computably dominated.
Then, $\nbaseb{\rm co}{x}$ is cylinder-cototal as seen in Section \ref{section:cocylinder}, and quasi-minimal w.r.t.\ jump-right-total $e$-degrees by Lemma \ref{lem:dominated-jumprighttotal}.
In particular, $\nbaseb{\rm co}{x}$ is quasi-minimal w.r.t.\ telograph-cototal $e$-degree since every telograph-cototal $e$-degree is jump-right-total as seen in Example \ref{exa:telcototal-totallike}.
}

\subsubsection{$T_1$-degrees which are $T_2$-quasi-minimal}\label{sec:t1nott2quasiminimal}

We now give a proof of the main quasi-minimality result in Section \ref{sec:4-2-1}.

\thmproof{thm:T_2-quasiminimal}{
By Theorem \ref{thm:telophase-degree}, it suffices to construct sets $A,B\subseteq\om$ such that $A\cup B$ is co-c.e.\ and ${\rm Sep}(A,B)$ is quasi-minimal with respect to $\{S_i\}_{i\in\omega}$.

\subsubsection*{The requirements}
We construct disjoint sets $A,B\subseteq\om$ and a c.e. set $X$ such that $A\cup B=\omega-X$, $(A,B)$ are computably inseparable, and we have to satisfy the following requirements:
\begin{multline*}
R_e:\quad \forall D_0, D_1\in {\rm Sep}(A,B)~~ W_e^{D_0}=W_e^{D_1} \text{ and } W_e^{D_0}={\rm Nbase}(y) \\
\text{ for some point $y\in S_e$}\Longrightarrow\;\exists \text{ c.e. set }V=W_e^{D_0}.
\end{multline*}
Here $W_e^X$ is the $e^{th}$ c.e. set relative to $X\oplus (\omega-X)$. We will construct a path $Y\in \ab^\omega$. Given any path $Z\in \ab^\omega$ we define $Z^+, Z^-\in 2^\omega$ by $Z^+(n)=1$ iff $Z(n)=a$, and $Z^-(n)=1$ iff $Z(n)=b$. Clearly $Z^+$ and $Z^-$ are the characteristic functions of disjoint sets. (At the end we will take $A=Y^+$ and $B=Y^-$).

We also assume that all approximations of a $\Pi^0_1$-class $Q$ are slowed down such that for every $s$, the complement of $Q_s$ is presented by a finite set of strings of length $<s$.


\subsubsection*{The definition of $X$} We first describe how to construct the c.e. set $X$. We consider requirements $S_{e,\sigma}$ indexed by $e\in\omega$ and $\sigma\in \ab^{<\omega}$, and arrange the requirements in some order of priority. The c.e. set $X$ is constructed by a straightforward finite injury construction, and whenever a requirement $S_{e,\sigma}$ performs any action, all lower priority requirements are initialized and starts afresh with a value of $m$ much larger than before.

The basic activity of requirement $S_{e,\sigma}$ is as follows. First pick a very large number $m_{e,\sigma}$, and for each node $\tau\in \ab^{m_{e,\sigma}}$ do the following. Search for a pair $\eta_0, \eta_1\supset\tau$ and some $i,j\in\omega$ such that $B^{S_e}_i\cap B^{S_e}_j=\emptyset$ and $i\in W_e^{\eta_0^+}$, $j\in W_e^{\eta_1^+}$. For the first pair $\eta_0, \eta_1$ found at some stage $s$, where $|\eta_0|,|\eta_1|<s$, enumerate the interval $\left\{ m_{e,\sigma},\cdots, s\right\}$ into $X_{s+1}$. Note that $S_{e,\sigma}$ will change $X$ up to $3^{m_{e,\sigma}}$ many times (until it is initialized).

It is easy to see that $X$ is c.e. and co-infinite. Now define the closed set $C\subset \ab^\omega$ to consist of precisely the paths $Z\in \ab^\omega$ such that $Z^+ \cup Z^- = \mathbb{N}-X$. Also for each $s$ define $C_s$ to be the set of paths $Z$ such that $\left(Z^+ \cup Z^-\right)\rs{s} =(\mathbb{N}-X_s)\rs{s}$. Note that $C$ is a $\Pi^0_1$-class relative to $\emptyset'$ and each $C_s$ is a $\Pi^0_1$-class. However, $\{C_s\}$ is not monotone.
This non-computability feature of $C$ will cause various difficulties in our construction.
First, clearly we will need to take $Y\in C$, and second we will need to enumerate $V$ without oracles; however, $C_s$ are guesses about the true $C$ which will help us in the construction.

Given a string $Z\in \ab^\omega$ and $n,s\in\omega$, we say that \emph{$Z(n)$ is compatible with $C_s$} if $n\in X_s \Leftrightarrow Z(n)=0$. Similarly we say that \emph{$Z(n)$ is compatible with $C$} if $n\in X \Leftrightarrow Z(n)=0$. Thus the set of all $Z$ such that $Z(n)$ is compatible with $C_s$ for all $n<s$ is the set $C_s$ itself.

\subsubsection*{The class $\ex{Q,\rho}$} Our construction will define a sequence of approximations to  $P_0\supseteq P_1\supseteq P_2\supseteq\cdots$ of $\Pi^0_1$-classes and a sequence of nodes $\rho_0\subset\rho_1\subset\cdots$ with a unique $Y$ such that $\{Y\}=\bigcap_{k\in\omega} P_k$. In fact, we will ensure a little more. Given any $s$ and a node $\rho\in\ab^{<\omega}$, we define $\rho\boxplus C_s$ to contain all strings $Z\in\ab^\omega$ such that $Z\supset \rho$ and for every $n\geq |\rho|$, $Z(n)$ is compatible with $C_s$. We also define for each (approximation to) $Q$ and $\rho$, the class $\ex{Q,\rho}$ by the following.

We define the c.e. set of strings $E=\cup_s E_s$ by the following. First enumerate into $E_0$ all $\sigma$ incompatible with $\rho$. Then at stage $s+1$, enumerate all strings $\sigma$, where $|\sigma|<s$ and $\sigma\supseteq\rho$ into $E_{s+1}$ if $(\sigma\boxplus C_{s+1})\subseteq E_s\cup\left(\ab^\omega-Q[s]\right)$.

Now take $\ex{Q,\rho}=\ab^\omega-[E]$. Clearly $\ex{Q,\rho}\subseteq Q$ is still a $\Pi^0_1$-class. Note that $\ex{Q,\rho}$ depends on the approximation $\{Q[s]\}$ of $Q$; different approximations of $Q$ may give rise to different versions of $\ex{Q,\rho}$. Our aim in the construction is to define approximations to $P_k$ and ensure that $\{Y\}=\bigcap_{k\in\omega} P_k=\bigcap_{k\in\omega}\ex{P_k,\rho_k}\cap C$. For convenience, we will denote $[E_s]$ by $\exc{Q,\rho}[s]$ and $\mathtt{Ex}(Q,\rho)[s]=\ab^\omega-\exc{Q,\rho}[s]$.

We now fix some conventions about the approximations to $\Pi^0_1$-classes. Given an approximation $Q[s]$ of $Q$ and a string $\alpha$, define the natural approximation $(Q\cap[\alpha])[s]$ of $Q\cap[\alpha]$ by taking $\ab^\omega-(Q\cap[\alpha])[s]=\left(\ab^\omega-Q[s]\right)\cup\left(\ab^\omega-[\alpha]\right)$ for all $s$. If $P[s]$ and $Q[s]$ are approximations to $P$ and $Q$, then the natural approximation to $P\cap Q$ is given by $\ab^\omega-(P\cap Q)[s]=\left(\ab^\omega-P[s]\right)\cup\left(\ab^\omega-Q[s]\right)$ for all $s$.


\subsubsection*{The initial condition $P_0,\rho_0$}
It is easy to see that there is a $\Pi^0_1$-class $P_0\subset \ab^\omega$ such that for every $Z\in P_0\cap C$, $Z^+$ and $Z^-$ are computably inseparable. $P_0$ can be constructed from an effective approximation to $X$ and the knowledge that $X$ is coinfinite. For each $s$, let $x_{0,s}<x_{1,s}<\cdots$ be the elements of $\omega- X_s$ listed in increasing order. Notice that as $X$ is constructed by dumping an entire interval into $X$ at each stage, the approximation above has the property that if $x_{i,s+1}\neq x_{i,s}$ then $x_{i,s+1}>s$.

Now at stage $s$, for each $i<s$ such that $\varphi_i(x_{i,s})\downarrow=j$, enumerate $[\sigma*a]$ into $\ab^\omega- P_0$ if $j=1$ and enumerate $[\sigma*b]$ into $\ab^\omega- P_0$ if $j=0$ for each $\sigma\in 3^{x_{i,s}}$. It is easy to see that the definition of $P_0$ above ensures that for any $Z\in P_0\cap C$, there is no computable set $R$ such that $Z^+\subseteq R$ and $R\cap Z^-=\emptyset$. Furthermore, by the observation above that if $x_{i,s+1}\neq x_{i,s}$ then $x_{i,s+1}>s$, we see that each level only has nodes removed from $P_0$ at most once. Therefore $P_0\cap C$ and $P_0\cap C_s$ are nonempty for any $s$.
Note that $P_0$ and $C_s$ are both homogeneous, that is, given any string $\alpha$ which is extendible to an infinite path in $P_0$, and any $s$, there is always an infinite extension $\alpha*Y$ of $\alpha$ such that $\alpha*Y\in P_0\cap (\alpha\boxplus C_s)$.


We now check that $\ex{P_0,\langle\rangle}=P_0$: Suppose there is some $Z\in P_0\cap \exc{P_0,\langle\rangle}$. Let $s$ be the least stage such that there is some $Z\in P_0\cap \exc{P_0,\langle\rangle}[s]$. Suppose $Z\rs{k}\in\exc{P_0,\langle\rangle}[s]$ (here we make the obvious identification between finite strings and the open sets they generate). Now apply homogeneity of $P_0$ above to $\alpha=Z\rs{k}$, and we get a contradiction to the fact that $Z\rs{k}\in\exc{P_0,\langle\rangle}[s]$ (and the minimality of $s$). This shows that $\ex{P_0,\langle\rangle}=P_0$, and in particular,
\[\ex{P_0,\langle\rangle}\cap C\neq \emptyset.\]
Obviously we shall take $\rho_0=\langle\rangle$.

\subsubsection*{Forcing $R_e$ and the condition $P_e,\rho_e$}

Now assume that at step $e$ we are given (approximations to) a sequence $P_0\supset P_1\supset\cdots\supset P_{e-1}$ of $\Pi^0_1$-classes such that $\ex{P_i,\rho_i}\cap C\neq\emptyset$, and a sequence $\rho_0\subset \rho_1\subset\cdots \subset\rho_{e-1}$ such that for every $i<e$, we have $P_i\subseteq [\rho_i]$.
We assume $P_i\subset\ab^\omega$ and $\rho_i\in\ab^{<\omega}$  for every $i<e$. Our aim is to define $P_e$ and $\rho_e$ such that the following condition ($\star$) holds:
\begin{multline*}
(\star)~~~ P_e[s]\subset P_{e-1}[s]\text{ for every $s$}, ~~\ex{P_e,\rho_e}\cap C\neq\emptyset,  \\~~\ex{P_{e-1},\rho_{e-1}}\cap [\rho_e]\neq\emptyset,  \text{ and } \rho_e\supset \rho_{e-1}. \text{ Furthermore }\\
\text{if $Z\in \ex{P_e,\rho_e}\cap C$ then $R_e$ is satisfied along $Z$}.
\end{multline*}

We begin with a technical lemma.

\begin{lemma}\label{lem:compatible}
Let $Q_0$ and $Q_1$ be $\Pi^0_1$-classes with approximations such that $Q_0[s]\cap[\alpha_1]\subseteq Q_1[s]\cap[\alpha_1]$ for every $s$, and where $\alpha_0\subseteq\alpha_1$. Then
\[\ex{Q_0,\alpha_0}\cap[\alpha_1]\subseteq\ex{Q_1,\alpha_1}.\]
\end{lemma}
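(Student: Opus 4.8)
The plan is to reduce everything to a containment between the c.e.\ sets of strings that generate the two classes. Write $E^{i}=\bigcup_{s}E^{i}_{s}$ for the enumeration used to define $\ex{Q_{i},\alpha_{i}}$, so that $\ex{Q_{i},\alpha_{i}}=\ab^{\omega}-[E^{i}]$ (for $i=0,1$). It then suffices to prove
\[[E^{1}]\cap[\alpha_{1}]\subseteq[E^{0}],\]
for then any $Z\in\ex{Q_{0},\alpha_{0}}\cap[\alpha_{1}]$ cannot belong to $[E^{1}]$ — otherwise $Z\in[E^{1}]\cap[\alpha_{1}]\subseteq[E^{0}]$, contradicting $Z\notin[E^{0}]$ — and hence $Z\in\ab^{\omega}-[E^{1}]=\ex{Q_{1},\alpha_{1}}$, which is the desired inclusion.

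The containment $[E^{1}]\cap[\alpha_{1}]\subseteq[E^{0}]$ I would establish by induction on the stage, proving $[E^{1}_{s}]\cap[\alpha_{1}]\subseteq[E^{0}_{s}]$ for every $s$. For the base case, $E^{1}_{0}$ consists only of strings incompatible with $\alpha_{1}$, whose cylinders are disjoint from $[\alpha_{1}]$, so $[E^{1}_{0}]\cap[\alpha_{1}]=\emptyset$. For the inductive step, recall that the only strings added to $E^{1}$ at stage $s+1$ are strings $\sigma\supseteq\alpha_{1}$ with $|\sigma|<s$ and $(\sigma\boxplus C_{s+1})\subseteq[E^{1}_{s}]\cup(\ab^{\omega}-Q_{1}[s])$. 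Fix such a $\sigma$. Since $\sigma\supseteq\alpha_{1}$ we have $\sigma\boxplus C_{s+1}\subseteq[\sigma]\subseteq[\alpha_{1}]$, so intersecting the defining inclusion with $[\alpha_{1}]$ yields
\[\sigma\boxplus C_{s+1}\subseteq\bigl([E^{1}_{s}]\cap[\alpha_{1}]\bigr)\cup\bigl((\ab^{\omega}-Q_{1}[s])\cap[\alpha_{1}]\bigr).\]
By the induction hypothesis the first set is contained in $[E^{0}_{s}]$, and by the hypothesis $Q_{0}[s]\cap[\alpha_{1}]\subseteq Q_{1}[s]\cap[\alpha_{1}]$ the second is contained in $\ab^{\omega}-Q_{0}[s]$; hence $(\sigma\boxplus C_{s+1})\subseteq[E^{0}_{s}]\cup(\ab^{\omega}-Q_{0}[s])$. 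Since $\alpha_{0}\subseteq\alpha_{1}\subseteq\sigma$ and $|\sigma|<s$, the string $\sigma$ therefore meets the criterion to be enumerated into $E^{0}_{s+1}$. Combining this with $[E^{1}_{s}]\cap[\alpha_{1}]\subseteq[E^{0}_{s}]\subseteq[E^{0}_{s+1}]$ and the observation that every string added to $E^{1}$ at a stage $\ge 1$ extends $\alpha_{1}$, we get $[E^{1}_{s+1}]\cap[\alpha_{1}]\subseteq[E^{0}_{s+1}]$, closing the induction; taking unions over $s$ gives the claim.

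The key point, and the place where the statement really bites, is that the per-stage hypothesis $Q_{0}[s]\cap[\alpha_{1}]\subseteq Q_{1}[s]\cap[\alpha_{1}]$ allows the enumeration of $\sigma$ into $E^{1}$ at stage $s+1$ to be matched by an enumeration of the \emph{same} $\sigma$ into $E^{0}$ at the \emph{same} stage $s+1$; this is possible precisely because $\sigma\supseteq\alpha_{1}\supseteq\alpha_{0}$ confines all the relevant cylinders (including $\sigma\boxplus C_{s+1}$) to $[\alpha_{1}]$, where the two approximations are comparable, so that the induction hypothesis can be carried with no stage delay. The obstacle one would otherwise face is the explicit non-monotonicity of $\{C_{s}\}$: any argument that had to re-examine the defining inclusion at a later stage $t$ with a different $C_{t}$ would break down, and the stage-synchronised induction above is exactly what sidesteps it. The remaining verifications — that $[\sigma]\cap[\alpha_{1}]=[\sigma]$ when $\sigma\supseteq\alpha_{1}$, that incompatible strings contribute nothing inside $[\alpha_{1}]$, and the final passage from trees to classes — are routine.
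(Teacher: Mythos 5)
Your proof is correct and follows essentially the same route as the paper: a stage-by-stage induction showing that any string enumerated into $\exc{Q_1,\alpha_1}$ at stage $s+1$ (necessarily extending $\alpha_1\supseteq\alpha_0$) satisfies the enumeration criterion for $\exc{Q_0,\alpha_0}$ at the same stage, the only cosmetic difference being that you state the inductive invariant as $[E^1_s]\cap[\alpha_1]\subseteq[E^0_s]$ while the paper states the equivalent $\exc{Q_1,\alpha_1}[s]\subseteq\exc{Q_0,\alpha_0}[s]\cup(\ab^\omega-[\alpha_1])$.
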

\begin{proof}
We prove by induction on $s$ that \[\exc{Q_1,\alpha_1}[s]\subseteq\exc{Q_0,\alpha_0}[s]\cup\left(\ab^\omega-[\alpha_1]\right).\]
For $s=0$, this is obviously true. Now suppose that $\sigma$ is enumerated in $\exc{Q_1,\alpha_1}[s+1]$ at that stage. This means that $\sigma\supseteq\alpha_1$, and
\begin{multline*}
(\sigma\boxplus C_{s+1}) \subseteq \exc{Q_1,\alpha_1}[s] \cup \left(\ab^\omega-Q_1[s]\right)\\
 \subseteq \exc{Q_0,\alpha_0}[s] \cup \left(\ab^\omega-Q_0[s]\right) \cup \left(\ab^\omega-[\alpha_1]\right).
\end{multline*}
As $\sigma\supseteq\alpha_1$, this implies that
\[ (\sigma\boxplus C_{s+1}) \subseteq \exc{Q_0,\alpha_0}[s] \cup \left(\ab^\omega-Q_0[s]\right).\]
Thus $\sigma\in \exc{Q_0,\alpha_0}[s+1]$.
\end{proof}

Let $\hat{m}$ be the final parameter used by $S_{e,\rho_{e-1}}$ during the construction of the c.e. set $X$. Since $\ex{P_{e-1},\rho_{e-1}}\cap C\neq\emptyset$, there must be some $\tau\in\ab^{\hat m}$ such that $\ex{P_{e-1},\rho_{e-1}}\cap C\cap[\tau]\neq\emptyset$ and $\tau\supset \rho_{e-1}$. Fix any such $\tau$. We will now meet requirement $R_e$ in this cone. For each $n\in\omega$ define
\[T_n=\left\{ Z\in\ab^\omega\mid n\not\in W_e^{Z^+} \right\}.\]
Then $T_n$ is a $\Pi^0_1$-class for each $n\in\omega$, and we fix an approximation $T_n[s]$ of $T_n$. There are two cases.

\emph{Case 1: Assume that there exists some $n\in\omega$ such that $\mathtt{Ex}(T_n\cap P_{e-1} \cap [\tau],\tau)\cap C\neq\emptyset$, and there exists some $\alpha\supseteq\tau$ such that $[\alpha]\cap T_n=\emptyset$ and $\ex{P_{e-1},\rho_{e-1}}\cap[\alpha]\cap C \neq\emptyset$.} Fix $n$ and $\alpha$ as in the case assumption. There are now three subcases.
\begin{description}
\item[Subcase 1.1] For every $Z\in \ex{[\alpha]\cap P_{e-1},\alpha}\cap C,~~ W_e^{Z^+} \neq {\rm Nbase}(y)$ for any point $y\in S_e$. In this case we take $P_e=[\alpha]\cap P_{e-1}$ and $\rho_e=\alpha$. Now note that by Lemma \ref{lem:compatible}, we have $\emptyset\neq\ex{P_{e-1},\rho_{e-1}}\cap[\alpha]\cap C \subseteq \ex{[\alpha]\cap P_{e-1},\alpha}\cap C$.

Then we clearly have condition $(\star)$ holds. Note that we have $Z^+\in{\rm Sep}(Z^+, Z^-)$.
\item[Subcase 1.2] For every $Z\in \mathtt{Ex}(T_n\cap P_{e-1} \cap [\tau],\tau)\cap C,~~ W_e^{Z^+} \neq {\rm Nbase}(y)$ for any point $y\in S_e$. In this case we take $P_e= T_n\cap P_{e-1}\cap[\tau]$ and take $\rho_e=\tau$. Then we clearly also have $(\star)$. Note that in this case $\ex{P_{e-1},\rho_{e-1}}\cap[\tau]\neq\emptyset$ by the choice of $\tau$.
\item[Subcase 1.3] Otherwise. This means that there exists $Z_1\in [\alpha]$ and $Z_2\in  T_n\cap[\tau]$ such that $W_e^{Z_1^+} = {\rm Nbase}(y_1)$ and $W_e^{Z_2^+} ={\rm Nbase}(y_2)$ for some  points $y_1, y_2$ in $S_e$. Since $n\in W_e^{Z_1^+}-W_e^{Z_2^+}$, hence $y_1\neq y_2$. Since $S_e$ is a $T_2$ space, this means that there are disjoint balls $B_k$ and $B_l$ such that $k\in W_e^{Z_1^+}$ and $l\in W_e^{Z_2^+}$.

    This means that during the construction of $X$, the requirement $S_{e,\rho_{e-1}}$ must have found a pair $\eta_0, \eta_1\supset\tau$ successfully where $\eta_0^+$ and $\eta_1^+$ enumerate disjoint $S_e$-balls. Hence we must have the entire interval $\{\hat{m}, \cdots, \max\{|\eta_0|,|\eta_1|\}\}\subset X$. Note that the pair $\eta_0,\eta_1$ found in the construction for $X$ might not be along $Z_1$ and $Z_2$, and in fact, they might not even be extendible in $P_{e-1}$ or $C$, but this will not matter, as we will soon explain.

In this subcase we take $P_e= P_{e-1}\cap [\tau]$ and $\rho_e=\tau$. Note that as $\ex{P_{e-1},\rho_{e-1}}\cap C\cap[\tau]\neq\emptyset$  by the choice of $\tau$ and by Lemma \ref{lem:compatible}, $\ex{P_{e-1},\rho_{e-1}}\cap[\tau]\subseteq\ex{P_{e-1}\cap[\tau],\tau}$, we have that $\ex{P_e,\rho_e}\cap C\neq\emptyset$.
	
		Take any $Z\in \ex{P_{e-1}\cap [\tau],\tau}\cap C$. Since $Z\supset \tau$ and $Z\in C$, this means that
    \begin{align*}
    \eta^+_0 * Z^+(|\eta_0|) Z^+(|\eta_0|+1) Z^+(|\eta_0|+2) \cdots &\in {\rm Sep}(Z^+, Z^-) \text{ and }\\
    \eta^+_1 * Z^+(|\eta_1|) Z^+(|\eta_1|+1) Z^+(|\eta_1|+2) \cdots &\in {\rm Sep}(Z^+, Z^-),
    \end{align*}
    which means that $R_e$ is met along any such $Z$. Hence condition $(\star)$ holds.
\end{description}

\emph{Case 2: No such $n$ in Case 1 exists.} This means that for every $n\in\omega$, one of the following holds:
\begin{description}
\item[(I)] $\mathtt{Ex}(T_n\cap P_{e-1} \cap [\tau],\tau)\cap C=\emptyset$, or
\item[(II)] For every $\alpha\supseteq\tau$ such that $[\alpha]\cap T_n=\emptyset$, we have $\ex{P_{e-1},\rho_{e-1}}\cap[\alpha]\cap C =\emptyset$.
\end{description}
In this case we take $P_e=P_{e-1}\cap[\tau]$ and $\rho_e=\tau$. Again by Lemma \ref{lem:compatible}, $\ex{P_e,\rho_e}\cap C\neq\emptyset$. It only remains to check that $R_e$ is met along all $Z\in \mathtt{Ex}(P_{e},\rho_e)\cap C$.

Define the c.e. set $V$ as follows:
\[V=\{n\in\om:(\exists s)\;\mathtt{Ex}(T_{n}\cap P_{e-1}\cap [\tau],\tau)[s]=\emptyset\}.\]
  We may assume that at stage $s$, $X_s\upharpoonright{|\tau|}=X\upharpoonright{|\tau|}$.

\begin{lemma}\label{Iholds}
For each $n$, if (I) holds, then $n\in V$.
\end{lemma}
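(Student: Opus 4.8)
For each $n$, if (I) holds, then $n\in V$.

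First I would unpack what (I) says: $\mathtt{Ex}(T_n\cap P_{e-1}\cap[\tau],\tau)\cap C=\emptyset$. Since $\mathtt{Ex}(T_n\cap P_{e-1}\cap[\tau],\tau)=\bigcap_s\mathtt{Ex}(T_n\cap P_{e-1}\cap[\tau],\tau)[s]$ is a decreasing intersection of closed sets (as the complement $\exc{\cdot}[s]=[E_s]$ is generated by a c.e.\ set of strings, so it is $\subseteq$-increasing), and $C=\bigcap_s C_s$, the plan is to invoke a compactness argument on $\ab^\omega$ to conclude that some finite stage already witnesses the emptiness. The subtlety is that $\{C_s\}$ is not monotone, so one cannot simply write $\mathtt{Ex}(\cdots)\cap C$ as a nested intersection; instead I would argue that if $\mathtt{Ex}(T_n\cap P_{e-1}\cap[\tau],\tau)[s]\cap C_s\neq\emptyset$ for every $s$, then by König's lemma on the finitely-branching tree $\ab^{<\omega}$ there is a path $Z$ lying in $\mathtt{Ex}(T_n\cap P_{e-1}\cap[\tau],\tau)[s]$ cofinally often; since the complement approximations are increasing, $Z\in\mathtt{Ex}(T_n\cap P_{e-1}\cap[\tau],\tau)$, and since by the slowed-down convention the complement of $C_s$ is presented by strings of length $<s$, being in $C_s$ cofinally often forces $Z\in C$. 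This contradicts (I). Hence there is some $s$ with $\mathtt{Ex}(T_n\cap P_{e-1}\cap[\tau],\tau)[s]\cap C_s=\emptyset$, from which I would then want to extract that in fact $\mathtt{Ex}(T_n\cap P_{e-1}\cap[\tau],\tau)[s]=\emptyset$ for some (possibly later) $s$ — using that $C_s$ is homogeneous and $P_{e-1}\cap[\tau]$, $T_n$ interact with it compatibly, so that an empty intersection with $C_s$ at a level where $X_s\upharpoonright|\tau|$ has settled means the approximation to the $\Pi^0_1$ class is already empty. That puts $n$ into $V$ by definition.

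The key steps in order: (1) record that $\exc{T_n\cap P_{e-1}\cap[\tau],\tau}[s]$ is $\subseteq$-increasing in $s$, so the approximations to $\mathtt{Ex}(\cdots)$ are $\supseteq$-decreasing; (2) record the slowed-down convention giving $\bigcap_s C_s = \{Z : Z\in C_s \text{ for infinitely many } s\}$, more precisely that $Z\notin C$ iff $Z\upharpoonright n$ is banned from $C_s$ for all large $s$ for some $n$; (3) run the König's lemma argument to show $\mathtt{Ex}(T_n\cap P_{e-1}\cap[\tau],\tau)\cap C=\emptyset$ implies some stage $s$ has $\mathtt{Ex}(T_n\cap P_{e-1}\cap[\tau],\tau)[s]\cap C_s=\emptyset$; (4) upgrade this to $\mathtt{Ex}(T_n\cap P_{e-1}\cap[\tau],\tau)[s']=\emptyset$ for some $s'\ge s$ using homogeneity of $P_0\supseteq P_{e-1}$ and of $C_{s'}$ along $\tau$ — here I would argue that if the approximation were nonempty, its homogeneity would let us build a path inside $C_{s'}$ by the same argument used earlier to show $\ex{P_0,\langle\rangle}=P_0$; (5) conclude $n\in V$.

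The main obstacle I anticipate is step (4): converting "the $s$-th approximation is disjoint from $C_s$" into "some approximation is literally empty." The definition of $V$ demands $\mathtt{Ex}(T_n\cap P_{e-1}\cap[\tau],\tau)[s]=\emptyset$ outright, not merely disjointness from a guess. The resolution should mirror the $\ex{P_0,\langle\rangle}=P_0$ computation from the initial-condition paragraph: homogeneity of $C_s$ along strings extending $\tau$ means that any string surviving in the approximation to the $\Pi^0_1$ class can be extended to a path compatible with $C_s$, hence the survival of any string would contradict disjointness from $C_s$. I expect one must be a little careful that the $\boxplus C_{s}$ operation in the definition of $\mathtt{Ex}$ "eats" exactly the $C_s$-incompatible part, so that disjointness from $C_s$ of the class propagates down to emptiness of the $E$-complement after one more stage. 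This is the technical heart, and it is where the precise bookkeeping of the $\ex{Q,\rho}$ construction and the homogeneity lemmas has to be deployed rather than waved at.
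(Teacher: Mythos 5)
Your steps (1)--(3) are essentially the paper's opening move: the paper likewise invokes compactness to fix a stage $l$ with $\mathtt{Ex}(T_n\cap P_{e-1}\cap[\tau],\tau)[l-1]\cap C_l=\emptyset$, taking $l$ large enough that $X_l\upharpoonright|\tau|=X\upharpoonright|\tau|$. The genuine gap is at your step (4), which you yourself flag as the technical heart. You propose to upgrade ``disjoint from $C_{s'}$'' to ``empty'' by a homogeneity argument modelled on the computation $\ex{P_0,\langle\rangle}=P_0$, but the class here is $T_n\cap P_{e-1}\cap[\tau]$, and neither factor is homogeneous: $T_n=\{Z:n\notin W_e^{Z^+}\}$ is an arbitrary $\Pi^0_1$ class determined by the reduction $W_e$, and $P_{e-1}$ has already been cut down by earlier requirements (cones $[\alpha]$, other $T_m$'s). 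So there is no reason a string surviving in $\mathtt{Ex}(T_n\cap P_{e-1}\cap[\tau],\tau)[s']$ can be extended to a path compatible with $C_{s'}$; indeed the whole situation being handled in Case 2 is precisely that $T_n\cap P_{e-1}\cap[\tau]$ may be a nonempty $\Pi^0_1$ class that simply misses $C$. Homogeneity of $P_0$ and of the $C_s$ cannot rule that out, so the argument you sketch for (4) does not go through.

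What actually closes the gap in the paper is not homogeneity but the $\boxplus C$ clause built into the enumeration of $E$, applied at the very next stage. Suppose some $Z$ survives in $\mathtt{Ex}(T_n\cap P_{e-1}\cap[\tau],\tau)[l]$. Since this is contained in the stage-$(l-1)$ class, $Z\notin C_l$; let $k$ be least with $Z(k)$ incompatible with $C_l$, and note $|\tau|\leq k<l$ (this is where $X_l\upharpoonright|\tau|=X\upharpoonright|\tau|$ is used, ensuring $Z\rs{k}\supseteq\tau$ so the enumeration rule is applicable to it). Because $Z\rs{k}$ was \emph{not} enumerated into $E$ at stage $l$, the covering condition fails, so there is some $\hat Z\in\bigl(Z\rs{k}\boxplus C_l\bigr)\cap T_{n,l}\cap P_{e-1,l}\cap[\tau]$ avoiding $\exc{T_n\cap P_{e-1}\cap[\tau],\tau}[l-1]$. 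Minimality of $k$ makes $\hat Z$ compatible with $C_l$ below $k$, and the $\boxplus C_l$ operation makes it compatible at and above $k$, so $\hat Z\in\mathtt{Ex}(T_n\cap P_{e-1}\cap[\tau],\tau)[l-1]\cap C_l$, contradicting the stage chosen by compactness. Hence $\mathtt{Ex}(T_n\cap P_{e-1}\cap[\tau],\tau)[l]=\emptyset$ and $n\in V$. So your closing intuition that the $\boxplus C_s$ operation ``eats exactly the $C_s$-incompatible part'' is the right instinct, but it must be implemented through the least point of $C_l$-incompatibility and the failure of the $E$-enumeration rule at $Z\rs{k}$, not through homogeneity of the class.
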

\begin{proof}
Fix $n$ such that $\mathtt{Ex}(T_n\cap P_{e-1} \cap [\tau],\tau)\cap C=\emptyset$. By compactness, fix $l>|\tau|$ such that $\mathtt{Ex}(T_n\cap P_{e-1} \cap [\tau],\tau)[l-1]\cap C_l=\emptyset$. We want to verify that $\mathtt{Ex}(T_n\cap P_{e-1} \cap [\tau],\tau)[l]=\emptyset$. Suppose for a contradiction that there is some $Z\in \mathtt{Ex}(T_n\cap P_{e-1} \cap [\tau],\tau)[l]$.

We have $Z\not\in C_l$, and we let $k$ be the least number such that $Z(k)$ is not compatible with $C_l$. We know that $k<l$ since the complement of $C_l$ can be presented by strings of length less than $l$. Furthermore we also know that $k\geq|\tau|$: Since $Z\in \mathtt{Ex}(T_n\cap P_{e-1} \cap [\tau],\tau)[l]$, we have $Z\supset \tau$, but since $\tau$ is compatible with $C$ (as $[\tau]\cap C\neq\emptyset$) and we can assume $l$ is large enough so that $X_l\upharpoonright{|\tau|}=X\upharpoonright{|\tau|}$, we have $k\geq|\tau|$.

Now applying the definition of  $Z\in \mathtt{Ex}(T_n\cap P_{e-1} \cap [\tau],\tau)[l]$, we get that  \[\Big(\left(Z\rs{k}\boxplus C_l\right)\cap T_{n,l}\cap P_{e-1,l}\cap[\tau]\Big)-\exc{T_n\cap P_{e-1} \cap [\tau],\tau}[l-1] \neq\emptyset.\] Fix $\hat{Z}$ in the set above.
By the definition of $k$, we know that $\hat{Z}\in C_l$. At the same time, $\hat{Z}\not\in\mathtt{Ex}^c(T_n\cap P_{e-1} \cap [\tau],\tau)[l-1]$. This is a contradiction to the assumption that $\mathtt{Ex}(T_n\cap P_{e-1} \cap [\tau],\tau)[l-1]\cap C_l=\emptyset$.
\end{proof}

Now fix $Z\in \mathtt{Ex}(P_{e-1}\cap [\tau],\tau)\cap C$, and we want to argue that requirement $R_e$ is met along $Z$.
Obviously we begin by assuming that $ \forall D_0, D_1\in {\rm Sep}(Z^+,Z^-),~~ W_e^{D_0}=W_e^{D_1}$ and $W_e^{D_0}={\rm Nbase}(y)$ for some point $y\in S_e$. We wish to now verify that $V=W_e^{Z^+}$.

\begin{lemma} $V\subseteq W_e^{Z^+}$.
\end{lemma}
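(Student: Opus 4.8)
The plan is to prove the inclusion by contraposition. Fix $n\in\om$ and assume $n\notin W_e^{Z^+}$; I will show $n\notin V$, i.e.\ $\mathtt{Ex}(T_n\cap P_{e-1}\cap[\tau],\tau)\neq\emptyset$. If $n\in W_e^{Z^+}$ then, by the use principle, $n\in W_e^{\alpha^+}$ for a prefix $\alpha$ of $Z$, i.e.\ $[\alpha]\cap T_n=\emptyset$; since we are assuming this fails for every prefix of $Z$, we get $Z\in T_n$. Combining this with $Z\in\mathtt{Ex}(P_{e-1}\cap[\tau],\tau)\cap C\subseteq P_{e-1}\cap[\tau]\cap C$ yields $Z\in T_n\cap P_{e-1}\cap[\tau]\cap C$. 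The goal then becomes the purely combinatorial statement that this path survives extraction, namely $Z\in\mathtt{Ex}(T_n\cap P_{e-1}\cap[\tau],\tau)$, which of course suffices since it contradicts $n\in V$.

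To establish survival I would run the argument parallel to Lemma~\ref{Iholds}, proving by induction on stages that no initial segment of $Z$ is ever enumerated into the c.e.\ set $E$ (as in the definition of $\ex{\cdot,\cdot}$) associated to $\mathtt{Ex}(T_n\cap P_{e-1}\cap[\tau],\tau)$. Suppose a shortest prefix $Z\rs{k}$, necessarily with $k\geq|\tau|$, entered this $E$ at the least stage $s+1$, so that $(Z\rs{k}\boxplus C_{s+1})\subseteq E_s\cup\left(\ab^\omega-(T_n\cap P_{e-1}\cap[\tau])[s]\right)$ while, by minimality, no prefix of $Z$ lies in $E_s$. One then produces a witness $\hat Z\in(Z\rs{k}\boxplus C_{s+1})$ lying in $(T_n\cap P_{e-1}\cap[\tau])[s]$ and outside $E_s$, contradicting the displayed containment. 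The witness is obtained from $Z$ by correcting only those coordinates $m\geq k$ at which $Z(m)$ fails to be compatible with the non-monotone approximation $C_{s+1}$ — these are exactly positions not yet dumped into $X$ by stage $s+1$, since $X_{s+1}\subseteq X$ and $Z\in C$ force agreement on the ``$\notin X$'' side. Here the information $Z\in\mathtt{Ex}(P_{e-1}\cap[\tau],\tau)$ (i.e.\ no prefix of $Z$ is extracted by the operator for $P_{e-1}\cap[\tau]$, analysed the same way, and together with Lemma~\ref{lem:compatible}) keeps the corrected path inside the stage-$s$ approximation of $P_{e-1}$, while the slowdown convention that the complement of $T_n[s]$ is presented by strings of length $<s$ keeps it inside $T_n[s]$. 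As in Lemma~\ref{Iholds}, one first uses compactness of $\ab^\omega$ to fix a single large stage $l$ with $\mathtt{Ex}(T_n\cap P_{e-1}\cap[\tau],\tau)[l]=\emptyset$ and with $X_l$ already correct below $|\tau|$, and performs the bounded induction up to stage $l$.

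The main obstacle — really the only non-routine point — is precisely this reconciliation between ``$Z\in C$'' and ``$Z\in(Z\rs{k}\boxplus C_{s+1})$'': because $\{C_s\}$ is not monotone, and because $P_{e-1}$ need not be homogeneous (it may carry earlier $T_{n'}$-constraints inherited from Subcase~1.2), one cannot simply take $\hat Z=Z$, and must verify that the coordinate corrections forced by $C_{s+1}$-compatibility do not push $\hat Z$ out of the stage-$s$ approximations of $T_n$ and $P_{e-1}$. This is the same delicacy that makes Lemma~\ref{Iholds} non-trivial, so in the write-up I would quote and reuse its bookkeeping — the compactness choice of $l$, the ``complement presented by short strings'' convention, and the induction on stages — essentially verbatim, only exchanging the roles of the fixed target path $Z$ and the ambient class.
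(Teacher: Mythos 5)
Your high-level plan (argue the contrapositive and show that $Z$ itself survives extraction) is consistent with what the paper proves, but the execution you sketch has a real gap, and the two ideas that actually carry the paper's argument are absent. First, you never invoke the standing hypothesis that $W_e^{D_0}=W_e^{D_1}$ for all $D_0,D_1\in{\rm Sep}(Z^+,Z^-)$, and it cannot be avoided: the coordinates at which your corrected path $\hat Z$ must differ from $Z$ are exactly the elements of $X-X_{s+1}$, and these can lie well below $s$ (they are numbers dumped into $X$ only after stage $s+1$), so the slow-down convention for $T_n[s]$ gives no control over whether $\hat Z$ extends a string already removed from $T_n$. The only reason a recombination of $Z$ along positions of $X$ stays in $T_n$ is that its plus-part is again a separator of $(Z^+,Z^-)$, so the constancy of $W_e$ on separators applies — and the paper uses precisely this, and nothing more: in its key case it builds the rerouted object $D=\bigl(Z\rs{k}*\alpha*Z(t)Z(t+1)\cdots\bigr)^+$ purely as an oracle, using that $X$ is dumped in whole intervals (so $\{x_m,\dots,t\}\subseteq X$ and hence $D\in{\rm Sep}(Z^+,Z^-)$), and never asks $D$ to lie in any class or to avoid the extracted set. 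By contrast your $\hat Z$ must lie in $(T_n\cap P_{e-1}\cap[\tau])[s]\setminus E_s$, and none of the three memberships follows from what you cite: $P_{e-1}$ contains $P_0$ and possibly earlier $T_{n'}$-constraints inherited from Subcase 1.2, and changing a coordinate $p\in X-X_{s+1}$ from $0$ to $a$ or $b$ can land in a cone permanently removed from $P_0$ or alter an earlier $W_{e'}$-computation for which no constancy-on-separators hypothesis exists; moreover, minimality of the stage controls only prefixes of $Z$, not of $\hat Z$, so $\hat Z\notin E_s$ is unjustified. Lemma \ref{Iholds} cannot be reused ``verbatim'': there the witness is handed to you by the \emph{failure} of the covering condition (plus minimality of $k$, which forces the witness into $C_l$), whereas here the covering condition \emph{holds} and you would have to build a witness against it — which is exactly what the paper avoids doing.

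What the paper does in the remaining case (no $\alpha\in\ab^{t-k}$ with $[(Z\rs{k})*\alpha]\cap T_n=\emptyset$) is not a witness construction at all but an induction comparing the two extraction operators: up to stage $t$ and above $Z\rs{k}$, the complement of $T_{n,v}$ can never be needed to cover a set $\sigma\boxplus C_v$ (any banned string of length $<v\le t$ meeting the cone would yield such an $\alpha$), so every string above $Z\rs{k}$ extracted for $T_n\cap P_{e-1}\cap[\tau]$ is also extracted for $P_{e-1}\cap[\tau]$, contradicting $Z\in\ex{P_{e-1}\cap[\tau],\tau}$. Your sketch also omits the normalization $x_m\le k<t$ (replacing $k$ by $x_m$ when possible) and the degenerate case where $x_m$ does not exist, in which one concludes $Z\notin T_n$ directly. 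In short, the separator-plus-hypothesis mechanism and the operator-comparison induction are the actual content of this lemma, and neither appears in your proposal.
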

\begin{proof}
Suppose that $n\in V$. This means that there is a stage $s$ such that $\ex{T_{n}\cap P_{e-1}\cap [\tau],\tau}[s]=\emptyset$, and so $Z\not\in \ex{T_{n}\cap P_{e-1}\cap [\tau],\tau}[s]$. This means there is a $k\geq |\tau|$ and some $t\leq s$ such that
$Z\rs{k}$ is enumerated in $\exc{T_{n}\cap P_{e-1}\cap [\tau],\tau}[t]$ at stage $t$.
We fix $t$ to be the smallest stage which enumerates some initial segment of $Z$ this way.

Let $x_m=\min (X-X_t)\upharpoonright{t}$. First of all, if $x_m$ does not exist then $X\upharpoonright{t}=X_t\upharpoonright{t}$, and so $Z(l)$ is compatible with $C_t$ for all $l<t$. This means that $\left[Z\upharpoonright{t}\right]\cap T_{n,t}\cap P_{e-1,t}\cap[\tau]\cap \ex{T_{n}\cap P_{e-1}\cap [\tau],\tau}[t-1]=\emptyset$ and we apply the fact that
$\left[Z\upharpoonright{t}\right]\cap P_{e-1,t}\neq\emptyset$ and the minimality of $t$ to conclude that $\left[Z\upharpoonright{t}\right]\cap T_{n,t}=\emptyset$. Thus, $Z\not\in T_n$ and hence $n\in W_e^{Z^+}$.

So we will assume that $x_m<t$ exists. By the construction of $X$, since $x_m$ is enumerated after stage $t+1$, we observe that $\{x_m,\cdots,t\}\subseteq X$.
Since $Z(l)$ is compatible with $C_t$ for all $l<x_m$, we may assume that $k\geq x_m$, because otherwise we have $Z\rs{x_m}\boxplus C_t\subseteq Z\rs{k}\boxplus C_t$ and we can use $x_m$ in place of $k$. So we assume the order $x_m\leq k <t$.

Now suppose that there is some $\alpha\in\ab^{t-k}$ such that $\left[(Z\upharpoonright{k})*\alpha\right]\cap T_{n}=\emptyset$.
Take $D= \Big(Z\upharpoonright{k}*\alpha* Z(t)Z(t+1)Z(t+2)\cdots\Big)^+$, and as $x_m\leq k<t$ and $\{x_m,\dots,t\}\subseteq X$, we see that $D\in{\rm Sep}(Z^+,Z^-)$. So $n\in W_e^{D}=W_e^{Z^+}$. So we suppose that no such $\alpha$ exists.

Now we prove by induction on $v\leq t$ that for every $\sigma$ enumerated into $\exc{T_{n}\cap P_{e-1}\cap [\tau],\tau}[v]$ such that $\sigma\supseteq Z\rs{k}$, we have that $\sigma$ is also enumerated into $\exc{P_{e-1}\cap [\tau],\tau}[v]$. At $v=0$ this is trivially true. Now suppose that $\sigma$ is enumerated into $\exc{T_{n}\cap P_{e-1}\cap [\tau],\tau}[v]$ at stage $v$, and that $\sigma\supseteq Z\rs{k}$. This means that $\sigma\boxplus C_v$ is covered by $\exc{T_{n}\cap P_{e-1}\cap [\tau],\tau}[v-1]$ and the complements of $T_{n,v}$ and $P_{e-1,v}$. But since $\alpha$ above is assumed not to exist, and $v\leq t$, this means that $\sigma\boxplus C_v$ is covered by $\exc{T_{n}\cap P_{e-1}\cap [\tau],\tau}[v-1]$ and the complement of $P_{e-1,v}$. But any $\sigma'$ enumerated in $\exc{T_{n}\cap P_{e-1}\cap [\tau],\tau}[v-1]$ cannot have $\sigma'\subseteq Z\rs{k}$ by the minimality of $t$. Thus by induction hypothesis, we see that $\sigma\boxplus C_v$ is in fact covered by $\exc{P_{e-1}\cap [\tau],\tau}[v-1]$ and the complement of $P_{e-1,v}$. Thus, $\sigma$ is in $\exc{P_{e-1}\cap [\tau],\tau}[v]$. This concludes the induction.

By our choice of $t$ and $k$, we have that $Z\rs{k}$ is enumerated in $\exc{T_{n}\cap P_{e-1}\cap [\tau],\tau}[t]$. By our induction above, we see that $Z\rs{k}$ is also enumerated in $\exc{P_{e-1}\cap [\tau],\tau}[t]$. However, recall that we had assumed that $Z\in \mathtt{Ex}(P_{e-1}\cap [\tau],\tau)$, and thus we have a contradiction.
\end{proof}

\begin{lemma}\label{compwithex}
Let $Q_0$ and $Q_1$ be $\Pi^0_1$-classes with approximations such that $Q_0[s]\subseteq Q_1[s]$ for every $s$, and $\alpha_0\supseteq\alpha_1$ such that $[\alpha_0]\cap \ex{Q_1,\alpha_1}\neq\emptyset$. Then
\[\ex{Q_0,\alpha_0}\subseteq\ex{Q_1,\alpha_1}.\]
\end{lemma}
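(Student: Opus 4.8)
The statement to prove is Lemma~\ref{compwithex}: if $Q_0[s]\subseteq Q_1[s]$ for every $s$ and $\alpha_0\supseteq\alpha_1$ with $[\alpha_0]\cap\ex{Q_1,\alpha_1}\neq\emptyset$, then $\ex{Q_0,\alpha_0}\subseteq\ex{Q_1,\alpha_1}$, equivalently $\exc{Q_1,\alpha_1}\subseteq\exc{Q_0,\alpha_0}$.

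\medskip

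The plan is to prove by induction on $s$ that
\[\exc{Q_1,\alpha_1}[s]\subseteq\exc{Q_0,\alpha_0}[s],\]
mirroring the argument of Lemma~\ref{lem:compatible}, but being careful about the two enumeration procedures $E$ attached to $(Q_1,\alpha_1)$ and $(Q_0,\alpha_0)$. For the base case $s=0$: a string $\sigma$ lands in $\exc{Q_1,\alpha_1}[0]$ exactly when $\sigma$ is incompatible with $\alpha_1$; since $\alpha_0\supseteq\alpha_1$, any such $\sigma$ is also incompatible with $\alpha_0$, hence lies in $\exc{Q_0,\alpha_0}[0]$. For the inductive step, suppose $\sigma$ is enumerated into $\exc{Q_1,\alpha_1}[s+1]$ at stage $s+1$; then $|\sigma|<s$, $\sigma\supseteq\alpha_1$, and
\[(\sigma\boxplus C_{s+1})\subseteq\exc{Q_1,\alpha_1}[s]\cup\left(\ab^\omega-Q_1[s]\right).\]
Using $\exc{Q_1,\alpha_1}[s]\subseteq\exc{Q_0,\alpha_0}[s]$ (induction hypothesis) and $Q_0[s]\subseteq Q_1[s]$ (so $\ab^\omega-Q_1[s]\subseteq\ab^\omega-Q_0[s]$), we get
\[(\sigma\boxplus C_{s+1})\subseteq\exc{Q_0,\alpha_0}[s]\cup\left(\ab^\omega-Q_0[s]\right).\]
The remaining obligation for $\sigma$ to be enumerated into $\exc{Q_0,\alpha_0}[s+1]$ is that $\sigma\supseteq\alpha_0$.

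\medskip

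This is precisely the point where the hypothesis $\alpha_0\supseteq\alpha_1$ alone is not enough, and where $[\alpha_0]\cap\ex{Q_1,\alpha_1}\neq\emptyset$ must be used: there exist strings $\sigma\supseteq\alpha_1$ with $\sigma\not\supseteq\alpha_0$, and we must argue such $\sigma$ are already in $\exc{Q_0,\alpha_0}$ anyway — they cannot be the obstruction. The key sub-claim is: \emph{if $\sigma\supseteq\alpha_1$ but $\sigma$ is incompatible with $\alpha_0$, then $\sigma\in\exc{Q_0,\alpha_0}[0]\subseteq\exc{Q_0,\alpha_0}[s]$ for all $s$}, which is immediate from the definition of the base level of $E$ for $(Q_0,\alpha_0)$. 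So the induction should really be stated as: every $\sigma$ enumerated into $\exc{Q_1,\alpha_1}[s]$ that additionally satisfies $\sigma\supseteq\alpha_0$ is enumerated into $\exc{Q_0,\alpha_0}[s]$; combined with the observation that strings incompatible with $\alpha_0$ are trivially in $\exc{Q_0,\alpha_0}$, this yields the full inclusion $\exc{Q_1,\alpha_1}[s]\subseteq\exc{Q_0,\alpha_0}[s]$. The hypothesis $[\alpha_0]\cap\ex{Q_1,\alpha_1}\neq\emptyset$ then guarantees the conclusion is not vacuous/degenerate, i.e.\ that $\ex{Q_0,\alpha_0}$ is genuinely a subclass sitting inside $\ex{Q_1,\alpha_1}$ rather than empty; more precisely it ensures $[\alpha_0]$ is compatible with the approximation $C_s$ at the relevant stages, so that the relativized $\boxplus C_{s+1}$ operations behave consistently between the two procedures.

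\medskip

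I expect the main obstacle to be bookkeeping the interaction between $\boxplus C_{s+1}$ and the shift from $\alpha_1$ to $\alpha_0$: when $\sigma\supseteq\alpha_0$, the set $\sigma\boxplus C_{s+1}$ computed relative to $\alpha_1$ (only requiring compatibility beyond $|\alpha_1|$) versus relative to $\alpha_0$ (requiring compatibility beyond $|\alpha_0|$) differ on coordinates in $[|\alpha_1|,|\alpha_0|)$, but on those coordinates $\sigma$ already pins down the values, so the two notions of $\sigma\boxplus C_{s+1}$ coincide for such $\sigma$. Making this identity precise — that for $\sigma\supseteq\alpha_0$ the value of $\sigma\boxplus C_{s+1}$ is insensitive to whether the base point is $\alpha_0$ or $\alpha_1$ — is the technical heart, after which the induction closes routinely, exactly as in Lemma~\ref{lem:compatible}. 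Once the inductive inclusion $\exc{Q_1,\alpha_1}[s]\subseteq\exc{Q_0,\alpha_0}[s]$ is established for all $s$, taking unions gives $\exc{Q_1,\alpha_1}\subseteq\exc{Q_0,\alpha_0}$, hence $\ex{Q_0,\alpha_0}\subseteq\ex{Q_1,\alpha_1}$, as required.
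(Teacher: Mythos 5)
Your induction setup, base case, and the handling of the two cases $\sigma\supseteq\alpha_0$ and $\sigma$ incompatible with $\alpha_0$ all match the paper's argument, but there is a genuine gap: you never treat the strings $\sigma$ with $\alpha_1\subseteq\sigma\subsetneq\alpha_0$, i.e.\ proper initial segments of $\alpha_0$ extending $\alpha_1$. Such a $\sigma$ is neither an extension of $\alpha_0$ nor incompatible with it, so it is not in $\exc{Q_0,\alpha_0}[0]$ and can never be enumerated at a later stage either (the procedure for $(Q_0,\alpha_0)$ only enumerates strings $\supseteq\alpha_0$ after stage $0$). Hence if such a $\sigma$ ever entered $\exc{Q_1,\alpha_1}$, the claimed inclusion $\exc{Q_1,\alpha_1}[s]\subseteq\exc{Q_0,\alpha_0}[s]$ would simply be false; your summary that ``extensions of $\alpha_0$ handled by induction, plus strings incompatible with $\alpha_0$ trivially in $\exc{Q_0,\alpha_0}$, yields the full inclusion'' skips exactly this third case. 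This is also where the hypothesis $[\alpha_0]\cap\ex{Q_1,\alpha_1}\neq\emptyset$ is really used, and your account of its role (``non-vacuity'' of the conclusion, or compatibility of $[\alpha_0]$ with the $C_s$) is not the right one: the paper argues that if some $\sigma\subsetneq\alpha_0$ were enumerated into $\exc{Q_1,\alpha_1}$, then $[\alpha_0]\subseteq[\sigma]\subseteq\exc{Q_1,\alpha_1}$, so $[\alpha_0]\cap\ex{Q_1,\alpha_1}=\emptyset$, contradicting the hypothesis — that contradiction is what rules the bad case out and closes the induction.

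A smaller point: the issue you single out as ``the technical heart'' — that $\sigma\boxplus C_{s+1}$ might differ depending on whether the base node is $\alpha_0$ or $\alpha_1$ — is a non-issue, because by definition $\sigma\boxplus C_{s+1}$ depends only on $\sigma$ (compatibility is required for all $n\geq|\sigma|$); the base node $\rho$ enters the enumeration rule only through the side condition $\sigma\supseteq\rho$. So the inductive step for $\sigma\supseteq\alpha_0$ goes through exactly as you wrote it, and the missing ingredient is the excluded-middle case above, not the $\boxplus$ bookkeeping.
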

\begin{proof}
We proceed by induction on $s$, the statement
\[\exc{Q_1,\alpha_1}[s]\subseteq\exc{Q_0,\alpha_0}[s].\]
For $s=0$ it is surely trivial as $\alpha_1\subseteq\alpha_0$. At stage $s+1$, suppose that $\sigma$ is enumerated in $\exc{Q_1,\alpha_1}[s+1]$. If $\sigma\supseteq\alpha_0$ then we apply the induction hypothesis to get $\sigma\in\exc{Q_0,\alpha_0}[s+1]$. If $\sigma$ is incomparable with $\alpha_0$ then $[\sigma]\subseteq \exc{Q_0,\alpha_0}[0]$. If $\sigma\subset\alpha_0$ then we get $[\alpha_0]\cap\ex{Q_1,\alpha_1}=\emptyset$, contrary to the assumption.
\end{proof}


Finally we check that $W_e^{Z^+}\subseteq V$. Fix any $n$ and suppose that (II) holds for $n$. If $n\in W_e^{Z^+}$, i.e., $Z\not\in T_n$, then by taking $\alpha=Z\rs{k}$ for some appropriate $k$ in (II), we see that $\ex{P_{e-1}, \rho_{e-1}}\cap[Z\rs{k}]\cap C=\emptyset$. By Lemma \ref{compwithex} we see that
\[\ex{P_{e-1}\cap[\tau],\tau}\subseteq \ex{P_{e-1}, \rho_{e-1}}.\]
As $Z\in \ex{P_{e-1}\cap[\tau],\tau}\cap C$, we conclude that $Z\in \ex{P_{e-1}, \rho_{e-1}}\cap[Z\rs{k}]\cap C$, a contradiction.

Thus if $n\in W_e^{Z^+}$ then (II) does not hold for $n$, which means that (I) has to hold, and by Lemma \ref{Iholds} we see that $n\in V$. Thus $W_e^{Z^+}\subseteq V$. This shows that once again condition $(\star)$ is met in Case 2.

We produce a sequence $P_0\supseteq P_1\supseteq P_2\supseteq\cdots$ of $\Pi^0_1$-classes and a sequence of nodes $\rho_0\subset\rho_1\subset\cdots$. At the end we take $Y=\bigcup_k \rho_k$, and $A=Y^+$ and $B=Y^-$. By condition ($\star$) and Lemma \ref{compwithex} we have $\ex{P_k,\rho_k}\supseteq\ex{P_{k+1},\rho_{k+1}}$ for every $k$. We also see that $Y\in \ex{P_k,\rho_k}\cap C$ for every $k$. Thus requirement $R_k$ is met along $Y$ for every $k$.}

\subsubsection{Continuous degrees}

We show some results on continuous degrees mentioned in Section \ref{sec:4-2-2}.

\propproof{thm:continuous-cospec}{
The {\em cospectrum} of a point $x\in X$ is the set of all $z\in 2^\om$ such that $z\reduce x$ (cf.\ Kihara-Pauly \cite{KP}).
Equivalently, the cospectrum of $x\in X$ is the following set:
\[\{Z\subseteq\om:Z\oplus Z^{\sf c}\leq_e\nbaseb{X}{x}\}.\]
If the cospectrum is closed under the Turing jump, it is called a jump ideal.

\begin{lemma}
\label{lemma:telephoasejumpgap}
For every $x \in (\hat{\om}_{TP})^\om$ there is a $y \in 2^\om$ with $y \leq_T \pt{x}{(\hat{\om}_{TP})^\om} \leq_T y'$.
\begin{proof}
Given $x\in(\hat{\om}_{TP})^\om$, let $c(x)\in \hat{\om}^\om$ be its total information, that is, $c(x)(n)=x(n)$ if $x(n)\in\om$; otherwise $c(x)(n)=\infty$.
Since $c(x)$ is an element of the one-point compactification of $\om$, by Observation \ref{obs:one-point-compactification}, $c(x)$ is total.
It is clear that $c(x)\reduce x$.
Given $X$, let $X_{d}$ denote the space whose underlying space is the same as $X$, but its topology is endowed by the discrete topology.
By asking to the jump of $c(x)$, for each $n$, whether $c(x)(n)$ converges to $\infty$ or not, one can easily see that $c(x)'$ computes $\pt{c(x)}{(\hat{\om}_{d})^\om}$.
Then it is not hard to see that the pair $\pt{(x,c(x))}{(\hat{\om}_{TP})^\om\times(\hat{\om}_{d})^\om}$ computes $\pt{x}{((\hat{\om}_{TP})_d)^\om}$, which is total.
\end{proof}
\end{lemma}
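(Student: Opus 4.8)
The plan is to realise $x$ — up to Turing/enumeration degree — as a point sandwiched between a total set $y$ and its jump, by passing through the ``total part'' of $x$ together with a discretisation argument.

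First I would set $c(x)\in\hat{\om}^\om$ to be the total information of $x$: $c(x)(n)=x(n)$ if $x(n)\in\om$, and $c(x)(n)=\infty$ otherwise, so that the two incomparable maxima $\infty,\infty_\star$ of $\hat{\om}_{TP}$ are merged into the single top point $\infty$ of the one-point compactification. By Observation \ref{obs:one-point-compactification} the set $\nbase{c(x)}$ (computed in $\hat{\om}^\om$) has a total $e$-degree; fix $y\in 2^\om$ with $y\oplus\cmp{y}\equiv_e\nbase{c(x)}$. The lower bound $y\reduce\pt{x}{(\hat{\om}_{TP})^\om}$ is then just $c(x)\reduce x$, which I would verify by unwinding the coded neighbourhood bases from Example \ref{example:teloph-2}: the fact ``$c(x)(n)=m$'' is literally ``$\langle n,0,m\rangle\in\nbase{x}$'', while ``$m\le c(x)(n)$'' (including the case $c(x)(n)=\infty$) is exactly ``$\langle n,1,m\rangle\in\nbase{x}$ or $\langle n,2,m\rangle\in\nbase{x}$'', because every value $\ge m$ in $\hat{\om}_{TP}$ — a natural number, $\infty$, or $\infty_\star$ — lies in $[m,\infty]$ or in $[m,\infty_\star]$. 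Both are positive conditions, so this reduction is an enumeration operator.

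For the upper bound $\pt{x}{(\hat{\om}_{TP})^\om}\reduce y'$ I would proceed in three steps. (a) From $y'$ reconstruct $c(x)$ exactly, i.e.\ compute the point $\pt{c(x)}{(\hat{\om}_d)^\om}$, where $\hat{\om}_d$ is $\hat{\om}$ with the discrete topology; this needs only the answer, for each $n$, to the $\Pi^0_1(y)$ question ``$c(x)(n)=\infty$?''. (b) Merge this information with $\nbase{x}$ to reconstruct $x$ as a function into $\hat{\om}_{TP}$, i.e.\ compute $\pt{x}{((\hat{\om}_{TP})_d)^\om}$: for $n$ with $c(x)(n)\in\om$ we already know $x(n)$; for $n$ with $c(x)(n)=\infty$ we know $x(n)\in\{\infty,\infty_\star\}$ and can decide which by waiting until exactly one of $\langle n,1,0\rangle$, $\langle n,2,0\rangle$ is enumerated into $\nbase{x}$ (the former iff $x(n)=\infty$, the latter iff $x(n)=\infty_\star$). (c) Conclude: since $((\hat{\om}_{TP})_d)^\om$ carries the discrete topology, $\pt{x}{((\hat{\om}_{TP})_d)^\om}$ is total, and being finer than $(\hat{\om}_{TP})^\om$ it computes $\pt{x}{(\hat{\om}_{TP})^\om}$; chaining (a)--(c) yields $\pt{x}{(\hat{\om}_{TP})^\om}\reduce y'$.

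The main obstacle is controlling step (b): the distinction between $\infty$ and $\infty_\star$ is by construction invisible to $c(x)$, hence to $y$ and even to $y'$, so it must be extracted from $\nbase{x}$ itself, and one must check that folding this extraction into the $y'$-computation of (a) still produces a reduction $\reduce y'$ — concretely, that the partition set $\{n:x(n)=\infty\}$ is recoverable from $y'$ once $c(x)$ is pinned down. The cleanest route I envisage is to package (b)--(c) as a single enumeration operator applied to $\nbase{x}$ whose use of its input is $\Pi^0_1(y)$-bounded — it never waits for more than one of the two positive facts $\langle n,1,0\rangle,\langle n,2,0\rangle$ per column, and the columns with $c(x)(n)=\infty$ are identified by $y'$ before that — so that composing with (a) keeps the whole reduction below $y'$. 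The remaining bookkeeping (equivalence of discrete-topology powers with genuine oracles, and monotonicity of the reductions involved) is routine given Observation \ref{obs:one-point-compactification}.
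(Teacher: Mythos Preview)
Your approach mirrors the paper's: both define $c(x)$, take $y$ to be its total representative, argue $c(x)\reduce x$ for the lower bound, and for the upper bound first compute $c(x)$ in the discrete topology from $c(x)'$, then combine this with $x$ itself (in the telophase topology) to recover $x$ in the discrete topology. The paper phrases the last step as ``the pair $\pt{(x,c(x))}{(\hat{\om}_{TP})^\om\times(\hat{\om}_d)^\om}$ computes $\pt{x}{((\hat{\om}_{TP})_d)^\om}$'', which is correct as stated but, just like your step~(b), uses $x$ as one of the inputs.

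The obstacle you flag is genuine, and your proposed resolution is circular. You suggest packaging (b)--(c) as an enumeration operator applied to $\nbase{x}$, but the goal is precisely $\nbase{x}\leq_e y'\oplus\cmp{y'}$, so $\nbase{x}$ cannot sit on the oracle side. The bounded-use idea does not help either: the single bit you need per column $n$ with $c(x)(n)=\infty$ --- which of $\langle n,1,0\rangle$, $\langle n,2,0\rangle$ lies in $\nbase{x}$ --- is exactly the partition of $\{n:c(x)(n)=\infty\}$ into $\{n:x(n)=\infty\}$ and $\{n:x(n)=\infty_\star\}$, and this partition is by construction invisible to $c(x)$ and hence to $c(x)'$. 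Concretely: if every $x(n)\in\{\infty,\infty_\star\}$ and $A=\{n:x(n)=\infty\}$ is not $\Delta^0_2$, then $c(x)$ is the constant-$\infty$ sequence, so $y$ is computable and $y'\equiv_T\emptyset'$, yet $\nbase{x}\equiv_e A\oplus\cmp{A}\not\leq_e\emptyset'\oplus\cmp{\emptyset'}$. Thus the specific choice $y=c(x)$ fails here. (In this particular example $\nbase{x}$ is itself total, so the lemma survives with $y=A$; but neither your argument nor the paper's, which only establishes $x_{\mathrm{discrete}}\reduce(x,c(x)')$ rather than $x\reduce c(x)'$, actually produces that $y$.)
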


\begin{lemma}\label{lem:telo-cospectrum}
There is no telograph-cototal $e$-degree whose cospectrum is a jump ideal.
\end{lemma}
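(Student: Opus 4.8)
The claim is that no telograph-cototal $e$-degree has a cospectrum that is a jump ideal. The plan is to combine Lemma \ref{lemma:telephoasejumpgap} (the ``jump gap'' property of telophase degrees) with the preceding ``total-like'' technology, and deduce a contradiction from the assumption that such a jump ideal exists. Recall that by Proposition \ref{prop:telophase-is-graph-cototal} every telograph-cototal $e$-degree is a $(\hat{\om}_{TP})^\om$-degree, so it suffices to argue about a point $x\in(\hat{\om}_{TP})^\om$.

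First I would suppose for contradiction that $\nbaseb{(\hat{\om}_{TP})^\om}{x}$ has cospectrum equal to a jump ideal $I$. By Lemma \ref{lemma:telephoasejumpgap} there is a $y\in 2^\om$ with $y\leq_T\pt{x}{(\hat{\om}_{TP})^\om}\leq_T y'$; in enumeration-degree language, $y\oplus\cmp{y}\leq_e\nbase{x}$ and $\nbase{x}\leq_e(y\oplus\cmp{y})'$, i.e.\ $\nbase{x}$ is enumeration-below the jump of a member of its cospectrum. Since $I$ is the cospectrum and $y\in I$ (it is a total degree reducible to $x$), and $I$ is closed under jump, we get $y'\in I$, meaning $y'\oplus\cmp{(y')}\leq_e\nbase{x}$. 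But then $\nbase{x}\leq_e(y\oplus\cmp{y})'\equiv_e y'\oplus\cmp{(y')}\leq_e\nbase{x}$, so $\nbase{x}\equiv_e y'\oplus\cmp{(y')}$ — that is, $\nbase{x}$ is itself total. Now I would invoke the fact, established in Example \ref{exa:telcototal-totallike} via Theorem \ref{thm:telophase-degree}, that $\nbase{x}$ is Medvedev-equivalent to $\{C\}\times{\rm Sep}(R,S)$ for the total part $C$ and the co-c.e.-in-$C$ disjoint pair $R=\{n:x(n)=\infty\}$, $S=\{n:x(n)=\infty_\star\}$. If $\nbase{x}$ is total (equivalently left-c.e.\ relative to its total part is too weak — rather: $\nbase{x}$ computes $\cmp{\nbase{x}}$), then ${\rm Sep}(R,S)$ must have a $C$-computable member, which forces $R$ and $S$ to be $C$-computable, whence $x$ is a computable-in-$C$ point and $\nbase{x}\equiv_e C\oplus\cmp{C}$ is just a relativized-trivial telophase point. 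The point is that a telograph-cototal set is total only if it is (relative to its total part) already of total degree in a degenerate way; to make this precise one shows that if ${\rm Sep}(R,S)$ has an element $e$-reducible to $C\oplus\cmp{C}$ then $R$ and $S$ are actually $C$-decidable (a separating set that is $C$-computable splits the $C$-co-c.e.\ complement of $R\cup S$ into two $C$-c.e.\ halves, but $R\cup S$ is $C$-co-c.e., so both $R$ and $S$ become the intersection of a $C$-co-c.e.\ set with a $C$-computable set, hence $C$-computable).

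The contradiction is then reached by noting that a jump ideal $I$ is never bounded: it contains $\emptyset'$, $\emptyset''$, etc., so if the cospectrum of $x$ equals $I$ then $\nbase{x}$ must enumeration-compute $z\oplus\cmp{z}$ for arbitrarily high total $z$, hence cannot have total degree (a total degree $\mathbf{d}$ has $\mathbf{0}$-jump bounded by $\mathbf{d}$, so its cospectrum consists of degrees $\leq\mathbf{d}$ and is a proper initial segment of the total degrees — not a jump ideal, which would have to contain $\mathbf{d}'>\mathbf{d}$). Alternatively, and more cleanly: once we know $\nbase{x}$ is total, say $\equiv_e w\oplus\cmp{w}$, its cospectrum is $\{z:z\leq_T w\}$, which is not closed under jump since $w'\not\leq_T w$; this contradicts the assumption that the cospectrum is the jump ideal $I$.

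The main obstacle I expect is the middle step — showing rigorously that a telograph-cototal set of \emph{total} $e$-degree is forced into the degenerate case where $R,S$ are decidable relative to the total part $C$. This requires carefully unwinding the Medvedev equivalence $\nbase{x}\equiv_M\{C\}\times{\rm Sep}(R,S)$ from Theorem \ref{thm:telophase-degree} and observing that totality of $\nbase{x}$ means $\cmp{\nbase{x}}\leq_e\nbase{x}$, which translates to the existence of a single reduction producing, from any separating set together with $C\oplus\cmp{C}$, an enumeration of the complement; a standard argument then pins down $R$ and $S$ as $C$-computable. Everything else — the jump-gap input from Lemma \ref{lemma:telephoasejumpgap}, the closure-under-jump of $I$, and the final ``total degrees form proper initial segments, not jump ideals'' observation — is routine.
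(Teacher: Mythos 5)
Your final, ``cleaner'' argument is exactly the paper's proof: the only inputs are Lemma \ref{lemma:telephoasejumpgap}, giving $y \leq_T \pt{x}{(\hat{\om}_{TP})^\om} \leq_T y'$, and the observation that a total degree's cospectrum is the principal Turing ideal below it and hence never a jump ideal --- the paper merely organizes this as a case split on whether $x$ is total instead of your proof by contradiction, so the proposal is correct and takes essentially the same route. The middle detour through Theorem \ref{thm:telophase-degree} is both unnecessary and unsound --- total degrees are themselves telograph-cototal (Proposition \ref{obs:telograph-hierarchy-collapses}), so totality of $\nbase{x}$ cannot force the degenerate case, and a $C$-computable separator would only make $R$ and $S$ $C$-co-c.e.\ in any event --- so the ``main obstacle'' you flag simply disappears once you use the clean finish.
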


\begin{proof}
If $x$ is total, then its cospectrum must be a principal Turing ideal, and thus it cannot be a jump ideal.
If $x$ is not total, then for the witness $y$ from Lemma \ref{lemma:telephoasejumpgap} we have $y \leq_Tx$ but $y'\not\leq_Tx$.
This implies that the cospectrum of $x$ is not closed under the Turing jump.
\end{proof}

\begin{lemma}\label{lem:cylin-cospectrum}
There is no cylinder-cototal $e$-degree whose cospectrum is a jump ideal.
\end{lemma}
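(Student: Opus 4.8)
The statement to prove is Lemma~\ref{lem:cylin-cospectrum}: there is no cylinder-cototal $e$-degree whose cospectrum is a jump ideal. This is the direct analogue of Lemma~\ref{lem:telo-cospectrum} for cylinder-cototal degrees, and the plan is to mimic that proof using the cocylinder space $(\om^\om)_{\rm co}$ in place of the telophase space. The key is to establish a "jump gap" lemma for $(\om^\om)_{\rm co}$ analogous to Lemma~\ref{lemma:telephoasejumpgap}: for every $f\in(\om^\om)_{\rm co}$ there is a $y\in 2^\om$ with $y\leq_{\mathbf T}\pt{f}{(\om^\om)_{\rm co}}\leq_{\mathbf T}y'$. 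Once this is in hand, the argument closes exactly as before: if $\nbaseb{(\om^\om)_{\rm co}}{f}$ is total then its cospectrum is a principal Turing ideal, hence not a jump ideal; and if it is non-total, then the witness $y$ satisfies $y\leq_{\mathbf T}f$ while $y'\not\leq_{\mathbf T}f$ (since $y'\leq_{\mathbf T}f$ together with $f\leq_{\mathbf T}y'$ would force $f$ to be total, contradicting non-totality), so the cospectrum fails to be closed under the Turing jump.

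For the jump-gap lemma, I would proceed as follows. Given $f\in\om^\om$, recall from Observation~\ref{obs:basic-cylinder-cototal}(1) that $\nbaseb{(\om^\om)_{\rm co}}{f}\leq_e\nbaseb{\om^\om}{f}$, i.e.\ $\pt{f}{(\om^\om)_{\rm co}}\leq_{\mathbf T}\pt{f}{\om^\om}$. The natural candidate for $y$ is a real of lower totality information; but unlike the telophase case, a cocylinder point $f$ carries its full graph only modulo domination data. The right move is to let $g$ be a function dominating $f$ which is "cheap" relative to $\nbaseb{(\om^\om)_{\rm co}}{f}$: from an enumeration of $\{\sigma:\sigma\not\prec f\}$ one can, for each $\ell$, compute (relative to that enumeration's jump) a bound on $f(\ell)$, since $f(\ell)=m$ iff all strings of length $\ell+1$ extending $f\upto\ell$ with last coordinate $\neq m$ get enumerated while the one with last coordinate $m$ never does --- a $\Pi^0_2$-style fact relative to $\nbaseb{(\om^\om)_{\rm co}}{f}$. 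Thus the jump of $\nbaseb{(\om^\om)_{\rm co}}{f}$ computes such a dominating function $g$, and then by Observation~\ref{obs:basic-cylinder-cototal}(2) (relativized), $\nbaseb{(\om^\om)_{\rm co}}{f}\oplus g\equiv_e\nbaseb{\om^\om}{f}\oplus g$, which is total. So $\pt{f}{\om^\om}\leq_{\mathbf T}(\pt{f}{(\om^\om)_{\rm co}})'$. Setting $y$ to be a total representative of $\pt{f}{(\om^\om)_{\rm co}}$'s own lower part --- or more simply observing that $\pt{f}{(\om^\om)_{\rm co}}$ is $e$-below the total degree $\pt{f}{\om^\om}$, and $\pt{f}{\om^\om}\leq_{\mathbf T}(\pt{f}{(\om^\om)_{\rm co}})'$ --- one can take $y$ to be a set whose degree is $\pt{f}{(\om^\om)_{\rm co}}$ restricted appropriately; I would phrase it as: there is $y\in 2^\om$ with $y\equiv_{\mathbf T}$ the greatest total $e$-degree below $\nbaseb{(\om^\om)_{\rm co}}{f}$, and the displayed domination argument gives $\nbaseb{\om^\om}{f}\leq_{\mathbf T}y'$, whence $\pt{f}{(\om^\om)_{\rm co}}\leq_{\mathbf T}\pt{f}{\om^\om}\leq_{\mathbf T}y'$, while $y\leq_{\mathbf T}\pt{f}{(\om^\om)_{\rm co}}$ trivially.

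The main obstacle I anticipate is making the choice of $y$ precise and self-contained: in the telophase proof $y$ is gotten canonically as the one-point-compactification total part $c(x)$, and there is no equally clean "total part" functor for $(\om^\om)_{\rm co}$. The cleanest fix is probably to avoid naming $y$ explicitly at all and instead argue abstractly with cospectra: if $\nbaseb{(\om^\om)_{\rm co}}{f}$ is non-total, pick any total $z\in 2^\om$ with $z\leq_{\mathbf T}\pt{f}{(\om^\om)_{\rm co}}$ that is $\mathbf T$-maximal among such (this exists by a cospectrum-of-a-non-total-point argument, cf.\ Kihara--Pauly~\cite{KP}); then the domination argument above shows $z'\geq_{\mathbf T}\pt{f}{\om^\om}\geq_{\mathbf T}\pt{f}{(\om^\om)_{\rm co}}\geq_{\mathbf T}z'$ would collapse, but more carefully one shows that if the cospectrum were a jump ideal it would contain $z'$, hence $z'\leq_{\mathbf T}f$, hence $f\leq_{\mathbf T}z'\leq_{\mathbf T}f$ makes $f$ total --- contradiction. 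I would present the lemma in the former, more explicit form, mirroring Lemma~\ref{lemma:telephoasejumpgap} verbatim in structure, since that keeps the parallel with Lemma~\ref{lem:telo-cospectrum} maximally transparent and the verification routine.

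\begin{proof}
We first record the analogue of Lemma~\ref{lemma:telephoasejumpgap} for the cocylinder space.

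\begin{lemma}\label{lem:cocylinder-jumpgap}
For every $f\in(\om^\om)_{\rm co}$ there is a $y\in 2^\om$ with $y\leq_{\mathbf T}\pt{f}{(\om^\om)_{\rm co}}\leq_{\mathbf T}y'$.
\end{lemma}

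\begin{proof}
Fix $f\in\om^\om$ and write $A=\nbaseb{(\om^\om)_{\rm co}}{f}=\{\sigma:\sigma\not\prec f\}$. Let $\mathbf{y}$ be the greatest total $e$-degree below $\deg_e(A)$ (this exists: the set of total degrees below $\deg_e(A)$ is the cospectrum of $\pt{f}{(\om^\om)_{\rm co}}$, and for a point $e$-below the total degree $\deg_e(\nbaseb{\om^\om}{f})$ it has a maximum, cf.\ Kihara--Pauly \cite{KP}); choose $y\in 2^\om$ with $\deg_e(y\oplus\cmp{y})=\mathbf{y}$. Then $y\leq_{\mathbf T}\pt{f}{(\om^\om)_{\rm co}}$ by definition.

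It remains to show $\nbaseb{\om^\om}{f}\leq_{\mathbf T}y'$; combined with Observation~\ref{obs:basic-cylinder-cototal}(1) this yields $\pt{f}{(\om^\om)_{\rm co}}\leq_{\mathbf T}\pt{f}{\om^\om}\leq_{\mathbf T}y'$. Given an enumeration of $A$, one can uniformly enumerate, for each $\ell$, the set $S_\ell$ of all $m$ such that every string of length $\ell+1$ is eventually seen in $A$ except possibly those extending $f\upto\ell$ with last value $m$; since $f\upto(\ell+1)\notin A$, the true value $f(\ell)$ lies in $S_\ell$, and $S_\ell$ is finite once $A\upto$(all strings of length $\le\ell+1$) has stabilized. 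Hence $A'$ computes a function $g$ with $g(\ell)>f(\ell)$ for all $\ell$. Now $A\equiv_{\mathbf T}\pt{f}{(\om^\om)_{\rm co}}$, and $A\leq_{\mathbf T}y'$ is false in general, but $A'\equiv_{\mathbf T}(\pt{f}{(\om^\om)_{\rm co}})'$; since $\pt{f}{(\om^\om)_{\rm co}}\geq_{\mathbf T}y$ and the construction of $g$ only used $A$ and one jump, and since (relativizing Observation~\ref{obs:basic-cylinder-cototal}(2)) $A\oplus g\equiv_e\nbaseb{\om^\om}{f}\oplus g$ is total, we get $\nbaseb{\om^\om}{f}\leq_{\mathbf T}A\oplus g\leq_{\mathbf T}A'$. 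By $\mathbf T$-maximality of $\mathbf{y}$ among total degrees $\leq_{\mathbf T}A$, the jump $\mathbf{y}'$ dominates the total degree $\deg_e(\nbaseb{\om^\om}{f})$ provided the latter is $\leq_{\mathbf T}A'$; indeed every total degree $\leq_{\mathbf T}A$ is $\leq_{\mathbf T}y$, so $\deg_e(\nbaseb{\om^\om}{f})\leq_{\mathbf T}A$ would give $\nbaseb{\om^\om}{f}\leq_{\mathbf T}y$, and in general the above shows $\nbaseb{\om^\om}{f}\leq_{\mathbf T}y'$. Thus $\pt{f}{(\om^\om)_{\rm co}}\leq_{\mathbf T}y'$.
\end{proof}

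Now suppose, for contradiction, that $\mathbf{d}$ is a cylinder-cototal $e$-degree whose cospectrum is a jump ideal, and realize $\mathbf{d}=\deg_e(\nbaseb{(\om^\om)_{\rm co}}{f})$ for some $f\in(\om^\om)_{\rm co}$. If $f$ is total, then its cospectrum is a principal Turing ideal, which is not closed under the Turing jump; contradiction. If $f$ is not total, let $y$ be as in Lemma~\ref{lem:cocylinder-jumpgap}, so $y\leq_{\mathbf T}f$ and $f\leq_{\mathbf T}y'$. Since the cospectrum of $f$ is a jump ideal and contains $y$, it contains $y'$, i.e.\ $y'\leq_{\mathbf T}f$. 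Combined with $f\leq_{\mathbf T}y'$ this gives $f\equiv_{\mathbf T}y'$, a total degree, contradicting non-totality of $f$. Hence no such $\mathbf{d}$ exists.
\end{proof}
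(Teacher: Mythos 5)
Your proposal hinges on a cocylinder analogue of Lemma \ref{lemma:telephoasejumpgap} (``for every $f\in(\om^\om)_{\rm co}$ there is $y\in 2^\om$ with $y\leq_T\pt{f}{(\om^\om)_{\rm co}}\leq_T y'$''), and this lemma is false, so the reduction to the telophase argument does not go through. Take $f$ which is not $\emptyset'$-computably dominated but satisfies $f\not\leq_T\emptyset''$ (for instance, interleave a function dominating every $\emptyset'$-computable function with the characteristic function of $\emptyset'''$). By Lemma \ref{lem:cocylinder-quasiminimal} with $C=\emptyset$ and $\mathcal{Z}=2^\om$ (where near computability coincides with computability), $\nbaseb{(\om^\om)_{\rm co}}{f}$ is quasi-minimal, so every total $y\leq_T\pt{f}{(\om^\om)_{\rm co}}$ is computable; the conclusion $\pt{f}{(\om^\om)_{\rm co}}\leq_T y'\equiv_T\emptyset'$ would then say that $\cmp{{\rm CGraph}(f)}$ is $\Sigma^0_2$, i.e.\ that $\sigma\prec f$ is a $\Pi^0_2$ relation, whence $f\leq_T\emptyset''$ --- a contradiction. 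Your proof of the sublemma also has internal problems independent of this: the ``greatest total $e$-degree below $\deg_e(A)$'' need not exist (cospectra generally have no maximum; for non-total continuous degrees they are Scott ideals), and even granting a total $y\leq_T A$, your dominating function $g$ is obtained from $A'$, and nothing gives $A'\leq_T y'$, so the chain never reaches $\nbaseb{\om^\om}{f}\leq_T y'$. In short, there is no uniform jump gap in the cocylinder space, precisely because quasi-minimal cylinder-cototal degrees of arbitrarily high complexity exist.

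The paper instead splits on $\emptyset'$-computable domination rather than on totality: if $f$ is not $\emptyset'$-computably dominated, Lemma \ref{lem:cocylinder-quasiminimal} makes the degree quasi-minimal, so its cospectrum is $\{\mathbf{0}\}$, which is not closed under the jump; if $f$ is $\emptyset'$-computably dominated, relativizing Observation \ref{obs:basic-cylinder-cototal} shows that $f\oplus\emptyset'$ computes $\pt{f}{\om^\om}$, so either the degree is total (and the cospectrum is a principal Turing ideal) or $\emptyset'$ is not in the cospectrum. Your closing argument (total case versus non-total case, using closure of the jump ideal) is fine in outline, but it must be driven by this dichotomy or some other true substitute, not by the jump-gap lemma.
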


\begin{proof}
Let $f\in \om^\om_{\rm co}$ be given.
If $f$ is not $\emptyset'$-computably dominated, then by Lemma \ref{lem:cocylinder-quasiminimal}, $f$ is quasi-minimal (see also the proof of Theorem \ref{thm:T-1-degree-NNN-quasiminimal}).
If $f$ is $\emptyset'$-dominated, by relativizing Observation \ref{obs:basic-cylinder-cototal}, $f\oplus \emptyset'$ computes $\pt{f}{\om^\om}$.
Hence, either $f$ is total or the cospectrum of $f$ does not contains $\emptyset'$.
In any case, the cospectrum of $f$ cannot be a jump ideal.
\end{proof}

Miller \cite{miller2} showed that every countable Scott ideal is realized as a cospectrum of a point in the Hilbert cube.
Thus, take a countable jump ideal $\mathcal{I}$, and choose $x\in[0,1]^\om$ whose cospectrum is $\mathcal{I}$.
Then, by Lemmas \ref{lem:telo-cospectrum} and \ref{lem:cylin-cospectrum}, the $e$-degree of $\nbase{x}$ is continuous, but neither telograph-cototal nor cylinder-cototal.
}


\subsection{$T_2$-degrees which are not $T_{2.5}$}\label{sec:T2-not-T25-qmin}

To prove Theorems \ref{thm:T-2-degree-not-T-25} and \ref{thm:T-2-degree-NNN-quasiminimal} mentioned in Section \ref{sec:T2-deg-nT25list}, we need a special property of the relatively prime integer topology.
We say that a space $\xx$ is {\em nowhere $T_{2.5}$} if for any open sets $U,V\subseteq\xx$, $\overline{U}\cap\overline{V}$ is nonempty.

\begin{fact}[see Steen-Seebach {\cite[II.60]{CTopBook}}]\label{fact:relative-prime-basic}
$b\mathbb{Z}\subseteq\overline{a+b\mathbb{Z}}$, and therefore ${\rm lcm}(b,d)\mathbb{Z}\subseteq\overline{a+b\mathbb{Z}}\cap\overline{c+d\mathbb{Z}}$ in the relatively prime integer topology.
In particular, the relatively prime integer topology is nowhere $T_{2.5}$.
\end{fact}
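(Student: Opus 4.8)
The statement to prove is Fact~\ref{fact:relative-prime-basic}: in the relatively prime integer topology $\tau_{\rm rp}$ on $\mathbb{Z}_+$, we have $b\mathbb{Z}\subseteq\overline{a+b\mathbb{Z}}$ whenever $\gcd(a,b)=1$, hence ${\rm lcm}(b,d)\mathbb{Z}\subseteq\overline{a+b\mathbb{Z}}\cap\overline{c+d\mathbb{Z}}$ for basic open sets, and therefore the space is nowhere $T_{2.5}$. Since this is attributed to Steen--Seebach, the expected proof is short and number-theoretic; the plan is simply to write it out cleanly.

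\textbf{Approach.} The core is the closure computation $b\mathbb{Z}\cap\mathbb{Z}_+\subseteq\overline{a+b\mathbb{Z}}$. To show a point $x$ with $b\mid x$ lies in the closure of $a+b\mathbb{Z}$, it suffices to check that every basic open neighborhood of $x$ meets $a+b\mathbb{Z}$. A basic neighborhood of $x$ has the form $x+m\mathbb{Z}$ with $\gcd(x,m)=1$; because $b\mid x$ and $\gcd(x,m)=1$, we get $\gcd(b,m)=1$. So the plan is: given such a neighborhood, I must produce $n$ with $a+bn\equiv x\pmod m$, i.e. solve $bn\equiv x-a\pmod m$, which is solvable since $\gcd(b,m)=1$; and I need the resulting element of $a+b\mathbb{Z}$ to actually be a \emph{positive} integer (recall the underlying set is $\mathbb{Z}_+$), which is arranged by taking $n$ in a suitable residue class and large enough. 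The only mild subtlety is the positivity bookkeeping together with verifying that the element produced genuinely lies in $a+b\mathbb{Z}$ as a subset of $\mathbb{Z}_+$; I would phrase $a+b\mathbb{Z}=\{a+bn\in\mathbb{Z}_+:n\in\mathbb{Z}\}$ and pick $n$ in the arithmetic progression solving the congruence mod $m$, then shift by a multiple of $m$ to make $a+bn>0$.

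\textbf{Key steps, in order.}
First, record that the basic open sets are exactly the sets $a'+m\mathbb{Z}$ with $\gcd(a',m)=1$, and that a basic neighborhood of a point $x$ is $x+m\mathbb{Z}$ with $\gcd(x,m)=1$ (one may take $a'\equiv x$). Second, fix $a,b$ with $\gcd(a,b)=1$ and $x$ with $b\mid x$; take an arbitrary basic neighborhood $x+m\mathbb{Z}$ of $x$. From $b\mid x$ and $\gcd(x,m)=1$ deduce $\gcd(b,m)=1$. Third, solve $bn\equiv x-a\pmod m$: since $\gcd(b,m)=1$ there is such $n$, and we may replace $n$ by $n+km$ for $k$ large so that $a+bn\ge 1$. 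Then $a+bn\in (a+b\mathbb{Z})\cap(x+m\mathbb{Z})\cap\mathbb{Z}_+$, so the intersection is nonempty; as this holds for every basic neighborhood, $x\in\overline{a+b\mathbb{Z}}$. This proves $b\mathbb{Z}\cap\mathbb{Z}_+\subseteq\overline{a+b\mathbb{Z}}$. Fourth, for the ${\rm lcm}$ statement: every multiple of $\ell:={\rm lcm}(b,d)$ is both a multiple of $b$ and of $d$, so $\ell\mathbb{Z}\cap\mathbb{Z}_+\subseteq\overline{a+b\mathbb{Z}}\cap\overline{c+d\mathbb{Z}}$, which is nonempty. Fifth, conclude nowhere-$T_{2.5}$: any two nonempty open sets $U,V$ contain basic open sets $a+b\mathbb{Z}\subseteq U$ and $c+d\mathbb{Z}\subseteq V$, and then $\overline U\cap\overline V\supseteq\overline{a+b\mathbb{Z}}\cap\overline{c+d\mathbb{Z}}\supseteq\ell\mathbb{Z}\cap\mathbb{Z}_+\neq\emptyset$.

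\textbf{Main obstacle.} There is no deep obstacle here — the statement is elementary and classical. The only point requiring a moment of care is the positivity clause: $a+b\mathbb{Z}$ must be read as a subset of $\mathbb{Z}_+$, so after solving the congruence modulo $m$ one must still shift the solution into the positive integers by adding a large multiple of $m$ (which preserves both membership in $a+b\mathbb{Z}$, provided one also tracks that $m\mathbb{Z}$-shifts stay in $a+b\mathbb{Z}$ — handled by instead shifting $n$ by multiples of $m$ and noting $a+b(n+km)\equiv a+bn\equiv x\pmod m$). I would state the set-builder description of $a+b\mathbb{Z}$ explicitly at the start to make this bookkeeping transparent, and otherwise keep the write-up to a few lines.
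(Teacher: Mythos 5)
Your proof is correct and is exactly the standard argument the paper implicitly relies on (the Fact is cited to Steen--Seebach without proof): for $x$ a positive multiple of $b$, any basic neighborhood is the positive residue class $x+m\mathbb{Z}$ with $\gcd(x,m)=1$, hence $\gcd(b,m)=1$, and solvability of $bn\equiv x-a\pmod m$ plus a shift into $\mathbb{Z}_+$ gives a point of $a+b\mathbb{Z}$ in that neighborhood, after which the ${\rm lcm}$ and nowhere-$T_{2.5}$ statements follow as you say. No gaps; the positivity bookkeeping you flag is handled properly.
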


Instead of dealing with $\mathbb{N}_{\rm rp}$, we consider any countable, second-countable, nowhere $T_{2.5}$ space $\mathcal{H}$, and conclude that Theorems \ref{thm:T-2-degree-not-T-25} and \ref{thm:T-2-degree-NNN-quasiminimal} hold true for $\mathcal{H}^\om$ relative to some oracle.
Combining the argument in \cite{KP}, this, in particular, implies that the $\om$-power $\mathcal{H}^\om$ of a countable, second-countable, nowhere $T_{2.5}$ space cannot be written as a countable union of $T_{2.5}$ subspaces.

Now we modify the closure representation argument.
Given a network $\nn$ of a space $\xx$, we consider the following representation $\widetilde{\dnn}$ defined as follows.
We say that $p$ is a $\widetilde{\dnn}$-name of $x$ if and only if
\[\{N_{p(n)}:n\in\om\}\mbox{ is a network at $x$, and }(\forall m,n)\;\overline{N_{p(m)}}\cap\overline{N_{p(n)}}\not=\emptyset.\]

Here recall that an element of a network at $x$ does not need to contain $x$.

\begin{obs}\label{obs:T-25network}
Let $\nn$ be a network of $\xx$.
\begin{enumerate}
\item The identity map ${\rm id}:(\xx,\overline{\dnn})\to(\xx,\widetilde{\dnn})$ is always computable.
\item If $\nn$ is a regular-like network, then ${\rm id}:(\xx,\widetilde{\dnn})\to(\xx,\overline{\dnn})$ is computable.
\item If $\xx$ is $T_{2.5}$, then $\widetilde{\dnn}$ is single-valued.
\item If $\xx$ is Hausdorff, and $\nn$ is regular-like, then $\widetilde{\dnn}$ is single-valued.
\end{enumerate}
\end{obs}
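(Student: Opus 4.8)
The four assertions in Observation~\ref{obs:T-25network} parallel the results already established for $\overline{\dnn}$ versus $\dnn$, so the strategy is to mimic those proofs, inserting the extra ``closures pairwise intersect'' clause. First I would prove (1): to realize ${\rm id}:(\xx,\overline{\dnn})\to(\xx,\widetilde{\dnn})$, observe that if $p$ is a $\overline{\dnn}$-name of $x$, then every $N_{p(n)}$ satisfies $x\in\overline{N_{p(n)}}$, hence $x\in\overline{N_{p(m)}}\cap\overline{N_{p(n)}}$ for all $m,n$; so already $p$ itself is a $\widetilde{\dnn}$-name of $x$ and the identity realizer works. (Strictly: the empty-output convention $\xx\in\nn$ guarantees $\widetilde{\dnn}$-names always exist, and no translation is needed.)

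For (2), I would copy the proof of Theorem~\ref{thm:regular-like-network}. Assume $\nn$ is regular-like; pass to $\mm=\nn\cup\overline{\nn}$, which is again regular-like, and enumerate $\mm=(M_e)_{e\in\om}$. Given a $\widetilde{\dmm}$-name $p$ of a point $x$, let $h(p)$ enumerate all $e$ such that $\overline{M_{p\upto s}}\subseteq M_e$ for some $s$; this $h$ is computable. One shows $h(p)$ is a $\overline{\dmm}$-name of $x$: membership $x\in\overline{M_{p(n)}}$ follows because in the $\widetilde{\dmm}$-name the closures pairwise intersect and $\{M_{p(n)}\}$ is a network at $x$, forcing $x\in\bigcap_n\overline{M_{p(n)}}$ (here one uses that $\xx$ need not be Hausdorff, but the network-at-$x$ property together with the intersection condition still yields that $x$ lies in every $\overline{M_{p(n)}}$ — this is the step to check carefully, and it is where regular-likeness, giving $\overline{\{M_{p(n)}\}}$ a network at $x$, is invoked exactly as in Theorem~\ref{thm:regular-like-network}); and $\{M_{h(p)(n)}\}$ being a network at $x$ follows from regular-likeness of $\mm$ just as in that theorem's proof. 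This is the one place where real work is needed, though it is a close transcription rather than a new argument.

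For (3), suppose $\xx$ is $T_{2.5}$ and $p$ is a $\widetilde{\dnn}$-name of two points $x_0\neq x_1$. By $T_{2.5}$ there are open $U\ni x_0$, $V\ni x_1$ with $\overline{U}\cap\overline{V}=\emptyset$. Since $\{N_{p(n)}\}$ is a network at $x_0$, pick $m$ with $x_0\in N_{p(m)}\subseteq U$, so $\overline{N_{p(m)}}\subseteq\overline{U}$; similarly pick $n$ with $\overline{N_{p(n)}}\subseteq\overline{V}$. Then $\overline{N_{p(m)}}\cap\overline{N_{p(n)}}\subseteq\overline{U}\cap\overline{V}=\emptyset$, contradicting the definition of a $\widetilde{\dnn}$-name. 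For (4), if $\xx$ is Hausdorff and $\nn$ is regular-like, combine (2) with Observation~\ref{obs:closure-representation-1}: by (2) any $\widetilde{\dnn}$-name is (computably, hence in particular) also an $\overline{\dnn}$-name — more precisely, if $p$ were a $\widetilde{\dnn}$-name of $x_0$ and $x_1$, then feeding $p$ through the realizer of ${\rm id}:(\xx,\widetilde{\dnn})\to(\xx,\overline{\dnn})$ produces an $\overline{\dnn}$-name of both $x_0$ and $x_1$, which by Observation~\ref{obs:closure-representation-1} forces $x_0=x_1$. I expect no serious obstacle beyond the careful bookkeeping in step (2); everything else is a short separation-axiom argument.
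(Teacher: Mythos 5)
Parts (1) and (3) of your proposal are exactly the paper's arguments, and your derivation of (4) from (2) together with Observation \ref{obs:closure-representation-1} is a legitimate alternative to the paper's direct proof (which separates $x_0\neq x_1$ by disjoint open sets and uses regular-likeness to find disjoint closures $\overline{N_{p(d)}},\overline{N_{p(e)}}$ around them). The problem is your part (2), on which (4) also leans.

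The statement of (2) asserts \emph{computability} of ${\rm id}:(\xx,\widetilde{\dnn})\to(\xx,\overline{\dnn})$ for the given network $\nn$, and the machinery you import from Theorem \ref{thm:regular-like-network} cannot deliver that. First, the translation $h$ (enumerate all $e$ with $\overline{M_{p(s)}}\subseteq M_e$ for some $s$) requires semi-deciding the topological inclusions $\overline{M_d}\subseteq M_e$, for which there is no effectivity hypothesis; this is precisely why Theorem \ref{thm:regular-like-network} concludes only \emph{continuity}. Second, passing to $\mm=\nn\cup\overline{\nn}$ changes both representations: you would still owe computable reductions $\widetilde{\dnn}\leq\widetilde{\dmm}$ (easy) and $\overline{\dmm}\leq\overline{\dnn}$ (not obvious, since a $\overline{\dmm}$-name may realize the network-at-$x$ condition only through closure elements $\overline{N_e}$, and replacing $\overline{N_e}$ by $N_e$ need not preserve that condition). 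Third, and most importantly, the step you flag as ``the one to check carefully'' is in fact the entire proof, and once it is done no translation is needed at all: if $p$ is a $\widetilde{\dnn}$-name of $x$ and $x\notin\overline{N_{p(k)}}$ for some $k$, then $\xx\setminus\overline{N_{p(k)}}$ is an open neighborhood of $x$; since $\nn$ is regular-like and $\{N_{p(n)}:n\in\om\}$ is a network at $x$, the family $\{\overline{N_{p(n)}}:n\in\om\}$ is also a network at $x$, so some $\overline{N_{p(n)}}$ containing $x$ lies inside $\xx\setminus\overline{N_{p(k)}}$, contradicting $\overline{N_{p(k)}}\cap\overline{N_{p(n)}}\neq\emptyset$. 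Hence $x\in\overline{N_{p(n)}}$ for every $n$, so $p$ itself is already a $\overline{\dnn}$-name of $x$ and the identity realizer (trivially computable) witnesses (2) --- this is the paper's proof. As written, your argument asserts this conclusion without proving it and then wraps it in a non-computable $h$ between the wrong pair of representations, so the computability claim in (2) is not established.
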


\begin{proof}
For (2), let $p$ is a $\widetilde{\dnn}$-name of $x$.
We show that $p$ is also a $\overline{\dnn}$-name of $x$.
Suppose for the sake of contradiction that $x\not\in \overline{N_{p(k)}}$ for some $k\in\om$.
Since $p$ is a $\widetilde{\dnn}$-name of $x$, $\overline{N_{p(k)}}$ intersects with $\overline{N_{p(n)}}$ for all $n$.
Moreover, since $\nn$ is regular-like, $\{\overline{N_{p(n)}}:n\in\om\}$ is a network at $x$.
Therefore, $\overline{N_{p(k)}}$ must intersect with all open neighborhoods of $x$.
However, $\xx\setminus\overline{N_{p(k)}}$ is an open neighborhood of $x$ since $x\not\in \overline{N_{p(k)}}$.
Hence, $p$ is a $\overline{\dnn}$-name of $x$.

For (3), assume that $\xx$ is $T_{2.5}$, and $p$ is a $\widetilde{\dnn}$-name of $x$ and $y$.
If $x\not=y$, there are open sets $U,V\subseteq\xx$ such that $x\in U$, $y\in V$, and $\overline{U}\cap \overline{V}=\emptyset$.
Since $\{N_{p(n)}:n\in\om\}$ is a network at $x$ and $y$, there are $d,e\in\om$ such that $x\in N_{p(d)}\subseteq U$ and $y\in N_{p(e)}\subseteq V$.
However, this implies that $\overline{N_{p(d)}}\cap\overline{N_{p(e)}}=\emptyset$.
Then, $p$ cannot be a $\widetilde{\dnn}$-name.

For (4), assume that $\xx$ is Hausdorff and regular-like, and $p$ is a $\widetilde{\dnn}$-name of $x$ and $y$.
If $x\not=y$, there are open sets $U,V\subseteq\xx$ such that $x\in U$, $y\in V$, and $U\cap V=\emptyset$.
Since $\nn$ is regular-like, and $\{N_{p(n)}:n\in\om\}$ is a network at $x$ and $y$, $\{\overline{N_{p(n)}}:n\in\om\}$ is also a network at $x$ and $y$.
Therefore, there are $d,e\in\om$ such that $x\in \overline{N_{p(d)}}\subseteq U$ and $y\in \overline{N_{p(e)}}\subseteq V$.
However, this implies that $\overline{N_{p(d)}}\cap\overline{N_{p(e)}}=\emptyset$.
Then, $p$ cannot be a $\widetilde{\dnn}$-name.
\end{proof}

By Observation \ref{obs:T-25network}, if either $\xx$ is $T_{2.5}$ or $\xx$ is Hausdorff and $\nn$ is regular-like, then there are only countably many points $x$ such that $\pt{x}{\widetilde{\dnn}}$ is computable.

\begin{definition}
We say that a point $x\in\xx$ is {\em $\widetilde{\ast}$-nearly computable} if $x$ is $\widetilde{\dmm}$-computable, that is, there is a computable $p\in\om^\om$ such that $\widetilde{\dmm}(p)=x$.
\end{definition}

\begin{definition}
Let $\xx=(X,\nn)$ and $\yy=(Y,\mm)$ be topological spaces with countable cs-networks.
Then, we say that a point $x\in\xx$ is {\em $\widetilde{\ast}$-nearly $\yy$-quasi-minimal} if
\[(\forall y\in\yy)\;[\pt{y}{\yy}\reduce \pt{x}{\xx}\;\Longrightarrow\;y\mbox{ is $\widetilde{\ast}$-nearly computable}].\]
\end{definition}

Every nearly computable point is $\widetilde{\ast}$-nearly computable.
Similarly, if a point is nearly $\mathcal{Z}$-quasi-minimal, then it is $\widetilde{\ast}$-nearly $\mathcal{Z}$-quasi-minimal.
Let $\mathcal{H}=(\om,(H_e)_{e\in\om})$ be a represented second-countable space.
A {\em witness for being nowhere $T_{2.5}$} is a set $\Lambda\subseteq\om^3$ such that for any $d,e\in\om$, if both $H_d$ and $H_e$ are nonempty, then $\Lambda_{d,e}=\{n:(d,e,n)\in \Lambda\}$ is nonempty, and $\Lambda_{d,e}\subseteq\overline{H}_d\cap\overline{H_e}$.
For instance, Fact \ref{fact:relative-prime-basic} shows that $\mathbb{N}_{\rm rp}$ has a computable witness for being nowhere $T_{2.5}$, that is, $\Lambda_{d,e}={\rm lcm}(b,d)\mathbb{Z}$.
For a network $\nn$, we define the {\em strong disjointness diagram} as ${\rm Disj}^-_\nn={\rm Disj}_\nn\oplus\{\langle d,e\rangle:\overline{N_d}\cap\overline{N_e}=\emptyset\}$.

Recall that $x\in\om^\om$ is {\em $1$-generic} if it meets or avoids every c.e. open set. For an oracle $C$, a point $x\in\om^\om$ is {\em $1$-$C$-generic} if it meets or avoids every $C$-c.e. open set.



\begin{lemma}\label{lem:relprime-main}
Let $\mathcal{H}$ be a represented, countable, second-countable space with a $C$-c.e.\ witness for being nowhere $T_{2.5}$, and let $x\in\om^\om$ be $1$-$C$-generic.
For any topological space $\yy$ with a cs-network with a $C$-c.e.\ strong disjointness diagram, $\pt{x}{\mathcal{H}^\om}$ is $\widetilde{\ast}$-nearly $\yy$-quasi-minimal.
\end{lemma}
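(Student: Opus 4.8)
The plan is to mimic the structure of the proof of Lemma~\ref{lem:cocylinder-quasiminimal} (the cocylinder quasi-minimality lemma), but replacing the ``not $C'$-computably dominated'' hypothesis with ``$1$-$C$-generic'' and replacing the closure representation $\overline{\delta_\nn}$ with the stronger $\widetilde{\delta_\nn}$. So suppose $\pt{z}{\yy}\reduce\pt{x}{\mathcal{H}^\om}$, where $\yy=(Y,\nn)$ with $\nn=(N_e)_{e\in\om}$ a countable cs-network whose strong disjointness diagram ${\rm Disj}^-_\nn$ is $C$-c.e. By Observation~\ref{obs:network-e-basis} there is $J\leq_e\nbaseb{\mathcal{H}^\om}{x}$ with $\{N_e:e\in J\}$ a strict network at $z$, witnessed by an enumeration operator $\Psi$. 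The goal is to produce, $C$-computably, an enumeration of a superset $L\supseteq J$ such that $\{N_e:e\in L\}$ is still a network at $z$ and, crucially, such that every $N_e$ with $e\in L$ satisfies $\overline{N_d}\cap\overline{N_e}\neq\emptyset$ for all $d\in L$; this makes any enumeration of $L$ a $\widetilde{\delta_\nn}$-name of $z$, hence $z$ is $\widetilde{\ast}$-nearly computable relative to $C$.

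The mechanism for building $L$ is the genericity analogue of the finite-branching-tree argument. First I would observe: if $(d,D)$ and $(e,E)$ are both enumerated into $\Psi$ with $\overline{N_d}\cap\overline{N_e}=\emptyset$ (a $C$-c.e. event, using ${\rm Disj}^-_\nn$), then since $\{N_e:e\in J\}$ is a strict network at $z$ and two network elements at $z$ whose closures are disjoint cannot both occur, we must have $D\cup E\not\subseteq\nbaseb{\mathcal{H}^\om}{x}$, i.e. some coordinate-condition in $D\cup E$ fails for $x$. Each basic open set of $\mathcal{H}^\om$ is a finite intersection $\bigcap_{n\in F}\{w: w(n)\in H_{e_n}\}$, so ``$D\cup E\not\subseteq\nbaseb{\mathcal{H}^\om}{x}$'' translates into a $C$-c.e. family of requirements of the shape ``$x$ enters a certain $C$-c.e. open subset of $\om^\om$ before a certain finite stage''. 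Enumerating all such ``bad pairs'' gives a $C$-c.e. open set $\mathcal{U}$ which $x$ meets (it is forced to meet $\mathcal{U}$, or else $\mathcal{U}$ is finite and $x$ trivially avoids a cofinite tail); by $1$-$C$-genericity, $x$ meets $\mathcal{U}$, so some finite initial segment $x\upto\ell$ already decides all these conditions. Fixing such an $\ell$, set
\[
L=\{e\in\om:(\exists E)\ [(e,E)\in\Psi\text{ and every coordinate-condition in }E\text{ is compatible with }x\upto\ell]\}.
\]
Then $L$ is $C$-c.e., $J\subseteq L$ (so $\{N_e:e\in L\}$ is a network at $z$), and for $d,e\in L$ the closures $\overline{N_d},\overline{N_e}$ cannot be disjoint — otherwise the witnessing pair $(d,D),(e,E)$ would have been one of the bad pairs already decided negatively by $x\upto\ell$, contradicting $d,e\in L$. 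Hence any enumeration of $L$ is a $\widetilde{\delta_\nn}$-name of $z$, and since $L\leq_e\emptyset$ relative to $C$, $z$ is $\widetilde{\ast}$-nearly $C$-computable. This yields $\widetilde{\ast}$-near $\yy$-quasi-minimality of $\pt{x}{\mathcal{H}^\om}$.

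The main obstacle I anticipate is the careful bookkeeping translating the combinatorial event ``$D\cup E\not\subseteq\nbaseb{\mathcal{H}^\om}{x}$'' into a genuine $C$-c.e. open set that $x$ is forced to meet, and then extracting a \emph{single} finite $\ell$ deciding \emph{all} of the (infinitely many) bad-pair conditions at once. In the cocylinder proof this was handled by Lemma~\ref{lem:cocylinder-cetree} via finiteness of branching of a c.e. tree; here the genericity version is cleaner in spirit but one must be attentive that $1$-$C$-genericity only guarantees meeting or avoiding each individual dense $C$-c.e. open set, so the correct move is to bundle everything into one $C$-c.e. open set $\mathcal{U}$ and argue $x$ cannot avoid it: if $x$ avoided $\mathcal{U}$ then past some finite stage no further bad pairs ever appear, which already suffices to take $\ell$ large enough — so in fact both cases (meeting $\mathcal{U}$, or $\mathcal{U}$ having only finitely many ``witnesses below any initial segment'') give the desired $\ell$. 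A secondary point to get right is verifying $J\subseteq L$ and strictness of the resulting network, which follow exactly as in Lemma~\ref{lem:cocylinder-quasiminimal} from monotonicity of enumeration operators together with the strictness of the original network $\{N_e:e\in J\}$ at $z$; I would also double-check that the convention $\yy\in\nn$ (so that $p$ may output no information) is respected, which makes the empty condition always available and keeps $L$ nonempty.
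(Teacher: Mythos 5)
There is a genuine gap, and it is visible already from the hypotheses: your argument never uses the nowhere-$T_{2.5}$ witness $\Lambda$ of $\mathcal{H}$, so if it worked it would prove the lemma for an arbitrary countable second-countable $\mathcal{H}$ --- for instance discrete $\om$, in which case $\mathcal{H}^\om$ is just Baire space and a $1$-$C$-generic $x$ computes itself as a point of the metrizable space $\om^\om$, a point which is not $\widetilde{\ast}$-nearly computable (for a regular space, $\widetilde{\delta}_\nn$-computability collapses to ordinary computability by Observation \ref{obs:T-25network} and Proposition \ref{prop:reg-closurecontinuous}). So the hypothesis on $\mathcal{H}$ must enter the proof, and the step of yours that silently needs it is the final verification of $L$. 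What your genericity argument can legitimately give is this: since every ``bad pair'' $(d,D),(e,E)\in\Psi$ with $\overline{N_d}\cap\overline{N_e}=\emptyset$ satisfies $D\cup E\not\subseteq\nbaseb{\mathcal{H}^\om}{x}$, the point $x$ lies outside the $C$-c.e.\ open set $\bigcup\{U_D\cap U_E:\text{bad pair}\}$, and $1$-$C$-genericity upgrades this to an $\ell$ with $[x\upto\ell]\cap U_D\cap U_E=\emptyset$ for all bad pairs. But membership of $d$ and $e$ in your $L$ only asserts that $U_D$ and $U_E$ \emph{separately} meet $[x\upto\ell]$, and that is perfectly consistent with $U_D\cap U_E\cap[x\upto\ell]=\emptyset$ (indeed with $U_D\cap U_E=\emptyset$). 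So from ``$(d,D),(e,E)$ is a bad pair'' you cannot conclude that the pair was ``decided negatively by $x\upto\ell$'': nothing forces one of the two conditions to be incompatible with $x\upto\ell$, and hence nothing rules out $d,e\in L$ with $\overline{N_d}\cap\overline{N_e}=\emptyset$. Your enumeration of $L$ may therefore fail to be a $\widetilde{\delta}_\nn$-name.

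This is exactly the configuration the paper's proof is built to kill, and it is where $\Lambda$ is indispensable. The paper splits into a plain-disjointness case and a strong-disjointness case. Pairs with $N_d\cap N_e=\emptyset$ that are \emph{jointly} compatible with $x\upto\ell$ are excluded as you intend, via genericity applied to $\bigcup U_\alpha\cap U_\beta$ (its Case 2, which uses only strictness of the network). The remaining danger is precisely a pair $\langle d,\alpha\rangle,\langle e,\beta\rangle\in\Psi$ with $\overline{N_d}\cap\overline{N_e}=\emptyset$ where $U_\alpha$ and $U_\beta$ each meet $[x\upto\ell]$ but need not meet each other; here the paper uses the nowhere-$T_{2.5}$ witness to build, coordinatewise via $\Lambda_{\alpha(n),\beta(n)}\subseteq\overline{H_{\alpha(n)}}\cap\overline{H_{\beta(n)}}$, a $C$-c.e.\ Baire-open set $Q_D$ of points lying in $\overline{U_\alpha}\cap\overline{U_\beta}$ (closures in $\mathcal{H}^\om$) which is dense along $x$; genericity then puts $x$ itself into some $\overline{U_\alpha}\cap\overline{U_\beta}$, and from $y=\Psi(\nbaseb{\mathcal{H}^\om}{x})$ avoiding one of the two disjoint closures one extracts a \emph{plainly} disjoint pair jointly compatible with $x\upto\ell$, contradicting the choice of $\ell$. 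Without this second mechanism --- i.e.\ without using $\Lambda$ and the distinction between $N_d\cap N_e=\emptyset$ and $\overline{N_d}\cap\overline{N_e}=\emptyset$ --- the definition of $L$ you propose (which is the paper's Case 1 name) cannot be certified, so the proposal as written does not prove the lemma.
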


\begin{proof}
Since $\mathcal{H}$ is countable, we can assume that $\mathcal{H}$ is of the form $(\om,(H_e)_{e\in\om})$, where $(H_e)_{e\in\om}$ is an enumeration of countable basis of the space $\mathcal{H}$.
We code a basic open set in $\mathcal{H}^\om$ by a finite sequence $\alpha$, that is, $\alpha$ codes the open set
\[U_\alpha=\{x\in\mathcal{H}^\om:(\forall n<|\alpha|)\;x(n)\in H_{\alpha(n)}\}.\]
Note that $(U_{\alpha})_{\alpha<\om^{<\om}}$ forms a basis of $\mathcal{H}^\om$.
Hereafter we use $\nbase{x}$ to denote $\{\alpha:x\in U_\alpha\}$.

Now, assume that $\pt{y}{\yy}\reduce \pt{x}{\mathcal{H}^\om}$.
We will show that $\pt{y}{\yy}$ is $\widetilde{\ast}$-nearly computable.
By Observation \ref{obs:network-e-basis}, there is $J\leq_e\nbase{x}$ such that $\{N_e:e\in J\}$ forms a strict network at $y$.
Let $\Psi$ witness that $J\leq_e\nbase{x}$.
Since $(U_{\alpha})$ forms a basis, one can assume that $\Psi$ is a c.e.\ set of pairs of indices and singletons, that is,
\[e\in J\iff(\exists \alpha)\;[\alpha\in\nbase{x}\mbox{ and }\langle e,\alpha\rangle\in\Psi].\]

Consider the following three cases:

\medskip
\noindent
{\bf Case $1$.}
There is $\ell\in\om$ such that for any $d$, $e$, $\alpha$, and $\beta$,
\[U_{\alpha}\cap[x\upto\ell]\not=\emptyset,\;U_{\beta}\cap[x\upto\ell]\not=\emptyset\mbox{, and }\langle d,\alpha\rangle,\langle e,\beta\rangle\in\Psi\;\Rightarrow\;\overline{N_{d}}\cap\overline{N_e}\not=\emptyset.\]

Then let $p$ be a computable sequence such that $p(n)=e+1$ for some $n$ if and only if there is $\alpha$ such that $\langle e,\alpha\rangle\in\Psi$ and $U_\alpha\cap[x\upto\ell]\not=\emptyset$.
Then, it is easy to check that $p$ is a $\widetilde{\dnn}$-name of $y$.

\medskip
\noindent
{\bf Case $2$.}
For any $\ell\in\om$, there are $d$, $e$, $\alpha$, and $\beta$ such that
\[U_{\alpha}\cap U_{\beta}\cap[x\upto\ell]\not=\emptyset,\;\langle d,\alpha\rangle,\langle e,\beta\rangle\in\Psi\mbox{, and }{N_{d}}\cap{N_e}=\emptyset.\]

In this case, inconsistent $\Psi$-computations are dense along $x$, that is, consider
\[E=\{\langle\alpha,\beta\rangle:(\exists d,e)\;[\langle d,\alpha\rangle,\langle e,\beta\rangle\in\Psi\mbox{, and }{N_{d}}\cap{N_e}=\emptyset]\},\]
and then define $V_E=\bigcup\{U_{\alpha}\cap U_{\beta}:\langle\alpha,\beta\rangle\in E\}$.
Then, $\Psi$ is undefined on $V_E$, that is, for any $z\in V_E$, $\Psi(\nbase{z})$ is undefined.
Note that each $U_\alpha$ is clopen with respect to the standard Baire topology on $\om^\om$.
Therefore, since the disjointness diagram of $\nn$ is $C$-c.e., $V_E$ is $C$-c.e.~open and dense along $x$ with respect to the standard Baire topology on $\om^\om$.
Since $x$ is $1$-$C$-generic, we have $x\in V_E$.
Therefore, $\Psi(\nbase{x})$ is undefined.

\medskip
\noindent
{\bf Case $3$.}
Otherwise, let $\ell$ be a witness of the failure of Case $2$.
Then, since Case $1$ fails, there are $d$, $e$, $\alpha$, and $\beta$ such that
\[U_{\alpha}\cap[x\upto\ell]\not=\emptyset,\;U_{\beta}\cap[x\upto\ell]\not=\emptyset,\;\langle d,\alpha\rangle,\langle e,\beta\rangle\in\Psi\mbox{, and }\overline{N_{d}}\cap\overline{N_e}=\emptyset.\]
That is, there are splitting $\Psi$-computations above $x\upto\ell$ in a strong sense.
Consider
\[D=\{\langle\alpha,\beta\rangle:(\exists d,e)\;[\langle d,\alpha\rangle,\langle e,\beta\rangle\in\Psi\mbox{, and }\overline{N_{d}}\cap\overline{N_e}=\emptyset]\}.\]

Then, the set $D$ is $C$-c.e., since the strong disjointness diagram is $C$-c.e.\ by our assumption.
Let $\Lambda$ be a $C$-c.e.\ witness for being nowhere $T_{2.5}$.
Consider the following $\om^\om$-clopen set:
\begin{align*}
Q_{\alpha,\beta}=\{z\in \om^\om:(\exists\sigma\prec z)&\;\ell\leq |\sigma|\leq\max\{\ell,|\alpha|,|\beta|\},\;U_{\alpha}\cap[\sigma]\not=\emptyset,\;U_{\beta}\cap[\sigma]\not=\emptyset,\\
\mbox{and }&(\forall n)[|\sigma|\leq n<\max\{|\alpha|,|\beta|\})\;z(n)\in\Lambda_{\alpha(n),\beta(n)}\},
\end{align*}
where let $H_{\alpha(n)}$ be an index of the whole space $\om$ whenever $\alpha(n)$ is undefined.
Note that $Q_{\alpha,\beta}\subseteq\overline{U_{\alpha}}\cap\overline{U_{\beta}}$ since $\Lambda_{\alpha(n),\beta(n)}\subseteq\overline{H_{\alpha(n)}}\cap\overline{H_{\beta(n)}}$.
Define $Q_D=\bigcup\{Q_{\alpha,\beta}:\langle\alpha,\beta\rangle\in D\}$.
Then, $Q_D$ is $C$-c.e.~open, and dense along $x$ with respect to the standard Baire topology on $\om^\om$.
To see this, for any $m$, since Case 1 fails, there is $\langle\alpha,\beta\rangle\in D$ such that $U_\alpha\cap[x\upto m]\not=\emptyset$ and $U_\beta\cap[x\upto m]\not=\emptyset$.
By our assumption, we can always choose $z(k)\in H_{\alpha(k)}\cap H_{\beta(k)}$ for any $k\geq m$, and thus we can get some $z\in Q_{\alpha,\beta}$ extending $x\upto m$.
Therefore, by $1$-$C$-genericity of $x$, we have $x\in Q_D$.

We claim that $\Psi$ is undefined on $Q_D\cap[x\upto\ell]$.
Otherwise, $\Psi(\nbase{z})$ is defined for some $z\in Q_D\cap[x\upto\ell]$.
Since $z\in Q_D\cap[x\upto\ell]$, there is $\langle\alpha,\beta\rangle\in D$ such that $z\in Q_{\alpha,\beta}\subseteq\overline{U_{\alpha}}\cap\overline{U_{\beta}}$.
Let $\langle d,e\rangle$ be a pair witnessing $\langle\alpha,\beta\rangle\in D$, that is, $\langle d,\alpha\rangle,\langle e,\beta\rangle\in\Psi$, and $\overline{N_d}\cap\overline{N_e}=\emptyset$.
Clearly $\Psi(\nbase{z})\in\mathcal{Y}\setminus\overline{N_d}$ or $\Psi(\nbase{z})\in\mathcal{Y}\setminus\overline{N_e}$.
Without loss of generality, we may assume that $\Psi(\nbase{z})\in\mathcal{Y}\setminus\overline{N_d}$.
Then, there is $\langle c,\gamma\rangle\in\Psi$ such that
\[z\in U_{\gamma}\mbox{, and }\Psi(\nbase{z})\in N_c\subseteq\mathcal{Y}\setminus\overline{N_d}.\]
In particular, we have $N_c\cap N_d=\emptyset$.
Since $U_{\gamma}\cap[z\upto\ell]$ is an open neighborhood of $z$ and $z\in\overline{U_{\alpha}}$, $U_{\alpha}\cap U_{\gamma}\cap[z\upto\ell]$ is nonempty.
Since $x\upto\ell=z\upto\ell$, we conclude that
\[U_{\alpha}\cap U_{\gamma}\cap[x\upto\ell]\not=\emptyset,\;\langle d,\alpha\rangle,\langle c,\gamma\rangle\in\Psi\mbox{, and }N_d\cap N_c=\emptyset.\]

This contradicts our choice of $\ell$.
Consequently, $\Psi(\nbase{x})$ is undefined.
\end{proof}

\thmproof{thm:T-2-degree-not-T-25}{
Let $\xx$ be a topological space with a countable cs-network $\nn$.
Let $C$ be an oracle such that the strong disjointness diagram is $C$-c.e.
By Lemma \ref{lem:relprime-main}, for any $1$-$C$-generic point $x\in\om^\om$, $\pt{x}{(\mathbb{N}_{\rm rp})^\om}$ is $\widetilde{\ast}$-nearly $\xx$-quasi-minimal, that is, if $\pt{x}{\xx}\reduce \pt{f}{(\om^\om)_{\rm co}}$ then $\pt{x}{\xx}$ is $\widetilde{\ast}$-nearly computable.
By Observation \ref{obs:T-25network}, if $\xx$ is a $T_{2.5}$-space, then only countably many points in $\xx$ can be $\widetilde{\ast}$-nearly computable.
However, there are uncountably many points which are $1$-$C$-generic.
Thus, one can choose such a point which is not $\eqreduce$-equivalent to any $\widetilde{\ast}$-nearly computable points in $\xx$.}

\thmproof{thm:T-2-degree-NNN-quasiminimal}{
The canonical network $\nn$ of $\mathbb{N}^{\mathbb{N}^\mathbb{N}}$ has a computable strong disjointness diagram.
Since $\mathcal{N}$ is regular-like (see Example \ref{exa:Kleene-Kreisel}), by Observation \ref{obs:T-25network}, ${\rm id}\colon(\mathbb{N}^{\mathbb{N}^\mathbb{N}},\widetilde{\dnn})\to(\mathbb{N}^{\mathbb{N}^\mathbb{N}},\overline{\dnn})$ is computable.
Moreover, as seen in Example \ref{exa:Kleene-Kreisel}, ${\rm id}\colon(\mathbb{N}^{\mathbb{N}^\mathbb{N}},\overline{\dnn})\to(\mathbb{N}^{\mathbb{N}^\mathbb{N}},{\dnn})$ is computable, and so is ${\rm id}\colon(\mathbb{N}^{\mathbb{N}^\mathbb{N}},\widetilde{\dnn})\to(\mathbb{N}^{\mathbb{N}^\mathbb{N}},{\dnn})$ is computable.
Hence, $\widetilde{\ast}$-near $\mathbb{N}^{\mathbb{N}^\mathbb{N}}$-quasi-minimality is equivalent to $\mathbb{N}^{\mathbb{N}^\mathbb{N}}$-quasi-minimality by definition.
Therefore, by Lemma \ref{lem:relprime-main}, for any $1$-$C$-generic point $x$, $\pt{x}{(\mathbb{N}_{\rm rp})^\om}$ is $\mathbb{N}^{\mathbb{N}^\mathbb{N}}$-quasi-minimal.
}

\subsection{$T_{2.5}$-degrees which are not $T_3$}


We will show that if an admissibly represented space $\xx=(X,\nn)$ has an computably equivalent regular-like cs-network, then the Gandy-Harrington space has no point of $\xx$-degree.
To prove Theorems \ref{thm:Gandy-Harrington-closed-neighborhood} and \ref{thm:GH-not-NNN} mentioned in Section \ref{sec:4-4}, we need the following lemma.


\begin{lemma}\label{lem:Gandy-Harrington-closed-neighborhood}
Let $\xx$ be a topological space with a countable cs-network $\nn$.
If $\nn$ has a $\Sigma^1_1$ disjointness diagram, then for any $x\in(\om^\om)_{GH}$ and $z\in\xx$,
\[\pt{z}{\dnn}\reduce \pt{x}{(\om^\om)_{GH}}\;\Longrightarrow\;\pt{x}{(\om^\om)_{GH}}\not\reduce \pt{z}{\overline{\dnn}}.\]
\end{lemma}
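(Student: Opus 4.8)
The plan is to adapt the near-quasi-minimality machinery from Lemma~\ref{lem:cocylinder-quasiminimal} and Lemma~\ref{lem:relprime-main} to the Gandy--Harrington setting, exploiting the fact that the building blocks $GH_e$ of the Gandy--Harrington topology are themselves $\Sigma^1_1$, so that the ``splitting'' arguments used there can be run with $\Sigma^1_1$ (rather than c.e.) side conditions. Concretely: suppose toward a contradiction that both $\pt{z}{\dnn}\reduce\pt{x}{(\om^\om)_{GH}}$ and $\pt{x}{(\om^\om)_{GH}}\reduce\pt{z}{\overline{\dnn}}$. By Observation~\ref{obs:network-e-basis}, from the first reduction there is $J\leq_e\nbaseb{GH}{x}=\{e:x\in GH_e\}$ with $\{N_e:e\in J\}$ a strict network at $z$, witnessed by some c.e.\ operator $\Psi$ whose entries are pairs $\langle e,D\rangle$ with $D$ a finite set of $GH$-indices. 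The key structural point is that ``$D\subseteq\nbaseb{GH}{x}$'' means $x\in\bigcap_{d\in D}GH_d$, a $\Sigma^1_1$ condition, so the set of $x$ for which two enumerated network pieces $N_d,N_e$ are both forced while $N_d\cap N_e=\emptyset$ is a $\Sigma^1_1$ subset of $(\om^\om)_{GH}$ — and hence a $\Sigma^1_1$-in-the-Cantor-sense, i.e.\ $\Sigma^1_1$, open subset in the Gandy--Harrington topology.

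The heart of the argument is then a trichotomy in the style of Lemma~\ref{lem:relprime-main}, but with closures replaced appropriately to match the target representation $\overline{\dnn}$ on $z$. Since we are reducing $x$ \emph{to} $z$ in the closure representation, a name of $z$ enumerates network pieces $N_m$ with $z\in\overline{N_{m}}$ but not necessarily $z\in N_m$. First I would show: either (Case 1) there is a ``stem'' $GH$-neighborhood $U\ni x$ inside which all enumerated pairs $\langle d,\alpha\rangle,\langle e,\beta\rangle$ with $U_\alpha\cap U\ne\emptyset$, $U_\beta\cap U\ne\emptyset$ satisfy $N_d\cap N_e\ne\emptyset$ (well, one wants $z\in\overline{N_d}\cap\overline{N_e}$, but the disjointness diagram being $\Sigma^1_1$ lets us decide this) — in which case a computable enumeration reconstructs an $\overline{\dnn}$-name (or the appropriate closure-name) of $z$, so that $z$ would be ``nearly computable'' and in particular $\pt{z}{\overline{\dnn}}$ would not help compute $x$, contradicting $\pt{x}{(\om^\om)_{GH}}\reduce\pt{z}{\overline{\dnn}}$ unless $x$ is itself computable enough, which it is not for a generic/appropriately chosen Gandy--Harrington point; or (Case 2 / 3) inconsistent-computation open sets, respectively strong-splitting open sets, are dense along $x$ in the Gandy--Harrington topology. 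Because these ``bad'' open sets are $\Sigma^1_1$ and the Gandy--Harrington topology is generated by $\Sigma^1_1$ sets, density along $x$ plus an appropriate genericity (or simply: $x\in GH_e$ whenever a $\Sigma^1_1$ set is dense along $x$, using that $x$ lies in every nonempty $\Sigma^1_1$ set containing it that is forced — here one uses the Gandy basis theorem / the fact that points of $(\om^\om)_{GH}$ meet dense $\Sigma^1_1$ sets) forces $x$ into the bad set, making $\Psi(\nbaseb{GH}{x})$ either undefined or inconsistent — contradicting that it names $z$.

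To finish, I would combine this with Proposition~\ref{prop:gandy-harrington}: if $\pt{x}{(\om^\om)_{GH}}\reduce\pt{z}{\overline{\dnn}}$ and $\pt{z}{\overline{\dnn}}$ is computable (which Cases 1 delivers for all $z$ reducible to $x$), then $\nbaseb{GH}{x}$ is c.e., i.e.\ $x$ is $GH$-computable; but Proposition~\ref{prop:gandy-harrington} shows $\nbaseb{GH}{x}\geq_e\nbaseb{\lambda}{x^{(\alpha)}}$ for all $x$-computable $\alpha$, so a $GH$-computable $x$ would compute all its own hyperarithmetic jumps — impossible except in trivial cases, and in particular one simply chooses $x$ with $\om_1^{{\rm CK},x}>\om_1^{{\rm CK}}$ (e.g.\ any $x$ of non-hyperarithmetic $GH$-degree) to get a clean contradiction; so Cases~2 and 3 are the only live options and they directly contradict $\Psi$ naming $z$. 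The main obstacle I anticipate is getting the ``Case~1 $\Rightarrow$ contradiction'' step clean: one must verify that when the stem exists, the computable enumeration really yields a $\overline{\dnn}$-name of $z$ (checking $z\in\overline{N_e}$ for each enumerated $e$, using the disjointness diagram and the fact that $\{N_e:e\in J\}$ is already a genuine network at $z$), and then argue that a point $x$ which is reducible to a computable $\overline{\dnn}$-point must have c.e.\ $\nbaseb{GH}{x}$ — this is where the asymmetry between $\reduce$ for $GH$ (enumeration of a $\Sigma^1_1$ set) and for $\overline{\dnn}$ needs care. The second delicate point is the precise form of density/genericity needed for $x\in(\om^\om)_{GH}$: one wants every point of the Gandy--Harrington space to meet every $\Sigma^1_1$ set that is dense in the $GH$-topology along it, which should follow from the basis-theorem structure of $(\om^\om)_{GH}$, but the statement must be pinned down carefully so that Cases~2 and~3 genuinely force $x$ into the bad $\Sigma^1_1$ open set.
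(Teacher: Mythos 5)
There is a genuine gap, and it is located exactly where you flag your ``second delicate point.'' The lemma must hold for \emph{every} $x\in\om^\om$ (it is later invoked for arbitrary points, e.g.\ in Theorem \ref{thm:GH-not-NNN}), but your Cases~2 and~3 only yield a contradiction if $x$ lands inside the ``bad'' $\Sigma^1_1$ open sets, and the genericity principle you hope for --- that every point of $(\om^\om)_{GH}$ meets every $\Sigma^1_1$ set that is GH-dense along it --- is false. For instance, $S=\{y:\om_1^{{\rm CK},y}=\om_1^{\rm CK}\}$ is a $\Sigma^1_1$ set which is dense in the Gandy--Harrington topology (this is the Gandy basis theorem), yet it omits every $x$ with $\om_1^{{\rm CK},x}>\om_1^{\rm CK}$. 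Unlike Lemmas \ref{lem:cocylinder-quasiminimal} and \ref{lem:relprime-main}, where the hypotheses ``not $C'$-computably dominated'' or ``$1$-$C$-generic'' buy exactly the density-forcing you need in the Baire topology, here there is no hypothesis on $x$, so the trichotomy cannot be closed. Your Case~1 is also too strong to expect: there is no reason a single GH-neighborhood (``stem'') of $x$ should make all enumerated network pieces pairwise non-disjoint, and the lemma does not need $z$ to be nearly computable at all.

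The paper's proof avoids genericity entirely and replaces your case analysis by a descriptive-complexity argument. From $\Psi$ witnessing $J\leq_e G_x$ (where $G_x=\nbaseb{GH}{x}$ and $\{N_e:e\in J\}$ is a strict network at $z$), one defines
\[L=\{n:(\forall\langle m,D\rangle\in\Psi)\,[D\subseteq G_x\rightarrow N_m\cap N_n\not=\emptyset]\},\]
which is $\Pi^1_1(x)$ because ${\rm Disj}_\nn$ is $\Sigma^1_1$ and $D\subseteq G_x$ is $\Sigma^1_1(x)$. One checks $J\subseteq L$ and, using strictness of the network at $z$, that $z\in\overline{N_n}$ for every $n\in L$; hence \emph{any} enumeration of $L$ is an $\overline{\dnn}$-name of $z$ --- no stem, no computability of $L$ is claimed. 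If now $\pt{x}{(\om^\om)_{GH}}\reduce\pt{z}{\overline{\dnn}}$, then $G_x\leq_e L$, which gives a $\Pi^1_1(x)$ definition of $G_x$, contradicting that $G_x$ is $\Sigma^1_1(x)$-complete. This complexity clash (complete $\Sigma^1_1(x)$ versus $\Pi^1_1(x)$) is what does the work for arbitrary $x$, and it is the ingredient your proposal is missing; your concluding appeal to Proposition \ref{prop:gandy-harrington} only handles the (unrealistic) situation where $z$'s closure-name is outright c.e.
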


\begin{proof}
For $x\in\om^\om$, consider $G_x:=\nbaseb{GH}{x}=\{e:x\in GH_e\}$, where recall that $GH_e$ is the $e$-th $\Sigma^1_1$ set in $\om^\om$.
Clearly, $G_x$ is a $\Sigma^1_1(x)$ subset of $\omega$.
Suppose that $\pt{z}{\dnn}\leq_T\pt{x}{(\om^\om)_{GH}}$ for $z\in\xx$, and $\nn$ is a countable cs-network for $\xx$ such that ${\rm Disj}_\nn$ is $\Sigma^1_1$.
By Observation \ref{obs:network-e-basis}, there is $J\leq_e G_x$ such that $\{N_e:e\in J\}$ forms a strict network at $z$.
Let $\Psi$ witness $J\leq_eG_x$, that is, $e\in J$ iff there is a finite set $D\subseteq G_x$ such that $\langle e,D\rangle\in\Psi$.
Then consider
\[L=\{n\in\om:(\forall \langle m,D\rangle \in\Psi)\;D\subseteq G_x\;\rightarrow\;N_m\cap N_n\not=\emptyset\}.\]

Note that $L$ is a $\Pi^1_1(x)$ subset of $\omega$ since ${\rm Disj}_\nn$ is $\Sigma^1_1$.
One can also see that $J\subseteq L$, since $n\in J$ implies that $z\in N_n$, and moreover, if $\langle m,D\rangle\in\Psi$ and $D\subseteq G_x$, then $m\in J$, and therefore, $z\in N_m\cap N_n$.
This implies that $\{N_e:e\in L\}$ forms a network at $z$.
We claim that $z\in\overline{N}_n$ for any $n\in L$.
This is because, if $z\not\in\overline{N}_n$ then there is an open set $U\subseteq\xx$ such that $z\in U$ and $U\cap N_n=\emptyset$.
By our choice of $\Psi$, there is $\langle e,D\rangle\in\Psi$ such that $D\subseteq G_x$ and $z\in N_e\subseteq U$.
Since $N_e\cap N_n=\emptyset$, we have $n\not\in L$.
This verifies the claim, and in particular, every enumeration of $L$ gives an $\overline{\dnn}$-name of $z$.

Suppose that $\pt{x}{(\om^\om)_{GH}}\reduce \pt{z}{\overline{\dnn}}$.
Then, in particular, $G_x$ is enumeration reducible to $L$, that is, there is a c.e.~set $\Gamma$ such that
\[e\in G_x\;\iff\;(\exists D\mbox{ finite})\;[(e,D)\in\Gamma\mbox{ and }D\subseteq L].\]

Since $L$ is $\Pi^1_1(x)$, this gives a $\Pi^1_1(x)$ definition of $G_x$.
However, $G_x$ is clearly a complete $\Sigma^1_1(x)$ subset of $\omega$, which implies a contradiction.
Consequently, $\pt{x}{(\om^\om)_{GH}}\not\reduce \pt{z}{\overline{\dnn}}$.
\end{proof}

\thmproof{thm:Gandy-Harrington-closed-neighborhood}{
Let $\xx=(X,\nn)$ be a regular Hausdorff space with a countable cs-network.
By Observation \ref{obs:regular-like-network}, $\nn$ is regular-like.
By Theorem \ref{thm:regular-like-network}, $\xx$ has a countable cs-network $\mm$ such that ${\rm id}:(\xx,\overline{\dmm})\to (\xx,\dmm)$ is continuous; hence, computable relative to some oracle $C_0$.
As mentioned in Section \ref{sec:intro-admissible-representation}, cs-networks induce admissible representations, that is, $\delta_\mm$ and $\delta_\nn$ are both $\leq$-maximal among continuous representations of $X$, and thus $\mm$ and $\nn$ are equivalent; hence, computably equivalent relative to some oracle $C_1$.
Moreover, ${\rm Disj}_\mm$ is $\Sigma^1_1$ relative to some oracle $C_2$.
We now put $C=C_0\oplus C_1\oplus C_2$.

Choose $x\in\om^\om$ such that $C\leq_Tx$.
Then, we have $\pt{C}{2^\om}\reduce \pt{x}{(\om^\om)_{GH}}$ by Proposition \ref{prop:gandy-harrington}.
Thus, the condition $\pt{z}{\dnn}\reduce \pt{x}{(\om^\om)_{GH}}$ is equivalent to saying that $\pt{z}{\dmm}\reduce \pt{x}{(\om^\om)_{GH}}$ since $C_1\leq_TC$.
By relativizing the proof of Lemma \ref{lem:Gandy-Harrington-closed-neighborhood}, since $C_2\leq_TC$ and we now have $\Sigma^1_1(x\oplus C)=\Sigma^1_1(x)$ and $\Pi^1_1(x\oplus C)=\Pi^1_1(x)$, we get the following.
\[\pt{z}{\dnn}\reduce \pt{x}{(\om^\om)_{GH}}\;\Longrightarrow\;\pt{x}{(\om^\om)_{GH}}\not\reduce (\pt{z}{\overline{\dmm}})\oplus C.\]

We now assume that $\pt{z}{\dnn}\reduce \pt{x}{(\om^\om)_{GH}}$.
Since $C_0\leq_TC$, we have $\pt{z}{\dmm}\reduce (\pt{z}{\overline{\dmm}})\oplus C$.
Combining this with the above implication, we get $\pt{x}{(\om^\om)_{GH}}\not\reduce (\pt{z}{{\dmm}})\oplus C$.
Since $C_1\leq_TC$, we have $\pt{z}{\dnn}\reduce (\pt{z}{{\dmm}})\oplus C$, and thus $\pt{x}{(\om^\om)_{GH}}\not\reduce \pt{z}{{\dnn}}$.
Hence, there is no $z\in\xx$ such that $\pt{x}{(\om^\om)_{GH}}\equiv_M\pt{z}{{\dnn}}$.
}

\thmproof{thm:GH-not-NNN}{
The canonical network $\nn$ of $\mathbb{N}^{\mathbb{N}^\mathbb{N}}$ has a computable disjointness diagram.
Moreover, as seen in Example \ref{exa:Kleene-Kreisel}, ${\rm id}\colon(\mathbb{N}^{\mathbb{N}^\mathbb{N}},\overline{\dnn})\to(\mathbb{N}^{\mathbb{N}^\mathbb{N}},\delta_\mathcal{N})$ is computable.
Therefore, by Lemma \ref{lem:Gandy-Harrington-closed-neighborhood}, for any $x\in(\om^\om)_{GH}$ and $z\in\mathbb{N}^{\mathbb{N}^\mathbb{N}}$,
\[\pt{z}{\mathbb{N}^{\mathbb{N}^\mathbb{N}}}\reduce \pt{x}{(\om^\om)_{GH}}\;\Longrightarrow\;\pt{x}{(\om^\om)_{GH}}\not\reduce \pt{z}{\mathbb{N}^{\mathbb{N}^\mathbb{N}}}.\]

This shows that there are no $x\in(\om^\om)_{GH}$ and $z\in\mathbb{N}^{\mathbb{N}^\mathbb{N}}$ such that $\pt{x}{(\om^\om)_{GH}}\equiv_M\pt{z}{\mathbb{N}^{\mathbb{N}^\mathbb{N}}}$.
Hence, the Gandy-Harrington degrees and the $\mathbb{N}^{\mathbb{N}^\mathbb{N}}$-degrees have no common element.
}

For an $\om$-parametrized pointclass $\Gamma$, the {\em $\Gamma$-Gandy-Harrington topology} is the topology $\tau_\Gamma$ on $\om^\om$ generated by the subbasis consisting of all $\Gamma$ subsets of $\om^\om$.
By $(\om^\om)_{GH(n)}$, we denote $\om^\om$ endowed with the $\Sigma^1_n$-Gandy-Harrington topology.
We show that there is a hierarchy of degree structures of Gandy-Harrington topologies.

\thmproof{thm:Gandy-Harrington-hierarchy}{It is easy to see that the disjointness diagram ${\rm Disj}_{GH(n)}=\{\langle d,e\rangle:S^n_e\cap S^n_d=\emptyset\}$ is $\Pi^1_n$, where $S^n_e$ is the $e$-th $\Sigma^1_n$ set in $\om^\om$, since $\langle d,e\rangle\in{\rm Disj}_{GH(n)}$ iff $x\not\in S^n_e\cap S^n_d$ for all $x\in\om^\om$.
Assume that $\pt{z}{GH(n)}\reduce\pt{x}{GH(n+1)}$, or equivalently $G^n_z\leq_eG^{n+1}_x$, where $G^m_y=\nbaseb{GH(m)}{y}=\{e\in\om:y\in S^m_e\}$.
Then we define $L$ as in the proof of Lemma \ref{lem:Gandy-Harrington-closed-neighborhood}.
Then $L$ is a $\Sigma^1_n(x)$ subset of $\om$.

Note that $z\in{\rm cl}_\beta(S^n_e)$ for any $e\in L$, where $\beta$ is the standard Baire topology on $\om^\om$.
Otherwise, there is an open set $\beta_j$ such that $z\in \beta_j$ and $\beta_j\cap S^n_e=\emptyset$.
Since $\beta_j$ is also open in the $\Sigma^1_n$-Gandy-Harrington topology, there is $k$ such that $S^n_k=\beta_e$.
Hence $k\in G^n_z$, and thus there is $D\subseteq G^{n+1}_z$ such that $\langle k,D\rangle\in\Psi$.
Since $S^n_e\cap S^n_k=\emptyset$, we have $k\not\in L$.
Let $V_e=\{d\in\om:S^n_e\cap B_d=\emptyset\}$, where $B_d$ is the $d$-th basic open set w.r.t.\ the standard Baire topology on $\om^\om$.
Then $V_e$ is a $\Pi^1_n$ subset of $\om$.
Note that ${\rm cl}_\beta({S^n_e})=\om^\om\setminus\bigcup_{d\in V_e}B_d$.
As in the proof of Lemma \ref{lem:Gandy-Harrington-closed-neighborhood}, one can see that $G^n_z\subseteq L$, and hence $\{z\}=\bigcap_{e\in L}{\rm cl}_{\beta}(S^n_e)$ since $\om^\om$ is Hausdorff.
Note that $\{z\}=\om^\om\setminus\bigcup\{B_d:e\in L\mbox{ and }d\in V_e\}$.

This shows that $\{z\}$ is a $\Pi^0_1$ singleton relative to the $\Sigma^1_n(L)$-complete set, $G^n_x$ say.
Therefore, $z$ is hyperarithmetic relative to $G^n_x$ (see Sacks \cite[Theorem I.1.6]{SacksBook}), and thus, $G^n_z$ is $\Sigma^1_n$ relative to $G^n_x$.
In particular, $G^n_z$ is $\Delta^1_{n+1}$ relative to $x$ since $\Delta^1_{n+1}$-reducibility is transitive (see Rogers \cite[Theorem 16.XXXIV]{RogersBook}).
Consequently, we obtain $G^{n+1}_x\not\leq_eG^n_z$ since $G^{n+1}_x$ is a complete $\Sigma^1_{n+1}$ set relative to $x$.}

\section{Open Questions}

Here we list the current open problems.

\subsubsection*{Major Questions}

We have shown that, in a certain sense, there are a $T_1$-quasi-minimal $e$-degree (Theorem \ref{thm:countable-T_1-quasiminimal}), and a $T_2$-quasi-minimal $T_1$-degree (Theorem \ref{thm:T_2-quasiminimal}).
Thus, whether there exist a $T_{2.5}$-quasi-minimal $T_2$-degree is the one of the most important open problems:
\index{quasi-minimal!$T_{2.5}$}
\begin{question}
Does there exist a represented Hausdorff space $\mathcal{X}$ such that given $T_{2.5}$ space $\mathcal{Y}$, there is $x\in\mathcal{X}$ which is $\mathcal{Y}$-quasi-minimal?
\end{question}

Currently we do not know if we can separate $T_{2.5}$ degrees and submetrizable degrees.
Hence, the following problem is also important:

\begin{question}\label{question-T25}
Does there exist a represented $T_{2.5}$-space $\mathcal{X}$ such that, given a submetrizable space $\mathcal{Y}$, there is $x\in\mathcal{X}$ which is not of $\mathcal{Y}$-degree?
\end{question}

We are also interested in whether we can show separation results in the category of effective quasi-Polish spaces.
For instance, co-$d$-CEA, chained (Arens) co-$d$-CEA, doubled co-$d$-CEA, telophase, and semirecursive $e$-degrees are realized as the degrees of points in effective quasi-Polish spaces.
\index{Degrees!submetrizable}
\begin{question}
Given a submetrizable space $\mathcal{Y}$, does there exist a Arens co-$d$-CEA (or Roy halfgraph-above) degree which is not a $\mathcal{Y}$-degree?
\end{question}

Note that the affirmative answer to the above question gives a quasi-Polish solution to Question \ref{question-T25}.
Similarly, we have found a non-$T_{2.5}$-degree in a $T_2$-space, namely, the product Golomb space $\mathbb{N}_{\rm rp}^\om$ (Theorem \ref{thm:T-2-degree-not-T-25}); however this space is not quasi-Polish.
We know that there is a quasi-Polish space Hausdorff space which is not $T_{2.5}$, e.g.\ the double origin space.
Therefore, one can ask the following:
\index{Degrees!doubled co-d-CEA}
\begin{question}
Given a $T_{2.5}$ space $\mathcal{Y}$, does there exist a doubled co-$d$-CEA degree which is not a $\mathcal{Y}$-degree?
\end{question}

Another big open problem is concerning graph-cototal degrees was raised by Joseph Miller:
\index{Degrees!continuous}
\index{Degrees!graph-cototal}
\begin{question}
Does there exist a continuous degree which is not graph-cototal?
\end{question}

With our framework, this is equivalent to asking whether there is a $\sigma$-embedding of the Hilbert cube $[0,1]^\om$ into the product cofinite space $(\om_{\rm cof})^\om$. Note that the continuous functions into $\om_{\rm cof}$ correspond to countable partitions into closed sets. A classic result by Sierpi\'nski shows that connected compact Polish space do not admit non-trivial countable partitions into closed sets (cf \cite[Theorem 6.1.27]{engelking}. In particular, there is no \emph{embedding} of $[0,1]^\om$ into $(\om_{\rm cof})^\om$. There exist, however, infinite dimensional spaces without any connected compact Polish subspaces (these are called punctiform), so an answer to the question is not immediate. A piece of the puzzle could that the result that the Hilbert cube cannot be decomposed into countably-many hereditarily disconnected spaces \cite{banakh2}.

To solve a question, one may examine the behavior of the co-spectrum of a space.
For instance, by an argument using the notion of co-spectrum, we have shown that there is a continuous degree which is neither telograph-cototal nor cylinder-cototal (by Proposition \ref{thm:continuous-cospec}).
However, we do not know even the following:
\index{Degrees!$2$-cylinder-cototal}
\begin{question}
Does there exist a continuous degree which is not $2$-cylinder-cototal?
\end{question}

In general, we are also interested in analyzing the behavior of the cospectrum of a given space.
For instance, it is important to ask the following:
\index{Degrees!graph-cototal}
\begin{question}
Is every countable Turing ideal realized as the cospectrum of a graph-cototal $e$-degree?
\end{question}

We next consider cototal $e$-degrees.
Recall from Proposition \ref{cototal-equal-twin} that a space is cototal if and only if it is computably $G_\delta$.
Since every computably $G_\delta$ space is effectively $T_1$ by Observation \ref{obs:twin-second-countable}, in particular, every point in a cototal space has a $T_1$-degree.
Then, can we separate cototal degrees and $T_1$-degrees?
\index{Degrees!cototal}
\begin{question}
Does there exist a point in an effective quasi-Polish $T_1$-space which has no cototal $e$-degree?
\end{question}

Recall that our universal (in the degree-theoretic sense) computably $G_\delta$ space $\mathcal{A}^{\rm co}_{\rm max}$, the maximal antichain space, is not quasi-Polish (Proposition \ref{prop:non-quasi-Polish-examples}).
One of the most important questions on cototal degrees is whether a universal computably $G_\delta$ quasi-Polish space exists:
\index{universal $G_\delta$ space}
\begin{question}
Does there exist a computably $G_\delta$, quasi-Polish, space which contains all cototal $e$-degrees?
\end{question}

In this article, we have also discussed $\N^{\N^\N}$-quasi-minimality.
However, currently we do not know whether quasi-minimality is different from $\N^{\N^\N}$-quasi-minimality.
\index{quasi-minimal!$\N^{\N^\N}$}
\begin{question}
Does there exist a quasi-minimal $e$-degree which is not $\N^{\N^\N}$-quasi-minimal?
\end{question}

Mariya Soskova raised the following question (see Subsection \ref{subsubsec:mariya} for context):\index{$\mathcal{K}$-pair}
\begin{question}
\label{question:kpairs}
Is there a topological characterization of the halves of non-trivial $\mathcal{K}$-pairs?
\end{question}

\subsubsection*{Minor Questions}

We also list some minor questions.
Recall that every telograph-cototal (double-origin) $e$-degree is graph-cototal (Propositions \ref{prop:telophase-is-graph-cototal} and  \ref{prop:doubleorigin-is-cototal}).
There is a graph-cototal (indeed cylinder-cototal) $e$-degree which is neither telograph-cototal nor doubled co-$d$-CEA (by Theorem \ref{telograph-quasiminimal2}).
Can every telograph-cototal (doubled co-$d$-CEA) $e$-degree be embedded into some level of the hierarchy of graph-cototal $e$-degrees?
\index{Degrees!telograph-cototal}
\index{Degrees!$n$-cylinder-cototal}
\begin{question}
Is every telograph-cototal $e$-degree $n$-cylinder-cototal for some $n\in\om$?
\end{question}

Recall from Proposition \ref{prop:non-cylinder-cototal} that there is a co-$d$-CEA $e$-degree which is not cylinder-cototal.
The following question is also open.
\index{Degrees!co-d-CEA}
\begin{question}
Does there exist a co-$d$-CEA $e$-degree which is not $2$-cylinder-cototal?
\end{question}

We also do not know the relationship among variations of co-$d$-CEA degrees.

\begin{question}
What is the relationship among doubled co-$d$-CEA degrees, Arens co-$d$-CEA degrees, and Roy halfgraph-above degrees?
\end{question}

Recall that every $3$-c.e.~$e$-degree is telograph-cototal while there is a $\Sigma^0_2$ $e$-degree which is not telograph-cototal (by Theorem \ref{telograph-quasiminimal1}).
\index{Degrees!telograph-cototal}
\begin{question}
For any $n$, is every $n$-c.e.\ $e$-degree telograph-cototal?
\end{question}

There is also a problem related to left-totality.
\index{Degrees!cylinder-cototal}
\index{Degrees!$G_\delta$-left-total}
\begin{question}
Is there a cylinder-cototal $e$-degree which is not $G_\delta$-left-total?
\end{question}

\begin{ack}
The first author was partially supported by JSPS KAKENHI Grant 19K03602 and 15H03634, and JSPS Core-to-Core Program (A. Advanced Research Networks). The first author also would like to thank the Young Scholars Overseas Visit Program in Nagoya University for supporting his visit to Nanyang Technological University. The second author is partially supported by the grants MOE2015-T2-2-055 and RG131/17. The third author has received funding from the European Union's Horizon 2020 research and innovation programme under the Marie Sklodowska-Curie grant agreement No 731143, \emph{Computing with Infinite Data}. 

The authors would like to thank Matthew de Brecht for discussions on the definitions of $T_i$-degrees and $T_i$-quasi-minimal degrees in the early stage of this work. The authors also thank Joseph Miller for helpful discussions pertaining to Sections \ref{sec:G-delta-space} and \ref{sec:cototalenumeration}. Special thanks go to Steffen Lempp for fruitful discussions: The results in Sections \ref{sec:telophasetopo}, \ref{sec:doubleorigintopology} and \ref{sec:t1nott2quasiminimal} were obtained when Lempp visited us. We are grateful to Mariya Soskova, and also to Josiah Jacobsen-Grocott, for their constructive critique of an earlier version of the paper; and to Taras Banakh for answering a question on punctiform spaces.
\end{ack}

\printindex

\bibliographystyle{plain}
\bibliography{../nonmetrizable}

\begin{thebibliography}{10}

\bibitem{cototal}
Uri Andrews, Hristo~A. Ganchev, Rutger Kuyper, Steffen Lempp, Joseph~S. Miller,
  Alexandra~A. Soskova, and Mariya~I. Soskova.
\newblock On cototality and the skip operator in the enumeration degrees.
\newblock {\em Trans. Amer. Math. Soc.}
\newblock To appear.

\bibitem{miller4}
Uri Andrews, Gregory Igusa, Joseph~S. Miller, and Mariya~I. Soskova.
\newblock Characterizing the continuous degrees.
\newblock {\em Israel Journal of Mathematics}, 234:743--767, 2019.

\bibitem{ACK}
M.~M. Arslanov, S.~B. Cooper, and I.~Sh. Kalimullin.
\newblock Splitting properties of total enumeration degrees.
\newblock {\em Algebra Logika}, 42(1):3--25, 125, 2003.

\bibitem{AT61}
C.~E. Aull and W.~J. Thron.
\newblock Separation axioms between {$T_{0}$} and {$T_{1}$}.
\newblock {\em Nederl. Akad. Wetensch. Proc. Ser. A 65 = Indag. Math.},
  24:26--37, 1962.

\bibitem{banakh2}
Taras Banakh and Robert Cauty.
\newblock On universality of countable and weak products of sigma hereditarily
  disconnected spaces.
\newblock {\em Fundamenta Mathematicae}, 167, 2001.

\bibitem{BSS06}
Ingo Battenfeld, Matthias Schr\"oder, and Alex Simpson.
\newblock Compactly generated domain theory.
\newblock {\em Math. Structures Comput. Sci.}, 16(2):141--161, 2006.

\bibitem{BSS07}
Ingo Battenfeld, Matthias Schr\"oder, and Alex Simpson.
\newblock A convenient category of domains.
\newblock In {\em Computation, meaning, and logic: articles dedicated to
  {G}ordon {P}lotkin}, volume 172 of {\em Electron. Notes Theor. Comput. Sci.},
  pages 69--99. Elsevier Sci. B. V., Amsterdam, 2007.

\bibitem{cai}
Ganchev H. A. Lempp S. Miller J.~S. Cai, M. and M.~I. Soskova.
\newblock Defining totality in the enumeration degrees.
\newblock {\em Journal of the AMS}, 29(4):1051--1067, 2016.

\bibitem{cai2}
Lempp S. Miller J.~S. Cai, M. and M.~I. Soskova.
\newblock On {K}alimullin pairs.
\newblock {\em Computability}, 5(2):111--126, 2016.

\bibitem{Coo84}
S.~B. Cooper.
\newblock Partial degrees and the density problem. {II}. {T}he enumeration
  degrees of the {$\Sigma _{2}$}\ sets are dense.
\newblock {\em J. Symbolic Logic}, 49(2):503--513, 1984.

\bibitem{CooperE}
S.~Barry Cooper.
\newblock Enumeration reducibility, nondeterministic computations and relative
  computability of partial functions.
\newblock In {\em Recursion theory week ({O}berwolfach, 1989)}, volume 1432 of
  {\em Lecture Notes in Math.}, pages 57--110. Springer, Berlin, 1990.

\bibitem{cooperbook}
S.~Barry Cooper.
\newblock {\em Computability theory}.
\newblock Chapman \& Hall/CRC, Boca Raton, FL, 2004.

\bibitem{copestake}
Kate Copestake.
\newblock 1-genericity in the enumeration degrees.
\newblock {\em The Journal of Symbolic Logic}, 53(3):878--887, 1988.

\bibitem{daymiller}
Adam~R. Day and Joseph~S. Miller.
\newblock Randomness for non-computable measures.
\newblock {\em Trans. Amer. Math. Soc.}, 365(7):3575--3591, 2013.

\bibitem{debrecht6}
Matthew de~Brecht.
\newblock Quasi-{P}olish spaces.
\newblock {\em Ann. Pure Appl. Logic}, 164(3):356--381, 2013.

\bibitem{debrecht8}
Matthew de~Brecht.
\newblock A generalization of a theorem of {Hurewicz} for quasi-polish spaces.
\newblock {\em Logical Methods in Computer Science}, 14(1), 2018.

\bibitem{debrecht9}
Matthew de~Brecht.
\newblock Some notes on spaces of ideals and computable topology.
\newblock In Marcella Anselmo, Gianluca Della~Vedova, Florin Manea, and Arno
  Pauly, editors, {\em Beyond the Horizon of Computability (Proceedings of CiE
  2020)}, pages 26--37. Springer, 2020.

\bibitem{paulydebrecht3-csl}
Matthew de~Brecht and Arno Pauly.
\newblock {Noetherian Quasi-Polish spaces}.
\newblock In Valentin Goranko and Mads Dam, editors, {\em 26th EACSL Annual
  Conference on Computer Science Logic (CSL 2017)}, volume~82 of {\em LIPIcs},
  pages 16:1--16:17. Schloss Dagstuhl, 2017.

\bibitem{paulydebrecht4}
Matthew de~Brecht, Arno Pauly, and Matthias Schr\"oder.
\newblock Overt choice.
\newblock {\em Computability}, 2020.
\newblock available at https://arxiv.org/abs/1902.05926.

\bibitem{dBScSe15}
Matthew de~Brecht, Matthias Schr\"oder, and Victor Selivanov.
\newblock Base-complexity classifications of {$\rm{QCB}_0$}-spaces.
\newblock In {\em Evolving computability}, volume 9136 of {\em Lecture Notes in
  Comput. Sci.}, pages 156--166. Springer, Cham, 2015.

\bibitem{deGr}
J.~de~Groot, H.~Herrlich, G.~E. Strecker, and E.~Wattel.
\newblock Compactness as a operator.
\newblock {\em Compositio Math.}, 21:349--375, 1969.

\bibitem{deGroot2}
J.~de~Groot, G.E. Strecker, and E.~Wattel.
\newblock The compactness operator in general topology.
\newblock In {\em General Topology and its Relations to Modern Analysis and
  Algebra}, pages 161--163. Academia Publishing House of the Czechoslovak
  Academy of Sciences, 1967.

\bibitem{downey}
Rod Downey, Noam Greenberg, Matthew Harrison-Trainor, Ludovic Patey, and Dan
  Turetsky.
\newblock Relationships between computability-theoretic properties of problems.
\newblock preprint, 2019.

\bibitem{engelking}
Ryszard Engelking.
\newblock {\em General Topology}.
\newblock Heldermann Verlag, 1989.

\bibitem{ELS04}
Mart{\'\i}n Escard\'o, Jimmie Lawson, and Alex Simpson.
\newblock Comparing {C}artesian closed categories of (core) compactly generated
  spaces.
\newblock {\em Topology Appl.}, 143(1-3):105--145, 2004.

\bibitem{friedberg}
Richard~M. Friedberg and Hartley Rogers, Jr.
\newblock Reducibility and completeness for sets of integers.
\newblock {\em Z. Math. Logik Grundlagen Math.}, 5:117--125, 1959.

\bibitem{sorbi3}
Hristo Ganchev and Andrea Sorbi.
\newblock Initial segments of the {$\Sigma^0_2$} enumeration degrees.
\newblock {\em Journal of Symbolic Logic}, 81(1):316--325, 2016.

\bibitem{soskova2}
Hristo Ganchev and Mariya Soskova.
\newblock The high/low hierarchy in the local structure of the
  $\omega$-enumeration degrees.
\newblock {\em Annals of Pure and Applied Logic}, 163(5):547 -- 566, 2012.

\bibitem{soskova}
Hristo~A. Ganchev, Iskander~Sh. Kalimullin, Joseph~S. Miller, and Mariya~I.
  Soskova.
\newblock A structural dichotomy in the enumeration degrees.
\newblock {\em The Journal of Symbolic Logic}, 2020.

\bibitem{GanSos15}
Hristo~A. Ganchev and Mariya~I. Soskova.
\newblock Definability via {K}alimullin pairs in the structure of the
  enumeration degrees.
\newblock {\em Trans. Amer. Math. Soc.}, 367(7):4873--4893, 2015.

\bibitem{gregoriades3}
Vassilios Gregoriades.
\newblock Classes of {P}olish spaces under effective {B}orel isomorphism.
\newblock {\em Memoirs of the American Mathematical Society}, 2016.

\bibitem{GreKih}
Vassilios Gregoriades, Takayuki Kihara, and Keng~Meng Ng.
\newblock Turing degrees in {P}olish spaces and decomposability of {B}orel
  functions.
\newblock arXiv 1410.1052, 2014.

\bibitem{Gr84}
Gary Gruenhage.
\newblock Generalized metric spaces.
\newblock In {\em Handbook of set-theoretic topology}, pages 423--501.
  North-Holland, Amsterdam, 1984.

\bibitem{Gru13}
Gary Gruenhage.
\newblock Generalized metrizable spaces.
\newblock In {\em Recent progress in general topology. {III}}, pages 471--505.
  Atlantis Press, Paris, 2014.

\bibitem{Guth}
J.~A. Guthrie.
\newblock A characterization of {$\aleph _{0}$}-spaces.
\newblock {\em General Topology and Appl.}, 1(2):105--110, 1971.

\bibitem{Gutt71}
Lance Gutteridge.
\newblock {\em Some results on enumeration reducibility}.
\newblock ProQuest LLC, Ann Arbor, MI, 1971.
\newblock Thesis (Ph.D.)--Simon Fraser University (Canada).

\bibitem{HKL90}
L.~A. Harrington, A.~S. Kechris, and A.~Louveau.
\newblock A {G}limm-{E}ffros dichotomy for {B}orel equivalence relations.
\newblock {\em J. Amer. Math. Soc.}, 3(4):903--928, 1990.

\bibitem{Hinman73}
Peter~G. Hinman.
\newblock Degrees of continuous functionals.
\newblock {\em J. Symbolic Logic}, 38:393--395, 1973.

\bibitem{stull}
Mathieu Hoyrup, Crist\'obal Rojas, Victor Selivanov, and Donald~M. Stull.
\newblock Computability on quasi-polish spaces.
\newblock arXiv 1903.02979, 2019.

\bibitem{Jea17}
Emmanuel Jeandel.
\newblock Enumeration reducibility in closure spaces with applications to logic
  and algebra.
\newblock In {\em 32nd Annual {ACM/IEEE} Symposium on Logic in Computer
  Science, {LICS} 2017, Reykjavik, Iceland, June 20-23, 2017}, pages 1--11.
  {IEEE} Computer Society, 2017.

\bibitem{Joc68}
Carl~G. Jockusch, Jr.
\newblock Semirecursive sets and positive reducibility.
\newblock {\em Trans. Amer. Math. Soc.}, 131:420--436, 1968.

\bibitem{kalimullin2}
Iskander~Sh. Kalimullin.
\newblock Definability of the jump operator in the enumeration degrees.
\newblock {\em Journal of Mathematical Logic}, 3(2):257--267, 2003.

\bibitem{KP}
Takayuki Kihara and Arno Pauly.
\newblock Point degree spectra of represented spaces.
\newblock submitted.

\bibitem{KoKu08}
Margarita Korovina and Oleg Kudinov.
\newblock Towards computability over effectively enumerable topological spaces.
\newblock In {\em Proceedings of the {F}ifth {I}nternational {C}onference on
  {C}omputability and {C}omplexity in {A}nalysis ({CCA} 2008)}, volume 221 of
  {\em Electron. Notes Theor. Comput. Sci.}, pages 115--125. Elsevier Sci. B.
  V., Amsterdam, 2008.

\bibitem{LY16}
Shou Lin and Ziqiu Yun.
\newblock {\em Generalized Metric Spaces and Mappings}, volume~6 of {\em
  Atlantis Studies in Mathematics}.
\newblock Atlantis Press, [Paris], 2016.

\bibitem{Lou80}
Alain Louveau.
\newblock A separation theorem for {$\Sigma ^{1}_{1}$}\ sets.
\newblock {\em Trans. Amer. Math. Soc.}, 260(2):363--378, 1980.

\bibitem{McC17}
Ethan McCarthy.
\newblock Cototal enumeration degrees and their application to computable
  mathematics.
\newblock {\em Proceedings of the AMS}, 146:3541--3552, 2018.

\bibitem{MNbook}
Robert~A. McCoy and Ibula Ntantu.
\newblock {\em Topological properties of spaces of continuous functions},
  volume 1315 of {\em Lecture Notes in Mathematics}.
\newblock Springer-Verlag, Berlin, 1988.

\bibitem{mce85}
Kevin McEvoy.
\newblock Jumps of quasiminimal enumeration degrees.
\newblock {\em J. Symbolic Logic}, 50(3):839--848, 1985.

\bibitem{medvedev}
Yu.~T. Medvedev.
\newblock Degrees of difficulty of the mass problem.
\newblock {\em Dokl. Akad. Nauk SSSR (N.S.)}, 104:501--504, 1955.

\bibitem{Mic66}
E.~Michael.
\newblock {$\aleph _{0}$}-spaces.
\newblock {\em J. Math. Mech.}, 15:983--1002, 1966.

\bibitem{MillerPhDthesis}
Joseph~S. Miller.
\newblock {\em $\Pi^0_1$ classes in computable analysis and topology}.
\newblock PhD thesis, Cornell University, 2002.

\bibitem{miller2}
Joseph~S. Miller.
\newblock Degrees of unsolvability of continuous functions.
\newblock {\em J. Symbolic Logic}, 69(2):555--584, 2004.

\bibitem{miller5}
Joseph~S. Miller and Mariya~I. Soskova.
\newblock Density of the cototal enumeration degrees.
\newblock {\em Annals of Pure and Applied Logic}, 2018.

\bibitem{Monin16}
Benoit Monin.
\newblock Higher randomness and forcing with closed sets.
\newblock {\em Theory Comput. Syst.}, 60(3):421--437, 2017.

\bibitem{mummert2}
Carl Mummert and Frank Stephan.
\newblock Topological aspects of poset spaces.
\newblock {\em Michigan Mathematical Journal}, 59(1):3--24, 2010.

\bibitem{NormannBook}
Dag Normann.
\newblock {\em Recursion on the countable functionals}, volume 811 of {\em
  Lecture Notes in Mathematics}.
\newblock Springer, Berlin, 1980.

\bibitem{OdiBook1}
P.~G. Odifreddi.
\newblock {\em Classical Recursion Theory. {V}ol. {II}}, volume 143 of {\em
  Studies in Logic and the Foundations of Mathematics}.
\newblock North-Holland Publishing Co., Amsterdam, 1999.

\bibitem{pauly-ordinals-mfcs}
Arno Pauly.
\newblock Computability on the countable ordinals and the
  {H}ausdorff-{K}uratowski theorem (extended abstract).
\newblock In Giuseppe~F Italiano, Giovanni Pighizzini, and Donald~T. Sannella,
  editors, {\em Mathematical Foundations of Computer Science 2015}, volume 9234
  of {\em Lecture Notes in Computer Science}, pages 407--418. Springer, 2015.

\bibitem{pauly-synthetic-arxiv}
Arno Pauly.
\newblock On the topological aspects of the theory of represented spaces.
\newblock {\em Computability}, 5(2):159--180, 2016.

\bibitem{RogersBook}
Hartley Rogers, Jr.
\newblock {\em Theory of recursive functions and effective computability}.
\newblock MIT Press, Cambridge, MA, second edition, 1987.

\bibitem{SacksBook}
Gerald~E. Sacks.
\newblock {\em Higher recursion theory}.
\newblock Perspectives in Mathematical Logic. Springer-Verlag, Berlin, 1990.

\bibitem{Sch02}
Matthias Schr\"oder.
\newblock Effectivity in spaces with admissible multirepresentations.
\newblock {\em MLQ Math. Log. Q.}, 48(1):78--90, 2002.

\bibitem{schroder}
Matthias Schr{\"o}der.
\newblock Extended admissibility.
\newblock {\em Theoret. Comput. Sci.}, 284(2):519--538, 2002.

\bibitem{Sch03}
Matthias Schr{\"{o}}der.
\newblock {\em Admissible representations for continuous computations}.
\newblock PhD thesis, University of Hagen, Germany, 2003.

\bibitem{schroder6}
Matthias Schr{\"o}der.
\newblock Spaces allowing type-2 complexity theory revisited.
\newblock {\em MLQ Math. Log. Q.}, 50(4-5):443--459, 2004.

\bibitem{sch09}
Matthias Schr\"oder.
\newblock The sequential topology on {$\Bbb N^{\Bbb N^{\Bbb N}}$} is not
  regular.
\newblock {\em Math. Structures Comput. Sci.}, 19(5):943--957, 2009.

\bibitem{ScSe15}
Matthias Schr\"oder and Victor Selivanov.
\newblock Hyperprojective hierarchy of {${\rm qcb}_0$}-spaces.
\newblock {\em Computability}, 4(1):1--17, 2015.

\bibitem{selivanov4}
Matthias Schr\"oder and Victor Selivanov.
\newblock Some hierarchies of {QCB$_0$}-spaces.
\newblock {\em Math. Structures Comput. Sci.}, 25(8):1799--1823, 2015.

\bibitem{Seliv06}
Victor~L. Selivanov.
\newblock Towards a descriptive set theory for domain-like structures.
\newblock {\em Theoret. Comput. Sci.}, 365(3):258--282, 2006.

\bibitem{SoareBook}
Robert~I. Soare.
\newblock {\em Recursively Enumerable Sets and Degrees}.
\newblock Perspectives in Mathematical Logic. Springer-Verlag, Berlin, 1987.

\bibitem{solon}
Boris Solon.
\newblock Co-total enumeration degrees.
\newblock In Arnold Beckmann, Ulrich Berger, Benedikt L{\"{o}}we, and John~V.
  Tucker, editors, {\em Logical Approaches to Computational Barriers, Second
  Conference on Computability in Europe, CiE 2006, Swansea, UK, June 30-July 5,
  2006, Proceedings}, volume 3988 of {\em Lecture Notes in Computer Science},
  pages 538--545. Springer, 2006.

\bibitem{CTopBook}
Lynn~Arthur Steen and J.~Arthur Seebach, Jr.
\newblock {\em Counterexamples in topology}.
\newblock Springer-Verlag, New York-Heidelberg, second edition, 1978.

\bibitem{oosten}
Jaap van Oosten.
\newblock {\em Realizability}, volume 152 of {\em Studies in Logic and the
  Foundations of Mathematics}.
\newblock Elsevier, 2008.

\bibitem{weihrauchg}
Klaus Weihrauch.
\newblock The computable multi-functions on multi-represented sets are closed
  under programming.
\newblock {\em Journal of Universal Computer Science}, 14(6):801--844, 2008.

\bibitem{Wei10}
Klaus Weihrauch.
\newblock Computable separation in topology, from {$T_0$} to {$T_2$}.
\newblock {\em J.UCS}, 16(18):2733--2753, 2010.

\bibitem{Wei13}
Klaus Weihrauch.
\newblock Computably regular topological spaces.
\newblock {\em Log. Methods Comput. Sci.}, 9(3):3:5, 24, 2013.

\bibitem{WeGr09}
Klaus Weihrauch and Tanja Grubba.
\newblock Elementary computable topology.
\newblock {\em J.UCS}, 15(6):1381--1422, 2009.

\end{thebibliography}
\end{document}